\documentclass[reqno]{amsart}
\usepackage[margin = 1in]{geometry}
\usepackage{amsmath, amssymb, amsthm, fancyhdr, verbatim, graphicx, relsize, mathtools, xfrac}
\usepackage{enumerate}
\usepackage{enumitem} 
\usepackage[all]{xy}
\usepackage[dvipsnames]{xcolor}
\usepackage{mathrsfs}
\usepackage{tikz}
\usetikzlibrary{arrows.meta, positioning, calc, patterns, decorations.pathmorphing, decorations.pathreplacing}
\usepackage{tikz-cd}
\usetikzlibrary{arrows.meta,backgrounds,calc}
\definecolor{CFink}{HTML}{334958}
\definecolor{CFone}{HTML}{238589}
\definecolor{CFtwo}{HTML}{B17E46}
\definecolor{CFthree}{HTML}{75629D}
\definecolor{CFprimeone}{HTML}{7655AD}
\definecolor{CFprimetwo}{HTML}{3E80BA}
\definecolor{CFprimethree}{HTML}{409D73}
\definecolor{CFprimefour}{HTML}{C2AF39}
\definecolor{CFprimefive}{HTML}{D88638}
\definecolor{CFprimesix}{HTML}{C95655}
\definecolor{CFwire}{HTML}{CAD2D8}
\tikzset{
  cf dot/.style={circle,inner sep=0pt,minimum size=4.2pt,
    outer sep=1pt,draw=none,fill=#1},
  cf group/.style={font=\normalsize,text=CFink,fill=none,inner sep=2pt},
  cf axis/.style={draw=CFink,line width=.65pt,
    -{Stealth[length=4.5pt,width=3.2pt]}},
  cf plane/.style={draw=#1!32,line width=.45pt,fill=#1,fill opacity=.022},
  cf edge/.style={line width=.85pt},
  cf route/.style={line width=1.35pt},
  cf change/.style={draw=CFink!80,line width=1.05pt,
    dash pattern=on 3pt off 2.5pt},
  cf guide/.style={line width=.4pt,densely dotted},
}
\usetikzlibrary{calc}
\usepackage[T1]{fontenc} 
\usepackage{framed}
\usepackage[hypertexnames=false]{hyperref}
\usepackage[OT2, T1]{fontenc}  
\usepackage[titletoc]{appendix}
\usepackage{bbm}
\usepackage{adjustbox}
\usepackage{amssymb} 
\usepackage{float}

\numberwithin{equation}{subsection}

\DeclareSymbolFont{cyrletters}{OT2}{wncyr}{m}{n}
\DeclareMathSymbol{\Sha}{\mathalpha}{cyrletters}{"58}

\usepackage{color}

\newcommand{\F}{\mathbf{F}}
\newcommand{\RR}{\mathbf{R}}
\newcommand{\CC}{\mathbf{C}}
\newcommand{\G}{\mathbf{G}}

\newcommand{\wt}[1]{\widetilde{#1}}

\newcommand{\Q}{\mathbf{Q}}
\newcommand{\Z}{\mathbf{Z}}

\newcommand{\mf}[1]{\mathfrak{#1}}

\newcommand{\sgn}{\operatorname{sgn}}
\newcommand{\Gal}{\operatorname{Gal}}

\newcommand{\cal}[1]{\mathcal{#1}}

\newcommand{\ol}[1]{\overline{#1}}
\newcommand{\wh}[1]{\widehat{#1}}

\newcommand{\co}{\colon}
\newcommand{\mrm}[1]{\mathrm{#1}}

\newcommand{\ld}{{}^L}
\newcommand{\lc}{{}^c}

\newcommand{\inj}{\hookrightarrow}

\newcommand{\io}{\iota}

\newcommand\cB{\mathcal{B}}
\newcommand\cC{\mathcal{C}}

\newcommand\cE{\mathcal{E}}

\newcommand\cG{\mathcal{G}}
\newcommand\cH{\mathcal{H}}

\newcommand\cK{\mathcal{K}}

\newcommand\cO{\mathcal{O}}

\newcommand\cS{\mathcal{S}}
\newcommand\cT{\mathcal{T}}

\newcommand\cW{\mathcal{W}}

\newcommand\cZ{\mathcal{Z}}

\newcommand\sH{\mathscr{H}}

\newcommand\frZ{\mathfrak{Z}}

\newcommand\frg{\mathfrak{g}}

\newcommand\frp{\mathfrak{p}}

\DeclareMathOperator{\GL}{GL}
\DeclareMathOperator{\SL}{SL}

\DeclareMathOperator{\ab}{ab}

\DeclareMathOperator{\cyc}{cyc}

\DeclareMathOperator{\Tr}{Tr}

\DeclareMathOperator{\Sp}{Sp}
\DeclareMathOperator{\SO}{SO}
\DeclareMathOperator{\Hom}{Hom}
\DeclareMathOperator{\Ind}{Ind}
\DeclareMathOperator{\Ima}{Im\,}

\DeclareMathOperator{\ord}{ord}
\DeclareMathOperator{\Aut}{Aut}
\DeclareMathOperator{\Rep}{Rep}

\DeclareMathOperator{\Nm}{Nm}

\DeclareMathOperator{\Lie}{Lie}
\DeclareMathOperator{\End}{End}

\DeclareMathOperator{\ad}{ad}

\DeclareMathOperator{\unr}{unr}
\DeclareMathOperator{\Res}{Res}

\DeclareMathOperator{\Div}{Div}

\DeclareMathOperator{\Id}{Id}
\DeclareMathOperator{\Ad}{Ad}
\DeclareMathOperator{\Gr}{Gr}

\DeclareMathOperator{\ind}{ind}

\DeclareMathOperator{\Sat}{Sat}

\DeclareMathOperator{\Exc}{Exc}

\DeclareMathOperator{\alg}{alg}

\DeclareMathOperator{\supp}{supp}

\newcommand{\cHck}{\mathcal{H}\mrm{ck}}

\DeclareMathOperator{\sm}{sm}
\DeclareMathOperator{\FS}{FS}
\DeclareMathOperator{\cInd}{c-Ind}

\DeclareMathOperator{\CT}{CT}

\DeclareMathOperator{\red}{red}

\DeclareMathOperator{\Kal}{Kal}

\DeclareMathOperator{\der}{der}

\DeclareMathOperator{\rk}{rk}

\DeclareMathOperator{\diag}{diag}

\DeclareMathOperator{\Fr}{Fr}

\RequirePackage{xspace}

\newcommand{\rH}{\ensuremath{\mathrm{H}}\xspace}

\newcommand{\rT}{\ensuremath{\mathrm{T}}\xspace}

\newcommand{\cbr}{br}

\newcommand{\chG}{\check{G}}
\newcommand{\chH}{\check{H}}
\newcommand{\chT}{\check{T}}

\renewcommand{\ss}{\mathrm{ss}}

\newtheorem{thm}{Theorem}[subsection]
\newtheorem{lemma}[thm]{Lemma}
\newtheorem{prop}[thm]{Proposition}
\newtheorem{cor}[thm]{Corollary}

\theoremstyle{remark}
\newtheorem{remark}[thm]{Remark} 
 
\newtheorem{defn}[thm]{Definition}
\newtheorem{ansatz}[thm]{Ansatz}

\newtheorem{example}[thm]{Example}

\makeatletter
\def\th@remark{%
  \thm@headfont{\bfseries}%
  \normalfont 
  \thm@preskip \thm@preskip 
  \thm@postskip\thm@preskip
}
\def\imod#1{\allowbreak\mkern5mu({\operator@font mod}\,\,#1)}
\makeatother

\numberwithin{equation}{subsection}
\numberwithin{figure}{subsection}

\title[Local Langlands functoriality for Yu's supercuspidals II]{Local Langlands functoriality for Yu's supercuspidals II: Fargues--Scholze's parametrization}

\author{Sean Cotner and Tony Feng}

\begin{document}

\begin{abstract}
This is the second of two papers dedicated to the explicit computation of the Fargues--Scholze correspondence. We compute the Tate cohomology of cuspidal representations arising from Yu's construction. Combining this with modular functoriality in the Local Langlands Correspondence, independence of $\ell$, and the partial characterization of the Local Langlands Correspondence established in the first paper, we compare the Fargues--Scholze and Kaletha parametrizations. Among the consequences, we deduce that the Fargues--Scholze correspondence has finite fibers and is surjective onto inertial L-parameters.
\end{abstract}

\maketitle

\tableofcontents

\section{Introduction}
This paper is the second half of a two-paper series which aims to explicitly compute the Fargues--Scholze parameters, at least on inertia, for cuspidal representations arising from Yu's construction \cite{Yu01}. We encourage the reader to read \cite[\S 1.1]{CF26a}, which sets the motivation and context for our work, before proceeding further. 

Let $F$ be a non-archimedean local field of residue characteristic $p$, and let $G$ be a connected reductive $F$-group. Throughout this introduction, we will assume that $G$ splits after a tamely ramified extension of $F$ and $p$ does not divide the order of the absolute Weyl group $\Omega$ of $G$. Let $\ell \neq p$ be a prime number.

\subsection{The main theorems}

In \cite{CF26a} we defined, following and extending work of Kaletha \cite{Kal19,Kal21b}, a partial form of an explicit Local Langlands Correspondence for irreducible cuspidal representations of $G(F)$ over $k \in \{\ol\Q_\ell, \ol\F_\ell\}$. More precisely, to any such representation $\pi$ we constructed an inertial L-parameter \cite[Definition~7.2.12]{CF26a}
\[
\rho_I^{\Kal}(\pi) : I_F \rightarrow \ld G(k).
\]
If $\pi$ is moreover \emph{non-singular} \cite[Definition~7.2.10]{CF26a}, we defined a full L-parameter 
\[
\rho^{\Kal}(\pi) : W_F \rightarrow \ld G(k).
\]
For example, $\rho_I^{\Kal}(\pi)$ is constructed by attaching a torus-character pair $(T, \theta)$ to $\pi$ and then defining
\[
\rho_I^{\Kal}(\pi) = \left(I_F \xrightarrow{\ld\theta} \ld T(k) \xrightarrow{\ld j_{T,G}} \ld G(k) \right),
\]
where $\ld j_{T,G}\co \ld T \to \ld G$ is a certain L-embedding coming from \cite{FKS23}; see \cite[Definition~4.5.1]{CF26a}.

On the other hand, for any such $\pi$ we also have a Fargues--Scholze parameter from \cite{FS},
\[
\rho^{\FS}(\pi) : W_F \rightarrow \ld G(k).
\]

In this paper, we prove the following comparison of the two constructions. 

\begin{thm}[Theorem~\ref{thm:main-tame}]\label{thm:II-intro-main-thm}
Let $\pi$ be an irreducible cuspidal $k$-representation of $G(F)$, and let $T$ be a torus attached to $\pi$ as above.
\begin{enumerate}
    \item If $T$ is maximally unramified, then $\rho^{\FS}(\pi)|_{I_F} \sim \rho_I^{\Kal}(\pi)$. If moreover $\pi$ is non-singular, then 
    \[
    \rho^{\FS}(\pi) \sim \rho^{\Kal}(\pi).
    \]
    \item We have
    \begin{equation}\label{eqn:II-intro-inertial-comparison}
    \rho^{\FS}(\pi)|_{I_F} \sim \rho_I^{\Kal}(\pi).
    \end{equation}
    Let $T_0$ be the maximal unramified $F$-subtorus of $T$. If moreover $\pi$ is non-singular, then there is an explicit integer $N$, depending only on the image of $T$ in $Z_G(T_0)_{\ad}$, such that 
    \begin{equation}\label{eqn:II-intro-full-comparison}
    \rho^{\FS}(\pi)|_{W_{F_N}} \sim \rho^{\Kal}(\pi)|_{W_{F_N}}.
    \end{equation}
    \item The representation $\pi$ is non-singular if and only if $\rho^{\FS}(\pi)$ is irreducible.\footnote{An L-parameter is \emph{irreducible} if and only if it does not normalize any proper parabolic of $\wh G$. Such L-parameters are referred to as \emph{elliptic} in \cite[Definition X.2.1]{FS} and \emph{supercuspidal} in \cite[Definition 4.1.1]{Kal21b}. Our terminology is more standard in the algebraic groups literature (see, e.g., \cite{BMR05}, which uses \emph{$\ld G$-ir} as an abbreviation), and it originates from Serre. It is convenient for our purposes because inertial L-parameters are irreducible in some important cases, but it seems inapt to call these \emph{elliptic} or \emph{supercuspidal}.}
\end{enumerate}
\end{thm}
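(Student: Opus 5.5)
The plan is to reduce everything to the depth-zero, maximally unramified case by exploiting the tools mentioned in the abstract: Tate cohomology, modular functoriality, and independence of $\ell$. Yu's construction attaches to $\pi$ a twisted Levi sequence $G^0 \subset G^1 \subset \cdots \subset G^d = G$ and a depth-zero cuspidal representation $\pi_0$ of $G^0(F)$. We then choose an auxiliary automorphism $\sigma$ of prime order $\ell$, for instance a Galois automorphism of a tame extension splitting the positive-depth characters, or an element of a torus acting by conjugation. We arrange $\sigma$ so that the fixed-point group $G^\sigma$ is the twisted Levi $G^{d-1}$, or more generally the group carrying the inner part of the datum.

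\textbf{Step 1 (Tate cohomology).} Compute the Tate cohomology $\wh{\rH}^0(\sigma;\pi)$ of the mod-$\ell$ reduction of $\pi$ as a representation of $G^\sigma(F)$. The aim is to show that it is the Frobenius twist of a Yu-type cuspidal for $G^{\sigma}$ whose datum is obtained by truncating that of $\pi$. This calculation goes through the compact induction $\cInd_K^{G(F)}\kappa$. By a Mackey-type argument, Tate cohomology commutes with compact induction up to terms with vanishing Tate cohomology; the free $\sigma$-orbits on double cosets contribute nothing. The Heisenberg–Weil pieces of $\kappa$ then have one-dimensional Tate cohomology given by an explicit sign or character. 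This step is the core new computation, and I expect it to be the main obstacle. The difficulty lies in controlling the Weil representation factors and the quadratic characters of Fintzen–Kaletha–Spice, which must match the twisting character in ${}^L j_{T,G}$ from \cite{FKS23}. I would first treat the case where $\sigma$ acts trivially on the Weil representation spaces, and then handle the general case through the explicit Gérardin character formulas.

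\textbf{Step 2 (transfer to L-parameters).} Invoke modular functoriality for Fargues–Scholze (Treumann–Venkatesh style, as established for FS). It says that the FS parameter of $\wh{\rH}^0(\sigma;\pi)$ equals the Frobenius twist of $\rho^{\FS}(\pi)$ composed with the $\sigma$-fixed L-embedding. By induction on $d$ and the depth-zero case, where $T$ is maximally unramified and the comparison follows from the partial characterization in \cite{CF26a}, this gives equality of the mod-$\ell$ reductions of the two parameters. Independence of $\ell$ then lifts this to $\ol\Q_\ell$. The argument is to vary $\ell$ among infinitely many primes, using that both sides are defined integrally and compatibly across $\ell$, and that semisimple parameters are determined by their reductions at infinitely many $\ell$.

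\textbf{Step 3 (deriving the three parts).} On inertia this yields \eqref{eqn:II-intro-inertial-comparison}, which is part (2) and the first assertion of (1). For the full parameter, Frobenius is not seen directly by Tate cohomology. When $T$ is maximally unramified, the partial characterization of \cite{CF26a} pins down the Frobenius image via the depth-zero Deligne–Lusztig data, which gives (1). In general, the ambiguity is an element of the centralizer of the image of inertia, which by non-singularity is a finite group. That group is controlled by $T/T_0$ inside $Z_G(T_0)_{\ad}$, so passing to $W_{F_N}$ kills it and yields \eqref{eqn:II-intro-full-comparison}. Finally, for part (3): if $\pi$ is non-singular, the torus $T$ is elliptic and $\rho^{\Kal}(\pi)$ is irreducible, and irreducibility is insensitive to restriction to finite index once it holds for inertia plus $W_{F_N}$. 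Conversely, if $\pi$ is singular, $\rho_I^{\Kal}(\pi)$ has a centralizer containing a positive-dimensional torus coming from the Weyl reflection fixing $\theta$. Any Frobenius extension then normalizes a proper parabolic, so $\rho^{\FS}(\pi)$ is reducible.
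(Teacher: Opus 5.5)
Your proposal has a genuine gap, and it is in the choice of $\sigma$ in Step 1. A Galois automorphism of $\Res_{E/F}G_E$ has fixed points $G$, so it realizes base change, not descent. To get $G^\sigma=G^{d-1}$ you would need conjugation by some $t\in Z(G^{d-1})(F)$ of order $\ell$ with $Z_G(t)=G^{d-1}$, and $\ell$ must also be the coefficient characteristic. Over a fixed $F$ the torsion of $Z(G^{d-1})(F)$ is finite, so your plan of gluing congruences at infinitely many $\ell$ has nothing to feed it. When the relevant central torus is ramified, unramified base change does not create the needed torsion either: for $S=\Res^1_{E/F}\G_m$ with $E/F$ ramified quadratic, the prime-to-$p$ torsion of $S(F_n)$ is $\{\pm1\}$ for every $n$.

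This is why the paper never descends along Yu's tower. It descends only to unramified twisted Levis $H=Z_G(S)$, with $S$ a simple subtorus of the maximal unramified subtorus of $T\cap G_{\der}$ (Theorem~\ref{thm:unramified-twisted-Levi-functoriality}). It does so after base changing to $F_{\ell_1}$ for a large prime $\ell_1$, where $S(F_{\ell_1})$ acquires a point $t$ of large prime order $\ell_0$ with $Z_G(t)=H$. Returning to $F$ then requires three further ingredients:
\begin{itemize}
\item the large-degree base change theorem (Theorem~\ref{thm:base-change-functoriality}), itself proved via Tate cohomology for $\Gal(F_{\ell_1}/F)$ and the Glauberman correspondence;
\item independence of $\ell$, to pass between the primes $\ell,\ell_1,\ell_0$;
\item \cite[Lemma~5.3.2]{Cot26b} at the single prime $\ell_0$, which lifts the inertial congruence because the images of inertia have order prime to $\ell_0$.
\end{itemize}
The totally ramified part of $T$ is never reached by descent. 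It is handled by small-degree base change modulo $\ell$ (Corollary~\ref{cor:yu-datum-small-degree-bc-parameter}) inside \cite[Theorem~10.2.1]{CF26a}, and this ramified part is what the integer $N$ measures.

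Your base case is circular. \cite[Theorem~10.2.1]{CF26a} does not compute $\rho^{\FS}$ in depth zero. It is a uniqueness statement: any map compatible with tori, twists and central characters, maps inducing isomorphisms on adjoint groups, small-degree base change, and descent to unramified twisted Levis agrees with $\rho^{\Kal}$ as in the theorem. The last two properties are exactly what must be proved for $\rho^{\FS}$. In the paper, depth zero is handled by the same spectral endoscopy, with $\rT^j$ of Deligne--Lusztig cuspidals controlled by Lusztig restriction.

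Step 2 also omits two ingredients.
\begin{itemize}
\item The $\sigma$-dual homomorphism must be computed, not assumed to be the natural L-embedding. The paper shows $\ld\psi\sim z\cdot\Fr_\ell\circ\ld j_{H,G}$ with $z$ unramified only for unramified $H$ (Proposition~\ref{prop:sigma-dual-unram-agreement-on-inertia}). Note also that modular functoriality expresses $\rho^{\FS}(\Pi^{(\ell)})$ through $\ld\psi\circ\rho^{\FS}$ of a constituent of $\rT^j$, not the other way round.
\item The descended mod-$\ell$ depth-zero representation is not known to lift to a cuspidal one in characteristic $0$. The paper gets around this with Kim--Yu covers and parabolic induction (\S\ref{ssec:modular-cuspidal-parabolic-induction}).
\end{itemize}

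Step 3 has three errors.
\begin{itemize}
\item For non-singular $\pi$, $S_\pi$ is a torus, not a finite group; what is finite is its Frobenius-fixed part modulo the center. $N$ comes from killing a class in $\rH^1(W_F/I_F,\Hom(X_{\ad},k^\times))$, once one knows both parameters induce the same $W_F$-action on $S_\pi$ (Theorem~\ref{thm:unramified-twisted-Levi-functoriality}(2)).
\item Irreducibility of $\rho^{\Kal}(\pi)$ need not survive restriction to $W_{F_N}$, so conjugacy on $W_{F_N}$ alone does not give irreducibility of $\rho^{\FS}(\pi)$. The paper uses instead that irreducibility depends only on $\rho|_{I_F}$ and this action on $S_\pi$.
\item Singularity means $S_\Psi$ is not a torus. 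It does not mean the centralizer ``contains a positive-dimensional torus'', which non-singular parameters also satisfy.
\end{itemize}
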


\begin{remark}The integer $N$ in Theorem~\ref{thm:II-intro-main-thm} is the same as the one appearing in \cite[Theorem~1.3.1]{CF26a}; as mentioned there, it is not difficult to compute in examples, and it is often ``small''; for instance, it is at most $4$ if $G \cong \mathrm{G}_2$. We note that Theorem~\ref{thm:II-intro-main-thm}(3) implies that the Fargues--Scholze L-packet of an irreducible L-parameter consists of non-singular cuspidal representations; this has been expected since at least \cite{Kal21b}.
\end{remark}

The following consequence of Theorem~\ref{thm:II-intro-main-thm} has been expected since \cite{FS}; we note that (1) relies on the fact that we have computed $\rho^{\FS}(\pi)|_{I_F}$ for \emph{all} cuspidal $\pi$, not just the non-singular ones from \cite{Kal21b}.

\begin{cor}\label{cor:II-intro-main-cor}

\begin{enumerate}
    \item (Theorem~\ref{thm:finiteness-of-fargues-scholze}) $\rho^{\FS}$ preserves depth and has finite fibers.
    \item (Theorem~\ref{thm:inertial-surjectivity}) If $\rho\co W_F \to \ld G(k)$ is an irreducible L-parameter, then there exists an irreducible smooth $k$-representation $\pi$ of $G(F)$ such that $\rho^{\FS}(\pi)|_{I_F} \sim \rho|_{I_F}$.
\end{enumerate}
\end{cor}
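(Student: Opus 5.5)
Both parts reduce to the inertial comparison \eqref{eqn:II-intro-inertial-comparison} of Theorem~\ref{thm:II-intro-main-thm}(2), combined with the known structure of Fargues--Scholze parameters under parabolic induction. Throughout we work under the standing hypotheses: $G$ is tamely ramified and $p \nmid |\Omega|$, so every irreducible cuspidal representation arises from Yu's construction.

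\emph{Part (1), depth.} Let $\pi$ be an irreducible smooth $k$-representation. By compatibility of $\rho^{\FS}$ with parabolic induction \cite{FS}, $\rho^{\FS}(\pi)$ equals the parameter of a cuspidal support $\sigma$ of $\pi$ on a Levi $M$, composed with $\ld M \to \ld G$. Depth is preserved under passage to the cuspidal support (Moy--Prasad), so it suffices to treat $\pi$ cuspidal.

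For cuspidal $\pi$, $\rho^{\FS}(\pi)|_{I_F} \sim \ld j_{T,G}\circ \ld\theta|_{I_F}$. The L-embedding $\ld j_{T,G}$ of \cite{FKS23} is built from tamely ramified $\chi$-data, so it is trivial on wild inertia. The depth of $\ld\theta$ equals the depth of $\theta$ by the depth-preservation of Langlands correspondence for tame tori (Yu's result). In Yu's construction, the depth of $\theta$ equals the depth of $\pi$, and the upper ramification filtration only sees wild inertia. Hence the depths agree.

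\emph{Part (1), finite fibers.} Fix $\rho$. Then $\rho|_{I_F}$ determines $\rho^{\Kal}_I$ up to conjugacy. The pairs $(T,\theta)$, up to $G(F)$-conjugacy and with given $\ld j\circ\ld\theta|_{I_F}$, form a finite set. This uses the following facts: there are finitely many conjugacy classes of tame maximal tori; a character of $T(F)$ is determined by its inertial restriction up to unramified twist; and the unramified twists fixing the $\wh G$-class are controlled. The last point is the main obstacle, since unramified characters form a torus, so there could a priori be infinitely many.

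To handle this, pass to the cuspidal support $(M,\sigma)$ and use that $\rho^{\FS}$ for $M$ is compatible with central characters. This pins down the unramified twist modulo a finite group, because the parameter determines the central character of $\sigma$, and unramified characters of $M(F)$ are detected by $Z(M)$. Each datum then yields finitely many Yu supercuspidals, coming from the finitely many extensions of the depth-zero part. Finally, each cuspidal support has finitely many irreducible subquotients of the induced representation.

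\emph{Part (2).} Given irreducible $\rho$, restrict to $I_F$. Using $p\nmid|\Omega|$, conjugate $\rho$ into the normalizer of a torus, following Kaletha's analysis of tame regular parameters in \cite{Kal19,Kal21b}. This factors $\rho|_{I_F}$ through $\ld j_{S,G}\colon \ld S\to \ld G$ for a tame elliptic $F$-torus $S$ determined by the Weyl-twisted Frobenius action. Here ellipticity of $S$ comes from irreducibility of $\rho$: a nontrivial $W_F$-fixed cocharacter would produce a normalized parabolic.

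Next, extend the inertial character to a character $\theta$ of $S(F)$ via local class field theory for tori. Choose an elliptic embedding $S\hookrightarrow G$ and produce a Yu datum from $(S,\theta)$ using Kaletha's Howe factorization. The genericity needed holds automatically under $p\nmid|\Omega|$. Let $\pi$ be the resulting cuspidal representation; it need not be non-singular. Theorem~\ref{thm:II-intro-main-thm}(2) then gives $\rho^{\FS}(\pi)|_{I_F}\sim\rho^{\Kal}_I(\pi)=\rho|_{I_F}$.
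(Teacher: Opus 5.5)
\textbf{Part (1).} Your depth argument is the paper's: the inertial comparison on $P_F$, \cite[Lemma~7.2.4]{CF26a} for $\ld j_{T,G}$, \cite[Theorem 7.10]{Yu09}, and $\theta$ has depth $r_d$. For finite fibers you take a heavier route through torus--character pairs, and two steps are missing.

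First, after passing to a cuspidal support $(M,\sigma)$ you only know $\ld j_{M,G}\circ\rho^{\FS}(\sigma)\sim\rho$. So ``the parameter determines the central character of $\sigma$'' needs the fact that only finitely many $\wh M$-classes of $\psi$ satisfy $\ld j_{M,G}\circ\psi\sim\rho$. This is \cite[Corollary 2.4]{DHKM}, which the paper invokes. Without it, neither the central character nor the unramified twist is pinned down.

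Second, going from $\rho|_{I_F}$ to classes $[(T,\theta)]$ to Yu data requires that only finitely many Yu data, up to equivalence, share a given $[(T,\theta)]\in\cT(\Psi)$. That is a uniqueness statement for Howe factorizations, and ``finitely many extensions of the depth-zero part'' does not supply it.

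The paper avoids all of this. It uses the inertial comparison only to get $r_d=r$, and then counts Yu data directly:
\begin{itemize}
\item finitely many tame twisted Levi sequences, vertices, and sequences $(r_i)$ with $r_d=r$;
\item characters $\phi_i$ of bounded order up to equivalence, by \cite[Lemma~6.6.5]{CF26a};
\item finitely many $\tau$, once compatibility with central characters bounds the central character of $\tau$.
\end{itemize}
Your finiteness of $\theta|_{T(F)_{\mathrm b}}$ also follows from depth alone, since $T(F)_{\mathrm b}/T(F)_{r+}$ is finite.

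\textbf{Part (2).} The genuine gap is that you never treat $k=\ol\F_\ell$, which the statement includes. Kaletha's analysis of irreducible parameters, and of the data they produce, is a characteristic-zero statement. The paper handles this case as follows:
\begin{itemize}
\item it lifts $\rho$ to $\ld G(\ol\Z_\ell)$ using \cite[Theorem 1.3, Theorem 1.6]{DHKM25};
\item it checks that the generic fiber is irreducible by Lemma~\ref{lemma:check-irreducibility-mod-ell};
\item it twists so that $\pi(\Psi)$ has finite-order central character, so a stable $\ol\Z_\ell$-lattice exists;
\item it takes $\pi$ to be an irreducible subquotient of the reduction and applies Lemma~\ref{lem:L-parameter-of-reduction}.
\end{itemize}

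Even over $\ol\Q_\ell$, your remark that $\pi$ ``need not be non-singular'' is wrong and hides a needed step. A Yu datum requires the depth-zero constituent $\tau$ of $R_{\ol S}^{\ol G^0_{[x]}}(\theta_0)$ to be cuspidal, and this fails for singular $\theta_0$ (e.g.\ $\theta_0=1$). It holds here because irreducibility of $\rho$ forces non-singularity. This is part of Kaletha's construction \cite[Lemma 4.1.3, Proposition 4.1.8, \S 4.2]{Kal21b}. The genericity coming from $p\nmid|\Omega|$ only concerns the positive-depth characters $\phi_i$.

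Consistently, $Z_{\wh G}(\rho(I_F))^\circ$ is a torus by \cite[Lemma~10.1.9]{CF26a}, so Theorem~\ref{thm:main-tame}(1) forces any such $\Psi$ to be non-singular. The paper simply quotes Kaletha for a non-singular $\Psi$ with $\rho^{\Kal}(\Psi)\sim\rho$. It then normalizes $\Psi$ via \cite[Lemma 3.7.2]{Kal19}, which you also omit and which Theorem~\ref{thm:main-tame} requires, before applying Theorem~\ref{thm:main-tame}.
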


\subsection{Related work}\label{ssec:related-work}
A broad overview of progress on the Local Langlands Correspondence, thanks to work of many authors, is documented in \cite{CF26a}. Here we discuss some results that are most closely related to ours. 

For depth $0$ cuspidal representations, the restriction of the Fargues--Scholze parameter to inertia has been computed by Eteve \cite[Theorem~2.3.9]{Ete23} for split groups over local function fields, and conditionally by Fu \cite[Theorem~1.0.6]{Fu26} for unramified groups over arbitrary non-archimedean local fields. Zhu \cite[\S 5.3.4]{Zhu25} computes the parameters of regular depth $0$ supercuspidals for his own version of the Local Langlands Correspondence, identifying them with the DeBacker--Reeder parameters for unramified groups of adjoint type with $\ol\Q_\ell$-coefficients. The work of Gleason--Hamann--Ivanov--Louren\c{c}o--Zou \cite{GHILZ26} makes it plausible that one will eventually be able to deduce the same comparison for Fargues--Scholze from Zhu's work.

In another direction, Gan--Harris--Sawin and Beuzart-Plessis \cite[Theorem 1.4]{GHS24} prove a weak form of depth preservation, namely that if $F$ is a function field whose residue field is not too small and $G$ is unramified, then there are many $\pi$ for which $\rho^{\FS}(\pi)$ is wildly ramified.\footnote{Strictly speaking, \cite{GHS24} concerns the Genestier--Lafforgue Local Langlands Correspondence \cite{GL18}, but this was shown to agree with Fargues--Scholze by Li-Huerta \cite{LH23}.} A stronger version of Corollary~\ref{cor:II-intro-main-cor}(2) (namely, the same statement without restriction to inertia) is also proven by Beuzart-Plessis--Harris--Thorne \cite{BPHT25} in the case that $F$ is a function field, $G$ is unramified and semisimple, and $p$ does not divide the order of the absolute Weyl group of $G$, under the hypothesis that a version of the stable twisted trace formula holds over function fields. By work of Li-Huerta \cite{LH24}, this implies a certain surjectivity statement onto not-very-ramified L-parameters when $F$ is $p$-adic of ``large'' ramification degree.

Corollary~\ref{cor:II-intro-main-cor} is of particular interest in view of recent work of Hansen--Mann \cite{HM26} which proves the categorical local Langlands conjecture of \cite{FS} for $\GL_n$ (under a hypothesis on compatibility with Eisenstein series) and provides a strategy for proving the categorical local Langlands conjecture in general. This strategy requires a number of ``soft'' inputs as in Corollary~\ref{cor:II-intro-main-cor}.\footnote{See the definition of ``well-understood'' groups in \cite[Definition 6.1.1]{HM26}. Our results do not establish any of the conditions (i)-(v) of that definition in full, but they do establish the first sentence of (i) and half of (iv) when $G$ splits after a tamely ramified extension and $p$ does not divide the order of the absolute Weyl group and $\pi \mapsto \phi_\pi$ is any map satisfying (v).}

For $k=\ol\Q_\ell$, several comparisons of $\rho^{\FS}$ with classical local Langlands correspondences are known after semisimplification. For $\GL_n$, this follows from Fargues--Scholze \cite[Theorem I.9.6(viii)--(ix)]{FS}; for its inner forms over $p$-adic fields, it is due to Hansen--Kaletha--Weinstein \cite[Theorem 1.0.3]{HKW22}. Hamann \cite[Theorem 1.1 and Corollary 8.3]{Ham25} treated $\mathrm{GSp}_4$, $\mathrm{Sp}_4$, and their inner forms when $F/\Q_p$ is unramified and $p>2$. Bertoloni Meli--Hamann--Nguyen \cite[Theorem 1.1]{BMHN24} treated odd-dimensional unitary and unitary similitude groups over $\Q_p$ attached to its unramified quadratic extension. Peng \cite[Theorem A]{Pen26} treated special orthogonal and unitary groups splitting over an unramified extension, for unramified $F/\Q_p$ and $p>2$. Daniels--van Hoften--Kim--Zhang \cite[Theorem II]{DvHKZ26} removed these restrictions on $p$ and $F$ for unitary groups, odd special orthogonal groups, their inner forms, inner forms of $\mathrm{GSp}_4$, and pure inner forms of quasi-split even special orthogonal groups, partially using \cite{Han26}.

However, the compatibilities established in the preceding paragraph do not imply our results, even for the specific families of classical groups covered there, because compatibility of the classical Local Langlands correspondence with Kaletha's explicit correspondence is not known in general. In fact, our results can be turned around to imply this compatibility in many cases: combining them with Theorem~\ref{thm:II-intro-main-thm}, under its hypotheses and for the groups covered by these comparison theorems, implies that $\rho_I^{\Kal}$ agrees with the restriction to $I_F$ of the semisimplification of the classical correspondences of Arthur \cite{Art13}, Mok and Kaletha--M\'{i}nguez--Shin--White \cite{Mok15,KMSW14}, Chen--Zou \cite{CZ21}, Ishimoto \cite{Ish24}, Gan--Takeda \cite{GT10,GT11}, Gan--Tantono \cite{GT14}, and Choiy \cite{Cho17}.

The following theorem strengthens Theorem~\ref{thm:II-intro-main-thm} in a special case.

\begin{thm}[Theorem~\ref{thm:sln-full-conjugacy}]\label{thm:II-intro-type-A}\emergencystretch=1em
Suppose $G_{F^{\unr}} \cong \SL_n$, and let $\pi$ be a non-singular cuspidal $k$-representation of $G(F)$. Then $\rho^{\FS}(\pi) \sim \rho^{\Kal}(\pi)$.
\end{thm}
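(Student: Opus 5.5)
We will upgrade the comparison on $W_{F_N}$ from Theorem~\ref{thm:II-intro-main-thm}(2) to a comparison on all of $W_F$, using special features of type $A$. Write $\rho = \rho^{\FS}(\pi)$ and $\rho' = \rho^{\Kal}(\pi)$. By Theorem~\ref{thm:II-intro-main-thm}(3), $\rho$ is irreducible; $\rho'$ is irreducible by its construction for non-singular $\pi$. By Theorem~\ref{thm:II-intro-main-thm}(2), after conjugating we may assume $\rho|_{W_{F_N}} = \rho'|_{W_{F_N}}$ for the explicit integer $N$.

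The key structural input is that $\wh G = \mathrm{PGL}_n$, so $\ld G = \mathrm{PGL}_n \rtimes \Gal$, with a possibly nontrivial outer action for unitary-type forms. Irreducible parameters then behave like irreducible projective representations, which have good rigidity properties. First consider the case where the action of $W_F$ on $\wh G$ is inner. Lift both parameters to $\GL_n(k)$-valued projective representations. On $W_{F_N}$ they agree. Since $\rho'$ is irreducible, Clifford theory for the finite-index normal subgroup $W_{F_N}$ applies. Two irreducible extensions of the same restriction differ by a function $c\colon W_F \to Z_{\wh G}(\rho(W_{F_N}))$ that factors through the finite cyclic group $W_F/W_{F_N}$ (cyclic modulo inertia). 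We then show that $c$ is a $1$-cocycle valued in the centralizer $S$ of $\rho|_{W_{F_N}}$. Showing that it is a coboundary amounts to showing $\rho \sim \rho'$.

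The main obstacle is controlling this twist. In $\mathrm{PGL}_n$, the centralizer $S$ of an irreducible-on-$W_F$ parameter restricted to $W_{F_N}$ is, up to a torus part, a finite abelian group. Its twists by characters of $W_F/W_{F_N}$ correspond, via $\rho \mapsto \rho\otimes\chi$, to twists by characters of $W_F$ valued in $Z(\SL_n)^\vee$-type groups. On the representation side, these twists match twisting $\pi$ by characters of $G(F)$ coming from $G(F)\to H^1(F,\mu_n)$. We therefore plan to use compatibility of both $\rho^{\FS}$ and $\rho^{\Kal}$ with central-character and character-twist data, which is known for Fargues--Scholze by \cite{FS}. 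Concretely, we identify which twist $\rho\otimes\chi$ equals $\rho'$. We then show $\chi$ is trivial by comparing a finer invariant, namely the full Kaletha parameter restricted to the maximal unramified subtorus, or equivalently Frobenius eigenvalues via the torus-character pair $(T,\theta)$. Failing that, we use independence of $\ell$ together with the $\ol\Q_\ell$ case. There, one might instead transfer to $\GL_n$ or its inner forms through the restriction $\mathrm{GL}_n \to \mathrm{PGL}_n$ dual to $\SL_n\subset\GL_n$. For $\GL_n$ and its inner forms, both correspondences are known to agree: Fargues--Scholze agrees with Harris--Taylor by \cite{FS}, \cite{HKW22}, and Harris--Taylor agrees with the explicit correspondence by tame-essentially-tame work. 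Restricting $\GL_n$-representations to $G(F)$ then pins down $\rho$ fully, since the composite to $\mathrm{PGL}_n$ of the $\GL_n$ parameter is the $G$ parameter for any constituent.

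For the outer (unitary) case, we expect the same argument to work after replacing $\GL_n$ by the corresponding unitary similitude-type cover, with $S$ still finite abelian. Here $p\nmid |\Omega| = n!$ keeps the relevant groups of order prime to $p$. For $\ol\F_\ell$ coefficients, we deduce the result by reduction mod $\ell$ of a lift of $\pi$, using the modular compatibility already invoked in the proof of Theorem~\ref{thm:II-intro-main-thm}.
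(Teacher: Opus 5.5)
There is a genuine gap. The step that is supposed to kill the cocycle $c$ is never carried out, and the analysis you sketch for it is off. A twist of a $\GL_n$-lift by a character of $W_F$ is invisible in $\wh G=\mathrm{PGL}_n$, so if the discrepancy between $\rho$ and $\rho'$ were such a twist you would already be done. Correspondingly, there is no twist on the $G(F)$ side to match it with: $Z(\wh G)=1$, and $G_{\ad}(F)$ acting through $H^1(F,\mu_n)$ permutes members of an L-packet rather than twisting.

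The real obstruction is different. For lifts $\tilde\rho,\tilde\rho'$ one has $\tilde\rho'|_{W_{F_N}}\cong\tilde\rho|_{W_{F_N}}\otimes\eta$ for a character $\eta$ of $W_{F_N}$ that need not be Frobenius-invariant. Only $\eta^{\Fr}/\eta$ is forced to be a self-twist of $\tilde\rho|_{W_{F_N}}$, i.e.\ to come from $\pi_0(S)$, and this obstruction does not vanish in general. Indeed, irreducibility together with agreement on $W_{F_N}$ does not imply conjugacy. Here is an example.
\begin{itemize}
\item Take $q\equiv 1\pmod 3$, a ramified cubic character $\mu_0$ of $F^\times$, and $\mu=\mu_0\circ\Nm_{F_3/F}$. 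This $\mu$ is nontrivial, Frobenius-invariant and trivial on $F^\times$.
\item By Hilbert 90 there is a character $\chi$ of $W_{F_3}$ with $\chi^{\Fr}=\chi\mu$.
\item Then $\Ind_{W_{F_3}}^{W_F}\chi$ and $\Ind_{W_{F_3}}^{W_F}\chi^2$ are irreducible, with restrictions $\chi\otimes(1\oplus\mu\oplus\mu^2)$ and $\chi^2\otimes(1\oplus\mu\oplus\mu^2)$ to $W_{F_3}$. So their images in $\mathrm{PGL}_3$ agree on $W_{F_3}$ (hence on $I_F$) after conjugation.
\item They are not $\mathrm{PGL}_3$-conjugate: that would force some $\chi\mu^{-i}$ to be the restriction of a character of $W_F$, whereas $(\chi\mu^{-i})^{\Fr}=\chi\mu^{1-i}$.
\end{itemize}
So no argument using only your data can conclude. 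The proposed ``finer invariant'' (Frobenius eigenvalues of $\rho^{\FS}(\pi)$) is precisely what is not known.

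Your fallback is the route of Remark~\ref{rem:type-A}. It relies on external comparisons (\cite{FS}, \cite{HKW22}, \cite{OT21}, \cite{Tok23}) that cover only inner forms of $\SL_n$ with characteristic $0$ coefficients. For outer (unitary) forms no comparison of $\rho^{\Kal}$ with a classical correspondence is available, so the ``similitude-type cover'' step has no support.

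The missing idea is to feed the $\GL_n$-type cover into this paper's own main theorem instead.
\begin{itemize}
\item Embed $G\subset\wt G$ with $\wt G_{F^{\unr}}\cong\GL_n$ as in Lemma~\ref{lemma:embed-sln-to-gln} (a unitary group in the outer case), and extend $\Psi$ to a Yu datum $\wt\Psi$ for $\wt G$.
\item Theorem~\ref{thm:main-tame} for $\wt G$ gives $\rho^{\FS}(\pi(\wt\Psi))|_{I_F}\sim\rho^{\Kal}(\wt\Psi)|_{I_F}$ as $\GL_n$-valued inertial parameters. This pins down the lifts, which is exactly the information missing above.
\item Both parameters have the same image in $\ld\wt G/\wh{\wt G}_{\der}$ by compatibility with central characters, and both are irreducible.
\item Centralizers in $\GL_n$ are connected, so the centralizer of the inertial image is a torus. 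A Lang-type argument on its intersection with $\SL_n$ (Lemma~\ref{lemma:l-parameters-for-gln}) shows that the two parameters differ by a cocycle valued in $Z(\SL_n)=\mu_n$.
\item That cocycle dies under $h\colon\ld\wt G\to\ld G$, and both correspondences are compatible with $h$ (\cite[Theorem I.9.6(v)]{FS}, \cite[Lemma~7.4.1]{CF26a}).
\end{itemize}
This gives $\rho^{\FS}(\pi)\sim\rho^{\Kal}(\pi)$ uniformly for inner and outer forms and for both choices of $k$. It uses neither the comparison on $W_{F_N}$ nor reduction modulo $\ell$.
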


We emphasize that our proof of Theorem~\ref{thm:II-intro-type-A} is \emph{purely local}. Theorem~\ref{thm:II-intro-type-A} implies in particular that if $G$ is an inner form of $\SL_n$ or an unramified special unitary group, then $\rho^{\Kal}$ agrees with the correspondence deduced from the classical ones for inner forms of $\GL_n$ or unramified unitary groups.

\begin{remark}\label{rem:type-A} If $G$ is an inner form of $\GL_n$, then \cite[Theorem I.9.6 (ix)]{FS} and \cite[Theorem 1.0.3]{HKW22} show that $\rho^{\FS}$ agrees with the semisimplification of the classical Local Langlands Correspondence $\rho^{\mathrm{cl}}$, while \cite[Theorem 1.1]{OT21} and \cite[Theorem 0.1]{Tok23} show that $\rho^{\Kal}$ agrees with $\rho^{\mathrm{cl}}$; these combine to prove Theorem~\ref{thm:II-intro-type-A} when $G$ is an inner form of $\SL_n$ (or even $\GL_n$). On the other hand, if $G$ is an inner form of an unramified special unitary group, then to our knowledge $\rho^{\Kal}$ has not previously been compared to the classical Local Langlands Correspondence of Mok \cite{Mok15} and Kaletha--M\'{i}nguez--Shin--White \cite{KMSW14}, and thus Theorem~\ref{thm:II-intro-type-A} is new in this case despite \cite[Theorem II]{DvHKZ26}.
\end{remark}

\subsection{Functoriality results}

The conclusion of Theorem~\ref{thm:II-intro-main-thm} is a consequence of \cite[Theorem~1.3.1]{CF26a}, and the proof involves checking the hypotheses of the latter. In view of the results of \cite[Theorem I.9.6]{FS}, the important hypotheses to check are (5) and (4), which say respectively that $\rho^{\FS}$ satisfies functoriality with respect to descent to unramified twisted Levis and certain small degree base change. We will bootstrap these from the two general functoriality results to which we have access, namely compatibility with parabolic induction \cite[Theorem I.9.6(viii)]{FS} and \emph{modular functoriality}, using \cite[Theorem~9.1.1 and Example~9.1.2]{F24} at all primes $\ell\neq p$.

\begin{thm}[Theorem~\ref{thm:base-change-functoriality}]\label{thm:II-intro-large-degree-bc}
There exists a positive integer $C$ (depending on $G$) such that for every prime $\ell > C$, every cyclic extension $E/F$ of degree $\ell$, and every tame irreducible cuspidal $k$-representation $\pi$ of $G(F)$, there is an explicit tame irreducible cuspidal $k$-representation $\pi_E$ of $G(E)$ such that $\rho^{\FS}(\pi_E)|_{I_E} \sim \rho^{\FS}(\pi)|_{I_E}$. Moreover, $C$ can be chosen such that if $\rho^{\FS}(\pi)$ is irreducible, then we have $\rho^{\FS}(\pi_E) \sim \rho^{\FS}(\pi)|_{W_E}$.
\end{thm}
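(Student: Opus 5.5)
The strategy is to use modular functoriality, i.e.\ Treumann--Venkatesh style Tate cohomology, with respect to the Galois action of $\Gamma = \Gal(E/F) \cong \Z/\ell$ on $G(E)$. Since $G(E)^{\Gamma} = G(F)$, \cite[Theorem~9.1.1 and Example~9.1.2]{F24} should give the following. Suppose $\Pi$ is an irreducible $\ol\F_\ell$-representation of $G(E)$ with a $\Gamma$-equivariant structure, and suppose $\pi$ is an irreducible constituent of the Tate cohomology $\wh{\rH}^0(\Gamma, \Pi)$, viewed as a $G(F)$-representation. Then the Fargues--Scholze parameter of $\Pi$ is the restriction to $W_E$ of the Frobenius twist of $\rho^{\FS}(\pi)$. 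Concretely, $\rho^{\FS}(\Pi) \sim \rho^{\FS}(\pi)^{(\ell)}|_{W_E}$, where the superscript $(\ell)$ denotes the Frobenius twist on $\ol\F_\ell$-coefficients.

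Here is the plan.

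\textbf{Step 1: construct $\pi_E$.} Given $\pi$ built from Yu datum $\Psi=((G^i),y,(r_i),\rho_0,(\phi_i))$, I would define $\pi_E$ from the base-changed datum $\Psi_E$. This means twisted Levis $G^i_E$, the same point $y$ viewed in the building over $E$, and characters $\phi_i\circ \Nm_{E/F}$. The depth-zero part is the Deligne--Lusztig or Shintani lift of $\rho_0$ across the residue extension. This residue extension is trivial if $E/F$ is ramified, and is the degree-$\ell$ extension if $E/F$ is unramified.

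\textbf{Step 2: compute Tate cohomology.} I would show that $\wh{\rH}^0(\Gamma, \pi_E)$ contains $\pi$, up to Frobenius twist. I would do this stage by stage through Yu's construction. For $\ell$ larger than a constant $C$ depending on $G$, $\ell$ is prime to all of the following: the orders of the relevant finite reductive quotients, the pro-$p$ groups $J^i_+$, and the Heisenberg groups. Consequently the following hold.
\begin{itemize}
\item Tate cohomology commutes with compact induction from the $\Gamma$-stable open compact-mod-center subgroup $\wt K_E$, whose fixed points are $\wt K$.
\item For the Heisenberg--Weil pieces, $\wh{\rH}^0$ of the $\ell$-th tensor power structure recovers the Heisenberg--Weil representation over $F$. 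This uses Treumann--Venkatesh's computation for Weil representations with $\ell$ coprime to $p$.
\item For the characters, $\wh{\rH}^0$ just restricts $\phi_i\circ\Nm$, which equals $\phi_i^{\ell}$, that is, the Frobenius twist.
\item The depth-zero piece is handled by Tate cohomology of Deligne--Lusztig induction, using the Brauer-type isomorphism for $\ell$ not dividing $|G(\F_q)|$-related quantities.
\end{itemize}
I expect this step to be the main obstacle. In particular, it requires controlling $\wh{\rH}^0$ of the depth-zero cuspidal and showing that $\wh{\rH}^0$ of the compact induction is nonzero and has $\pi$ as a constituent, rather than vanishing. I would first work mod $\ell$ with reductions of $\ol\Q_\ell$-lattices, then pass back.

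\textbf{Step 3: conclude.} Applying the modular functoriality theorem at many primes gives $\rho^{\FS}(\pi_E)\equiv \rho^{\FS}(\pi)|_{W_E}$ modulo the Frobenius twist, with $\ol\F_\ell$-coefficients. Restricted to $I_E$, both parameters factor through a finite quotient whose order is bounded in terms of $G$ and the depth, up to a tame part. I would then use independence of $\ell$ together with compatibility of $\rho^{\FS}$ with reduction mod $\ell$ to lift the conclusion to $\ol\Q_\ell$-coefficients, removing the Frobenius twist on inertia by comparing traces of semisimple parameters.

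For the irreducible case, suppose $\rho^{\FS}(\pi)$ is irreducible. Its restriction $\rho^{\FS}(\pi)|_{W_E}$ is still irreducible once $\ell$ exceeds the bound on the component groups. Moreover, irreducible semisimple parameters are determined by their reductions for $\ell > C$. This upgrades the inertial equivalence to $\rho^{\FS}(\pi_E)\sim\rho^{\FS}(\pi)|_{W_E}$.
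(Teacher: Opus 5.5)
Your overall architecture matches the paper's: a base-changed Yu datum, Tate cohomology of $\pi_E$ under $\Gal(E/F)$, modular functoriality for $\Res_{E/F}G_E$, then passage from $\ol\F_\ell$ to $\ol\Q_\ell$. However, Step~3 has a genuine gap, and it is exactly where the uniformity of $C$ is decided.

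\emph{The lifting step.} For a given $E$ of degree $\ell$, modular functoriality is only available with coefficients of characteristic $\ell$. So ``many primes'' is not an option, and independence of $\ell$ plays no role in the lifting. What lifts the mod-$\ell$ conjugacy on $I_E=I_F$ to $\ol\Q_\ell$ is the following: for every $\ell>C$, with $C$ independent of $\pi$, both $\rho^{\FS}(\pi)(I_F)$ and $\rho^{\FS}(\pi_E)(I_E)$ are finite of order prime to $\ell$. Conjugacy of $\ol\Z_\ell$-valued homomorphisms of a prime-to-$\ell$ finite group is then detected modulo $\ell$.

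Your bound ``in terms of $G$ and the depth, up to a tame part'' does not give this. The depth is unbounded, and the tame part is precisely where $\ell$ can divide. The paper's Lemma~\ref{lemma:inertia-order} supplies the missing input. For a parameter of $W_{F_\ell}$, a tame generator $s$ satisfies $\rho(s)^{q^\ell}\sim\rho(s)$. So in a faithful $n$-dimensional representation, its eigenvalues have order dividing some $q^{i\ell}-1\equiv q^i-1\pmod{\ell}$ with $i\le n$. This is prime to $\ell$ once $\ell>q^n+1$. Lemma~\ref{lemma:semisimple-image} is also needed to exclude a unipotent part of $\rho(s)$ when $k=\ol\F_\ell$.

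\emph{The irreducible case.} Your claim that ``irreducible semisimple parameters are determined by their reductions for $\ell>C$'' is the whole content of this case, and it is false as stated. Unramified characters of $\GL_1$ with congruent but distinct Frobenius eigenvalues already show this. You must first twist $\pi$ so that its central character has finite order. Then you must bound, uniformly in $\pi$, the primes dividing $|\rho(W_F)|$ by those dividing $|\rho(I_F)|$ plus a constant depending only on $\wh G$ and $|W_0|$. This is Lemma~\ref{lem:semisimple-finite-image}, proved via Jordan's theorem and \S\ref{ssec:group-theory}, combined with a Kummer-type argument for the image in $\ld G/\wh G_{\der}$. It must be applied to $\rho^{\FS}(\pi_E)$ as well, and that requires first proving $\rho^{\FS}(\pi_E)$ is irreducible. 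The paper does this by comparing inertial centralizers and the Weil group actions on them, as in Theorem~\ref{thm:base-change-functoriality}(2).

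\emph{Frobenius twists.} Your form of modular functoriality puts the twist on the wrong side. The correct statement is $\rho^{\FS}(\Pi^{(\ell)})\sim(\ld\psi\circ\rho^{\FS}(\pi))^{\ss}$. Since $\rT^a(\sigma,\pi_E)$ contains $\pi^{(\ell)}$, this yields $\rho^{\FS}(\pi_E^{(\ell)})\sim\rho^{\FS}(\pi^{(\ell)})|_{W_E}$, and the twists cancel exactly by \eqref{eq:fs-coefficient-frobenius}. With your formula the twists compound instead. A leftover Frobenius twist cannot be removed by comparing traces: already for a tame character $\theta$ of inertia, $\theta$ and $\theta^\ell$ differ in general.

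\emph{Tensor products in Step~2.} Tate cohomology is not multiplicative, and cup products can vanish (Example~\ref{ex:vanishing-cup-product}). So computing ``stage by stage'' through Yu's tensor product needs justification. The paper uses that the Heisenberg--Weil factors are extremal $\ol\F_\ell[\sigma]$-modules (Lemma~\ref{lemma:tate-cohom-of-heisenberg}) to obtain Corollary~\ref{cor:tate-cohom-tensor-product-yu}. It computes their Tate cohomology via the Glauberman correspondence and G\'erardin's formulas, not Treumann--Venkatesh. It then checks that the sign character of Proposition~\ref{prop:tate-cohom-of-weil} is trivial on $G^0(F)_{[y]}$.

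\emph{Smaller points.}
\begin{itemize}
\item The Tate cohomology of the compact induction contains $\cInd_K^{G(F)}$ of the Tate cohomology only as a direct summand, not as the whole.
\item For $\ell>q$ every cyclic extension of degree $\ell$ is unramified, so your ramified case never occurs.
\item Before reducing mod $\ell$, you must replace $\Psi$ by an equivalent datum whose $\phi_i$ and central character of the depth-zero representation have order prime to $\ell$.
\item The depth-zero piece of $\Psi_E$ is defined by reduction, Glauberman correspondence, Frobenius twist and lifting. This is precisely so that its Tate cohomology is the Frobenius twist of the original depth-zero representation; with a Shintani lift you would need a separate comparison.
\end{itemize}
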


The constant $C$ in Theorem~\ref{thm:II-intro-large-degree-bc} is in principle effective, but we have made no effort to optimize it or make it explicit. Although we expect Theorem~\ref{thm:II-intro-large-degree-bc} to have other applications, for our purposes its main interest comes from the role it plays in the proof of the following theorem. A slight technical strengthening (Theorem~\ref{thm:unramified-twisted-Levi-functoriality}) implies hypothesis (5) of \cite[Theorem~1.3.1]{CF26a}.

\begin{thm}[Theorem~\ref{thm:unramified-twisted-Levi-functoriality}]\label{thm:II-intro-unram-twisted-levi}
Let $\pi$ be an irreducible cuspidal $k$-representation of $G(F)$, let $T \subset G$ be a maximal $F$-torus associated to $\pi$, and suppose that $T \cap G_{\der}$ is not totally ramified. There exists a proper unramified twisted Levi $F$-subgroup $M \subsetneq G$ and an irreducible cuspidal $k$-representation $\pi_M$ of $M(F)$ such that
\begin{equation}\label{eqn:adlr-kaletha}
\rho_I^{\Kal}(\pi) \sim \ld j_{M,G} \circ \rho_I^{\Kal}(\pi_M)
\end{equation}
and
\begin{equation}\label{eqn:adlr-fs}
\rho^{\FS}(\pi)|_{I_F} \sim \ld j_{M,G} \circ \rho^{\FS}(\pi_M)|_{I_F}.
\end{equation}
If $\pi$ is non-singular, then $\pi_M$ is non-singular and
\[
\rho^{\Kal}(\pi) \sim \ld j_{M,G} \circ \rho^{\Kal}(\pi_M).
\]
\end{thm}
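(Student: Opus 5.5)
We will construct $M$ from the maximal unramified subtorus of $T$, handle the Kaletha side by an explicit computation with torus-character pairs, and bootstrap the Fargues--Scholze side from compatibility with parabolic induction using base change along a large cyclic degree-$\ell$ extension (Theorem~\ref{thm:II-intro-large-degree-bc}) and modular functoriality.

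\emph{Construction of $M$ and $\pi_M$.} Let $(T,\theta)$ be the torus-character pair attached to $\pi$, and let $S$ be the maximal unramified $F$-subtorus of $T \cap G_{\der}$. The hypothesis that $T\cap G_{\der}$ is not totally ramified says exactly that $S$ is nontrivial. Set $M = Z_G(S)$. This is an unramified twisted Levi subgroup containing $T$, and it is proper because $S$ is a noncentral torus of $G$. Since $T$ is elliptic in $M$, the same pair $(T,\theta)$, or more precisely the truncation of Yu's datum for $\pi$ to $M$ via the Kaletha-style regular/non-singular reparametrization of \cite{CF26a}, produces an irreducible cuspidal representation $\pi_M$ of $M(F)$ with torus $T$. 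For \eqref{eqn:adlr-kaletha}, both sides are $\ld\theta$ composed with $\ld j_{T,G}$ and with $\ld j_{M,G}\circ \ld j_{T,M}$, respectively. The main input is the compatibility $\ld j_{T,G} \sim \ld j_{M,G}\circ\ld j_{T,M}$ of the L-embeddings of \cite{FKS23} with respect to the intermediate Levi, together with the corresponding compatibility of the $\chi$-data and the toral character twists used in \cite[Definition~4.5.1]{CF26a}. We will also check that non-singularity of $\pi$ descends to $\pi_M$; since $W_M$-stabilizers are smaller, this should be formal. The full-parameter statement for non-singular $\pi$ then follows by the same computation on $W_F$.

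\emph{The Fargues--Scholze side.} We will first treat the case where $M$ is a genuine Levi. After passing to a finite unramified extension $F'/F$ splitting $S$, $M_{F'}$ becomes a Levi of a parabolic $P$. Parabolic induction compatibility \cite[Theorem I.9.6(viii)]{FS} then relates $\rho^{\FS}$ of any constituent of $\Ind_P^G$ to $\ld j_{M,G}\circ\rho^{\FS}$. The difficulty is that $\pi$ is cuspidal, not a constituent of an induction. Our plan is to use Theorem~\ref{thm:II-intro-large-degree-bc} together with modular functoriality \cite[Theorem~9.1.1]{F24}. Choose a prime $\ell>C$ and a cyclic degree-$\ell$ extension $E/F$. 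Work with mod-$\ell$ coefficients, where the action of $\Gal(E/F)$ on $\pi_E$ allows the Tate cohomology $\hat H^0(\Z/\ell, \pi_E)$ to be identified with the mod-$\ell$ reduction of $\pi$; this is the Tate cohomology computation announced in the abstract. Meanwhile, after unramified base change the representation $\pi_E$ should appear in a parabolic induction from $M$ in the appropriate sense, up to restriction to $I$. Concretely, we will compute $\hat H^*$ of Yu's types and show compatibility with the Levi. Modular functoriality then transports the Levi relation from $E$ back to $F$ mod $\ell$ on inertia. Since this holds for infinitely many $\ell$ and inertial parameters are determined by the mod-$\ell$ data for all large $\ell$ by independence of $\ell$, this yields \eqref{eqn:adlr-fs}.

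\emph{Main obstacle.} The hardest step is the passage from the twisted Levi to an honest parabolic on the FS side. This requires exhibiting $\pi$, restricted suitably, as related to $\ind$ from $M$, and controlling Tate cohomology of Yu's compactly induced representations for an order-$\ell$ automorphism. We will first try the Mackey formula, $\hat H^0(\cInd_K^{G(E)}\lambda_E)\cong \cInd_{K^\sigma}^{G(F)}\hat H^0(\lambda_E)$ for $\sigma$-stable $K$, reducing everything to the finite-group level, where Glauberman-type correspondences for $\ell\nmid$ the relevant orders apply.
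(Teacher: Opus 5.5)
There is a genuine gap on the Fargues--Scholze side: nothing in your argument actually descends from $G$ to a twisted Levi.

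Modular functoriality for the Galois automorphism $\sigma$ of a cyclic extension $E/F$ is \emph{base change}. The fixed group of $\sigma$ on $\Res_{E/F}G_E$ is $G$ itself, the $\sigma$-dual homomorphism is the diagonal map, and $\rT^a(\sigma,\pi_E)$ only gives back (a Frobenius twist of) $\pi$ on $G(F)$. That is exactly Theorem~\ref{thm:base-change-functoriality}: it relates $\rho^{\FS}(\pi)$ to $\rho^{\FS}(\pi_E)$ and never to anything on $M$.

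Nor does $\pi_E$ ``appear in a parabolic induction from $M$''. For all large $\ell$ (prime to the order of the Galois action on $X^*(S)$ and $X^*(T)$), $S_E$ stays anisotropic and $T_E$ stays elliptic. So $M_E\supset T_E$ is not a proper Levi subgroup, and the base-changed representation is again cuspidal. An extension that does turn $M$ into a Levi has bounded degree, so Theorem~\ref{thm:base-change-functoriality} says nothing about it.

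The missing idea is \emph{spectral endoscopy}. Take $\sigma=\Ad(t)$ for a torsion element $t\in S$ of prime order $\ell$ with $Z_G(t)=H\coloneqq Z_G(S)$. Then $G^\sigma=H$, and the $\sigma$-dual homomorphism is $\Fr_\ell\circ\ld j_{H,G}$ up to conjugacy and an unramified cocycle (Proposition~\ref{prop:sigma-dual-unram-agreement-on-inertia}). One computes $\rT^a(\sigma,\pi)$ through Deligne--Lusztig restriction of the depth-zero part and the Tate cohomology of the Weil--Heisenberg factors (Theorem~\ref{thm:yu-datum-dl-res}). Two cautions here: for the depth-zero part $\ell$ divides the group order, so Glauberman is not the relevant input; and the Tate cohomology of a compact induction only contains the compact induction of the Tate cohomology as a direct summand, not an isomorphic copy. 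Theorem~\ref{thm:modular-functoriality} then gives the comparison modulo $\ell$. This lifts to characteristic $0$ by \cite[Lemma~5.3.2]{Cot26b} because $\ell\nmid|\rho(I_F)|$; a single such prime suffices.

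Large-degree base change enters only to \emph{manufacture} $t$, since $S(F)$ may have no usable torsion. One passes to $F_{\ell_1}$ with $\ell_1$ large (keeping $S$ simple and $T$ elliptic), finds $t\in S(F_{\ell_1})$ of large prime order $\ell_0$, and moves the coefficient prime from $\ell_1$ to $\ell_0$ by independence of $\ell$ (Lemma~\ref{lemma:endoscopic-reduction}). This is also why $S$ should be a \emph{simple} subtorus of the maximal unramified subtorus rather than the whole thing. Only then does every torsion element of large order have centralizer exactly $Z_G(S)$ (Lemma~\ref{lemma:simple-tori-centralizers}).

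Second, your $\pi_M$ on $Z_G(S)$ ``with torus $T$'' need not exist when $\pi$ is singular. What Tate cohomology produces is a \emph{mod-$\ell$} cuspidal representation $\pi(\ol\Psi_H)$ of $H(F)$, whose depth-zero part is a constituent of the reduction of ${}^*R^{\ol G^0_{[x]}}_{\ol H^0_{[x]}}(\tau)$. The corresponding Lusztig series of $\ol H^0_{[x]}$ need not contain any characteristic-zero cuspidal representation; already for a factor of type $\GL_2$, one has $R_{\ol T}(1)=1-\mathrm{St}$ for the elliptic torus $\ol T$. The paper therefore passes to the cuspidal support and lifts it to an $F$-Levi $M\subset H$, in general strictly smaller than $H$. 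It then uses Kim--Yu covers to show that $\pi(\ol\Psi_H)$ is a subquotient of $I_Q^H(\pi(\ol\Psi_M))$, with $\pi(\ol\Psi_M)$ occurring in the reduction of a characteristic-zero $\pi(\Psi_M)$ (Proposition~\ref{prop:tame-cuspidal-parabolic-induction}). The same tool is what reduces $k=\ol\F_\ell$ to $k=\ol\Q_\ell$, a case you do not address.

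Finally, $\ld j_{T,G}$ and $\ld j_{M,G}\circ\ld j_{T,M}$ need not be conjugate, so using ``the same pair $(T,\theta)$'' for $\pi_M$ is not right. The depth-zero datum of $\pi_M$ has to be twisted by $\epsilon_G\epsilon_M\epsilon_{\sharp,x}^{\vec G,M}$. This twist is what makes the Kaletha comparison \cite[Theorem~9.3.3]{CF26a} hold, and it is what matches the sign produced by Tate cohomology of the Weil--Heisenberg representations (Lemma~\ref{lemma:eta-sign-comparison}).
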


The proofs of Theorems~\ref{thm:II-intro-large-degree-bc} and \ref{thm:II-intro-unram-twisted-levi} appear to be quite robust; we expect them to extend to give similar functoriality results in any setting in which one has an explicit construction of cuspidal representations. For instance, the arguments should also apply to the constructions of \cite{FS25} and \cite{Tak26}, which go beyond Yu's construction when $p$ is small (especially $p = 2$), but they should even apply to constructions which exhibit more wildly ramified behavior, such as those for small $p$ in \cite{BK93}, \cite{SS08}, \cite{Ste08}. In \cite[\S 5]{Cot26b}, the first-named author uses this strategy to compute the Fargues--Scholze L-parameters of depth $0$ non-singular cuspidal representations of arbitrary connected reductive $F$-groups.

Finally, the following theorem supplies the small-degree base change used in the proof of Theorem~\ref{thm:II-intro-main-thm}.

\begin{thm}[Corollary~\ref{cor:yu-datum-small-degree-bc-parameter}]\label{thm:II-intro-small-degree-bc}
Let $\pi$ be a toral\footnote{The word ``toral'' is often used to mean that the twisted Levi sequence in any Yu datum defining $\pi$ is of the form $(T \subset G)$ for a torus $T$. Importantly, we only demand the less restrictive condition that the smallest twisted Levi in any Yu datum defining $\pi$ is a torus.} cuspidal $k$-representation of $G(F)$ with finite order central character, let $T \subset G$ be a maximal $F$-torus associated to $\pi$, and let $E/F$ be a cyclic extension of degree $\ell$ such that $T_E$ is elliptic. Then there exists a toral cuspidal $k$-representation $\pi_E$ such that
\[
\rho^{\Kal}(\pi_E) \sim \rho^{\Kal}(\pi)|_{W_E}
\]
and
\[
\rho^{\FS}(\pi_E) \sim \rho^{\FS}(\pi)|_{W_E} \pmod{\ell}.
\]
\end{thm}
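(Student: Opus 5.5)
The strategy is to use Tate cohomology and Treumann--Venkatesh style modular functoriality, as in \cite[Theorem~9.1.1 and Example~9.1.2]{F24}, for the $\Z/\ell = \Gal(E/F)$-action on $G(E)$. Let $\sigma$ generate $\Gal(E/F)$.

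\textbf{Construction of $\pi_E$.} Write $\pi = \cInd_{K}^{G(F)} \kappa$ for a toral Yu datum $(\vec G, y, \vec\phi, \rho_0)$ whose smallest twisted Levi is $G^0 = T$. Since $T_E$ is elliptic in $G_E$, base change of the datum along $E/F$ gives a Yu datum $(\vec G_E, y, \vec\phi\circ \Nm_{E/F}, \rho_0\circ\Nm_{E/F})$ for $G(E)$. Genericity of $\phi_i\circ \Nm$ follows from that of $\phi_i$: $\ell\ne p$, so $E/F$ is tame and either unramified or totally tamely ramified, and the depths rescale by $e(E/F)$. We define $\pi_E$ as the resulting cuspidal representation, which is toral and $\sigma$-stable.

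\textbf{Kaletha side.} The torus--character pair attached to $\pi_E$ is $(T_E, \theta\circ\Nm_{E/F})$. By local class field theory, $\ld(\theta\circ \Nm_{E/F})$ is the restriction to $W_E$ of $\ld\theta$. The point to check is compatibility of the $\chi$-data and the sign characters, i.e.\ the toral invariant and the twisting character $\epsilon$ of \cite{FKS23}, under base change. The FKS L-embedding $\ld j_{T,G}$ restricted to $W_E$ should coincide with $\ld j_{T_E,G_E}$, up to a character that is absorbed into the modification of $\theta$. Here the hypothesis that $\pi$ has finite order central character, together with $\ell$ odd and prime to the relevant $2$-torsion, should make the sign discrepancies trivial. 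This yields $\rho^{\Kal}(\pi_E)\sim\rho^{\Kal}(\pi)|_{W_E}$.

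\textbf{Fargues--Scholze side.} This is the main step and the main obstacle. I would show that the Tate cohomology $\widehat{T}^0(\Z/\ell, \pi_E)$, for suitable integral models over $\ol\F_\ell$, contains the Frobenius twist $\pi^{(\ell)}\otimes\ol\F_\ell$ of the mod $\ell$ reduction of $\pi$. Then \cite[Theorem~9.1.1]{F24} gives $\rho^{\FS}(\pi_E)|_{W_F}$-compatibility: the Frobenius twist of $\rho^{\FS}(\pi)$ agrees with the Tate-diagonal image of $\rho^{\FS}(\pi_E)$. After composing with the base-change L-homomorphism and untwisting Frobenius, this gives the congruence $\rho^{\FS}(\pi_E)\sim\rho^{\FS}(\pi)|_{W_E}\pmod\ell$.

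\textbf{Computing the Tate cohomology.} I compute Tate cohomology of compact induction via a Mackey-type formula. First, $\widehat T^0(\cInd_{K_E}^{G(E)}\kappa_E)\cong \cInd_{K_E^\sigma}^{G(F)}\widehat T^0(\kappa_E)$, using that the $\sigma$-orbits on $G(E)/K_E$ which are not fixed contribute free modules and hence vanish. The fixed cosets correspond to $H^1(\Z/\ell, K_E)$-classes, which are trivial by pro-$p$ vanishing plus Lang-type arguments. Second, $\widehat T^0(\kappa_E)$ is computed factor by factor. For the characters $\phi_i\circ\Nm$, one has $\widehat T^0 = \phi_i^{\ell}$ restricted. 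For the Heisenberg--Weil pieces, Tate cohomology of Weil representations of $\sigma$-stable symplectic $\F_p$-spaces is the Weil representation of the fixed space twisted by the Frobenius, by Treumann--Venkatesh's computation for Heisenberg groups. The signs again need control via the finite order central character. The outcome is $\kappa^{(\ell)}$ up to Frobenius twist, as required.
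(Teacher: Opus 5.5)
Your Fargues--Scholze strategy matches the paper's: exhibit $\pi(\Psi)^{(\ell)}$ as a summand of $\rT^a(\sigma,\pi_E)$, apply modular functoriality to $\Res_{E/F}G_E$ (whose $\sigma$-dual homomorphism is the diagonal \eqref{eq:cyclic-base-change-L-map}), and undo the coefficient Frobenius. But there is a genuine gap in the sign characters, which is where the content of the statement lies.

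You define $\pi_E$ by naive base change, with depth-zero character $\rho_0\circ\Nm_{E/F}$. You yourself note that $\ld j_{T,G}|_{W_E}$ and $\ld j_{T_E,G_E}$ differ by a character to be absorbed into $\theta$. They do genuinely differ in general (see the remark after Lemma~\ref{lemma:dual-hom-base-change}). Yet your $\pi_E$ contains no such modification, and you instead assert that the FKS sign discrepancies vanish. The finite order of the central character has nothing to do with these quadratic characters. You give no argument that oddness of $\ell$ makes them match, and $\ell=2$ is not excluded by the statement.

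The paper takes $\pi_E=\pi(\Psi_E)$ for the datum of \cite[\S 8.3]{CF26a}, which base-changes the FKS-normalized character rather than $\tau$ (your $\rho_0$) itself. On $K=K_E^\sigma$ it satisfies
\[
(\epsilon_E\,\epsilon_{E,\sharp,x}\,\tau_E)|_K=\epsilon_F\,\epsilon_{F,\sharp,x}\,\tau^\ell ,
\]
and for this datum the Kaletha relation is \cite[Proposition~8.3.1]{CF26a}. On $T(F)$, your depth-zero character differs from $\tau_E$ by $(\epsilon_E\epsilon_{E,\sharp,x})|_{T(F)}\cdot\epsilon_F\epsilon_{F,\sharp,x}$. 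Nothing in your proposal shows this is trivial, so neither relation is established for your $\pi_E$.

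A compensating sign is also what your Tate cohomology computation drops. As a representation of $\Sp(V)^\sigma$, the Tate cohomology of the Weil--Heisenberg representation is not the Weil--Heisenberg representation of $V^\sigma$. It is $W_{0,\phi}\otimes\chi_1$ (Proposition~\ref{prop:tate-cohom-of-weil}), where $\chi_1$ is a product of sign characters attached to the weights on $(V^\sigma)^\perp$. This character is invisible to the Heisenberg group and is extracted from G\'erardin's character formulas \cite{Ger77}, not from a Heisenberg computation. In the large-degree unramified case (Theorem~\ref{thm:yu-datum-base-change}) the twist dies, because $G^0(F)_{[x]}$ acts through a group generated by unipotent elements. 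Here $G^0=T$ acts through a torus, and one gets $\rT^{n_i}(V_{\wh\phi_{E,i}})\cong V_{\wh\phi_i}^{(\ell)}\otimes\epsilon_{E,\sharp,x}\epsilon_{F,\sharp,x}$ as $K$-representations. The normalization of $\tau_E$ above is precisely what cancels this factor and leaves $\pi(\Psi)^{(\ell)}$ (Theorem~\ref{thm:yu-datum-small-degree-bc}).

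Two further points need repair.
\begin{itemize}
\item Computing $\rT(\kappa_E)$ factor by factor is not legitimate in general, since cup products in Tate cohomology can vanish (Example~\ref{ex:vanishing-cup-product}). What makes it work is that the Heisenberg factors are extremal $k[\sigma]$-modules (Lemma~\ref{lemma:tate-cohom-of-heisenberg}, via the Glauberman correspondence). Then Corollary~\ref{cor:tate-cohom-tensor-product-yu} applies, with mixed degrees $\rT^{n_i}$ satisfying $\sum_i n_i=a$ rather than $\rT^0$ throughout.
\item You only need $\cInd_K^{G(F)}\rT(\kappa_E)$ as a direct summand of $\rT(\pi_E)$ \cite[Lemma~5.2.1]{Cot26b}. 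Your claimed isomorphism rests on a vanishing of $H^1(\Gal(E/F),K_E)$ that you have not justified, and it is doubtful since $K_E\supset T(E)$.
\end{itemize}
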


In particular, Theorem~\ref{thm:II-intro-small-degree-bc} shows (under its hypotheses) that $\rho^{\FS}(\pi_E)|_{P_F} \sim \rho^{\FS}(\pi)|_{P_F}$; see \cite[Lemma~5.3.2]{Cot26b}. This provides hypothesis (4) in \cite[Theorem~1.3.1]{CF26a} and is thereby enough to complete the proof of Theorem~\ref{thm:II-intro-main-thm}. If one wishes to extend the proof of Theorem~\ref{thm:II-intro-main-thm} to a more wild construction (and, in particular, beyond Yu's construction), at present it appears that the most difficult step would be to generalize Theorem~\ref{thm:II-intro-small-degree-bc} to (especially ramified) cyclic extensions of degree $p$.

\subsection{Discussion of the proofs}\label{ssec:III-intro-proofs}

Our methods are completely different from those of prior works (recalled in \S \ref{ssec:related-work}). Notably, they are purely local and representation-theoretic. The starting point is the theory of \emph{modular functoriality} as developed in \cite{F24}, building on ideas of Treumann--Venkatesh \cite{TV}. This refers to functoriality along a homomorphism $\ld H \rightarrow \ld G$ where $H$ is a connected reductive group arising as the $\sigma$-fixed points of a prime order $\ell$ automorphism $\sigma \in \Aut(G)$, and the L-groups $\ld H$ and $\ld G$ are regarded over $k = \ol \F_\ell$. We emphasize that it is \emph{crucial} that the same $\ell$ appears as both the order of $\sigma$ and the characteristic of $k$. The adjective ``modular'' refers to the phenomenon that this functoriality only exists in the defining characteristic $\ell$, thanks to miracles of modular arithmetic. 

It is highly non-obvious that under the given assumptions defining $H \subset G$, there is a matching ``$\sigma$-dual homomorphism $\ld H_k \rightarrow \ld G_k$'' defined over $k$. This was proven in some cases by \cite{TV} under the (presumably artificial) assumptions that $G$ is simply connected and $H$ is semisimple, and $G$ does not contain factors of type $\mathrm{E}_6$. The construction in \cite[Theorem~7.2.1]{F24} applies whenever $G^\sigma$ is connected, for every prime $\ell\neq p$.

\begin{example}[Spectral endoscopy] An important class of examples of modular functoriality arise when $\sigma$ is an \emph{inner} automorphism, so $H$ is the centralizer of an element of $G(F)$. We refer to this situation as \emph{spectral endoscopy}, and we call $\chH$ a \emph{spectral endoscopic group} for $\chG$. The name comes from the resemblance to endoscopy, in which (roughly) we would say that $H$ is an endoscopic group for $G$ if $\chH$ is the centralizer of a semisimple element of $\chG$. Thus, spectral endoscopy is endoscopy with the roles of the group and its dual group reversed.

We will crucially exploit the miracle that \emph{unlike} in endoscopy, there is a canonical conjugacy class of $\sigma$-dual homomorphisms $\ld H_k \rightarrow \ld G_k$ in the setting of spectral endoscopy. At first glance, this appears to be obviously wrong, as the group theory should work in the same way as in the usual theory of endoscopy. Indeed, a commonly cited example in endoscopy is that $H = \Sp_{2a} \times \Sp_{2b}$ can arise as the centralizer of an order 2 element of $G = \Sp_{2a+2b}$. Then the $\sigma$-dual homomorphism is a finite map 
\[
\SO_{2a+1} \times \SO_{2b+1} \rightarrow \SO_{2a+2b+1}.
\]
The existence of such a finite homomorphism goes against the grain, but the key point is that \emph{it only exists in characteristic $2$}. For more details, see Example \ref{ex:spectral-endoscopy}.
\end{example}

Modular functoriality says that \emph{Tate cohomology realizes descent} (in the sense of Langlands functoriality) from $G$ to $H$. Informally speaking, it supplies a ``descent'' construction which is formally analogous to a Jacquet functor. Such constructions should be regarded as ``exceptional'' features of positive characteristic. More precisely, Theorem~\ref{thm:modular-functoriality} asserts that for any irreducible $\Pi \in \Rep_k^{\sm} G(F)$ whose isomorphism class is fixed by $\sigma$, so that the $G(F)$-action on $\Pi$ extends to a $G(F) \rtimes \langle\sigma\rangle$-action, every irreducible $H(F)$-subquotient $\pi$ of the Tate cohomology group $\rT^0(\sigma, \Pi)$ has the property that
\begin{equation}\label{eq:III-intro-modular-functoriality}
\rho^{\FS}(\Pi^{(\ell)})
\sim \bigl(\ld\psi\circ\rho^{\FS}(\pi)\bigr)^{\ss},
\end{equation}
where $\Pi^{(\ell)}=\Pi\otimes_{k,\Fr_\ell}k$ is the coefficient Frobenius twist and $(\cdot)^{\ss}$ denotes semisimplification in $\ld G$.

In the setting above, since $H$ has smaller dimension, we can hope to understand $\rho^{\FS}(\Pi)$ inductively using \eqref{eq:III-intro-modular-functoriality}, by understanding $\rT^0(\sigma, \Pi)$ and $\ld \psi$. This is the strategy that we will follow, but its execution is subtle. One reason is that modular functoriality forces us to work in positive characteristic, and to use automorphisms with order equal to that characteristic. For a given group and prime $\ell$, there may be few (e.g., no) such automorphisms. Furthermore, even when they exist, the prime $\ell$ must then be ``small'' (i.e., non-banal) relative to $G$, so that reduction modulo $\ell$ loses information. For these reasons, modular functoriality has the feel of an ``exceptional'' phenomenon, and it is not immediately clear how much it can be leveraged. 

The solution to these problems has two key ingredients.
\begin{enumerate}
    \item Scholze's independence of $\ell$ results \cite[Theorem 1.1]{Sch25} allow us to ``glue'' congruences modulo several different primes in order to obtain characteristic $0$ information.
    \item Using degree $\ell$ cyclic base change modulo $\ell$ and another instance of modular functoriality, we can pass to unramified extensions where our groups acquire more torsion elements, and we can thereby be put into the situation of spectral endoscopy by \cite[Lemma~5.3.5]{Cot26b}.
\end{enumerate}
In particular, although the applications described above are to representation theory with characteristic $0$ coefficients, \emph{the proofs make essential use of miracles occurring in positive characteristic}. It is striking that this seemingly transient information, apparently so delicate that it can only exist in specific characteristics, is simultaneously powerful enough to accumulate to highly nontrivial knowledge of the Fargues--Scholze correspondence in characteristic zero. 

To prove Theorem \ref{thm:II-intro-large-degree-bc}, modular functoriality reduces one to a problem of computing Tate cohomology of Yu's construction. In particular, we have to control the Tate cohomology of Deligne--Lusztig representations, which boils down to the link between Tate cohomology and the \emph{Glauberman correspondence}, established in \cite[Corollary~3.5.5, Lemma~3.5.11]{Cot26b}. To prove Theorem \ref{thm:II-intro-unram-twisted-levi}, we first base change to reduce to the setting of spectral endoscopy for a very large prime, and then we apply modular functoriality again. Again we have to control the Tate cohomology of Deligne--Lusztig representations, and now this boils down to the link between Tate cohomology and \emph{Deligne--Lusztig restriction} established in \cite[Proposition~3.4.1]{Cot26b}. Thus, the moral reason that this works is the phenomenon highlighted in \cite{Cot26b} that Tate cohomology realizes a type of modular functoriality for finite groups, unifying the modular reductions of various other known constructions in the spirit of ``Langlands functoriality'' for finite reductive groups. The details of Yu's construction also require us to understand the (remarkably subtle) interaction of Tate cohomology with tensor products, as well as the Tate cohomology of Weil--Heisenberg representations.

\subsection{Sign characters}\label{ss:III-signs} One important theme of the explicit Local Langlands parametrization is the necessity of certain subtle sign characters, e.g., as seen in \cite{FKS23}. These sign characters must arise ``naturally'' in the comparison to the Fargues--Scholze correspondence, and our proof does offer some (partial) conceptual explanation of their provenance. 

Indeed, the FKS sign character itself arises as a product of other sign characters, and this decomposition is at least partly present in our analysis. This is discussed in some detail in \cite[\S 1.5]{CF26a}. From our perspective, part of the FKS sign character arises to correct a discrepancy between the $L$-embeddings defined in \cite[\S 4.2]{FKS23} for $G$ and for various unramified twisted Levi subgroups of $G$. Another part arises from the Weil-Heisenberg representation, as we now describe.

In order to execute the strategy sketched in \S \ref{ssec:III-intro-proofs}, we need to compute the Tate cohomology of cuspidal representations arising from Yu's construction. These are compact inductions of tensor products of depth 0 cuspidals and Weil--Heisenberg representations, and so we eventually need to understand the Tate cohomology of Weil--Heisenberg representations. One factor in the FKS sign character, namely $\epsilon_{\sharp,x}$ (introduced as $\epsilon_{x,r/2}^{\mathrm{ram}}$ in \cite[\S 4.3]{DS18}), is ultimately seen to arise from the Tate cohomology of Weil--Heisenberg representations, as a consequence of certain sign characters appearing in \cite{Ger77}; see Proposition~\ref{prop:tate-cohom-of-weil} and Lemma~\ref{lemma:eta-sign-comparison}.

\subsection{Outline of the paper}

We now describe the various sections of this paper in some detail.

In \S\ref{ssec:notation-fs}, we record the notation that we will use in the body of the paper.

In \S\ref{sec:generalities-on-l-parameters}, we prove a number of technical results concerning L-parameters which will be needed to apply modular functoriality; in particular, we prove a number results giving a prior bounds on the orders of the images of (inertial) L-parameters. In \S\ref{sec:spectral-endoscopy}, we prove that the $\sigma$-dual homomorphisms of \cite{F24} are compatible with a number of natural operations on $G$, and we bootstrap these compatibilities to give (partial) calculations in our situations of interest. Some calculations are relegated to Appendix~\ref{app:l-homs}.

In \S\ref{section:tate-tensor-products}, we prove a certain surprisingly subtle compatibility of Tate cohomology with tensor products. The proofs are mainly combinatorial. In \S\ref{ss:weil-heisenberg}, we compute the Tate cohomology of Weil--Heisenberg representations using the results of \cite{Ger77}, and we describe a certain sign character which intervenes in the calculation.

In \S\ref{section:tate-cohom-yu}, we combine all of the previous calculations to describe the Tate cohomology of cuspidal representations arising from Yu's construction in three important situations: base change of large prime degree, base change of small prime degree for toral cuspidal representations, and descent to unramified twisted Levis under some hypotheses. Using these, we obtain our main explicit results on modular functoriality, including Theorem~\ref{thm:II-intro-small-degree-bc}.

In \S\ref{ssec:modular-cuspidal-parabolic-induction}, we establish a certain compatibility of Yu's construction with parabolic induction; the relevance of this to the overall strategy is explained in detail there. This relies in Appendix~\ref{app:kim-yu-ohara-coefficients}, which extends certain results of \cite{KY17} and \cite{Ohara24} from characteristic $0$ coefficients to positive characteristic coefficients.

The most important part of this paper is \S\ref{section:fs-functoriality}, which combines all of the preceding sections to prove Theorems~\ref{thm:II-intro-large-degree-bc} and \ref{thm:II-intro-unram-twisted-levi}. Finally, in \S\ref{section:fs-calculation}, we use \cite[Theorem 10.2.1]{CF26a} to deduce Theorem~\ref{thm:II-intro-main-thm}, Corollary~\ref{cor:II-intro-main-cor}, and Theorem~\ref{thm:II-intro-type-A}. 

Throughout this paper, we pay special attention to the case that $p$ is small, and thus all results are stated in greater generality than in the introduction.

\begin{figure}[!htbp]
  \centering
  \hypersetup{hidelinks}
  \tikzset{
    lf box/.style={draw, line width=.45pt, rectangle, align=center,
      inner xsep=7pt, inner ysep=7pt, font=\small,
      text width=3.35cm, minimum height=1.55cm, fill=white},
    lf wide/.style={lf box, text width=4.65cm, minimum height=1.55cm},
    lf result/.style={lf box, text width=7.1cm},
    lf input/.style={lf box, dashed},
    lf arrow/.style={-{Stealth[length=4.5pt,width=3.4pt]}, line width=.5pt},
    lf external/.style={lf arrow, dashed},
    lf note/.style={font=\scriptsize, align=center, fill=white, inner sep=2pt}
  }
  \resizebox{.90\textwidth}{!}{%
    \begin{tikzpicture}[x=1cm,y=1cm,
      lf box/.append style={text width=2.65cm}]
      \node[lf box] (bounds) at (-7.5,-7.5)
        {Generalities on\\L-parameters\\
         \S\ref{sec:generalities-on-l-parameters}};
      \node[lf box] (tensor) at (0,0)
        {Tate cohomology\\of tensor products\\
         \S\ref{section:tate-tensor-products}};
      \node[lf box] (covers) at (-7.5,0)
        {Kim--Yu covers\\
         Appendix~\ref{app:kim-yu-ohara-coefficients}};
    
      \node[lf box] (spectral) at (-3.75,0)
        {Spectral\\endoscopy and\\{\footnotesize L-homomorphisms}\\
         \S\ref{sec:spectral-endoscopy}, Appendix~\ref{app:l-homs}};
      \node[lf box] (weil) at (0,-2.5)
        {Weil--Heisenberg\\representations\\
         \S\ref{ss:weil-heisenberg}};
      \node[lf box] (parabolic) at (-7.5,-2.5)
        {Parabolic induction\\and lifting\\
         \S\ref{ssec:modular-cuspidal-parabolic-induction}};
    
      \node[lf box] (yu) at (0,-5)
        {Tate cohomology\\of Yu\\representations\\
         \S\ref{section:tate-cohom-yu}};
      \node[lf input] (companions) at (3.75,-5)
        {Cot26\\CF26a};
    
      \node[lf box] (functoriality) at (-3.75,-7.5)
        {Fargues--Scholze\\functoriality\\
         \S\ref{section:fs-functoriality}};
      \node[lf input] (characterization) at (3.75,-10)
        {CF26a, \S10\\Partial\\characterization};
      \node[lf box] (comparison) at (0,-10)
        {Comparison and\\consequences\\
         \S\ref{section:fs-calculation}};
    
      \draw[lf arrow] (tensor.south) -- (weil.north);
      \draw[lf arrow] (weil.south) -- (yu.north);
      \draw[lf arrow] (covers.south) -- (parabolic.north);
      \draw[lf arrow] ([xshift=.75cm]spectral.south) -- (yu.west);
      \draw[lf external] (companions.west) -- (yu.east);
      \draw[lf arrow] ([xshift=-.75cm]yu.south) -- ([xshift=.75cm]functoriality.north);
      \draw[lf arrow] (bounds.east) -- (functoriality.west);
      \draw[lf arrow] (spectral.south) -- (functoriality.north);
      \draw[lf arrow] (parabolic.south) -- ([xshift=-.75cm]functoriality.north);
      \draw[lf arrow] (functoriality.south) -- ([xshift=-.75cm]comparison.north);
      \draw[lf external] (characterization.west) -- (comparison.east);
      \draw[lf arrow] (yu.south) -- (comparison.north);
    \end{tikzpicture}
  }
  \caption{Leitfaden of the paper.}
  \label{fig:cf26b-leitfaden-normal}
\end{figure}

\subsection{AI methodology}

After a draft of this paper was mostly written, AI tools were used (by the second author only) to proofread, reorganize, revise, and create figures. Among the mathematically substantive contributions, GPT 5.2 found and corrected a mistake in the proof of \eqref{eqn:zero-sum}. Later, GPT 5.5 unearthed a gap in the material of \S \ref{ssec:modular-cuspidal-parabolic-induction}, and then filled it by providing the ideas behind Lemmas~\ref{lemma:center-artin-rees} and \ref{lemma:family-to-fiber}. The material of Appendix~\ref{app:kim-yu-ohara-coefficients} consists of a technical extension of existing results in the literature (with roughly the same proofs); a first draft of it was produced (again by the second author) with the help of GPT 5.6. All other statements and proofs in this paper originate from its human authors.

\subsection{Acknowledgements}

We thank Jeff Adler, Ad\`ele Bourgeois, Charlotte Chan, Stephen DeBacker, Jessica Fintzen, Michael Harris, Alex Hazeltine, Alex Ivanov, Josh Lansky, Monica Nevins, Sian Nie, Peter Scholze, David Schwein, Jack Sempliner, Loren Spice, Jay Taylor, and Cheng-Chiang Tsai for helpful conversations. 

S.C.~acknowledges support from the National Science Foundation under Award No.\ 2402231 and the European Research Council (ERC) under the
European Union’s Horizon 2020 research and innovation programme (grant agreement no.\ 950326).

T.F.~was supported by the NSF (grants DMS-2302520 and DMS-2441922), the Simons Foundation, and the Alfred P. Sloan Foundation.

\section{Notation and conventions}\label{ssec:notation-fs}

We will use the terminology of paraductive group schemes from \cite{Cot26b}. We will also use the terminology for Yu's construction from \cite{CF26a}, which differs slightly from the usual conventions; in particular, the twisted Levis in a Yu datum are not required to be pairwise distinct, and the depth $0$ term need not be irreducible. Our notation follows that of \cite{CF26a}, which we repeat below for the reader's convenience. 
\subsection{Local fields}
Let $F$ be a local field with ring of integers $\cO_F$ and residue field $\F_q$ of characteristic $p > 0$. We fix a separable closure $\ol F/F$. A separable algebraic extension of $F$ is always understood as a subfield of $\ol F$. For any finite separable field extension $E/F$ and integer $n \geq 1$, we denote by $E_n$ the unramified subextension of $\ol F/E$ of degree $n$. We write $F^{\unr}\subset \ol F$ for the maximal unramified extension of $F$. The absolute value is normalized by $|a|_F=q^{-v(a)}$, where $v$ is the normalized valuation of $F$.

We write $W_F$ for the Weil group of $F$, $I_F \triangleleft W_F$ for the inertia subgroup, and $P_F \triangleleft W_F$ for the wild inertia subgroup. 

We write $\Fr$ for geometric Frobenius in $W_F/I_F$, and also for a chosen lift to $W_F$.

\subsection{Reductive groups}
We denote by $G$ a connected reductive group over $F$. (Our convention is that reductive groups over fields are not necessarily assumed to be connected.) All representations and characters of $G(F)$ are understood to be smooth unless otherwise stated. We write $\Rep_k^{\sm}G(F)$, or $\Rep_k(G(F))$, for the category of smooth $k$-representations of $G(F)$. If $C \subset G$ is a closed subscheme, then we denote by $Z_G(C)$ (resp.\ $N_G(C)$) the scheme-theoretic centralizer (resp.\ normalizer) of $C$ in $G$.

If $H$ is a group scheme of finite type over a field $k$, then we use $H^\circ$ to denote the identity component of $H$. The notation $H^0$, which is sometimes used instead of $H^\circ$, will be reserved for Yu's construction. We write $\pi_0(H)=H/H^\circ$ for the group scheme of connected components and $H_{\red}$ for the underlying reduced subscheme.

For a connected reductive group $H$, we write $Z(H)$ for its center, $H_{\der}$ for its derived subgroup, $H_{\ad}=H/Z(H)$ for its adjoint quotient, and $H_{\ab}=H/H_{\der}$ for its abelianization. We write $\rk H$ for its absolute rank. For semisimple $H$, $\pi_1(H)$ denotes its algebraic fundamental group. We use $\frg=\Lie(G)$ and $\Lie^*(G)$ for the linear dual of $\Lie(G)$.

If $T$ is an $F$-torus, then $T(F)_{\mathrm{b}}$ denotes the maximal bounded subgroup of $T(F)$.

If $P=MU$ is a parabolic $F$-subgroup of $G$ and $W$ is a smooth representation of $M(F)$ over one of the coefficient rings considered in this paper, then we write
\[
I_P^G(W)\coloneqq \Ind_{P(F)}^{G(F)}(W\otimes\delta_P^{-1/2})
\]
for normalized parabolic induction, where $W$ is inflated to $P(F)$ and $\delta_P$ is the modulus character. We make compatible choices of square roots of $q$ in $\ol\Q_\ell$, $\ol\Z_\ell$, and $\ol\F_\ell$. We use $\ind$ for ordinary induction in finite or finite-index settings and $\cInd$ for compact induction.

\subsection{Dual groups}
The Langlands dual group $\wh{G}$ is regarded over $\Z[1/p]$, though we will often base change to a field of characteristic $\neq p$ without changing the notation. In \S \ref{sec:spectral-endoscopy} we use the notation $\chG$ for the Tannakian dual group, which is the same reductive group as $\wh{G}$ but with a different convention for the Galois action, which can be trivialized upon choosing a square root of the cyclotomic character \cite[\S VI.11]{FS}.

We write $\ld G$ for the L-group of $G$, usually as $\widehat G\rtimes W_0$ for a chosen finite quotient $W_0$ of $W_F$ through which the action on $\widehat G$ factors. If $\rho_1,\rho_2$ are (inertial) L-parameters valued in $\ld G(k)$, then $\rho_1\sim\rho_2$ means that $\rho_1$ and $\rho_2$ are $\wh G(k)$-conjugate. We reserve \emph{equality} of L-parameters for actual equality of chosen representatives by homomorphisms.

\subsection{Bruhat--Tits buildings}
We let $\cB(G)$ denote the extended Bruhat--Tits building of $G$ over $F$. For a point $x \in \cB(G)$, we let $[x] = [x]_G$ denote the corresponding point of $\cB(G_{\der})$. Let $\cG_x$ (resp.\ $\cG_{[x]}$) denote the smooth separated $\cO_F$-group scheme with generic fiber $G$ and $\cG_x(\cO_F) = G(F)_x$ (resp.\ $\cG_{[x]}(\cO_F) = G(F)_{[x]}$), the stabilizer of $x$ (resp.\ $[x]$) in $G(F)$, as in \cite[Remark 8.3.4]{KP}. Note that $\cG_x$ is of finite type, but $\cG_{[x]}$ might not be. Let $\ol G_x$ (resp.\ $\ol G_{[x]}$) denote the quotient of the special fiber of $\cG_x$ (resp.\ $\cG_{[x]}$) by the unipotent radical of its identity component.

For $r\geq 0$, the notations $G(F)_{x,r}$ and $G(F)_{x,r+}$ refer to the Moy--Prasad filtration at $x$; analogous notation is used for the Lie algebra. We implicitly use the normalized valuation $v$ of $F$ in this definition; however, if $E/F$ is a finite separable extension then $G(E)_{x,r}$ and $G(E)_{x,r+}$ will be defined using the unique extension of $v$ to $E$. Thus $G(F)_{x,r} = G(E)_{x,r} \cap G(F)$.

\subsection{Coefficients}\label{sssec:coefficient-conventions-fs}
We use $k$ for coefficient rings, which need not be fields. Our coefficient prime $\ell$ is always distinct from $p$. We write $\ol\Z_\ell$ for the valuation ring of $\ol\Q_\ell$, with residue field $\ol\F_\ell$.

For a $k$-algebra $k'$ and a $k$-module or representation $V$, we write $V_{k'}=V\otimes_k k'$. Subscripts on group schemes and homomorphisms likewise indicate base change.

Let $\Fr_\ell\co\ol\F_\ell\to\ol\F_\ell$ be $a\mapsto a^\ell$. If $V$ is a representation over $\ol\F_\ell$, then $V^{(\ell)} \coloneqq V \otimes_{\ol \F_\ell, \Fr_\ell} \ol \F_\ell$ denotes its Frobenius twist. Thus, for an $\ol\F_\ell$-valued character $\chi$, we have $\chi^{(\ell)}=\Fr_\ell\circ\chi$.

\section{Generalities on L-parameters}\label{sec:generalities-on-l-parameters}

In this section, we establish abstract group-theoretic results which will be applied to L-groups and other groups arising from L-parameters. The main goal is to prove a priori bounds on the prime numbers dividing the orders of the images of irreducible (i.e., supercuspidal or elliptic) L-parameters. This will be necessary later for showing that L-parameters with characteristic $0$ coefficients can be probed effectively using congruences.

If $k$ is a field and $H$ is a smooth affine $k$-group scheme, then we will say that $H$ is reductive provided that its identity component $H^\circ$ is reductive. In other words, we do not require reductive groups to be connected. Similarly, we will say that $H$ is semisimple if $H^\circ$ is semisimple. This generality is meant to accommodate two separate situations: first, in the following sections, we will often wish to consider the ``finite form'' $\ld G = \widehat{G} \rtimes W_0$ of an L-group, where $W_0$ is a finite quotient of the Weil group $W_F$. Second, even when working entirely inside $\widehat{G}$, we will often wish to consider centralizers which may fail to be connected. 

\subsection{Finiteness results in group theory}\label{ssec:group-theory}

We begin with a number of abstract finiteness results which will ultimately be applied to L-parameters. Recall the notion of semisimple homomorphism $\Gamma \to H(k)$ from \cite[\S 10.1]{CF26a}; such a map is semisimple if and only if the image of $\Gamma$ is $H$-cr in the sense of \cite{BMR05}.

\begin{lemma}\label{lemma:finite-subgroups-invertible-order}
Let $k$ be an algebraically closed field, let $H$ be a reductive $k$-group, and let $n$ be a positive integer. Then up to $H^\circ(k)$-conjugacy there are only finitely many finite subgroups of $H(k)$ of order $n$ for which the inclusion map is semisimple. If $n$ is invertible in $k$, then the inclusion map of any finite subgroup of order $n$ is semisimple.
\end{lemma}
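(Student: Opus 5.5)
The plan is to reduce both assertions to geometric invariant theory on representation varieties of a fixed finite group. There are only finitely many isomorphism classes of abstract groups $\Gamma$ of order $n$, so the first claim is equivalent to the following: for each finite group $\Gamma$ of order $n$, there are only finitely many $H^\circ(k)$-conjugacy classes of semisimple homomorphisms $\Gamma \to H(k)$. Indeed, an injective semisimple homomorphism then determines the subgroup, and conjugacy of homomorphisms implies conjugacy of images.

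\textbf{Finiteness.} Consider the affine $k$-scheme $X = \Hom(\Gamma, H)$, a closed subscheme of $H^{\Gamma}$ defined by the group relations, with the action of $H^\circ$ by conjugation. By the results of Richardson and of Bate--Martin--R\"ohrle \cite{BMR05}, a homomorphism $\phi$ is semisimple ($H$-cr image) if and only if the orbit $H^\circ\cdot\phi$ is closed in $X$. The non-connected case reduces to $H^\circ$, since $\phi$ is $H$-cr iff it is $H^\circ$-cr after passing to $\phi^{-1}(H^\circ)$, which is a finite-index subgroup.

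It therefore suffices to show that $X$ has only finitely many closed $H^\circ$-orbits. For this I would use the standard rigidity argument for finite groups. The Zariski tangent space to $X$ at $\phi$ is $Z^1(\Gamma, \Lie H)$, and the tangent space to the orbit is $B^1(\Gamma,\Lie H)$.
\begin{itemize}
\item When $\mathrm{char}\, k \nmid n$, we have $H^1(\Gamma,\Lie H)=0$. Hence every orbit is open in $X$, so $X$, being of finite type, has finitely many orbits.
\item In general this fails, so I would instead pass to the categorical quotient $X/\!\!/H^\circ$, whose points biject with closed orbits. It is enough to show this quotient is finite.
\end{itemize}

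This general case is the main obstacle. My first approach would be to use that points of $X/\!\!/H^\circ$ are determined by the values $f(\phi(\gamma_1),\dots,\phi(\gamma_m))$ of invariant functions. For a finite group, the image consists of elements of bounded order $n$, and each element of order dividing $n$ has finitely many semisimple parts up to conjugacy. A cleaner route that I would try first is the following. By a theorem of Slodowy/Vinberg type (see also BMR), every closed orbit can be degenerated to, or characterized by, its restriction to a Sylow-type decomposition. Alternatively, one can argue that a semisimple $\phi$ has reductive centralizer and that the quotient $X/\!\!/H^\circ$ is a finite-type scheme on which the \emph{reduced} tangent directions vanish. Concretely, using Luna-type slices in positive characteristic (valid for $H$-cr orbits by Bardsley--Richardson under mild hypotheses), the slice at $\phi$ is a neighbourhood in $X/\!\!/H^\circ$. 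Since $\Gamma$ is finite, one can base-change from $\ol{\F}_p$ or use the constructibility of the set of semisimple points together with the fact that $\Gamma$-representations over $k$ into $\GL_N$ have finitely many semisimple classes: Brauer characters take finitely many values. Embedding $H\subset \GL_N$, each semisimple $\phi$ has finitely many possible $\GL_N$-semisimplifications. Finally, the map $X/\!\!/H^\circ\to \Hom(\Gamma,\GL_N)/\!\!/\GL_N$ is finite (Vinberg/Martin), so it has finite fibers, which gives finiteness. This last reduction, via finiteness of the morphism of quotients, is the step I expect to require the most care.

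\textbf{Invertible order.} For the second assertion, assume $n$ is invertible in $k$ and let $\Gamma\subset H(k)$ be finite of order $n$. I would show $\Gamma$ is $H$-cr directly. Suppose $\Gamma\subset P$ for a parabolic $P=P_\lambda$ with Levi $L_\lambda$. The obstruction to conjugating $\Gamma$ into $L_\lambda$ by an element of the unipotent radical $R_u(P_\lambda)$ lies in nonabelian $H^1(\Gamma, R_u(P_\lambda))$. Filtering $R_u(P_\lambda)$ by vector groups, each graded piece contributes an abelian cohomology group $H^1$, which is killed by $n$ and is a $k$-vector space, hence zero. So $\Gamma$ is $R_u(P_\lambda)$-conjugate into $L_\lambda$, which is exactly the criterion for being $H$-cr in \cite{BMR05}.
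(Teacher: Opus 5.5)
Your proposal is correct in substance, but on both halves it follows a different route from the paper, and one of your supporting remarks is false.

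\textbf{Finiteness.} The paper makes the same reduction to a fixed abstract $\Gamma$. It then quotes \cite[Theorem 1.1]{Cot24} with $\Gamma'=1$: the map $\Hom(\Gamma,H)/\!\!/H\to\Hom(1,H)/\!\!/H=\operatorname{Spec} k$ is finite. Hence there are finitely many closed orbits, and these are the semisimple homomorphisms by \cite[Corollary 3.7, \S 6.3]{BMR05}.

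Of your several sketches, only the last is an argument; the Luna-slice, tangent-direction and ``Sylow-type'' remarks should be discarded. That last argument embeds $H\subset\GL_N$ and maps to $\Hom(\Gamma,\GL_N)/\!\!/\GL_N$, whose $k$-points are the finitely many semisimple $N$-dimensional $k[\Gamma]$-modules. This works, but the finiteness you need is not literally Martin's theorem. It is Martin's finiteness of $H^m/\!\!/H\to\GL_N^m/\!\!/\GL_N$ composed with the finite maps $X/\!\!/H^\circ\to X/\!\!/H\to H^m/\!\!/H$; the last is finite because $X\subset H^m$ is closed and $H$-stable. So you trade the paper's single citation for the free-group case of the same kind of theorem, plus elementary modular representation theory.

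\textbf{Invertible case.} The paper notes that orbit maps are smooth \cite[Lemma A.1]{DHKM25}. So every $H^\circ$-orbit is open, and hence closed, being the complement of the union of the others. Your first bullet ($Z^1=B^1$) already shows every orbit is open, so one more line would have given the paper's proof. Your alternative also works: you trivialize nonabelian $H^1(\Gamma,R_u(P_\lambda))$ along the $\lambda$-weight filtration, whose quotients are vector groups with linear action. This verifies the definition of complete reducibility directly and bypasses Richardson's closed-orbit criterion.

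\textbf{The false remark.} You claim that $\phi$ is $H$-cr if and only if $\phi|_{\phi^{-1}(H^\circ)}$ is $H^\circ$-cr. This fails when $p$ divides the index. In characteristic $p$, let $H=\GL_2\times\Z/p$, $\Gamma=\Z/p$, and $\phi(1)=(u,1)$ with $u\neq1$ unipotent. Then $\phi^{-1}(H^\circ)$ is trivial, yet the closure of $H^\circ\cdot\phi$ contains the homomorphism $1\mapsto(1,1)$, so $\phi$ is not semisimple. What you actually need is weaker: $H^\circ\cdot\phi$ is closed if and only if $H\cdot\phi$ is, since the latter is a finite union of $H^\circ$-orbits of equal dimension. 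Combine this with the non-connected form of Richardson's criterion in \cite[\S 6.3]{BMR05}, which is what the paper uses.
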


\begin{proof}
There are only finitely many isomorphism classes of finite groups of order $n$, so it is enough to show that if $\Gamma$ is one such finite group then there are only finitely many conjugacy classes of semisimple homomorphisms $\Gamma \to H(k)$. Let $\sH$ be the $k$-scheme parameterizing homomorphisms $\Gamma \to H$, so $\sH/\!/H$ is finite by \cite[Theorem 1.1]{Cot24} (taking $H = G$ and $\Gamma' = 1$). This is enough for the first claim since $(\sH/\!/H)(k)$ parameterizes closed orbits in $\sH$, and closed orbits correspond to semisimple homomorphisms by \cite[Corollary 3.7, \S6.3]{BMR05}. 

Now suppose $n$ is invertible in $k$. In this case, the result is \cite[Lemma 2.6]{BMR05}, but we give a slightly different proof for convenience. Every orbit map $H^\circ \to \sH$ is smooth by \cite[Lemma A.1]{DHKM25}, so every $H^\circ$-orbit in $\sH$ is open. Since each orbit is the complement of the union of the other orbits, it follows that every orbit is closed, as desired.
\end{proof}

\begin{lemma}\label{lemma:big-finiteness-claim}
Let $k$ be an algebraically closed field, let $H$ be a reductive $k$-group, and let $n$ be a positive integer. There exists a positive integer $N$ such that for every finite subgroup $\Gamma_0 \subset H(k)$ of order $n$ such that the inclusion map $\Gamma_0 \to H$ is semisimple and every element $h \in H(k)$ normalizing $\Gamma_0$ such that $Z_H(\Gamma_0, h)$ is finite, the group $\langle \Gamma_0, h\rangle$ is finite of order $\leq N$.
\end{lemma}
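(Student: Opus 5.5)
By Lemma~\ref{lemma:finite-subgroups-invertible-order}, up to $H^\circ(k)$-conjugacy there are only finitely many such $\Gamma_0$. The conclusion is invariant under conjugation (conjugate $h$ as well), so it suffices to fix one $\Gamma_0$ and find a bound $N(\Gamma_0)$. Taking the maximum over the finitely many classes then gives $N$.

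Now fix $\Gamma_0$. Let $L = N_H(\Gamma_0)$ and $C = Z_H(\Gamma_0)$; both are closed subgroups of $H$. The group $C$ is normal in $L$, and $L/C$ embeds in $\Aut(\Gamma_0)$, so $[L:C] \le |\Aut(\Gamma_0)| =: a$. Since $\Gamma_0 \to H$ is semisimple, $C$ is reductive (possibly disconnected) by Richardson's theorem as in \cite{BMR05}, and hence so is $L$. For $h \in L(k)$, the element $h^a$ lies in $C(k)$. Moreover $Z_H(\Gamma_0,h) = Z_C(h)$. It therefore suffices to bound the order of $h$ modulo $\Gamma_0$, because $\langle \Gamma_0,h\rangle = \Gamma_0\langle h\rangle$ has order at most $n\cdot\ord(h)$.

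\textbf{Key step.} Show that if $h \in L(k)$ has $Z_C(h)$ finite, then $h$ has finite order bounded in terms of $L$ alone.

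Write the Jordan decomposition $h = su$. Both $s$ and $u$ lie in $L$ and normalize $\Gamma_0$; since $\Gamma_0$ is finite, some power $s^m$, $u^m$ with $m \le a$ centralizes $\Gamma_0$. Then $Z_C(h)$ contains $Z_C(h)\supseteq \overline{\langle h^a\rangle}$, and $h^a \in C$ commutes with $h$. So $\langle h^a\rangle \subset Z_C(h)$ is finite, and $h$ has finite order. The remaining task is a uniform bound. Here I would use that $h$ acts on $C^\circ$ by an automorphism $\phi$ with finite fixed points $(C^\circ)^{\phi}$; this is a quasi-semisimple, ``elliptic'' automorphism in Steinberg's sense.

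Such an $h$ lies in a coset $hC^\circ$ of $L^\circ = C^\circ$, and there are at most $|\pi_0(L)|$ cosets. For each component $gC^\circ$, I would apply the structure theory of semisimple elements in disconnected groups (Steinberg, or Digne--Michel): every semisimple element of $gC^\circ$ is $C^\circ$-conjugate to an element $g't$, where $g'$ is quasi-central and $t \in (T^{g'})^\circ$, with $T$ a $g'$-stable maximal torus. Finiteness of the centralizer forces $(T^{g'})^\circ$ to be trivial modulo centre considerations, and forces $Z(C^\circ)^{\circ}$ to have finite $\phi$-fixed points. Hence $t$ ranges over a finite set of torsion elements, namely the finitely many elements of $T^{g'}$ whose centralizer is finite. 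This gives finitely many conjugacy classes of semisimple $s$ in each coset, so a uniform bound on $\ord(s)$. The unipotent part $u$ commutes with $s$ and lies in $Z_C(s)$-ish. Since $u^m \in Z_C(h)$ is unipotent and finite, we get $u^{m}=1$ in characteristic $0$; in characteristic $\ell$, we get $u^{m\ell^e}=1$ with $\ell^e \le \dim H$-bounded exponent. Combining these gives $\ord(h) \le N'$, and $N = \max n N'$.

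\textbf{Main obstacle.} The hardest step is the uniform finiteness of conjugacy classes of semisimple elements with finite centralizer in a fixed component of a disconnected reductive group. I expect to handle it via quasi-central elements as above. An alternative is a finiteness argument on the GIT quotient $L/\!/C^\circ$ (by conjugation), showing that the locus where centralizers are finite is finite.
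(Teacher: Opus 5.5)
Your reduction to finitely many $\Gamma_0$, and your observation that $h^{a}\in Z_C(h)$, agree with the paper. That observation shows $h$ has finite order and reduces everything to controlling $Z_C(h)$, or the conjugacy class of $h$, uniformly.

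\textbf{The gap is in the key step.} You apply the Steinberg/Digne--Michel description of semisimple elements to the semisimple part $s$ of $h=su$, and then use ``finiteness of the centralizer'' to kill $(T^{g'})^\circ$. But the hypothesis only gives finiteness of $Z_C(h)=Z_{Z_C(s)}(u)$, not of $Z_C(s)$. When $\operatorname{char}k=\ell>0$, the unipotent part $u$ can lie outside $C^\circ$ and act on it by an outer automorphism with finite fixed points, and then $Z_C(s)$ is infinite.

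\emph{Example.} Over $k=\ol{\F}_2$, let $H=\G_m\rtimes(\Z/2\times\Z/3)$, where the generator $\epsilon$ of $\Z/2$ acts by inversion and the generator $\gamma$ of $\Z/3$ acts trivially. Take $\Gamma_0=1$ and $h=\gamma\epsilon$.
\begin{itemize}
\item $Z_H(h)$ is finite.
\item $s=h^4=\gamma$ is central, so $Z_H(s)=H$, and $(T^{g'})^\circ=\G_m$ for its coset.
\item The coset $\gamma\G_m$ contains no semisimple element with finite centralizer, yet it contains semisimple elements of arbitrarily large order.
\end{itemize}
So no argument about $s$ and its coset alone can bound $\ord(s)$; the bound has to come from $h$ itself. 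Relatedly, your claim that conjugation by $h$ is quasi-semisimple is false when $u\neq 1$.

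In characteristic $0$ your argument does work. There $h$ has finite order, hence is semisimple. Steinberg's theorem then shows that if a coset of $C^\circ$ contains a semisimple element with finite centralizer, then $(T^{g'})^\circ=1$. Hence $t=1$, and every semisimple element of that coset is conjugate to $g'$; you get exactly one class, not merely ``a finite set''.

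\textbf{The missing idea, used in the paper,} is to let $h$ itself act on $C^\circ_{\mathrm{red}}=Z_H(\Gamma_0)^\circ_{\mathrm{red}}$. This group is connected reductive by \cite[Proposition 3.12]{BMR05}, and its $h$-fixed points are finite by hypothesis, so \cite[Lemma~10.1.1]{CF26a} forces it to be a torus $S$. The paper then argues as follows.
\begin{itemize}
\item Since the automorphism scheme of a torus is \'etale, the action of $h$ on $S$ depends only on the image of $h$ in the finite group $\pi_0(N_H(\Gamma_0))$.
\item Hence $|Z_H(\Gamma_0,h)(k)|$ is bounded by some $N_1$ depending only on that image.
\item Since $h^{a}\in Z_H(\Gamma_0,h)$, this gives $\ord(h)\le aN_1$.
\end{itemize}

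Alternatively, closer to your coset-by-coset strategy and valid in every characteristic: once $S$ is a torus with $S^h$ finite, the homomorphism $x\mapsto (h^{-1}xh)x^{-1}$ of $S$ has finite kernel, so it is surjective. Thus every element of $hS$ is $S$-conjugate to $h$, so $\ord(h)$ depends only on the image of $h$ in $\pi_0(N_H(\Gamma_0))$, and no Jordan decomposition or quasi-central elements are needed.
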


\begin{proof}
By Lemma~\ref{lemma:finite-subgroups-invertible-order}, there are only finitely many $H^\circ(k)$-conjugacy classes of finite subgroups $\Gamma_0 \subset H(k)$ of order $n$ such that $\Gamma_0 \to H$ is semisimple; thus there is a positive integer $N_0$ such that $N_H(\Gamma_0)/Z_H(\Gamma_0)^\circ_{\mathrm{red}}$ is of order $\leq N_0$ for all such $\Gamma_0$.

Temporarily fix a pair $(\Gamma_0, h)$ as in the lemma statement and let $\Gamma = \langle \Gamma_0, h\rangle$, so $\Gamma$ is a subgroup of $H(k)$ containing $\Gamma_0$ as a normal subgroup. Note that $h$ normalizes the group $Z_H(\Gamma_0)_{\mathrm{red}}^\circ$, which is connected reductive by \cite[Proposition 3.12]{BMR05}, and $(Z_H(\Gamma_0)_{\mathrm{red}}^\circ)^h$ is finite by hypothesis, so \cite[Lemma~10.1.1]{CF26a} implies that $Z_H(\Gamma_0)_{\mathrm{red}}^\circ$ is a torus. In particular, the reduced centralizer $Z_H(\Gamma)_{\red}$ is of order bounded only in terms of $\Gamma_0$ and the image of $h$ in $\pi_0(N_H(\Gamma_0))$ (since the Aut scheme of a torus is \'etale). Since $\pi_0(N_H(\Gamma_0))$ is finite, it follows that there is a positive integer $N_1$ such that the order of $Z_H(\Gamma)$ is $\leq N_1$ for all pairs $(\Gamma_0, h)$.

By the first paragraph, $h^{N_0!}$ centralizes $\Gamma_0$; thus $h^{N_0!} \in Z_H(\Gamma_0, h)$. By the second paragraph, $h^{N_0! N_1!} = 1$. Thus we may take $N = N_0! N_1! n$.
\end{proof}

Lemma~\ref{lemma:big-finiteness-claim} is sufficient for the proof of Theorem~\ref{thm:II-intro-main-thm}, but for the sake of cleaner statements below we will prove a slightly refined finiteness statement when $k$ is of characteristic $0$.

\begin{lemma}\label{lemma:finitely-many-iso-classes}
Let $k$ be an algebraically closed field, and let $d, n \in \Z$. There are only finitely many isomorphism classes of reductive $k$-groups $H$ such that $\dim H \leq d$ and $|\pi_0(H)| \leq n$.
\end{lemma}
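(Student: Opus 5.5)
The plan is to reduce to the connected case, then control the finitely many ways a bounded finite group can be glued onto a fixed connected reductive group. For a reductive $H$ with $\dim H\le d$, the identity component $H^\circ$ is connected reductive of dimension $\le d$. There are only finitely many isomorphism classes of connected reductive $k$-groups of dimension $\le d$: such a group is determined by its root datum, the rank is $\le d$, the number of roots is $\le d$, and there are finitely many root data of bounded rank and bounded root system. The last point holds because a root datum of rank $r$ with root system $\Phi$ is determined, up to isomorphism, by a lattice $X\cong\Z^r$ together with finite subsets of $X$ and $X^\vee$. Up to $\GL_r(\Z)$ there are only finitely many such configurations, since the pair (root lattice, coroot lattice) is determined by the root system, and the choice of $X$ amounts to a finite-index/cofinite-type choice classified by finitely many invariants. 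The cleanest route is via the isogeny picture: $H^\circ$ is a quotient of $H_{\der}^{\mathrm{sc}}\times T$ by a finite central subgroup, and that subgroup is contained in $Z(H^{\mathrm{sc}}_{\der})\times T[m]$ for bounded $m$. Hence it suffices to fix $H^\circ$ and the finite group $\Pi=\pi_0(H)$, of order $\le n$, which has finitely many choices.

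Next we classify extensions $1\to H^\circ\to H\to \Pi\to 1$ of smooth affine groups with $\Pi$ a constant finite group. First we reduce to the case where $H$ is generated by $H^\circ$ and a finite subgroup surjecting onto $\Pi$. Since $k$ is algebraically closed, we pick a maximal torus $T\subset H^\circ$ and Borel $B\supset T$. By conjugacy, $N_H(T,B)$ surjects onto $\Pi$ with kernel $T$, so $H = H^\circ\cdot N_H(T,B)$ and $H$ is the pushout of $N_H(T,B)$ along $T\inj H^\circ$. It therefore suffices to show finitely many possibilities for the extension $1\to T\to N_H(T,B)\to\Pi\to 1$, together with the induced outer action.

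The action of $\Pi$ on $T$ and on the based root datum lands in the finite group of automorphisms of the based root datum. Actually $\Aut(X,\Delta,\ldots)$ can be infinite when there is a central torus, which is the main obstacle. We handle it as follows. The conjugation action factors through $\Pi$, so its image is a subgroup of order $\le n$ of $\Aut$ of the based root datum, an arithmetic group (a subgroup of some $\GL_r(\Z)$ preserving finite data). Arithmetic groups have finitely many conjugacy classes of finite subgroups (Borel--Harish-Chandra / Minkowski), so there are finitely many actions $\Pi\to\Aut$ up to isomorphism of $H^\circ$. For a fixed action, extensions of $\Pi$ by $T(k)$ realizing it are classified by $H^2(\Pi,T(k))$. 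This group is killed by $|\Pi|$, and it is finite because $T(k)$ is divisible with finite $|\Pi|$-torsion: using $0\to T[N]\to T\to T\to 0$ with $N=|\Pi|$, $H^2(\Pi,T)$ is a quotient of $H^2(\Pi,T[N])$, which is finite. We should also check that algebraic extensions match abstract ones, which holds since $\Pi$ is finite and constant. Combining the finitely many choices of $H^\circ$, $\Pi$, action, and cohomology class gives finitely many isomorphism classes of $H$.
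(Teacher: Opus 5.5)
Your strategy is close in spirit to the paper's, but one reduction step is false as stated.

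\textbf{Comparison with the paper.} The paper also first bounds the connected groups and then glues along $N_H(B,T)$. It does this using a \emph{finite} supplement $E\subset N_H(B,T)$ with $E\cap H^\circ=T[n]$ and $H=H^\circ\cdot E$, taken from \cite[Lemma 3.3]{Cot24}. It then counts the data $(H^\circ,E,\text{action of }E\text{ on }H^\circ,\,T[n]\subset E)$. You instead keep the full extension of $\Pi$ by $T$ and control it through $\rH^2(\Pi,T(k))$. Your divisibility argument is essentially the cohomological form of the existence of such an $E$. Your use of finiteness of conjugacy classes of finite subgroups in the (possibly infinite) group of based-root-datum automorphisms also handles a point the paper treats only briefly.

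\textbf{The gap.} The false step is the sentence ``it therefore suffices to show finitely many possibilities for the extension $1\to T\to N_H(T,B)\to\Pi\to 1$, together with the induced outer action.'' Write $N=N_H(T,B)$, let $T_{\ad}$ be the image of $T$ in $H^\circ_{\ad}$, and let $\Psi_0$ be the based root datum. The pushout $H\cong(H^\circ\rtimes N)/T$ depends on the full conjugation action $\phi\co N\to\Aut(H^\circ,B,T)\cong T_{\ad}(k)\rtimes\Aut(\Psi_0)$. It is not determined by $N$ together with the composite $\Pi\to\Aut(\Psi_0)$. For fixed $N$ and fixed outer action, the admissible $\phi$ (those restricting to $T\to T_{\ad}$ on $T$) form a torsor under $Z^1(\Pi,T_{\ad}(k))$, and this datum really changes $H$.

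Here is a counterexample in characteristic $\neq 2$. Compare $\SL_2\times\Z/2$ with $(\SL_2\times\Z/4)/\langle(-1,2)\rangle$.
\begin{itemize}
\item Both have $H^\circ=\SL_2$, $\Pi=\Z/2$ and trivial outer action.
\item Both have $N_H(T,B)\cong T\times\Z/2$. In the second group, the element $\epsilon=(\diag(i,-i),1)$ has order $2$ and acts on $\SL_2$ by $\Ad(\diag(i,-i))$.
\item They are not isomorphic: the centralizer of $H^\circ$ is $(\Z/2)^2$ in the first group and $\Z/4$ in the second.
\end{itemize}
So your list of data (connected component, $\Pi$, outer action, class in $\rH^2$) does not determine $H$.

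\textbf{The fix.} Changing $\phi$ by a coboundary $\pi\mapsto s\cdot{}^{\pi}s^{-1}$ with $s\in T_{\ad}(k)$ gives an isomorphic pushout, via $(h,n)\mapsto(s(h),n)$. Moreover $\rH^1(\Pi,T_{\ad}(k))$ is finite by the same argument you gave for $\rH^2$: it is killed by $|\Pi|$, hence is a quotient of $\rH^1(\Pi,T_{\ad}(k)[|\Pi|])$. Adding this finite datum to your list makes the count go through.
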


\begin{proof}
It is enough to show that there are only finitely many $H$ with $\dim H = d$ and $|\pi_0(H)| = n$. Recall that $H^\circ$ is determined by $(H^\circ)_{\der}$, the maximal central torus $Z_{H^\circ}$, and the intersection $(H^\circ)_{\der} \cap Z_{H^\circ}$. Note that $(H^\circ)_{\der}$ is determined up to isomorphism by its root datum, of which there are only finitely many possibilities; the torus $Z_{H^\circ}$ is determined by its dimension, which is $\leq d$; and the intersection $(H^\circ)_{\der} \cap Z_{H^\circ}$ is a subgroup of the finite multiplicative type $k$-group scheme $Z((H^\circ)_{\der})$, so it admits only finitely many possibilities. This settles the case $n = 1$.

In general, let $(B, T)$ be a Borel-torus pair in $H^\circ$. By \cite[Lemma 3.3]{Cot24} and the conjugacy of Borel-torus pairs in $H^\circ$, there is a subgroup $E \subset N_H(B, T)$ such that $E \cap H^\circ = T[n]$ and $H = H^\circ \cdot E$. If $T[n]$ acts on $H^\circ \times E$ on the right by $(h, x) \cdot t = (ht, t^{-1}x)$, then the multiplication map $(H^\circ \times E)/T[n] \to H$ is a scheme-theoretic isomorphism, under which the multiplication of $H$ is given by
\[
(h_1, x_1) \cdot (h_2, x_2) = (h_1 \cdot {}^{x_1}h_2, x_1x_2).
\]
Thus $H$ is determined by $H^\circ$, $E$, the action of $E$ on $H^\circ$, and the embedding $T[n] \subset E$. There are only finitely many possibilities for $H^\circ$ by the first paragraph. The group $E$ admits only finitely many possible isomorphism classes since its order can be bounded in terms of $\dim H$ and $|\pi_0(H)|$ and its identity component is of multiplicative type. For a given $E$, there are only finitely many actions of $E$ on $H^\circ$ preserving a Borel-torus pair by Lemma~\ref{lemma:finite-subgroups-invertible-order}, since each such action corresponds to a semisimple $k$-homomorphism $E \to \Aut_{H^\circ/k}$. Finally, there are only finitely many embeddings $T[n] \subset E$. This proves the lemma.
\end{proof}

\begin{lemma}\label{lemma:bigger-finiteness-claim}
Let $k$ be a field of characteristic $0$, and let $H$ be a reductive $k$-group. There exists a positive integer $N$ such that for every finite subgroup $\Gamma_0 \subset H(k)$ and every element $h \in H(k)$ normalizing $\Gamma_0$ such that $Z_H(\Gamma_0, h)$ is finite, the group $\langle\Gamma_0, h\rangle$ is finite, and the only prime numbers dividing its order divide $N \cdot |\Gamma_0|$.
\end{lemma}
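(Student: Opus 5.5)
We may enlarge $k$ to an algebraic closure, since finiteness of $Z_H(\Gamma_0,h)$ and the order of $\langle \Gamma_0,h\rangle$ do not change. In characteristic $0$ every finite subgroup has invertible order, so by Lemma~\ref{lemma:finite-subgroups-invertible-order} the inclusion $\Gamma_0 \to H$ is semisimple. The difference from Lemma~\ref{lemma:big-finiteness-claim} is that $|\Gamma_0|$ is no longer fixed. So the constants $N_0$ and $N_1$ from that proof must be replaced by bounds that are uniform in $\Gamma_0$. Since only the primes dividing $|\langle\Gamma_0,h\rangle|$ matter, it is enough to bound the order of the image of $h$ in $N_H(\Gamma_0)/\Gamma_0 Z_H(\Gamma_0)$ and in $Z_H(\Gamma_0)$, up to contributions from $\Gamma_0$ itself.

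\textbf{Step 1 (structure of the centralizer).} Set $Z = Z_H(\Gamma_0)$. It is reductive, and $Z^\circ$ is connected reductive by \cite[Proposition 3.12]{BMR05}. Since $h$ normalizes $Z^\circ$ and $(Z^\circ)^h$ is finite, \cite[Lemma~10.1.1]{CF26a} shows that $Z^\circ = S$ is a torus. Moreover $\dim S \le \rk H$.

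\textbf{Step 2 (uniformly bounded component groups).} Here Lemma~\ref{lemma:finitely-many-iso-classes} should enter. I expect to show that $|\pi_0(N_H(\Gamma_0))|$ is bounded up to factors involving $|\Gamma_0|$. Concretely:
\begin{enumerate}[label=(\alph*)]
\item $N_H(\Gamma_0)/Z$ embeds in $\Aut(\Gamma_0)$, whose order has only primes dividing $|\Gamma_0|$ or bounded primes. For instance, if $\Gamma_0$ is abelian of rank $\le r$, then $|\Aut|$ involves $|\GL_r(\Z/m)|$, and the primes dividing $|\GL_r(\F_p)|$ for $p \mid m$ are controlled via $p^i-1$ with $i\le r$. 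This is not obviously bounded, so I would instead use Jordan's theorem: $\Gamma_0$ has an abelian normal subgroup $A$ of index $\le J(H)$.
\item $\pi_0(Z)$: by Jordan's theorem applied to $Z$ (a finite-by-torus group), $|\pi_0(Z)|$ should be bounded by a constant times primes dividing $|\Gamma_0|$.
\end{enumerate}

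\textbf{Step 3 (conclusion).} Put $\Gamma=\langle\Gamma_0,h\rangle$.
\begin{enumerate}[label=(\roman*)]
\item First, $h^{a}$ centralizes $\Gamma_0$, where $a$ is the order of the image of $h$ in $N_H(\Gamma_0)/Z\subset \Aut(\Gamma_0)$.
\item Second, $h^{ab}\in S$, where $b$ divides $|\pi_0(Z)|$.
\item Finally, $h^{ab}$ lies in $S^{h}$, which is finite because $Z_H(\Gamma_0,h)$ is finite. Its order is bounded in terms of $\dim S$ and the finite-order automorphism of $X^*(S)\cong\Z^{\le \rk H}$ induced by $h$: if $h$ acts by $\alpha$, then $S^h$ has order $|\det(1-\alpha)|$. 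This needs care, because $\alpha$ has bounded order but $\det(1-\alpha)$ can involve various primes. Since the order of $\alpha$ lies in $\GL_{\rk H}(\Z)$, only finitely many conjugacy classes occur, so $|\det(1-\alpha)|$ is bounded.
\end{enumerate}
Then $|\Gamma|$ divides $|\Gamma_0|\cdot a\cdot b\cdot |S^h|$, and it remains to show that every prime dividing $a$ or $b$ divides $N|\Gamma_0|$.

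\textbf{Main obstacle.} I expect the hardest step to be controlling the primes dividing $a$, the order of $h$ acting on $\Gamma_0$. The approach I would try first is a Jordan-type reduction that replaces $\Gamma_0$ by a bounded-index abelian normal subgroup $A\subset T$ for a maximal torus $T$. Then $h$ acts on $A$ through a subgroup of the normalizer of the torus $Z_H(A)^\circ$, modulo $Z_H(A)^\circ$, which is a bounded Weyl-type group. That bound would make $a$ divisible only by bounded primes and primes dividing $|\Gamma_0|$.
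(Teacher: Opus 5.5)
\textbf{There is a genuine gap.} Your bookkeeping in Step~3 is sound as far as it goes. The order $|\Gamma|$ divides $|\Gamma_0|\cdot a\cdot b\cdot|S^h|$. Also $|S^h|=|\det(1-\alpha)|$ is uniformly bounded, because finite-order elements of $\GL_r(\Z)$ fall into finitely many conjugacy classes. The remaining claim is that every prime dividing $a$ or $b$ is bounded or divides $|\Gamma_0|$. That claim is the whole content of the lemma, and neither half is established.

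\emph{The bound on $b$.} Jordan's theorem applied to $Z=Z_H(\Gamma_0)$ only produces abelian normal subgroups of bounded index in finite subgroups of $Z$. It says nothing about which primes divide the order of an abelian group, so it cannot control $|\pi_0(Z)|$.

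\emph{The bound on $a$.} The aside in Step~2(a) is false as stated: for $\Gamma_0\cong(\Z/p)^2$, the order $|\Aut(\Gamma_0)|=|\GL_2(\F_p)|$ is divisible by the primes dividing $p+1$. Your fallback breaks at its first step. A bounded-index abelian normal subgroup $A\subset\Gamma_0$ supplied by Jordan need not be normalized by $h$, so ``$h$ acts on $A$'' is not available. Even granting that, knowing that a bounded power of $h$ centralizes $A$ does not yet bound the order of $h$ acting on $\Gamma_0$. You would still need an extra argument, for example that automorphisms trivial on $A$ and on $\Gamma_0/A$ form a group of exponent dividing $\exp(A)$.

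\textbf{The missing idea} is to apply Jordan's theorem to the whole group $\Gamma=\langle\Gamma_0,h\rangle$, which is already known to be finite (by Lemma~\ref{lemma:big-finiteness-claim} with $n=|\Gamma_0|$, or by your own Step~3).
\begin{enumerate}
\item Jordan gives an abelian $\Delta\trianglelefteq\Gamma$ of bounded index.
\item Replace $\Delta$ by its subgroup of $e$th powers, where $e$ is the exponent of $N_H(T)/T$. This subgroup is characteristic in $\Delta$, so still normal in $\Gamma$, and it lies in a maximal torus.
\item Then $\Delta_0=\Delta\cap\Gamma_0$ is automatically normalized by $h$.
\item The paper then passes to $N_H(\Delta_0)/\Delta_0$. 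As $\Delta_0$ ranges over finite subgroups of a maximal torus, these groups fall into finitely many isomorphism classes, by Lemma~\ref{lemma:finitely-many-iso-classes}: the dimension is bounded and the component group is a subquotient of $N_H(T)/T$.
\item In that quotient, the image of $\Gamma_0$ has order dividing $[\Gamma:\Delta]$. One checks that the centralizer of the images of $\Gamma_0$ and $h$ is still finite: the identity component of its preimage has commutators with $\Gamma_0$ and $h$ landing in the finite group $\Delta_0$, hence centralizes them, hence is trivial.
\item So the fixed-order Lemma~\ref{lemma:big-finiteness-claim} applies uniformly. It gives $N$ with $\gamma^N\in\Delta_0$ for all $\gamma\in\Gamma$.
\end{enumerate}
This controls your $a$, $b$ and $|S^h|$ simultaneously, without ever analyzing $\Aut(\Gamma_0)$ or $\pi_0(Z_H(\Gamma_0))$.
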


\begin{proof}
We may and do assume that $k$ is algebraically closed. Since $k$ is of characteristic $0$, Jordan's theorem implies that there exists an integer $N_0$ such that every finite subgroup $\Gamma \subset H(k)$ admits an abelian normal subgroup $\Delta \subset \Gamma$ of index dividing $N_0$. By \cite[Chapter II, Theorem 5.17]{SS70}, there is a maximal $k$-torus $T \subset H$ which is normalized by $\Delta$. If $e$ is the exponent of $(N_H(T)/T)(\ol k)$, then the group of $e$th powers in $\Delta$ is a characteristic subgroup which is contained in $T(k)$. If $N_1 = N_0 e^{\dim T} |(N_H(T)/T)(\ol k)|$, then it follows that each such $\Gamma$ admits an abelian normal subgroup of index dividing $N_1$ which lies in the group of $k$-points of a maximal $k$-torus of $H$.

Next, let $T \subset H$ be a maximal $k$-torus. As $\Delta_0$ ranges over all finite subgroups of $T(k)$, we claim that the isomorphism class of the normalizer $N_H(\Delta_0)/\Delta_0$ ranges over only finitely many reductive $k$-groups. Indeed, $\dim(N_H(\Delta_0)/\Delta_0) \leq \dim H$, and the component group $\pi_0(N_H(\Delta_0)/\Delta_0)$ is a subquotient of the finite group $\pi_0(N_H(Z_H(\Delta_0)^\circ))$. This has cardinality which is bounded independently of $\Delta_0$ because $|\pi_0(N_H(Z_H(\Delta_0)^\circ))| \leq |N_H(T)/T|$, so the claim follows from Lemma~\ref{lemma:finitely-many-iso-classes}.

Finally, let $\Gamma_0 \subset H(k)$ and $h \in H(k)$ be as in the lemma statement, and let $\Gamma = \langle\Gamma_0, h\rangle$. Since $k$ has characteristic $0$, the inclusion $\Gamma_0 \to H$ is semisimple by Lemma~\ref{lemma:finite-subgroups-invertible-order}; hence Lemma~\ref{lemma:big-finiteness-claim}, applied with $n=|\Gamma_0|$, shows that $\Gamma$ is finite. By the first paragraph, there is an abelian normal subgroup $\Delta \subset \Gamma$ of index dividing $N_1$ such that $\Delta$ lies in a maximal torus of $H$. Let $\Delta_0 = \Delta \cap \Gamma_0$, so $\Delta_0$ is an abelian normal subgroup of $\Gamma$, and note that by the second paragraph and Lemma~\ref{lemma:big-finiteness-claim}, there is an integer $N$ depending only on $H$ such that the order of the image of $\Gamma$ in $N_H(\Delta_0)/\Delta_0$ is of order dividing $N$. Thus $\gamma^N \in \Delta_0$ for all $\gamma \in \Gamma$, so the prime numbers dividing the order of $\gamma$ all divide $N \cdot |\Delta_0|$, which divides $N \cdot |\Gamma_0|$.
\end{proof}

\begin{lemma}\label{lemma:finite-subgroup-normalizer-bound}
Let $k$ be a field, and let $H$ be a reductive $k$-group. There exists an integer $N$ such that for every finite subgroup $\Gamma \subset H(k)$ of order prime to $\operatorname{char} k$, we have $|\pi_0(N_H(\Gamma)/\Gamma)| \leq N$.
\end{lemma}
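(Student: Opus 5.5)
We reduce to bounding $\pi_0(N_H(\Gamma))$ relative to the identity component of the centralizer. We may assume $k$ is algebraically closed, since $\pi_0$ of a finite type $k$-group scheme can be computed after base change to $\ol k$ and the order of $\Gamma$ is unchanged. The key observation is that $N_H(\Gamma)/Z_H(\Gamma)$ embeds into the finite group $\Aut(\Gamma)$. Hence $Z_H(\Gamma)^\circ = N_H(\Gamma)^\circ$, and there is an exact sequence
\[
1 \to \pi_0(Z_H(\Gamma)) \to \pi_0(N_H(\Gamma)) \to N_H(\Gamma)/Z_H(\Gamma) \to 1 .
\]
The group $\pi_0(N_H(\Gamma)/\Gamma)$ is a quotient of $\pi_0(N_H(\Gamma))$. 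It therefore suffices to bound $|\pi_0(Z_H(\Gamma))|$ and $|N_H(\Gamma)/Z_H(\Gamma)|$ uniformly in $\Gamma$.

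\textbf{Step 1: bound $|\Gamma|$ itself.} This handles both factors at once, since there are only finitely many groups of a given order, and $|\Gamma|$ bounds $|\Aut(\Gamma)|$. However, $|\Gamma|$ is not bounded: the group $\Gamma$ can be an arbitrarily large finite subgroup of a torus. We must therefore handle $\Gamma$ via a Jordan-type decomposition, as in the proof of Lemma~\ref{lemma:bigger-finiteness-claim}. The plan is to use Jordan's theorem in the prime-to-characteristic form (Larsen--Pink, or the classical version of Brauer--Feit/Weisfeiler for prime-to-$p$ subgroups). This produces an integer $N_0$ depending only on $H$ and an abelian normal subgroup $\Delta\subset\Gamma$ of index dividing $N_0$. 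Since $|\Delta|$ is prime to $\operatorname{char}k$, its elements are semisimple and commuting. Hence $\Delta$ lies in a maximal torus $T$ after conjugation, modulo the possibly disconnected part $\pi_0(H)$; we replace $\Delta$ by its subgroup of $e$-th powers as in Lemma~\ref{lemma:bigger-finiteness-claim} to land inside $T(k)$ with bounded index $N_1$.

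\textbf{Step 2: control $Z_H(\Delta)$ and its normalizer uniformly.} For finite $\Delta\subset T(k)$, the group $Z_H(\Delta)^\circ$ is a connected reductive group containing $T$. It is determined by the subset of roots trivial on $\Delta$, so there are only finitely many possibilities for $Z_H(\Delta)^\circ$, and for $Z_H(\Delta)$ up to the Weyl group. Moreover, $\pi_0(Z_H(\Delta))$ injects into a subquotient of $N_H(T)/T$ and is bounded by $|N_H(T)/T|$. Now $\Gamma$ normalizes $\Delta$ and acts on the reductive group $Z_H(\Delta)$, with $\Gamma/\Delta$ of bounded order. We then have $Z_H(\Gamma)\subset Z_H(\Delta)$, and $Z_H(\Gamma)$ is the fixed locus in $Z_H(\Delta)$ of the bounded-order group $\Gamma/\Delta$ acting on it, intersected with the centralizer of $\Delta$. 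The component group of the fixed points of a finite group of order prime to the characteristic acting on a reductive group with bounded $\pi_0$ is bounded. This follows from Lemma~\ref{lemma:finite-subgroups-invertible-order} (finitely many conjugacy classes of such actions of each bounded-order group, up to the finitely many choices of $Z_H(\Delta)$) together with the known finiteness of $\pi_0$ of fixed points for a fixed action. This bounds $|\pi_0(Z_H(\Gamma))|$.

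\textbf{Step 3: bound $N_H(\Gamma)/Z_H(\Gamma)$.} This is the main obstacle, since $\Aut(\Gamma)$ is unbounded (for example $\Gamma$ cyclic of large order). The idea is that the image of $N_H(\Gamma)$ in $\Aut(\Gamma)$ is bounded because of the algebraic structure. Its restriction to $\Delta$ (a characteristic-ish subgroup; we should choose $\Delta$ canonically, e.g.\ via the $e$-th power trick, so that $N_H(\Gamma)$ normalizes it) lies in the image of $N_H(\Delta)\to\Aut(\Delta)$. This image is $\pi_0$-controlled: $N_H(\Delta)/Z_H(\Delta)$ is a subquotient of $N_H(T')/T'$ for a maximal torus $T'$ of $Z_H(\Delta)^\circ$, since $N_H(\Delta)^\circ$ centralizes $\Delta$. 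Its order is therefore at most $|N_H(T)/T|\cdot|\pi_0(Z_H(\Delta))|$. The kernel of $\Aut(\Gamma)\to\Aut(\Delta)\times\Aut(\Gamma/\Delta)$, intersected with the image of $N_H(\Gamma)$, consists of automorphisms of the form $\gamma\mapsto\gamma\, c(\gamma)$ with $c$ a cocycle valued in $Z(\Delta)$-type data. These come from elements of $Z_H(\Delta)$ normalizing $\Gamma$ modulo $Z_H(\Gamma)$. Here we would apply Lemma~\ref{lemma:big-finiteness-claim}-style reasoning inside $Z_H(\Delta)/\Delta$, using Step 2's finiteness of possibilities, to bound this part by a constant depending only on $H$. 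Combining Steps 2 and 3 with the exact sequence gives the desired $N$.
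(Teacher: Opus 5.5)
\textbf{There is a genuine gap: your opening reduction replaces the lemma by a false statement.} Since $\Gamma\subset N_H(\Gamma)$, the group $N_H(\Gamma)/Z_H(\Gamma)$ always contains $\Gamma Z_H(\Gamma)/Z_H(\Gamma)\cong\Gamma/Z(\Gamma)\cong\mathrm{Inn}(\Gamma)$. This is not bounded as $\Gamma$ varies.

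For a concrete example, take $H=\SL_2$ over an algebraically closed field of characteristic $0$. Let $\Gamma$ be the binary dihedral group of order $4n$ generated by $a=\diag(\zeta_{2n},\zeta_{2n}^{-1})$ and $b=\left(\begin{smallmatrix}0&1\\-1&0\end{smallmatrix}\right)$. For $n\geq 3$ one has $Z_H(\Gamma)=\{\pm1\}$. Hence $N_H(\Gamma)$ is finite and $|\pi_0(N_H(\Gamma))|=|N_H(\Gamma)|\geq 4n$, whereas $N_H(\Gamma)/\Gamma$ has order $2$.

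So neither $|\pi_0(N_H(\Gamma))|$ nor $|N_H(\Gamma)/Z_H(\Gamma)|$ admits a uniform bound. The quotient by $\Gamma$ in the statement is essential, not a harmless final step. This is exactly where your Step 3 breaks. The part of the image of $N_H(\Gamma)$ lying in $\ker\bigl(\Aut(\Gamma)\to\Aut(\Delta)\times\Aut(\Gamma/\Delta)\bigr)$ contains conjugation by elements of $\Delta$, i.e.\ the coboundaries $B^1(\Gamma/\Delta,\Delta)$. In the example, take $\Delta=\langle a\rangle$, with $b$ acting by inversion. Conjugation by $\delta\in\Delta$ sends $b\mapsto\delta^2 b$, which gives $n$ distinct automorphisms. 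No argument in the style of Lemma~\ref{lemma:big-finiteness-claim} can bound this piece.

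\textbf{The missing idea is to divide out the large abelian part of $\Gamma$ before counting.} A correct version of your reduction would bound $|\pi_0(Z_H(\Gamma))|$ together with the image of $N_H(\Gamma)$ in $\mathrm{Out}(\Gamma)$, not in $\Aut(\Gamma)$. That would still require a genuinely new argument.

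The paper builds the quotient in structurally, in four steps.
\begin{enumerate}
\item It reduces to characteristic $0$. It lifts $\Gamma\subset H$ to Witt vectors, where the normalizer is smooth because orbit maps to the Hom scheme are smooth. It then compares component groups of the two fibers via \cite[Proposition 3.1.3]{Con14}.
\item It replaces $H$ by $N_H(S)/S$ for a maximal torus $S$ of $Z_H(\Gamma)$, so that $N_H(\Gamma)$ becomes finite.
\item It applies Jordan's theorem to the finite group $N_H(\Gamma)$ itself rather than to $\Gamma$. The resulting abelian subgroup $\Delta$, of bounded index and lying in a torus, is then automatically normalized by $N_H(\Gamma)$. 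This is the stability you noted you would need to arrange by hand.
\item Passing to $N_H(\Delta)$ and inducting on $\dim H$ makes $\Delta$ normal in $H$, hence central in $H^\circ$. Then $N_H(\Delta\cap\Gamma)$ is open in $H$. Replacing $H$ by $N_H(\Delta\cap\Gamma)/(\Delta\cap\Gamma)$ turns $\Gamma$ into a group of order bounded in terms of $H$. There Lemma~\ref{lemma:finite-subgroups-invertible-order} and Lemma~\ref{lemma:finitely-many-iso-classes} give the uniform bound.
\end{enumerate}
Quotienting the ambient group by $\Delta\cap\Gamma$ is what absorbs the unbounded inner automorphisms that defeat your approach.
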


\begin{proof}
We may and do assume that $k$ is algebraically closed, so $H$ is split. Suppose first that $\operatorname{char} k = \ell > 0$, so there is a split reductive group scheme $\cH$ over the ring $W(k)$ of Witt vectors such that $H \cong \cH_k$. The Hom scheme $\underline{\Hom}(\Gamma, \cH)$ is smooth by \cite[Lemma A.1]{DHKM25}, so the inclusion $\Gamma \subset H(k)$ lifts to an inclusion $\Gamma \subset \cH(W(k))$. By \cite[Lemma A.1]{DHKM25} again, each orbit map $\cH \to \underline{\Hom}(\Gamma, \cH)$ is smooth. If $\Omega \subset \underline{\Hom}(\Gamma, \cH)$ is the open subscheme consisting of monic homomorphisms, then $\Aut(\Gamma)$ acts freely on $\Omega$, so the quotient map $\Omega \to \Omega/\Aut(\Gamma)$ is \'etale. It follows that the orbit map $\cH \to \Omega/\Aut(\Gamma)$ through the inclusion $\Gamma \subset \cH(W(k))$ is smooth. Thus $N_{\cH}(\Gamma)$, being a fiber of this map, is smooth over $W(k)$, and hence so is $N_{\cH}(\Gamma)/\Gamma$. Since each fiber of $N_{\cH}(\Gamma)/\Gamma$ has reductive identity component, \cite[Proposition 3.1.3]{Con14} shows that $|\pi_0(N_H(\Gamma)/\Gamma)| \leq |\pi_0(N_{\cH_{W(k)[1/\ell]}}(\Gamma)/\Gamma)|$. Thus we may and do assume $\operatorname{char} k = 0$.

Let $S \subset Z_H(\Gamma)$ be a maximal $k$-torus, and let $M = N_H(S)$, so $\Gamma \subset M(k)$. If $T \subset H$ is a maximal $k$-torus, then $\pi_0(M)$ is a subquotient of $N_H(T)/T$, and in particular its order is bounded independently of $\Gamma$. By Lemma~\ref{lemma:finitely-many-iso-classes}, we may therefore pass from $H$ to $M/S$ and from $\Gamma$ to its image to assume that $Z_H(\Gamma)$, and hence also $N_H(\Gamma)$, is finite.

As in the proof of Lemma~\ref{lemma:bigger-finiteness-claim}, there exists an integer $N_1$ depending only on $H$ such that $N_H(\Gamma)$ admits a normal subgroup $\Delta$ of index $\leq N_1$ which lies in the group of $k$-points of a maximal $k$-torus $T$ of $H$. Observe that the normalizer $N_H(\Delta)$ contains $T$, so its component group is a subquotient of $N_H(T)/T$ and in particular is of order bounded independently of $\Delta$. Since $N_H(\Gamma) \subset N_H(\Delta)$, Lemma~\ref{lemma:finitely-many-iso-classes} and induction on $\dim H$ allow us to pass from $H$ to $N_H(\Delta)$ to assume that $\Delta$ is normal in $H$. It follows that $\Delta$ is central in $H^\circ$, so $N_H(\Delta \cap \Gamma)$ is an open $k$-subgroup scheme of $H$; since $N_H(\Gamma) \subset N_H(\Delta \cap \Gamma)$, we may further assume that $\Delta \cap \Gamma$ is normal in $H$. The map $N_H(\Gamma)/(\Delta \cap \Gamma) \to N_{H/(\Delta \cap \Gamma)}(\Gamma/(\Delta \cap \Gamma))$ is injective, and since $\Gamma/(\Delta \cap \Gamma)$ is of order $\leq N_1$, the result therefore follows from Lemma~\ref{lemma:finite-subgroups-invertible-order} and Lemma~\ref{lemma:finitely-many-iso-classes}.
\end{proof}

\subsection{Irreducible L-parameters} 

Throughout this section, we fix a finite quotient $W_0$ of $W_F$ through which the action on $\widehat{G}$ factors. Let $\ld G$ denote the reductive group $\widehat{G} \rtimes W_0$. Recall the notion of irreducible homomorphism $\Gamma \to H(k)$ from \cite[\S 10.1]{CF26a}; this is slightly less restrictive than the condition that the image of $\Gamma$ in $H(k)$ is $H$-ir in the sense of \cite{BMR05}.

\begin{lemma}\label{lemma:check-irreducibility-mod-ell}If $\rho\colon W_F \to \ld G(\ol{\Z}_\ell)$ is an L-parameter such that $\rho_{\ol{\F}_\ell}$ is irreducible, then $\rho_{\ol{\Q}_\ell}$ is irreducible. Conversely, if $\rho(W_F)$ is finite of order prime to $\ell$ and $\rho_{\ol{\Q}_\ell}$ is irreducible, then $\rho_{\ol{\F}_\ell}$ is irreducible.
\end{lemma}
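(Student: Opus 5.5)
We will use the characterization of irreducibility recalled from \cite[\S 10.1]{CF26a}: a homomorphism $\Gamma \to \ld G(k)$ is irreducible if and only if its image is not contained in (or does not normalize, in the relevant sense) any proper parabolic subgroup of $\ld G$. Equivalently, the image fixes no point of the ``spherical building'' given by the proper parabolics of $\wh G$. The plan is to work with the parabolic schemes over $\ol\Z_\ell$. The variety of parabolic subgroups of $\wh G$ of a fixed type is projective and smooth over $\ol\Z_\ell$.

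\textbf{First assertion.} We prove the contrapositive. Suppose $\rho_{\ol\Q_\ell}$ is reducible. Then there is a proper parabolic $P \subset \wh G_{\ol\Q_\ell}$ normalized by $\rho(W_F)$. Let $\mathcal P$ be the scheme of parabolics of the type of $P$; it is smooth and projective over $\ol\Z_\ell$.

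Since $\mathcal P$ is proper, the $\ol\Q_\ell$-point $P$ extends to an $\cO$-point for the ring of integers $\cO$ of a finite extension of $\Q_\ell$ over which $P$ is defined. This gives a parabolic subgroup scheme $\mathcal P_{\cO}$ of $\wh G_{\cO}$. It is normalized by $\rho(W_F)$: the locus of the fixed-point scheme $\mathcal P^{\rho(W_F)}$, which is closed because the image is generated by finitely many elements up to a finite quotient action, contains the generic point, hence the closure of that point. Reducing modulo the maximal ideal gives a proper parabolic of $\wh G_{\ol\F_\ell}$ of the same type, normalized by $\rho_{\ol\F_\ell}(W_F)$. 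Hence $\rho_{\ol\F_\ell}$ is reducible.

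Some care is needed for the ``$\ld G$-parabolic'' version, i.e.\ parabolics of the possibly disconnected group $\ld G$. Those are exactly normalizers in $\ld G$ of parabolics of $\wh G$ with $\rho$-stable type, and the argument above applies verbatim.

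\textbf{Converse.} Assume $\Gamma \coloneqq \rho(W_F)$ is finite of order prime to $\ell$ and that $\rho_{\ol\F_\ell}$ is reducible. Let $\mathcal P$ be a parabolic type such that the fixed-point scheme $X \coloneqq \mathcal P^{\Gamma}$ has an $\ol\F_\ell$-point.

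Since $|\Gamma|$ is invertible on $\ol\Z_\ell$ and $\mathcal P$ is smooth over $\ol\Z_\ell$, the fixed-point scheme $X$ is smooth over $\ol\Z_\ell$. This is the standard fact that fixed points of a linearly reductive (here étale constant, order invertible) group acting on a smooth scheme are smooth, as in the deformation argument of \cite[Lemma A.1]{DHKM25}. By Hensel's lemma, which applies because $\ol\Z_\ell$ is henselian with algebraically closed residue field, the $\ol\F_\ell$-point lifts to a $\ol\Z_\ell$-point of $X$. Its generic fiber is a proper parabolic of $\wh G_{\ol\Q_\ell}$ normalized by $\rho_{\ol\Q_\ell}(W_F)$, contradicting irreducibility.

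\textbf{Main obstacle.} The main technical point is matching exactly the notion of irreducibility used in \cite[\S 10.1]{CF26a} with the condition of normalizing a parabolic. This notion is slightly weaker than $\ld G$-ir. We first make precise which parabolics, or which $R$-parabolics attached to cocharacters, are involved, and then run both steps with the corresponding scheme of cocharacter-parabolics. For that we use the smoothness and properness of the scheme of parabolics of a given type, and smoothness of fixed points under a group of invertible order.
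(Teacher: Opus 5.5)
Your proof is correct. The first direction is the paper's argument, and the converse takes a genuinely different route.

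\textbf{First direction.} You use properness of the scheme of parabolics, as the paper does. The paper applies the valuative criterion directly to the valuation ring $\ol\Z_\ell$. That spares you the passage to a finite extension $\cO$ and the check that $\rho(W_F)\subset \ld G(\cO)$. Your remark about finite generation is also unnecessary: the fixed locus of any group acting on a separated scheme is closed.

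\textbf{Converse.} The paper argues through centralizers:
\begin{itemize}
\item $\rho_{\ol\F_\ell}$ is semisimple because its image is finite of order prime to $\ell$ (Lemma~\ref{lemma:finite-subgroups-invertible-order});
\item $Z_{\wh G}(\rho(W_F))$ is flat over $\ol\Z_\ell$ by \cite[Lemma A.1]{DHKM25};
\item \cite[Lemma~10.1.7]{CF26a}, which detects irreducibility of semisimple parameters via their centralizers, then transfers irreducibility from the generic fiber to the special fiber.
\end{itemize}
You instead lift the obstruction directly. The fixed-point scheme $\mathrm{Par}(\wh G)^{\Gamma}$ is smooth over $\ol\Z_\ell$: averaging kills $H^1(\Gamma,-)$ on the torsor of infinitesimal lifts, since $|\Gamma|$ is invertible. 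By Hensel's lemma, a $\Gamma$-stable proper parabolic in the special fiber therefore lifts to a $\Gamma$-stable parabolic over $\ol\Z_\ell$. Its generic fiber has the same, nontrivial, type because the type is locally constant.

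\textbf{What each route buys.} Yours is more elementary and symmetric with the first half: properness for one direction, smoothness of fixed points for the other. It also proves slightly more, namely that every $\Gamma$-stable parabolic of the special fiber lifts with its type preserved. The paper's route instead plugs into the centralizer criterion of \cite{CF26a}, which it reuses elsewhere, for instance to transfer irreducibility in Theorem~\ref{thm:base-change-functoriality}. Its only deformation-theoretic input is flatness of the centralizer.

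\textbf{Your last paragraph.} The worry there is unnecessary. The notion in \cite[\S 10.1]{CF26a}, as the paper uses it, is exactly that the image normalizes no proper parabolic of $\wh G$. So no refinement via R-parabolics or cocharacters is needed, and both of your arguments apply as written.
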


\begin{proof}
If $\rho_{\ol\Q_\ell}$ is not irreducible, then by definition there is a proper parabolic $\ol\Q_\ell$-subgroup $P_0 \subset \wh G_{\ol\Q_\ell}$ such that $\rho(W_F)$ normalizes $P_0$. Since the scheme of parabolic subgroup schemes of $\wh G$ satisfies the valuative criterion of properness \cite[Corollary 5.2.9]{Con14}, it follows that $\rho(W_F)$ normalizes a parabolic $\ol\Z_\ell$-subgroup scheme $P$ of $\wh G$ and hence also the special fiber $P_{\ol\F_\ell}$, contradicting irreducibility of $\rho_{\ol\F_\ell}$.

For the converse, note that $\rho_{\ol\F_\ell}$ is semisimple by Lemma~\ref{lemma:finite-subgroups-invertible-order}, and $Z_{\wh G}(\rho(W_F))$ is flat by \cite[Lemma A.1]{DHKM25}, so we conclude by \cite[Lemma~10.1.7]{CF26a}.
\end{proof}

\begin{lemma}\label{lemma:semisimple-image}
Let $k \in \{\ol\F_\ell, \ol\Q_\ell\}$. If $\rho\co W_F \to \ld G(k)$ is a semisimple L-parameter, then $\rho(I_F)$ is finite. Moreover, there exists a constant $C$, depending only on $\wh G$ and $|W_0|$, such that if either $k = \ol\Q_\ell$ or $\ell > C$, then $\rho(w)$ is semisimple for all $w \in W_F$.
\end{lemma}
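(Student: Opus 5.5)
Finiteness of $\rho(I_F)$ comes from continuity and $\ell\neq p$; semisimplicity of each $\rho(w)$ comes from a bound on the orders of the finite groups $\rho(I_F)$, after which a reduction to the finite-image case is possible.

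\textbf{Finiteness of $\rho(I_F)$.} By continuity and Grothendieck's monodromy argument, $\rho(P_F)$ is finite, since $P_F$ is pro-$p$ with $p\neq \ell$. Over $\ol\Q_\ell$, $\rho|_{I_F}$ is then given by a finite-image part together with a unipotent part coming from tame inertia $I_F/P_F\cong\prod_{\ell'\neq p}\Z_{\ell'}$. I would rule out the unipotent part as follows. Frobenius acts on tame inertia by $\sigma\mapsto\sigma^q$, so a nontrivial unipotent image would give a nonzero nilpotent monodromy operator $N$ with $\mathrm{Ad}(\rho(\Fr))N=q^{\pm1}N$. The image of this $\mathfrak{sl}_2$-type data lies in a parabolic normalized by $\rho(W_F)$ (the Jacobson--Morozov/Kempf parabolic attached to $N$, which is canonical). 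By semisimplicity, $\rho(W_F)$ then lies in a Levi of that parabolic, which is incompatible with the nilpotent being nonzero in the standard way. So $\rho(I_F)$ is finite. Over $\ol\F_\ell$, the image of the $\ell$-Sylow of tame inertia is a finite $\ell$-group normalized by Frobenius, and the same argument applies to the pro-$\ell$ part; the rest of tame inertia has finite image because a continuous map to a discrete group kills an open subgroup.

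\textbf{A uniform bound on $|\rho(I_F)|$.} I would next show that $|\rho(I_F)|$ has prime divisors, indeed order, bounded in terms of $\wh G$ and $|W_0|$. Set $\Gamma_0=\rho(I_F)$ and $h=\rho(\Fr)$. The group $\Gamma_0$ is finite, normalized by $h$, and it is the image of the group $I_F/P_F$ extended by a finite $p$-group. Use the tame relation $\Fr\,\sigma\,\Fr^{-1}=\sigma^{q^{\pm1}}$ on tame inertia. The image of the tame generator has order $m$, and $q$ has multiplicative order modulo $m$ bounded by the order of $h$ acting on $\Gamma_0$, up to conjugation in the normalizer. Lemma~\ref{lemma:finite-subgroup-normalizer-bound} bounds $\pi_0(N_{\ld G}(\Gamma_0)/\Gamma_0)$. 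In characteristic $0$, Jordan-type reductions as in Lemma~\ref{lemma:bigger-finiteness-claim} bound the torus part.

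\textbf{Semisimplicity of each $\rho(w)$.} Write $w=\Fr^a\sigma$ with $\sigma\in I_F$; some fixed power of $\rho(\Fr)$ centralizes $\rho(I_F)$. Semisimplicity of $\rho$ should force $\rho(\Fr)$ to be semisimple: the Jordan decomposition $\rho(\Fr)=su$ has $u$ centralizing $\rho(I_F)$, lying in the unipotent radical of a parabolic attached to $u$ and normalized by the image, which again contradicts semisimplicity unless $u=1$. Over $\ol\Q_\ell$ this is clean. Over $\ol\F_\ell$, $u$ can fail to be canonical only through finite-group phenomena of order divisible by $\ell$. This is exactly where $\ell>C$ enters: if $\ell$ exceeds the uniform bound on $|\pi_0|$ and on $|\rho(I_F)|$ modulo the torus part, then the relevant finite groups have order prime to $\ell$. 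Lemma~\ref{lemma:finite-subgroups-invertible-order} then makes their inclusions semisimple, so the argument runs as in characteristic $0$.

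\textbf{Main obstacle.} The hardest step is making the constant $C$ uniform, depending only on $\wh G$ and $|W_0|$, in characteristic $\ell$. I would first try to reduce to characteristic $0$ by lifting via the smoothness of Hom schemes \cite[Lemma A.1]{DHKM25}, as in the proof of Lemma~\ref{lemma:finite-subgroup-normalizer-bound}, and then invoke the characteristic-$0$ bounds.
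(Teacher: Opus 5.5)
Your characteristic-$0$ argument is fine in outline: you kill the monodromy operator with its Jacobson--Morozov parabolic, then use a canonical parabolic for the unipotent part of $\rho(w)$. The paper instead reduces finiteness to semisimplicity of elements via \cite[Lemma 2.2]{DHKM25}. The gap is in characteristic $\ell$, which is where the uniform constant $C$ lives. (Over $\ol\F_\ell$, finiteness of $\rho(I_F)$ is automatic from discreteness and there is no monodromy operator; the content is that the $\ell$-power-order unipotent Jordan parts vanish.)

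First, the bound you propose is false: neither $|\rho(I_F)|$ nor its set of prime divisors is bounded in terms of $\wh G$ and $|W_0|$. Already for $\wh G=\G_m$, characters of $1+\mathfrak p_F$ give $\rho(P_F)$ of arbitrarily large $p$-power order. Tame characters of order $q-1$ involve primes that are unbounded as $F$ varies. So any $C$ built from $|\rho(I_F)|$, even ``modulo the torus part'' (which you only control in characteristic $0$), depends on $F$ and on $\rho$.

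Second, in characteristic $\ell$ several of your steps presuppose $\ell\nmid|\rho(I_F)|$: applying Lemma~\ref{lemma:finite-subgroup-normalizer-bound} to $\Gamma_0=\rho(I_F)$, lifting $\Gamma_0$ to characteristic $0$ via \cite[Lemma A.1]{DHKM25}, and asserting that the unipotent part of $\rho(\Fr)$ centralizes $\rho(I_F)$. That hypothesis says the image of a tame generator $s$ is already semisimple. This is exactly the nontrivial point, and it genuinely fails for small $\ell$. For $\wh G=\GL_\ell$ with $\ell\mid q-1$, an irreducible $\Ind_{W_K}^{W_F}\chi$ with $K/F$ totally tamely ramified of degree $\ell$ has $\rho(s)$ equal to a scalar times a regular unipotent element. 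So the key step of your plan is circular.

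The missing idea is to apply Lemma~\ref{lemma:finite-subgroup-normalizer-bound} to $\Gamma=\rho(P_F)$ instead. This is a finite $p$-group, so its order is prime to $\ell$ and no bound on its size is needed. Together with Lemma~\ref{lemma:finitely-many-iso-classes}, this gives an $N$ depending only on $\wh G$ and $|W_0|$ with $|\pi_0(N_{\ld G}(\rho(P_F))/\rho(P_F))|\le N$ for every $\rho$.

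If $\rho(w)=xu$ is a Jordan decomposition, then $u$ normalizes $\rho(P_F)$ because $P_F\trianglelefteq W_F$, and $u$ has $\ell$-power order. So for $\ell>N$ one gets $u\in N_{\wh G}(\rho(P_F))^\circ=Z_{\wh G}(\rho(P_F))^\circ$. For $w=s$, take a (Kempf-optimal) Hilbert--Mumford cocharacter destabilizing $u$ in this connected reductive group. It puts $\rho(I_F)$ in a parabolic with $u$ in its unipotent radical, and complete reducibility of $\rho|_{I_F}$ \cite[Theorem 3.10]{BMR05} forces $u=1$.

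This gives $\ell\nmid|\rho(I_F)|$. The same reasoning then handles every $w\in W_F$, for instance using the prime-to-$\ell$ group $\rho(I_F)$ and the normal subgroups $\langle I_F,\Fr^n\rangle$ of $W_F$. The result is $C=N$, independent of $F$; this is essentially the paper's proof.
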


\begin{proof}
Note first that $\rho(P_F)$ is finite and consists of semisimple elements since $\ell \neq p$. If $k = \ol\F_\ell$, then it is clear that $\rho(I_F)$ is finite; by \cite[Lemma 2.2]{DHKM25}, it is enough in the case $k = \ol\Q_\ell$ to show that every element of $\rho(I_F)$ is semisimple, so we reduce to proving the second claim. By Lemma~\ref{lemma:finitely-many-iso-classes} and Lemma~\ref{lemma:finite-subgroup-normalizer-bound}, there is an integer $N$ depending only on $\wh G$ and $|W_0|$ such that
\[
|\pi_0(N_{\ld G}(\rho(P_F))/\rho(P_F))| \leq N.
\]
Let $s \in I_F$ lift a pro-generator of $I_F/P_F$, and let $\rho(s) = xu$ be the Jordan decomposition of $\rho(s)$, where $u$ is unipotent. If $\ell > N$, then since $\ell \neq p$ we have $u \in N_{\wh G}(\rho(P_F))^\circ = Z_{\wh G}(\rho(P_F))^\circ$. By the Hilbert--Mumford criterion, there is a cocharacter $\lambda\co \G_m \to Z_{\wh G}(\rho(P_F))$ such that $\lim_{t \to 0} \lambda(t)u\lambda(t)^{-1} = 1$. Since $\rho|_{I_F}$ is semisimple by \cite[Theorem 3.10]{BMR05}, it follows that $u = 1$, i.e., $\rho(s)$ is semisimple. In particular, $\rho(I_F)$ is finite as above. Precisely the same argument shows that if $\ell > N$ then $\rho(\Fr^n)$ is semisimple if $n \in \Z$ and $\Fr^n \in W_F$ is a lift of the $n$th power of Frobenius in $W_F/I_F$. Since every element of $W_F$ is such a lift, we conclude that $C = N$ works.
\end{proof}

\begin{lemma}\label{lem:semisimple-finite-image}
Let $k \in \{\ol{\F}_\ell, \ol{\Q}_\ell\}$, and let $\rho\colon W_F \to \ld G(k)$ be a semisimple L-parameter. If $G$ is semisimple and $\rho$ is irreducible, then there is a positive integer $B$, depending only on $\widehat{G}$ and $|W_0|$ (not on $F$), such that every prime dividing $|\rho(W_F)|$ either divides $|\rho(I_F)|$ or is at most $B$. In particular, there is a positive integer $N$, depending only on $\widehat{G}$, the set of prime divisors of $|\rho(I_F)|$, and $|W_0|$ (not on $F$), such that every prime dividing $|\rho(W_F)|$ is at most $N$.
\end{lemma}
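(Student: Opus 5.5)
The plan is to view $\rho(W_F)$ as generated by the finite normal subgroup $\Gamma_0 = \rho(I_F)$ together with one element $h = \rho(\Fr)$. Then the statement should follow from Lemma~\ref{lemma:big-finiteness-claim} or Lemma~\ref{lemma:bigger-finiteness-claim}, applied to $H = \ld G$, once we check that $Z_{\ld G}(\Gamma_0, h)$ is finite.

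First, $\Gamma_0$ is finite by Lemma~\ref{lemma:semisimple-image}. It is normal in $\rho(W_F)$, and since $W_F/I_F$ is generated by Frobenius, $\rho(W_F) = \langle \Gamma_0, h\rangle$. Next I would use irreducibility together with the semisimplicity of $G$. Because $\rho$ does not normalize any proper parabolic of $\wh G$ and $\rho$ is semisimple, the centralizer $Z_{\wh G}(\rho(W_F))$ should be finite modulo $Z(\wh G)^{W_F}$. This is the standard characterization of irreducible parameters, via \cite[\S 10.1]{CF26a}: a nontrivial torus in the centralizer yields a cocharacter, and hence a proper Levi containing the image. Since $G$ is semisimple, $Z(\wh G)$ is finite, so $Z_{\wh G}(\Gamma_0,h)$ is finite. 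Finally, $Z_{\ld G}(\Gamma_0,h)$ contains $Z_{\wh G}(\Gamma_0,h)$ with index at most $|W_0|$, so it is finite as well.

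In characteristic $0$, Lemma~\ref{lemma:bigger-finiteness-claim} then gives an $N$ depending only on $\ld G$, hence only on $\wh G$ and $|W_0|$, such that every prime dividing $|\rho(W_F)|$ divides $N\cdot|\Gamma_0|$. We may take $B = N$.

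For $k=\ol\F_\ell$, Lemma~\ref{lemma:bigger-finiteness-claim} is unavailable, so I would split into two cases.
\begin{itemize}
\item If $\ell \le B'$ for a bound $B'$ fixed below, the only possible new prime is controlled. Indeed, any prime dividing $|\rho(W_F)|$ but not $|\Gamma_0|$ divides the order of the image of $h$ in $\rho(W_F)/\Gamma_0$. I would bound these primes as in the proof of Lemma~\ref{lemma:big-finiteness-claim}: $h^{N_0!}$ centralizes $\Gamma_0$, and the centralizer $Z_H(\Gamma_0)^\circ_{\red}$ is a torus by \cite[Lemma~10.1.1]{CF26a}. Then $h^{N_0!}$ acts on it, and a power of it lies in the finite group $Z_H(\Gamma)$, whose order is bounded via the étale automorphism scheme of the torus and the finiteness of $\pi_0$.
\item Otherwise the key point is that these bounds are uniform. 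Here $N_0$ comes from $\pi_0(N_H(\Gamma_0)/\Gamma_0)$, which is uniformly bounded by Lemma~\ref{lemma:finite-subgroup-normalizer-bound} when $|\Gamma_0|$ is prime to $\ell$. If $\ell$ divides $|\Gamma_0|$, then $\ell$ already divides $|\rho(I_F)|$, and I would handle the $\ell'$-part by the same argument.
\end{itemize}
Either way, primes not dividing $|\Gamma_0|$ divide $N_0!\,N_1!$, where $N_0, N_1$ depend only on $\wh G$ and $|W_0|$. The order of $Z_H(\Gamma)$ is bounded because that group is finite with a torus-type structure of bounded rank and bounded component group; this requires redoing the second paragraph of Lemma~\ref{lemma:big-finiteness-claim} uniformly over all $\Gamma_0$, not just over $\Gamma_0$ of fixed order.

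The main obstacle is exactly this uniformity in positive characteristic: making $N_0$ and $N_1$ independent of $|\Gamma_0|$. I would try first to reduce via Lemma~\ref{lemma:finite-subgroup-normalizer-bound} applied to the prime-to-$\ell$ part $\rho(P_F)$ and to the tame quotient, which is metacyclic. Alternatively, I would lift $\rho$ to $\ol\Z_\ell$ when $\Gamma_0$ has order prime to $\ell$, using Lemma~\ref{lemma:check-irreducibility-mod-ell} and \cite[Lemma A.1]{DHKM25}, and then quote the characteristic $0$ case.

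The final assertion then follows immediately. Every prime dividing $|\rho(W_F)|$ either lies among the prime divisors of $|\rho(I_F)|$ or is at most $B$, so $N = \max(B, \max\{p' : p' \mid |\rho(I_F)|\})$ works.
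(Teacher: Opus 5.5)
Your characteristic $0$ argument is correct and is the one the paper relies on. You take $\Gamma_0=\rho(I_F)$ and $h=\rho(\Fr)$, get finiteness of $Z_{\ld G}(\Gamma_0,h)$ from irreducibility and finiteness of $Z(\wh G)$, and apply Lemma~\ref{lemma:bigger-finiteness-claim}, using that there are only finitely many possibilities for $\ld G$ once $\wh G$ and $|W_0|$ are fixed.

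The genuine gap is the case $k=\ol\F_\ell$, which your proposal does not actually prove.
\begin{itemize}
\item The constants $N_0,N_1$ in the proof of Lemma~\ref{lemma:big-finiteness-claim} come from finiteness of conjugacy classes of subgroups of a \emph{fixed} order $n$, so they depend on $|\Gamma_0|$.
\item Jordan's theorem, which drives Lemma~\ref{lemma:bigger-finiteness-claim}, is unavailable in characteristic $\ell$, and Lemma~\ref{lemma:finite-subgroup-normalizer-bound} needs $\ell\nmid|\Gamma_0|$.
\item The uniform bound on $|Z_H(\Gamma)|$ is asserted rather than proved; you note yourself that it would require redoing the argument.
\item When $\ell\mid|\Gamma_0|$, you still have to bound the primes other than $\ell$ dividing $|\rho(W_F)|$, and ``the same argument'' does not do this.
\item Your fallback of lifting to $\ol\Z_\ell$ is restricted to $\ell\nmid|\Gamma_0|$. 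There \cite[Lemma A.1]{DHKM25} lifts the finite subgroup but not, by itself, the whole parameter, and you never say how a statement about the lift yields one about $\rho$.
\end{itemize}

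The missing idea is to lift $\rho$ itself, with no condition on $|\rho(I_F)|$.
\begin{itemize}
\item By flatness of the moduli space of L-parameters \cite[Theorem 4.1 i)]{DHKM25}, there is $\wt\rho_0\co W_F\to\ld G(\ol\Z_\ell)$ reducing to $\rho$. Its generic fiber is irreducible, hence semisimple, by Lemma~\ref{lemma:check-irreducibility-mod-ell}.
\item The characteristic $0$ case then shows that $\wt\rho_0(W_F)$ is finite and that every prime dividing its order divides $N_0\cdot|\wt\rho_0(I_F)|$.
\item Reduction gives surjections $\wt\rho_0(W_F)\to\rho(W_F)$ and $\wt\rho_0(I_F)\to\rho(I_F)$. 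Their kernels are finite subgroups of the congruence kernel of $\ld G(\ol\Z_\ell)$, hence $\ell$-groups. So every prime $r\neq\ell$ dividing $|\rho(W_F)|$ divides $N_0\cdot|\rho(I_F)|$.
\item The prime $\ell$ itself is handled by Lemma~\ref{lemma:semisimple-image}. If $\ell>C$, every $\rho(w)$ is semisimple, so $\ell\nmid|\rho(W_F)|$; otherwise $\ell\le C$.
\end{itemize}
Hence $B=\max(N_0,C)$ works, and no uniformity statement in positive characteristic is needed. This is essentially the paper's argument.
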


\begin{proof}
If $k = \ol\F_\ell$, then by flatness of the moduli space of L-parameters \cite[Theorem 4.1 i)]{DHKM25} and Lemma~\ref{lemma:check-irreducibility-mod-ell}, there is some $\wt\rho_0\co W_F \to \ld G(\ol\Z_\ell)$ lifting $\rho$; let $\wt\rho\co W_F \to \ld G(\ol\Q_\ell)$ be a semisimplification of $(\wt\rho_0)_{\ol\Q_\ell}$. By Lemma~\ref{lemma:finitely-many-iso-classes} and Lemma~\ref{lemma:bigger-finiteness-claim}, there is some $N_0$ depending only on $\wh G$ and $|W_0|$ such that the only prime numbers dividing $|\wt\rho(W_F)|$ divide $N_0 \cdot |\wt\rho(I_F)|$. If $\ell > C$, where $C$ is as in Lemma~\ref{lemma:semisimple-image}, then $\ell$ does not divide the order of $|\rho(W_F)|$. Thus one can take $B = \max(N_0, C)$ and $N = \max(N_0, C, D)$, where $D$ is the largest squarefree divisor of $|\rho(I_F)|$.
\end{proof}

The following lemma will allow us to apply Lemma~\ref{lem:semisimple-finite-image} effectively in practice.

\begin{lemma}\label{lemma:inertia-order}
There exists an integer $N$ depending only on $\widehat{G}$, $|W_0|$, and $q$ such that for every prime number $\ell\geq N$, every $k\in\{\ol\Q_\ell,\ol\F_\ell\}$, and every semisimple L-parameter $\rho\colon W_{F_\ell}\to\ld G(k)$, the image $\rho(I_F)$ is of order not divisible by $\ell$.
\end{lemma}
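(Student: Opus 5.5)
The proof is a structural argument on the image of inertia. We split into the wild part $\rho(P_F)$ and the tame quotient, and use that Frobenius of $F_\ell$ acts on tame inertia by $q^\ell$-powering. (The statement writes $W_{F_\ell}$ and $\rho(I_F)$; since $I_{F_\ell}=I_F$ there is no ambiguity.)

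\textbf{Wild part.} Since $\ell\neq p$, the group $\rho(P_F)$ is a finite $p$-group, so its order is prime to $\ell$. It therefore suffices to show that the image of a lift $s\in I_F$ of a pro-generator of $I_F/P_F$ has order prime to $\ell$ modulo $\rho(P_F)$.

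\textbf{Reduction to a finite-order element.} First, $\rho(I_F)$ is finite: by Lemma~\ref{lemma:semisimple-image} when $k=\ol\Q_\ell$, and automatically when $k=\ol\F_\ell$.
\begin{itemize}
\item When $k=\ol\F_\ell$, the $\ell$-part of the order of $\rho(s)$ is its unipotent part.
\item When $k=\ol\Q_\ell$, we must show that the order of $\rho(s)$ has no factor $\ell$.
\end{itemize}
Pass to the reductive group $H=N_{\ld G}(\rho(P_F))/\rho(P_F)$. By Lemma~\ref{lemma:finitely-many-iso-classes} and Lemma~\ref{lemma:finite-subgroup-normalizer-bound}, $\pi_0(H)$ has order at most some $N_0$ depending only on $\wh G$ and $|W_0|$, as in the proof of Lemma~\ref{lemma:semisimple-image}. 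Let $\Fr$ be a Frobenius lift in $W_{F_\ell}$; it is the $\ell$-th power of a Frobenius of $F$. Set $Q=q^\ell$, so that $\Fr\, s\,\Fr^{-1}\equiv s^{Q}$ modulo $P_F$. Write $t$ for the image of $\rho(s)$ in $H$ and $f$ for the image of $\rho(\Fr)$. Then
\[
f t f^{-1}=t^{Q}.
\]
Take $\ell>N_0$. Then the $\ell$-primary part $t_\ell$ of $t$ lies in $H^\circ$, and it also satisfies $f t_\ell f^{-1}=t_\ell^{Q}$.

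\textbf{Main obstacle: showing $t_\ell$ is trivial.} Suppose $t_\ell\neq 1$. In characteristic $0$, $t_\ell$ is semisimple of $\ell$-power order $\ell^a$. Conjugation by $f$ permutes the $H^\circ$-conjugacy classes of elements of $\langle t_\ell\rangle$, and up to the component group only boundedly many such classes are permuted. Hence some bounded power $f^m$ with $m\le N_1(\wh G,|W_0|)$ maps the class of $t_\ell$ to itself, so $t_\ell^{Q^m}$ is conjugate to $t_\ell$.

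Elements conjugate in a connected reductive group are $W$-conjugate in a maximal torus. The eigenvalues of $t_\ell$ in a faithful representation of bounded dimension $d$ are then permuted by $x\mapsto x^{Q^m}$. Consequently $Q^{m\cdot d!\,|W|}\equiv 1 \pmod{\ell}$ on a primitive $\ell$-th root of unity. For $\ell>N$, this forces $q^{\ell M}\equiv 1\pmod \ell$ with $M$ bounded. By Fermat, $q^{\ell M}\equiv q^{M}\pmod\ell$, so $\ell\mid q^M-1$. This is impossible once $\ell>q^{M}$, which gives the dependence of $N$ on $q$.

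In characteristic $\ell$, the same argument would be applied to the unipotent part. More simply, by semisimplicity and the Hilbert--Mumford argument already used in Lemma~\ref{lemma:semisimple-image}, the unipotent part vanishes for $\ell>C$. Alternatively, one can lift via Lemma~\ref{lem:semisimple-finite-image}'s technique (flatness from \cite{DHKM25}) to $\ol\Z_\ell$ and reduce to characteristic $0$. Taking $N$ to be the maximum of the constants $N_0$, $C$, and $q^M+1$ finishes the proof.
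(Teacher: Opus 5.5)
Your proof is correct and uses the same core mechanism as the paper. The Frobenius of $F_\ell$ conjugates a tame generator to its $q^\ell$-th power, so the eigenvalues in a faithful representation of bounded dimension are permuted by $x\mapsto x^{q^\ell}$. Since $q^{\ell i}\equiv q^i \pmod{\ell}$, taking $\ell$ larger than a bounded power of $q$ excludes nontrivial $\ell$-power roots of unity, and Lemma~\ref{lemma:semisimple-image} supplies semisimplicity.

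The implementation differs in three ways.
\begin{itemize}
\item The paper fixes one embedding $\iota\colon \ld G\hookrightarrow \GL_n$ and takes $\sigma\in I_F$ a pro-generator of $I_F/P_F$ of pro-order prime to $p$, so that $\rho(\sigma)^{q^\ell}$ is genuinely conjugate to $\rho(\sigma)$ inside $\ld G$. This gives the explicit bound $N=\max(q^n+1,C+1)$ and treats both coefficient fields at once.
\item You make the Frobenius relation exact by passing to $H=N_{\ld G}(\rho(P_F))/\rho(P_F)$. This avoids choosing a tame complement. The cost is that you need bounds on $|\pi_0(H)|$ and on the dimension of a faithful representation of $H$ that are uniform in $\rho$ (and in $\ell$), via Lemmas~\ref{lemma:finitely-many-iso-classes} and~\ref{lemma:finite-subgroup-normalizer-bound}. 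So your $N$ is far less explicit.
\item Your split by coefficient field is a clean observation. Over $\ol\F_\ell$ the $\ell$-part of $\rho(s)$ is exactly its unipotent part, so Lemma~\ref{lemma:semisimple-image} alone suffices and no dependence on $q$ is needed. Over $\ol\Q_\ell$ semisimplicity is automatic and only the eigenvalue argument matters.
\end{itemize}

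One step should be repaired, although it is not needed. The assertion that \emph{only boundedly many such classes are permuted} is not correct as written. Since $t_\ell$ lies in a maximal torus of $H^\circ$, the number of $H^\circ$-conjugacy classes meeting $\langle t_\ell\rangle$ is at least $\ell^a/|W|$, which is unbounded. What is true is that $f$ acts on $H^\circ$-classes through its image in $\pi_0(H)$, so $f^m\in H^\circ$ for some $m\le N_0$.

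You can drop this step entirely, along with the requirement $t_\ell\in H^\circ$ and the Weyl group. The identity $f t_\ell f^{-1}=t_\ell^{Q}$ already shows that $t_\ell$ and $t_\ell^{Q}$ are conjugate in $H$. Hence they have the same eigenvalues in any representation of $H$, so $m=1$ works.
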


\begin{proof}
Note that given $\widehat{G}$ and $|W_0|$, there are only finitely many possibilities for $\ld G$ up to isomorphism, so we need only show the existence of some $N$ for a fixed $\ld G$. Let $C$ be the constant from Lemma~\ref{lemma:semisimple-image}. Choose an embedding $\iota\colon \ld G \to \GL_n$, and let $N = \max(q^n+1,C+1)$. If $\ell \geq N$, then $q^i \not\equiv 1 \pmod\ell$ for all $1 \leq i \leq n$. This implies $q^{i\ell} \not\equiv 1 \pmod\ell$ for all $1 \leq i \leq n$. If $\rho$ is as above and $\sigma \in I_F$ is a pro-generator of $I_F/P_F$ of pro-order prime to $p$, then $\rho(\sigma)^{q^\ell}$ is conjugate to $\rho(\sigma)$ by the defining relation of $W_{F_\ell}$. If $\alpha_1, \dots, \alpha_n$ are the eigenvalues of $\iota(\rho(\sigma))$, then for each $j$ there is some $1 \leq i \leq n$ such that $\alpha_j^{q^{i\ell}} = \alpha_j$, i.e., $\alpha_j$ is a $(q^{i\ell} - 1)$th root of unity. So the choice of $\ell$ implies that $\ell$ does not divide the order of the semisimple part of $\rho(\sigma)$. Since $N > p$, to show $\ell \nmid |\rho(I_F)|$ it suffices to show that $\rho(\sigma)$ is semisimple. But now this follows from Lemma~\ref{lemma:semisimple-image} after passing from $\ld G$ to $\GL_n$, since $N > n+1$.
\end{proof}

Finally, we record three technical lemmas which will only be used in the proof of Theorem~\ref{thm:unramified-twisted-Levi-functoriality}.

\begin{lemma}\label{lemma:fixed-point-torus-flat}
Let $H$ be a reductive $\ol\Z_\ell$-group scheme, and let $\alpha$ be a $\ol\Z_\ell$-automorphism of $H$ such that $(H^\alpha_{\ol\F_\ell})^\circ_{\red}$ is a torus. Then $(H^\alpha_{\ol\Q_\ell})^\circ$ is a torus, and the schematic closure of $(H^\alpha_{\ol\Q_\ell})^\circ$ in $H$ is a $\ol\Z_\ell$-torus with special fiber $(H^\alpha_{\ol\F_\ell})^\circ_{\red}$.
\end{lemma}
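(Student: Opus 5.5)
The plan is to study the fixed-point scheme $H^\alpha$ as a closed subgroup scheme of $H$ over $\ol\Z_\ell$ and compare its fibers. Say $H$ has fibers of dimension $n$, and let $T_0 = (H^\alpha_{\ol\F_\ell})^\circ_{\red}$, a torus of dimension $d$.

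\textbf{Step 1: lift $T_0$.} Tori lift uniquely up to isomorphism over the henselian (indeed $\ell$-adically complete, after passing to the completion if necessary) ring $\ol\Z_\ell$. More precisely, the functor of subtori of $H$ is smooth, and $T_0$ is a subtorus of the special fiber of $H$. The finite subgroups $T_0[m]$, for $m$ prime to $\ell$, are fixed pointwise by $\alpha$. They lift uniquely to $\alpha$-fixed finite étale subgroups of $H$, using rigidity of homomorphisms from finite étale groups. I would use the maximal-torus centralizer approach: take $m$ large and prime to $\ell$, so that $Z_H(T_0[m])^\circ$ has special fiber $Z_{H_{\ol\F_\ell}}(T_0)$. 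A cleaner route is to view $\alpha$ as an automorphism of the torus-scheme of $H$. The $\alpha$-fixed subtori form a smooth scheme, because the obstruction lives in $\rH^1$ of the finite group $\langle\alpha\rangle$ acting on a torus Lie algebra. This is not automatically zero, so instead I lift the homomorphism $T_0 \to H_{\ol\F_\ell}$. Tori are linearly reductive and $\underline{\Hom}(T,\cH)$ is smooth (SGA3), so we obtain a $\ol\Z_\ell$-torus $\cT\subset H$. The subtorus $\alpha(\cT)$ has the same special fiber as $\cT$, so by rigidity of tori (SGA3 IX: two subtori agreeing on the special fiber over a henselian local ring are conjugate by an element reducing to the identity; better, use uniqueness of lifts of the torsion $T_0[m]$ for all $m$ prime to $\ell$, which are Zariski dense) we get $\alpha(\cT)=\cT$ with $\alpha$ acting trivially on $\cT$. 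Thus $\cT \subset H^\alpha$.

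\textbf{Step 2: dimension bound.} By upper semicontinuity of fiber dimension for the finite-type group scheme $H^\alpha$ (Chevalley; the identity section passes through every fiber), we get $\dim H^\alpha_{\ol\Q_\ell} \le \dim H^\alpha_{\ol\F_\ell} = d$. On the other hand, $\cT_{\ol\Q_\ell}\subset (H^\alpha_{\ol\Q_\ell})^\circ$ has dimension $d$. Since the generic fiber is smooth in characteristic $0$, it follows that $(H^\alpha_{\ol\Q_\ell})^\circ = \cT_{\ol\Q_\ell}$ is a torus.

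\textbf{Step 3: the closure.} The schematic closure of $\cT_{\ol\Q_\ell}$ is $\cT$ itself, since $\cT$ is flat and hence is the closure of its generic fiber. Its special fiber is $T_0$.

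\textbf{Main obstacle.} I expect the hardest point to be the $\alpha$-invariance in Step 1. The cleanest argument is to consider the set $\bigcup_{m}T_0[m]$ with $(m,\ell)=1$. Each $T_0[m]$ is a finite étale $\ol\F_\ell$-group of pointwise $\alpha$-fixed points, and it lifts uniquely to $\cT[m]$. Because $\alpha$ preserves the unique lift inside $\cT$, which is étale, and acts trivially mod $\ell$, it fixes $\cT[m]$ pointwise. Density of these torsion subgroups in $\cT$ then gives that $\alpha|_{\cT}$ is the identity.
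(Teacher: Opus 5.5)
There is a genuine gap in Step~1, at exactly the point you flag as the main obstacle: nothing in your argument makes the lifted torus $\cT$ stable under $\alpha$, let alone fixed pointwise.

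The uniqueness you invoke for lifts of $T_0[m]$ holds only \emph{inside} a fixed torus (or étale subscheme) such as $\cT$. As subgroups of $H$, lifts of the finite étale group $T_0[m]$ are unique only up to conjugation by $\ker\bigl(H(\ol\Z_\ell)\to H(\ol\F_\ell)\bigr)$. The same is true of lifts of $T_0$ itself, which is all the SGA3 rigidity you quote gives. Since $\alpha(\cT[m])=\alpha(\cT)[m]$ is just another such lift, the sentence ``$\alpha$ preserves the unique lift inside $\cT$'' presupposes $\alpha(\cT)=\cT$, which is what had to be shown.

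For an arbitrary lift this is false, even when the hypotheses hold comfortably. Take $H=\GL_2$, $\ell$ odd, and $\alpha=\Ad(\diag(1,-1))$, so $H^\alpha$ is the diagonal torus $D$. Let $\cT=hDh^{-1}$ with $h=\begin{pmatrix}1&\ell\\0&1\end{pmatrix}$. Then $\alpha(h)=h^{-1}$, so $\alpha(\cT)=h^{-1}Dh\neq\cT$ (since $h^2\notin N_H(D)$), and $\cT\not\subset H^\alpha$. Your final step (an automorphism of a torus that is trivial mod $\ell$ is trivial) is fine, but only once stability is known.

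So the real content of Step~1 is the \emph{existence} of an $\alpha$-fixed lift, equivalently the lower bound $\dim H^\alpha_{\ol\Q_\ell}\geq d$, and you give no argument for it. Choosing the lift well means solving the twisted-conjugacy equation $g^{-1}\alpha(g)\in N_H(\cT)\,h_\alpha^{-1}$, where $\alpha(\cT)=h_\alpha\cT h_\alpha^{-1}$ with $h_\alpha$ reducing to $1$, for $g$ in the kernel of reduction. The natural order-by-order argument requires $d\alpha-1$ to be surjective on $\Lie(H_{\ol\F_\ell})/\Lie(Z_{H_{\ol\F_\ell}}(T_0))$. That is a smoothness-type property of $H^\alpha_{\ol\F_\ell}$ which the hypothesis does not provide: it only concerns $(H^\alpha_{\ol\F_\ell})^\circ_{\red}$, and $H^\alpha_{\ol\F_\ell}$ may be non-reduced. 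Nor is averaging available, since $\alpha$ need not have finite order, let alone order prime to $\ell$; your own aside about $\rH^1$ of ``the finite group $\langle\alpha\rangle$'' already assumes this.

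The paper gets the lower bound from a different source:
\begin{itemize}
\item It takes a pinning-preserving $\alpha_0$ in the same connected component of $\Aut_{H/\ol\Z_\ell}$ as $\alpha$.
\item Using \cite{ALRR22} and Prasad--Yu, it shows that $r=\rk (H^{\alpha_0}_{\kappa})^\circ_{\red}$ is the same for $\kappa=\ol\Q_\ell$ and $\kappa=\ol\F_\ell$.
\item The Steinberg-type bound \cite[Remark 5.2.2(1)]{XZ19} says every $\alpha$ in that component has $\dim H^\alpha_\kappa\geq r$. The torus hypothesis forces $d=r$.
\item Semicontinuity then gives $\dim H^\alpha_{\ol\Q_\ell}=r$, and \cite[Exp.~X, Th\'eor\`eme 8.8]{SGA3II} shows the closure $\cZ$ is a torus.
\end{itemize}
Your Steps~2 and~3 are fine once an $\alpha$-fixed lift exists; indeed $\cZ$ is a posteriori such a lift. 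But that existence is exactly the missing idea.
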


\begin{proof}
Recall from \cite[Theorem 7.1.9]{Con14} that the Aut scheme $\Aut_{H/\ol\Z_\ell}$ has identity component $H^{\ad}$, and the quotient $\Aut_{H/\ol\Z_\ell}/H^{\ad}$ is representable by an \'etale-locally constant $\ol\Z_\ell$-group scheme. Let $C$ be the connected component of $\Aut_{H/\ol\Z_\ell}$ through which the section $\alpha$ factors, so $C$ is a smooth affine $\ol\Z_\ell$-scheme. Let $\alpha_0$ denote a pinning-preserving $\ol\Z_\ell$-automorphism of $H$ which factors through $C$. By \cite[Theorem 1.1(1), (4)]{ALRR22}, the fixed point scheme $H^{\alpha_0}$ is $\ol\Z_\ell$-flat, and for each field $k$ over $\ol\Z_\ell$, the $k$-group scheme $(H^{\alpha_0}_k)^\circ_{\red}$ is connected reductive. By \cite[Proposition 3.4]{PY06}, if $\cZ_0$ is the schematic closure of $(H^{\alpha_0}_{\ol\Q_\ell})^\circ$ in $H$, then the normalization $\wt\cZ_0$ of $\cZ_0$ is smooth. Since the normalization map is finite, the induced map $(\wt\cZ_0)_{\ol\F_\ell}^\circ \to (H^{\alpha_0}_{\ol\F_\ell})^\circ_{\red}$ of smooth $\ol\F_\ell$-group schemes has finite kernel, so it is surjective for dimension reasons, and it follows that $(\wt\cZ_0)_{\ol\F_\ell}^\circ$ is a connected reductive group of the same rank as $(H^{\alpha_0}_{\ol\F_\ell})^\circ_{\red}$. In particular, $\wt\cZ_0$ is a reductive $\ol\Z_\ell$-group scheme by \cite[Proposition 3.1.3]{Con14}, so the rank of $(\wt\cZ_0)_{\ol\F_\ell}$ is equal to the rank of $(\wt\cZ_0)_{\ol\Q_\ell} = (H^{\alpha_0}_{\ol\Q_\ell})^\circ$; call this rank $r$.

By \cite[Remark 5.2.2(1)]{XZ19}, since $(H^\alpha_{\ol\F_\ell})^\circ_{\red}$ is a torus, it follows that $\dim(H^\alpha_{\ol\F_\ell}) = r$. It follows from upper-semicontinuity of fiber dimension that $\dim(H^\alpha_{\ol\Q_\ell}) \leq r$, and the previous paragraph shows $\dim(H^\alpha_{\ol\Q_\ell}) \geq r$ (using \cite[Remark 5.2.2(1)]{XZ19} again). Moreover, $(H^\alpha_{\ol\Q_\ell})^\circ$ is a torus: if $\cZ$ is the schematic closure of $(H^\alpha_{\ol\Q_\ell})^\circ$ in $H$, then $(\cZ_{\ol\F_\ell})^\circ_{\red}$ is an $\ol\F_\ell$-torus since it is a smooth $\ol\F_\ell$-group scheme contained in $(H^\alpha_{\ol\F_\ell})^\circ_{\red}$. By \cite[Expos\'e X, Th\'eor\`eme 8.8]{SGA3II}, it follows that $\cZ$ is a $\ol\Z_\ell$-torus, as desired.
\end{proof}

\begin{lemma}\label{lemma:torus-action-mod-ell}
Let $\rho_1, \rho_2 \co W_F \to \ld G(\ol\Z_\ell)$ be L-parameters, and write $\ol\rho_i$ for the reduction of $\rho_i$ modulo the maximal ideal of $\ol\Z_\ell$. Suppose that
\begin{enumerate}
\item $\rho_1|_{I_F} = \rho_2|_{I_F}$,
\item $Z_{\wh G_{\ol\F_\ell}}(\ol\rho_1(I_F))^\circ_{\red}$ is a torus.
\end{enumerate} 
Then $\ol\rho_1$ and $\ol\rho_2$ induce the same conjugation action of $W_F$ on $Z_{\wh G_{\ol\F_\ell}}(\ol\rho_1(I_F))^\circ_{\red}$ if and only if $\rho_1$ and $\rho_2$ induce the same conjugation action of $W_F$ on $Z_{\wh G_{\ol\Q_\ell}}(\rho_1(I_F))^\circ$.
\end{lemma}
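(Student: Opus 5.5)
The plan is to reduce to Lemma~\ref{lemma:fixed-point-torus-flat}. Since $\rho_1|_{I_F} = \rho_2|_{I_F}$ and $I_F$ is normal in $W_F$, each $\rho_i(w)$ normalizes $\rho_1(I_F)$. Hence conjugation by $\rho_i(w)$ preserves the centralizer $\cZ' \coloneqq Z_{\wh G}(\rho_1(I_F))$, regarded as a $\ol\Z_\ell$-group scheme. Both conditions in the statement depend only on the elements $g_w \coloneqq \rho_1(w)^{-1}\rho_2(w)$. Indeed, $\rho_1$ and $\rho_2$ induce the same action on a $W_F$-stable subgroup exactly when each $g_w$ centralizes it. Moreover, $g_w$ centralizes $\rho_1(I_F)$: for $x\in I_F$,
\[
g_w\rho_1(x)g_w^{-1}=\rho_1(w)^{-1}\rho_1(wxw^{-1})\rho_1(w)=\rho_1(x).
\]
So $g_w \in \cZ'(\ol\Z_\ell)$. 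The claim therefore reduces to showing that $g_w$ centralizes $Z_{\wh G_{\ol\Q_\ell}}(\rho_1(I_F))^\circ$ if and only if its reduction $\ol g_w$ centralizes $Z_{\wh G_{\ol\F_\ell}}(\ol\rho_1(I_F))^\circ_{\red}$.

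Next I would realize these centralizers as a flat family. The image $\rho_1(I_F)$ is a group $\Gamma$ acting on $H=\wh G_{\ol\Z_\ell}$ by conjugation. I want to apply Lemma~\ref{lemma:fixed-point-torus-flat}, which is stated for a single automorphism. If $\Gamma$ is finite and cyclic modulo $P_F$, I would pick a generator. More robustly, I would redo the argument of Lemma~\ref{lemma:fixed-point-torus-flat} for the action of $\Gamma$. The expected output is the same: $Z_{\wh G_{\ol\Q_\ell}}(\rho_1(I_F))^\circ$ is a torus, and its schematic closure $\cT \subset \wh G_{\ol\Z_\ell}$ is a $\ol\Z_\ell$-torus with special fiber $Z_{\wh G_{\ol\F_\ell}}(\ol\rho_1(I_F))^\circ_{\red}$. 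Hypothesis (2) is exactly what this input requires. The main obstacle is making this extension from one automorphism to the group $\rho_1(I_F)$ rigorous. Finiteness of $\rho_1(I_F)$ is not given a priori over $\ol\Z_\ell$; only tame inertia with a pro-$p$ part appears. My first attempt would be to use that $\rho_1(P_F)$ is finite of order prime to $\ell$. Its centralizer is then a smooth reductive $\ol\Z_\ell$-group by \cite[Lemma A.1]{DHKM25}. The remaining tame generator then acts on that centralizer as a single automorphism $\alpha$, so Lemma~\ref{lemma:fixed-point-torus-flat} applies to $H=Z_{\wh G}(\rho_1(P_F))$ directly. The fixed points of $\alpha$ are exactly the centralizer of $\rho_1(I_F)$.

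Finally, conjugation by $g_w \in \cZ'(\ol\Z_\ell)$ preserves $\cT$, because it preserves its generic fiber and $\cT$ is a schematic closure. This gives an automorphism of the $\ol\Z_\ell$-torus $\cT$. Automorphisms of a torus over the connected base $\ol\Z_\ell$ correspond to automorphisms of the constant character lattice. Hence the automorphism is trivial if and only if it is trivial on the generic fiber, and if and only if it is trivial on the special fiber. The generic fiber of $\cT$ is $Z_{\wh G_{\ol\Q_\ell}}(\rho_1(I_F))^\circ$, and the special fiber is $Z_{\wh G_{\ol\F_\ell}}(\ol\rho_1(I_F))^\circ_{\red}$. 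Since the reduction of $\rho_i$ is $\ol\rho_i$, this gives the equivalence for each $w$, and hence the lemma.
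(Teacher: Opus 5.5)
Your proposal is correct and follows essentially the same route as the paper. The paper also takes $\cZ_P=Z_{\wh G}(\rho_1(P_F))$, which is smooth with reductive identity component (it cites \cite[Corollary 3.7]{BCT24}); it realizes $Z_{\wh G}(\rho_1(I_F))$ as the fixed points of a lift $s$ of a tame generator and applies Lemma~\ref{lemma:fixed-point-torus-flat} to get the $\ol\Z_\ell$-torus closure. It then concludes from stability of the schematic closure under conjugation and \'etaleness of the automorphism scheme of a torus; your repackaging via $g_w=\rho_1(w)^{-1}\rho_2(w)$ is harmless. One detail to fix is that $\cZ_P$ may be disconnected, so Lemma~\ref{lemma:fixed-point-torus-flat} should be applied to the relative identity component $\cZ_P^\circ$, using $((\cZ_P)^{\rho_1(s)})^\circ=((\cZ_P^\circ)^{\rho_1(s)})^\circ$, as the paper does.
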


\begin{proof}
By \cite[Corollary 3.7]{BCT24}, the $\ol\Z_\ell$-group scheme $\cZ_P \coloneqq Z_{\wh G}(\rho_1(P_F))$ is smooth affine and its relative identity component $\cZ_P^\circ$ is reductive. Let $s \in I_F$ lift a pro-generator of $I_F/P_F$. Since $I_F$ is topologically generated by $P_F$ and $s$, we have $Z_{\wh G}(\rho_1(I_F)) = (\cZ_P)^{\rho_1(s)}$, and $((\cZ_P)^{\rho_1(s)})^\circ = ((\cZ_P^\circ)^{\rho_1(s)})^\circ$.
Since $Z_{\wh G_{\ol\F_\ell}}(\ol\rho_1(I_F))^\circ_{\red}$ is a torus by hypothesis, Lemma~\ref{lemma:fixed-point-torus-flat} shows that if $\cZ$ denotes the schematic closure of $Z_{\wh G_{\ol\Q_\ell}}(\rho_1(I_F))^\circ$ in $\wh G$ then $\cZ$ is a $\ol\Z_\ell$-torus with special fiber $Z_{\wh G_{\ol\F_\ell}}(\ol\rho_1(I_F))^\circ_{\red}$.
For $w \in W_F$ and $j \in \{1,2\}$, conjugation by $\rho_j(w)$ preserves the generic fiber $Z_{\wh G_{\ol\Q_\ell}}(\rho_1(I_F))^\circ$ and hence also its schematic closure $\cZ$. Since tori have \'etale Aut schemes, the result follows.
\end{proof}

\begin{lemma}\label{lemma:toral-l-parameter-centralizer}
Let $H \subset G$ be an unramified twisted Levi, let $k \in \{\ol\F_\ell, \ol\Q_\ell\}$ for a prime $\ell \neq p$, let $\ld j_{H,G}\co \ld H \to \ld G$ be the canonical L-embedding over $k$ from \cite[Definition~4.5.1]{CF26a}, and let $\rho_H\co W_F \to \ld H(k)$ be a semisimple L-parameter. If $\rho \coloneqq \ld j_{H,G} \circ \rho_H$, then the ranks of $Z_{\wh G}(\rho(I_F))^\circ_{\red}$ and $Z_{\wh H}(\rho_H(I_F))^\circ_{\red}$ are equal. In particular, if $Z_{\wh G}(\rho(I_F))^\circ_{\red}$ is a torus, then
\[
Z_{\wh G}(\rho(I_F))^\circ_{\red} = \ld j_{H,G}(Z_{\wh H}(\rho_H(I_F))^\circ_{\red}).
\]
\end{lemma}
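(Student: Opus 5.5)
The plan is to reduce to a statement about centralizers of a torus-valued datum inside the dual Levi. First, recall the shape of $\ld j_{H,G}$ from \cite[Definition~4.5.1]{CF26a}. Since $H$ is an unramified twisted Levi, $\wh H$ is a Levi subgroup of $\wh G$, namely $\wh H = Z_{\wh G}(Z(\wh H)^\circ)$. The embedding restricted to $\wh H$ is the inclusion, and $\ld j_{H,G}$ differs from the naive inclusion only by a cocycle. Because $H$ splits over an unramified extension relative to $G$, this cocycle may be taken to be trivial on $I_F$ up to a central correction in $Z(\wh H)$.

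The first key step is to show that conjugation by $\rho(I_F)$ preserves $Z(\wh H)^\circ$ and acts on it through a finite group. The twisted part of $\ld j_{H,G}$ on $I_F$ acts on $Z(\wh H)^\circ$ through the inertial action on $Z(\wh H)$. This action is trivial, because $H$ is unramified in the relevant sense: its maximal central torus splits over $F^{\unr}$ relative to the Galois action on $\wh G$ restricted to inertia. Consequently $Z(\wh H)^\circ \subset Z_{\wh G}(\rho(I_F))$. I expect this to be checked directly from the construction in \cite[\S4]{CF26a}.

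Next, set $S = Z(\wh H)^\circ$ and $Z = Z_{\wh G}(\rho(I_F))^\circ_{\red}$. Then $S$ is a central torus in $Z$. By the previous step, $Z_{Z}(S)^\circ = (Z \cap Z_{\wh G}(S))^\circ_{\red}$, and this equals $Z_{\wh H}(\rho(I_F))^\circ_{\red}$, since $Z_{\wh G}(S)=\wh H$. Moreover, $\rho(I_F)\subset \ld j_{H,G}(\ld H)$ acts on $\wh H$ compatibly with $\rho_H$, so $Z_{\wh H}(\rho(I_F)) = Z_{\wh H}(\rho_H(I_F))$.

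For the rank statement, note that $Z$ is reductive by semisimplicity of $\rho|_{I_F}$ (using \cite[Proposition 3.12]{BMR05} and \cite[Theorem 3.10]{BMR05}, as in Lemma~\ref{lemma:semisimple-image}). The torus $S$ lies in a maximal torus $T_Z$ of $Z$, and $T_Z$ centralizes $S$. Hence $T_Z \subset Z_Z(S)^\circ$, and the centralizer of a torus in a connected reductive group has full rank. This gives the equality of ranks.

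For the "in particular", suppose $Z$ is a torus. Then $Z$ is commutative, so $Z$ centralizes $S$ and therefore $Z = Z_Z(S)^\circ$, which is $\ld j_{H,G}(Z_{\wh H}(\rho_H(I_F))^\circ_{\red})$. Here one uses that $\ld j_{H,G}$ is the identity on $\wh H$.

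The main obstacle is the first step: verifying that the inertia part of $\ld j_{H,G}$ normalizes $\wh H$ and centralizes $Z(\wh H)^\circ$. This must be read off from the explicit $\chi$-data and sign characters of \cite{FKS23}. I would first try to show that all of those corrections on $I_F$ take values in $Z(\wh H)$ or in a torus normalizing $\wh H$, which reduces the problem to the Galois action on $Z(\wh H)$.
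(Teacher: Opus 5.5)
Your first step is false, and this is a genuine gap rather than something to be read off from $\chi$-data. In fact the $\chi$-data play no role. Since $\ld j_{H,G}$ is a homomorphism restricting to the inclusion on $\wh H$, conjugation by $\rho(w)$ on $\wh H$ equals conjugation by $\rho_H(w)$ inside $\ld H$. On $Z(\wh H)$ this is just the intrinsic action of $w$ in the L-group of $H$, because inner automorphisms of $\wh H$ act trivially there. Hence $\rho(I_F)$ centralizes $Z(\wh H)^\circ$ if and only if $Z(H)^\circ$ splits over $F^{\unr}$.

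Being an unramified twisted Levi only says that $H_{F^{\unr}}$ is a Levi subgroup of $G_{F^{\unr}}$, i.e.\ the centralizer of the maximal $F^{\unr}$-split torus $A \subset Z(H)$. It does not force $Z(H)^\circ$ to be unramified. For example, $Z(H)^\circ \supset Z(G)^\circ$, which is ramified for a unitary group attached to a ramified quadratic extension. The failure also occurs for semisimple $G$: if $G$ is the ramified quasi-split $\mathrm{SU}_3$ and $H$ is its minimal Levi, then $H \cong \Res_{E/F}\G_m$ with $E/F$ ramified, and inertia acts nontrivially on $Z(\wh H)^\circ = \wh H$. Separately, $S$ is not central in $Z$ even when it lies in $Z$; otherwise you would get $Z \subset \wh H$ unconditionally. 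Your rank argument only uses $S \subset T_Z$, so that slip is harmless.

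The repair is to use the right torus. Put $S' \coloneqq (Z(\wh H)^{\rho(I_F)})^\circ_{\red}$, which $\rho(I_F)$ centralizes by definition and which corresponds to $A$ under duality. The input you actually need from ``unramified'' is $Z_{\wh G}(S') = \wh H$. This is the dual form of $H_{F^{\unr}} = Z_{G_{F^{\unr}}}(A)$: a root of $G$ vanishes on $A$ if and only if its $I_F$-average on $Z(H)^\circ$ vanishes, and the same holds dually for coroots and $S'$ (compare Borel's description of Levi subgroups of L-groups).

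With $S'$ in place of $S$, your remaining steps go through without needing reductivity of $Z$. A maximal torus $T_Z \supset S'$ of $Z$ lies in $Z_{\wh G}(S') \cap Z_{\wh G}(\rho(I_F)) = \ld j_{H,G}(Z_{\wh H}(\rho_H(I_F)))$, which gives the rank equality. If $Z$ is a torus, it commutes with $S'$ and so lies in $\wh H$, which gives the final claim.

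The corrected argument uses essentially the same input as the paper, arranged differently. The paper uses your idea only for wild inertia: there $Z(\wh H)^\circ$ really is centralized, by tameness, so $Z_{\wh H}(\rho_H(P_F))^\circ$ is a Levi subgroup of $Z_{\wh G}(\rho(P_F))^\circ$. It then uses unramifiedness to produce a $\rho(I_F)$-stable parabolic with that Levi. Finally, it applies \cite[Lemma~10.1.10]{CF26a} to a $\rho(I_F)$-stable Borel--torus pair to compare the maximal tori of the two inertial centralizers.
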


\begin{proof}
We will use $\ld j_{H,G}$ to identify $\wh H$ with a Levi subgroup of $\wh G$. Since the maximal central torus $Z$ of $\wh H$ is contained in $Z_{\wh G}(\rho(P_F))$, we see that $Z_{\wh H}(\rho_H(P_F))^\circ$ is a Levi subgroup of $Z_{\wh G}(\rho(P_F))^\circ$, and thus the ranks of these two groups are the same. By \cite[Lemma~10.1.10]{CF26a}, there is a $\rho_H(I_F)$-stable Borel-torus pair $(\wh B_H, \wh T)$ in $Z_{\wh H}(\rho_H(P_F))^\circ$, and $(\wh T^{\rho_H(I_F)})^\circ_{\red}$ is a maximal torus of $Z_{\wh H}(\rho_H(I_F))^\circ_{\red}$. Since $H$ is an \emph{unramified} twisted Levi, there is some $\rho(I_F)$-stable parabolic $\wh P \subset Z_{\wh G}(\rho(P_F))^\circ$ with Levi $Z_{\wh H}(\rho(P_F))^\circ$; if $\wh U$ is the unipotent radical of $\wh P$, then $\wh B \coloneqq \wh B_H \wh U$ is a $\rho(I_F)$-stable Borel subgroup of $Z_{\wh G}(\rho(P_F))^\circ$ containing $\wh T$, so another application of \cite[Lemma~10.1.10]{CF26a} shows that $(\wh T^{I_F})^\circ_{\red}$ is a maximal torus of $Z_{\wh G}(\rho(I_F))^\circ_{\red}$. Thus $Z_{\wh H}(\rho_H(I_F))^\circ_{\red}$ and $Z_{\wh G}(\rho(I_F))^\circ_{\red}$ have the same rank. The final claim follows immediately from this.
\end{proof}




\section{Spectral endoscopy and L-homomorphisms}\label{sec:spectral-endoscopy}

In this section, we compute the $\sigma$-dual homomorphism $\ld\psi\co \ld H \to \ld G$ constructed in \cite[Theorem~7.2.1]{F24} in some cases. We will use the Tannakian dual notation $\chG,\chH,\chT$ from \S\ref{ssec:notation-fs}. This is the same underlying reductive group as the corresponding $\widehat{G}$-notation, but with the geometric $c$-group convention for the Galois action.

\subsection{Recollections on modular functoriality}\label{ssec:modular-functoriality} Throughout this section, we work under the following ansatz: 

\begin{ansatz}\label{ansatz:modular-functoriality}
Let $G$ be a connected reductive group over a local field $F$ of residue characteristic $p$. Let $\sigma \in \Aut(G)$ be an automorphism of prime order $\ell \neq p$ whose fixed point subgroup $G^\sigma =: H$ is connected reductive.
\end{ansatz}

\subsubsection{The $\sigma$-dual homomorphism}\label{sssec:sigma-dual-homomorphism}

We consider the dual groups $\lc H_k, \lc G_k$ over $k \coloneqq \ol \F_\ell$. In the situation of Ansatz \ref{ansatz:modular-functoriality}, \cite[Theorem~7.2.1]{F24} constructs a canonical conjugacy class of maps of $c$-groups (see also \cite[\S 7.8]{TVarxiv})
\begin{equation}\label{eq:sigma-dual-homomorphism}
\lc \psi \co \lc H_k \rightarrow \lc G_k.
\end{equation}
called the ``$\sigma$-dual homomorphism''. To relate this to the more standard formulations of the Local Langlands Correspondence in terms of the L-group, we \emph{choose a square root of the residue field cardinality}\footnote{This choice will usually not matter, but it does intervene at one point: the compatibility of the formation of $\ld \psi$ with base change, cf.\ \cite[Remark~5.2.3]{CF26a}.} of $F$ in $k$ in order to specify a square root of the mod-$\ell$ cyclotomic character $\chi_{\cyc} \co W_F \rightarrow k^\times$. This choice determines isomorphisms $\lc H_k \cong \ld H_k$ and $\lc G_k \cong \ld G_k$ by \cite[Lemma 5.5.7]{Zhu17a}, allowing us to view the $\sigma$-dual homomorphism instead as a homomorphism
\begin{equation}\label{eq:l-group-sigma-dual-homomorphism}
\ld \psi \co \ld H_k \rightarrow \ld G_k.
\end{equation}



\begin{remark}
Without restriction on $\ell$, but under the assumption that $G$ is simply connected and $H$ is semisimple\footnote{The latter condition rules out the main use case in this paper, namely the case that $H$ is an unramified twisted Levi subgroup of $G$.}, a $\sigma$-dual homomorphism is constructed in \cite[\S 1.3]{TV}, except possibly in cases where $(\Lie G, \Lie H)$ contains a factor of the form $(\mf e_6, \mf{sl}_3^3)$ or $(\mf e_6, \mf{sl}_6 \times \mf{sl}_2)$ or $(\mf e_6, \mf{sp}_8)$. The approach of \cite{TV} is orthogonal to that of \cite{F24}, and it has not yet been checked that in cases of overlap, the two $\sigma$-dual homomorphisms agree. This would be of independent interest to check, but it has no relevance to this paper. 
\end{remark}

The following theorem is a special case of \cite[Theorem~9.1.1]{F24}, as explained in \cite[Example~9.1.2]{F24}. It proves the functoriality conjecture of Treumann--Venkatesh \cite[Conjecture 6.3]{TV} in terms of the Fargues--Scholze correspondence.

\begin{thm}[Modular functoriality]\label{thm:modular-functoriality}\label{hyp:modular-functoriality}
Let $\Pi$ be an irreducible smooth representation of $G(F)$ over $k$ whose isomorphism class is fixed by $\sigma$, so that the $G(F)$-action on $\Pi$ extends uniquely to a $G(F) \rtimes \langle\sigma\rangle$-action \cite[Proposition 6.1]{TV}. Then for each $j \in \Z/2\Z$ and every $H(F)$-irreducible subquotient $\pi$ of $\rT^{j}(\Pi)$, the $\sigma$-dual homomorphism satisfies
\[
\rho^{\FS}(\Pi^{(\ell)})\sim\bigl(\ld\psi\circ\rho^{\FS}(\pi)\bigr)^{\ss}.
\] 
\end{thm}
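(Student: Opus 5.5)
This statement is quoted from \cite[Theorem~9.1.1]{F24}, specialized as in \cite[Example~9.1.2]{F24}. The plan is therefore to check that the hypotheses of that general theorem hold in the setting of Ansatz~\ref{ansatz:modular-functoriality}, rather than to reprove it. The general theorem concerns a $\Z/\ell\Z$-action on the local Hecke stack $\mathrm{Bun}_G$ and on the Fargues--Scholze excursion algebra. It says that Tate cohomology of the equivariant sheaf attached to $\Pi$, taken on the $\sigma$-fixed locus $\mathrm{Bun}_G^\sigma$, is compatible with the map of spectral Bernstein centers induced by $\lc\psi$. In outline, the argument has three steps.

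\textbf{Step 1: extend the action.} Since the isomorphism class of $\Pi$ is $\sigma$-fixed and $\ell$ is prime, the $G(F)$-action on $\Pi$ extends to $G(F)\rtimes\langle\sigma\rangle$. Here one uses that $k^\times$ is $\ell$-divisible and $H^2(\Z/\ell,k^\times)=0$, which gives existence, and that the scalar ambiguity is an $\ell$-th root of unity, which is trivial in characteristic $\ell$; this gives uniqueness \cite[Proposition 6.1]{TV}. We then view $\Pi$ as a $\sigma$-equivariant sheaf on $\mathrm{Bun}_G$ supported on the semistable locus, and identify $\rT^j(\Pi)$ with the Tate cohomology of this sheaf restricted to $\mathrm{Bun}_H\subset(\mathrm{Bun}_G)^\sigma$.

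\textbf{Step 2: compare excursion operators.} For each excursion datum on the $G$-side, the operator acting on $\Pi$ is $\sigma$-equivariant, so it induces an operator on $\rT^j(\Pi)$. The key input of \cite{F24} (Smith theory for Hecke operators) is that $\rT^*$ of the Hecke operator attached to $V\in\Rep(\lc G_k)$ equals the Hecke operator on $\mathrm{Bun}_H$ attached to $\lc\psi^*(V^{(\ell)})$. The Frobenius twist arises because Tate cohomology of a tensor power is Frobenius-linear. The geometric Satake compatibility underlying this step is exactly the construction of $\lc\psi$ in \cite[Theorem~7.2.1]{F24}.

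\textbf{Step 3: conclude for subquotients.} Excursion operators act on $\Pi$ by scalars, namely the scalars that determine $\rho^{\FS}(\Pi)$. These scalars are preserved on $\rT^j(\Pi)$, and then on every irreducible $H(F)$-subquotient $\pi$. So $\pi$ has excursion character equal to that of $\Pi^{(\ell)}$ pulled back along $\lc\psi$. Since semisimple parameters are determined by their excursion characters, we get $\rho^{\FS}(\Pi^{(\ell)})\sim(\ld\psi\circ\rho^{\FS}(\pi))^{\ss}$. The last thing to check is that the chosen square root of $q$ converts $c$-groups to L-groups compatibly on both sides.

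The main obstacle is Step 2, the Smith-theoretic compatibility of Tate cohomology with Hecke operators, which requires the parity and purity properties of the geometric Satake sheaves. This is where \cite{F24} does the real work, and connectedness of $G^\sigma$ is used there.
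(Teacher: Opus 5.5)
Your proposal takes the same route as the paper, which gives no independent argument and simply quotes \cite[Theorem~9.1.1]{F24} as specialized in \cite[Example~9.1.2]{F24}; the operative hypothesis is that $H=G^\sigma$ is connected, as in Ansatz~\ref{ansatz:modular-functoriality}. Your heuristic account of \cite{F24} is not load-bearing, but its Frobenius bookkeeping is off: $\lc\psi$ already contains $\Fr_\ell$ (for a trivial action it \emph{is} $\Fr_\ell$, cf.\ \S\ref{sssec:Brauer-for-trivial-action}), so the theorem amounts to saying that the $H$-excursion operators for $\lc\psi^*V$ act on $\pi$ by the scalars of the $G$-excursion operators for $\Fr_\ell^*V=V^{(\ell)}$ on $\Pi$, whereas your pairing of $V$ with $\lc\psi^*(V^{(\ell)})$, read $k$-linearly, would put the inverse Frobenius twist of $\Pi$ on the left-hand side instead of $\Pi^{(\ell)}$.
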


\subsection{Spectral endoscopic groups} Consider the special case of \S \ref{ssec:modular-functoriality} where $\sigma \in \Aut(G)$ is inner, say of the form $g \mapsto sgs^{-1}$ for $s \in G(F)$. Thus $H = Z_G(s)$. Then we will call $\ld H$ a \emph{spectral endoscopic group} for $\ld G$. The name comes from the resemblance to endoscopy, in which we would say that $H$ is an endoscopic group for $G$ if $\chH$ is the centralizer of a semisimple element of $\chG$. Thus, spectral endoscopy is essentially endoscopy with the roles of the group and its dual group reversed.

But \emph{unlike} in endoscopy, there is \emph{also} a canonical conjugacy class of morphisms from the spectral endoscopic group to $\ld G$, namely the $\sigma$-dual homomorphism of \eqref{eq:sigma-dual-homomorphism}. This homomorphism typically does not lift to characteristic $0$ (or even to Artinian rings); it is a miracle of modular arithmetic.

\begin{example}\label{ex:spectral-endoscopy}
Let $a,b \geq 1$ be integers and assume $p \neq 2$. We define the symplectic group $\Sp_{2n}/F$ by
\[
J_n=\begin{pmatrix}0&\Id_n\\-\Id_n&0\end{pmatrix},\qquad
\Sp_{2n}(R)=\{g\in\GL_{2n}(R):{}^t gJ_ng=J_n\}
\]
for every $F$-algebra $R$. 

Let $V=F^{2a+2b}$, with ordered basis
$e_1,\ldots,e_{a+b},f_1,\ldots,f_{a+b}$ having Gram matrix $J_{a+b}$. Set
$W_a=\langle e_1,\ldots,e_a,f_1,\ldots,f_a\rangle$ and
$W_b=\langle e_{a+1},\ldots,e_{a+b},f_{a+1},\ldots,f_{a+b}\rangle$,
so that $V=W_a \perp W_b$. Let $G=\Sp(V)\cong \Sp_{2a+2b}$, and let $s$ act as $\Id$ on $W_a$ and $-\Id$ on $W_b$. In this basis, we have
\[
s = \begin{pmatrix} \Id_a \\ & - \Id_b & \\ & &\Id_a \\ & & &-\Id_b \end{pmatrix}.
\]
Since $p\neq 2$, the subspaces $W_a$ and $W_b$ are the two eigenspaces of $s$. Consequently, an element of $G$ commutes with $s$ if and only if it preserves both subspaces. 
Thus conjugation by $s$ gives an order $\ell=2$ automorphism $\sigma \in \Aut(G)$ such that
$G^\sigma=H=\Sp_{2a}\times\Sp_{2b}$. In this case, the $\sigma$-dual homomorphism is a map
\[
\SO_{2a+1} \times \SO_{2b+1} \rightarrow \SO_{2a+2b+1}
\]
which is finite onto its image. The traditional wisdom is that no such map exists, but recall that we are only looking for such a map in characteristic $\ell=2$. Here, there is indeed an ``exceptional'' homomorphism of the desired shape.

To see this, recall that $\SO_{2a+1}$ is the group of determinant-$1$ isometries of the standard split quadratic form $Q_a = x_0^2 + x_1x_2 + \ldots + x_{2a-1}x_{2a}$ on a vector space $V_a$ of dimension $2a+1$, while $\SO_{2b+1}$ is the group of determinant-$1$ isometries of the standard split quadratic form $Q_b = y_0^2 + y_1y_2 + \ldots + y_{2b-1}y_{2b}$ on a vector space $V_b$ of dimension $2b+1$. The sum of these quadratic forms has rank $2a+2b+2$ in characteristics other than $2$, but in characteristic $2$ the quadratic form $Q_a + Q_b$ on $V_a \oplus V_b$ has a one-dimensional radical (cut out by the conditions $x_0 = y_0$ and the vanishing of the other coordinates), and the action on this quotient induces an injection
\begin{equation}\label{eq:endoscopy-paradox}
\SO_{2a+1} \times \SO_{2b+1} \inj \SO_{2a+2b+1}.
\end{equation}
Finally, the connected part of the $\sigma$-dual homomorphism $\ld \psi \co \chH \rightarrow \chG$ is not \eqref{eq:endoscopy-paradox} but rather its composition with the $\ell$-power Frobenius $\Fr_\ell$ on $\chH$.
\end{example}

\subsection{Properties of the $\sigma$-dual homomorphism} The $\sigma$-dual homomorphism is constructed in a highly indirect manner in \cite[\S 7.2]{F24}. We will need to establish more explicit information about it.

\subsubsection{Compatibility with field extensions}

\begin{lemma}\label{lemma:dual-hom-base-change}
Let $E/F$ be a finite separable field extension. If $\sigma$ is an $F$-automorphism of $G$ of order $\ell$, $H = G^\sigma$ is connected, and $\lc\psi\co \lc H \to \lc G$ is the $\sigma$-dual homomorphism, then
\begin{equation}\label{eq:lc-psi-base-change}
\lc\psi|_{\chH \rtimes W_E} \sim \lc\psi_E.
\end{equation}
Consequently, $\ld\psi\co \ld H \to \ld G$ satisfies
\begin{equation}\label{eq:ld-psi-base-change}
\ld\psi|_{\wh H \rtimes W_{E_2}} \sim \ld\psi_E|_{\wh H \rtimes W_{E_2}}.
\end{equation}

\end{lemma}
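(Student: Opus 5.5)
The approach is to deduce \eqref{eq:lc-psi-base-change} directly from the construction of the $\sigma$-dual homomorphism in \cite[\S 7.2]{F24}, and then deduce \eqref{eq:ld-psi-base-change} by comparing $c$-groups with L-groups.

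\emph{Step 1: the $c$-group statement.} Recall that $\lc\psi$ is built Tannakianly from the geometric Satake equivalence: the restriction of $\lc\psi$ to $\chH$ comes from a functor on Satake categories (Tate cohomology / Smith theory for the $\sigma$-action on the affine Grassmannian over $\ol F$, or over the geometric Fargues--Fontaine setting). The $W_F$-part comes from the Galois action on these geometric objects, which is defined over $F$. Base changing $G$, $H$, $\sigma$ to $E$ does not change the geometry over $\ol F = \ol E$; it only restricts the descent datum from $W_F$ to $W_E$. By naturality of the construction in the base field, the Tannakian functor for $(G_E,\sigma_E)$ is the restriction of the one for $(G,\sigma)$ to $W_E$-equivariant objects. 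Hence the induced homomorphism $\chH\rtimes W_E \to \chG\rtimes W_E$ is $\lc\psi_E$, up to the conjugacy ambiguity inherent in fiber functor choices.

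\emph{Step 2: passing to L-groups.} The isomorphisms $\lc H_k\cong\ld H_k$ and $\lc G_k\cong \ld G_k$ of \cite[Lemma 5.5.7]{Zhu17a} depend on a square root of the cyclotomic character, determined by a square root of $q$. Over $E$, we need a square root of $q_E = q^{f}$, where $f$ is the residue degree. The natural choice $\sqrt{q}^{\,f}$ may differ from the square root used to define $\ld\psi_E$ by a sign. The resulting twist is by the quadratic unramified character of $W_E$ composed with the cocharacter $2\rho$-type element (cf.\ \cite[Remark~5.2.3]{CF26a}). This character becomes trivial on $W_{E_2}$. Therefore, after restricting to $\wh H\rtimes W_{E_2}$, the two identifications agree, and \eqref{eq:lc-psi-base-change} transports to \eqref{eq:ld-psi-base-change}.

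\emph{Main obstacle.} The main obstacle is Step 1: making rigorous that the indirect construction of \cite{F24} is compatible with restricting the Weil group. One also needs the conjugation ambiguity to be controlled by a single $\chG$-conjugacy class rather than something larger. I would first try to verify this by checking that each ingredient of \cite[\S 7.2]{F24} (the Tate-cohomology functor, the fiber functor, and the resulting Tannakian reconstruction) is functorial for the restriction $W_E\subset W_F$.
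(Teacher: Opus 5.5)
Your proposal is correct and follows the paper's proof. The $c$-group compatibility \eqref{eq:lc-psi-base-change} is exactly \cite[Lemma~7.3.1]{F24}, which the paper simply cites, so the naturality argument you flag as the main obstacle is already available in the literature. The L-group statement is then deduced just as you do: the two choices of square root of the residue cardinality differ by a sign, i.e.\ by an unramified quadratic character of $W_E$, and this character is trivial on $W_{E_2}$.
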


\begin{proof}The compatibility for $\lc \psi$ is \cite[Lemma~7.3.1]{F24}. Next, recall from the discussion of \S \ref{sssec:sigma-dual-homomorphism} that the passage from $\lc\psi$ to $\ld \psi$ depends on the choice of square root of the residue cardinality, used to specify a square root of the mod-$\ell$ cyclotomic character. This choice introduces an ambiguity of sign that is killed by squaring, hence disappears after restricting from $W_E$ to $W_{E_2}$, which implies \eqref{eq:ld-psi-base-change}.
\end{proof}

\begin{remark}
It is \textit{not} generally true that $\ld j_{T,G}|_{W_E}\sim\ld j_{T_E,G_E}$ for a finite extension $E/F$, even if $E/F$ is unramified; see \cite[Remark~5.2.3]{CF26a}.
\end{remark}

\subsubsection{Compatibility with Frobenius}\label{sssec:Brauer-for-trivial-action}
We require a preliminary observation about the \emph{(normalized) Brauer functor},
\[
\wt{\cbr} \co \Sat(\cHck_{G, \Div_X^1};k) \rightarrow \Sat(\cHck_{H, \Div_X^1};k),
\]
from \cite[\S\S 6.3--6.5]{F24}. Here the \emph{Satake category} $\Sat(\cHck_{G, \Div_X^1};k)$ is defined as in \cite[Definition VI.7.8]{FS}.

Let an abstract cyclic group $\Sigma$ of order $\ell$ act trivially on $G$, so any generator $\sigma \in \Sigma$ has trivial image in $\Aut(G)$. This is allowed in the conventions of \cite[Sections 2.2--2.3]{F24}, which require an action of $\Sigma$ but do not require it to be faithful. Indeed, the inclusion of fixed points is nothing but the identity map. The Smith-Treumann localization for the identity map is simply projection to the Tate category, which admits an obvious perverse t-exact lift by the identity functor. Hence the normalized Brauer functor is intertwined under Geometric Satake with the functor
\[
V \mapsto \rT^0(V^{\otimes \ell})\cong V^{(\ell)}
\]
as in \cite[Example~4.5.3]{F24}. In other words, this is pullback along the $L$-homomorphism
\[
\Fr_\ell\co\lc G\longrightarrow\lc G,\qquad (g,w)\longmapsto(\Fr_\ell(g),w).
\]
Transporting from the $c$-group to the L-group using \eqref{eq:c-to-L-coordinate-change} gives the formula of \cite[Lemma~8.5.3 and Remark~8.5.4]{F24}:
\begin{equation}\label{eq:fs-coefficient-frobenius}
\rho^{\FS}(\pi^{(\ell)})
\sim z \cdot\Fr_\ell\circ\rho^{\FS}(\pi)
\end{equation}
where the cocycle $z \co W_F \rightarrow Z(\wh G_{\der})(k)^{W_F}$ is given by 
\[
z(w) =(2\rho_{\wh G}^{\vee})\bigl(\chi_{\cyc}^{1/2} (w)^\ell/\chi_{\cyc}^{1/2} (w)\bigr),
\]
where $\chi_{\cyc}^{1/2} \co W_F\to k^\times$ is the chosen unramified square root of the cyclotomic character, and $\rho_{\wh G}^{\vee}$ is the half sum of the positive coroots of $\wh G$.

\subsubsection{Compatibility with passage to Levis}
The proof of \cite[Lemma~6.5.2]{F24} can be repeated essentially verbatim for more general constant terms associated to a $\sigma$-stable pair of parabolic subgroup and Levi. We record the result for convenient reference.

\begin{lemma}\label{lem:parabolic-CT-compatible-br}
Let $M$ be an $F$-rational Levi subgroup of $G$ and let $P$ be an $F$-rational parabolic subgroup of $G$ with Levi $M$. Assume that $M$ and $P$ are $\sigma$-stable. Then $P^\sigma$ is a parabolic subgroup of $H$ with Levi subgroup $M^\sigma$. There is a commutative square of symmetric monoidal functors, 
\begin{equation}\label{eq:parabolic-CT-compatible-br}
\begin{tikzcd}
\Sat(\cHck_{G, \Div_X^1};k) \ar[r, "\wt{\cbr}"] \ar[d, "{\CT_P[\deg_P]}"']  &  \Sat(\cHck_{H, \Div_X^1};k)  \ar[d, "{\CT_{P^\sigma}[\deg_{P^\sigma}]}"] \\
\Sat(\cHck_{M, \Div_X^1};k) \ar[r, "\wt{\cbr}"]  &  \Sat(\cHck_{M^\sigma, \Div_X^1};k) 
\end{tikzcd}
\end{equation}
Here, if $U_P$ denotes the unipotent radical of $P$, then $\deg_P(\nu)=\langle 2\rho_{U_P},\nu\rangle$, and $\deg_{P^\sigma}$ is defined similarly.
\end{lemma}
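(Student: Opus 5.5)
The plan is to follow the proof of \cite[Lemma~6.5.2]{F24}, which treats the Borel case (constant term to the torus), and to replace the Borel and torus by $P$ and $M$ throughout. First I handle the group-theoretic claim. Since $\sigma$ has order $\ell \neq p$, the group $\langle \sigma \rangle$ is linearly reductive, so fixed points of smooth groups are smooth. Choose a cocharacter $\lambda$ of $Z(M)^\circ$ with $P = P_G(\lambda)$ and $M = Z_G(\lambda)$. Average $\lambda$ over $\langle\sigma\rangle$; this makes sense in $X_*(Z(M)^\circ)\otimes\Q$ because $\sigma$ preserves $M$ and hence $Z(M)^\circ$. Then $P$ and $M$ are $\sigma$-stable, and the averaged cocharacter still lies in the open chamber defining $P$, since that chamber is $\sigma$-stable and convex. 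After clearing denominators we obtain a $\sigma$-fixed $\lambda$, which factors through $H = G^\sigma$. Then $P_H(\lambda) = P \cap H = P^\sigma$ and $Z_H(\lambda) = M^\sigma$. Thus $P^\sigma$ is parabolic in $H$ with Levi $M^\sigma$, and $(U_P)^\sigma = U_{P^\sigma}$.

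Next I handle the geometric commutativity. The constant term functor is $\CT_P = q_! p^*$ (up to the standard shift), coming from the correspondence
\[
\cHck_G \leftarrow \cHck_P \rightarrow \cHck_M
\]
on the $B_{\mathrm{dR}}^+$-affine Grassmannians over $\Div_X^1$. Equivalently, via hyperbolic localization, it is the functor attached to the $\G_m$-action through $\lambda$. Because $\lambda$ is $\sigma$-fixed, this $\G_m$-action commutes with the $\Sigma$-action. The $\sigma$-fixed points of the attracting locus $\Gr_P$ are $\Gr_{P^\sigma}$, and similarly for the fixed locus, by the same Tannakian/functor-of-points argument used in the Borel case. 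Smith--Treumann localization (the Brauer functor) is built from restriction to $\Sigma$-fixed points followed by projection to the Tate category. It commutes with $*$-pullback, and it commutes with $!$-pushforward along $\Sigma$-equivariant maps because the cone of the comparison map is supported on non-fixed loci and dies in the Tate category. This gives a natural isomorphism $\cbr\circ \CT_P \cong \CT_{P^\sigma}\circ \cbr$ before normalization. Alternatively, by Braden's theorem one can exchange $q_!p^*$ with $q'_*p'^!$ as in F24.

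The remaining step, which I expect to be the main obstacle, is the shifts and normalization. The normalized functor $\wt{\cbr}$ involves a perverse-t-exact lift together with a shift or twist on each connected component, indexed by $\nu \in \pi_0$. On the component labeled $\nu$, the unnormalized square commutes up to the shift
\[
\deg_P(\nu) - \deg_{P^\sigma}(\nu_\sigma),
\]
where the normalization of $\wt{\cbr}$ contributes a term $\langle 2\rho_G - 2\rho_H, \nu\rangle$ for $G$ and the corresponding term for $M$. I would check that these discrepancies cancel using $2\rho_G = 2\rho_M + 2\rho_{U_P}$ and the analogous decomposition for $H$. I would then verify that the resulting isomorphism respects the symmetric monoidal structures, including the sign conventions on commutativity constraints, exactly as in \cite[Lemma~6.5.2]{F24}. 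That argument reduces to checking monoidality on the fusion product and uses only that $\CT$ is monoidal by \cite[\S VI]{FS}.
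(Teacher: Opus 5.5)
Your plan matches the paper's: it records this lemma only by noting that the proof of \cite[Lemma~6.5.2]{F24} goes through essentially verbatim for a $\sigma$-stable pair $(P,M)$. Your averaging argument (a $\sigma$-fixed cocharacter $\lambda$ with $P=P_G(\lambda)$ and $M=Z_G(\lambda)$, hence $P^\sigma=P_H(\lambda)$ and $M^\sigma=Z_H(\lambda)$) correctly proves the group-theoretic claim, which the paper states without proof.

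One correction for when you carry out the geometric step. $\wt{\cbr}$ is not Smith--Treumann localization of $\mathcal F$ itself. It applies that localization to an $\ell$-fold norm (Tate-diagonal) construction on $\mathcal F$, carrying its cyclic $\Sigma$-equivariant structure, and then takes a perverse $t$-exact lift. This is why it becomes $V\mapsto \rT^0(V^{\otimes\ell})\cong V^{(\ell)}$ for trivial $\sigma$ in \S\ref{sssec:Brauer-for-trivial-action}. Consequently you must also commute $\CT_P[\deg_P]$ $\Sigma$-equivariantly past the norm. That commutation is where the symmetric monoidality of $\CT_P[\deg_P]$ and the $\sigma$-stability of $(P,M)$ are really used.
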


\begin{lemma}\label{lem:H-in-Levi}
Let $M$ be an $F$-rational Levi subgroup of $G$ and $P$ be an $F$-rational parabolic subgroup of $G$ with Levi $M$. Suppose $\sigma$ is an inner automorphism of $G$ of order $\ell$ stabilizing $(P,M)$, and put $H=G^\sigma$. Let $\lc \psi_G \co \lc H \rightarrow \lc G$ and $\lc \psi_M \co \lc M^\sigma \rightarrow \lc M$ be the associated $\sigma$-dual homomorphisms. Then
\[
\lc\psi_G\circ\lc j_{M^\sigma,H}\sim\lc j_{M,G}\circ\lc\psi_M.
\]
Here $\lc j_{M,G}$ and $\lc j_{M^\sigma,H}$ are the standard embeddings from Proposition~\ref{prop:constant-term-split-c-map}. In particular, if $H\subset M$, then $M^\sigma=H$ and
\[
\lc\psi_G\sim\lc j_{M,G}\circ\lc\psi_M.
\]
\end{lemma}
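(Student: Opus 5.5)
The approach is Tannakian: the $\sigma$-dual homomorphism $\lc\psi_G$ is characterized, up to conjugacy, as the map of $c$-groups that the geometric Satake equivalence attaches to the normalized Brauer functor $\wt{\cbr}\co \Sat(\cHck_{G,\Div_X^1};k)\to\Sat(\cHck_{H,\Div_X^1};k)$. Likewise, the standard embedding $\lc j_{M,G}$ of Proposition~\ref{prop:constant-term-split-c-map} is what corresponds to the shifted constant term $\CT_P[\deg_P]$. So the two composites $\lc\psi_G\circ\lc j_{M^\sigma,H}$ and $\lc j_{M,G}\circ\lc\psi_M$ correspond to the two composites of symmetric monoidal functors around the square \eqref{eq:parabolic-CT-compatible-br}. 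Lemma~\ref{lem:parabolic-CT-compatible-br} says this square commutes, and I would transport that commutativity through Tannakian duality.

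The steps, in order, are as follows.

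First, check that the hypotheses of Lemma~\ref{lem:parabolic-CT-compatible-br} hold. Since $\sigma$ is inner of order $\ell$ and stabilizes $(P,M)$, the group $M^\sigma$ is a Levi of the parabolic $P^\sigma$ of $H$. Moreover, $M^\sigma$ is connected, being the centralizer in the Levi $M$ of a semisimple element (or by the connectedness arguments of \cite{F24}), so $\lc\psi_M$ is defined.

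Second, identify each functor with its dual map. The functor $\CT_P[\deg_P]$, composed with the fiber functors, is restriction along $\lc j_{M,G}$, and similarly $\CT_{P^\sigma}[\deg_{P^\sigma}]$ is restriction along $\lc j_{M^\sigma,H}$. The normalized Brauer functors for $G$ and $M$ are restriction along $\lc\psi_G$ and $\lc\psi_M$ respectively, by the construction in \cite[\S 7.2]{F24}.

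Third, apply Tannakian reconstruction to the commutative square of functors. This needs compatibility with the fiber functors, which are $W$-equivariant when one uses the $c$-group formalism over $\Div_X^1$. Two homomorphisms inducing isomorphic tensor functors compatible with fiber functors up to a tensor isomorphism are conjugate; the isomorphism of fiber functors provides the conjugating element of $\chG(k)$.

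The special case is then immediate. If $H\subset M$, then $M^\sigma = M\cap H = H$ and $P^\sigma = H$, so $\lc j_{M^\sigma,H}$ is the identity and the conjugacy reduces to $\lc\psi_G\sim\lc j_{M,G}\circ\lc\psi_M$.

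The main obstacle is the third step, specifically the bookkeeping of the $W_F$-part and of the shifts and normalizations. One needs the cohomological shift $[\deg_P]$ together with the normalization of $\wt{\cbr}$ to produce exactly the standard embeddings of Proposition~\ref{prop:constant-term-split-c-map}, rather than twists of them by a cocycle like $z$ in \eqref{eq:fs-coefficient-frobenius}. One also needs the conjugating element to be taken in $\chG(k)$ and to be compatible with the Weil-group action.

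I would first try to work with the $c$-group, where the Weil action is geometric and the Tate twists are absorbed. There I would show that both composites send $\chH$ into $\chG$ and agree on $W_F$ after conjugation, by comparing them on a $\sigma$-stable maximal torus inside $M^\sigma$. On that torus both composites restrict to the Frobenius-twisted inclusion of dual tori.
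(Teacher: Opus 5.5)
Your proposal is essentially the paper's proof. The paper simply takes the Tannakian dual of the commutative square \eqref{eq:parabolic-CT-compatible-br}, using that $\wt{\cbr}_G$ and $\wt{\cbr}_M$ are dual to $\lc\psi_G$ and $\lc\psi_M$ by definition, and that the constant term functors are dual to the untwisted embeddings $\lc j_{M,G}$ and $\lc j_{M^\sigma,H}$ by Proposition~\ref{prop:constant-term-split-c-map}. Because everything is phrased for $c$-groups, this already settles the $W_F$-bookkeeping you worry about, so the maximal-torus comparison in your last paragraph is unnecessary (and over $\ol\F_\ell$, agreement on a torus would not determine a conjugacy class anyway).

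One small correction: centralizers of semisimple elements need not be connected, so that is not a valid reason for $M^\sigma$ to be connected. Here $\sigma$ is inner and normalizes both $P$ and $M$, so it is conjugation by an element of $M$ and fixes $Z(M)^\circ$ pointwise. Hence $M^\sigma=Z_H(Z(M)^\circ)$, which is a Levi subgroup of the connected group $H$ and therefore connected.
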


\begin{proof}
Apply Tannakian reconstruction directly to \eqref{eq:parabolic-CT-compatible-br}.
By definition,
\begin{itemize}
\item $\wt{\cbr}_G$ is Tannakian dual to $\lc \psi_G^*$,
\item $\wt{\cbr}_M$ is Tannakian dual to $\lc \psi_M^*$, and
\item the two constant term functors are Tannakian dual to $\lc j_{M,G}$ and $\lc j_{M^\sigma,H}$, by Proposition~\ref{prop:constant-term-split-c-map}.
\end{itemize} 
Therefore, passing to the Tannakian dual of \eqref{eq:parabolic-CT-compatible-br} gives the result.
\end{proof}

\begin{example}\label{ex:H=Levi}
Suppose in the setting of Lemma~\ref{lem:H-in-Levi} that $H = M$. Then \S\ref{sssec:Brauer-for-trivial-action} identifies $\wt{\cbr}_M \cong \Fr_\ell^*$, so we obtain
\[
\lc \psi_G \sim \lc j_{M,G} \circ \Fr_\ell = \Fr_\ell \circ \lc j_{M,G}.
\]
\end{example}

\subsection{L-embeddings of unramified twisted Levi subgroups}

Recall that if $H$ is an unramified twisted Levi, then there is a canonical L-embedding
\begin{equation}\label{eqn:L-embedding-of-unram-twisted-Levi}
\ld j_{H,G}\co \ld H \rightarrow \ld G
\end{equation}
defined using the minimally ramified set of $\chi$-data, as in \cite[Definition~4.5.1]{CF26a}.

\begin{prop}\label{prop:sigma-dual-unram-agreement-on-inertia}
Let $\sigma$ be an inner automorphism of $G$ of order $\ell$, and suppose that $H = G^\sigma$ is an unramified twisted Levi $F$-subgroup of $G$. Let $\ld\psi\co \ld H \to \ld G$ be the associated $\sigma$-dual homomorphism. Then
\begin{enumerate}
    \item $\ld\psi|_{\wh H \rtimes I_F} \sim \Fr_\ell \circ \ld j_{H,G}|_{\wh H \rtimes I_F}$.
    \item There exists a cocycle $z\co W_F/I_F \to Z(\wh H \cap \wh G_{\der})(k)^{I_F}$ such that
    \[
    \ld\psi \sim z \cdot \Fr_\ell \circ \ld j_{H,G}.
    \]
\end{enumerate}
\end{prop}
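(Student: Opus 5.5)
The plan is to reduce to the split case over a finite unramified extension, where $H$ becomes an honest Levi, and then descend. Since $H = G^\sigma$ is an unramified twisted Levi, there is a finite unramified extension $E/F$ such that $H_E$ is a Levi subgroup of $G_E$. Because $\sigma$ is inner, of the form $g\mapsto sgs^{-1}$ with $s \in H(F)$ central in $H$, we can choose an $E$-rational parabolic $P \subset G_E$ with Levi $H_E$. Such a $P$ is $\sigma$-stable, since $s$ lies in $Z(H)$, which lies in a maximal split torus of $H_E$ and hence normalizes every parabolic with Levi $H_E$. Then Example~\ref{ex:H=Levi} (Lemma~\ref{lem:H-in-Levi} with $M = H$) gives $\lc\psi_{E} \sim \Fr_\ell\circ \lc j_{H_E,G_E}$. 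By Lemma~\ref{lemma:dual-hom-base-change}, this computes $\lc\psi|_{\chH\rtimes W_E}$. Note that $I_E = I_F$ because $E/F$ is unramified, so this already determines $\lc\psi$ on $\chH \rtimes I_F$.

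Next we translate to L-groups. The $c$-to-L coordinate change twists by a square root of the cyclotomic character, which is unramified, so on $\wh H\rtimes I_F$ it is trivial. Thus $\ld\psi|_{\wh H\rtimes I_F}$ is conjugate to $\Fr_\ell\circ \ld j^{\mathrm{std}}_{H_E,G_E}|_{\wh H\rtimes I_F}$, where $\ld j^{\mathrm{std}}$ denotes the standard Levi embedding from Proposition~\ref{prop:constant-term-split-c-map}. The remaining point for (1) is to compare this with $\ld j_{H,G}$ of \cite[Definition~4.5.1]{CF26a}. That embedding is built from minimally ramified $\chi$-data; for an unramified twisted Levi, the $\chi$-data for the roots of $G$ outside $H$ can be chosen unramified, or at least with characters trivial on inertia. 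Hence $\ld j_{H,G}|_{\wh H\rtimes I_F}$ agrees up to $\wh G$-conjugacy with the standard Levi embedding restricted to inertia. I expect this comparison of $\chi$-data on inertia to be the main technical obstacle, including tracking the sign conventions in \cite[Remark~5.2.3]{CF26a}. I would first try to show it directly from the explicit formula for the Langlands--Shelstad type embedding, using the fact that unramified characters contribute nothing on $I_F$.

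For (2), both $\ld\psi$ and $\Fr_\ell\circ\ld j_{H,G}$ are L-homomorphisms $\ld H\to\ld G$ extending the identity on $W_F$, and by (1) they agree on $\wh H\rtimes I_F$ after conjugating. We may restrict to $\wh H$ itself, where the two maps coincide: $\Fr_\ell$ composed with the fixed embedding, pinned down by the Levi computation over $E$. Two such L-homomorphisms with the same restriction to $\wh H\rtimes I_F$ differ by a function $z\colon W_F\to Z_{\wh G}(\Fr_\ell(\wh H))$ that is trivial on $I_F$ and satisfies the cocycle condition. The centralizer of the image of $\wh H$ (a Levi of $\wh G$) is $Z(\wh H)$. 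The cocycle must also commute with the image of $I_F$, giving values in $Z(\wh H)^{I_F}$. Finally, the determinant/abelianization parts match because both maps induce the same map on $Z(\wh G)$ and abelian quotients (again from the base-changed comparison, together with the behaviour of central characters). So we can project to get $z$ valued in $Z(\wh H\cap\wh G_{\der})(k)^{I_F}$ and factoring through $W_F/I_F$.
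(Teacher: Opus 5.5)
Your argument for (1) is essentially the paper's. You pass to an unramified $E$ over which $H_E$ is a Levi, apply Lemma~\ref{lem:H-in-Levi} and Example~\ref{ex:H=Levi}, descend with Lemma~\ref{lemma:dual-hom-base-change} using $I_E=I_F$, and note that the $c$-to-L change of coordinates is unramified. The comparison of $\ld j_{H,G}$ with the standard Levi embedding on $\wh H\rtimes I_F$, which you flag as the main obstacle, is exactly what \cite[Lemma~4.5.2(1) and Lemma~5.2.2]{CF26a} provide. Your heuristic for it is sound: a root outside an unramified twisted Levi cannot be ramified-symmetric, because inertia acts trivially on the character group of the unramified central torus. (The $\sigma$-stability of $P$ is also simpler than you make it: $\sigma$ is conjugation by an element of the image of $H$ in $G_{\ad}$, and $H\subset P$.)

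The genuine gap is in (2). Your centralizer argument correctly produces a cocycle $z\co W_F/I_F\to Z(\wh H)(k)^{I_F}$ with $\ld\psi\sim z\cdot\Fr_\ell\circ\ld j_{H,G}$. But cutting $Z(\wh H)$ down to $Z(\wh H\cap\wh G_{\der})=Z(\wh H)\cap\wh G_{\der}$ is the real content of (2), and you do not establish it. Up to coboundaries, you must show that the image $\bar z$ of $z$ in $\wh G/\wh G_{\der}$ (the dual torus of $Z(G)^\circ$) is a coboundary. Equivalently, $\ld\psi$ and $\Fr_\ell\circ\ld j_{H,G}$ must become conjugate after composing with $\ld G\to\ld G/\wh G_{\der}$, on all of $W_F$.

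Your inputs do not give this. The base-changed comparison only sees $W_E$, and restriction $\rH^1(W_F/I_F,(\wh G/\wh G_{\der})(k))\to\rH^1(W_E/I_F,(\wh G/\wh G_{\der})(k))$ need not be injective; with trivial Frobenius action it is the $[E:F]$-th power map. Concretely, twisting $\ld\psi$ by an unramified $Z(\wh G)$-valued cocycle that is trivial on $W_E$ leaves everything you use unchanged, yet can change the class of $\bar z$. Nor can central characters help, since (2) is a statement about the $\sigma$-dual homomorphism alone, with no representation in sight.

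What is missing is a direct computation of $\ld\psi$ on the central torus, which the paper supplies as follows.
For a torus, $\sigma$ is trivial, and \S\ref{sssec:Brauer-for-trivial-action} gives $\ld\psi=\Fr_\ell$ on the nose, the $c$-group and L-group being canonically identified.
Then use the $\sigma$-equivariant central isogeny $G_{\mathrm{sc}}\times Z(G)^\circ_{\red}\to G$, with $G_{\mathrm{sc}}^\sigma$ connected by Steinberg's theorem.
Compatibility of $\sigma$-dual homomorphisms with central isogenies and products \cite[Lemmas~7.4.1 and 7.3.3]{F24}, together with the matching compatibilities of $\ld j_{H,G}$ \cite[Lemma~4.5.2(2),(3)]{CF26a}, reduces (2) to semisimple $G$.
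There $\wh H\cap\wh G_{\der}=\wh H$, and your centralizer argument is exactly what finishes the proof.
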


\begin{proof}
If $E/F$ is a finite unramified field extension and $\ld\psi_E$ is the $\sigma$-dual homomorphism corresponding to the $E$-automorphism $\sigma_E$ of $G_E$, then Lemma~\ref{lemma:dual-hom-base-change} shows that
\begin{equation}\label{eqn:unram-levi-l-emb-bc}
    \ld\psi|_{\wh H \rtimes I_F} \sim \ld\psi_E|_{\wh H \rtimes I_F}.
\end{equation}
If $E$ is chosen such that $H_E$ is a Levi $E$-subgroup of $G_E$, then Lemma~\ref{lem:H-in-Levi}, Example~\ref{ex:H=Levi}, and \cite[Lemma~4.5.2(1)]{CF26a} show that\footnote{Below, we implicitly use that the translation between the c-group and L-group inserts an unramified modulus character, which restricts trivially to inertia. See Lemma~\ref{lem:transport-c-to-L}, Remark~\ref{rem:L-group-CT-dual}, and Example~\ref{ex:H=Levi}.}
\begin{equation}\label{eqn:honest-levi-l-emb}
    \ld\psi_E|_{\wh H \rtimes I_F} \sim \Fr_\ell \circ \ld j_{H_E, G_E}|_{\wh H \rtimes I_F}.
\end{equation}
Combining \eqref{eqn:unram-levi-l-emb-bc} and \eqref{eqn:honest-levi-l-emb} with \cite[Lemma~5.2.2]{CF26a} yields (1).

If $G$ is a torus, then (2) holds by \S\ref{sssec:Brauer-for-trivial-action}, since the $c$-group and L-group are canonically isomorphic in this case. By \cite[Lemma~7.4.1]{F24}, the formation of $\lc\psi$ is compatible with central isogenies; the isomorphism between the $c$-group and L-group is functorial for them (independently of the choice of $\sqrt{q}$), so the same holds for $\ld\psi$.

Put $Z=Z(G)^\circ_{\red}$ and consider the $\sigma$-equivariant central isogeny $G_{\mathrm{sc}}\times Z\to G$. The induced action of $\sigma$ on $G_{\mathrm{sc}}$ has connected fixed subgroup by \cite[Theorem 8.1]{St68}, and acts trivially on $Z$, so $G_{\mathrm{sc}}^\sigma$ and $Z^\sigma$ are connected. By compatibility of the $\sigma$-dual homomorphisms with central isogenies and \cite[Lemma~4.5.2(2)]{CF26a}, we reduce to the case $G=G_{\mathrm{sc}}\times T$ for a torus $T$. Similarly, \cite[Lemma~7.3.3]{F24} shows that $\lc\psi$ is compatible with products, and the isomorphism of $c$-groups and L-groups is similarly compatible. \cite[Lemma~4.5.2(3)]{CF26a} gives the same compatibility for $\ld j_{H,G}$. Thus by the settled torus case we may assume that $G$ is semisimple, where (2) follows from (1).
\end{proof}

In order to apply Proposition~\ref{prop:sigma-dual-unram-agreement-on-inertia} in practice, we require the following technical result to guarantee that $G^\sigma$ is connected if the prime $\ell$ is suitably large.

\begin{lemma}\label{lem:centralizer-connected}
Let $t \in G(F)$ be a semisimple element of finite order $n$. If $n$ is coprime to $\#\pi_1(G_{\der})$, then its centralizer is connected reductive. In particular, if $\gcd(n, \#\pi_1(G_{\ad})) = 1$, then $G^\sigma$ surjects onto $(G_{\ad})^\sigma$.
\end{lemma}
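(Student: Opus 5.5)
We reduce to Steinberg's connectedness criterion. Recall that for a connected reductive group $G$ over an algebraically closed field and a semisimple element $t$, the component group $\pi_0(Z_G(t))$ embeds into $\pi_1(G_{\der})$. More precisely, it is a subquotient controlled by the kernel of $G_{\mathrm{sc}}\to G_{\der}$; compare \cite[Theorem 8.1]{St68}, already used in the proof of Proposition~\ref{prop:sigma-dual-unram-agreement-on-inertia}. Reductivity of $Z_G(t)^\circ$ is standard for semisimple $t$. Moreover, $Z_G(t)$ is smooth because $t$ has finite order $n$ prime to $p$: its group of multiplicative type $\langle t\rangle$ is étale and linearly reductive. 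So it suffices to work over $\ol F$ and prove that $Z_G(t)$ is connected.

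First we reduce to the derived group. Write $G = Z(G)^\circ\cdot G_{\der}$. After passing to $\ol F$ we may write $t = zt'$ with $z$ central and $t'\in G_{\der}$. We can arrange that $t'$ has finite order: since $Z(G)^\circ$ is divisible, choose $z$ suitably in $Z(G)^\circ$, possibly replacing $n$ by a divisor of $n\cdot|Z(G)^\circ\cap G_{\der}|$. It is cleaner, though, to avoid that reduction and argue directly with the standard description $\pi_0(Z_G(t))\cong W(t)/W^\circ(t)$. Here, with $T$ a maximal torus containing $t$, $W(t)=\{w\in W: w(t)=t\}$ and $W^\circ(t)$ is the Weyl group of $Z_G(t)^\circ$. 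Steinberg shows that $W(t)/W^\circ(t)$ injects into $\pi_1(G_{\der})$, via the element $w(\tilde t)\tilde t^{-1}\in\ker(G_{\mathrm{sc}}\to G_{\der})$ for a lift $\tilde t$.

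The key step is an order argument. The image of $w\in W(t)$ in $\ker(G_{\mathrm{sc}}\to G_{\der})$ is $c_w = w(\tilde t)\tilde t^{-1}$. We show $c_w$ is killed by $n$; together with $\gcd(n,\#\pi_1(G_{\der}))=1$ this forces $c_w=1$, so $W(t)=W^\circ(t)$ and $Z_G(t)$ is connected. To get $c_w^n=1$, note $t^n=1$, so $\tilde t^n$ lies in the central kernel and is fixed by $w$. Since everything lies in the commutative torus $\tilde T$, $c_w^n=w(\tilde t^n)\tilde t^{-n}=1$. The main obstacle is handling non-derived $G$: there $\tilde t$ should be taken in $\tilde T \coloneqq T_{\mathrm{sc}}\times Z(G)^\circ$ covering $T$. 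The kernel of $\tilde T\to T$ is then $\ker(G_{\mathrm{sc}}\times Z^\circ\to G)$, which maps isomorphically onto a group related to $\pi_1(G_{\der})$ and the finite group $Z^\circ\cap G_{\der}$. We must check that $c_w$ lands in the $G_{\mathrm{sc}}$-part. This holds because $W$ acts trivially on $Z^\circ$, so the $Z^\circ$-component of $c_w$ is trivial, and $c_w$ lies in $\ker(G_{\mathrm{sc}}\to G)\cap G_{\mathrm{sc}} = \ker(G_{\mathrm{sc}}\to G_{\der})$, whose order is $\#\pi_1(G_{\der})$.

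For the final claim, apply the first part to the image $\bar t\in G_{\ad}(\ol F)$ of $t$ (where $\sigma=\Ad t$). The order of $\bar t$ divides $n$, so it is coprime to $\#\pi_1(G_{\ad})$, and hence $Z_{G_{\ad}}(\bar t)=(G_{\ad})^\sigma$ is connected. The map $Z_G(t)\to Z_{G_{\ad}}(\bar t)$ has image containing a maximal torus $\bar T$ and all root groups $U_\alpha$ with $\alpha(t)=1$. These generate $(G_{\ad})^\sigma$, which has the same root system $\{\alpha:\alpha(\bar t)=1\}$ and is connected. So the map is surjective on $\ol F$-points, hence is a surjection of smooth groups.
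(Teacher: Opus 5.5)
Your proof is correct, and it is essentially the standard argument behind \cite[Lemma 2.14(b), Corollary 2.16(c)]{St75}, which the paper simply cites for the first statement: you embed $W(t)/W^\circ(t)$ into $\ker(G_{\mathrm{sc}}\to G_{\der})$ via $w\mapsto w(\tilde t)\tilde t^{-1}$ and observe that these classes are $n$-torsion, and your generation-by-$\bar T$-and-root-groups argument for the second statement is equivalent to the paper's dimension count. One small slip: smoothness of $Z_G(t)$ should be justified by $n$ being prime to $\operatorname{char} F$ (this is automatic for a finite-order semisimple element and vacuous in characteristic $0$), not by $n$ being prime to the residue characteristic $p$, which is not among the hypotheses.
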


\begin{proof}
The first statement is standard and is proven in, for instance, \cite[Lemma 2.14(b), Corollary 2.16(c)]{St75}. The second statement follows for dimension reasons from the first.
\end{proof}

\section{Calculating Tate cohomology of tensor products}\label{section:tate-tensor-products}

Throughout this section, let $\ell$ be a prime, let $k$ be a field of characteristic $\ell$, and let $\sigma$ generate a cyclic group of order $\ell$. Write $k[\sigma]$ for the group algebra of $\langle\sigma\rangle$, and set
\[
N_\sigma \coloneqq 1+\sigma+\cdots+\sigma^{\ell-1}=(\sigma-1)^{\ell-1}.
\]
For a $k[\sigma]$-module $M$, write
\[
\rT^0(\sigma, M)\coloneqq\frac{\ker(\sigma-1\mid M)}{N_\sigma M},
\qquad
\rT^1(\sigma, M)\coloneqq\frac{\ker(N_\sigma\mid M)}{(\sigma-1)M},
\]
where the indices are taken in $\Z/2\Z$. We will write $\rT^i(M)$ in place of $\rT^i(\sigma, M)$ if $\sigma$ is clear from context. All tensor products in this section carry the diagonal $\sigma$-action. If $M$ is a representation of $\Gamma\rtimes\langle\sigma\rangle$, then $\rT^i(M)$ is naturally a representation of $\Gamma^\sigma$.

We are interested in the behavior of Tate cohomology with respect to tensor products. Unfortunately, cup product maps in Tate cohomology are often trivial.

\begin{example}\label{ex:vanishing-cup-product}
Let $\ell = 3$, and let $V = W = k[\sigma]/(\sigma-1)^2 = ke_1 \oplus ke_2$, where $e_2 = 1$ and $e_1 = \sigma - 1$. We claim that the map $\smile\co \rT^0(V) \otimes \rT^0(W) \to \rT^0(V \otimes_k W)$ is trivial. To see this, note first that $\rT^0(V) = \rT^0(W) = ke_1$. Next, note that $V \otimes_k W$ has basis $e_{i,j}$ for $i,j \in \{1,2\}$, where
\[
\begin{aligned}
    (\sigma-1)e_{1,1} &=0,\\
    (\sigma-1)e_{1,2} &=e_{1,1},\\
    (\sigma-1)e_{2,1} &=e_{1,1},\\
    (\sigma-1)e_{2,2} &=e_{1,1} + e_{1,2} + e_{2,1}.
\end{aligned}
\]
From this, we see that $(V \otimes_k W)^\sigma$ has basis $\{e_{1,1}, e_{1,2} - e_{2,1}\}$, while $N_\sigma(V \otimes_k W) = ke_{1,1}$. The map $\smile\co \rT^0(V) \otimes_k \rT^0(W) \to \rT^0(V \otimes_k W)$ is induced by the map $v \otimes w \mapsto v \otimes w$, so indeed it vanishes.
\end{example}

Recall from \cite[Definition~3.5.3]{Cot26b} that a finitely generated $k[\sigma]$-module is \emph{minimal} if it is isomorphic to $k^{\oplus r}\oplus k[\sigma]^{\oplus s}$, and it is \emph{maximal} if it is isomorphic to
\[
\bigl(k[\sigma]/((\sigma-1)^{\ell-1})\bigr)^{\oplus r}\oplus k[\sigma]^{\oplus s}
\]
for some $r,s\geq0$. It is \emph{extremal} if it is minimal or maximal. We will see below (Corollary~\ref{cor:cup-product-extremal-cases}) that cup products interact with extremal $k[\sigma]$-modules in a rather simple way. In particular, because the modules $V$ and $W$ in Example~\ref{ex:vanishing-cup-product} are maximal, the map $\smile\co \rT^1(V) \otimes_k \rT^1(W) \to \rT^0(V \otimes_k W)$ is injective; this can of course be checked directly as well. Since $\rT^0(V)$ and $\rT^1(V)$ have isomorphic semisimplifications \cite[Lemma~3.1.2]{Cot26b}, this is enough in applications.

\subsection{Cup products}

Recall that if $V$ and $W$ are $k[\sigma]$-modules, then there exist natural cup product maps 
\[
\smile_{i,j}\co\rT^i(V) \otimes_k \rT^j(W) \to \rT^{i+j}(V \otimes_k W)
\]
for $i, j \in \Z/2\Z$. If $ij = 0$, then $\smile_{i,j}$ is given by $v \otimes w \mapsto v \otimes w$, while on the other hand $\smile_{1,1}$ is given by $v \otimes w \mapsto \sum_{0 \leq a < b \leq \ell} \sigma^av \otimes \sigma^bw$; see \cite[XII.7, page 252]{CE56}. The following lemma is elementary and combinatorial in nature but the details are surprisingly complicated.

\begin{lemma}\label{lemma:first-tate-cohom-tensor}
Let $V$ and $W$ be finitely generated $k[\sigma]$-modules, and let $1 \leq m_1, \dots, m_a \leq \ell$ and $1 \leq n_1, \dots, n_b \leq \ell$ be the sizes of the Jordan blocks of $\sigma$ on $V$ and $W$, respectively. Then
\begin{enumerate}
    \item $\dim_k \Ima(\smile_{0,1}) = \#\{(i, j) \in [1,a] \times [1,b]\co m_i \leq n_j < \ell\}$,
    \item $\dim_k \Ima(\smile_{0,0}) = \#\{(i, j) \in [1,a] \times [1,b]\co m_i + n_j \leq \ell\}$,
    \item $\dim_k \Ima(\smile_{1,1}) = \#\{(i, j) \in [1,a] \times [1,b]\co m_i + n_j \geq \ell, \max(m_i, n_j) < \ell\}$.
\end{enumerate}
In particular, if $\rT^0(V) \neq 0$ and $\rT^0(W) \neq 0$, then $\rT^0(V \otimes_k W) \neq 0$, $\rT^1(V \otimes_k W) \neq 0$, one of the maps $\smile_{0,1}$ and $\smile_{1,0}$ is nonzero, and one of $\smile_{0,0}$ and $\smile_{1,1}$ is nonzero.
\end{lemma}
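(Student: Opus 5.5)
We reduce at once to indecomposable modules. All three cup products are natural and biadditive, and Tate cohomology commutes with finite direct sums. Write $V=\bigoplus_i J_{m_i}$ and $W=\bigoplus_j J_{n_j}$ with $J_m\coloneqq k[\sigma]/(\sigma-1)^m$. Then each $\smile_{\cdot,\cdot}$ is the direct sum, over pairs $(i,j)$, of the maps $\rT^\ast(J_{m_i})\otimes\rT^\ast(J_{n_j})\to\rT^\ast(J_{m_i}\otimes J_{n_j})$, each landing in its own summand of $\rT^\ast(V\otimes W)$. For $m<\ell$ we have $\rT^0(J_m)=\ker(\sigma-1)=k(\sigma-1)^{m-1}$, since $N_\sigma=(\sigma-1)^{\ell-1}$ kills $J_m$. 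Likewise $\rT^1(J_m)=\ker N_\sigma/(\sigma-1)J_m=J_m/(\sigma-1)J_m$ is one-dimensional, spanned by the class of $1$. For $m=\ell$ both groups vanish. So each source is at most one-dimensional, and it suffices to prove the following claim: for $m,n<\ell$, the image of the generator is nonzero exactly under the stated inequality. When $m$ or $n$ equals $\ell$ the count is correctly zero, and this is why the conditions $<\ell$ appear.

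Write $x=\sigma-1$ and $e=(\sigma-1)^{m-1}$, $f=(\sigma-1)^{n-1}$. The key computational input is the structure of $J_m\otimes J_n$. Its socle (the $\sigma$-invariants) has dimension equal to the number of Jordan blocks, namely $\min(m,n)$. Its image under $N_\sigma=x^{\ell-1}$ is spanned by the projective summands, and the number of such summands is $\max(0,m+n-\ell)$. One can check this in characteristic $\ell$ from the fact that $x^{\ell-1}$ acts on $J_m\otimes J_n$ through $\sum_{a+b=\ell-1}$-type binomial expressions. Alternatively one can cite the standard Green-ring decomposition $J_m\otimes J_n$ for $m+n\le\ell$ (no projectives) versus $m+n>\ell$.

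\emph{Case $(0,0)$.} The element $e\otimes f$ is the bottom socle vector. It is the unique invariant lying in the maximal $x$-filtration degree, because $x^{m+n-2}(1\otimes1)$ is a nonzero multiple of $e\otimes f$ by the binomial coefficient $\binom{m+n-2}{m-1}$, which is nonzero mod $\ell$ when $m+n-2<\ell$. It therefore lies in $N_\sigma(J_m\otimes J_n)=x^{\ell-1}(J_m\otimes J_n)$ iff $m+n-1\ge \ell+1$ fails in the appropriate direction. Concretely, $e\otimes f\in x^{\ell-1}(\cdot)$ iff the filtration degree $m+n-2\ge \ell-1$ and a projective summand exists, i.e. iff $m+n>\ell$. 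This gives (2).

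\emph{Case $(0,1)$.} We ask whether $e\otimes 1\in x(J_m\otimes J_n)$ modulo $N_\sigma$, i.e. whether $e\otimes1$ is zero in $\rT^1$. Using $x(u\otimes v)=xu\otimes v+\sigma u\otimes xv$ and working inside $J_m\otimes J_n$, one shows that $e\otimes 1$ is a boundary iff $n<m$. The reason is that one can solve $x(\sum_k c_k\, x^{m-1-?}\otimes x^{k})=e\otimes1$ triangularly only when the $n$-block is shorter. Equivalently, $e\otimes1$ is the top of a Jordan block of length $n$ inside the $J_m$-socle-tensor part. This gives (1).

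\emph{Case $(1,1)$.} Here $1\otimes1\mapsto\sum_{a<b}\sigma^a\otimes\sigma^b$. Expanding in the basis $x^i\otimes x^j$, its leading coefficient on $x^{i}\otimes x^{j}$ with $i+j=\ell-1$ is $\pm\binom{\ell}{i+1}/\ell$-type, and these are nonzero mod $\ell$. So the image is an invariant of filtration degree $\ell-2$. It survives modulo $N_\sigma$ exactly when $m+n\ge\ell$, where it complements the projective part; otherwise it is forced into the zero class.

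The main obstacle is these explicit mod-$\ell$ coefficient computations, particularly for $\smile_{1,1}$. I would handle them uniformly by identifying $J_m\otimes J_n$ with $k[x,y]/(x^m,y^n)$, with $\sigma$ acting as $(1+x)(1+y)$, and comparing $x$-adic filtration degrees, checking small cases such as Example~\ref{ex:vanishing-cup-product} for consistency.

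For the final assertions: if $\rT^0(V),\rT^0(W)\ne0$, pick blocks with $m,n<\ell$. If $m+n\le\ell$ then $\smile_{0,0}\ne0$, and otherwise $\smile_{1,1}\ne0$. Either way one of the $\rT^0$-cup products is nonzero, so $\rT^0(V\otimes W)\ne0$. Also either $m\le n$ or $n\le m$, so one of $\smile_{0,1},\smile_{1,0}$ is nonzero, giving $\rT^1(V\otimes W)\ne0$.
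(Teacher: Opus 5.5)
Your skeleton is the same as the paper's: reduce to a single pair of blocks $J_m,J_n$ with $m,n<\ell$, note that each source is one-dimensional, and decide whether the image of the generator vanishes. Your deduction of the final assertions from (1)--(3) is also fine. The gap is that the three criteria themselves, which are the entire content of the lemma (you call them ``the main obstacle''), are asserted rather than proved, and the justifications you sketch do not work.

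\begin{itemize}
\item For $\smile_{0,1}$ there is no argument. That $e\otimes 1$ generates the submodule $e\otimes J_n\cong J_n$ says nothing about whether it lies in $(\sigma-1)(J_m\otimes J_n)$.
\item For $\smile_{0,0}$, the direction $m+n>\ell\Rightarrow e\otimes f\in N_\sigma(J_m\otimes J_n)$ is unsupported. A projective summand only gives $N_\sigma(J_m\otimes J_n)\neq 0$. The invariants form a $\min(m,n)$-dimensional space, so you must show this particular vector is hit. Your coefficient $\binom{m+n-2}{m-1}$ is $\equiv 0 \pmod{\ell}$ by Lucas' theorem whenever $m+n\geq\ell+2$, so that route only reaches $m+n=\ell+1$.
\item For $\smile_{1,1}$, the claim that the image has no components of filtration degree below $\ell-2$ is exactly the nontrivial congruence \eqref{eqn:zero-sum}. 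Without it, ``forced into the zero class'' for $m+n<\ell$ does not follow. Also, the degree-$(\ell-2)$ coefficient on $x^a\otimes x^{\ell-2-a}$ is $\binom{\ell-1}{a+1}$, and your indexing $i+j=\ell-1$ clashes with your own ``degree $\ell-2$''.
\end{itemize}

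The paper fills these gaps by explicit work in the basis $g_{i,j}$:
\begin{itemize}
\item an inductive boundary construction for $m>n$, and a filtration with an alternating-sum invariant for $m\le n$;
\item the identity $N_\sigma(g_{\ell-n+1,n})=s_n g_{1,1}$, with $s_{n+1}=-s_n$ and $s_\ell=1$;
\item the generating function $(x+y+xy)^{\ell-1}+(x+y+xy)^{\ell}$.
\end{itemize}

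Your model $R=k[x,y]/(x^m,y^n)$, with $\sigma-1$ acting by multiplication by $z=x+y+xy$, would also give short proofs if you use its ring structure.
\begin{itemize}
\item Since $1+x$ is a unit in $R$, setting $y'=y(1+x)$ identifies $R=k[x,y']/(x^m,y'^n)$ with $z=x+y'$. Hence $R/zR\cong k[x]/(x^{\min(m,n)})$, and $x^{m-1}\in zR$ iff $n<m$. This gives (1).
\item $N_\sigma R=z^{\ell-1}R$ is an ideal of the local ring $R$, and the socle of $R$ is $k\,x^{m-1}y^{n-1}$. So the ideal contains $e\otimes f$ iff $z^{\ell-1}\neq 0$ in $R$. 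Since $\binom{\ell-1}{a}\equiv(-1)^a$, this happens iff $m+n>\ell$. This gives (2).
\item Summing geometric series in characteristic $\ell$ gives, in $k[x,y]$, $\sum_{0\le i<j\le \ell-1}(1+x)^i(1+y)^j=(z^{\ell-1}-y^{\ell-1})/x$. This lies in degrees $\geq \ell-2$, with degree-$(\ell-2)$ part $\sum_a\binom{\ell-1}{a+1}x^a y^{\ell-2-a}$. Meanwhile $z^{\ell-1}R$ lies in degrees $\geq\ell-1$. So the element is $0$ in $R$ when $m+n<\ell$, and has nonzero image in $\rT^0$ when $m+n\geq\ell$. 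This gives (3).
\end{itemize}
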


\begin{proof}
By passing to direct summands of $V$ and $W$, we may assume that $V$ and $W$ are indecomposable $k[\sigma]$-modules, say of $k$-dimensions $m$ and $n$, respectively. If $m = \ell$ or $n = \ell$, then $\rT^i(V) = 0$ or $\rT^i(W) = 0$ for all $i$ by \cite[Lemma~3.1.3]{Cot26b}, in which case the claims are clear. So assume $m < \ell$ and $n < \ell$. Let $e_1, \dots, e_m$ be a $k$-basis for $V$, and let $f_1, \dots, f_n$ be a $k$-basis for $W$, such that $(\sigma - 1)e_i = e_{i-1}$ and $(\sigma - 1)f_i = f_{i-1}$ for all $i$, where by convention $e_0 = 0$ and $f_0 = 0$. Note that then $\rT^0(V) = ke_1$, $\rT^0(W) = kf_1$, $\rT^1(V) = V/(\bigoplus_{i=1}^{m-1} ke_i)$, and $\rT^1(W) = W/(\bigoplus_{i=1}^{n-1} kf_i)$. If $g_{i,j} \coloneqq e_i \otimes f_j$, then we have
\begin{equation}\label{eqn:g-relation}
(\sigma - 1)g_{i,j} = g_{i-1,j-1} + g_{i-1,j} + g_{i,j-1},
\end{equation}
where again by convention $g_{i,j} = 0$ if either $i\leq 0$ or $j\leq 0$. 



We will analyze the three relevant cup products separately. By the above, the map $\smile_{0,1}$ is nonzero if and only if the class of $g_{1,n}=e_1\otimes f_n$ is nonzero in
\[
\rT^1(V\otimes_k W)=\ker(N_\sigma\mid V\otimes_k W)/(\sigma-1)(V\otimes_k W).
\]
Similarly, $\smile_{0,0}$ is nonzero if and only if $g_{1,1}\notin N_\sigma(V\otimes_k W)$, and $\smile_{1,1}$ is nonzero if and only if $\sum_{0\leq i<j\leq \ell}\sigma^i e_m\otimes \sigma^j f_n \notin N_\sigma(V\otimes_k W)$. We must prove that:
\begin{enumerate}
    \item $g_{1,n}$ has nonzero image in $\rT^1(V \otimes_k W)$ if and only if $m \leq n$,
    \item $g_{1,1}$ has nonzero image in $\rT^0(V \otimes_k W)$ if and only if $m + n \leq \ell$,
    \item $\sum_{0 \leq i < j \leq \ell} \sigma^ie_m \otimes \sigma^j f_n$ has nonzero image in $\rT^0(V \otimes_k W)$ if and only if $m + n \geq \ell$.
\end{enumerate}
Note that the final claims of the lemma follow immediately from (1), (2), (3), and \cite[Lemma~3.1.3]{Cot26b}.

We first prove (1). As a special case of (\ref{eqn:g-relation}), note that
\[
(\sigma - 1)(g_{i, 1}) = g_{i-1,1}
\]
for $1 \leq i \leq m$, so $g_{i,1} \in (\sigma-1)(V \otimes_k W)$ for $1 \leq i \leq m-1$. Similarly, we have
\[
(\sigma - 1)(g_{i,2}) = g_{i-1,1} + g_{i-1,2} + g_{i,1},
\]
so $g_{i,2} \in (\sigma - 1)(V \otimes_k W)$ for $1 \leq i \leq m-2$. Arguing inductively in this way, we find that $g_{i,j} \in (\sigma - 1)(V \otimes_k W)$ for $1 \leq i \leq m-j$. If $m > n$, then it follows that $g_{1,n}$ is trivial in $\rT^1(V \otimes_k W)$, as desired; so suppose for the remainder of the proof that $m \leq n$.

For $1 \leq d \leq m+n-1$, let $S_d$ denote the $k[\sigma]$-submodule of $V \otimes_k W$ spanned as a $k$-vector space by the elements $g_{i,j}$ with $i+j-1 \leq d$. For $d \leq 0$, we set $S_d =0$. It is clear from \eqref{eqn:g-relation} that $(\sigma-1)S_{d+1} \subset S_d$ for all $d$ (see Figure \ref{fig:grid-filtration}).

\begin{figure}[H]
\centering
\begin{tikzpicture}[>=Stealth, scale=0.95, every node/.style={font=\scriptsize}]
\def\m{4}
\def\n{6}
\def\d{5} 

\draw[->] (0.6,0.6) -- (\m+0.9,0.6) node[below] {$i$};
\draw[->] (0.6,0.6) -- (0.6,\n+0.9) node[left] {$j$};

\foreach \i in {1,...,\m}{
\foreach \j in {1,...,\n}{
    \pgfmathtruncatemacro{\s}{\i+\j-1}
    \ifnum\s>\d
    \fill (\i,\j) circle (1.0pt);
    \else
    \fill (\i,\j) circle (1.7pt);
    \fi
}
}

\pgfmathtruncatemacro{\diag}{\d+1}
\pgfmathtruncatemacro{\jLeft}{max(1,\diag-\m+1)}
\pgfmathtruncatemacro{\iLeft}{\diag-\jLeft+1}
\pgfmathtruncatemacro{\jRight}{min(\d,\diag)}
\pgfmathtruncatemacro{\iRight}{\diag-\jRight+1}
\draw[dashed] (\iLeft,\jLeft) -- (\iRight,\jRight);

\node[above right] at (1,1) {$g_{1,1}$};
\node[above right] at (1,\n) {$g_{1,n}$};
\node[above right] at (\m,1) {$g_{m,1}$};
\node[above right] at (\m,\n) {$g_{m,n}$};

\draw[thick] (1,\n) circle (0.18);
\pgfmathtruncatemacro{\jB}{\n-\m+1}
\draw[thick] (\m,\jB) circle (0.18);
\node[above right] at (\m,\jB) {$g_{m,n-m+1}$};

\draw[->] (3,4) -- (2,4);
\draw[->] (3,4) -- (3,3);
\draw[->] (3,4) -- (2,3);
\node[above right] at (3,4) {$g_{i,j}$};
\end{tikzpicture}
\caption{The basis $g_{i,j}=e_i\otimes f_j$ arranged on an $(i,j)$-grid. The filtration $S_d$ is the region $i+j-1\le d$ to the ``lower-left'' of the dashed line. Since $(\sigma-1)$ moves down/left/down-left, 
one has $(\sigma-1)S_{d+1}\subset S_d$.}
\label{fig:grid-filtration}
\end{figure}

Conversely, one sees visibly from Figure~\ref{fig:grid-filtration} that if $u \in V \otimes_k W$ is such that $(\sigma-1)(u) \in S_n$, then $u \in S_{n+1}$. 
From \eqref{eqn:g-relation}, we see that if $u \in S_{n+1}$, and $(\sigma-1)u \equiv \sum_{i+j = n+1} d_{i,j}g_{i,j} \pmod{S_{n-1}}$, then $\sum_{i+j=n+1} (-1)^i d_{i,j} = 0$. Thus $g_{1,n} \not\in (1-\sigma)S_{n+1}$, as desired.

Next we establish (2). By \cite[Lemma~3.1.1]{Cot26b}, we have $N_\sigma = (\sigma-1)^{\ell-1}$, so
\begin{equation}\label{eqn:norm-push}
N_\sigma(V \otimes_k W) = (\sigma-1)^{\ell-1}(S_{m+n-1}) \subset S_{m+n-\ell}.
\end{equation}
Thus if $m+n \leq \ell$, then $N_\sigma(V\otimes_k W) = 0$ and in particular $g_{1,1}$ has nonzero image in $\rT^0(V \otimes_k W)$. Conversely, suppose $m+n > \ell$. Then $\ell - n + 1 \leq m$, and \eqref{eqn:norm-push} shows that
\[
N_\sigma(g_{\ell-n+1,n}) \in S_1 = kg_{1,1}.
\]
It remains to show that this element is nonzero.

We first perform the binomial calculation once in the ``universal'' case $V = k[\sigma], W = k[\sigma]$, i.e., working in $k[\sigma]\otimes_k k[\sigma]$. Let $\delta \coloneqq\sigma-1$, and in $k[\sigma]\otimes_k k[\sigma]$ set
\[
h_{i,j}=\delta^{\ell-i}\otimes \delta^{\ell-j}, \qquad 1\leq i,j\leq \ell.
\]
Let $T_d$ be the span of the $h_{i,j}$ with $i+j-1\leq d$. Expanding $\sigma^r=(1+\delta)^r$ gives
\[
N_\sigma(h_{\ell,\ell})
=\sum_{r=0}^{\ell-1}\sum_{0\leq a,b\leq r}{r\choose a}{r\choose b}h_{\ell-a,\ell-b}.
\]
Since $N_\sigma=\delta^{\ell-1}$ and $\delta T_d\subset T_{d-1}$, we have $N_\sigma(h_{\ell,\ell})\in T_\ell$. Define $s_n$ by the congruence
\[
N_\sigma(h_{\ell,\ell})\equiv \sum_{n=1}^{\ell}s_nh_{n,\ell-n+1}\pmod{T_{\ell-1}},
\]
so we have
\[
s_n \coloneqq\sum_{r=1}^{\ell-1}{r\choose \ell-n}{r\choose n-1},
\]
where we interpret ${r\choose a} = 0$ if $r < a$. Applying $\delta$ for the diagonal action and reducing modulo $T_{\ell-2}$ gives
\[
0=\delta N_\sigma(h_{\ell,\ell})
\equiv \sum_{n=1}^{\ell-1}(s_n+s_{n+1})h_{n,\ell-n}
\pmod{T_{\ell-2}}.
\]
Here we used that $\delta(h_{r,\ell-r+1})=h_{r-1,\ell-r+1}+h_{r,\ell-r}+h_{r-1,\ell-r}$, and the last term lies in $T_{\ell-2}$. The classes of the $h_{n,\ell-n}$ for $1\leq n\leq \ell-1$ are linearly independent modulo $T_{\ell-2}$, so $s_{n+1}=-s_n$ for all $n$. Since $s_\ell=1$, it follows that $s_n\neq 0$ for every $n$.

Now return to general $V\otimes_k W$. This admits a quotient map from $k[\sigma]\otimes_k k[\sigma]$, sending
\[
h_{\ell-m+i,\ell-n+j}\longmapsto g_{i,j}.
\]
In particular, $h_{2\ell-m-n+1,\ell}$ maps to $g_{\ell-n+1,n}$ and $h_{\ell-m+1,\ell-n+1}$ maps to $g_{1,1}$. The same expansion as above shows that the coefficient of $h_{\ell-m+1,\ell-n+1}$ in $N_\sigma(h_{2\ell-m-n+1,\ell})$ is $s_n$. Applying the quotient map, we obtain
\[
N_\sigma(g_{\ell-n+1,n})=s_ng_{1,1}.
\]
Since $s_n\neq 0$, this shows that $g_{1,1}\in N_\sigma(V\otimes_k W)$. Hence $g_{1,1}$ has zero image in $\rT^0(V\otimes_k W)$ when $m+n>\ell$, proving (2).

Finally, we prove (3). One computes
\begin{equation}\label{eq:case-3-cup-product}
\begin{aligned}
\smile_{1,1}(e_m \otimes f_n) = \sum_{0 \leq i < j \leq \ell} \sigma^ie_m \otimes \sigma^jf_n &= \sum_{\substack{0 \leq i < j \leq \ell \\ 0 \leq a \leq i \\ 0 \leq b \leq j}} {i \choose a}{j \choose b}g_{m-a,n-b} \\
    &= \sum_{0 \leq a, b \leq \ell-1} \left(\sum_{\substack{a \leq i < j \leq \ell \\ b \leq j}} {i \choose a}{j \choose b}\right)g_{m-a,n-b}.
\end{aligned}
\end{equation}
We first show that $\Ima(\smile_{1,1}) \subset S_{m+n-\ell+1}$; this immediately implies that $\smile_{1,1} = 0$ if $m+n < \ell$.

It is equivalent to show that
\begin{equation}\label{eqn:zero-sum}
    \sum_{0 \leq i < j \leq \ell} {i\choose a}{j\choose b} \equiv 0 \pmod{\ell}
\end{equation}
whenever $a, b \geq 0$ satisfy $a + b < \ell - 2$. Using the ``hockey-stick identity'', we obtain
\[
\sum_{0 \leq i < j \leq \ell} {i \choose a}{j \choose b} = \sum_{j=1}^\ell {j\choose a+1}{j\choose b},
\]
where we interpret ${j\choose c} = 0$ whenever $j < c$. This sum is the coefficient of $x^{a+1}y^b$ in the polynomial $f(x,y) = \sum_{j=1}^\ell (1+x)^j(1+y)^j \in \F_\ell[x,y]$. Let $z \coloneqq (1+x)(1+y) - 1$, so we have
\[
f(x,y) = \sum_{j=1}^\ell (1+z)^j = \frac{(1+z)((1+z)^\ell - 1)}{z} = (x+y+xy)^{\ell-1} + (x+y+xy)^\ell.
\]
The coefficients of this polynomial vanish in degree below $\ell - 1$, and thus the coefficient of $x^{a+1}y^b$ vanishes whenever $a+b < \ell - 2$, as desired.

Now it remains to show that if $m + n \geq \ell$, then \eqref{eq:case-3-cup-product} does not lie in $N_\sigma(V \otimes_k W)$. As before, we have $N_\sigma(V \otimes_k W) \subset S_{m+n-\ell}$, and the quotient $
S_{m+n-\ell+1}/S_{m+n-\ell}$ has basis given by the nonzero $g_{m-a,n-b}$ with $a+b=\ell-2$. Since $m+n\geq \ell$, we can find such $a,b$ with $0\leq a\leq m-1$ and $0\leq b \leq n-1$. It is therefore enough to show that the coefficient of $g_{m-a,n-b}$ is nonzero for such an $a$. This is equivalent to the assertion
\[
    t_a \coloneqq \sum_{\substack{a \leq i < j \leq \ell \\ \ell-a-2 \leq j}} {i\choose a}{j\choose \ell-a-2} \not\equiv 0 \pmod{\ell}
\]
whenever $0 \leq a \leq \ell-2$. But the previous paragraph shows that $t_a$ is the coefficient of $x^{a+1}y^{\ell-a-2}$ in the polynomial $f(x,y)$ above, which is ${\ell-1 \choose a+1} \not\equiv 0 \pmod{\ell}$.
\end{proof}

\subsection{Tensor products of Jordan blocks}

Substantial work has been done to describe tensor products of Jordan blocks in positive characteristic in an algorithmic manner; see for instance \cite{Sri64}. We will only need the following result, which can be proven directly.

\begin{lemma}\label{lemma:tate-cohom-edge-cases}
Let $1 \leq a, b \leq \ell$, and let $V = k[\sigma]/((\sigma-1)^a)$ and $W = k[\sigma]/((\sigma-1)^b)$.
\begin{enumerate}
    \item If $a = 1$, then the map $\smile_{0,0}\co \rT^0(V) \otimes_k \rT^0(W) = V \otimes_k \rT^0(W) \to \rT^0(V \otimes_k W)$ is an isomorphism.
    \item If $b = \ell$, then $V \otimes_k W \cong W^{\oplus a}$ and thus $\rT^0(V \otimes_k W) = \rT^1(V \otimes_k W) = 0$.
    \item If $b = \ell - 1$, then $V \otimes_k W \cong k[\sigma]/((\sigma-1)^{\ell-a}) \oplus k[\sigma]^{\oplus a-1}$ as $k[\sigma]$-modules, and the map $\smile_{1,1}\co \rT^1(V) \otimes_k \rT^1(W) \to \rT^0(V \otimes_k W)$ is an isomorphism.
\end{enumerate}
\end{lemma}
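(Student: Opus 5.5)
We treat the three parts separately, using the explicit basis $g_{i,j}=e_i\otimes f_j$ and the relation \eqref{eqn:g-relation} from the proof of Lemma~\ref{lemma:first-tate-cohom-tensor}, together with the dimension counts in that lemma.

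For (1), $a=1$ means $V=k$ with trivial action, so $V\otimes_k W\cong W$ as $k[\sigma]$-modules via $1\otimes w\mapsto w$. Under this identification $\smile_{0,0}$ is the identity on $\rT^0(W)$, so it is an isomorphism.

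For (2), we show $V\otimes_k k[\sigma]\cong k[\sigma]^{\oplus a}$. This is the standard fact that $M\otimes k[\sigma]$ is free for any $M$: the untwisting map $v\otimes \tau\mapsto \tau^{-1}v\otimes\tau$ intertwines the diagonal action with the action on the second factor alone. Hence $V\otimes_k W$ is free of rank $\dim V=a$, and free modules have vanishing Tate cohomology by \cite[Lemma~3.1.3]{Cot26b}.

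For (3), with $b=\ell-1$ we first determine the decomposition. Since $W\cong k[\sigma]/(N_\sigma)$, there is an exact sequence $0\to k\to k[\sigma]\to W\to 0$. Tensoring with $V$ gives
\[
0\to V\to V\otimes k[\sigma]\to V\otimes W\to 0,
\]
and the middle term is free of rank $a$ by (2). Thus $V\otimes W$ is $\Omega^{-1}V$ up to projective summands, namely $k[\sigma]/((\sigma-1)^{\ell-a})$ plus free summands. A dimension count, $a(\ell-1)=(\ell-a)+\ell(a-1)$, pins down the number of free summands as $a-1$. For $a=\ell$ one reads $k[\sigma]/((\sigma-1)^0)=0$, and the statement is consistent with (2) applied with the roles of $V$ and $W$ swapped. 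For the cup product, the target $\rT^0(V\otimes W)$ is $1$-dimensional when $a<\ell$ by \cite[Lemma~3.1.3]{Cot26b}, and so is the source $\rT^1(V)\otimes\rT^1(W)$. It therefore suffices that $\smile_{1,1}\neq0$. Lemma~\ref{lemma:first-tate-cohom-tensor}(3) gives exactly this, since $m+n=a+\ell-1\geq\ell$ and $\max(a,\ell-1)<\ell$. When $a=\ell$, both sides vanish.

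The main point needing care is the identification of $V\otimes W$ via the syzygy. If one prefers to avoid stable-module language, the fallback is to compute $\dim\ker(\sigma-1)$ on $V\otimes W$ directly from \eqref{eqn:g-relation}. This kernel has dimension $a$, matching $a$ Jordan blocks. Combining it with the rank of $(\sigma-1)^{\ell-1}$, which equals $a-1$ by \eqref{eqn:norm-push}, shows there are exactly $a-1$ blocks of size $\ell$, and the remaining block has size $\ell-a$ by the dimension count.
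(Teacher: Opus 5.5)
Your proposal is correct. For the Jordan decomposition in (3) it takes a genuinely different route from the paper; parts (1), (2), and the cup-product half of (3) essentially match it.

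\textbf{How the paper proves the decomposition.} It works in the explicit basis $e_{i,j}$ of $V\otimes_k W$ and establishes two one-sided bounds.
\begin{enumerate}
\item $\dim_k\ker(\sigma-1)\leq a$, by a downward induction along the rows $U_d$.
\item $\dim_k\Ima(N_\sigma)\geq a-1$, by showing that $N_\sigma(e_{2,\ell-1}),\dots,N_\sigma(e_{a,\ell-1})$ are linearly independent.
\end{enumerate}
It then uses $\rT^0(V\otimes_k W)\neq 0$ to force both inequalities to be equalities, and reads off the Jordan type by a dimension count.

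\textbf{How you prove it.} You tensor $0\to k\to k[\sigma]\to W\to 0$ with $V$. By (2) the middle term $V\otimes_k k[\sigma]$ is free, hence injective, since $k[\sigma]$ is self-injective. The dual Schanuel lemma and Krull--Schmidt then give $V\otimes_k W\cong \Omega^{-1}V\oplus(\text{free})$. Here $\Omega^{-1}V\cong k[\sigma]/((\sigma-1)^{\ell-a})$, by embedding $V$ as the ideal generated by $(\sigma-1)^{\ell-a}$. Schanuel even gives the number $a-1$ of free summands directly, so your dimension count is a consistency check rather than a necessity.

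\textbf{Comparison.} Your route is shorter and more conceptual. It also shows more generally that $M\otimes_k k[\sigma]/(N_\sigma)$ is stably isomorphic to $\Omega^{-1}M$ for any $M$. The price is reliance on standard modular representation theory (self-injectivity, Heller shifts, Schanuel). The paper avoids these in favor of a self-contained computation in the style of Lemma~\ref{lemma:first-tate-cohom-tensor}.

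\textbf{A flaw in your fallback paragraph.} With $m=a$ and $n=\ell-1$, \eqref{eqn:norm-push} only gives the containment $N_\sigma(V\otimes_k W)\subset S_{a-1}$. This is an upper bound, of dimension $a(a-1)/2$, which is not sharp once $a\geq 3$. It gives no lower bound at all, so it does not show that $(\sigma-1)^{\ell-1}$ has rank $a-1$. Likewise, ``the kernel has dimension $a$'' needs a lower bound as well as an upper bound. To make the fallback work you would need exactly the paper's two estimates, combined with $\rT^0(V\otimes_k W)\neq 0$. Your main syzygy argument does not depend on the fallback, so this does not affect correctness.
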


\begin{proof}
The statement (1) is clear. For (2), note that $W$ is the regular representation of $k[\sigma]$, and the tensor product of any representation of a finite group with the regular representation is a direct sum of copies of the regular representation. Hence the result follows from vanishing of Tate cohomology of the regular representation. 

Next we prove (3). If $a = \ell$, then both claims follow from (2) and symmetry, so assume $a < \ell$. We determine the Jordan decomposition of $\sigma$ acting on $V \otimes_k W$ by computing the dimension of $\ker(\sigma-1)$ and the dimension of $\Ima(N_\sigma)$. We will prove that $\dim_k \ker(\sigma-1| V \otimes_k W) \leq a$ and $\dim_k \Ima(N_\sigma|V \otimes_k W) \geq a-1$. By \cite[Lemma~3.1.3]{Cot26b}, we have $\rT^0(V \otimes_k W) \neq 0$, so these two inequalities must actually be equalities, and thus $\dim_k \rT^0(V \otimes_k W) = 1$. Since $\dim_k V \otimes_k W = a(\ell-1) = (a-1)\ell + (\ell-a)$, it follows by applying \cite[Lemma~3.1.3]{Cot26b} again that $V \otimes_k W$ admits $a-1$ Jordan blocks of size $\ell$ and one of size $\ell-a$, which establishes the first point of (3). The second point of (3) follows from the first point and Lemma~\ref{lemma:first-tate-cohom-tensor}(3).

First, we show $\dim_k \ker(\sigma-1| V \otimes_k W) \leq a$. For $1 \leq i \leq a$ and $1 \leq j \leq \ell-1$, let $e_{i,j} = (\sigma-1)^{a-i} \otimes (\sigma-1)^{\ell - 1-j} \in V \otimes_k W$, so the $e_{i,j}$ form a $k$-basis for $V \otimes_k W$ and $(\sigma - 1)e_{i,j} = e_{i-1,j-1} + e_{i-1,j} + e_{i,j-1}$, where by convention $e_{i,j} = 0$ for $i = 0$ or $j = 0$. We claim that the map $\ker(\sigma-1|V \otimes_k W) \to k^{\oplus a}$ given by $v \mapsto (c_{i,1})_{1 \leq i \leq a}$, where $c_{i,j}$ is the coefficient of $e_{i,j}$ in $v$, is injective; this implies the desired inequality. For this, suppose that $c_{i,1} = 0$ for all $i$. We will show by induction on $d$ that $c_{i,d} = 0$ for all $i$, the case $d=1$ being clear by assumption. For $1 \leq d \leq \ell - 1$, let $U_d$ denote the $k$-subspace of $V \otimes_k W$ spanned by those $e_{i,j}$ with $j \geq d$ (see Figure \ref{fig:Ud-Ld}). For $1 \leq d \leq \ell - 2$, the induction hypothesis gives
\[
(\sigma-1)v \equiv \sum_{1 \leq i \leq a} (c_{i+1,d+1} + c_{i,d+1})e_{i,d} \pmod{U_{d+1}},
\]
where by convention $c_{a+1,d+1} = 0$. Taking $i = a$ gives $c_{a,d+1} = 0$, and inducting downwards on $i$ we see $c_{i,d+1} = 0$ for all $i$, as desired. 

Next, we show $\dim_k \Ima(N_\sigma|V \otimes_k W) \geq a-1$. For $0 \leq d \leq a-1$, let $L_d$ denote the $k$-subspace of $V \otimes_k W$ spanned by $e_{i,j}$ with $i \leq d$ (see Figure \ref{fig:Ud-Ld}).

\begin{figure}[!h]
\centering
\begin{tikzpicture}[
>=Stealth,
x=0.72cm,y=0.6cm,
every node/.style={font=\scriptsize},
lab/.style={fill=white, inner sep=1pt}
]
\def\a{8}
\def\b{6} 
\def\d{4}

\fill[red!14] (0.8,\d-0.2) rectangle (\a+0.2,\b+0.2);
\draw[red!65!black] (0.8,\d-0.2) rectangle (\a+0.2,\b+0.2);
\fill[blue!18, fill opacity=0.8] (0.8,0.8) rectangle (\d+0.2,\b+0.2);
\draw[blue!65!black] (0.8,0.8) rectangle (\d+0.2,\b+0.2);

\draw[->] (0.6,0.6) -- (\a+1.0,0.6) node[below] {$i$};
\draw[->] (0.6,0.6) -- (0.6,\b+1.0) node[left] {$j$};

\foreach \i in {1,...,\a}{
\foreach \j in {1,...,\b}{
    \fill (\i,\j) circle (0.9pt);
}
}

\draw[thick] (0.9,1) -- (\a+0.1,1);

\draw[thick] (0.9,\b) -- (\a+0.1,\b);

\node[lab] at (6.2,5.5) {$U_d=\langle e_{i,j}: j\ge d\rangle$};
\node[lab] at (2.5,2.3) {$L_d=\langle e_{i,j}: i\le d\rangle$};

\end{tikzpicture}
\caption{The two filtrations used for the proof of Lemma~\ref{lemma:tate-cohom-edge-cases}(3): $U_d$ (upper rows) and $L_d$ (left-hand columns).}
\label{fig:Ud-Ld}
\end{figure}

One shows easily by induction on $i$ that if $d \leq a-2$ then
\[
\sigma^i(e_{d+2, \ell-1}) \equiv \sum_{j=0}^i {i\choose j} e_{d+2,\ell-1-j} + i\sum_{j=0}^i {i\choose j}e_{d+1,\ell-1-j} \pmod{L_d}.
\]
Thus the coefficient of $e_{d+2,\ell-1-j}$ in $N_\sigma(e_{d+2,\ell-1})$ is $\sum_{i=j}^{\ell-1} {i\choose j} = {\ell \choose j+1} \equiv 0 \pmod{\ell}$ since $0 \leq j < \ell-1$. On the other hand, the coefficient of $e_{d+1,1}$ is
\[
\ell - 2 + (\ell-1)^2 \equiv -1 \pmod{\ell}.
\]
We therefore see that $N_\sigma(e_{2,\ell-1}), \dots, N_\sigma(e_{a,\ell-1})$ are $k$-linearly independent, and it follows that $\dim_k \Ima(N_\sigma|V \otimes_k W) \geq a-1$, as desired.
\end{proof}


\begin{cor}\label{cor:cup-product-extremal-cases}
Let $V$ and $W$ be finitely generated $k[\sigma]$-modules, and for $i, j \in \Z/2\Z$ let $\smile_{i,j}\co \rT^i(V) \otimes_k \rT^j(W) \to \rT^{i+j}(V \otimes_k W)$ be the cup product map.
\begin{enumerate}
    \item If $V$ is minimal, then $\smile_{0,0}$ and $\smile_{0,1}$ are isomorphisms.
    \item If $W$ is maximal, then $\smile_{0,1}$ and $\smile_{1,1}$ are isomorphisms.
\end{enumerate}
\end{cor}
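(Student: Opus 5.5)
The plan is to reduce both statements to a count of dimensions. Everything involved decomposes over direct sums: Tate cohomology, tensor products and cup products are all additive in $V$ and in $W$. So I would decompose $V=\bigoplus_i V_i$ and $W=\bigoplus_j W_j$ into Jordan blocks of sizes $m_i$ and $n_j$. Then $\smile_{p,q}$ is the direct sum over pairs $(i,j)$ of the maps $\rT^p(V_i)\otimes\rT^q(W_j)\to \rT^{p+q}(V_i\otimes W_j)$. It therefore suffices to show that each block-pair map is an isomorphism.

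For a pair in which one of the blocks has size $\ell$, I would use \cite[Lemma~3.1.3]{Cot26b} and Lemma~\ref{lemma:tate-cohom-edge-cases}(2). These show that the source and target of the block-pair map both vanish, so the map is trivially an isomorphism. This handles every pair involving a free summand.

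For (1), the minimal case, the remaining blocks of $V$ have $m_i=1$, so $V_i=k$ is the trivial module. The map $\smile_{0,0}$ is then an isomorphism by Lemma~\ref{lemma:tate-cohom-edge-cases}(1). For $\smile_{0,1}$, tensoring with the trivial module is the identity functor, and the cup product $k\otimes \rT^1(W_j)\to\rT^1(k\otimes W_j)$ is induced by $1\otimes w\mapsto w$. It is therefore the identity, hence an isomorphism.

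For (2), the maximal case, the remaining blocks of $W$ have $n_j=\ell-1$, and $V_i$ is an arbitrary block of size $m<\ell$. The statement that $\smile_{1,1}$ is an isomorphism is exactly Lemma~\ref{lemma:tate-cohom-edge-cases}(3).

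The only step needing any care is $\smile_{0,1}$ in case (2). By Lemma~\ref{lemma:tate-cohom-edge-cases}(3), $V_i\otimes W_j\cong k[\sigma]/(\sigma-1)^{\ell-m}\oplus(\text{free})$. Since $\ell-m<\ell$, the target $\rT^1(V_i\otimes W_j)$ is one-dimensional. The source $\rT^0(V_i)\otimes\rT^1(W_j)$ is also one-dimensional, because it is $ke_1\otimes$ (the top class). By Lemma~\ref{lemma:first-tate-cohom-tensor}(1), the image has dimension $1$ exactly when $m\le n=\ell-1<\ell$, and this holds because $m<\ell$. So the map is a nonzero map between one-dimensional spaces, hence an isomorphism.

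I would double-check one point: Lemma~\ref{lemma:first-tate-cohom-tensor}(1) is stated with $V$ in the $\rT^0$ slot and $W$ in the $\rT^1$ slot, which matches the ordering here. Once the dimension counts of the source and target match, there is no real obstacle.
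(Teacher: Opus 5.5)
Your proposal is correct and follows the paper's proof. The paper decomposes $V$ and $W$ into indecomposable $k[\sigma]$-modules and applies Lemma~\ref{lemma:first-tate-cohom-tensor} and Lemma~\ref{lemma:tate-cohom-edge-cases} to each pair of blocks; you have spelled out the case checks (free blocks, size-$1$ blocks, size-$(\ell-1)$ blocks) that the paper leaves implicit.
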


\begin{proof}
Both points are immediate from Lemma~\ref{lemma:first-tate-cohom-tensor} and Lemma~\ref{lemma:tate-cohom-edge-cases} after decomposing $V$ and $W$ into indecomposable $k[\sigma]$-modules.
\end{proof}

\begin{cor}\label{cor:tate-cohom-tensor-product-yu}
Assume that $k$ is algebraically closed. Let $K$ be a group\footnote{In the application below, $K$ will be a compact open subgroup of a reductive $p$-adic group arising from (the twisted version of) Yu's construction of tame cuspidal representations \cite{Yu01}, \cite{FKS23}.}, let $\sigma$ be an order $\ell$ automorphism of $K$, let $V$ be a finite-dimensional $k[K \rtimes \langle\sigma\rangle]$-module, and let $W_1, \dots, W_n$ be finite-dimensional irreducible $k[K \rtimes \langle\sigma\rangle]$-modules. Suppose that $W_i$ is an extremal $k[\sigma]$-module for each $1 \leq i \leq n$. Then there exist $j_1, \dots, j_n \in \Z/2\Z$ such that for each $i \in \Z/2\Z$, we have
\[
\rT^{i + \sum_{m=1}^n j_m}\left(V \otimes \bigotimes_{m=1}^n W_m\right) \cong \rT^i(V) \otimes \bigotimes_{m=1}^n \rT^{j_m}(W_m)
\]
as $K^\sigma$-representations.
\end{cor}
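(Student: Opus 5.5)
We argue by induction on $n$, the case $n=0$ being trivial (take no indices). It therefore suffices to treat $n=1$: given $V$ and an irreducible $k[K\rtimes\langle\sigma\rangle]$-module $W$ that is extremal as a $k[\sigma]$-module, we produce $j\in\Z/2\Z$ with $\rT^{i+j}(V\otimes W)\cong \rT^i(V)\otimes \rT^j(W)$ as $K^\sigma$-representations for both $i$. For general $n$, apply the $n=1$ case with $V$ replaced by $V\otimes W_1\otimes\cdots\otimes W_{n-1}$, which is again a finite-dimensional $k[K\rtimes\langle\sigma\rangle]$-module, and then use the inductive hypothesis on $\rT^{i'}(V\otimes W_1\otimes\cdots\otimes W_{n-1})$. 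Note that $V$ itself is not assumed extremal, which is why the induction must put the extremal factor second.

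For the case $n=1$ we use the cup products $\smile_{i,j}\co \rT^i(V)\otimes\rT^j(W)\to\rT^{i+j}(V\otimes W)$. These are natural, hence $K^\sigma$-equivariant: $K^\sigma$ commutes with $\sigma$, so it acts on $\rT^\bullet$ of each factor and of the tensor product, and the explicit formulas recalled before Lemma~\ref{lemma:first-tate-cohom-tensor} commute with this action. This holds also for $\smile_{1,1}$, since $g\in K^\sigma$ commutes with each $\sigma^a$. We then split into cases according to the extremal type of $W$. If $W$ is minimal, we take $j=0$. By Corollary~\ref{cor:cup-product-extremal-cases}(1), applied with the roles of the two factors swapped by the symmetry $V\otimes W\cong W\otimes V$, both $\smile_{0,0}$ and $\smile_{1,0}$ are isomorphisms, giving $\rT^{i}(V\otimes W)\cong\rT^i(V)\otimes\rT^0(W)$ for both $i$. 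If $W$ is maximal, we take $j=1$. Corollary~\ref{cor:cup-product-extremal-cases}(2) says that $\smile_{0,1}$ and $\smile_{1,1}$ are isomorphisms, giving $\rT^{i+1}(V\otimes W)\cong\rT^i(V)\otimes\rT^1(W)$.

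One point needs care: a module that is simultaneously minimal and maximal. This happens when $\ell=2$, or when $W$ is free so that both sides vanish. In that situation either choice of $j$ works, so there is no conflict. The role of irreducibility of $W$ and of algebraic closedness of $k$ should be to make the extremal type well defined and compatible with the $K\rtimes\langle\sigma\rangle$-structure. It is not needed for the cup product argument itself, only for the hypotheses to make sense as stated.

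The main obstacle is checking the equivariance and the symmetry step. Corollary~\ref{cor:cup-product-extremal-cases}(1) is phrased with the minimal module in the first slot, so we must verify that the swap isomorphism $V\otimes W\cong W\otimes V$ intertwines $\smile_{0,i}$ with $\smile_{i,0}$ up to the standard sign. That sign is harmless for the isomorphism claim. Once this identification is in place, the rest is formal bookkeeping of the indices $j_m$ through the induction.
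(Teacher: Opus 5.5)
Your proof is correct. It rests on the same key input as the paper: Corollary~\ref{cor:cup-product-extremal-cases}, with the same choice $j_m=0$ for minimal $W_m$ and $j_m=1$ for maximal non-minimal $W_m$. The induction, however, is organized differently.

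The paper proceeds in three steps:
\begin{enumerate}
\item It uses Lemma~\ref{lemma:tate-cohom-edge-cases} to show that $\bigotimes_m W_m$ is itself extremal, minimal or maximal according to the parity of $\sum_m j_m$.
\item It applies the cup product corollary once, to $V\otimes\bigl(\bigotimes_m W_m\bigr)$.
\item It splits $\rT^{\sum_m j_m}\bigl(\bigotimes_m W_m\bigr)$ into $\bigotimes_m \rT^{j_m}(W_m)$ by a second induction.
\end{enumerate}
You instead absorb $W_1,\dots,W_{n-1}$ into the arbitrary first slot and peel off one extremal factor at a time. Since Corollary~\ref{cor:cup-product-extremal-cases} places no condition on the non-extremal factor, you never need to know that tensor products of extremal modules are extremal. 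Your route is therefore slightly more economical, while the paper's yields that structural fact as a by-product.

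The swap step you flag is harmless. Both $\smile_{0,1}$ and $\smile_{1,0}$ are induced by $v\otimes w\mapsto v\otimes w$, so the flip $V\otimes W\cong W\otimes V$ intertwines them on the nose, with no sign at all. The paper's proof also tacitly uses this symmetric form of part (1), in the case $\sum_m j_m=0$.

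Your speculation about the role of irreducibility and algebraic closedness is off, but harmless. Extremality is a property of the $k[\sigma]$-module alone, and these hypotheses are simply unused, in the paper's proof as well as yours.
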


\begin{proof}
Let $j_m = 0$ if $W_m$ is minimal, and let $j_m = 1$ if $W_m$ is maximal and not minimal. It follows from Lemma~\ref{lemma:tate-cohom-edge-cases} and induction on $n$ that if $\sum_{m=1}^n j_m = 0$ then $\bigotimes_{m=1}^n W_m$ is minimal, and otherwise it is maximal. By Corollary~\ref{cor:cup-product-extremal-cases}, it follows that
\[
\smile_{i,\sum_{m=1}^n j_m}\co \rT^i(V) \otimes \rT^{\sum_{m=1}^n j_m}\left(\bigotimes_{m=1}^n W_m\right) \to \rT^{i + \sum_{m=1}^n j_m}\left(V \otimes \bigotimes_{m=1}^n W_m\right)
\]
is an isomorphism of $K^\sigma$-representations (since the cup product maps are evidently $K^\sigma$-equivariant). By the same reasoning, it follows by Corollary~\ref{cor:cup-product-extremal-cases} and induction on $n$ that
\[
\rT^{\sum_{m=1}^n j_m}\left(\bigotimes_{m=1}^n W_m\right) \cong \bigotimes_{m=1}^n \rT^{j_m}(W_m)
\]
as $K^\sigma$-representations. Combining these two isomorphisms yields the claim.
\end{proof}

\section{Tate cohomology of Weil--Heisenberg representations}\label{ss:weil-heisenberg}

For the most part, to apply the results of \cite{F24} we need only show that Tate cohomology is ``large''. Using \cite[Lemma~3.1.3]{Cot26b}, it is easy to see that the Tate cohomology of a Heisenberg representation is nonzero, and since irreducible representations of Heisenberg groups are classified by their central characters it is therefore easy to exhibit an irreducible constituent. However, in the setting of Yu's construction, it is also necessary to understand the Tate cohomology of the \emph{Weil} representation. To control this, we need a more refined understanding of the Tate cohomology of Heisenberg representations, for which we will ultimately use \cite[Theorem~3.5.4]{Cot26b}.

\subsection{Definition of the Weil--Heisenberg representation}
Let $\ell \neq p$ be distinct prime numbers such that $p\neq 2$,\footnote{The assumption $p \neq 2$ can likely be weakened using the formalism of Fintzen--Schwein \cite{FS25} and Takaya \cite{Tak26}, but we do not pursue this here.} let $q$ be a power of $p$, let $(V, B)$ be a finite-dimensional symplectic space over $\F_q$, and let $\sigma$ be a symplectic automorphism of $V$ of order $\ell$. Let $V_0 \coloneqq V^\sigma$, and let $B_0 = B|_{V_0}$. Let $V^\sharp \coloneqq V \times \F_q$ and $V_0^\sharp \coloneqq V_0 \times \F_q$ (with multiplication $(v, a) \cdot (w, b) = (v+w, a+b+\frac{1}{2}B(v,w))$) denote the corresponding Heisenberg groups. Then $\sigma$ induces an automorphism of $V^\sharp$ by $\sigma(v, c) = (\sigma(v), c)$, so that $(V^\sharp)^\sigma = V_0^\sharp$.

In this section, we will use $\mathbf{Sp}(V)$ to denote the symplectic group as a connected reductive $\F_q$-group, and we will use $\Sp(V)$ to denote its set of $\F_q$-points.

Let $\phi\colon \F_q \to \ol\F_\ell^\times$ be a nontrivial character, and let $W_{\phi}$ (resp.\ $W_{0,\phi}$) denote the unique irreducible $\ol\F_\ell$-representation of $V^\sharp$ (resp.\ $V_0^\sharp$) with central character $\phi$. Recall that $W_\phi$ (resp.\ $W_{0,\phi}$) extends canonically to a representation of $\Sp(V) \ltimes V^\sharp$ (resp.\ $\Sp(V_0) \ltimes V_0^\sharp$), called the \emph{Weil--Heisenberg representation} and exposed in great detail in \cite{Ger77}. We remark that \cite{Ger77} works throughout with complex coefficients, but by applying \cite[Part III, \S 15.5, Proposition 43]{Serre77} to representations of the Heisenberg group, and then extending actions to the symplectic group as usual, the results apply equally well over any field of characteristic not equal to $p$.

\subsection{Calculation of Tate cohomology}
Using the element $\sigma \in \Sp(V)$, one can consider the Tate cohomology $\rT^j(W_{\phi})$ for $j \in \Z/2\Z$.

\begin{lemma}\label{lemma:tate-cohom-of-heisenberg}
For $j \in \Z/2\Z$, the Tate cohomology $\rT^j(W_\phi)$ is isomorphic to $W_{0,\phi}$ as an $\Sp(V_0) \ltimes V_0^\sharp$-representation. Moreover, the action of $\sigma$ on $W_\phi$ is extremal in the sense of \cite[Definition~3.5.3]{Cot26b}.
\end{lemma}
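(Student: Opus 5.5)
We will work with the Schrödinger-type model attached to the orthogonal decomposition $V = V_0 \perp V_1$, where $V_1 \coloneqq (\sigma-1)V$. Since $\ell \neq p$, the operator $\sigma$ is semisimple on $V$. Hence $V = V^\sigma \oplus (\sigma-1)V$, and this decomposition is orthogonal for $B$: for $v_0 \in V_0$ and $w \in V$ we have $B(v_0,(\sigma-1)w) = B(\sigma^{-1}v_0 - v_0, w) = 0$. Thus $V_0$ and $V_1$ are both symplectic. The Heisenberg group then satisfies $V^\sharp \cong (V_0^\sharp \times V_1^\sharp)/\{(0,a,0,-a)\}$, and by uniqueness of the irreducible representation with given central character we get $W_\phi \cong W_{0,\phi} \boxtimes W_{1,\phi}$ as a representation of $V_0^\sharp \times V_1^\sharp$. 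Here $W_{1,\phi}$ is the Heisenberg representation of $V_1^\sharp$.

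The element $\sigma$ lies in $\Sp(V_1) \subset \Sp(V)$, and $\Sp(V_0)\times\Sp(V_1)$ acts on $W_{0,\phi}\boxtimes W_{1,\phi}$ through the tensor product of the two Weil representations. This is the canonical restriction property of the Weil representation for $p$ odd \cite{Ger77}; the only possible correction is a character of $\Sp(V_0)\times \Sp(V_1)$, which is trivial except for tiny $q$, and which in any case can be absorbed because $W_\phi$ is canonically normalized. Consequently $\sigma$ acts as $1\otimes \omega_1(\sigma)$, and as $k[\sigma]$-modules
\[
W_\phi \cong W_{0,\phi}\otimes_k W_{1,\phi}, \qquad \sigma \text{ trivial on the first factor}.
\]
Lemma~\ref{lemma:tate-cohom-edge-cases}(1), applied blockwise (equivalently, the trivial case of Corollary~\ref{cor:cup-product-extremal-cases}(1), since $W_{0,\phi}$ is minimal), then gives
\[
\rT^j(W_\phi) \cong W_{0,\phi} \otimes \rT^j(W_{1,\phi})
\]
equivariantly for $\Sp(V_0)\ltimes V_0^\sharp$, which acts only on the first factor.

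It therefore suffices to show that $\rT^j(W_{1,\phi})$ is one-dimensional with trivial action, and that $\sigma$ on $W_{1,\phi}$ is extremal, since a tensor product of a trivial module with an extremal module is extremal. Note that $\sigma$ is fixed-point-free on $V_1$. First, Gérardin's character formula for the Weil representation at an element without eigenvalue $1$ gives $|\operatorname{tr}\omega_1(\sigma^i)| = |\det(\sigma^i - 1 \mid V_1)|^{-1/2}\cdot q^{\dim V_1/2} \cdot (\dots) = 1$, so $\operatorname{tr}\omega_1(\sigma^i) = \pm 1$ for $0<i<\ell$. This trace is $\ell$-power torsion up to sign, and equals $\epsilon \in \{\pm1\}$ independently of $i$ by Galois conjugacy, since the traces are rational for $\ell$ odd. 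Lifting to characteristic $0$, this shows that the Brauer character of $W_{1,\phi}$ restricted to $\langle\sigma\rangle$ equals $\epsilon\cdot\mathbf 1 + m\cdot(\text{regular})$ for $\epsilon=1$, or $(\text{regular}) - \mathbf 1$ plus copies of the regular character for $\epsilon=-1$. Because $\ell\neq p$, reduction of the $\langle\sigma\rangle$-lattice is governed by the characteristic-$0$ decomposition only up to Jordan structure. To pin down the Jordan structure we invoke \cite[Theorem~3.5.4]{Cot26b}, which identifies the $k[\sigma]$-module structure of such a representation, via the Glauberman correspondence, as $k\oplus k[\sigma]^{s}$ or $k[\sigma]/(\sigma-1)^{\ell-1}\oplus k[\sigma]^s$ according to the sign. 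In either case $W_{1,\phi}$ is extremal with $\dim\rT^j(W_{1,\phi})=1$ for both $j$ by \cite[Lemma~3.1.3]{Cot26b}. The residual action of $\Sp(V_0)$, which centralizes $\sigma$ on $V_0$ and acts trivially on the $W_{1,\phi}$ factor, is trivial.

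The main obstacle will be this extremality step: passing from the characteristic-$0$ character value $\pm 1$ to the precise Jordan-block structure mod $\ell$. This is exactly why the proof needs \cite[Theorem~3.5.4]{Cot26b} rather than a character computation. A secondary point to check is that the restriction of the canonical Weil representation to $\Sp(V_0)\times\Sp(V_1)$ is honestly a tensor product, with no twisting character that could act nontrivially on $\sigma$ or on the $\Sp(V_0)$-action.
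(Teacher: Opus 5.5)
Your proof is correct in outline, but it takes a different route from the paper. The paper never factors $W_\phi$. It applies the Tate--Glauberman comparison \cite[Corollary~3.5.5]{Cot26b} to $W_\phi$ itself, with $\Delta=V^\sharp$. Extremality and irreducibility of $\rT^j(W_\phi)$ then follow because the Glauberman correspondence preserves irreducibility. Since $V_0^\sharp$ acts on $\rT^j(W_\phi)$ with central character $\phi$, this gives $\rT^j(W_\phi)\cong W_{0,\phi}$ as a $V_0^\sharp$-representation. The $\Sp(V_0)$-action is then identified in two ways:
\begin{itemize}
\item when $\Sp(V_0)$ is perfect, by uniqueness of the extension;
\item in the single exceptional case $\dim V_0=2$, $q=3$, by evaluating at an order-$3$ unipotent $u\in\Sp(V_0)$, using \cite[Theorem~3.5.4]{Cot26b} and G\'erardin's reduction formula $\chi(\sigma\circ u)=(1+2\zeta)\Tr_{W_{1,\phi}}(\sigma)$.
\end{itemize}
You instead split $V=V_0\perp V_1$ and factor $W_\phi\cong W_{0,\phi}\boxtimes W_{1,\phi}$, with $\sigma$ acting only on the second factor. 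You then apply the same black box to $W_{1,\phi}$ with $\Delta=V_1^\sharp$. There the Glauberman correspondent is the linear character $\phi$ of $(V_1^\sharp)^\sigma=Z(V_1^\sharp)$, so $\rT^j(W_{1,\phi})$ is a line and $W_{1,\phi}$ is extremal. Your route makes the $\Sp(V_0)\ltimes V_0^\sharp$-equivariance automatic. It also anticipates Lemma~\ref{lemma:weil-rep-character-non-sp}, since $\ker(r)(\F_q)=\Sp(V_1)^\sigma$ visibly acts on the line $\rT^j(W_{1,\phi})$ by a character. The paper's route avoids needing to know how the normalized Weil representation restricts to $\Sp(V_0)\times\Sp(V_1)$, at the cost of an ad hoc check in the exceptional case.

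The price of your route is that everything rests on the compatibility $\omega_V|_{\Sp(V_0)\times\Sp(V_1)}\cong\omega_{V_0}\boxtimes\omega_{V_1}$ of normalized Weil representations. Your justification of it (``can be absorbed because $W_\phi$ is canonically normalized'') is not an argument. Since $\sigma\neq1$ forces $V_1\neq0$, in the critical case $\dim V_0=2$, $q=3$ one has $\dim V\geq4$. So $\Sp(V)$ is perfect and the normalization of $W_\phi$ is automatic. It says nothing about which of the three extensions of the Heisenberg representation of $V_0^\sharp$ to $\Sp(V_0)\cong\SL_2(\F_3)$ occurs in the restriction. Ruling out a cubic twist there is exactly what the paper's computation at $u$ does. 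A twist on the $\Sp(V_1)$ side is harmless: $q=3$ forces $\ell\neq3$, so a cubic character is trivial on $\sigma$. You should either cite the orthogonal-sum compatibility for G\'erardin's normalization precisely, or carry out that character check.

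Finally, your trace computation $\operatorname{tr}\omega_1(\sigma^i)=\pm1$ plays no role once you invoke \cite{Cot26b} for the Jordan structure. Also, ``the Brauer character restricted to $\langle\sigma\rangle$'' is meaningless because $\sigma$ is $\ell$-singular. You mean the ordinary character of a $\ol\Z_\ell$-lift, which exists by \cite[Lemma~2.5]{Fin22}.
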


\begin{proof}
We will first show that the action of $\sigma$ on $W_\phi$ is extremal and $\rT^j(W_\phi)$ is isomorphic to $W_{0,\phi}$ as a $V_0^\sharp$-representation. Observe that $V_0^\sharp$ acts on $\rT^j(W_\phi)$ with central character $\phi$, so every irreducible constituent of $\rT^j(W_\phi)$ is isomorphic to $W_{0,\phi}$. Thus we are reduced to the claim that $\rT^j(W_\phi)$ is irreducible. Note that $W_\phi$ is the special fiber of a $\ol\Z_\ell$-representation $\cW_\phi$ of $\Sp(V) \ltimes V^\sharp$ by \cite[Lemma 2.5]{Fin22}, where $(\cW_\phi)_{\ol\Q_\ell}$ is the usual Weil--Heisenberg representation as in \cite{Ger77}. By \cite[Corollary~3.5.5]{Cot26b} and the fact that the Glauberman correspondence sends irreducible characters to irreducible characters, the claim follows.

Next we show that the induced action of $\Sp(V_0)$ on $\rT^j(W_\phi)$ realizes the Weil representation. If either $\dim_{\F_q} V_0 \neq 2$ or $q \neq 3$, then there is a unique action of $\Sp(V_0) \ltimes V_0^\sharp$ on $W_{0, \phi}$ extending the Heisenberg representation of $V_0^\sharp$ since $\Sp(V_0)$ is a perfect group, and we conclude that the given action of $\Sp(V_0)$ is the Weil representation.

Hence it remains to handle the case $\dim_{\F_q} V_0 = 2$ and $q = 3$, in which case $\ell \neq 3$. Since the abelianization of $\Sp(V_0) \cong \SL_2(\F_3)$ is cyclic of order $3$, Schur's Lemma implies that any two actions of $\Sp(V_0) \ltimes V_0^\sharp$ on $W_{0,\phi}$ extending the Heisenberg representation of $V_0^\sharp$ can only differ by a character of $\Sp(V_0)$ valued in $\mu_3(\ol\F_\ell)$. Moreover, $\Sp(V_0)$ is of order $24$, and if $u \in \Sp(V_0)$ is of order $3$ then the (trace) character of the Weil representation on $u$ is nonzero by \cite[Theorem 4.4]{Ger77}. Thus it suffices to show that the value of the Brauer character $\lambda$ of $\rT^j(W_\phi)$ on a single such element $u$ agrees with the value of the Brauer character of $W_{0,\phi}$ on $u$. By \cite[Theorem 4.9.1(c)]{Ger77}, the value of the Brauer character of $W_{0,\phi}$ on $u$ is $1+2\zeta$ for a primitive $3$rd root of unity $\zeta$, so it suffices to show $\lambda(u) \in \{\pm (1+2\zeta)\}$. If $\chi$ is the ordinary character of $(\cW_\phi)_{\ol\Q_\ell}$, then by \cite[Theorem~3.5.4]{Cot26b} (applied to $\Gamma = \Sp(V) \ltimes V^\sharp$ and $\Delta = \{1\} \times V^\sharp$) there is some $\epsilon \in \{\pm 1\}$ such that
\begin{equation}\label{eqn:glauberman-weil}
    \chi(\sigma \circ u) = \epsilon \lambda(u).
\end{equation}
If $V_{0,+}$ is the unique line in $V_0$ which is fixed by $u$ and $V_1 = V_{0,+}^\perp/V_{0,+}$ (the perpendicular being taken in $V$), then by \cite[Theorem 4.9.1(c)]{Ger77} we have
\begin{equation}\label{eqn:weil-reduction-to-ss}
    \chi(\sigma \circ u) = (1+2\zeta)\Tr_{W_{1,\phi}}(\sigma),
\end{equation}
where $W_{1,\phi}$ is the Heisenberg representation of $V_1^\sharp$ with central character $\phi$. Since $\sigma$ acts without fixed point on $V_1$, \cite[Theorem 4.9.1(a)]{Ger77} shows that $\Tr_{W_{1,\phi}}(\sigma) \in \{\pm 1\}$. Combining \eqref{eqn:glauberman-weil} and \eqref{eqn:weil-reduction-to-ss}, we see that $\lambda(u) \in \{\pm (1+2\zeta)\}$, as desired.
\end{proof}

\subsection{A sign character}
Let $V_1$ denote the unique $\sigma$-stable complement of $V_0$ in $V$, so there are natural inclusions
\[
\mathbf{Sp}(V_0) \times 1 \subset \mathbf{Sp}(V)^\sigma \subset \mathbf{Sp}(V_0) \times \mathbf{Sp}(V_1) \subset \mathbf{Sp}(V).
\]
Let $r\colon \mathbf{Sp}(V)^\sigma \to \mathbf{Sp}(V_0)$ denote the restriction map, so we have $\mathbf{Sp}(V)^\sigma \cong \mathbf{Sp}(V_0) \times \ker(r)$. This induces an isomorphism 
\begin{equation}\label{eq:Sp-fixed}
\Sp(V)^\sigma \cong \Sp(V_0) \times \ker(r)(\F_q).
\end{equation}
The action of $\Sp(V)$ on $W_\phi$ induces a natural action of $\Sp(V)^\sigma $ on $\rT^j(W_\phi)$. The action of $\Sp(V_0)$ was determined in Lemma~\ref{lemma:tate-cohom-of-heisenberg}, so it remains to compute the action of $\ker(r)(\F_q)$. 

We first recall some notation from \cite[\S 4]{Ger77}. Since $\ell \neq p$, the element $\sigma$ is semisimple, so we may choose a maximal $\F_q$-torus $T\subset\mathbf{Sp}(V)$ containing $\sigma$. Let $P$ denote the set of characters $T_{\ol\F_q} \to \G_m$ which appear as weights for the action of $T_{\ol\F_q}$ on $V_{\ol\F_q}$. Note that $P$ is closed under negation, thanks to the symplectic structure on $V$. Let $\Gamma \coloneqq \Gal(\ol\F_q/\F_q)$, and let $\Sigma \coloneqq \Gamma \times \{\pm 1\}$, so $\Gamma$ and $\Sigma$ act on $P$ in a natural way. If $\omega$ is a $\Gamma$-orbit on $P$, then we will say that $\omega$ is \textit{symmetric} if $\omega = -\omega$, and $\omega$ is \textit{asymmetric} if $\omega \neq -\omega$.

If $\Omega$ is a $\Sigma$-orbit on $P$, let $q_\Omega \coloneqq q^{|\Omega|/2}$, and let $T_\Omega$ denote the subtorus of $T$ which is the intersection of the kernels of the elements $\alpha \in P - \Omega$. Note that the obvious multiplication map induces 
\begin{equation}\label{eq:T-factorize}
T \cong \prod_{\Omega \in P/\Sigma} T_\Omega.
\end{equation}
Let $P_i$, for $i \in \{0,1\}$, denote the set of weights for $T_{\ol\F_q}$ on $(V_i)_{\ol\F_q}$. Then $P = P_0 \sqcup P_1$, and the isomorphism \eqref{eq:T-factorize} restricts to an isomorphism
\[
T \cap \ker(r) \cong \prod_{\Omega \in P_1/\Sigma} T_\Omega.
\]
If $\alpha \in \Omega$, let $\F_\alpha$ denote the extension of $\F_q$ corresponding to the Galois orbit of $\alpha$, and define a character $\chi_\Omega^T\colon T(\F_q) \to \ol\Z_\ell^\times$ by
\[
\chi_\Omega^T(t) = \begin{cases}
\sgn_{\F_\alpha^\times}(\alpha(t)) &\text{if } \alpha \text{ is asymmetric,} \\
\sgn_{\F_\alpha^1}(\alpha(t)) &\text{if } \alpha \text{ is symmetric}
\end{cases}
\]
for all $t \in T(\F_q)$, where $\sgn_{\F_\alpha^\times}$ (resp.\ $\sgn_{\F_\alpha^1}$) is the unique nontrivial sign character of $\F_\alpha^\times$ (resp.\ $\F_\alpha^1$, the unique subgroup of $\F_\alpha^\times$ of order $q_\Omega+1$). It is clear from the definition that $\chi_\Omega^T$ is independent of the choice of $\alpha \in \Omega$, and that $\chi_\Omega^T|_{T_{\Omega'}(\F_q)} = 1$ whenever $\Omega \neq \Omega'$. Hence we may define
\[
\chi^T = \prod_{\Omega \in P/\Sigma} \chi_\Omega^T.
\]

\subsection{The residual action}

We write $\ol \chi$ for the composition of a $\ol \Z_\ell^\times$-valued character $\chi$ with the quotient map $\ol\Z_\ell^\times \to \ol \F_\ell^\times$. Retain the notation of the previous subsection.

\begin{lemma}\label{lemma:weil-rep-character-non-sp}
For $j \in \Z/2\Z$, the $\ker(r)(\F_q)$-action on $\rT^j(W_\phi)$ is through a character $\theta$ of the quotient $\ker(r)_{\ab}(\F_q)$, and we have
\begin{equation}\label{eqn:ker-r-character-restriction}
\theta|_{\prod_{\Omega \in P_1/\Sigma} T_\Omega(\F_q)} = \ol \chi^T.
\end{equation}
\end{lemma}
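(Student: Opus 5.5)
The plan has two parts: first show that $\ker(r)(\F_q)$ acts on $\rT^j(W_\phi)$ through a character, then identify that character on the torus part by comparing Brauer characters with Gérardin's character formula.

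\emph{Step 1: action through a character.} By Lemma~\ref{lemma:tate-cohom-of-heisenberg}, $\rT^j(W_\phi)\cong W_{0,\phi}$ as an $\Sp(V_0)\ltimes V_0^\sharp$-representation, and this is irreducible already for $V_0^\sharp$. The group $\ker(r)(\F_q)$ commutes with $\sigma$, so it acts on $\rT^j(W_\phi)$. It acts trivially on $V_0$, so under \eqref{eq:Sp-fixed} it commutes with $\Sp(V_0)\ltimes V_0^\sharp$. Schur's lemma then shows that $\ker(r)(\F_q)$ acts by scalars, i.e.\ through a character $\theta$, which necessarily factors through the abelianization $\ker(r)_{\ab}(\F_q)$.

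\emph{Step 2: reduction to a Brauer-character identity.} Since $\theta$ is a character, it suffices to compute it on elements $t\in \prod_{\Omega\in P_1/\Sigma}T_\Omega(\F_q)$ of order prime to $\ell$. The prime-to-$\ell$ part suffices because $\ol\chi^T$ is $\pm1$-valued, and for $\ell=2$ both sides are trivial on $2$-elements mod $2$. The tool is \cite[Theorem~3.5.4]{Cot26b} (the Glauberman/Tate comparison already used in the proof of Lemma~\ref{lemma:tate-cohom-of-heisenberg}), applied with $\Gamma=\Sp(V)\ltimes V^\sharp$ and $\Delta=V^\sharp$. For $t$ of order prime to $\ell$ centralizing $\sigma$, it gives a sign $\epsilon$, independent of $t$, with
\[
\chi(\sigma t)=\epsilon\,\lambda(t)=\epsilon\,\dim(W_{0,\phi})\,\theta(t),
\]
where $\chi$ is the ordinary character of $(\cW_\phi)_{\ol\Q_\ell}$ and $\lambda$ is the Brauer character of $\rT^j(W_\phi)$ (lifted via Teichmüller). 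Evaluating at $t=1$ gives $\chi(\sigma)=\epsilon\dim W_{0,\phi}$, so
\[
\theta(t)=\chi(\sigma t)/\chi(\sigma).
\]

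\emph{Step 3: the character computation.} Both $\sigma$ and $\sigma t$ lie in the torus $T$, so their Weil characters are given by \cite[Theorem 4.9.1]{Ger77} together with Gérardin's torus formula \cite[\S4]{Ger77}. In that formula, $\chi$ on a regular-on-$V_1$ element of $T$ factors over $\Sigma$-orbits $\Omega$. The factor attached to $\Omega\subset P_0$ is $q_\Omega$ (up to the same sign for $\sigma$ and $\sigma t$, since $t$ is trivial there). The factor attached to $\Omega\subset P_1$ is, up to a sign depending only on $\Omega$, equal to $\chi_\Omega^T$ of the element. Here $\sigma t$ remains fixed-point-free on $V_1$, because $\sigma$ has order $\ell$ and $t$ has order prime to $\ell$, which keeps us in the applicable case of the formula. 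Taking the ratio and using multiplicativity of $\chi_\Omega^T$, the factors for $\Omega\subset P_1$ contribute $\chi^T_\Omega(\sigma t)/\chi^T_\Omega(\sigma)=\chi^T_\Omega(t)$, so $\theta(t)=\prod_{\Omega\in P_1/\Sigma}\chi_\Omega^T(t)$. The factors $\chi_\Omega^T$ for $\Omega\subset P_0$ are trivial on $T\cap\ker(r)$, so this equals $\ol\chi^T(t)$ after reduction. This proves \eqref{eqn:ker-r-character-restriction}.

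The main obstacle is Step 3: extracting from \cite{Ger77} the exact form of the character on anisotropic (fixed-point-free) semisimple elements. One must check that the constant signs in front of each orbit factor cancel in the ratio, and treat symmetric and asymmetric orbits separately. As a fallback, I would reduce via \eqref{eq:T-factorize} to a single orbit $\Omega$, where $V_\Omega$ is a $2$-dimensional space over $\F_\alpha$ or $\F_\alpha^1$. There I would compute the trace of the Weil representation of the relevant cyclic torus explicitly; the answer is $\pm\sgn(\alpha(\cdot))$, which is the content of Gérardin's formula.
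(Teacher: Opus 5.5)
Your Steps 2--3 are essentially the paper's argument for \eqref{eqn:ker-r-character-restriction}. The paper also combines \cite[Theorem~3.5.4]{Cot26b}, applied to $\ell'$-elements of $\ker(r)(\F_q)$, with G\'erardin's formula $\chi(g)=(-1)^{l(V,T;g)}q^{N(V;g)}\chi^T(g)$ on $T(\F_q)$. It checks that $l$ and $N$ agree for $\sigma$ and $\sigma t$, and fixes $\epsilon$ by taking $t=1$; your ratio $\chi(\sigma t)/\chi(\sigma)$ is the same computation.

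\textbf{The gap is in Step 1.} Schur's lemma only gives a character of the finite group $\ker(r)(\F_q)$, i.e.\ one factoring through the abstract abelianization $\ker(r)(\F_q)_{\ab}$. The lemma asserts more. By Lang's theorem, $\ker(r)(\F_q)\to\ker(r)_{\ab}(\F_q)$ is surjective with kernel $\ker(r)_{\der}(\F_q)$, so the claim is that $\theta$ kills $\ker(r)_{\der}(\F_q)$. That group contains the commutator subgroup but can be strictly larger when $q=3$.

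For example, take $q=3$, $\ell=2$, $\dim V_1=2$ and $\sigma=-1$ on $V_1$. Then $\ker(r)=\mathbf{Sp}(V_1)\cong\SL_2$, so the lemma says $\SL_2(\F_3)$ acts trivially. A priori, however, $\theta$ could be one of the two nontrivial characters $\SL_2(\F_3)\to\Z/3\to\ol\F_2^\times$. These characters are trivial on every $2$-element, hence on every torus of $\SL_2(\F_3)$, so no torus computation can detect them. The stronger statement is not cosmetic: Definition~\ref{def:chi-sigma} uses it to say that $\theta$ is determined by its restriction to $T(\F_q)$.

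\textbf{What is missing is an evaluation of $\theta$ on unipotent elements.} Since $\ker(r)_{\der}$ is simply connected (geometrically a product of $\SL$'s, or of $\mathbf{Sp}$'s when $\ell=2$), $\ker(r)_{\der}(\F_q)$ is generated by the unipotent elements of $\ker(r)(\F_q)$. It therefore suffices to show $\theta(u)=1$ for unipotent $u\in\ker(r)(\F_q)$. Such $u$ has odd $p$-power order, so your Brauer-character identity gives $\theta(u)=\chi(\sigma u)/\chi(\sigma)$, and you must prove $\chi(\sigma u)=\chi(\sigma)$.

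The paper does this by induction on $\dim V_0$.
\begin{itemize}
\item If $V_0=0$, then \cite[Theorem~4.9.1(a)]{Ger77} gives $\chi(\sigma u)=\pm1$, so $\theta(u)=\pm1$. Since $u$ has odd order, $\theta(u)=1$.
\item Otherwise, choose a line $V_+\subset V_0$ and a $\sigma$-stable complement $V'\supset V_1$ of $V_+$ in $V_+^\perp$. The reduction formula \cite[Theorem~4.9.1(c)]{Ger77} writes $\chi(\sigma)$ and $\chi(\sigma u)$ as the same sum over $V'^\perp/V_+$ (because $\sigma$ and $\sigma u$ agree on $V'^\perp\subset V_0$), multiplied respectively by $\chi'(\sigma')$ and $\chi'(\sigma'u')$. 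These last two agree by the induction hypothesis.
\end{itemize}
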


\begin{proof}
Note first that $\ker(r)(\F_q)$ commutes with $V_0^\sharp$. Lemma~\ref{lemma:tate-cohom-of-heisenberg} shows that $\rT^j(W_\phi)$ is an irreducible $V_0^\sharp$-representation, so $\ker(r)(\F_q)$ acts on $\rT^j(W_\phi)$ through a character $\theta$ by Schur's lemma. Moreover, applying \cite[Theorem~3.5.4]{Cot26b} with $\Gamma = \Sp(V) \ltimes V^\sharp$ and $\Delta = \{1\} \times V^\sharp$, we see that if $\chi$ is the ordinary character of $(\cW_\phi)_{\ol\Q_\ell}$, then there is a sign $\epsilon \in \{\pm 1\}$ such that for any $g \in \ker(r)(\F_q)_{\ell'}$ we have
\begin{equation}\label{eqn:ker-r-character-rel}
|V_0|^{1/2}\cdot\theta(g) = \epsilon\ol\chi(\sigma \circ g)
\end{equation}
since $\dim \rT^j(W_\phi) = |V_0|^{1/2}$.

Since $\ker(r)(\F_q)$ acts through a character, its action formally factors through $\ker(r)(\F_q)_{\ab}$. However, the lemma asserts the a priori stronger statement that the action factors through $\ker(r)_{\ab}(\F_q)$.\footnote{If $q > 3$, then $\ker(r)_{\ab}(\F_q) = \ker(r)(\F_q)_{\ab}$, so the claim is only nontrivial in the case $q = 3$. However, the argument we give is uniform in $q$.} We now prove this claim. Note that $(\ker(r)_{\der})_{\ol\F_q}$ is a product of special linear groups if $\ell$ is odd and a product of symplectic groups if $\ell=2$, hence in particular it is simply connected. Recall (see for instance the references in \cite[Theorem C.1, Proposition C.3]{FS25}) that $\ker(r)_{\der}(\F_q)$ is generated by the unipotent elements of $\ker(r)(\F_q)$. It therefore suffices to show that if $u \in \ker(r)(\F_q)$ is unipotent then $\theta(u) = 1$; we will prove this by induction on $\dim V_0$. Note that any such $u$ commutes with $\sigma$, stabilizes $V_0$ and $V_1$, and acts trivially on $V_0$.

Suppose first that $\dim V_0 = 0$, so $\sigma$ and $\sigma \circ u$ both act without fixed points on $V$. Then \cite[Theorem 4.9.1(a)]{Ger77} shows that
\[
\chi(\sigma \circ u) \in \{\pm 1\}.
\]
By \eqref{eqn:ker-r-character-rel}, it follows that $\theta(u) \in \{\pm 1\}$, so $\theta(u) = 1$ since $p$ is odd and $u$ is of $p$-power order.  Next suppose $\dim V_0 > 0$, and let $V_+ \subset V_0$ be a line. Since $u$ acts trivially on $V_0$, we see that $\sigma \circ u$ also acts trivially on $V_0$. Let $V'$ be a $\sigma$-stable complement of $V_+$ in $V_+^\perp$ containing $V_1$; then $V'$ is equal to $(V' \cap V_0) \oplus V_1$ by semisimplicity of $\sigma$. See Figure \ref{fig:weil-heisenberg-block-decomposition}.

\begin{figure}[H]
\centering
\begin{minipage}{0.96\textwidth}
\small
Set \(U\coloneqq V'\cap V_0\), and choose a line \(V_- \subset V_0\) such that
\(V'^\perp=V_+\oplus V_-\). Then the relevant subspaces are arranged as follows:
\[
\begin{tikzpicture}[x=1cm,y=1cm, every node/.style={font=\small}]
\def\h{0.72}

\draw[fill=blue!10] (0,0) rectangle (1.1,\h);
\node at (0.55,0.36) {$V_+$};

\draw[fill=blue!10] (1.1,0) rectangle (3.4,\h);
\node at (2.25,0.36) {$U=V'\cap V_0$};

\draw[fill=orange!15] (3.4,0) rectangle (6.0,\h);
\node at (4.7,0.36) {$V_1$};

\draw[fill=blue!10] (6.0,0) rectangle (7.1,\h);
\node at (6.55,0.36) {$V_-$};

\draw (0,\h+0.12) -- (0,\h+0.30) -- (6.0,\h+0.30) -- (6.0,\h+0.12);
\node at (3.0,\h+0.56) {$V_+^\perp = V_+\oplus V'$};

\draw (1.1,-0.12) -- (1.1,-0.30) -- (6.0,-0.30) -- (6.0,-0.12);
\node at (3.55,-0.56) {$V'=U\oplus V_1$};

\draw[densely dashed] (0,-0.88) -- (1.1,-0.88);
\draw[densely dashed] (6.0,-0.88) -- (7.1,-0.88);
\node at (3.55,-0.88) {$V'^\perp=V_+\oplus V_- \subset V_0$};

\node[anchor=west] at (7.35,0.52) {blue blocks lie in \(V_0\)};
\node[anchor=west] at (7.35,0.18) {orange block is \(V_1\)};
\end{tikzpicture}
\]

\end{minipage}
\caption{The block decomposition used in the induction step.}
\label{fig:weil-heisenberg-block-decomposition}
\end{figure}

But $V_1$ is $\sigma \circ u$-stable and $\sigma \circ u$ acts trivially on $V_0$, so indeed $V'$ is also $\sigma \circ u$-stable. If $\chi'$ denotes the character for the Weil representation corresponding to $V'$ and for any $g \in \Sp(V)$ stabilizing $V'$ we let $g'$ denote the image of $g$ in $\Sp(V')$, then by \cite[Theorem 4.9.1(c)]{Ger77} we have
\[
\chi(\sigma) = \sum_{\ol y \in V'^\perp/V_+} \phi\left(\frac{B(\sigma y, y)}{2}\right) \chi'(\sigma'),
\]
where $y$ denotes any lift of $\ol y$ to $V'^\perp$. Similarly,
\[
\chi(\sigma \circ u) = \sum_{\ol y \in V'^\perp/V_+} \phi\left(\frac{B((\sigma \circ u)y, y)}{2}\right)\chi'(\sigma' \circ u').
\]
By the induction hypothesis we have $\chi'(\sigma') = \chi'(\sigma' \circ u')$, so it suffices to show $B(\sigma y, y) = B((\sigma \circ u)y, y)$ for all $y \in V'^\perp$. But $V'^\perp \subset V_0$, so $\sigma y = (\sigma \circ u)y$ for any $y \in V'^\perp$ and we have completed the induction.

Now we prove \eqref{eqn:ker-r-character-restriction}. If $\ell = 2$ then both sides are necessarily the trivial character, so the claim is trivial. Therefore we may suppose $\ell \neq 2$. Since $\theta$ is an $\ol\F_\ell^\times$-valued character, it suffices to show that the restriction of $\theta$ to $(T \cap \ker(r))(\F_q)_{\ell'}$ is equal to the restriction of $\ol\chi^T$. From now on, fix $t \in (T \cap \ker(r))(\F_q)_{\ell'}$. By \cite[Corollary 4.8.1]{Ger77}, we have
\begin{equation}\label{eqn:weil-rep-char-t}
\chi(\sigma \circ t) = (-1)^{l(V, T; \sigma \circ t)}q^{N(V; \sigma \circ t)}\chi^T(\sigma \circ t),
\end{equation}
where $l(V, T; \sigma \circ t)$ is the number of orbits $\omega \in P/\Gamma$ such that the action of $\sigma \circ t$ on $V_\omega = \sum_{\alpha \in \omega} V_\alpha$ is nontrivial, and $N(V;\sigma \circ t) = \frac{1}{2}\dim\ker(\sigma \circ t-1)$. Observe first that 
\[
\chi^T(\sigma \circ t) = \chi^T(t)
\]
since $\chi^T$ is an order $2$ character and $\sigma$ is of odd order. Next, note that 
\[
q^{N(V;\sigma \circ t)} = q^{N(V;\sigma)} = |V_0|^{1/2}
\]
since $V^{\sigma \circ t} = V^\sigma = V_0$: indeed, the automorphism $\sigma \circ t$ acts trivially on $V_0$, and conversely, if $\sigma$ acts nontrivially on some $V_\omega$, then since $t$ is of order prime to $\sigma$ it follows that $\sigma \circ t$ also acts nontrivially on $V_\omega$. This also shows that 
\[
l(V, T; \sigma \circ t) = l(V, T; \sigma).
\]
By taking $g = 1$ in \eqref{eqn:ker-r-character-rel}, and comparing with \eqref{eqn:weil-rep-char-t}, we find
\[
\epsilon = (-1)^{l(V, T;\sigma)}.
\]
Combining all of these observations with \eqref{eqn:ker-r-character-rel}, we obtain
\[
|V_0|^{1/2} \cdot \theta(t) = (-1)^{l(V,T;\sigma)}\chi(\sigma \circ t) = |V_0|^{1/2} \cdot \chi^T(t),
\]
which shows $\theta|_{(T \cap \ker(r))(\F_q)_{\ell'}} = \chi^T|_{(T \cap \ker(r))(\F_q)_{\ell'}}$, as desired.
\end{proof}

The decomposition $P = P_0 \sqcup P_1$ induces a factorization 
\[
\chi^T = \prod_{\Omega \in P_0/\Sigma} \chi_\Omega^T \prod_{\Omega \in P_1/\Sigma} \chi_\Omega^T.
\]

\begin{defn}\label{def:chi-sigma}Define a character $\chi_1^T\co T(\F_q) \to \ol\Z_\ell^\times$ by
\begin{equation}
\chi_1^T = \prod_{\Omega \in P_1/\Sigma} \chi_\Omega^T.
\end{equation}
Note that $\chi_1^T|_{\prod_{\Omega \in P_0/\Sigma} T_\Omega(\F_q)} = 1$, while $\chi_1^T$ and $\chi^T$ have the same restriction to $\prod_{\Omega \in P_1/\Sigma} T_\Omega(\F_q)$. 

Using the isomorphism $\Sp(V)^\sigma \cong \Sp(V_0) \times \ker(r)(\F_q)$ from \eqref{eq:Sp-fixed}, we will regard the character $\theta$ of Lemma~\ref{lemma:weil-rep-character-non-sp} as a character of $\Sp(V)^\sigma$. Then Lemma~\ref{lemma:weil-rep-character-non-sp} says that 
\begin{equation}\label{eq:theta-restriction}
\theta|_{T(\F_q)} = \ol \chi_1^T.
\end{equation}
If $T \subset \mathbf{Sp}(V)$ is a maximal $\F_q$-torus with $\sigma \in T(\F_q)$, then the map $T \cap \ker(r) \to \ker(r)_{\ab}$ is surjective with torus kernel, so by Lang's theorem the map $(T \cap \ker(r))(\F_q) \to \ker(r)_{\ab}(\F_q)$ is surjective. Hence $\theta$ is the unique character on $\ker(r)_{\ab}(\F_q)$ satisfying the equation \eqref{eq:theta-restriction} for any (equivalently, all) such $T$. We define $\chi_1 \coloneqq \theta$ as above.
\end{defn}

\begin{prop}\label{prop:tate-cohom-of-weil}
For $j\in \Z/2\Z$, the $\ol\F_\ell$-representation of $\Sp(V)^\sigma \ltimes V_0^\sharp$ on $\rT^j(W_\phi)$ is isomorphic to $W_{0,\phi} \otimes \chi_1$.
\end{prop}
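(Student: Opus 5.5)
The plan is to assemble the proposition from Lemma~\ref{lemma:tate-cohom-of-heisenberg}, Lemma~\ref{lemma:weil-rep-character-non-sp}, and the product decomposition \eqref{eq:Sp-fixed}. The first step is to record how the groups act. Since $\sigma$ commutes with $\Sp(V)^\sigma$ inside $\Sp(V)\ltimes V^\sharp$, the Tate cohomology $\rT^j(W_\phi)$ is naturally a representation of $\Sp(V)^\sigma \ltimes (V^\sharp)^\sigma = \Sp(V)^\sigma\ltimes V_0^\sharp$. By \eqref{eq:Sp-fixed}, this group is $(\Sp(V_0)\ltimes V_0^\sharp)\times \ker(r)(\F_q)$. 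The two factors commute, because $\ker(r)(\F_q)$ acts trivially on $V_0$ and hence centralizes $V_0^\sharp$ and $\Sp(V_0)$.

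The second step treats each factor separately. By Lemma~\ref{lemma:tate-cohom-of-heisenberg}, the restriction of $\rT^j(W_\phi)$ to $\Sp(V_0)\ltimes V_0^\sharp$ is isomorphic to $W_{0,\phi}$, and in particular it is irreducible. By Lemma~\ref{lemma:weil-rep-character-non-sp}, the factor $\ker(r)(\F_q)$ acts through the scalar character $\theta$. This is where Schur's lemma enters, since $\ker(r)(\F_q)$ commutes with an irreducible $V_0^\sharp$-action. By Definition~\ref{def:chi-sigma}, this character is $\chi_1$.

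For the conclusion, fix an isomorphism $f\colon W_{0,\phi}\to \rT^j(W_\phi)$ of $\Sp(V_0)\ltimes V_0^\sharp$-representations. An element $(g,k)v$ with $g\in\Sp(V_0)$, $k\in\ker(r)(\F_q)$, and $v\in V_0^\sharp$ then acts on the target by $\chi_1(k)\cdot(gv)$, because the $\ker(r)(\F_q)$-scalars commute with everything. So $f$ is an isomorphism $W_{0,\phi}\otimes\chi_1\to\rT^j(W_\phi)$, provided the factors are interpreted correctly. Here $W_{0,\phi}$ is extended to $\Sp(V)^\sigma$ through the projection to $\Sp(V_0)$, and $\chi_1$ is viewed on $\Sp(V)^\sigma$ through the projection to $\ker(r)(\F_q)$ followed by $\ker(r)(\F_q)\to\ker(r)_{\ab}(\F_q)$, trivially on $V_0^\sharp$.

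The main obstacle is the bookkeeping in the final step, not any new computation. One must check that the identification \eqref{eq:Sp-fixed} is compatible with the semidirect product structure on $V_0^\sharp$, so that $\ker(r)(\F_q)$ really acts trivially on $V_0^\sharp$. One must also check that the conventions in Definition~\ref{def:chi-sigma} for regarding $\theta$ as a character of $\Sp(V)^\sigma$ match the tensor-product description. Both follow because $\ker(r)$ is by definition the subgroup acting trivially on $V_0$. No further argument beyond the two lemmas seems necessary.
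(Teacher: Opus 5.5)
Your proposal is correct and is essentially the paper's argument. The paper's proof simply says the proposition follows from Lemma~\ref{lemma:tate-cohom-of-heisenberg}, Lemma~\ref{lemma:weil-rep-character-non-sp}, and the identification $\chi_1=\theta$ made in Definition~\ref{def:chi-sigma} via the product decomposition \eqref{eq:Sp-fixed}. You additionally spell out the bookkeeping the paper leaves implicit, namely that $\ker(r)(\F_q)$ centralizes $\Sp(V_0)\ltimes V_0^\sharp$, so its scalar action combines with the isomorphism to $W_{0,\phi}$.
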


\begin{proof}
This follows from the previous paragraph and Lemmas~\ref{lemma:tate-cohom-of-heisenberg} and \ref{lemma:weil-rep-character-non-sp}.
\end{proof}

\subsection{Restriction of scalars}\label{ss:weil-heisenberg-restriction-of-scalars}

For later considerations with Yu's construction, we need to understand the behavior of $\chi_1$ with respect to a form of restriction of scalars. Maintaining the preceding notation, let $V_p = V$ considered as an $\F_p$-vector space. We define a symplectic form $B_p$ on $V_p$ by sending $x, y \in V_p$ to $B_p(x, y) = \Tr_{\F_q/\F_p}(B(x,y))$. This induces a natural inclusion map $i_p\co\Sp(V) \to \Sp(V_p)$. If $\sigma \in \Sp(V)$ is any element, then $i_p$ induces an inclusion map $\Sp(V)^\sigma \to \Sp(V_p)^{i_p(\sigma)}$.

\begin{lemma}\label{lemma:sign-char-weil-restriction}
Let $\sigma \in \Sp(V)$ be of order $\ell$, and let $\chi_1$ (resp.\ $\chi_{1,p}$) denote the sign character from Definition \ref{def:chi-sigma} of $\Sp(V)^\sigma$ (resp.\ $\Sp(V_p)^{i_p(\sigma)}$) corresponding to $\sigma$ (resp.\ $i_p(\sigma)$). Then $\chi_1 = \chi_{1,p} \circ i_p$.
\end{lemma}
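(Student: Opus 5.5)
Both $\chi_1$ and $\chi_{1,p}\circ i_p$ are characters of $\Sp(V)^\sigma$, and both are trivial on the $\Sp(V_0)$ factor of \eqref{eq:Sp-fixed}. For $\chi_1$ this holds by construction. For $\chi_{1,p}\circ i_p$, note that $i_p(\Sp(V_0))\subset \Sp((V_0)_p)\times 1$, and $(V_p)^{i_p(\sigma)}=(V_0)_p$, so it also holds. It therefore suffices to compare the two characters on $\ker(r)(\F_q)$. By Definition~\ref{def:chi-sigma}, $\chi_1$ is determined by its restriction to $T(\F_q)$ for a single maximal $\F_q$-torus $T\subset \mathbf{Sp}(V)$ containing $\sigma$, using the surjectivity of $(T\cap\ker(r))(\F_q)\to\ker(r)_{\ab}(\F_q)$. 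We also need to know that $\chi_{1,p}\circ i_p$ factors through $\ker(r)_{\ab}(\F_q)$. This follows from Lemma~\ref{lemma:weil-rep-character-non-sp} applied over $\F_p$, since $i_p$ maps $\ker(r)$ into $\ker(r_p)$ and the induced map on abelianizations is compatible. So the whole problem reduces to the identity $\chi^T_1(t)=\chi^{T_p}_{1,p}(i_p(t))$ for $t\in T(\F_q)$, where $T_p\subset\mathbf{Sp}(V_p)$ is a maximal $\F_p$-torus containing $i_p(\operatorname{Res}_{\F_q/\F_p}T)(\F_p)=i_p(T(\F_q))$.

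I would make these choices explicit. Take $T_p$ to be a maximal torus containing the image of $\operatorname{Res}_{\F_q/\F_p}T$, which acts on $V_p$. Over $\ol\F_p$, $V_p\otimes\ol\F_p=\bigoplus_{\tau\in\Gal(\F_q/\F_p)}V\otimes_{\F_q,\tau}\ol\F_p$. So the weights of $\operatorname{Res}T$ on $V_p$ are the pairs $(\tau,\alpha)$, and they form a single $\Gal(\ol\F_p/\F_p)$-orbit for each $\Gal(\ol\F_q/\F_q)$-orbit of weights $\alpha\in P$. Moreover, the field of definition of the character $(\tau,\alpha)$ over $\F_p$ is again $\F_\alpha$, with $\alpha(t)$ unchanged as an element of $\F_\alpha$. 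Since $\operatorname{Res}T$ need not be maximal in $\mathbf{Sp}(V_p)$, I would instead compute $\chi^{T_p}_{1,p}$ directly using \cite[Corollary 4.8.1]{Ger77}, in the form that $\chi^{T'}(g)$ depends only on the eigenvalue data of the semisimple element $g$. Equivalently, I would argue that the sign $\sgn_{\F_\alpha^\times}(\alpha(t))$ (resp.\ $\sgn_{\F_\alpha^1}$) is intrinsic to the $\F_p[t]$-module structure on $V_\omega$. Under this correspondence, symmetric orbits stay symmetric: $-\alpha$ in the same $\Gamma$-orbit is a Galois condition, and it persists over $\F_p$. The fields $\F_\alpha$ and $\F_\alpha^1$ are also unchanged, and so are the values $\alpha(t)$. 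The decomposition $P=P_0\sqcup P_1$ matches as well, because $V_1$ is the same subspace of $V_p$. Thus the factors $\chi^T_\Omega$ for $\Omega\subset P_1$ correspond bijectively to the factors for $T_p$ on $V_{1,p}$, with equal values.

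The main obstacle is a careful check that the definition of $\chi^T_\Omega$ is intrinsic. Concretely, I need to show that the orbit-wise sign $\prod_{\Omega\subset P_1}\sgn(\alpha(t))$ is an invariant of $t$ acting on the $\F_p$-space $V_1$ with its form, independent of the base field and of the choice of torus. My first approach would be to avoid the combinatorics by using the Glauberman/Tate-cohomology characterization instead. The Heisenberg representation $W_\phi$ of $V^\sharp$ (with central character $\phi$) restricts along $V_p^\sharp\to V^\sharp$, $(v,a)\mapsto(v,a)$ with center $\F_q\to\F_p$ adjusted by the trace. It is identified with the Heisenberg representation of $V_p^\sharp$ for the character $\phi_0$ with $\phi=\phi_0\circ\Tr$, and this identification is compatible with the Weil actions through $i_p$ by uniqueness of the extension, up to the $q=3$ caveat handled as in Lemma~\ref{lemma:tate-cohom-of-heisenberg}. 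Then $\rT^j(W_\phi)$ is literally the same $\Sp(V)^\sigma$-representation from both points of view. Proposition~\ref{prop:tate-cohom-of-weil}, applied over $\F_q$ and over $\F_p$, identifies it with $W_{0,\phi}\otimes\chi_1$ and with $(W_{0,\phi_0}\otimes\chi_{1,p})\circ i_p$ respectively. Restricting to $\ker(r)(\F_q)$, which acts by scalars, gives $\chi_1=\chi_{1,p}\circ i_p$ there, and we already have equality on $\Sp(V_0)$. I expect this Tate-cohomology route to be the cleanest, with the only delicate point being compatibility of the Weil representations under $i_p$.
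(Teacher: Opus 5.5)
Your proposal is correct. The route you commit to, the Tate-cohomology argument in your last paragraph, is genuinely different from the paper's.

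\textbf{The paper's route.} The paper works directly from the formula for $\chi^T$. For a maximal $\F_q$-torus $T_0\ni\sigma$ of $\mathbf{Sp}(V,B)$, the image $j_p(\Res_{\F_q/\F_p}T_0)$ is already a \emph{maximal} $\F_p$-torus of $\mathbf{Sp}(V_p,B_p)$: both have rank $rn/2$, where $q=p^r$ and $n=\dim_{\F_q}V$. The paper then identifies the weights on $V_1\otimes_{\F_p}\ol\F_p$ with $\coprod_{j\colon\F_q\to\ol\F_p}P_{1,q}$. It checks that $\Sigma$-orbits, symmetry, the fields $\F_\alpha$ and $\F_\alpha^1$, and the values $\alpha(t)$ all correspond, and concludes using invariance of sign characters under field isomorphisms.

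So your claim that $\Res T$ ``need not be maximal'' is false. Once maximality is in hand, your second paragraph is essentially the paper's proof, and the ``intrinsic'' obstacle you flag never arises.

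\textbf{Your route.} You use Lemma~\ref{lemma:weil-rep-character-non-sp} as a characterization: $\chi_1$ is the scalar by which $\ker(r)(\F_q)$ acts on $\rT^j(W_\phi)$. You then transport this along $i_p$. This avoids all weight bookkeeping. The cost is that it relies on the much deeper input behind that lemma (G\'erardin's character formulas and the Glauberman correspondence) and on compatibility of Weil representations under $i_p$. The paper's argument is elementary by comparison.

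\textbf{Details to repair.}
\begin{enumerate}
\item The relevant homomorphism is $V^\sharp\to V_p^\sharp$, $(v,a)\mapsto(v,\Tr_{\F_q/\F_p}a)$. The map $V_p^\sharp\to V^\sharp$, $(v,a)\mapsto(v,a)$ that you wrote is not a homomorphism. For $\phi=\phi_0\circ\Tr_{\F_q/\F_p}$, the representation $W_\phi$ is the inflation along this map of the Heisenberg representation of $V_p^\sharp$ with central character $\phi_0$.
\item Say explicitly why choosing $\phi$ this way is harmless: Definition~\ref{def:chi-sigma} characterizes $\chi_1$ and $\chi_{1,p}$ independently of $\phi$.
\item State that $i_p(\ker(r)(\F_q))\subset\ker(r_p)(\F_p)$. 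This is what makes $\ker(r)(\F_q)$ act by the scalar $\chi_{1,p}\circ i_p$ on the $\F_p$ side.
\item The $q=3$ caveat needs no Brauer-character computation. Since $p$ is odd, $\Sp(V)$ is perfect unless $\dim_{\F_q}V=2$ and $q=3$. When it is perfect, the extension of $W_\phi$ to $\Sp(V)\ltimes V^\sharp$ is unique, so the pullback via $i_p$ of the Weil representation of $\Sp(V_p)$ is the Weil representation of $\Sp(V)$. In the exceptional case, $q=p$ and $i_p=\mathrm{id}$, so there is nothing to prove.
\end{enumerate}
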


\begin{proof}
Let $\Sigma_p = \Gal(\ol\F_p/\F_p) \times \{\pm 1\}$ and $\Sigma_q = \Gal(\ol\F_q/\F_q) \times \{\pm1\}$. View $\Sp(V,B)$ as the group of $\F_p$-points of $\Res_{\F_q/\F_p}\mathbf{Sp}(V, B)$. Then $i_p$ arises from a monic $\F_p$-homomorphism
\[
j_p\co\Res_{\F_q/\F_p}\mathbf{Sp}(V, B) \to \mathbf{Sp}(V_p,B_p).
\]
If $q = p^r$ and $\dim_{\F_q}(V) = n$, then $\dim_{\F_p}(V_p) = rn$. Thus, if $T_0 \subset \mathbf{Sp}(V,B)$ is a maximal $\F_q$-torus containing $\sigma$, then $j_p(\Res_{\F_q/\F_p}T_0) \subset \mathbf{Sp}(V_p,B_p)$ is a maximal $\F_p$-torus containing $i_p(\sigma)$. Fix such a $T_0$, let $T = \Res_{\F_q/\F_p} T_0$, and identify $T$ with its image under $j_p$.

After base change to $\ol\F_p$, there are natural decompositions
\[
T_{\ol\F_p} \cong \prod_{j\co \F_q \to \ol\F_p} T_0 \otimes_{\F_q, j} \ol\F_p
\quad \text{and} \quad
V_p \otimes_{\F_p} \ol\F_p \cong \prod_{j\co \F_q \to \ol\F_p} V \otimes_{\F_q, j} \ol\F_p.
\]
Under these decompositions, the action of $T$ on $V_p$ is the product of the actions of $T_0 \otimes_{\F_q,j} \ol\F_p$ on $V \otimes_{\F_q,j} \ol\F_p$.

Let $P_{1,q}$ denote the set of weights for $T_{0,\ol\F_q}$ on $V_1 \otimes_{\F_q} \ol\F_q$, and let $P_{1,p}$ denote the set of weights for $T_{\ol\F_p}$ on $V_1 \otimes_{\F_p} \ol\F_p$. Since the preceding decompositions are $\sigma$-equivariant, they identify
\[
P_{1,p} \cong \coprod_{j\co \F_q \to \ol\F_p} P_{1,q}.
\]
If $\omega \subset P_{1,q}$ is a $\Gal(\ol\F_q/\F_q)$-orbit, then $\coprod_j \omega$ is a $\Gal(\ol\F_p/\F_p)$-orbit in $P_{1,p}$. This gives a natural bijection $P_{1,p}/\Sigma_p \cong P_{1,q}/\Sigma_q$ preserving symmetric and asymmetric orbits.

Given $\alpha \in P_{1,p}$ (resp.\ $\beta \in P_{1,q}$), let $\F_{p,\alpha}$ (resp.\ $\F_{q,\beta}$) denote the extension of $\F_p$ (resp.\ $\F_q$) corresponding to the Galois orbit of $\alpha$ (resp.\ $\beta$). Suppose $\alpha$ lies in the copy of the orbit of $\beta$ indexed by an embedding $j\co \F_q \to \ol\F_p$. For the extension of $j$ to $\F_{q,\beta}$ determined by $\alpha$, we have
\[
\F_{p,\alpha}=j(\F_{q,\beta}) \quad \text{and} \quad \alpha(i_p(t)) = j(\beta(t)) \quad \text{for } t \in T(\F_p)=T_0(\F_q).
\]
In the symmetric case, the orders defining $\F_{p,\alpha}^1$ and $\F_{q,\beta}^1$ agree, so $j(\F_{q,\beta}^1)=\F_{p,\alpha}^1$. Since the sign characters are invariant under field automorphisms, we have
\[
\sgn_{\F_{p,\alpha}^\times}(\alpha(i_p(t))) = \sgn_{\F_{q,\beta}^\times}(\beta(t)) \quad \text{and} \quad \sgn_{\F_{p,\alpha}^1}(\alpha(i_p(t))) = \sgn_{\F_{q,\beta}^1}(\beta(t)).
\]
These observations combine to show that for $t \in T(\F_p) = T_0(\F_q)$, we have
\[
\begin{aligned}
\chi_{1,p}(i_p(t)) &= \prod_{\alpha \in (P_{1,p})_{\mathrm{asym}}/\Sigma_p} \sgn_{\F_{p,\alpha}^\times}(\alpha(i_p(t))) \cdot \prod_{\alpha \in (P_{1,p})_{\mathrm{sym}}/\Sigma_p} \sgn_{\F_{p,\alpha}^1}(\alpha(i_p(t))) \\
    &= \prod_{\beta \in (P_{1,q})_{\mathrm{asym}}/\Sigma_q} \sgn_{\F_{q,\beta}^\times}(\beta(t)) \cdot \prod_{\beta \in (P_{1,q})_{\mathrm{sym}}/\Sigma_q} \sgn_{\F_{q,\beta}^1}(\beta(t)) \\
    &= \chi_1(t),
\end{aligned}
\]
as desired.
\end{proof}

\section{Tate cohomology of tame cuspidal representations}\label{section:tate-cohom-yu}

In this section, we use the formalism developed in \cite[\S 3]{Cot26b} and Section~\ref{section:tate-tensor-products}, as well as the calculations of \cite{Cot26b}, to compute the Tate cohomology for some cuspidal $\ol\F_\ell$-representations arising from Yu's construction. Using \cite[Theorem~9.1.1]{F24}, these calculations give rise to explicit modular functoriality results for $\rho^{\FS}$. Namely, we will prove analogues of \cite[Propositions~8.2.3, 8.3.1, and Theorem~9.3.3]{CF26a} modulo certain prime numbers. In \S\ref{section:fs-functoriality}, we will combine these statements (and one more modular functoriality result from the next section) to prove \emph{characteristic} $0$ functoriality for descent to unramified twisted Levis.

We fix a connected reductive $F$-group $G$ which splits after a tamely ramified extension of $F$, and we assume that $p \neq 2$.

\subsection{Large prime degree unramified base change}\label{ss:yu-base-change}

In this section, we prove analogues of the results of \cite[\S 8.2]{CF26a} for $\rho^{\FS}$. Recalling the setting, we take $k = \ol\F_\ell$ or $\ol\Z_\ell$; when $k=\ol\Z_\ell$, we assume that every $\phi_i$ has finite order prime to $\ell$. We let $\ell$ be a banal prime for $G$ such that $\ell > \rk G + 1$. Since $\ell$ is banal for $G$, it is also banal for $G_{F_\ell}$ by \cite[Proposition~4.1.2(2)]{Cot26b}. 
Throughout, let
\[
\Psi = ((G^i)_{0 \leq i \leq d}, x, (r_i)_{0 \leq i \leq d}, \tau, (\phi_i)_{0 \leq i \leq d})
\]
be an irreducible Yu datum for $G$ with coefficients in $k$, and assume that $\Psi$ is normalized in the sense of \cite[Definition 3.7.1]{Kal19}. Recall the base changed Yu datum 
\[
\Psi_\ell = ((G_\ell^i), x_\ell, (r_{\ell,i}), \tau_\ell, (\phi_{\ell,i}))
\]
defined in \cite[\S 8.2]{CF26a}.

\begin{thm}\label{thm:yu-datum-base-change}
Let $\ell$ be a banal prime for $G$ such that $\ell > \rk G + 1$, let $\Psi$ be an irreducible Yu datum for $G$ over $\ol\F_\ell$, and let $\sigma$ be a generator for $\Gal(F_\ell/F)$. Then $\rT^a(\sigma, \pi(\Psi_\ell))$ admits the Frobenius twist $\pi(\Psi)^{(\ell)}$ as a direct summand for each $a \in \Z/2$.
\end{thm}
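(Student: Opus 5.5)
We propose to compute $\rT^a(\sigma,\pi(\Psi_\ell))$ directly from Yu's construction, using a Mackey-type formula for Tate cohomology of compactly induced representations. Write $\pi(\Psi_\ell)=\cInd_{K_\ell}^{G(F_\ell)}\kappa_\ell$, where $K_\ell = G^0(F_\ell)_{[x_\ell]}G^1(F_\ell)_{x_\ell,s_0}\cdots G^d(F_\ell)_{x_\ell,s_{d-1}}$. Here $\kappa_\ell=\tau_\ell\otimes\kappa_{\ell,0}\otimes\cdots\otimes\kappa_{\ell,d}$, with the $\kappa_{\ell,i}$ the Weil--Heisenberg-type pieces built from the $\phi_{\ell,i}=\phi_i\circ\Nm_{F_\ell/F}$.

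Since $x_\ell=x$ is $\sigma$-fixed and all the data are $\sigma$-stable, $K_\ell$ is $\sigma$-stable with $K_\ell^\sigma=K$, and $\kappa_\ell$ extends to $K_\ell\rtimes\langle\sigma\rangle$. The formalism of \cite[\S 3]{Cot26b} then expresses $\rT^a(\cInd_{K_\ell}^{G(F_\ell)}\kappa_\ell)$ as a direct sum over $\sigma$-stable double cosets $G(F)\backslash (G(F_\ell)/K_\ell)^\sigma$ of compact inductions of Tate cohomology of the conjugated $\kappa_\ell$. The identity double coset contributes $\cInd_K^{G(F)}\rT^{a}(\sigma,\kappa_\ell)$ as a direct summand. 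It therefore suffices to show that $\rT^{a}(\sigma,\kappa_\ell)$, for a suitable parity, contains $\kappa^{(\ell)}$ as a $K$-direct summand. Cuspidality of $\pi(\Psi)^{(\ell)}$ will handle irreducibility.

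To compute $\rT(\sigma,\kappa_\ell)$, I would apply Corollary~\ref{cor:tate-cohom-tensor-product-yu} with $V=\tau_\ell$ and $W_m$ the Weil--Heisenberg factors. This requires each $W_m$ to be extremal as a $k[\sigma]$-module, which is Lemma~\ref{lemma:tate-cohom-of-heisenberg}: $\sigma$ acts on the relevant symplectic space $V_i = J^{i+1}_\ell/J^{i+1}_{\ell,+}$, which is $V_i^{F}\otimes_{\F_q}\F_{q^\ell}$ with $\sigma$ permuting Frobenius-translates. Proposition~\ref{prop:tate-cohom-of-weil} then identifies $\rT^j$ of each factor with the Weil--Heisenberg representation for $V_i^\sigma=V_i^F$, twisted by $\chi_1$.

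The first key check is that this sign character $\chi_1$ is trivial. Here $\sigma$ acts on $V_i$ as a permutation of $\ell$ blocks, so $V_1$ is a sum of copies of the nontrivial part of the regular representation. On $P_1$ the weights come in $\Sigma$-orbits on which the relevant sign characters cancel or are trivial; one could also reduce to this via Lemma~\ref{lemma:sign-char-weil-restriction}, viewing $\F_{q^\ell}$-structures over $\F_q$. The second check concerns the central character. The Heisenberg center acts through $\phi_{\ell,i}$ restricted to the $\sigma$-invariants, namely $\phi_i\circ\Nm = \phi_i^\ell = \phi_i^{(\ell)}$ in characteristic $\ell$. This yields the Frobenius twist, and likewise the characters $\phi_{\ell,i}|_K = \phi_i^{\ell}$.

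For the depth-zero part $\tau_\ell$, a Deligne--Lusztig-type cuspidal representation of $\ol G^0_{x}(\F_{q^\ell})$ inflated, I would use Shintani descent together with the link of \cite[Corollary~3.5.5]{Cot26b} between Tate cohomology and the Glauberman correspondence. Since $\ell$ is banal and $\ell>\rk G+1$, this gives $\rT^{j}(\tau_\ell)\supset\tau^{(\ell)}$ as a summand. I expect this depth-zero step to be the main obstacle, since it requires matching the base-changed $\tau_\ell$ of \cite[\S 8.2]{CF26a} with the Glauberman/Shintani image. I would first try to reduce to Deligne--Lusztig characters $R_{T}^{G}(\theta\circ\Nm)$, using banality to ensure the relevant modules are projective-plus-trivial. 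Finally, I would combine the parities $j_m$ to get each $a\in\Z/2$, using that $\rT^0$ and $\rT^1$ agree up to semisimplification and, for these extremal modules, actually agree.
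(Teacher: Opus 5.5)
Your architecture matches the paper's. The identity double coset gives $\cInd_{K}^{G(F)}\rT^a(\sigma,\cdot)$ as a direct summand by \cite[Lemma~5.2.1]{Cot26b}. Corollary~\ref{cor:tate-cohom-tensor-product-yu} applies because of Lemma~\ref{lemma:tate-cohom-of-heisenberg}, Proposition~\ref{prop:tate-cohom-of-weil} handles the Heisenberg factors, and $\phi_{\ell,i}|_K=\phi_i^\ell=\phi_i^{(\ell)}$.

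\textbf{The sign-character step fails.} The step you call the first key check asserts that $\chi_1$ is trivial, but $\chi_1$ is \emph{not} trivial on $\Sp(V_{\ell,i+1})^\sigma$ in general. For instance, take $\ell=3$ and $p\equiv 1\pmod 3$. Then $V_1=V_\zeta\oplus V_{\zeta^{-1}}$ with both eigenspaces Lagrangian in $V_1$, so $\ker(r)\cong\GL(V_\zeta)$ over $\F_p$. On its split diagonal torus, which contains $\sigma$, Lemma~\ref{lemma:weil-rep-character-non-sp} gives $\chi_1(t)=\prod_j\sgn_{\F_p^\times}(t_j)=\sgn_{\F_p^\times}(\det t)$. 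This is nontrivial since $p$ is odd. Your weight-orbit ``cancellation'' argument uses only the block-permutation shape of $\sigma$, so it would prove this false statement and cannot be completed.

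What you actually need is only that the pullback of $\chi_1$ to $K$ is trivial. Only $G^0(F)_{[x]}$ matters here, since the groups $J_{j+1}$ act through the Heisenberg group or through characters. The paper's reason is structural. By \cite[Lemma~6.3.2]{CF26a}, the image of $G^0(F)_{[x]}$ lies in $\mathbf{Sp}(V_{i+1},B_{i+1})(\F_q)$, acting on $V_{i+1}\otimes_{\F_q}\F_{q^\ell}$ by scalar extension. That group is generated by unipotent elements, which have odd order $p$, so every $\{\pm1\}$-valued character is trivial on it. In the example above, this is just $\det_{\F_p}(g\otimes 1|_{V_\zeta})=\Nm_{\F_q/\F_p}(\det g)=1$.

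\textbf{Two smaller points.}

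First, you have omitted the FKS sign character. The inducing representation is $\epsilon_\ell\otimes\tau_\ell\otimes\bigotimes_i V_{\wh\phi_{\ell,i}}\otimes\phi_{\ell,d}$. So you also need $\epsilon_\ell|_K=\prod_{i=1}^d\epsilon^{G^i/G^{i-1}}_{x,F}$, which is \cite[Lemma~5.2.1]{CF26a}. You also need that the Frobenius twist does not change a $\{\pm1\}$-valued character; for $\ell=2$ such characters are trivial over $\ol\F_2$.

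Second, the depth-zero step is not the obstacle you expect. In \cite[\S 8.2]{CF26a}, $\tau_\ell$ is built from $\tau$ through the Glauberman correspondence and a Frobenius twist. This is precisely so that \cite[Lemma~3.5.11]{Cot26b} gives $\rT^m(\tau_\ell)\cong\tau^{(\ell)}$ for each $m$. No Shintani descent or reduction to Deligne--Lusztig characters is needed. This is also what makes every parity $a$ work once Corollary~\ref{cor:tate-cohom-tensor-product-yu} has fixed the parities of the extremal factors.
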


\begin{proof}
We briefly recall the notation from \cite[\S 6]{CF26a}. Define (cf.\ \cite[(6.2.1)]{CF26a}) 
\[
K_\ell = G^0(F_\ell)_{[x]}G^1(F_\ell)_{x, \frac{r_0}{2}} \cdots G^d(F_\ell)_{x, \frac{r_{d-1}}{2}}.
\]
Let $\epsilon_\ell = \prod_{i=1}^d \epsilon_{x,F_\ell}^{G^i/G^{i-1}}$ be the sign character on $K_\ell$ defined in \cite{FKS23} (cf.\ \cite[\S 6.5]{CF26a}). For each $0\leq i\leq d-1$, we let $\wh \phi_{\ell,i}$ be the $\ol\F_\ell^\times$-valued character of $G^0(F_\ell)_{[x]}G^i(F_\ell)_{x,0}G(F_\ell)_{x,\frac{r_i}{2}+}$ associated to $\phi_{\ell,i}$ (cf.\ \cite[\S 6.2]{CF26a}). For $1\leq i\leq d$, we have groups $J_{\ell,i}$ and $J_{\ell,i}^+$ (cf.\ \cite[Lemma~6.4.4]{CF26a}), and the group $V_{\ell,i} = J_{\ell,i}/J_{\ell,i}^+$ (cf.\ \cite[Lemma~6.3.1]{CF26a}) is an $\F_p$-vector space equipped with a symplectic form 
\[
\langle a, b\rangle_{\ell,i} = \ol\psi^{-1}(\wh\phi_{\ell,i-1}(aba^{-1}b^{-1})) \qquad  \text{for $a, b \in V_{\ell,i}$.}
\]

For $0\leq i\leq d-1$, let $V_{\wh\phi_{\ell,i}}$ denote the Heisenberg representation of $J_{\ell,i+1}/(J_{\ell,i+1}^+ \cap \ker \wh\phi_{\ell,i})$ with central character induced by $\wh\phi_{\ell,i}|_{J_{\ell,i+1}^+}$, as in \cite[\S 6.3]{CF26a}. This can be inflated to a representation of $K_\ell$ as follows: the group $J_{\ell,i+1}$ acts on $V_{\wh\phi_{\ell,i}}$ through the Heisenberg representation; if $j \neq i$, then $J_{\ell,j+1}$ acts on $V_{\wh\phi_{\ell,i}}$ through $\wh\phi_{\ell,i}|_{J_{\ell,j+1}}$; finally, $G^0(F_\ell)_{[x]}$ acts on $V_{\wh\phi_{\ell,i}}$ through $\phi_{\ell,i}$ times the Weil representation arising from the map defined in \cite[\S 6.3]{CF26a}:
\[
G^0(F_\ell)_{[x]}/G^0(F_\ell)_{x,0+} \to \Sp(V_{\ell,i+1}).
\]
We have then
\[
\pi(\Psi_\ell) = \cInd_{K_\ell}^{G(F_\ell)}(\epsilon_\ell \otimes \tau_\ell \otimes \bigotimes_{i=0}^{d-1} V_{\wh\phi_{\ell,i}} \otimes \phi_{\ell,d}).
\]

We now begin the calculation. Fix $a \in \Z/2\Z$. By \cite[Lemma~5.2.1]{Cot26b}, the Tate cohomology $\rT^a(\sigma, \pi(\Psi_\ell))$ admits $\cInd_{K_\ell^\sigma}^{G(F)}(\rT^a(\sigma, \epsilon_\ell \otimes \tau_\ell \otimes \bigotimes_{i=0}^{d-1} V_{\wh\phi_{\ell,i}} \otimes \phi_{\ell,d}))$ as a direct summand. By \cite[Lemma~6.4.2(2)]{CF26a}, we have
\[
K_\ell^\sigma = G^0(F)_{[x]}G^1(F)_{x,\frac{r_0}{2}} \cdots G^d(F)_{x,\frac{r_{d-1}}{2}},
\]
which is the group $K$ (cf.\ \cite[(6.2.1)]{CF26a}) appearing in Yu's construction for $\Psi$. Similarly, \cite[Lemma~6.4.4(2)]{CF26a} shows that $J_i = J_{\ell,i}^\sigma$ and $J_i^+ = (J_{\ell,i}^+)^\sigma$ are the groups appearing in Yu's construction for $\Psi$. If $V_i = J_i/J_i^+$ (as in \cite[Lemma~6.3.1]{CF26a}), then the special isomorphism for $J_{\ell,i}/(J_{\ell,i}^+ \cap \ker \wh\phi_{\ell,i-1})$ restricts to the special isomorphism for $J_i/(J_i^+ \cap \ker \wh\phi_{i-1})$ by \cite[Lemma~6.4.5(2)]{CF26a}.

By Corollary~\ref{cor:tate-cohom-tensor-product-yu}, applicable because of Lemma~\ref{lemma:tate-cohom-of-heisenberg}, there exist elements $m, n_0,\dots,n_{d-1} \in \Z/2\Z$ such that $m + \sum_{i=0}^{d-1} n_i = a$ and such that the cup product map defines an isomorphism
\[
\rT^a(\epsilon_\ell \otimes \tau_\ell \otimes \bigotimes_{i=0}^{d-1} V_{\wh\phi_{\ell,i}} \otimes \phi_{\ell,d}) \cong \epsilon_\ell|_K\cdot \phi_{\ell,d}|_K \rT^m(\tau_\ell) \otimes \bigotimes_{i=0}^{d-1} \rT^{n_i}(V_{\wh\phi_{\ell,i}}).
\]
Note that $\epsilon_\ell|_K = \prod_{i=1}^d \epsilon_{x,F}^{G^i/G^{i-1}}$ by \cite[Lemma~5.2.1]{CF26a}.
If $\ell = 2$, then both sign characters are trivial; if $\ell$ is odd, then passage to the $\ell$-Frobenius twist affects neither character. We have $\phi_{\ell,d}|_K = \phi_d^\ell$ by \cite[Proposition~7.1.2]{CF26a}, and because of the construction in \cite[\S 8.2]{CF26a}, \cite[Lemma~3.5.11]{Cot26b} identifies $\rT^m(\tau_\ell)$ with $\tau^{(\ell)}$.

Observe that $V_{\ell,i+1}^\sigma = V_{i+1}$, so Proposition~\ref{prop:tate-cohom-of-weil} identifies $\rT^{n_i}(V_{\wh\phi_{\ell,i}})$ with the Weil--Heisenberg representation attached to $V_{i+1}$, twisted by a residual character $\chi_{1,i}$ of order at most two. It remains to check that this twist is trivial under the $G^0(F)_{[x]}$-action. \cite[Lemma~6.3.2]{CF26a} shows that the action of $G^0(F)_{[x]}$ on $V_{\ell,i+1}$ factors through $\mathbf{Sp}(V_{i+1},B_{i+1})(\F_q)$, which is generated by unipotent elements (which have order $p \neq 2$). Therefore, the restriction of the quadratic twist to $G^0(F)_{[x]}$ is trivial. Thus $K$ acts on $\rT^{n_i}(V_{\wh\phi_{\ell,i}})$ in the following way: $J_{i+1}$ acts through the Heisenberg representation with central character induced by $\wh\phi_i^\ell|_{J_{i+1}^+}$ by \cite[Lemma~6.4.3]{CF26a}; for $j \neq i$, the group $J_{j+1}$ acts through $\wh\phi_i^\ell|_{J_{j+1}}$; and $G^0(F)_{[x]}$ acts through $\phi_{\ell,i}|_{G^0(F)_{[x]}} = \phi_i^\ell|_{G^0(F)_{[x]}}$ times the Weil representation of $G^0(F)_{[x]}$ through the map
\[
G^0(F)_{[x]} \to \Sp(V_{i+1}).
\]
These observations combine to show
\[
\epsilon_\ell|_K \cdot \rT^m(\tau_\ell) \otimes \bigotimes_{i=0}^{d-1} \rT^{n_i}(V_{\wh\phi_{\ell,i}}) \otimes \phi_{\ell,d}|_K = \epsilon^{(\ell)} \otimes \tau^{(\ell)} \otimes \bigotimes_{i=0}^{d-1} V_{\wh\phi_i}^{(\ell)} \otimes \phi_d^{(\ell)}.
\]
Since the compact induction of the left side is a direct summand of $\rT^a(\pi(\Psi_\ell))$, the theorem follows.
\end{proof}

\begin{cor}\label{cor:yu-datum-base-change-parameter}
Let $\ell$ be a banal prime for $G$ such that $\ell > \rk G + 1$, and let $\Psi$ be a Yu datum for $G$ over $\ol\F_\ell$. Then we have 
\[
\rho^{\FS}(\pi(\Psi))|_{W_{F_\ell}} \sim \rho^{\FS}(\pi(\Psi_\ell)).
\]
\end{cor}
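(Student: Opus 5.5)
The plan is to apply modular functoriality (Theorem~\ref{thm:modular-functoriality}) to the group $\Res_{F_\ell/F} G_{F_\ell}$ with $\sigma$ the automorphism induced by a generator of $\Gal(F_\ell/F)$. Its fixed points are $G$, which is connected, so Ansatz~\ref{ansatz:modular-functoriality} holds. The representation $\Pi=\pi(\Psi_\ell)$ of $G(F_\ell)$ is $\sigma$-stable by construction of $\Psi_\ell$. By Theorem~\ref{thm:yu-datum-base-change}, $\pi(\Psi)^{(\ell)}$ is a direct summand, hence an irreducible subquotient, of $\rT^0(\sigma,\Pi)$. Theorem~\ref{thm:modular-functoriality} then gives
\[
\rho^{\FS}_{\Res}(\Pi^{(\ell)})\sim\bigl(\ld\psi\circ\rho^{\FS}(\pi(\Psi)^{(\ell)})\bigr)^{\ss}.
\]

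I would first identify the $\sigma$-dual homomorphism $\ld\psi$ for this base-change (diagonal) setting. It should be the twisted diagonal $\wh G\rtimes W_F\to(\prod_{\Gal(F_\ell/F)}\wh G)\rtimes W_F$, composed with $\Fr_\ell$. This is a result of \cite{F24} and \cite{TV} (the $\ell$-fold tensor power and Tate diagonal; compare \S\ref{sssec:Brauer-for-trivial-action}), possibly up to a central cocycle like $z$ in \eqref{eq:fs-coefficient-frobenius}. Via Shapiro's lemma, parameters of $\Res_{F_\ell/F}G$ correspond to parameters of $G_{F_\ell}$ on $W_{F_\ell}$. Composing with the twisted diagonal corresponds to restriction to $W_{F_\ell}$, and semisimplification is compatible with this correspondence. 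Also, $\rho^{\FS}$ for $\Res$ agrees with $\rho^{\FS}$ for $G_{F_\ell}$ under Shapiro, by \cite[Theorem I.9.6]{FS}.

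Next, I would remove the Frobenius twists on both sides using \eqref{eq:fs-coefficient-frobenius}. Applying it to $\pi(\Psi)$ over $F$ and to $\pi(\Psi_\ell)$ over $F_\ell$ produces $\Fr_\ell\circ\rho$ up to the cocycles $z$. Since $\Fr_\ell$ is an automorphism of $\ol\F_\ell$ and the input parameters are semisimple, one can cancel it. The restriction of the $F$-cocycle to $W_{F_\ell}$ should agree with the $F_\ell$-cocycle up to the square-root choice, which is handled as in Lemma~\ref{lemma:dual-hom-base-change}. Here $\ell$ odd matters; if $\ell=2$, both $2\rho^\vee$ cocycles are trivial. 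The conclusion is $\rho^{\FS}(\pi(\Psi))|_{W_{F_\ell}}^{\ss}\sim\rho^{\FS}(\pi(\Psi_\ell))$. Fargues--Scholze parameters are semisimple, and restriction of a semisimple parameter to a finite-index normal subgroup stays semisimple (this follows from \cite{BMR05}). So the semisimplification can be dropped, which gives the corollary.

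The main obstacle is the precise identification of $\ld\psi$ and the bookkeeping of the cocycles and square roots of $q$ versus $q^\ell$. The $c$-to-L translation could introduce a discrepancy that must be shown to vanish, or to be absorbed by the matching twist on the $F_\ell$ side. I would first try to argue entirely with $c$-groups, where the statement is cleanest, and only convert to L-groups at the end.
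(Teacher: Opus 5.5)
Your overall route matches the paper's: you apply modular functoriality to $\Res_{F_\ell/F}G_{F_\ell}$ with $\sigma$ a generator of $\Gal(F_\ell/F)$, use Theorem~\ref{thm:yu-datum-base-change} to place $\pi(\Psi)^{(\ell)}$ inside $\rT^a(\sigma,\pi(\Psi_\ell))$, invoke compatibility with Weil restriction, drop the semisimplification, and finish with \eqref{eq:fs-coefficient-frobenius} and $\Fr_\ell^{-1}$. However, the key input $\ld\psi$ is misidentified. For cyclic base change, the $\sigma$-dual homomorphism is the plain diagonal $\ld\psi(h,w)=(\Delta(h),w)$, as in \eqref{eq:cyclic-base-change-L-map} (from \cite[Proposition~7.5.2]{F24}), with no $\Fr_\ell$.

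This is not a normalization ambiguity. In the base change situation, the Frobenius twist is produced by the Tate cohomology itself, which is why Theorem~\ref{thm:yu-datum-base-change} yields $\pi(\Psi)^{(\ell)}$ rather than $\pi(\Psi)$. Already for a torus, $\rT^0(\sigma,\chi\circ\Nm_{F_\ell/F})=\chi^\ell=\chi^{(\ell)}$. Matching this with $\rho^{\FS}((\chi\circ\Nm_{F_\ell/F})^{(\ell)})=\rho^{\FS}(\chi^{(\ell)})|_{W_{F_\ell}}$ forces $\ld\psi=\Delta$. Your analogy with \S\ref{sssec:Brauer-for-trivial-action} is misleading. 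There $\sigma$ acts trivially, so $\rT^0(\Pi)=\Pi$ carries no twist, and the twist has to sit in $\ld\psi$.

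With your $\ld\psi=\Delta\circ\Fr_\ell$, the bookkeeping does not close. The left side carries one Frobenius twist (from $\Pi^{(\ell)}$), while the right side carries two (from $\ld\psi$ and from $\pi(\Psi)^{(\ell)}$). After applying \eqref{eq:fs-coefficient-frobenius} and cancelling, you would get $\rho^{\FS}(\pi(\Psi_\ell))\sim\Fr_\ell\circ\rho^{\FS}(\pi(\Psi))|_{W_{F_\ell}}$ up to a central cocycle. That is not the corollary, and a Frobenius twist cannot be absorbed into a central cocycle.

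With the correct $\ld\psi$, you get $\rho^{\FS}(\pi(\Psi_\ell)^{(\ell)})\sim\rho^{\FS}(\pi(\Psi)^{(\ell)})|_{W_{F_\ell}}$ directly, and your remaining steps go through: removing the semisimplification and undoing the Frobenius twist with compatible square roots of $q$ and $q^\ell$. Your worry about the $c$-to-L translation is settled by Lemma~\ref{lem:transport-c-to-L}. If you use the same square root of the cyclotomic character for $G$ and $\Res_{F_\ell/F}G_{F_\ell}$, then $a_{\Res_{F_\ell/F}G_{F_\ell}}=\Delta\circ a_G$, so the transported cocycle is trivial.
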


\begin{proof}
Combining Theorem~\ref{thm:modular-functoriality} with Theorem~\ref{thm:yu-datum-base-change}, compatibility with Weil restriction \cite[Theorem I.9.6(vii)]{FS}, and \eqref{eq:cyclic-base-change-L-map} gives the following conjugacy; \cite[Lemma~10.1.6(1)]{CF26a} removes the semisimplification:
\[
\rho^{\FS}(\pi(\Psi)^{(\ell)})|_{W_{F_\ell}}
\sim\rho^{\FS}(\pi(\Psi_\ell)^{(\ell)}).
\]
Apply \eqref{eq:fs-coefficient-frobenius} and the inverse of Frobenius to obtain the corollary. 
\end{proof}

\subsection{Small prime degree base change}\label{ss:yu-low-deg-bc}

In this section, we prove analogues of the results of \cite[\S 8.3]{CF26a} for $\rho^{\FS}$. We take $k = \ol\F_\ell$, assume that $G^0 = T$ is a torus, and assume that $\Psi$ is normalized in the sense of \cite[Definition 3.7.1]{Kal19}. Let $E/F$ be a Galois extension of prime degree $\ell \neq p$ such that $T_E$ is an elliptic $E$-torus of $G_E$, and let $\sigma$ be a generator of $\Gal(E/F)$.

Recall the Yu datum $\Psi_E=((G_E^i),x_E,(r_{E,i}),\tau_E,(\phi_{E,i}))$ defined in \cite[\S 8.3]{CF26a}.

\begin{thm}\label{thm:yu-datum-small-degree-bc}
In the setting above, $\rT^a(\sigma,\pi(\Psi_E))$ admits $\pi(\Psi^{(\ell)})$ as a direct summand for each $a\in\Z/2$.
\end{thm}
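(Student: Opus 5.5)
I will follow the template of the proof of Theorem~\ref{thm:yu-datum-base-change} closely, replacing the unramified degree-$\ell$ extension $F_\ell$ by the cyclic extension $E/F$. Write
\[
\pi(\Psi_E)=\cInd_{K_E}^{G(E)}\Bigl(\epsilon_E\otimes\tau_E\otimes\bigotimes_{i=0}^{d-1}V_{\wh\phi_{E,i}}\otimes\phi_{E,d}\Bigr),
\]
with $K_E=T(E)_{[x]}G^1(E)_{x,r_0/2}\cdots G^d(E)_{x,r_{d-1}/2}$ and with the Heisenberg--Weil factors $V_{\wh\phi_{E,i}}$ attached to the symplectic spaces $V_{E,i+1}=J_{E,i+1}/J_{E,i+1}^+$.

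The first step is to apply \cite[Lemma~5.2.1]{Cot26b}. This shows that $\rT^a(\sigma,\pi(\Psi_E))$ contains $\cInd_{K_E^\sigma}^{G(F)}$ of the Tate cohomology of the inducing representation as a direct summand. The analogues of \cite[Lemmas~6.4.2, 6.4.4, 6.4.5]{CF26a} from \cite[\S 8.3]{CF26a} should give three identifications:
\begin{itemize}
\item $K_E^\sigma=K$;
\item $J_{E,i}^\sigma=J_i$ and $(J_{E,i}^+)^\sigma=J_i^+$;
\item compatibility of the special isomorphisms.
\end{itemize}
Since $V_{E,i}^\sigma=V_i$, Lemma~\ref{lemma:tate-cohom-of-heisenberg} shows that each $V_{\wh\phi_{E,i}}$ is extremal. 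Corollary~\ref{cor:tate-cohom-tensor-product-yu} then splits the Tate cohomology as a tensor product
\[
\epsilon_E|_K\cdot\phi_{E,d}|_K\,\rT^m(\tau_E)\otimes\bigotimes_i\rT^{n_i}(V_{\wh\phi_{E,i}}).
\]
Here $\tau_E$ is a character of $T(E)_{[x]}$ since $G^0=T$ is a torus, so $\rT^m(\tau_E)=\tau_E|_{T(F)_{[x]}}$. The characters $\phi_{E,i}$ restrict to $\phi_i^\ell$ on the relevant groups, by the definition of $\Psi_E$ as a norm composition and the fact that $N_{E/F}$ is raising to the $\ell$th power on $F$-points. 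This matches the Frobenius twist $\Psi^{(\ell)}$.

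The second step, which I expect to be the main obstacle, concerns the residual characters. By Proposition~\ref{prop:tate-cohom-of-weil}, each $\rT^{n_i}(V_{\wh\phi_{E,i}})$ is the Weil--Heisenberg representation of $V_{i+1}$ twisted by the quadratic character $\chi_{1,i}$ of Definition~\ref{def:chi-sigma}. In the unramified case these twists died because $G^0(F)_{[x]}$ acted through a group generated by unipotents. Here $G^0(F)_{[x]}=T(F)_{[x]}$ is a torus, so $\chi_{1,i}|_{T(F)}$ is genuinely nontrivial in general.

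My plan is to compute $\chi_{1,i}$ using \eqref{eq:theta-restriction}. The torus $T(F)_{[x]}$ maps into a maximal torus of $\mathbf{Sp}(V_{E,i+1})$ containing $\sigma$, and the weights on $V_{E,i+1}/V_{i+1}$ are the roots $\alpha$ of $G^{i+1}/G^i$ whose $\sigma$-orbits are nontrivial. Lemma~\ref{lemma:sign-char-weil-restriction} will be used to pass from $\F_p$-structures to residue-field structures. I would then show that the product $\epsilon_E|_K\cdot\prod_i\chi_{1,i}$ equals $\epsilon^{(\ell)}=\epsilon$ on $K$. Concretely, the FKS characters $\epsilon_{x,E}^{G^i/G^{i-1}}$ restricted to $T(F)$ should differ from $\epsilon_{x,F}^{G^i/G^{i-1}}$ by exactly the $\epsilon_\sharp$-type sign over the roots that are ramified or symmetric relative to $\sigma$. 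This is what Lemma~\ref{lemma:eta-sign-comparison} (announced in the introduction) is designed to provide, in the spirit of \S\ref{ss:III-signs}.

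If $\ell=2$, all of these signs are trivial in characteristic $2$ and the step is vacuous. For odd $\ell$, I would work orbit-by-orbit on $\Sigma$-orbits of roots, comparing the sign $\sgn_{\F_\alpha^\times}$ or $\sgn_{\F_\alpha^1}$ with the corresponding toral FKS factor. Finally, reassembling the factors identifies the inducing representation of $\pi(\Psi^{(\ell)})$, and hence exhibits $\pi(\Psi^{(\ell)})$ as a direct summand of $\rT^a(\sigma,\pi(\Psi_E))$.
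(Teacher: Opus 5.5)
\textbf{Verdict: there is a genuine gap, in the sign bookkeeping of your second step.}

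Your skeleton is the same as the paper's. That covers \cite[Lemma~5.2.1]{Cot26b}, $K_E^\sigma=K$, and Corollary~\ref{cor:tate-cohom-tensor-product-yu} via Lemma~\ref{lemma:tate-cohom-of-heisenberg}. It also covers $\rT^m(\tau_E)=\tau_E|_K$, $\phi_{E,d}|_K=\phi_d^\ell|_K$, and Proposition~\ref{prop:tate-cohom-of-weil}, whose residual quadratic twists $\chi_{1,i}$ no longer die on the torus $T(F)_{[x]}$.

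The problem is what you compare the twists against. Your target $\epsilon_E|_K\cdot\prod_i\chi_{1,i}=\epsilon_F$ implicitly assumes $\tau_E|_{T(F)_{[x]}}=\tau^\ell$, i.e.\ that the depth-zero character of $\Psi_E$ is the plain norm composition. That is not how $\Psi_E$ is built. Its definition in \cite[\S 8.3]{CF26a} involves the four sign characters $\epsilon_F,\epsilon_{F,\sharp,x},\epsilon_E,\epsilon_{E,\sharp,x}$. Together with \cite[Proposition~7.1.2]{CF26a}, it only gives
\[
(\epsilon_E\epsilon_{E,\sharp,x}\tau_E)|_K=\epsilon_F\epsilon_{F,\sharp,x}\tau^\ell .
\]
With this, what the theorem requires is $\epsilon_E\tau_E\prod_i\chi_{1,i}=\epsilon_F\tau^\ell$ on $K$. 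Since all the signs are quadratic, this is equivalent to
\[
\prod_i\chi_{1,i}\big|_K=\epsilon_{E,\sharp,x}\epsilon_{F,\sharp,x}\big|_K .
\]
Your target coincides with this only if, in addition, $\epsilon_E\epsilon_{E,\sharp,x}|_K=\epsilon_F\epsilon_{F,\sharp,x}$. That relation says the non-$\sharp$ part of the FKS character is compatible with restriction along $E/F$. The paper neither proves nor uses it, and the fact that $\Psi_E$ has to be corrected by exactly these signs indicates it cannot be taken for granted. If it fails, your target identity is actually false.

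The fix is to aim at the smaller identity displayed above. The Tate cohomology of the Weil--Heisenberg factors only sees the $\epsilon_\sharp$-parts. The tools you name are the right ones for computing $\prod_i\chi_{1,i}$: the orbit description of $\chi_1^T$ via \eqref{eq:theta-restriction}, Lemma~\ref{lemma:sign-char-weil-restriction}, and an FKS-type weight comparison as in Lemma~\ref{lemma:eta-sign-comparison}. But the comparison must be with $\epsilon_{E,\sharp,x}\epsilon_{F,\sharp,x}$, not with $\epsilon_E|_K\epsilon_F$.

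Once that is in hand, the inducing representation of the summand is
\[
\epsilon_E\tau_E\epsilon_{E,\sharp,x}\epsilon_{F,\sharp,x}\otimes\bigotimes_i V_{\wh\phi_i}^{(\ell)}\otimes\phi_d^\ell .
\]
By the displayed definition of $\tau_E$ and $\epsilon_{F,\sharp,x}^2=1$, this equals $\epsilon_F\tau^\ell\otimes\bigotimes_i V_{\wh\phi_i}^{(\ell)}\otimes\phi_d^\ell$, which is the inducing representation of $\pi(\Psi^{(\ell)})$. No direct comparison of $\epsilon_E|_K$ with $\epsilon_F$ is ever needed.
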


\begin{proof}
Let $\epsilon_F$, $\epsilon_{F,\sharp,x}$, $\epsilon_E$, and $\epsilon_{E,\sharp,x}$ be the sign characters as in the definition of $\Psi_E$. With notation as in \cite[\S 6]{CF26a}, we have (cf.\ \cite[(6.5.1)]{CF26a})
\[
\pi(\Psi_E) = \cInd_{K_E}^{G(E)}(\epsilon_E \otimes \tau_E \otimes \bigotimes_{i=0}^{d-1} V_{\wh\phi_{E,i}} \otimes \phi_{E,d}),
\]
where (cf.\ \cite[(6.2.1)]{CF26a})
\[
K_E = G^0(E)_{[x]}G^1(E)_{x,\frac{r_0}{2}} \cdots G^d(E)_{x,\frac{r_{d-1}}{2}}.
\]
By \cite[Lemma~6.4.2]{CF26a}, we have $K_E^\sigma = K$, where $K$ is the compact open subgroup of $G(F)$ constructed in precisely the same way as $K_E$ but with $F$ in place of $E$. Thus by \cite[Lemma~5.2.1]{Cot26b} and Corollary~\ref{cor:tate-cohom-tensor-product-yu}, for a given $a \in \Z/2\Z$ there exist $m, n_0,\dots,n_{d-1} \in \Z/2\Z$ such that $m + \sum_{i=0}^{d-1} n_i = a$ and the $G(F)$-representation $\rT^a(\pi(\Psi_E))$ admits
\[
\cInd_K^{G(F)}(\epsilon_E \otimes \tau_E \otimes \bigotimes_{i=0}^{d-1} \rT^{n_i}(V_{\wh\phi_{E,i}}) \otimes \phi_{E,d})
\]
as a direct summand. By construction of $\Psi_E$ and \cite[Proposition~7.1.2]{CF26a}, we have $(\epsilon_E\epsilon_{E,\sharp,x}\tau_E)|_K = \epsilon_F\epsilon_{F,\sharp,x}\tau^\ell$ and $\phi_{E,d}|_K = \phi_d^\ell|_K$. Thus the argument in the proof of Theorem~\ref{thm:yu-datum-base-change} shows that $\rT^{n_i}(V_{\wh\phi_{E,i}})$ is isomorphic to $V_{\wh\phi_i}^{(\ell)} \otimes \epsilon_{E,\sharp,x}\epsilon_{F,\sharp,x}$ as a $K$-representation, with action as in Yu's construction applied to $\Psi$ (cf.\ \cite[\S 6.3 and Lemma~6.4.5]{CF26a}). Combining these observations yields the theorem.
\end{proof}

\begin{cor}\label{cor:yu-datum-small-degree-bc-parameter}
In the setting of Theorem~\ref{thm:yu-datum-small-degree-bc}, we have
\[
\rho^{\FS}(\pi(\Psi))|_{W_E} \sim \rho^{\FS}(\pi(\Psi_E)).
\]
\end{cor}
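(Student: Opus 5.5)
The plan is to follow the proof of Corollary~\ref{cor:yu-datum-base-change-parameter} essentially verbatim, with Theorem~\ref{thm:yu-datum-small-degree-bc} replacing Theorem~\ref{thm:yu-datum-base-change}.

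\emph{Step 1: set up modular functoriality for the Weil restriction.} Let $G' = \Res_{E/F} G_E$ and let $\sigma$ act on $G'$ through the Galois action, so $(G')^\sigma = G$ is connected and Ansatz~\ref{ansatz:modular-functoriality} holds. The representation $\Pi = \pi(\Psi_E)$ of $G'(F) = G(E)$ has $\sigma$-stable isomorphism class. This holds because $\Psi_E$ is built from $\Psi$ by base change, so its data ($x_E$, $\tau_E$, $\phi_{E,i}$, and the sign characters) are $\Gal(E/F)$-invariant; hence $\Pi$ extends to $G(E) \rtimes \langle\sigma\rangle$. By Theorem~\ref{thm:yu-datum-small-degree-bc}, $\rT^0(\sigma,\Pi)$ contains $\pi(\Psi^{(\ell)}) = \pi(\Psi)^{(\ell)}$ as a direct summand, and this is irreducible. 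Theorem~\ref{thm:modular-functoriality} then gives
\[
\rho^{\FS}(\pi(\Psi_E)^{(\ell)}) \sim \bigl(\ld\psi\circ\rho^{\FS}(\pi(\Psi)^{(\ell)})\bigr)^{\ss},
\]
where $\rho^{\FS}$ on the left is read for $G'$ over $F$. By \cite[Theorem I.9.6(vii)]{FS} this corresponds to the parameter of $\pi(\Psi_E)^{(\ell)}$ for $G_E$ over $E$ under Shapiro's lemma.

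\emph{Step 2: identify $\ld\psi$.} For the Galois-permutation automorphism of a Weil restriction, one uses the known description of the $\sigma$-dual homomorphism, namely the cyclic base change L-map \eqref{eq:cyclic-base-change-L-map} (the ``diagonal'' composed with $\Fr_\ell$). Translated through Shapiro's lemma, this says that $\ld\psi\circ\rho$ corresponds to $\rho|_{W_E}$ up to the coefficient Frobenius. The result is
\[
\rho^{\FS}(\pi(\Psi_E)^{(\ell)}) \sim \bigl(\rho^{\FS}(\pi(\Psi)^{(\ell)})|_{W_E}\bigr)^{\ss}.
\]

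\emph{Step 3: remove the semisimplification and the Frobenius twists.} The Fargues--Scholze parameters are semisimple, and a restriction to the finite-index subgroup $W_E$ of a semisimple parameter is semisimple; this is \cite[Lemma~10.1.6(1)]{CF26a}, as used in Corollary~\ref{cor:yu-datum-base-change-parameter}. So the $(\cdot)^{\ss}$ can be dropped. Next apply \eqref{eq:fs-coefficient-frobenius} on both sides. It gives $\rho^{\FS}(\pi^{(\ell)}) \sim z\cdot \Fr_\ell\circ\rho^{\FS}(\pi)$, and the cocycle $z$ for $G$ restricts on $W_E$ to the cocycle for $G_E$, since $\chi_{\cyc}^{1/2}|_{W_E}$ is the corresponding square root for $E$. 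After cancelling $z$, apply $\Fr_\ell^{-1}$, which is an automorphism of $\ld G(\ol\F_\ell)$ commuting with $\wh G$-conjugacy, to obtain $\rho^{\FS}(\pi(\Psi))|_{W_E} \sim \rho^{\FS}(\pi(\Psi_E))$.

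\emph{Main obstacle.} I expect the delicate point to be Step 2 together with the bookkeeping of square roots of the cardinality of the residue field. When $E/F$ is ramified or $\ell=2$, the residue fields of $E$ and $F$ may coincide, or the chosen square roots may interact. The choice of $\sqrt{q}$ affects the passage from $\lc\psi$ to $\ld\psi$, as in Lemma~\ref{lemma:dual-hom-base-change}, so one must check that the same choice is made compatibly for $F$ and $E$. One should then verify, at the level of $c$-groups where no choice is needed, that the ambiguity cancels in the final conjugacy.
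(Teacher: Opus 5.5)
Your proposal is the paper's argument. Both apply Theorem~\ref{thm:modular-functoriality} to $\Res_{E/F}G_E$ with its Galois $\sigma$, fed by Theorem~\ref{thm:yu-datum-small-degree-bc}. Both then use \cite[Lemma~10.1.6(1)]{CF26a} to drop the semisimplification, compatibility with Weil restriction \cite[Theorem I.9.6(vii)]{FS} together with \eqref{eq:cyclic-base-change-L-map}, and finally compatibility with Frobenius twists.

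One correction to Step 2. The map \eqref{eq:cyclic-base-change-L-map} is $(h,w)\mapsto(\Delta(h),w)$, with \emph{no} $\Fr_\ell$. For the Galois-permutation automorphism, the coefficient Frobenius is already produced by Tate cohomology: the summand is $\pi(\Psi)^{(\ell)}$, just as $\rT^0(V^{\otimes\ell})\cong V^{(\ell)}$. The $\Fr_\ell$ you have in mind belongs to the trivial and inner cases (\S\ref{sssec:Brauer-for-trivial-action}, Example~\ref{ex:H=Levi}). If you composed with $\Fr_\ell$ as your parenthetical says, the right side would carry two Frobenius twists against one on the left, and the conclusion would be off by a Frobenius twist. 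The display you actually write after Step 2 is nevertheless the correct one.

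Your ``main obstacle'' does not arise. The appendix derives \eqref{eq:cyclic-base-change-L-map} from the $c$-group formula of \cite[Proposition~7.5.2]{F24} via Lemma~\ref{lem:transport-c-to-L}, using the same square root of the cyclotomic character for $G$ and $\Res_{E/F}G_E$ and the product pinning. With those choices the transport cocycle $c_\psi$ is identically trivial.
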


\begin{proof}
By Theorem~\ref{thm:yu-datum-small-degree-bc}, the representation $\pi(\Psi)^{(\ell)}$ is an irreducible subquotient of $\rT^a(\sigma, \pi(\Psi_E))$. Apply Theorem~\ref{thm:modular-functoriality} to $\Res_{E/F}(G_E)$ with its $\sigma$-action and connected fixed subgroup $G$. Then \cite[Lemma~10.1.6(1)]{CF26a} removes the semisimplification and gives
\[
\ld \psi \circ \rho^{\FS}(\pi(\Psi)^{(\ell)})\ \sim \rho^{\FS}(\pi(\Psi_E)^{(\ell)}),
\]
where $\pi(\Psi_E)^{(\ell)}$ is regarded as a representation of $G'(F) = G(E)$. Compatibility of $\rho^{\FS}$ with Weil restriction \cite[Theorem I.9.6(vii)]{FS} and the cyclic base change formula \eqref{eq:cyclic-base-change-L-map} imply that
\[
\rho^{\FS}(\pi(\Psi)^{(\ell)})|_{W_E} \sim \rho^{\FS}(\pi(\Psi_E)^{(\ell)}),
\]
where now $\pi(\Psi_E)^{(\ell)}$ is regarded as a representation of $G(E)$. The corollary then follows from compatibility of $\rho^{\FS}$ with Frobenius twists.
\end{proof}

\subsection{Descent to unramified twisted Levis}\label{ss:yu-descent-levi}

In this subsection, we prove analogues of the results of \cite[\S 9]{CF26a} for $\rho^{\FS}$.

\subsubsection{The setting}\label{sss:levi-descent-setting}

Throughout, we take $k = \ol\Z_\ell$ and we assume that the Yu datum $\Psi \otimes_{\ol\Z_\ell} \ol\Q_\ell$ is irreducible. We fix the following situation, which will remain in force through Corollary~\ref{cor:yu-datum-dl-res-parameter}. Let $H \subset G$ be an unramified twisted Levi, and suppose that there exists $[(T, \theta)] \in \cT_0(\Psi \otimes_{\ol\Z_\ell} \ol\Q_\ell)$ (in the notation of \cite[Definition~7.2.1]{CF26a}) such that $T$ is elliptic in $G^0$ and $T \subset H \cap G^0$. Fix an element $s \in H(F)$ of order $\ell$ such that $H = Z_G(s)$\footnote{For a given $H$, such an element $s$ might not exist, so this includes an extra hypothesis on the data.}, and let $\sigma$ denote the automorphism of $G$ induced by conjugation by $s$. For $0 \leq i \leq d$, set $H^i = G^i \cap H$ and $\phi_{H,i} = \phi_i|_{H^i(F)}$. We record the following observations.
\begin{enumerate}
\item Each $H^i$ is a twisted Levi $F$-subgroup of $H$, and $Z_{H^0}/Z_H$ is anisotropic: this follows from the assumption that $T$ is an elliptic maximal torus of $G$.
\item Via any choice of embedding $\cB(H^0) \subset \cB(G^0)$, the point $x$ lies in $\cB(H^0)$. The image $[x]$ of $x$ in $\cB(H^0_{\der})$ is a vertex, since $[x]$ is the point associated to the elliptic maximally unramified maximal $F$-torus $T$: see \cite[Remark 16.7]{KP}.
\item $\phi_{H,i}$ is $H^{i+1}$-generic of depth $r_i$ for all $0\leq i < d$ by \cite[Lemma~9.3.1]{CF26a}.
\end{enumerate}
For simplicity, we will use $\epsilon_G$ (resp.\ $\epsilon_H$) to refer to the sign character constructed from $((G^i), x, (r_i))$ (resp.\ $((H^i), x, (r_i))$); see \cite[\S\S 3.3 and 6.5]{CF26a}.

\subsubsection{The twisting characters}\label{sss:levi-descent-twisting}

For $0 \leq i \leq d-1$, recall from \cite[\S 6.3]{CF26a} the symplectic $\F_p$-vector space $V_{i+1}$: by the Moy--Prasad isomorphism (cf.\ \cite[(6.3.1)]{CF26a}) we have
\[
V_{i+1} \cong \frg^{i+1}(F)_{x,\frac{r_i}{2}}/\left(\frg^{i+1}(F)_{x,\frac{r_i}{2}+} + \frg^i(F)_{x,\frac{r_i}{2}}\right),
\]
and if $X_i^*\in \Lie^*(G^i)^{G^i}(F)_{x, -r_i}$ denotes the generic functional associated to $\phi_i$ then the symplectic form on $V_{i+1}$ is equal to the map $(\ol v, \ol w) \mapsto \Tr_{\F_q/\F_p}\overline{X_i^*([v,w])}$ by \cite[Lemma~6.3.1]{CF26a}. Note that $V_{i+1}$ is an $\F_q$-vector space in a natural way, the map $(\ol v, \ol w) \mapsto \overline{X_i^*([v,w])}$ is a symplectic form, and there is a natural inclusion $\Sp_{\F_q}(V_{i+1}) \subset \Sp_{\F_p}(V_{i+1})$, where the subscripts $\F_q$ and $\F_p$ are used to distinguish the two symplectic forms. It is clear that $\sigma \in \Sp_{\F_q}(V_{i+1})$.

\begin{defn}\label{defn:levi-descent-twisting}
For $0 \leq i \leq d-1$, let $\wt\chi_{1,i}$ denote the sign character of $\Sp_{\F_q}(V_{i+1})^\sigma$ from Definition~\ref{def:chi-sigma}, applied to the $\F_q$-symplectic space $V_{i+1}$ and the element $\sigma \in \Sp_{\F_q}(V_{i+1})$. Let $a_i\co H^0(F)_{[x]} \to \Sp_{\F_q}(V_{i+1})^\sigma$ be the natural map, and define the characters
\[
\eta_i = \wt\chi_{1,i}\circ a_i, \qquad \xi_H = \epsilon_G|_{H^0(F)_{[x]}}\,\epsilon_H\prod_{i=0}^{d-1}\eta_i
\]
of $H^0(F)_{[x]}$.
\end{defn}

The following lemma is the key computation of this subsection. We will use $\mathbf{Sp}_{\F_q}(V_{i+1})$ (resp.\ $\mathbf{Sp}_{\F_p}(V_{i+1})$) to denote the symplectic group of $V_{i+1}$ (resp.\ $V_{i+1, p}$, in the notation of \S\ref{ss:weil-heisenberg-restriction-of-scalars}), considered as an algebraic group over $\F_q$ (resp.\ $\F_p$).

\begin{lemma}\label{lemma:eta-sign-comparison}
If $\epsilon_i = \epsilon_{\sharp,x}^{G^{i+1}/G^i,H}$ as in \cite[Lemma~3.2.2]{CF26a}, then
\begin{equation}\label{eq:eta-sign-comparison}
\eta_i \equiv \epsilon_i|_{H^0(F)_{[x]}} \pmod{\ell}.
\end{equation}
Thus, if $T_H \subset H^0$ is a tamely ramified maximal $F$-torus with $x \in \cB(T_H)$, then
\[
\left.\xi_H\right|_{T_H(F)_{[x]}} \equiv \left.(\epsilon_G\epsilon_H\epsilon_{G,\sharp,x}\epsilon_{H,\sharp,x})\right|_{T_H(F)_{[x]}} \pmod{\ell}.
\]
\end{lemma}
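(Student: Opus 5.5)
The plan is to reduce \eqref{eq:eta-sign-comparison} to a comparison of two explicit quadratic characters on a maximal torus, and then deduce the second statement formally.

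\emph{Reduction to a torus.} Both sides of \eqref{eq:eta-sign-comparison} are characters of $H^0(F)_{[x]}$ of order at most $2$. On the $\eta_i$ side, Definition~\ref{def:chi-sigma} shows that $\wt\chi_{1,i}$ factors through $\ker(r)_{\ab}(\F_q)$. The map $a_i$ factors through the reductive quotient $\ol H^0_{[x]}$. Arguing as in the proof of Theorem~\ref{thm:yu-datum-base-change}, both characters are trivial on the image of unipotent elements, since these have odd order $p$. So it suffices to compare them on $T_H(F)_{[x]}$ for tamely ramified maximal tori $T_H \subset H^0$ with $x \in \cB(T_H)$. Lang's theorem then lets us pass to the image of $T_H(F)_{[x]}$ in a maximal $\F_q$-torus $\ol T \subset \mathbf{Sp}_{\F_q}(V_{i+1})$ containing $\sigma$. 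If $\ell=2$, both sides are trivial modulo $\ell$, so we may assume $\ell$ is odd.

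\emph{Computing $\eta_i$.} On $\ol T$, equation \eqref{eq:theta-restriction} gives $\eta_i = \prod_{\Omega \in P_1/\Sigma}\chi^{\ol T}_\Omega \circ a_i$. The weights of $T_H$ on $V_{i+1}\otimes \ol\F_q$ are the reductions of the roots $\alpha \in R(G^{i+1},T_H)\smallsetminus R(G^i,T_H)$ whose affine root takes the value $r_i/2$ at $x$. The weights in $P_1$, i.e.\ those not fixed by $\sigma = \mathrm{Ad}(s)$, are exactly the roots not lying in $H^{i+1}$. For each such orbit, $\chi^{\ol T}_\Omega(t)$ is $\sgn_{\F_\alpha^\times}(\alpha(t))$ or $\sgn_{\F_\alpha^1}(\alpha(t))$, according as the orbit is asymmetric or symmetric. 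These are precisely the toral invariants entering the definition of $\epsilon_{\sharp,x}$ in \cite{DS18} and \cite[Lemma~3.2.2]{CF26a}, now restricted to the roots of $G^{i+1}$ outside $G^i$ and outside $H$. There is one wrinkle: the form on $V_{i+1}$ used in Yu's construction is the $\F_p$-form. Lemma~\ref{lemma:sign-char-weil-restriction} shows that the sign character for the $\F_p$-form, pulled back along $i_p$, equals the one computed for the $\F_q$-form, so working over $\F_q$ is legitimate.

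\emph{Main obstacle.} The hard step is matching this product orbit by orbit with the explicit formula for $\epsilon_i=\epsilon_{\sharp,x}^{G^{i+1}/G^i,H}$. Three points need care. First, the $\Gamma\times\{\pm1\}$-orbits of weights on $V_{i+1}$ must correspond to the symmetric and asymmetric Galois orbits of roots in \cite{DS18}, with the residue fields $\F_\alpha$ matching. Second, the depth condition $r_i/2$ must select the same ramified roots in both definitions. Third, any discrepancy (for instance from unramified symmetric roots contributing a Legendre-symbol factor, or from a normalization of $\alpha(t)$ versus $\alpha(t)-1$) must be a character of order $2$ that becomes trivial modulo $\ell$ or cancels. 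I would first check all of this on a rank-one example, namely the symmetric case with $\alpha$ ramified and $\F_\alpha^1$ of order $q_\Omega+1$.

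\emph{Second assertion.} Multiply \eqref{eq:eta-sign-comparison} over $i$, restrict to $T_H(F)_{[x]}$, and use that $\xi_H = \epsilon_G\epsilon_H\prod_i\eta_i$. This gives $\xi_H \equiv \epsilon_G\epsilon_H\prod_i\epsilon_i$ on $T_H(F)_{[x]}$. The multiplicativity of $\epsilon_{\sharp,x}$ in towers from \cite[Lemma~3.2.2]{CF26a} identifies $\prod_i\epsilon_i|_{T_H(F)_{[x]}}$ with $\epsilon_{G,\sharp,x}\epsilon_{H,\sharp,x}$. Here $\epsilon_{G,\sharp,x}\epsilon_{H,\sharp,x}$ involves exactly the roots of $G$ not in $H$, and since these characters have order $2$, the inverse may be replaced by the character itself.
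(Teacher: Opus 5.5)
Your outline has the right shape: compare both sides on tame tori $T_H(F)_{[x]}$, then invoke uniqueness. But the step that actually proves \eqref{eq:eta-sign-comparison} is never supplied. You call the orbit-by-orbit identification of $\prod_{\Omega\in P_1/\Sigma}\chi^{\ol T}_\Omega\circ a_i$ with $\epsilon_i$ the ``main obstacle'' and leave it open. The fallback you suggest does not exist. Once $\ell=2$ is discarded, a nontrivial $\{\pm1\}$-valued discrepancy stays nontrivial modulo the odd prime $\ell$. So the congruence is really an exact equality of sign characters, and no quadratic error can be absorbed.

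Nor can the matching be read off as you suggest, because your description of the weights is only valid for unramified $T_H$. That description takes the weights to be reductions of roots with the right affine value at $x$, with the $\sigma$-moved weights being the roots outside $H$. You cannot restrict to unramified tori. $H^0(F)_{[x]}$ can meet non-identity components of $\ol H^0_{[x]}$ whose elements lie in no unramified maximal torus, e.g.\ a uniformizer of a quaternion algebra. So uniqueness needs all tame $T_H$. For ramified $T_H$, three things go wrong:
\begin{itemize}
\item the image of $T_H(F)_{[x]}$ in $\mathbf{Sp}_{\F_q}(V_{i+1})$ has multi-dimensional weight spaces (it acts by $\pm1$ on pieces from symmetric ramified roots);
\item the maximal torus $\ol T$ containing that image is not canonical;
\item the $\Gal(\ol\F_q/\F_q)\times\{\pm1\}$-orbits of $\ol T$-weights (symmetric versus asymmetric, the fields $\F_\alpha$) bear no direct relation to the root orbits defining $\epsilon_{\sharp,x}$.
\end{itemize}

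The paper avoids any root-by-root computation by passing through an auxiliary character.
\begin{itemize}
\item Let $Z_i$ be the maximal central $\F_q$-torus of $\mathbf{Sp}_{\F_q}(V_{i+1})^\sigma$, and $\mathfrak S_i$ the set of $Z_i$-weights on $V_{i+1,1}=(V_{i+1}^\sigma)^\perp$. The $Z_i$-isotypic pieces are $\ol H^0_x$-stable, and \cite[Proposition~5.1.13]{FKS23} attaches to $\cO\mapsto\det(-\mid (V_{i+1,1})_\cO)$ a sign character $\epsilon_{\mathfrak S_i}$ of $\ol H^0_x(\F_q)$.
\item Since $\ell\neq p$, $V_{i+1,1}\cong V_{i+1}/V_{i+1}^\sigma$ is exactly the space used in \cite[Lemma~3.2.2]{CF26a} to define $\epsilon_i$.
\item For each tame $T_H$, choose a maximal $\F_q$-torus $\ol S_i\subset\mathbf{Sp}_{\F_q}(V_{i+1})^\sigma$ containing the image of $\ol T_H$. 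This needs the image to consist of semisimple elements, which holds because $\pi_0(\ol T_{H,x})$ has order prime to $p$ by tameness and the Kottwitz isomorphism. Lang's theorem alone does not give this.
\item The compatibility \cite[Remark~5.1.12]{FKS23}, applied to the restriction maps from $\ol S_i$-weights to $Z_i$-weights and to characters of $\ol T_H$, shows that $\epsilon_{\mathfrak S_i}|_{T_H(F)_{[x]}}$ equals both $\eta_i|_{T_H(F)_{[x]}}$ (via \eqref{eq:theta-restriction}) and $\epsilon_i|_{T_H(F)_{[x]}}$.
\item Uniqueness is then \cite[Lemma~3.5]{FKS23}. Your argument via triviality on unipotent elements only covers the part of $H^0(F)_{[x]}$ over the identity component of $\ol H^0_{[x]}$.
\end{itemize}
Your deduction of the second assertion from the first is fine.
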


\begin{proof}We may and do assume that $G$ is of adjoint type, so $[x] = x$. First, we claim that if $f_{i+1}\co \ol G_x^0 \to \mathbf{Sp}_{\F_q}(V_{i+1})$ is the $\F_q$-homomorphism defined in \cite[Lemma~6.3.2]{CF26a}, then for every tamely ramified maximal $F$-torus $S \subset G^0$ with $x \in \cB(S)$, the image of $\ol S_x$ under $f_{i+1}$ lies in a maximal $\F_q$-torus of $\mathbf{Sp}_{\F_q}(V_{i+1})$. For this, note that the Kottwitz isomorphism \cite[Corollary 11.7.2]{KP} and the fact that $S$ is tame imply that $\pi_0(\ol S_x)(\ol\F_q)$ is of order prime to $p$. Thus $f_{i+1}(\ol S_x(\ol\F_q))$ is a subgroup of $\mathbf{Sp}_{\F_q}(V_{i+1})(\ol\F_q)$ consisting of semisimple elements, so it is a classical fact that $f_{i+1}(\ol S_x)$ lies in a maximal $\F_q$-torus of $\mathbf{Sp}_{\F_q}(V_{i+1})$. If $S \subset H^0$, then this torus can be chosen to lie in $\mathbf{Sp}_{\F_q}(V_{i+1})^\sigma$.

Now one can argue as in the proof of \cite[Lemma~3.2.2]{CF26a}. If $\ell=2$, then both sides of \eqref{eq:eta-sign-comparison} are trivial, so assume that $\ell$ is odd. Choose a tamely ramified maximal $F$-torus $T_H \subset H^0$ such that $x \in \cB(T_H)$, and let $\ol S_i$ be a maximal $\F_q$-torus of $\mathbf{Sp}_{\F_q}(V_{i+1})^\sigma$ such that $f_{i+1}(\ol T_H) \subset \ol S_i$. Let $Z_i \subset \mathbf{Sp}_{\F_q}(V_{i+1})^\sigma$ be the maximal central $\F_q$-torus.

Let $V_{i+1,0}\coloneqq V_{i+1}^\sigma$ and $V_{i+1,1}\coloneqq V_{i+1,0}^\perp$, so $V_{i+1} = V_{i+1, 0} \oplus V_{i+1, 1}$. Let $\mathfrak{S}_i$ be the set of weights for the action of $(Z_i)_{\ol\F_q}$ on $(V_{i+1,1})_{\ol\F_q}$, equipped with its natural action of $\Gamma=\Gal(\ol\F_q/\F_q)$. Let
\[
(V_{i+1,1})_{\ol\F_q}=\bigoplus_{\cO\in\mathfrak{S}_i}(V_{i+1,1})_\cO
\]
be the weight space decomposition for $Z_i$. Since $Z_i$ is central in $\mathbf{Sp}_{\F_q}(V_{i+1})^\sigma$, every summand is $(\ol H_x^0)_{\ol\F_q}$-stable; define $\chi_\cO=\det(-|(V_{i+1,1})_\cO)$.
Then \cite[Proposition 5.1.13]{FKS23} describes a character $\epsilon_{\mathfrak{S}_i}\co\ol H_x^0(\F_q)\to\{\pm1\}$ associated to $\mathfrak{S}_i$ and the natural map $\mathfrak{S}_i\to X^*((\ol H_x^0)_{\ol\F_q})$ given by $\cO\mapsto\chi_\cO$.

Since $\ell\ne p$, taking $\sigma$-invariants is exact, so \cite[Lemmas~6.4.4(1) and 6.3.1]{CF26a} identify $V_{i+1,1}\simeq V_{i+1}/V_{i+1}^\sigma$ with the quotient $V$ in \cite[Lemma~3.2.2]{CF26a}. Applying \cite[Remark 5.1.12]{FKS23} to
the restriction maps from the weights of $\ol S_i$ on $V_{i+1,1}$ to the weights of $Z_i$ and $\ol T_H$
as in the proof of \cite[Lemma~3.2.2]{CF26a}, we see that the restriction of $\epsilon_{\mathfrak{S}_i}$ to $T_H(F)_{[x]}$ is equal to the restrictions of the left and right sides of \eqref{eq:eta-sign-comparison}, respectively. Since $T_H$ is arbitrary, \cite[Lemma 3.5]{FKS23} shows that this property uniquely determines $\eta_i$, proving \eqref{eq:eta-sign-comparison}.
\end{proof}

\subsubsection{The descent theorem}

Let $\tau_{H,0}$ be a cuspidal irreducible $\ol\F_\ell$-representation of $\ol H^0_{[x]}(\F_q)$, and set $\tau_H \coloneqq \tau_{H,0}\otimes\xi_H$. Define
\begin{equation}\label{eq:levi-Psi-H}
\Psi_{H, \tau_H} = ((H^i), x, (r_i), \tau_H, (\ol \phi_{H,i})),
\end{equation}
where $\ol\phi_{H,i}$ is the mod $\ell$ reduction of $\phi_{H,i}$. It is easy to check that $\Psi_{H,\tau_H}$ is a Yu datum for $H$ over $\ol\F_\ell$, and if $\Psi$ is normalized then so is $\Psi_{H,\tau_H}$. Recall the pair $[(T,\theta)] \in \cT_0(\Psi \otimes_{\ol\Z_\ell} \ol\Q_\ell)$ fixed at the beginning of this section.

\begin{thm}\label{thm:yu-datum-dl-res}
Keep the setting of \S\ref{sss:levi-descent-setting}. Suppose $[\ol G^0_{[x]}(\F_q): (\ol G^0_{[x]})^\circ(\F_q) \cdot Z(\ol G_{[x]}^0)(\F_q)]$ is prime to $\ell$ and $\ell$ is good for $G$ and $\theta$ is of finite order prime to $\ell$ and the $\ol\Q_\ell$-representation $\tau \otimes_{\ol\Z_\ell} \ol\Q_\ell$ is defined over a finite extension of $\Q_\ell^{\unr}$ of degree prime to $\ell-1$.
Then there exists a cuspidal irreducible $\ol\F_\ell$-representation $\tau_{H,0}$ of $\ol H^0_{[x]}(\F_q)$, whose Brauer character occurs with nonzero coefficient in the $\ell$-modular reduction of the Lusztig restriction ${}^*R^{\ol G^0_{[x]}}_{\ol H^0_{[x]}}(\tau\otimes_{\ol\Z_\ell}\ol\Q_\ell)$, such that for every $a \in \Z/2\Z$ the representation $\pi(\Psi_{H,\tau_H})$ is an irreducible subquotient of $\rT^a(\sigma, \pi(\Psi) \otimes_{\ol\Z_\ell} \ol\F_\ell)$.
\end{thm}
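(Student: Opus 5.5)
We follow the template of Theorems~\ref{thm:yu-datum-base-change} and \ref{thm:yu-datum-small-degree-bc}. Write
\[
\pi(\Psi)\otimes_{\ol\Z_\ell}\ol\F_\ell=\cInd_K^{G(F)}\Bigl(\epsilon_G\otimes\ol\tau\otimes\bigotimes_{i=0}^{d-1}V_{\wh\phi_i}\otimes\ol\phi_d\Bigr),
\]
where $\sigma$ is conjugation by $s\in H(F)$. Since $s\in T(F)\subset G^0(F)_{[x]}$, $s$ normalizes $K$ and each $J_i,J_i^+$, and the $K\rtimes\langle\sigma\rangle$-action on the inducing representation is just the action of $s$.

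The first step is to reduce to a compact open subgroup. By \cite[Lemma~5.2.1]{Cot26b}, $\rT^a(\sigma,\pi(\Psi)_{\ol\F_\ell})$ contains $\cInd_{K^\sigma}^{H(F)}\rT^a(\sigma,\cdot)$ of the inducing representation as a direct summand. Here $K^\sigma=Z_K(s)$. Since taking $\sigma$-invariants is exact on pro-$p$ groups, the analogues of \cite[Lemmas~6.4.2, 6.4.4]{CF26a} should give
\[
K^\sigma=H^0(F)_{[x]}H^1(F)_{x,r_0/2}\cdots H^d(F)_{x,r_{d-1}/2}=K_H,
\]
with $J_i^\sigma=J_{H,i}$ and $(J_i^+)^\sigma=J_{H,i}^+$. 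The special isomorphisms restrict compatibly, so $V_{i+1}^\sigma$ is the symplectic space $V_{H,i+1}$ of Yu's construction for $\Psi_{H,\tau_H}$.

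The second step splits the Tate cohomology of the tensor product. By Lemma~\ref{lemma:tate-cohom-of-heisenberg} each $V_{\wh\phi_i}$ is extremal, so Corollary~\ref{cor:tate-cohom-tensor-product-yu} applies. It gives $m+\sum n_i=a$ and
\[
\rT^a\cong\epsilon_G|_{K_H}\,\ol\phi_d|_{K_H}\,\rT^m(\ol\tau)\otimes\bigotimes_i\rT^{n_i}(V_{\wh\phi_i}).
\]
The characters are $s$-invariant, so they pass through unchanged, and no Frobenius twist appears because $\sigma$ is inner. Proposition~\ref{prop:tate-cohom-of-weil}, combined with Lemma~\ref{lemma:sign-char-weil-restriction} to move between the $\F_p$- and $\F_q$-symplectic structures, identifies $\rT^{n_i}(V_{\wh\phi_i})$ with the Weil--Heisenberg representation $V_{\wh\phi_{H,i}}$ twisted by $\eta_i$. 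Note that $\wt\chi_{1,i}$ is defined using $\F_q$, matching Definition~\ref{defn:levi-descent-twisting}. Unlike the base change case, the twist $\eta_i$ need not die on $H^0(F)_{[x]}$, because $\ker(r)$ can contain tori. Collecting the twists, the inducing representation becomes
\[
\epsilon_H\otimes(\rT^m(\ol\tau)\otimes\xi_H)\otimes\bigotimes_i V_{\wh\phi_{H,i}}\otimes\ol\phi_{H,d},
\]
using $\epsilon_H^2=1$ and the definition of $\xi_H$.

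The third step, which is the main obstacle, is the depth zero part. The group $H^0(F)_{[x]}$ acts on $\rT^m(\ol\tau)$, a representation of $\ol G^0_{[x]}(\F_q)^s=\ol H^0_{[x]}(\F_q)$, up to the component issue. We need an irreducible cuspidal constituent $\tau_{H,0}$ whose Brauer character appears in the reduction of ${}^*R^{\ol G^0_{[x]}}_{\ol H^0_{[x]}}(\tau_{\ol\Q_\ell})$. The tool here is \cite[Proposition~3.4.1]{Cot26b}: it says that the Brauer character of $\rT^m$ agrees, up to sign, with the reduction of Lusztig restriction. The hypotheses are used as follows.
\begin{itemize}
\item The index condition, together with $\ell$ being good, controls the disconnected group $\ol G^0_{[x]}$.
\item The condition that $\theta$ has order prime to $\ell$ ensures $\tau$ is a Deligne--Lusztig-type character that reduces well.
\item The field-of-definition condition ensures the Frobenius twist in that comparison is harmless, so no $\Fr_\ell$ appears.
\end{itemize}
I would check nonvanishing by computing ${}^*R(\tau)$ via the Mackey formula. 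Since $T\subset H^0$ is elliptic, $\pm R_T^{G^0}\theta$ restricts to a sum containing $\pm R_T^{H^0}\theta$. Ellipticity of $T$ in $H^0$, from observation (1), makes every constituent of $R_T^{H^0}\theta$ cuspidal modulo $\ell$ when $\theta$ is in general position. This is the delicate point; if it fails in general, one would have to choose $\tau_{H,0}$ among the cuspidal $\ell$-modular constituents more carefully. Finally, $\rT^m(\ol\tau)$ is only known up to semisimplification, so $\tau_{H,0}\otimes\xi_H$ appears only as a subquotient. Compact induction is exact, and by \cite{Fin22} compact induction of a Yu datum from a subquotient is irreducible. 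This yields $\pi(\Psi_{H,\tau_H})$ as an irreducible subquotient of $\rT^a$.
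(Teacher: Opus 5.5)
Your reduction to $K_G^\sigma=K_H$ via \cite[Lemma~5.2.1]{Cot26b} matches the paper's proof, and so do the splitting by Corollary~\ref{cor:tate-cohom-tensor-product-yu}, the identification of $\rT^{n_i}(V_{\wh\phi_i})$ with $V_{\wh\phi_{H,i}}\otimes\eta_i$ via Proposition~\ref{prop:tate-cohom-of-weil} and Lemma~\ref{lemma:sign-char-weil-restriction}, and the bookkeeping $\epsilon_G\prod_i\eta_i=\epsilon_H\xi_H$. The gap is the cuspidality of $\tau_{H,0}$. Without it, $\Psi_{H,\tau_H}$ is not a Yu datum and the conclusion has no content.

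You try to read cuspidality off the Lusztig-restriction side by assuming $\theta$ is in general position, and no such regularity is available. The theorem is applied through Lemma~\ref{lemma:tate-descent-to-H} to arbitrary, in particular singular, Yu data in Theorem~\ref{thm:unramified-twisted-Levi-functoriality}(1). In that case $R_{\ol T_{[x]}}^{\ol H^0_{[x]}}(\theta_0)$ can have non-cuspidal constituents. Even for regular $\theta$ the argument does not close. The Mackey formula computes ${}^*R$ of $R_{\ol T_{[x]}}^{\ol G^0_{[x]}}(\theta_0)$, not of its constituent $\tau$. The virtual character ${}^*R^{\ol G^0_{[x]}}_{\ol H^0_{[x]}}(\tau)$ can involve other series with signs, so a Brauer constituent of its reduction need not come from a cuspidal character. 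You flag this step yourself and leave it open.

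The missing idea is to obtain cuspidality from the Tate-cohomology side instead.
\begin{enumerate}
\item $\tau\otimes_{\ol\Z_\ell}\ol\F_\ell$ is cuspidal because its generic fiber is: taking invariants under a $p$-group unipotent radical is exact and commutes with reduction modulo $\ell\neq p$.
\item \cite[Proposition~3.4.2]{Cot26b} then shows that every irreducible constituent of $\rT^m(\sigma,\tau\otimes\ol\F_\ell)$ is cuspidal.
\item The paper uses \cite[Proposition~3.4.1(2)]{Cot26b} only as a coefficientwise inequality $\chi_{\rT^m(\tau\otimes\ol\F_\ell)}\ge\bigl|\ol{{}^*R(\tau\otimes\ol\Q_\ell)}\bigr|$, not the equality up to sign that you state.
\item Hence any irreducible Brauer character occurring in the reduction of ${}^*R(\tau)$ is a constituent of $\rT^m$. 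So it is automatically cuspidal, and no control over the constituents of ${}^*R(\tau)$ is needed.
\end{enumerate}

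A secondary point concerns nonvanishing. What you need is that the $\ell$-modular reduction of ${}^*R(\tau)$ is nonzero, not ${}^*R(\tau)$ itself. A nonzero virtual character can vanish on all $\ell$-regular elements. The adjunction $\langle{}^*R(\tau),R_{\ol T_{[x]}}^{\ol H^0_{[x]}}\theta_0\rangle=\langle\tau,R_{\ol T_{[x]}}^{\ol G^0_{[x]}}\theta_0\rangle\neq0$ only gives the characteristic-zero statement. Passing to $\ell'$-elements is exactly where $\ell$ being good, the index hypothesis, and the $\ell'$-order of $\theta_0$ enter, via \cite[Proposition~3.4.1]{Cot26b}.
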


\begin{proof}
We use all of the notation introduced in \cite[\S 6]{CF26a} freely, and we write $K_G$ (resp.\ $K_H$) for the compact open subgroup (cf.\ \cite[(6.2.1)]{CF26a}) of $G$ (resp.\ of $H$) constructed from $\Psi$ (resp.\ from $\Psi_{H,\tau_H}$; this subgroup does not depend on the choice of $\tau_{H,0}$). By definition (cf.\ \cite[(6.5.1)]{CF26a}, with $\wh\phi_i$ as in \cite[\S 6.2]{CF26a} and $V_{\wh\phi_i}$ as in \cite[\S 6.3]{CF26a}), we have
\[
\pi(\Psi) = \cInd_{K_G}^{G(F)}(\epsilon_G \otimes \tau \otimes \bigotimes_{i=0}^{d-1} V_{\wh\phi_i} \otimes \phi_d).
\]
Since $T \subset H^0$, the point $x$ lies in $\cB(H^0)$ and thus the finite order central element $s$ fixes $x$, so $s \in G^0(F)_{[x]} \subset K_G$; we regard each tensor factor of the inducing representation as a $K_G \rtimes \langle\sigma\rangle$-module by letting $\sigma$ act through $s$. In particular, Corollary~\ref{cor:tate-cohom-tensor-product-yu} applies below with first factor the possibly reducible module $\epsilon_G \otimes \tau_{\ol\F_\ell}$.

Fix $a \in \Z/2\Z$. By \cite[Lemma~5.2.1]{Cot26b} and Corollary~\ref{cor:tate-cohom-tensor-product-yu} (applicable because of Lemma~\ref{lemma:tate-cohom-of-heisenberg}), there exist $m, n_0,\dots,n_{d-1} \in \Z/2\Z$ such that $m + \sum_{i=0}^{d-1} n_i = a$ and such that the Tate cohomology $\rT^a(\sigma, \pi(\Psi)\otimes_{\ol\Z_\ell}\ol\F_\ell)$ admits the following representation as a direct summand:
\begin{equation}\label{eq:levi-descent-step1-summand}
\cInd_{K_G^\sigma}^{H(F)}(\epsilon_G \otimes \rT^m(\tau \otimes_{\ol\Z_\ell} \ol\F_\ell) \otimes \bigotimes_{i=0}^{d-1} \rT^{n_i}(V_{\wh\phi_i} \otimes_{\ol\Z_\ell} \ol\F_\ell) \otimes \phi_d)
\end{equation}
By \cite[Lemma~6.4.2(1)]{CF26a}, we have $K_G^\sigma = K_H$.

By assumption on $\tau \otimes_{\ol\Z_\ell} \ol\Q_\ell$, we may apply \cite[Proposition~3.4.1(2)]{Cot26b} to the character of the generic fiber $\tau\otimes_{\ol\Z_\ell}\ol\Q_\ell$ and to the same lattice $\tau$ used to form Tate cohomology to obtain the inequality of characters (with notation as in \cite[\S 3.2]{Cot26b})
\[
\chi_{\rT^m(\sigma,\tau\otimes_{\ol\Z_\ell}\ol\F_\ell)}
\geq 
\left|\ol{{}^*R^{\ol G^0_{[x]}}_{\ol H^0_{[x]}}
(\tau\otimes_{\ol\Z_\ell}\ol\Q_\ell)}\right|
\]
for both $m \in \Z/2$. Note that the assumption on $\tau \otimes_{\ol\Z_\ell} \ol\Q_\ell$ implies that the depth $0$ character of $T(F)_{[x]}$ associated to $\Psi \otimes_{\ol\Z_\ell} \ol\Q_\ell$ in \cite[\S 7.2]{CF26a} is of finite order prime to $\ell$. By the assumption $[(T, \theta)] \in \cT_0(\Psi)$, the representation $\tau \otimes_{\ol\Z_\ell} \ol\Q_\ell$ has nonzero pairing with $R_{\ol T_{[x]}}^{\ol G_{[x]}}(\theta_0)$. By \cite[Proposition~3.4.1]{Cot26b} (using that $\ell$ is good), it follows that there is an irreducible $\ol\F_\ell$-representation $\tau_{H,0}$ of $\ol H^0_{[x]}(\F_q)$ whose Brauer character occurs with nonzero coefficient in $\ol{{}^*R^{\ol G^0_{[x]}}_{\ol H^0_{[x]}}(\tau \otimes_{\ol\Z_\ell}\ol\Q_\ell)}$. Thus $\rT^m(\tau\otimes_{\ol\Z_\ell}\ol\F_\ell)$ admits $\tau_{H,0}$ as an irreducible constituent. The representation $\tau\otimes_{\ol\Z_\ell}\ol\F_\ell$ is cuspidal since $\tau \otimes_{\ol\Z_\ell} \ol\Q_\ell$ is cuspidal, so $\tau_{H,0}$ is cuspidal by \cite[Proposition~3.4.2]{Cot26b}.

Next, fix $0\leq i\leq d-1$; we will compute $\rT^{n_i}(V_{\wh\phi_i})$. By Proposition~\ref{prop:tate-cohom-of-weil}, the $\Sp(V_{i+1})^\sigma \ltimes (V_{i+1}^\sigma)^\sharp$-representation $\rT^{n_i}(V_{\wh\phi_i})$ is isomorphic to the tensor product of the Weil--Heisenberg representation of $\Sp(V_{i+1}^\sigma) \ltimes (V_{i+1}^\sigma)^\sharp$ (under the restriction map $\Sp(V_{i+1})^\sigma \to \Sp(V_{i+1}^\sigma)$) and a certain $\{\pm1\}$-valued character $\chi_{1,i}$ of $\Sp(V_{i+1})^\sigma$. By Lemma~\ref{lemma:sign-char-weil-restriction}, applied to the $\F_q$-symplectic structure on $V_{i+1}$ recalled in \S\ref{sss:levi-descent-twisting}, we have $\wt\chi_{1,i} = \chi_{1,i}|_{\Sp_{\F_q}(V_{i+1})^\sigma}$, with $\wt\chi_{1,i}$ as in Definition~\ref{defn:levi-descent-twisting}. In particular, with notation from Definition~\ref{defn:levi-descent-twisting}, the action of $H^0(F)_{[x]}$ on $\rT^{n_i}(V_{\wh\phi_i})$ through $a_i$ is the twist by $\eta_i$ of its action on the Weil--Heisenberg representation of $\Sp(V_{i+1}^\sigma) \ltimes (V_{i+1}^\sigma)^\sharp$. This Weil--Heisenberg representation is isomorphic to $V_{\wh\phi_{H,i}}$ by \cite[Lemmas~6.4.4(1), 6.4.3(1), and 6.4.5(1)]{CF26a}: the first shows $J_{i+1}(\vec G)^\sigma = J_{i+1}(\vec H)$ and $J_{i+1}^+(\vec G)^\sigma = J_{i+1}^+(\vec H)$; the second shows that $\wh\phi_{H,i}$ is the restriction of $\wh\phi_i$; and the third relates the special isomorphisms used in the construction. Thus, as a $K_H$-representation, $\rT^{n_i}(V_{\wh\phi_i})$ has an irreducible constituent isomorphic to $V_{\wh\phi_{H,i}}\otimes\eta_i$.

We have now seen that the $H(F)$-representation in \eqref{eq:levi-descent-step1-summand} has as an irreducible subquotient
\[
\cInd_{K_H}^{H(F)}(\epsilon_G|_{K_H}\otimes \tau_{H,0}\otimes \bigotimes_{i=0}^{d-1}(V_{\wh\phi_{H,i}}\otimes\eta_i)\otimes \phi_d|_{K_H}).
\]
Since $\tau_H = \tau_{H,0}\otimes\xi_H$ and $\epsilon_H^2=1$, the factor $\epsilon_G|_{H^0(F)_{[x]}}\otimes \tau_{H,0}\otimes\prod_{i=0}^{d-1}\eta_i$ is equal to $\epsilon_H\otimes\tau_H$. Thus indeed $\pi(\Psi_{H,\tau_H})$ is an irreducible subquotient of $\rT^a(\pi(\Psi) \otimes_{\ol\Z_\ell} \ol\F_\ell)$.
\end{proof}

\begin{cor}\label{cor:yu-datum-dl-res-parameter}
Under the hypotheses of Theorem~\ref{thm:yu-datum-dl-res}, there exists some $\tau_{H,0}$ such that, for some irreducible subquotient $\ol\pi_0$ of $\pi(\Psi) \otimes_{\ol\Z_\ell} \ol\F_\ell$, there is a cocycle $z\co W_F/I_F \to Z(\wh H \cap \wh G_{\der})(\ol\F_\ell)^{I_F}$ such that 
\begin{equation}\label{eqn:fs-unram-modular-ss}
\rho^{\FS}(\ol\pi_0) \sim \bigl(z \cdot \ld j_{H,G} \circ \rho^{\FS}(\pi(\Psi_{H,\tau_H}))\bigr)^{\ss},
\end{equation}
where $\ld j_{H,G}\co \ld H \to \ld G$ is the canonical L-embedding as in \cite[Definition~4.5.1]{CF26a}. If $\ell>\rk G+1$, the superscript $\ss$ can be omitted.
\end{cor}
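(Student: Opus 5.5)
The plan is to combine Theorem~\ref{thm:yu-datum-dl-res} with modular functoriality (Theorem~\ref{thm:modular-functoriality}) and then identify the $\sigma$-dual homomorphism using Proposition~\ref{prop:sigma-dual-unram-agreement-on-inertia}.

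First, take $\tau_{H,0}$ as supplied by Theorem~\ref{thm:yu-datum-dl-res}. Then $\pi(\Psi_{H,\tau_H})$ is an irreducible subquotient of $\rT^a(\sigma,\pi(\Psi)\otimes_{\ol\Z_\ell}\ol\F_\ell)$. Modular functoriality is stated for an \emph{irreducible} $\sigma$-stable $\Pi$, whereas $\pi(\Psi)\otimes\ol\F_\ell$ may be reducible. So we first pass to a Jordan--Hölder filtration of $\pi(\Psi)\otimes_{\ol\Z_\ell}\ol\F_\ell$ by $G(F)\rtimes\langle\sigma\rangle$-stable pieces. This is possible since $\sigma$ acts through $s\in K_G$, so every $G(F)$-subrepresentation is automatically $\sigma$-stable, and each irreducible $G(F)$-subquotient carries a compatible $\sigma$-action.

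Tate cohomology is a $\Z/2$-graded cohomological functor. The long exact hexagon for short exact sequences then shows that every irreducible $H(F)$-subquotient of $\rT^a$ of the whole module is a subquotient of $\rT^{b}(\ol\pi_0)$ for some irreducible subquotient $\ol\pi_0$ and some $b\in\Z/2$. Applying Theorem~\ref{thm:modular-functoriality} to $\ol\pi_0$ and $\pi=\pi(\Psi_{H,\tau_H})$ gives
\[
\rho^{\FS}(\ol\pi_0^{(\ell)})\sim\bigl(\ld\psi\circ\rho^{\FS}(\pi(\Psi_{H,\tau_H}))\bigr)^{\ss}.
\]

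Next, $H=Z_G(s)=G^\sigma$ is connected and is an unramified twisted Levi. Proposition~\ref{prop:sigma-dual-unram-agreement-on-inertia}(2) therefore gives $\ld\psi\sim z'\cdot\Fr_\ell\circ\ld j_{H,G}$, with $z'$ an unramified cocycle valued in $Z(\wh H\cap\wh G_{\der})^{I_F}$.

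To remove the Frobenius twists, we use \eqref{eq:fs-coefficient-frobenius} on both sides. On the left, $\rho^{\FS}(\ol\pi_0^{(\ell)})\sim z_G\cdot\Fr_\ell\circ\rho^{\FS}(\ol\pi_0)$. On the right, we need $\rho^{\FS}(\pi_H)$ in terms of $\rho^{\FS}(\pi_H^{(\ell)})$. It is simpler to apply the theorem to the twist $\Psi_{H,\tau_H}^{(\ell^{-1})}$, or equivalently to note that $\Fr_\ell$ is an automorphism of $\ld G(\ol\F_\ell)$ commuting with $\ld j_{H,G}$, since $\ld j_{H,G}$ is defined over $\F_\ell$ up to a $\chi$-datum whose values we can absorb.

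Applying $\Fr_\ell^{-1}$ then yields \eqref{eqn:fs-unram-modular-ss} with $z=\Fr_\ell^{-1}(z'z_G^{-1})$. Both $z_G$ and $z'$ are unramified and valued in $Z(\wh G_{\der})^{W_F}$, respectively $Z(\wh H\cap\wh G_{\der})^{I_F}$, and $Z(\wh G_{\der})\subset Z(\wh H\cap\wh G_{\der})$. This is where I expect the main bookkeeping obstacle: one has to check that $\Fr_\ell$ commutes with $\ld j_{H,G}$ on the nose, or up to an unramified central correction of the allowed type. The correction arises because $\chi$-data are tamely ramified characters, and applying $\Fr_\ell$ replaces $\chi$ by $\chi^\ell$. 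If the correction does not vanish on inertia, I would instead re-choose $\tau_{H,0}$, absorbing $\Fr_\ell$ into the coefficient twist of the Yu datum (permitted since the statement only asserts existence of some $\tau_{H,0}$).

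Finally, suppose $\ell>\rk G+1$. Then the semisimplification can be dropped by \cite[Lemma~10.1.6(1)]{CF26a}, exactly as in the proofs of Corollaries~\ref{cor:yu-datum-base-change-parameter} and \ref{cor:yu-datum-small-degree-bc-parameter}.
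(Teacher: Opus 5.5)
Your proposal follows the paper's proof. The steps are: finite length of $\pi(\Psi)\otimes_{\ol\Z_\ell}\ol\F_\ell$, which you should justify before taking a Jordan--H\"older filtration (the paper cites \cite[Lemma~7.3.1]{CF26a} and \cite[Theorem~3.1]{Fin22}); your hexagon argument, in the role of \cite[Lemma~3.1.4]{Cot26b}, to reach an irreducible $\sigma$-stable $\ol\pi_0$; then Theorem~\ref{thm:modular-functoriality}, Proposition~\ref{prop:sigma-dual-unram-agreement-on-inertia}(2), \eqref{eq:fs-coefficient-frobenius} and $\Fr_\ell^{-1}$; and finally \cite[Lemma~10.1.6(3)]{CF26a} (the paper cites part (3) here, not part (1)) to drop the semisimplification.

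The obstacle you anticipate does not exist. In Proposition~\ref{prop:sigma-dual-unram-agreement-on-inertia}(2) the Frobenius is applied after $\ld j_{H,G}$, on $\ld G(\ol\F_\ell)$. So applying the automorphism $\Fr_\ell^{-1}$ of $\ld G(\ol\F_\ell)$ to both sides turns $\Fr_\ell\circ\ld j_{H,G}\circ\rho^{\FS}(\pi(\Psi_{H,\tau_H}))$ directly back into $\ld j_{H,G}\circ\rho^{\FS}(\pi(\Psi_{H,\tau_H}))$. There is no need to commute $\Fr_\ell$ past $\ld j_{H,G}$ or to Frobenius-twist the $H$-datum. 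Those fallbacks would in fact be unjustified, since Theorem~\ref{thm:yu-datum-dl-res} produces $\pi(\Psi_{H,\tau_H})$ itself.
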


\begin{proof}
Since $\Psi_{H,\tau_H}$ is an irreducible Yu datum, the representation $\pi(\Psi_{H,\tau_H})$ is irreducible, so Theorem~\ref{thm:yu-datum-dl-res} says that $\pi(\Psi_{H,\tau_H})$ itself occurs as an irreducible subquotient of $\rT^a(\sigma,\pi(\Psi)\otimes_{\ol\Z_\ell}\ol\F_\ell)$ for both $a \in \Z/2\Z$. The representation $\pi(\Psi)\otimes_{\ol\Z_\ell}\ol\F_\ell$ is admissible of finite length: by \cite[Lemma~7.3.1(1) and (2)]{CF26a}, its irreducible subquotients are the (irreducible by \cite[Theorem 3.1]{Fin22}) representations attached by Yu's construction to the finitely many irreducible subquotients of $\tau \otimes_{\ol\Z_\ell} \ol\F_\ell$. By \cite[Lemma~3.1.4]{Cot26b}, there is an irreducible subquotient $\ol\pi_0$ of $\pi(\Psi)\otimes_{\ol\Z_\ell}\ol\F_\ell$, with $\sigma$-stable isomorphism class, such that $\pi(\Psi_{H,\tau_H})$ is an irreducible subquotient of $\rT^a(\sigma, \ol\pi_0)$. Apply Theorem~\ref{thm:modular-functoriality} to the irreducible representation $\ol\pi_0$, giving
\begin{equation}\label{eq:dl-descent-frobenius}
\bigl(\ld\psi\circ\rho^{\FS}(\pi(\Psi_{H,\tau_H}))\bigr)^{\ss}
\sim\rho^{\FS}(\ol\pi_0^{(\ell)}) \sim z \cdot\Fr_\ell\circ\rho^{\FS}(\ol\pi_0),
\end{equation}
for some cocycle $z \co W_F \rightarrow Z(\wh G_{\der})(\ol\F_\ell)^{W_F}$,
where the second conjugacy is \eqref{eq:fs-coefficient-frobenius}. Proposition~\ref{prop:sigma-dual-unram-agreement-on-inertia}(2) gives a cocycle $z_0\co W_F/I_F\to Z(\wh H\cap\wh G_{\der})(\ol\F_\ell)^{I_F}$ such that $\ld\psi\sim z_0\cdot\Fr_\ell\circ\ld j_{H,G}$. Comparing with \eqref{eq:dl-descent-frobenius} and applying the inverse of Frobenius yields \eqref{eqn:fs-unram-modular-ss}. (The cocycle appearing in \eqref{eq:fs-coefficient-frobenius} is unramified and valued in $Z(\wh G_{\der})\subset Z(\wh H\cap\wh G_{\der})$, so it may be absorbed in $z$.) Finally, \cite[Lemma~10.1.6(3)]{CF26a} removes the semisimplification when $\ell>\rk G+1$.
\end{proof}

\section{Compatibility with parabolic induction}\label{ssec:modular-cuspidal-parabolic-induction}

The main technical result of this paper (Theorem~\ref{thm:unramified-twisted-Levi-functoriality}) is a functoriality result for $\ol\Q_\ell$-representations $\pi$ of $G(F)$ with respect to descent to unramified twisted Levis $H(F)$ of $G(F)$. The rough goal of the proof is to use the modular functoriality results of \S\ref{section:tate-cohom-yu} and independence of $\ell$ \cite[Theorem 1.1]{Sch25} to reduce to the case that $\ell$ is ``large'', then apply Corollary~\ref{cor:yu-datum-dl-res-parameter}. From $\pi$, we use Tate cohomology to extract an explicit $\ol\F_\ell$-representation $\ol\pi_H$ of $H(F)$ such that (for instance)
\[
\rho^{\FS}(\pi)|_{I_F} \equiv \rho^{\FS}(\ol\pi_H)|_{I_F} \pmod{\ell}.
\]
This is nearly enough to obtain a functoriality result with $\ol\Q_\ell$-coefficients, but this argument does not produce a $\ol\Q_\ell$-representation $\pi_H$ such that $\ol\pi_H$ occurs in the mod $\ell$ reduction of some $\ol\Z_\ell$-lattice in $\pi_H$. If $\ol\pi_H$ is non-singular, then it is not difficult to construct such a $\pi_H$, but in general we do not know how to do this.

To explain the situation, suppose $\ol\Psi_H = ((H^i), x, (r_i), \ol\tau_H, (\ol\phi_{H,i}))$ is a Yu datum for $H(F)$ over $\ol\F_\ell$ such that $\ol\pi_H \cong \pi(\ol\Psi_H)$. The natural way to obtain a lift $\pi_H$ would be to find a cuspidal $\ol\Q_\ell$-representation $\tau_H$ of $\ol H_{[x]}(\F_q)$ such that $\ol\tau_H$ occurs in the mod $\ell$ reduction of a lattice in $\tau_H$ and let $\pi_H = \pi(\Psi_H)$, where $\Psi_H = ((H^i), x, (r_i), \tau_H, (\phi_{H,i}))$ and $\phi_{H,i}$ is the Teichm\"uller lift of $\ol\phi_{H,i}$. We do not know whether such a $\tau_H$ exists; see \cite[Question~2.10.4]{Cot26b}. (What is true is that there often exist non-cuspidal such $\tau_H$.) Thus we need a way around this problem.

First, by \cite[Part III, no.\ 16.1, Theorem 33]{Serre77}, there is \emph{some} (not necessarily cuspidal) $\ol\Q_\ell$-representation $\tau_H$ of $\ol H_{[x]}(\F_q)$ such that $\ol\tau_H$ occurs in the mod $\ell$ reduction of $\tau_H$. Using uniqueness of cuspidal supports for paraductive $\F_q$-group schemes (\cite[Lemma~2.10.2]{Cot26b}), one shows that there is a Levi $F$-subgroup $L \subset H$ such that $x \in \cB(L)$ and a cuspidal $\ol\Q_\ell$-representation $\tau_L$ of $\ol L_{[x]_G}(\F_q)$ such that $\tau_H$ occurs in the parabolic induction of $\tau_L$, and from this one obtains a Yu datum $\Psi_L$ for $L$ over $\ol\Q_\ell$.\footnote{This is slightly incorrect for technical reasons, both because $\ol L_{[x]_G} \neq \ol L_{[x]_L}$ and because one must twist by a sign character to get the correct Yu datum for functoriality.} This is the same construction as in \cite[\S 9]{CF26a}, and \cite[Theorem~9.3.3]{CF26a} relates $\rho^{\Kal}(\ol\Psi_L)$ to $\rho^{\Kal}(\ol\Psi_H)$. It remains to relate $\rho^{\FS}(\pi(\ol\Psi_L))$ to $\rho^{\FS}(\pi(\ol\Psi_H))$, for which the obvious tool is compatibility with parabolic induction \cite[Theorem I.9.6(viii)]{FS}. The main result for this purpose is Proposition~\ref{prop:tame-cuspidal-parabolic-induction}, which shows that $\pi(\ol\Psi_H)$ occurs in the parabolic induction of $\pi(\ol\Psi_L) \otimes \chi$ for some unramified character $\chi$, and from this we deduce the crucial lifting result Corollary~\ref{cor:lift-tame-reps-to-char-0}.

In this section, we retain the standing hypothesis $p \neq 2$.

\subsection{The setting}\label{sss:parabolic-descent-setting}

We fix the following situation, which will remain in force through the proof of Proposition~\ref{prop:tame-cuspidal-parabolic-induction} below. Let $H$ be a connected reductive group over $F$, and let $L$ be a Levi $F$-subgroup of a parabolic $F$-subgroup $P$ of $H$. Let
\[
\Psi_H = ((H^i)_{0 \leq i \leq d}, x, (r_i), \tau_H, (\phi_i))
\]
be an irreducible Yu datum for $H$ over $\ol\F_\ell$. Let $P = LU$ be an $F$-parabolic subgroup of $H$, let $\ol P$ be the parabolic opposite to $P$ with respect to $L$, and let $\ol U$ be its unipotent radical. Fix a commutative diagram of embeddings
\[
\begin{tikzcd}
\cB(L^0) \arrow[r] \arrow[d]
&\cB(L^1) \arrow[r] \arrow[d]
&\cdots \arrow[r]
&\cB(L) \arrow[d] \\
\cB(H^0) \arrow[r]
&\cB(H^1) \arrow[r]
&\cdots \arrow[r]
&\cB(H)
\end{tikzcd}
\]
and suppose that $x \in \cB(L^0)$ under the embeddings $\cB(L^0) \subset \cB(H^0)$.\footnote{We emphasize that $x$ is a point of $\cB(H^0)$ chosen in advance, and since $Z(H^0)/Z(H)$ is anisotropic, no group defined below using the Moy--Prasad filtration (for $H^i$ or $L^i$) below depends on the choice of these embeddings.} We will use $\iota_0$ to denote any of these chosen embeddings. For $0 \leq i \leq d$, set
\[
L^i = L \cap H^i, \qquad P^i = P \cap H^i, \qquad U^i = U \cap H^i, \qquad \ol U^i = \ol U \cap H^i.
\]
We assume that  
\begin{itemize}
\item the image $[x]_L$ in $\cB((L^0)_{\der})$ is a vertex, and
\item $Z(L^0)/Z(L)$ is anisotropic, i.e., condition (1) in the definition of a Yu datum (\cite[\S 6.1]{CF26a}) for the sequence $(L^i)$. 
\end{itemize}
In the applications in this paper, the two displayed hypotheses will be verified using \cite[Lemmas~9.1.2 and 9.1.1]{CF26a}.

Choose an irreducible cuspidal $\ol\F_\ell$-representation $\tau_L$ of $\ol L^0_{[x]_L}(\F_q)$, and let
\[
\Psi_L = ((L^i), x, (r_i), \tau_L, (\phi_i|_{L^i(F)})).
\]
Observe that $\Psi_L$ is a Yu datum for $L$ over $\ol\F_\ell$ by \cite[Lemma~9.3.1]{CF26a} and the above assumptions.

\subsection{Some technical preliminaries}
The following two technical lemmas will be used to pass from a family of representations to a single member. Let $k$ be an algebraically closed field of characteristic $\neq p$.

\begin{lemma}\label{lemma:center-artin-rees}
Let $A$ be a commutative noetherian $k$-algebra, and let $X$ be a smooth $A$-representation of $H(F)$. Assume that $X$ is admissible, i.e., $X^J$ is a finitely generated $A$-module for every compact open subgroup $J \subset H(F)$. Let $V$ be an irreducible subquotient of $X$ on which $A$ acts through a character $\chi\co A \to k$, and let $\mathfrak m = \ker\chi$. Then there is an integer $n \geq 1$ such that $V$ is an irreducible subquotient of $X/\mathfrak m^nX$.
\end{lemma}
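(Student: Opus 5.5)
We reduce to a statement about finitely generated modules over a single Hecke algebra, where the Artin--Rees lemma applies. Write $V \cong X_1/X_2$ with $X_2 \subset X_1 \subset X$ smooth $A[H(F)]$-submodules; here $A$ acts on $X$ commuting with $H(F)$. Choose a compact open pro-$p$ subgroup $J \subset H(F)$ with $V^J \neq 0$. Since $J$ is pro-$p$ and $p$ is invertible in $k$, the functor $(-)^J$ is exact. Hence $V^J = X_1^J/X_2^J$. Moreover, $V^J$ is a simple module over the Hecke algebra $\cH_J = \cH(H(F),J)_k$, by the standard correspondence between irreducible smooth representations with $J$-fixed vectors and simple $\cH_J$-modules. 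The module $X^J$ is finitely generated over $A$, and $A$ is noetherian. Therefore $X_1^J$ and $X_2^J$ are finitely generated $A$-modules.

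Next, note that $\mathfrak m$ kills $V$. Indeed, $A$ acts on $V$ through $\chi$, so $\mathfrak m X_1 \subset X_2$. Apply the Artin--Rees lemma to the finitely generated $A$-module $X^J$ and its submodule $X_1^J$. This gives an integer $c$ such that $\mathfrak m^n X^J \cap X_1^J = \mathfrak m^{n-c}(\mathfrak m^c X^J \cap X_1^J) \subset \mathfrak m^{n-c}X_1^J$ for all $n \ge c$. Taking $n = c+1$, we obtain $\mathfrak m^n X^J \cap X_1^J \subset \mathfrak m X_1^J \subset X_2^J$.

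We now pass from $J$-invariants to the full representation. Since $J$-invariants are exact and commute with the $A$-action, $(\mathfrak m^n X)^J = \mathfrak m^n X^J$. This holds because $\mathfrak m^n X$ is the image of a map $X^{\oplus r} \to X$ given by generators of $\mathfrak m^n$. Consider the subquotient $Y_1/Y_2$ of $X/\mathfrak m^n X$, where $Y_1 = X_1 + \mathfrak m^n X$ and $Y_2 = X_2 + \mathfrak m^n X$. We have $Y_1/Y_2 \cong X_1/\bigl(X_2 + (X_1 \cap \mathfrak m^n X)\bigr)$, so it is a quotient of $V$. Since $V$ is irreducible, this quotient is either $V$ or $0$. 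Taking $J$-invariants, and using exactness once more, gives $(Y_1/Y_2)^J = X_1^J/\bigl(X_2^J + (X_1^J \cap \mathfrak m^n X^J)\bigr) = X_1^J/X_2^J = V^J \neq 0$. Hence $Y_1/Y_2 \cong V$, so $V$ is an irreducible subquotient of $X/\mathfrak m^n X$.

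The main point requiring care is this interaction between $\mathfrak m^n X$ and $J$-invariants, together with the fact that the conclusion can be tested on a single level $J$. The step $(X_1 \cap \mathfrak m^n X)^J = X_1^J \cap \mathfrak m^n X^J$ is immediate, since invariants commute with intersections. Admissibility is used only to make $X^J$ finitely generated over $A$, which is exactly what Artin--Rees requires. Notably, one never needs $X$ itself to be finitely generated over $A$ or over $H(F)$.
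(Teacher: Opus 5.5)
Your proof is correct and follows essentially the same route as the paper: choose a pro-$p$ level $J$ with $V^J\neq 0$, apply Artin--Rees to $X_1^J\subset X^J$ to get $\mathfrak m^nX^J\cap X_1^J\subset X_2^J$, and use exactness of $J$-invariants to show that the quotient $X_1/(X_2+X_1\cap\mathfrak m^nX)$ of $V$ has nonzero $J$-invariants. The differences are cosmetic. You justify $(\mathfrak m^nX)^J=\mathfrak m^nX^J$ via a surjection $X^{\oplus r}\to\mathfrak m^nX$ rather than via the $A$-linear averaging projection $X\to X^J$. You also compute $(Y_1/Y_2)^J$ directly, whereas the paper argues that the image of $X_1\cap\mathfrak m^nX$ in $V$ has no $J$-invariants.
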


\begin{proof}
Choose $A$-stable $H(F)$-subrepresentations $V'' \subset V' \subset X$ with $V'/V'' \simeq V$, and choose a compact open pro-$p$ subgroup $J \subset H(F)$ with $V^J \neq 0$.

Since $J$ is pro-$p$ and $\operatorname{char}k\neq p$, averaging over $J$ is an $A$-linear projection $X\to X^J$. Hence $(\mathfrak bX)^J=\mathfrak bX^J$ for every ideal $\mathfrak b\subset A$.

The module $X^J$ is finitely generated over the noetherian ring $A$, and $(V')^J \subset X^J$ is a submodule with $\mathfrak m\,(V')^J \subset (V'')^J$, because $A$ acts on $V$ through $\chi$. By the Artin--Rees lemma, there is $n \geq 1$ with
\begin{equation}\label{eq:J-invariants}
(\mathfrak m^nX)^J \cap (V')^J = \mathfrak m^n(X^J) \cap (V')^J \subset \mathfrak m\,(V')^J \subset (V'')^J.
\end{equation}
Consider the image of $V'$ in $X/\mathfrak m^nX$. Its quotient by the image of $V''$ is 
\begin{equation}\label{eq:V-quotient}
V'/(V'' + V'\cap\mathfrak m^nX),
\end{equation}
which is a quotient of $V'/V'' \simeq V$, hence either $0$ or $V$. Since $k$ is of characteristic $\neq p$, the formation of $J$-invariants is exact, so \eqref{eq:J-invariants} shows that the image of $V'\cap\mathfrak m^nX$ in $V$ has trivial $J$-invariants. Since $V^J \neq 0$ and $V$ is irreducible, it follows that $V'' + V'\cap\mathfrak m^nX \neq V'$, so \eqref{eq:V-quotient} is a nonzero quotient of $V$, hence isomorphic to $V$. Thus $V$ is an irreducible subquotient of $X/\mathfrak m^nX$.
\end{proof}

\begin{lemma}\label{lemma:family-to-fiber}
Let $K_0 \trianglelefteq K_1 \subset L(F)$ be open subgroups such that 
\begin{enumerate}
\item $K_0$ is compact modulo $Z(L)(F)\cap K_0$,
\item $K_0' \coloneqq (Z(L)(F)\cap K_1)\,K_0$ is of finite index in $K_1$.
\end{enumerate} 
Let $M_0$ be a smooth $k$-representation of $K_0$ of finite length, and let $V$ be an irreducible subquotient of $I_P^H\bigl(\cInd_{K_0}^{L(F)}M_0\bigr)$. Then there is an irreducible subquotient $M_1$ of $\cInd_{K_0}^{K_1}M_0$ such that $V$ is an irreducible subquotient of $I_P^H\bigl(\cInd_{K_1}^{L(F)}M_1\bigr)$.
\end{lemma}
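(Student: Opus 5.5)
The plan is to realize $\cInd_{K_1}^{L(F)}$ of $\cInd_{K_0}^{K_1}M_0$ as a family over a commutative noetherian ring, and then to specialize using Lemma~\ref{lemma:center-artin-rees}. By transitivity of compact induction we have $\cInd_{K_0}^{L(F)}M_0\cong\cInd_{K_1}^{L(F)}(\cInd_{K_0}^{K_1}M_0)$. Since $K_0\trianglelefteq K_1$, the group $K_0'$ is a subgroup of $K_1$, and it has finite index by hypothesis (2). Hence $N\coloneqq\cInd_{K_0}^{K_1}M_0$ is a finite extension of conjugates of $\cInd_{K_0}^{K_0'}M_0$.

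Set $Z_1\coloneqq Z(L)(F)\cap K_1$, which is central in $K_1$. Then $\cInd_{K_0}^{K_0'}M_0\cong\cInd_{Z_1\cap K_0}^{Z_1}k\otimes M_0$, where $Z_1$ acts on the first factor and $K_0$ acts diagonally; this uses the fact that $Z_1$ is central. Consider the commutative group ring $A\coloneqq k[Z_1/Z_1^c]$, where $Z_1^c$ is the maximal compact subgroup. Because $Z_1$ is a finitely generated abelian group modulo a compact subgroup, $A$ is a Laurent polynomial ring tensored with a finite group algebra, and in particular it is noetherian.

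Next I would make $A$ act on $N$ commuting with $K_1$. The first attempt is to use the group algebra of the image of $Z_1$ in $K_1/K_0$, which acts through $K_1$-translations; since $Z_1$ is central in $K_1$, this action commutes with $K_1$. The action extends to $X\coloneqq I_P^H(\cInd_{K_1}^{L(F)}N)$, commuting with $H(F)$, because $Z_1\subset Z(L)(F)$ acts on the induced representation by central translations.

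The main obstacle is admissibility of $X$ over $A$. Hypothesis (1) says that $K_0$ is compact modulo $Z(L)(F)\cap K_0$, so $N$ is finitely generated over $A[K_1]$. The plan is to show that $(\cInd_{K_1}^{L(F)}N)^{J_L}$ is finitely generated over $A$ for each compact open $J_L$. I would argue this via double cosets $J_L\backslash L(F)/K_1$: these are possibly infinite, but modulo the central action of $Z(L)(F)$ they should become finite after enlarging $A$ to $k[Z(L)(F)/(Z(L)(F))^c]$, which is finitely generated over the image of $A$ only up to finite index. Granting this, one passes to $X^J$ using the Iwasawa decomposition $H(F)=P(F)\cdot J_{\max}$ with finitely many $P(F)\backslash H(F)/J$ cosets. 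This is the analogue of Bernstein's finiteness for induction from compact-mod-center subgroups, and I expect it to need the most care.

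Given admissibility, the specialization runs as follows. Since $V$ is irreducible and admissible, Schur's lemma shows that $A$ acts on $V$ through a character $\chi$, with kernel $\mathfrak m$. Lemma~\ref{lemma:center-artin-rees} then makes $V$ an irreducible subquotient of $X/\mathfrak m^nX=I_P^H(\cInd_{K_1}^{L(F)}(N/\mathfrak m^nN))$, using exactness of $I_P^H$ and of compact induction. The module $N/\mathfrak m^nN$ has finite length as a $K_1$-representation, being finite-dimensional modulo the finite-index quotient. Filtering it by a composition series and using exactness again, $V$ is a subquotient of $I_P^H(\cInd_{K_1}^{L(F)}M_1)$ for some irreducible subquotient $M_1$ of $N/\mathfrak m^nN$. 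Finally, $M_1$ is in particular an irreducible subquotient of $N=\cInd_{K_0}^{K_1}M_0$, as required.
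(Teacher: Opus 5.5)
Your architecture matches the paper's: a commutative noetherian ring acting centrally, Schur's lemma for $V$, Lemma~\ref{lemma:center-artin-rees}, then a finite-dimensional $K_1$-representation and a filtration argument. The gap is that the ring you use is too small, and the admissibility step you flag is false in the generality of the lemma.

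For compact open $J_L\subset L(F)$, the space $(\cInd_{K_0}^{L(F)}M_0)^{J_L}$ is a direct sum of finite-dimensional pieces indexed by $K_0\backslash L(F)/J_L$. By the Cartan decomposition, this set stays infinite modulo $Z(L)(F)$ whenever $L_{\der}$ is $F$-isotropic. Nothing in (1)--(2) forces all but finitely many of these pieces to vanish; that would be a cuspidality property, which is not assumed.

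Concretely, take $L=\GL_2$ (with $P=H=L$), $K_0=K_1=\GL_2(\cO_F)$ and $M_0=k$. The $K_0$-invariants form the spherical Hecke algebra. On it $\cO_F^\times$ acts trivially, and the central element $\varpi$ sends $\mathbf 1_{K_0\diag(\varpi^a,\varpi^b)K_0}$ to $\mathbf 1_{K_0\diag(\varpi^{a+1},\varpi^{b+1})K_0}$. So it is free of infinite rank over the image of $k[Z(L)(F)]$. Hence no group algebra of (a subgroup of) the center makes $X$ admissible, and Lemma~\ref{lemma:center-artin-rees} cannot be invoked.

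Two smaller points. First, $k[Z_1/Z_1^c]$ and $k[Z_1/(Z_1\cap K_0)]$ do not act at all, since $Z_1\cap K_0$ acts nontrivially on $M_0$; you should use the image of $k[Z_1]$ in the endomorphism ring. Second, your decomposition of $\cInd_{K_0}^{K_0'}M_0$ only makes sense after reducing to irreducible $M_0$ and extending it to $K_0'$ by a character of $Z_1$ that extends its central character.

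The missing ingredient is the full Bernstein center. Let $A$ be the image of $\frZ_L$ in $\End_k(\cInd_{K_0}^{L(F)}M_0)$. By the finiteness theorem of Dat--Helm--Kurinczuk--Moss, $A$ is a finitely generated $k$-algebra, and the finitely generated representation $\cInd_{K_0}^{L(F)}M_0$ is admissible over it. Normalized parabolic induction preserves this, so Lemma~\ref{lemma:center-artin-rees} applies with $\mathfrak m=\ker\chi\subset A$.

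The cost is that $X/\mathfrak m^nX$ is no longer visibly induced from a finite-dimensional representation of $K_1$. The paper handles this by letting $B\subset A$ be the image of $k[Z_1]$ (central elements of $L(F)$ lie in $\frZ_L$) and setting $\frp=\mathfrak m\cap B$. Then $\cInd_{K_0}^{L(F)}M_0\otimes_B B/\frp^n\cong\cInd_{K_1}^{L(F)}\bigl(\cInd_{K_0}^{K_1}M_0\otimes_B B/\frp^n\bigr)$. The inducing representation here is finite-dimensional by (1) and (2), and this module surjects onto $\cInd_{K_0}^{L(F)}M_0\otimes_A A/\mathfrak m^n$.

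With that in place, the rest of your argument goes through: filter the finite-dimensional inducing representation by a composition series and use exactness of $I_P^H$ and of compact induction.
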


\begin{proof}
Note that $\cInd_{K_0}^{L(F)} M_0$ is a finitely generated $k$-representation of $L(F)$ since $K_0 \subset L(F)$ is open. Let $A$ be the image of the Bernstein center $\frZ_L$ of $\Rep_k(L(F))$ in $\End_k(\cInd_{K_0}^{L(F)} M_0)$, so $A$ is a finitely generated $k$-algebra and $\cInd_{K_0}^{L(F)} M_0$ is an admissible $A$-representation of $L(F)$ by \cite[Theorem 1.2]{DHKM}. Thus $I_P^H(\cInd_{K_0}^{L(F)} M_0)$ is an admissible $A$-representation by \cite[Chapitre II, no.\ 2.1 (i)]{Vig96}. Note that $A$ acts on $V$ through a character $\chi\co A \to k$ by Schur's lemma, so if $\mathfrak{m} = \ker \chi$ then Lemma~\ref{lemma:center-artin-rees} shows that there is some $n$ such that $V$ is an irreducible subquotient of
\begin{equation}\label{eqn:parabolic-induction-tensor-product}
I_P^H(\cInd_{K_0}^{L(F)} M_0) \otimes_A A/\mathfrak{m}^n \cong I_P^H((\cInd_{K_0}^{L(F)} M_0) \otimes_A A/\mathfrak{m}^n).
\end{equation}

We may and do assume that $M_0$ is an irreducible $k$-representation of $K_0$. By (1), the $k$-vector space $M_0$ is finite-dimensional. Choose a $k$-representation $M_0'$ of $K_0'$ extending $M_0$, and let $\zeta\co Z(L)(F) \cap K_1 \to k^\times$ be the central character of $M_0'$ (which exists by Schur's lemma). There is a homomorphism $k[Z(L)(F) \cap K_1] \to A$ whose image is a finitely generated $k$-algebra $B$ (since $\zeta$ is smooth); let $\frp \subset B$ be the preimage of $\mathfrak{m}$. Note that
\[
\cInd_{K_0}^{K_1} M_0 \cong \cInd_{K_0'}^{K_1}(M_0' \otimes_k k[K_0'/K_0]),
\]
so $\cInd_{K_0}^{K_1} M_0$ is a finitely generated $B$-module by (2). Thus $M_0' \otimes_k (k[K_0'/K_0] \otimes_B B/\frp^n)$ is finite-dimensional over $k$. Since
\[
\cInd_{K_0}^{L(F)}(M_0) \otimes_B B/\frp^n \cong \cInd_{K_1}^{L(F)}(\cInd_{K_0'}^{K_1}(M_0' \otimes_k (k[K_0'/K_0] \otimes_B B/\frp^n)))
\]
surjects onto $\cInd_{K_0}^{L(F)}(M_0) \otimes_A A/\mathfrak{m}^n$, the result follows by a filtration argument from \eqref{eqn:parabolic-induction-tensor-product} and exactness of $I_P^H$ \cite[Chapitre II, no.\ 2.1 (v)]{Vig96} and the fact that $K_0'$ is of finite index in $K_1$.
\end{proof}

\subsection{The main result}

Write $\epsilon_H$ and $\epsilon_L$ for the FKS characters \cite[(3.3.1)]{CF26a} attached to the towers $(H^i)$ and $(L^i)$, respectively. The following proposition is the main result of this section. 

\begin{prop}\label{prop:tame-cuspidal-parabolic-induction}
Keep the setting of \S\ref{sss:parabolic-descent-setting}. Let $\tau_{L,0}$ be an irreducible cuspidal $\ol\F_\ell$-representation of $\ol L^0_{[x]_H}(\F_q)$, and suppose that $\tau_H$ is an irreducible subquotient of the parabolic induction $\ind_{\ol P^0_{[x]_H}(\F_q)}^{\ol H^0_{[x]_H}(\F_q)}(\tau_{L,0})$. Then there exists an irreducible cuspidal $\ol\F_\ell$-representation $\tau_L$ of $\ol L^0_{[x]_L}(\F_q)$ such that, writing 
\[
\Psi_L = ((L^i), x, (r_i), \tau_L, (\phi_i|_{L^i(F)}))
\]
for the resulting Yu datum for $L$ over $\ol\F_\ell$, the following conditions hold.
\begin{enumerate}
    \item The restriction of $\tau_L$ to $L^0(F)_{[x]_H}$ admits
    $\tau_{L,0}\otimes \epsilon_H\epsilon_L\epsilon_{\sharp,x}^{\vec H,L}$
    as an irreducible subrepresentation, where $\epsilon_{\sharp,x}^{\vec H, L}$ is the sign character from \cite[(3.3.4)]{CF26a}.
    \item $\pi(\Psi_H)$ is an irreducible subquotient of $I_P^H(\pi(\Psi_L))$.
\end{enumerate}
\end{prop}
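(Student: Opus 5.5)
We plan to prove the two parts together, by building $\pi(\Psi_H)$ as a subquotient of a parabolic induction from $L(F)$ and then refining the inducing representation with Lemma~\ref{lemma:family-to-fiber}. Write $K_H$ and $K_L$ for the compact-mod-centre open subgroups of Yu's construction for $\Psi_H$ and $\Psi_L$. Write $\kappa_H$ for the inducing representation of $\pi(\Psi_H)$, so $\pi(\Psi_H)=\cInd_{K_H}^{H(F)}(\epsilon_H\otimes\tau_H\otimes\bigotimes_i V_{\wh\phi_i}\otimes\phi_d)$ and similarly for $L$.

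\textbf{Step 1: an Iwahori-type decomposition.} Following the Kim--Yu covers of Appendix~\ref{app:kim-yu-ohara-coefficients}, which extends \cite{KY17} and \cite{Ohara24} to $\ol\F_\ell$-coefficients, we introduce the group
\[
K_{H,P}\coloneqq (K_H\cap \ol U(F))\,\bigl(K_H\cap L(F)\bigr)\,\bigl(K_H\cap U(F)\bigr)
\]
together with a representation $\kappa_{H,P}$ of it. This representation is trivial on the two unipotent parts, and its restriction to $K_H\cap L(F)$ is the Yu inducing representation for $L$ attached to $\tau_{L,0}$ twisted by $\epsilon_H\epsilon_L\epsilon_{\sharp,x}^{\vec H,L}$. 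The sign-character bookkeeping here is the same as in \cite[\S 9]{CF26a}, using \cite[(3.3.4)]{CF26a}. The Heisenberg--Weil factors restrict compatibly because $J_{i+1}(\vec H)\cap L=J_{i+1}(\vec L)$ and the symplectic spaces decompose with respect to $P$.

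The cover property, transported to $\ol\F_\ell$-coefficients by the appendix, then gives two consequences. First, $\cInd_{K_{H,P}}^{H(F)}\kappa_{H,P}$ is a subquotient of $I_P^H\bigl(\cInd_{K_H\cap L(F)}^{L(F)}\kappa_{H,P}|_{K_H\cap L(F)}\bigr)$, via Bushnell--Kutzko-style $G$-covers and Vign\'eras' exactness results \cite{Vig96}. Second, because $\tau_H$ is a subquotient of $\ind_{\ol P^0(\F_q)}^{\ol H^0(\F_q)}\tau_{L,0}$, the representation $\kappa_H$ is a subquotient of $\ind_{K_{H,P}}^{K_H}\kappa_{H,P}$. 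We will check the second consequence by reducing mod $G^0(F)_{x,0+}$, where it becomes the finite-group statement about the depth-zero factor; the other tensor factors extend across this induction. Combining the two, $\pi(\Psi_H)$ is an irreducible subquotient of $I_P^H(\cInd_{K_0}^{L(F)}M_0)$, where $K_0=K_H\cap L(F)$ and $M_0=\kappa_{H,P}|_{K_0}$.

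\textbf{Step 2: from a family to a single representation.} We apply Lemma~\ref{lemma:family-to-fiber} with $K_1=K_L$, the group generated by $L^0(F)_{[x]_L}$ and $K_0$. Condition (1) of the lemma holds because $K_0$ is open and compact modulo the centre. Condition (2) holds because $L^0(F)_{[x]_L}/\bigl(L^0(F)_{[x]_H}\,Z(L)(F)\bigr)$ is finite, by anisotropy of $Z(L^0)/Z(L)$ and the vertex hypothesis on $[x]_L$.

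The lemma produces an irreducible subquotient $M_1$ of $\cInd_{K_0}^{K_L}M_0$ such that $\pi(\Psi_H)$ is a subquotient of $I_P^H(\cInd_{K_L}^{L(F)}M_1)$. Clifford theory, using that the Heisenberg parts are irreducible and extend, writes $M_1$ as $\epsilon_L\otimes\tau_L\otimes\bigotimes V_{\wh\phi_i|_L}\otimes\phi_d$. Here $\tau_L$ is an irreducible representation of $\ol L^0_{[x]_L}(\F_q)$ whose restriction contains $\tau_{L,0}\otimes\epsilon_H\epsilon_L\epsilon_{\sharp,x}^{\vec H,L}$, which proves (1). Cuspidality of $\tau_L$ follows from that of $\tau_{L,0}$, since restriction to the finite-index normal subgroup preserves cuspidality. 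Hence $\cInd_{K_L}^{L(F)}M_1=\pi(\Psi_L)$, which proves (2).

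\textbf{Main obstacle.} The main difficulty is Step 1, the cover property with $\ol\F_\ell$-coefficients, together with exact matching of the sign characters. This is why the appendix is needed. For the signs, we would first try to reduce to the identity $\epsilon_H|_{L}=\epsilon_L\epsilon_{\sharp,x}^{\vec H,L}$ on tori, using \cite[Lemma 3.5]{FKS23} as in Lemma~\ref{lemma:eta-sign-comparison}.
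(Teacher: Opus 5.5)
Your overall architecture matches the paper's: embed a compact induction into $I_P^H(\cdots)$ via the Kim--Yu/Ohara results of Appendix~\ref{app:kim-yu-ohara-coefficients}, locate $\pi(\Psi_H)$ inside that compact induction, then refine with Lemma~\ref{lemma:family-to-fiber}. Your Step~2 is also fine. Applying Lemma~\ref{lemma:family-to-fiber} directly with $K_0=K_H\cap L(F)$ and $K_1=K_L$, followed by Clifford theory, works; the paper instead passes first through $Z_{L^0}(F)\cK_L$ and an extension $\tau_{L,1}$.

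However, Step~1, which carries all the content, has a genuine gap.

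\textbf{The group $K_{H,P}$.} The set $(K_H\cap\ol U(F))(K_H\cap L(F))(K_H\cap U(F))$ is not a group in general. At depth $0$ its image in $\ol H^0_{[x]_H}(\F_q)$ is the open Bruhat cell attached to $P^0$, which is not a subgroup. At positive depth the Moy--Prasad groups already have Iwahori decompositions, so your set contains all of $H^i(F)_{x,r_{i-1}/2}$. There no $\kappa_{H,P}$ of the kind you describe can exist. A representation trivial on both $U^i(F)_{x,r_{i-1}/2}$ and $\ol U^i(F)_{x,r_{i-1}/2}$ is trivial on the group they generate. That group contains depth-$r_{i-1}$ coroot elements for roots of $H^i$ outside $H^{i-1}$ occurring in $U$, and on these $\wh\phi_{i-1}$ is nontrivial by genericity. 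So the restriction to $K_H\cap L(F)$ cannot be Yu's representation for $L$.

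The paper's group is asymmetric: $K_{L,P}^0=\ol U^0(F)_{x,0+}L^0(F)_{[x]_H}U^0(F)_x$ and $K_{L,P}^i=\ol U^i(F)_{x,r_{i-1}/2+}L^i(F)_{x,r_{i-1}/2}U^i(F)_{x,r_{i-1}/2}$. It keeps one of the two complementary Lagrangians in the complement of $V_i(\vec L)$ in $V_i(\vec H)$ and discards the other. It matches the Kim--Yu group $K^+$ for a generically shifted embedding, which is what makes Corollaries~\ref{cor:KY-64}, \ref{cor:KY-dual-cover} and Proposition~\ref{prop:ohara-33} apply. Those results use Blondel's injectivity condition and Ohara's argument, not Bushnell--Kutzko covers, which behave differently modulo $\ell$ (Remark~\ref{rem:injective-vs-strong}).

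\textbf{The induction from $K_{H,P}$ to $K_H$.} Your second consequence rests on a false claim: the Heisenberg--Weil factors do not extend across the induction up to $K_H$, so nothing reduces to a depth-zero finite-group statement. The representation $V_{\wh\phi_{L,i-1}}$ is smaller than $V_{\wh\phi_{i-1}}$ by the size of a Lagrangian in that complement. Also, $V_{\wh\phi_{i-1}}|_{J_i(\vec L)}$ is a multiple of $V_{\wh\phi_{L,i-1}}$, not $V_{\wh\phi_{L,i-1}}$ itself.

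The missing idea is the paper's Step~3. One induces stepwise along the filtration $K_{L,P}=K_{L,P}(d+1)\subset\cdots\subset K_{L,P}(0)=K_H$. Then \cite[Theorem 2.4(b)]{Ger77} identifies $\ind_{K_{L,P}(i+1)}^{K_{L,P}(i)}V_{\wh\phi_{L,i-1}}$ with $V_{\wh\phi_{i-1}}$ twisted by $\epsilon_{i-1}=\epsilon_{\sharp,x}^{H^i/H^{i-1},L}$. This is exactly where $\epsilon_{\sharp,x}^{\vec H,L}$ comes from. It is also why one twists $\tau_{L,0}$ by $\Delta_{P,H}=\epsilon_H\epsilon_L\epsilon_{\sharp,x}^{\vec H,L}$: the twist cancels $\epsilon_L$ and the G\'erardin signs, leaving $\epsilon_H\otimes\ind_{\ol P^0_{[x]_H}(\F_q)}^{\ol H^0_{[x]_H}(\F_q)}(\tau_{L,0})$ on $K_H$.

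In particular, the identity $\epsilon_H|_L=\epsilon_L\epsilon_{\sharp,x}^{\vec H,L}$ you propose to prove is not what is needed. If it held, the twist in (1) would be trivial.
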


\begin{proof}
As mentioned above, any irreducible cuspidal $\ol\F_\ell$-representation of $\ol L^0_{[x]_L}(\F_q)$ completes $((L^i), x, (r_i), (\phi_i|_{L^i(F)}))$ to a Yu datum for $L$, so the content of the proposition is the construction of a particular $\tau_L$ and the proof that it satisfies conditions (1) and (2). The proof proceeds in four major steps.
\begin{enumerate}
\item[(Step 1)] We define a compact-mod-center open subgroup $K_{L,P} \subset H(F)$ which interpolates between the compact-mod-center subgroup defined in Yu's construction for $\Psi_H$ and the corresponding one for $L$. 

\item[(Step 2)] We use Appendix~\ref{app:kim-yu-ohara-coefficients} to embed a certain representation $\pi$ of $H(F)$ compactly induced from $K_{L,P}$ into the normalized parabolic induction of a certain finitely generated representation of $L(F)$, and we relate the latter to representations of the form $\pi(\Psi_L)$. 

\item[(Step 3)] We relate $\pi$ to $\pi(\Psi_H)$, which requires wrestling with some sign issues. 

\item[(Step 4)] We conclude using Lemma~\ref{lemma:family-to-fiber}.
\end{enumerate}
Now we start the proof in earnest. 

\emph{Step 1. The subgroups $K_{L,P}$ and $K_{L,P}^{\mathrm{KY},+}$.} For $1 \leq i\leq d$, define
\[
K_{L,P}^i=\ol U^i(F)_{x, \frac{r_{i-1}}{2}+}L^i(F)_{x,\frac{r_{i-1}}{2}}U^i(F)_{x,\frac{r_{i-1}}{2}} \subset H^i(F),
\]
and set
\begin{equation}\label{eq:KLP-depth-zero-par}
K_{L,P}^0\coloneqq\ol U^0(F)_{x,0+}L^0(F)_{[x]_H}U^0(F)_x \subset H^0(F)_{[x]_H}.
\end{equation}
It is easy to see that $K^i_{L,P}$ is a group for all $0 \leq i \leq d$.
For $i>0$, we have 
\begin{equation}\label{eq:KLP-root-decomposition}
K_{L,P}^i=(K_{L,P}^i\cap\ol U^i(F))(K_{L,P}^i\cap L^i(F))(K_{L,P}^i\cap U^i(F))
\end{equation}
and $K_{L,P}^i \cap L(F) = L^i(F)_{x,r_{i-1}/2}$.

Next, we define the compact-mod-center subgroups
\begin{equation}\label{eq:KLP-definition}
K_{L,P}\coloneqq K_{L,P}^0K_{L,P}^1\cdots K_{L,P}^d
\end{equation}
and
\begin{equation}\label{eq:KLP-KY-plus}
\begin{aligned}
K_{L,P}^{+}\coloneqq {}&
\ol U^0(F)_{x,0+}L^0(F)_{x,0+}U^0(F)_x \\
&\cdot
\prod_{i=1}^d
\ol U^i(F)_{x,\frac{r_{i-1}}{2}+}
L^i(F)_{x,\frac{r_{i-1}}{2}+}
U^i(F)_{x,\frac{r_{i-1}}{2}} .
\end{aligned}
\end{equation}
See Figure~\ref{fig:KLP-KY-containment} for a cartoon of these subgroups; note $K_{L,P}^+ \subset K_{L,P}$.

\begin{figure}[!h]
\centering
\begin{tikzpicture}[
    line cap=round,
    line join=round,
    x=0.47cm,
    y=0.47cm,
    outline/.style={draw=black, line width=0.6pt},
    closededge/.style={draw=black, line width=0.75pt},
    openedge/.style={draw=red, dash pattern=on 1.5pt off 1.0pt, line width=0.9pt},
    brace/.style={decorate, decoration={brace, amplitude=3.5pt}, draw=black!65, line width=0.6pt},
    every node/.style={font=\scriptsize}
]

\definecolor{klpfill}{HTML}{D8E9FF}

\def\base{0}
\def\yzero{4.10}
\def\yone{2.88}
\def\ytwo{1.62}

\begin{scope}[shift={(0,6.25)}]
    \node[anchor=west] at (6.75,2.85) {$K_{L,P}$};
    \foreach \xa/\xb/\ytop in
    {-6.20/-5.00/\ytwo,-5.00/-3.80/\yone,-3.80/-2.60/\yzero,
        -1.80/-1.20/\ytwo,-1.20/-0.60/\yone,-0.60/0.60/\yzero,
        0.60/1.20/\yone,1.20/1.80/\ytwo,
        2.60/3.80/\yzero,3.80/5.00/\yone,5.00/6.20/\ytwo}
    {
        \fill[klpfill] (\xa,\base) rectangle (\xb,\ytop);
        \draw[outline] (\xa,\base) -- (\xa,\ytop);
        \draw[outline] (\xb,\base) -- (\xb,\ytop);
        \draw[outline] (\xa,\base) -- (\xb,\base);
    }
    \foreach \xa/\xb/\ytop in {-6.20/-5.00/\ytwo,-5.00/-3.80/\yone,-3.80/-2.60/\yzero}
    \draw[openedge] (\xa,\ytop) -- (\xb,\ytop);
    \foreach \xa/\xb/\ytop in
    {-1.80/-1.20/\ytwo,-1.20/-0.60/\yone,-0.60/0.60/\yzero,
        0.60/1.20/\yone,1.20/1.80/\ytwo,
        2.60/3.80/\yzero,3.80/5.00/\yone,5.00/6.20/\ytwo}
    \draw[closededge] (\xa,\ytop) -- (\xb,\ytop);
    \draw[outline] (-7.25,\base) -- (-7.25,\yzero);
    \draw[outline] (-7.40,\yzero) -- (-7.10,\yzero);
    \draw[outline] (-7.40,\yone) -- (-7.10,\yone);
    \draw[outline] (-7.40,\ytwo) -- (-7.10,\ytwo);
    \node[anchor=east] at (-7.55,\yzero) {$0$};
    \node[anchor=east] at (-7.55,\yone) {$r_0/2$};
    \node[anchor=east] at (-7.55,\ytwo) {$r_1/2$};
    \node[anchor=west] at (6.70,4.60) {$\Phi(H^0,T)$};
    \node[anchor=west] at (6.70,5.35) {$\Phi(H^1,T)$};
    \node[anchor=west] at (6.70,6.10) {$\Phi(H^2,T)$};
    \draw[brace] (-3.80,4.60) -- (-2.60,4.60);
    \draw[brace] (-0.60,4.60) -- (0.60,4.60);
    \draw[brace] (2.60,4.60) -- (3.80,4.60);
    \draw[brace] (-5.00,5.35) -- (-2.60,5.35);
    \draw[brace] (-1.20,5.35) -- (1.20,5.35);
    \draw[brace] (2.60,5.35) -- (5.00,5.35);
    \draw[brace] (-6.20,6.10) -- (-2.60,6.10);
    \draw[brace] (-1.80,6.10) -- (1.80,6.10);
    \draw[brace] (2.60,6.10) -- (6.20,6.10);
        \node at (-4.40,-0.85) {$\ol U$ directions};
        \node at (0,-0.85) {$L$ directions};
        \node at (4.40,-0.85) {$U$ directions};
\end{scope}

    \begin{scope}
        \node[anchor=west] at (6.75,2.85) {$K_{L,P}^+$};
    \foreach \xa/\xb/\ytop in
    {-6.20/-5.00/\ytwo,-5.00/-3.80/\yone,-3.80/-2.60/\yzero,
        -1.80/-1.20/\ytwo,-1.20/-0.60/\yone,-0.60/0.60/\yzero,
        0.60/1.20/\yone,1.20/1.80/\ytwo,
        2.60/3.80/\yzero,3.80/5.00/\yone,5.00/6.20/\ytwo}
    {
        \fill[klpfill] (\xa,\base) rectangle (\xb,\ytop);
        \draw[outline] (\xa,\base) -- (\xa,\ytop);
        \draw[outline] (\xb,\base) -- (\xb,\ytop);
        \draw[outline] (\xa,\base) -- (\xb,\base);
    }
    \foreach \xa/\xb/\ytop in
    {-6.20/-5.00/\ytwo,-5.00/-3.80/\yone,-3.80/-2.60/\yzero,
        -1.80/-1.20/\ytwo,-1.20/-0.60/\yone,-0.60/0.60/\yzero,
        0.60/1.20/\yone,1.20/1.80/\ytwo}
    \draw[openedge] (\xa,\ytop) -- (\xb,\ytop);
    \foreach \xa/\xb/\ytop in
    {2.60/3.80/\yzero,3.80/5.00/\yone,5.00/6.20/\ytwo}
    \draw[closededge] (\xa,\ytop) -- (\xb,\ytop);
    \draw[outline] (-7.25,\base) -- (-7.25,\yzero);
    \draw[outline] (-7.40,\yzero) -- (-7.10,\yzero);
    \draw[outline] (-7.40,\yone) -- (-7.10,\yone);
    \draw[outline] (-7.40,\ytwo) -- (-7.10,\ytwo);
    \node[anchor=east] at (-7.55,\yzero) {$0$};
    \node[anchor=east] at (-7.55,\yone) {$r_0/2$};
    \node[anchor=east] at (-7.55,\ytwo) {$r_1/2$};
        \node at (-4.40,-0.85) {$\ol U$ directions};
        \node at (0,-0.85) {$L$ directions};
            \node at (4.40,-0.85) {$U$ directions};
        \end{scope}

        \fill[klpfill] (-1.95,-1.75) rectangle (-1.58,-1.50);
        \draw[closededge] (-1.95,-1.50) -- (-1.58,-1.50);
        \node[anchor=west] at (-1.45,-1.63) {boundary included};
        \fill[klpfill] (-1.95,-2.28) rectangle (-1.58,-2.03);
        \draw[openedge] (-1.95,-2.03) -- (-1.58,-2.03);
        \node[anchor=west] at (-1.45,-2.16) {boundary omitted};
    \end{tikzpicture}
    \caption[Comparison of KLP and KY-plus]{Comparison of $K_{L,P}$ and
    $K_{L,P}^+$.}
\label{fig:KLP-KY-containment}
\end{figure}

We need to compare $K_{L,P}^+$ to the group appearing in Proposition~\ref{prop:KY-cover}, which is defined in terms of the \emph{generic embeddings} of \cite{KY17}. Choose a rational cocharacter $\nu$ valued in $Z(L^0)$ such that $\langle\alpha, \nu\rangle > 0$ for every root $\alpha$ occurring in $\Lie(U)$, let $x_L$ be the point of $\cB(L^0)$ such that $\iota_0(x_L) = x$, and let $\iota$ be the shift of $\iota_0$ determined by the property that $\iota(x_L) = x + \delta \nu$ for some very small $\delta > 0$, so $\iota$ is generic for all of the depths $0, r_0/2, \dots, r_{d-1}/2$. For each real number $s$, we have $U_\alpha(F)_{x+\delta\nu,s} = U_\alpha(F)_{x,s-\delta\langle\alpha, \nu\rangle}$, so $\delta$ can be chosen small enough such that for all $s$ among $0, r_0/2, \dots, r_{d-1}/2$ we have 
\[
U_\alpha(F)_{x+\delta\nu, s} = \begin{cases}
U_\alpha(F)_{x,s+} &\text{if } \alpha \in \Lie(\ol U), \\
U_\alpha(F)_{x,s} &\text{if } \alpha \in \Lie(U).
\end{cases}
\]
Comparing \eqref{eq:KLP-KY-plus} and \eqref{eq:KY-plus-explicit} therefore shows $K^+_{L,P} = K^+$.

Finally, we define\footnote{When \(d=0\), the convention is that \(\cK_L^+=L^0(F)_{x,0+}\).}
\[
\cK_L \coloneqq L^0(F)_{[x]_H}L^1(F)_{x,\frac{r_0}{2}} \cdots L^d(F)_{x,\frac{r_{d-1}}{2}},
\qquad
\cK_L^+\coloneqq L^0(F)_{x,0+}L^1(F)_{x,\frac{r_0}{2}+}\cdots
L^d(F)_{x,\frac{r_{d-1}}{2}+}.
\]
Multiplying \eqref{eq:KLP-root-decomposition} over \(i>0\), together with
\eqref{eq:KLP-depth-zero-par}, yields
\begin{equation}\label{eq:KLP-parabolic-decomposition}
K_{L,P}= (K_{L,P}\cap\ol U(F))\,\cK_L\,(K_{L,P}\cap U(F)),
\qquad
K_{L,P}\cap L(F)=\cK_L.
\end{equation}
Similarly, we have
\begin{equation}\label{eq:KLP-KY-plus-parabolic-decomposition}
K_{L,P}^+
=
\bigl(K_{L,P}^+\cap\ol U(F)\bigr)\,
\cK_L^+\,
\bigl(K_{L,P}^+\cap U(F)\bigr),
\qquad
K_{L,P}^+\cap L(F)=\cK_L^+.
\end{equation}

\emph{Step 2. Covers and the injection into parabolic induction.} Let $\phi_{L,i} = \phi_i|_{L^i(F)}$ for all $i$, and let $\chi\co L^0(F)_{[x]_H}/L^0(F)_{x,0+} \to \ol\F_\ell^\times$ be a character. Consider the tensor product
\[
\rho_{L,P}(\chi) \coloneqq \epsilon_L \otimes \tau_{L,0}\chi \otimes \bigotimes_{i=0}^{d-1} V_{\wh \phi_{L,i}} \otimes \phi_{L,d},
\]
with notation as in Yu's construction. By \eqref{eq:KLP-parabolic-decomposition}, the representation $\rho_{L,P}(\chi)$ extends to $K_{L,P}$ by inflation; we use the same notation for this inflated representation. 

By \eqref{eq:KLP-parabolic-decomposition} and the definition of $\rho_{L,P}(\chi)$, the pair $(K_{L,P}, \rho_{L,P}(\chi))$ satisfies conditions \ref{itm:decomp} and \ref{itm:inflation} of Definition~\ref{def:blondel-cover} for $P=LU$. 
By Corollary~\ref{cor:KY-64}, Corollary~\ref{cor:KY-dual-cover}, and Proposition~\ref{prop:ohara-33}, applied with $G=H$, $M=L$, $K=K_{L,P}$, and $\rho_M=\rho_{L,P}(\chi)|_{\cK_L}$, there is an $H(F)$-equivariant injection
\begin{equation}\label{eqn:ohara-injection}
\cInd_{K_{L,P}}^{H(F)}\rho_{L,P}(\chi) \hookrightarrow I_P^H\!\left(\cInd_{\cK_L}^{L(F)}\rho_{L,P}(\chi)|_{\cK_L}\right)
\end{equation}
of $H(F)$-representations.

\emph{Step 3. Calculation of the compact induction.} We now want to show that the left-hand side of \eqref{eqn:ohara-injection} admits $\pi(\Psi_H)$ as an irreducible subquotient for some choice of $\chi$. Let
\begin{equation}\label{eq:KLP-filtration-zero}
K_{L,P}(0)\coloneqq H^0(F)_{[x]_H}H^1(F)_{x,\frac{r_0}{2}}\cdots
H^d(F)_{x,\frac{r_{d-1}}{2}}.
\end{equation}
This is the compact-mod-center subgroup that appears in Yu's construction for $\Psi_H$. Define a decreasing filtration of groups $K_{L,P}(0) \supset \ldots \supset K_{L,P}(d+1)=K_{L,P}$ via
\begin{equation}\label{eq:KLP-filtration-positive}
K_{L,P}(i)\coloneqq K_{L,P}^0K_{L,P}^1\cdots K_{L,P}^{i-1}
H^i(F)_{x,\frac{r_{i-1}}{2}}\cdots H^d(F)_{x,\frac{r_{d-1}}{2}} 
\end{equation}
for $1 \leq i \leq d$.

By transitivity of induction, we have
\[
\cInd_{K_{L,P}}^{H(F)}(\rho_{L,P}(\chi)) = \cInd_{K_{L,P}(d)}^{H(F)}(\cInd_{K_{L,P}}^{K_{L,P}(d)}(\rho_{L,P}(\chi))).
\]
If $d > 0$ then all terms in the tensor product on the right side extend to $K_{L,P}(d)$ except $V_{\wh\phi_{L,d-1}}$, so by the projection formula we have
\[
\cInd_{K_{L,P}}^{K_{L,P}(d)}(\rho_{L,P}(\chi)) \cong \epsilon_L \otimes \tau_{L,0}\chi \otimes \bigotimes_{i=0}^{d-2} V_{\wh\phi_{L,i}} \otimes \ind_{K_{L,P}}^{K_{L,P}(d)}(V_{\wh\phi_{L,d-1}}) \otimes \phi_{L,d}.
\]
Using the same reasoning inductively along with the relation 
\[
\cInd_{K_{L,P}(i+1)}^{H(F)} = \cInd_{K_{L,P}(i)}^{H(F)} \circ \cInd_{K_{L,P}(i+1)}^{K_{L,P}(i)},
\]
we conclude that
\begin{align}\label{eq:KP1-induction}
\cInd_{K_{L,P}}^{H(F)}(&\rho_{L,P}(\chi)) \nonumber \\
&\cong \cInd_{K_{L,P}(1)}^{H(F)}\left(\epsilon_L \otimes \tau_{L,0}\chi \otimes \bigotimes_{i=1}^{d} \ind_{K_{L,P}(i+1)}^{K_{L,P}(i)}(V_{\wh\phi_{L,i-1}}) \otimes \phi_{L,d}\right).
\end{align}
We now explicate the action of $K_{L,P}(1)$ on each term $\ind_{K_{L,P}(i+1)}^{K_{L,P}(i)}(V_{\wh\phi_{L,i-1}})$ above. For $j \neq i$, $j > 0$, the action of $H^j(F)_{x,\frac{r_{j-1}}{2}}$ is evidently given by $\wh\phi_{i-1}|_{H^j(F)_{x,\frac{r_{j-1}}{2}}}$. Let
\[
\Delta_{P,H}\coloneqq \epsilon_H\epsilon_L\epsilon_{\sharp,x}^{\vec H, L} =
\epsilon_H\epsilon_L\prod_{i=0}^{d-1}\epsilon_i
\]
as a sign character of $L^0(F)_{[x]_H}$, where $\epsilon_i = \epsilon_{\sharp,x}^{H^{i+1}/H^i,L}$ is the character defined by \cite[Lemma~3.2.2]{CF26a}. We view $\Delta_{P,H}$ as a character of $K_{L,P}(i)$ by inflation.
By \cite[Theorem 2.4(b)]{Ger77}\footnote{There is a famous (series of) typo(s) in the printed statement of this theorem; see \cite[Footnote 1 on page 2]{Fin21b} for the correct statement as given here, which is easily extracted from the proof of \cite[Theorem 2.4(b)]{Ger77}. Although \cite[Theorem 2.4(b)]{Ger77} is phrased for complex coefficients, it is valid over $k = \ol \F_\ell$: both sides of the isomorphism restrict irreducibly to the underlying finite Heisenberg $p$-group, and since $\ell\neq p$, reduction modulo $\ell$ preserves both this irreducibility and the isomorphism class of the restriction.} and the description of $\epsilon_{i-1}$ given in \cite[Lemma~3.2.2]{CF26a} (see also Lemma~\ref{lemma:eta-sign-comparison}), the action of $L^0(F)_{[x]_H}U^0(F)_x H^i(F)_{x,\frac{r_{i-1}}{2}}$ on $\ind_{K_{L,P}(i+1)}^{K_{L,P}(i)}(V_{\wh\phi_{L,i-1}})$ is isomorphic to the twist by $\epsilon_{i-1}$ of the restriction of $V_{\wh\phi_{i-1}}$. Consequently, we have
\begin{align}\label{eq:L-yu-datum-identity}
\cInd_{K_{L,P}(1)}^{H(F)}(&\epsilon_L \otimes \tau_{L,0}\Delta_{P,H} \otimes \bigotimes_{i=1}^{d} \ind_{K_{L,P}(i+1)}^{K_{L,P}(i)}(V_{\wh\phi_{L,i-1}}) \otimes \phi_{L,d}) \nonumber \\
    &\cong\cInd_{K_{L,P}(0)}^{H(F)}(\epsilon_H \otimes \ind_{\ol P^0_{[x]_H}(\F_q)}^{\ol H^0_{[x]_H}(\F_q)}(\tau_{L,0}) \otimes \bigotimes_{i=0}^{d-1} V_{\wh\phi_i} \otimes \phi_d).
\end{align}
Combining \eqref{eqn:ohara-injection}, \eqref{eq:KP1-induction}, and \eqref{eq:L-yu-datum-identity}, we find that $\pi(\Psi_H)$ is an irreducible subquotient of $I_P^H(\cInd_{\cK_L}^{L(F)}(\rho_{L,P}(\Delta_{P,H})))$.

\emph{Step 4: conclusion.} Let $\tau_{L,1}$ be an irreducible representation of $Z_{L^0}(F)L^0(F)_{[x]_H}$ which extends $\tau_{L,0}\Delta_{P,H}$, and observe that 
\begin{align}\label{eqn:ohara-2}
\cInd_{\cK_L}^{L(F)}(&\rho_{L,P}(\Delta_{P,H})) \nonumber\\
&\cong \cInd_{Z_{L^0}(F)\cK_L}^{L(F)}(\epsilon_L \otimes \tau_{L,1} \otimes \bigotimes_{i=0}^{d-1} V_{\wh \phi_{L,i}} \otimes \phi_{L,d} \otimes \ol\F_\ell[Z_{L^0}(F)L^0(F)_{[x]_H}/L^0(F)_{[x]_H}]).
\end{align}
By \eqref{eqn:ohara-2}, Step 3, and Lemma~\ref{lemma:family-to-fiber}, there is a character $\theta\co Z_{L^0}(F)L^0(F)_{[x]_H}/L^0(F)_{[x]_H} \to \ol\F_\ell^\times$ (necessarily of depth $0$) such that $\pi(\Psi_H)$ is an irreducible subquotient of
\[
I_P^H(\cInd_{Z_{L^0}(F)\cK_L}^{L(F)}(\epsilon_L \otimes \tau_{L,1}\theta \otimes \bigotimes_{i=0}^{d-1} V_{\wh \phi_{L,i}} \otimes \phi_{L,d})).
\]
Note that $\epsilon_L$, $V_{\wh\phi_{L,i}}$, and $\phi_{L,d}$ all extend to $K(\vec{L},x,\vec{r})$, so it follows that there is some irreducible $\ol L^0_{[x]_L}(\F_q)$-representation $\tau_L$ whose restriction to $\ol L^0_{[x]_H}(\F_q) \cdot Z(\ol L^0_{[x]_L})(\F_q)$ admits $\tau_{L,1}\theta$ as an irreducible subquotient. If $\Psi_L=((L^i),x,(r_i),\tau_L,(\phi_{L,i}))$ is the resulting irreducible Yu datum for $L$, then $\Psi_L$ satisfies the desiderata.
\end{proof}

\subsection{Lifting to characteristic zero} 
The following corollary is the main consequence of Proposition~\ref{prop:tame-cuspidal-parabolic-induction} that will be used later. 

\begin{cor}\label{cor:lift-tame-reps-to-char-0}
Let $\ol\Psi$ be a normalized irreducible Yu datum for $G$ over $\ol\F_\ell$. There exists a Levi factor $L$ of a parabolic $F$-subgroup $P \subset G$, a normalized Yu datum $\Psi_L$ for $L$ over $\ol\Z_\ell$ with irreducible generic fiber, and a normalized irreducible Yu datum $\ol\Psi_L$ for $L$ over $\ol\F_\ell$, such that
\begin{enumerate}
    \item $\pi(\ol\Psi_L)$ is an irreducible subquotient of $\pi(\Psi_L \otimes_{\ol\Z_\ell}\ol\F_\ell)$,
    \item $\pi(\ol\Psi)$ is an irreducible subquotient of $I_P^G(\pi(\ol\Psi_L))$,
    \item $\rho_I^{\Kal}(\ol\Psi) \sim \ld j_{L,G} \circ \rho_I^{\Kal}(\ol\Psi_L)$,
    \item $\rho^{\FS}(\pi(\ol\Psi)) \sim \ld j_{L,G} \circ \rho^{\FS}(\pi(\ol\Psi_L))$.
\end{enumerate} 
If $\ol\Psi$ is $F$-non-singular, then we can take $L=P=G$ and $\ol\Psi_L = \ol\Psi$.
\end{cor}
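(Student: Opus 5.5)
The plan is to lift the depth-zero part of $\ol\Psi$ to characteristic $0$ after passing to a Levi subgroup, and then use Proposition~\ref{prop:tame-cuspidal-parabolic-induction} to compare Fargues--Scholze parameters via parabolic induction. Write $\ol\Psi = ((G^i), x, (r_i), \ol\tau, (\ol\phi_i))$.

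\emph{Step 1 (finding $L$ and a characteristic-zero cuspidal $\tau_L$).} By \cite[Part III, no.\ 16.1, Theorem 33]{Serre77}, there is an irreducible $\ol\Q_\ell$-representation $\tau'$ of $\ol G^0_{[x]}(\F_q)$ such that $\ol\tau$ occurs in the reduction of some $\ol\Z_\ell$-lattice in $\tau'$. Uniqueness of cuspidal support for paraductive groups \cite[Lemma~2.10.2]{Cot26b} gives a Levi $\ol M$ of $\ol G^0_{[x]}$ and a cuspidal $\ol\Q_\ell$-representation $\tau'_M$ of $\ol M(\F_q)$ with $\tau' \subset \ind_{\ol P_M}(\tau'_M)$. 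Reducing mod $\ell$, $\ol\tau$ is a subquotient of $\ind(\ol\tau_{M,0})$ for some irreducible constituent $\ol\tau_{M,0}$ of the reduction of $\tau'_M$; this constituent is cuspidal by \cite[Proposition~3.4.2]{Cot26b}-type arguments. Next, lift $\ol M$ to an $F$-Levi $L^0$ of $G^0$ with $x \in \cB(L^0)$, and set $L = Z_G(Z(L^0)^\circ_{\mathrm{split}})$, with $L^i = L\cap G^i$ and $P$ a parabolic of $G$ with Levi $L$ compatible with $\ol P_M$. Then \cite[Lemmas~9.1.1, 9.1.2]{CF26a} verify the two hypotheses of \S\ref{sss:parabolic-descent-setting}.

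\emph{Step 2 (Yu data and items (1), (2)).} Apply Proposition~\ref{prop:tame-cuspidal-parabolic-induction} with $H=G$ and $\tau_{L,0}=\ol\tau_{M,0}$. This produces $\ol\tau_L$, and hence $\ol\Psi_L$, with $\pi(\ol\Psi)$ a subquotient of $I_P^G(\pi(\ol\Psi_L))$; this gives (2). To obtain $\Psi_L$ over $\ol\Z_\ell$, take the Teichm\"uller lifts $\phi_{L,i}$ of $\ol\phi_i|_{L^i}$ and a cuspidal $\ol\Q_\ell$-representation $\tau_L$ of $\ol L^0_{[x]_L}(\F_q)$ extending $\tau'_M \otimes (\text{lift of the sign } \epsilon_G\epsilon_L\epsilon^{\vec G,L}_{\sharp,x})$ from $\ol L^0_{[x]_G}$, chosen so that $\ol\tau_L$ occurs in its reduction. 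This choice is possible because the extension from $\ol L^0_{[x]_G}(\F_q)\cdot Z$ to $\ol L^0_{[x]_L}(\F_q)$ has index compatible with Clifford theory. Item (1) then follows from \cite[Lemma~7.3.1]{CF26a}. Normalization is inherited from $\ol\Psi$.

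\emph{Step 3 (items (3) and (4)).} Item (4) follows from (2) together with compatibility of $\rho^{\FS}$ with parabolic induction \cite[Theorem I.9.6(viii)]{FS}, using the fact that $\ld j_{L,G}$ for an honest Levi is the standard embedding. Item (3) is \cite[Theorem~9.3.3]{CF26a} (the Kaletha-side comparison of the same construction), whose input data agree with ours because of the sign twist in Proposition~\ref{prop:tame-cuspidal-parabolic-induction}(1). When $\ol\Psi$ is $F$-non-singular, I would show that $\ol\tau$ itself lifts to a cuspidal $\ol\Q_\ell$-representation (a non-singular Deligne--Lusztig lift), so that Step 1 gives $\ol M = \ol G^0_{[x]}$ and $L=G$.

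I expect the main obstacle to be Step 2's lift. One has to match the finite groups $\ol L^0_{[x]_G}$ and $\ol L^0_{[x]_L}$ and carry the signs through so that a characteristic-$0$ cuspidal $\tau_L$ really has $\ol\tau_L$ in its reduction. I would first try to handle this by noting that $\ol\tau_L$ restricted to $\ol L^0_{[x]_G}\cdot Z$ contains the reduction of $\tau'_M$ twisted, and then inducing and choosing a constituent.
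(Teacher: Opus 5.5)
Your proposal is correct and follows the paper's proof essentially step for step: Serre's lifting theorem, cuspidal support via \cite[Lemma~2.10.2]{Cot26b}, lifting the split central torus to get $L=Z_G(S)$, then Proposition~\ref{prop:tame-cuspidal-parabolic-induction} with $H=G$, followed by \cite[Theorem~9.3.3]{CF26a} for (3) and \cite[Theorem I.9.6(viii)]{FS} for (4). The paper makes your ``input data agree'' in (3) precise by using \cite[Lemma~2.10.3]{Cot26b} to place $\ol\tau_L$ in a Lusztig series $\cE(\ol L^0_{[x]_L},[\ol T_{L,1},\eta_1])$ whose character is the $\epsilon_G\epsilon_L\epsilon^{\vec G,L}_{\sharp,x}$-twist required by \cite[(9.2.3)--(9.2.4)]{CF26a}; your two additions, namely constructing the cuspidal lift $\tau_L$ by inducing and choosing a constituent, and the Deligne--Lusztig argument for the non-singular clause, fill in points that the paper's proof leaves implicit.
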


\begin{proof}
We write $\ol\Psi = ((G^i), x, (r_i), \ol\tau, (\ol\phi_i))$ as usual. Let $\wt\tau$ be an irreducible $\ol\Q_\ell$-representation of $\ol G^0_{[x]}(\F_q)$ such that $\ol\tau$ is an irreducible constituent of the $\ell$-modular reduction of $\wt\tau$; this exists by \cite[Part III, no.\ 16.1, Theorem 33]{Serre77}. Let $\phi_i\co G^i(F)\to\ol\Z_\ell^\times$ be the Teichm\"uller lift of $\ol\phi_i$, so each $\phi_i$ is normalized. By \cite[Lemma~2.10.2]{Cot26b}, there exists a split $\F_q$-torus $\ol S^\circ \subset \ol G^0_{[x]}$ which is the maximal split central $\F_q$-torus in $\ol L \coloneqq Z_{\ol G^0_{[x]}}(\ol S^\circ)$, a parabolic $\F_q$-subgroup $\ol P^\circ$ of $(\ol G^0_{[x]})^\circ$ with Levi $\ol L^\circ$, and an irreducible cuspidal $\ol\Q_\ell$-representation $\wt\tau_{L,0}$ of $\ol L(\F_q)$ such that $\wt\tau$ is an irreducible constituent of $\ind_{\ol P(\F_q)}^{\ol G^0_{[x]}(\F_q)}(\wt\tau_{L,0})$, where $\ol P = \ol L \cdot \ol P^\circ$. Let $\ol\tau_{L,0}$ be an irreducible constituent of the $\ell$-modular reduction of $\wt\tau_{L,0}$ such that $\ol\tau$ is an irreducible subquotient of $\ind_{\ol P(\F_q)}^{\ol G^0_{[x]}(\F_q)}(\ol\tau_{L,0})$.

Let $\cal S$ be an $\cO_F$-subtorus of $\cal G^0_{[x]}$ lifting $\ol S^\circ$, as in \cite[\S 7.2]{CF26a}, and let $S$ be the generic fiber of $\cal S$, so $S$ is an $F$-split subtorus of $G^0$. Let $L = Z_G(S)$, and let $P$ be a parabolic $F$-subgroup of $G$ with Levi $L$. Note that the bulleted assumptions of \S\ref{sss:parabolic-descent-setting} hold by \cite[Lemmas~9.1.2 and 9.1.1]{CF26a}. By Proposition~\ref{prop:tame-cuspidal-parabolic-induction}, applied with $H = G$ and the Levi subgroup $L$, there is an irreducible Yu datum $\ol\Psi_{L,\ol\tau_L} = ((L^i),x,(r_i),\ol\tau_L,(\ol\phi_{i,L}))$ for $L$ over $\ol\F_\ell$, where $\ol\phi_{i,L} = \ol\phi_i|_{L^i(F)}$ for all $0\leq i\leq d$, such that
\begin{enumerate}[label=(\Alph*)]
\item $\ol\tau_L|_{L^0(F)_{[x]}}$ admits $\ol\tau_{L,0} \otimes \epsilon_G\epsilon_L\epsilon_{\sharp,x}^{\vec G, L}$ as an irreducible subrepresentation, where $\epsilon_{\sharp, x}^{\vec G, L}$ is as in \cite[(3.3.4)]{CF26a},
\item $\pi(\ol\Psi)$ is an irreducible subquotient of $I_P^G(\pi(\ol\Psi_{L,\ol\tau_L}))$.
\end{enumerate} 
By \cite[Lemma~2.10.3]{Cot26b}, if $\wt\tau_L$ is an irreducible cuspidal $\ol\Q_\ell$-representation of $\ol L^0_{[x]_L}(\F_q)$ with a stable $\ol\Z_\ell$-lattice $\Lambda_L$ whose reduction admits $\ol\tau_L$ as an irreducible subquotient, then there is a generalized maximal torus-character pair $(\ol T_{L,1},\eta_1)$ as in \cite[(9.2.3)--(9.2.4)]{CF26a}, i.e., $\ol\tau_L\in\cE(\ol L^0_{[x]_L},[\ol T_{L,1},\eta_1])$ and $\eta_1|_{\ol T_L(\F_q)}  = \eta_0\epsilon_G\epsilon_L\epsilon_{\sharp,x}^{\vec G, L}$. Let
\[
\Psi_{L,\Lambda_L} = ((L^i),x,(r_i),\Lambda_L,(\phi_{i,L})),
\]
where $\phi_{i,L} = \phi_i|_{L^i(F)}$ as above. The inclusions $L^i_{\der}\to G^i_{\der}$ lift to the simply connected covers, so $\Psi_{L,\Lambda_L}$ and $\ol\Psi_{L,\ol\tau_L}$ are normalized. We have now proven (1) and (2). Now \cite[Theorem~9.3.3]{CF26a} gives
\[
\rho_I^{\Kal}(\ol\Psi) \sim \ld j_{L,G} \circ \rho_I^{\Kal}(\ol\Psi_L),
\]
proving (3). Finally, (4) follows from (2) and \cite[Theorem I.9.6(viii)]{FS}.
\end{proof}

\section{Functoriality for the Fargues--Scholze correspondence}\label{section:fs-functoriality}

In this section, we will establish certain characteristic $0$ functoriality statements for ``large'' prime degree cyclic field extensions and for descent to unramified twisted Levis. The rough idea is that the results of \S\ref{section:tate-cohom-yu} give rise to functoriality statements modulo ``large'' primes, and \cite[Lemma~5.3.2]{Cot26b} shows that such congruences can be lifted to characteristic $0$.

Recall the notation $\rho_I^{\Kal}$ and $\cT(\Psi)$ from \cite[\S 7.2]{CF26a}; also recall from \S\ref{ssec:notation-fs} that $F_\ell/F$ denotes the unramified extension of degree $\ell$.

\subsection{Modular reduction}\label{ssec:modular-reduction} We will need to know that the formation of Fargues--Scholze parameter is compatible with reduction modulo $\ell$ (even when said reduction is not irreducible).

\begin{lemma}\label{lem:L-parameter-of-reduction}
Let $\Pi$ be an irreducible smooth representation of $G(F)$ over $\ol \Q_\ell$, and let $\Lambda\subset \Pi$ be a $G(F)$-stable $\ol \Z_\ell$-lattice. For every irreducible $G(F)$-subquotient $\Pi'$ of $\Lambda\otimes_{\ol\Z_\ell}\ol\F_\ell$, we have $\rho^{\FS}(\Pi')\sim \ol{\rho^{\FS}(\Pi)}^{\ss}$.
\end{lemma}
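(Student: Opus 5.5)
The plan is to work with the Fargues--Scholze construction at the level of the excursion (Bernstein-type) algebra and pass through integral coefficients. Recall from \cite[\S IX]{FS} that $\rho^{\FS}$ is defined via a map from the algebra $\mathcal{O}(Z^1(W_F,\wh G)/\!/\wh G)$ of functions on the stack-theoretic coarse moduli of L-parameters to the Bernstein center of $\Rep_{\Lambda}G(F)$, for any $\Z_\ell$-algebra $\Lambda$ (in particular $\Lambda=\ol\Z_\ell$), and that this construction is compatible with change of coefficients $\ol\Z_\ell\to\ol\Q_\ell$ and $\ol\Z_\ell\to\ol\F_\ell$. The key point is that excursion operators are defined integrally and act on any smooth $\ol\Z_\ell[G(F)]$-module, naturally in the module.

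First I will let the excursion algebra act on the lattice $\Lambda$. Since $\Lambda\subset\Pi$ and the action is natural with respect to $\Lambda\otimes\ol\Q_\ell=\Pi$, each excursion operator acts on $\Pi$ by the scalar determined by $\rho^{\FS}(\Pi)$ (Schur's lemma), hence acts on $\Lambda$ by the same scalar, which therefore lies in $\ol\Z_\ell$. This gives a character $\mathcal{O}(Z^1/\!/\wh G)_{\ol\Z_\ell}\to\ol\Z_\ell$, i.e.\ an $\ol\Z_\ell$-point of the GIT quotient; by the bijection between closed points of $Z^1/\!/\wh G$ over a field and semisimple parameters up to conjugacy (as in \cite[\S VIII]{FS}, using \cite{BMR05}), its generic fiber is $\rho^{\FS}(\Pi)$ and its special fiber corresponds to $\ol{\rho^{\FS}(\Pi)}^{\ss}$. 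Here I must note that $\rho^{\FS}(\Pi)$ takes values in a conjugate of $\ld G(\ol\Z_\ell)$ by compactness of $W_F$ image modulo Frobenius considerations; the cleanest route is to avoid choosing an integral model of the parameter and just use that reduction of the GIT point equals the class of the semisimplification of any integral representative.

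Second, the operators then act on $\Lambda\otimes\ol\F_\ell$ by the reductions of these scalars, hence on every subquotient $\Pi'$ by those reduced scalars. By naturality of the mod $\ell$ excursion action (compatibility of the construction with base change $\ol\Z_\ell\to\ol\F_\ell$), the scalars by which excursion operators act on the irreducible $\Pi'$ are exactly those defining $\rho^{\FS}(\Pi')$. Hence $\rho^{\FS}(\Pi')$ and $\ol{\rho^{\FS}(\Pi)}^{\ss}$ define the same $\ol\F_\ell$-point of $Z^1/\!/\wh G$, and both being semisimple, they are $\wh G(\ol\F_\ell)$-conjugate.

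The main obstacle I expect is the identification of the special fiber of the $\ol\Z_\ell$-point of the GIT quotient with the class of $\ol{\rho^{\FS}(\Pi)}^{\ss}$: one needs that $\rho^{\FS}(\Pi)$ can be conjugated into $\ld G(\ol\Z_\ell)$ (the image of $W_F$ is bounded since $\rho(I_F)$ is finite by Lemma~\ref{lemma:semisimple-image} and the parameter is determined up to a compact-image situation after twisting; alternatively use properness-type arguments for $Z^1$ over the GIT quotient as in \cite{DHKM25}), and that reduction commutes with evaluation of invariant functions, which is formal once an integral representative exists. I would first try to cite the flatness/finite-type results of \cite{DHKM25} to produce the integral representative directly from the $\ol\Z_\ell$-point.
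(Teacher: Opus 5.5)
Your proposal is correct and follows essentially the same route as the paper: integral excursion operators (via the Fargues--Scholze map to the integral Bernstein center) preserve $\Lambda$, so the character of $\Pi$ is $\ol\Z_\ell$-valued, and its reduction governs the action on $\Lambda\otimes_{\ol\Z_\ell}\ol\F_\ell$ and hence on every irreducible subquotient $\Pi'$. The obstacle you flag does not arise in the paper, which simply \emph{defines} $\ol{\rho^{\FS}(\Pi)}^{\ss}$ as the semisimple parameter attached to the reduced excursion character, so no integral representative of $\rho^{\FS}(\Pi)$ is needed. Note also that your compactness justification was not right as stated: finiteness of $\rho(I_F)$ does not bound the image of Frobenius, and any integrality comes from the integrality of the excursion character itself.
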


\begin{proof}
The L-parameter $\rho^{\FS}(\Pi) \in \rH^1(W_F, \wh{G}(\ol \Q_\ell))$ corresponds to a character $\chi_{\Pi} \co \Exc_{\Q_\ell}(W_F, \wh{G}) \rightarrow \ol \Q_\ell$. Since integral excursion operators preserve $\Lambda$ (the Fargues--Scholze construction is defined with $\Z_\ell[\sqrt q]$-coefficients and maps to the integral Bernstein center; cf.\ \cite[Chapter IX]{FS}), it follows that $\chi_{\Pi}$ restricts to a $\Z_\ell$-algebra homomorphism $\Exc_{\Z_\ell}(W_F,\wh G)\to \ol\Z_\ell$. We therefore obtain a character
\[
\ol \chi_{\Pi} \coloneqq \chi_{\Pi} \otimes_{\Z_\ell} \F_\ell  \co \Exc_{\F_\ell}(W_F, \wh{G}) \rightarrow \ol \F_\ell.
\]
This defines the semisimplified mod $\ell$ reduction $\ol \rho^{\FS}(\Pi)^{\ss} \in \rH^1(W_F, \wh{G}(\ol \F_\ell))$. If $\ol{\Pi}\coloneqq \Lambda \otimes_{\ol\Z_\ell}\ol\F_\ell$, then it follows that every modular excursion operator acts on $\ol\Pi$ through $\ol\chi_\Pi$. Hence, for every irreducible constituent $\Pi'$ of $\ol\Pi^{\ss}$, the parameter $\rho^{\FS}(\Pi')$ satisfies $\rho^{\FS}(\Pi')\sim \ol{\rho^{\FS}(\Pi)}^{\ss}$, as desired.
\end{proof}




\subsection{Independence of $\ell$}\label{ssec:independence-of-ell}
 Our argument will require ``sewing'' information from different primes.  To do this, we fix isomorphisms $\ol \Q_\ell \cong \CC$. For primes $\ell, \ell' \neq p$, this immediately allows us to convert between smooth representations of $G(F)$ over $\ol\Q_\ell$ and $\ol\Q_{\ell'}$.
Similarly, we can convert between semisimple L-parameters $W_F \rightarrow \ld G(k)$ for $k \in \{\CC\} \cup \{\ol\Q_\ell : \ell \neq p\}$.
It is not obvious that the Fargues--Scholze correspondence is compatible with these conversions, but that is established in \cite{Sch25}. The more precise statement is as follows.

\begin{thm}[{\cite[Theorem 1.2]{Sch25}}]
For a coefficient field $k$, let $\mf{Z}(G;k)$ be the Bernstein center of $G(F)$ with coefficients in $k$. Then there is a map
\[
\FS_G \co \Exc(W_F, \wh G_{\Q(\sqrt{q})}) \rightarrow \mf{Z}(G; \Q(\sqrt{q}))
\]
which after base extension to $\Q_\ell$ for any $\ell \neq p$ recovers the map 
\[
\Exc(W_F, \wh G_{\Q_\ell(\sqrt{q})}) \rightarrow \mf{Z}(G; \Q_\ell(\sqrt{q})).
\]
\end{thm}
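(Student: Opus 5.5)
This statement is quoted from \cite[Theorem 1.2]{Sch25}, so the plan is to reduce it to Scholze's construction rather than reprove it. The approach is to show that the Fargues--Scholze map $\Exc(W_F,\wh G)\to \mf Z(G)$ is defined over a ring containing $\Z[1/p,\sqrt q]$, up to inverting finitely many primes, and that its image in the Bernstein center is independent of the auxiliary $\ell$. Only the rational structure matters for us, so we may work after inverting everything but $\sqrt q$.

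\emph{Step 1: reduce to the traces of excursion operators.} The algebra $\Exc(W_F,\wh G_{\Q(\sqrt q)})$ is generated by the elements $S_{V,x,\xi,(\gamma_i)}$ attached to a finite set $I$, a representation $V$ of $(\ld G)^I$, invariants $x\in V$ and $\xi\in V^*$, and tuples $(\gamma_i)\in W_F^I$. It therefore suffices to show two things: the endomorphism of the identity functor on $\Rep G(F)$ cut out by such an excursion datum via $\ell$-adic geometric Satake and the Hecke stack $\cHck$ does not depend on $\ell$, and it descends to $\Q(\sqrt q)$.

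\emph{Step 2: use an $\ell$-independent motivic Satake.} I would invoke Scholze's motivic (or "geometrization over $\Z$") formalism. This consists of a category of motivic sheaves on $\mrm{Bun}_G$ and on the Hecke stacks with $\Q(\sqrt q)$-coefficients, whose $\ell$-adic realizations recover the étale sheaves of \cite{FS}. It is compatible with a motivic Satake equivalence that realizes to each $\ell$-adic Satake equivalence. The excursion operators are built from creation and annihilation maps, Hecke functors, and the Weil-group action through partial Frobenii. These all have motivic avatars, so they give an element of $\mf Z(G;\Q(\sqrt q))$ whose $\ell$-adic realization is the original operator.

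\emph{Step 3: check that the Weil-group action is defined motivically.} This is the main obstacle. In \cite{FS} the $W_F$-action on the Hecke functors comes from the fact that $\ell$-adic sheaves on $\Div^1$ are $W_F$-local systems. One has to show that the resulting monodromy operators, and not merely Frobenius, are $\ell$-independent. Scholze handles this via a rigidity or "no nontrivial motivic local systems beyond finite monodromy" argument, together with Galois-equivariance of the motivic realization. Granting this, the identity on generators extends multiplicatively, which gives the theorem.
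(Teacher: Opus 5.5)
The paper gives no proof of this statement: it is quoted verbatim from \cite[Theorem~1.2]{Sch25} and used as a black box (via \S\ref{ssec:independence-of-ell}) to move Fargues--Scholze parameters between different primes $\ell$, so deferring to Scholze, as you do, is exactly the paper's approach. Your Steps 2--3 should not be presented as a proof, though, because they grant the entire content of \cite{Sch25}; moreover, the Step 3 mechanism you attribute to Scholze is not supported by anything in this paper, and taken literally a ``finite monodromy'' statement is too strong, since Tate twists already give motivic local systems on which Frobenius acts with infinite order.
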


Fixing $\sqrt{q} \in \ol \Q$ for simplicity of notation, a character $\chi \co \Exc(W_F, \wh G_{\ol \Q}) \rightarrow \ol \Q$ thus induces a character $\chi_{\Q_\ell} \co \Exc(W_F, \wh G_{\ol \Q_\ell}) \rightarrow \ol \Q_\ell$ for each $\ell \neq p$. This in turn induces a compatible system of $(\rho_{\chi})_{\ol \Q_\ell} \in \rH^1(W_F, \wh G(\ol \Q_\ell))$ for each $\ell \neq p$.
In particular, irreducibility of a single $(\rho_{\chi})_{\ol \Q_\ell}$ is equivalent to irreducibility of all $(\rho_{\chi})_{\ol \Q_\ell}$.

If $\chi$ restricts to a $\ol \Z_\ell$-valued character on $\Exc(W_F, \wh G_{\ol \Z_\ell})$, then this integral character can be base changed to $\ol \F_\ell$. This corresponds to a semisimple L-parameter which we denote $(\rho_{\chi})_{\ol \F_\ell}$; informally, it is the ``semisimplification of the mod $\ell$ reduction of $\chi_{\Q_\ell}$'' (cf.\ \S \ref{ssec:modular-reduction}).

\subsection{Functoriality for base change of large prime degree}

First we show the existence of a cyclic base change lifting along extensions of sufficiently large prime degree.

\begin{thm}\label{thm:base-change-functoriality}
Let $G$ be a connected reductive group over $F$, and assume that $p\neq2$. There exists a positive integer $C$, depending only on $G/F$, such that for every prime $\ell\neq p$ with $\ell>C$, every $k\in\{\ol\Q_\ell,\ol\F_\ell\}$, and every normalized irreducible Yu datum $\Psi$ for $G$ over $k$, there is an explicit normalized irreducible Yu datum $\Psi_\ell$ for $G_{F_\ell}$ over $k$ such that
\[
\rho^{\FS}(\pi(\Psi_\ell))|_{I_F} \sim \rho^{\FS}(\pi(\Psi))|_{I_F}\quad \text{and} \quad \rho_I^{\Kal}(\Psi_\ell) \sim \rho_I^{\Kal}(\Psi).
\]
Moreover:
\begin{enumerate}
\item If $\Psi$ is non-singular, then so is $\Psi_\ell$, and $\rho^{\Kal}(\Psi_\ell) \sim \rho^{\Kal}(\Psi)|_{W_{F_\ell}}$.

\item If $S_\pi\coloneqq Z_{\wh G}(\rho^{\FS}(\pi(\Psi))|_{I_F})^\circ_{\red}$ is a torus, then there exist representatives such that $\rho^{\FS}(\pi(\Psi_\ell))|_{I_F} = \rho^{\FS}(\pi(\Psi))|_{I_F}$ and such that $\rho^{\FS}(\pi(\Psi))$ and $\rho^{\FS}(\pi(\Psi_\ell))$ induce the same conjugation action of $W_{F_\ell}$ on $S_\pi$.

\item $\rho^{\FS}(\pi(\Psi))$ is irreducible if and only if $\rho^{\FS}(\pi(\Psi_\ell))$ is irreducible, and in this case $\rho^{\FS}(\pi(\Psi_\ell)) \sim \rho^{\FS}(\pi(\Psi))|_{W_{F_\ell}}$. 
\end{enumerate}
\end{thm}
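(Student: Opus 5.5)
The plan is to take $\Psi_\ell$ to be the base changed Yu datum of \cite[\S 8.2]{CF26a}, and to get $\rho^{\FS}$ in characteristic $0$ by lifting the mod $\ell$ congruence of Corollary~\ref{cor:yu-datum-base-change-parameter}. The Kaletha side is handled by \cite[Proposition~8.2.3]{CF26a}. That result gives $\rho_I^{\Kal}(\Psi_\ell)\sim\rho_I^{\Kal}(\Psi)$ and, in the non-singular case, $\rho^{\Kal}(\Psi_\ell)\sim\rho^{\Kal}(\Psi)|_{W_{F_\ell}}$. Non-singularity is preserved for $\ell$ large, because $\ell$ is then prime to the orders of the relevant finite groups of $\ol G^0_{[x]}$. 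Statement (1) then holds with no input from $\rho^{\FS}$.

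We choose $C$ so that every $\ell>C$ satisfies four conditions: $\ell$ is banal for $G$, $\ell>\rk G+1$, $\ell$ exceeds the constant of Lemma~\ref{lemma:semisimple-image}, and $\ell$ exceeds the constant $N$ of Lemma~\ref{lemma:inertia-order}. These depend only on $\wh G$, $|W_0|$ and $q$, hence only on $G/F$.

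For $k=\ol\F_\ell$ the inertial statement follows from Corollary~\ref{cor:yu-datum-base-change-parameter} by restricting to $I_F=I_{F_\ell}$.

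For $k=\ol\Q_\ell$ the argument is subtler, because the congruence lives at the same prime $\ell$ that is tied to the extension $F_\ell$. We proceed in three steps.
\begin{enumerate}
\item By banality, the depth-zero part $\tau$ and the characters $\phi_i$ have finite-order data, which we may take prime to $\ell$. So $\Psi$ admits a $\ol\Z_\ell$-form whose reduction $\ol\Psi$ is irreducible. Its base change $\Psi_\ell$ then reduces to $\ol\Psi_\ell$, since the construction of \cite[\S 8.2]{CF26a} is compatible with reduction.
\item Lemma~\ref{lem:L-parameter-of-reduction} and Corollary~\ref{cor:yu-datum-base-change-parameter} give
\[
\ol{\rho^{\FS}(\pi(\Psi))}^{\ss}|_{W_{F_\ell}}\sim\ol{\rho^{\FS}(\pi(\Psi_\ell))}^{\ss}.
\]
\item Lemma~\ref{lemma:inertia-order}, applied to the parameter of $G_{F_\ell}$, shows that both $\rho^{\FS}(\pi(\Psi))(I_F)$ and $\rho^{\FS}(\pi(\Psi_\ell))(I_F)$ are finite of order prime to $\ell$. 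Lemma~\ref{lemma:semisimple-image} shows these parameters are semisimple. Restricted to $I_F$, reduction mod $\ell$ is injective on conjugacy classes of such homomorphisms: they are rigid by Lemma~\ref{lemma:finite-subgroups-invertible-order} and the smoothness of $\underline{\Hom}(\Gamma,\wh G)$ from \cite[Lemma A.1]{DHKM25}. This is the lifting statement of \cite[Lemma~5.3.2]{Cot26b}, and it yields the inertial conjugacy in characteristic $0$.
\end{enumerate}

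For (2), fix representatives that agree on $I_F$. If $S_\pi$ is a torus, Lemma~\ref{lemma:fixed-point-torus-flat} shows that its mod $\ell$ counterpart is a torus. Lemma~\ref{lemma:torus-action-mod-ell} then reduces the equality of the $W_{F_\ell}$-actions to the same equality mod $\ell$. That equality follows from the congruence of full parameters over $W_{F_\ell}$, after adjusting the conjugating element within $Z_{\wh G}(\rho(I_F))$. Tori have étale automorphism schemes, so the action is determined by its reduction.

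For (3), irreducibility passes between $\ol\Q_\ell$ and $\ol\F_\ell$ by Lemma~\ref{lemma:check-irreducibility-mod-ell}. This requires images of order prime to $\ell$; Lemma~\ref{lem:semisimple-finite-image} bounds the primes dividing the image in terms of the primes dividing the inertia image, which we enlarge $C$ to exceed. An irreducible parameter restricted to the unramified extension $W_{F_\ell}$ of large prime degree stays irreducible. We then lift the mod $\ell$ conjugacy of full parameters, again using that the image has order prime to $\ell$.

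The step I expect to be the main obstacle is making $C$ uniform while still applying these results over $F_\ell$. For this, Lemmas~\ref{lemma:inertia-order} and \ref{lem:semisimple-finite-image} must be applied to $G_{F_\ell}$ with bounds independent of $\ell$. The $\ld G$ over $F_\ell$ has the same $\wh G$ and a $W_0$ of bounded size (for $\ell$ prime to $|W_0|$), so I expect this to go through. A second point to check is that the prime-to-$\ell$ bound on the full image in (3) really is independent of $\ell$.
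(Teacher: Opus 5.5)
Your overall architecture matches the paper's: $\Psi_\ell$ is the \S 8.2 base change of an integral model, the Kaletha side comes from \cite[Proposition~8.2.3]{CF26a}, and the mod $\ell$ comparison comes from Corollary~\ref{cor:yu-datum-base-change-parameter} and Lemma~\ref{lem:L-parameter-of-reduction}. The inertial statement is lifted using Lemma~\ref{lemma:inertia-order} and \cite[Lemma~5.3.2]{Cot26b}, and (2) is Lemma~\ref{lemma:torus-action-mod-ell} after lifting the conjugating element through the smooth inertial centralizer. However, the integral model does not exist \emph{by banality}. Banality only controls the data on compact open subgroups. For $k=\ol\Q_\ell$ the $\phi_i$ and the central character of $\pi(\Psi)$ can have infinite order (e.g.\ a non-unitary unramified $\phi_d$), in which case $\pi(\Psi)$ has no stable lattice at all. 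Even finite-order $\phi_i$ may have order divisible by $\ell$. The paper first twists by a character of $G(F)$ (all three parametrizations are compatible with twists), so that the central character $\zeta$ has finite order with prime divisors bounded by $C$ independently of $\Psi$. It then passes to a $G$-equivalent datum via \cite[Proposition~6.6.3, Lemma~6.6.5]{CF26a}, so that the $\phi_i$ and the central character of $\tau$ have order prime to $\ell$. Only then is the $\ol\Z_\ell$-version of the \S 8.2 construction defined.

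The ``second point to check'' you flag in (3) is a real gap, but it is not a uniformity-over-$F_\ell$ issue: Lemmas~\ref{lemma:inertia-order} and \ref{lem:semisimple-finite-image} are already stated for $W_{F_\ell}$-parameters with constants independent of $F$. The problem is that Lemma~\ref{lem:semisimple-finite-image} requires $G$ semisimple, so it only bounds the image under $f\colon\ld G\to\ld G/Z(\wh G)$. The image under $h\colon\ld G\to\ld G/\wh G_{\der}$ must be handled separately. By compatibility of $\rho^{\FS}$ with central characters and the Kummer sequence, its order has prime divisors dividing $\ord(\zeta)\cdot|W_0|$; this is where the twisting reduction is used again. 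The kernel of $f\times h$ is then absorbed by taking $C>|Z(\wh G_{\der})|$. Without this, for $G$ with a central torus, neither your lift of the full conjugacy nor your use of the converse in Lemma~\ref{lemma:check-irreducibility-mod-ell} goes through. The paper gets the irreducibility equivalence more cheaply. The argument for (2) shows the two inertial centralizers are tori simultaneously and carry the same $W_{F_\ell}$-action, and \cite[Lemma~10.1.7]{CF26a} with $\ell>\rk G+1$ finishes. So the full-image bound is needed only for the final conjugacy. Finally, Lemma~\ref{lemma:fixed-point-torus-flat} goes from the special fiber to the generic fiber, the opposite of what you use in (2). The direction you need is that $S_\pi$ being a torus forces the mod $\ell$ inertial centralizer to be a torus. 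This follows from smoothness of the inertial centralizer over $\ol\Z_\ell$ when $\rho(I_F)$ has order prime to $\ell$, as packaged in \cite[Lemma~5.3.2]{Cot26b}.
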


\begin{proof}
Let
\[
\Psi = ((G^i)_{0 \leq i \leq d}, x, (r_i)_{0 \leq i \leq d}, \tau, (\phi_i)_{0 \leq i \leq d})
\]
be the Yu datum chosen in the statement. We begin with some preparations to simplify the form of $\Psi$. Recall that $\rho^{\FS}$, $\rho_I^{\Kal}$, and $\rho^{\Kal}$ are all compatible with twists by characters, so we may assume that $\pi(\Psi)$ has finite order central character $\zeta$; we may further choose $C$ large enough (independently of $\Psi$) such that the prime numbers dividing the order of $\zeta$ are $\leq C$. By \cite[Proposition~6.6.3 and Lemma~6.6.5]{CF26a}, if $C$ is large enough then we may replace $\Psi$ by a $G$-equivalent Yu datum to assume that each $\phi_i$ has finite order prime to $\ell$ and the central character of $\tau$ has finite order prime to $\ell$. In particular, if $k = \ol\Q_\ell$ then $\Psi$ may be assumed to be the generic fiber of a normalized Yu datum $\Psi_0$ for $G$ over $\ol\Z_\ell$. Similarly, if $k = \ol\F_\ell$ then $\Psi$ may be assumed to be the special fiber of some such $\Psi_0$. Set $\ol\Psi \coloneqq \Psi_0 \otimes_{\ol\Z_\ell} \ol\F_\ell$, and let $\ol\tau$ denote its depth $0$ representation.  We may further assume that any $\ell > C$ is banal; note that $\ell$ is also banal for $G(F_\ell)$ by \cite[Proposition~4.1.2(2)]{Cot26b}.

Let $(\Psi_0)_\ell$ be the ``base change'' Yu datum constructed from $\Psi_0$ in \cite[\S 8.2]{CF26a}. Set $\ol\Psi_\ell = (\Psi_0)_\ell \otimes_{\ol\Z_\ell} \ol\F_\ell$, and let $\ol\tau_\ell$ denote its depth $0$ representation. If $k = \ol\Q_\ell$, we set $\Psi_\ell = (\Psi_0)_\ell \otimes_{\ol\Z_\ell} \ol\Q_\ell$, while if $k = \ol\F_\ell$ we set $\Psi_\ell = \ol\Psi_\ell$. Note that $\Psi_\ell$ is normalized. Let $\Omega$ denote the absolute Weyl group of $G$. If $C$ is chosen to be larger than $\rk G + 1$ and $|\Omega|^2$, then \cite[Proposition~8.2.3]{CF26a} yields the claims for $\rho_I^{\Kal}$ and $\rho^{\Kal}$.

Next we analyze $\rho^{\FS}$. If $\ol\tau$ is non-singular and $\ell > \rk G + 1$ and $\ell$ is larger than the constant $A$ from \cite[Proposition~4.3.4(2)]{Cot26b}, then $\ol\tau_\ell$ is also non-singular by \cite[Proposition~4.3.4(1)--(2)]{Cot26b} and the fact that composition with $\Fr_\ell$ preserves non-singularity. By Corollary~\ref{cor:yu-datum-base-change-parameter}, we have
\begin{equation}\label{eq:bc-fs-param-identity-modular}
\rho^{\FS}(\pi(\ol\Psi_\ell)) \sim \rho^{\FS}(\pi(\ol\Psi))|_{W_{F_\ell}}.
\end{equation} 
For (2), note first that the set of prime numbers dividing the order of $\rho^{\FS}(\pi(\Psi))(I_F)$ is bounded independently of $\Psi$ by \cite[Lemma 2.2]{DHKM25} and Lemma~\ref{lemma:semisimple-image}. By Lemma~\ref{lemma:inertia-order}, we may choose $C$ large enough that for each $\ell > C$, the order of $\rho^{\FS}(\pi(\Psi_\ell))(I_F)$ is prime to $\ell$. By \cite[Lemma~5.3.2]{Cot26b} and \eqref{eq:bc-fs-param-identity-modular}, it follows that if $\ell > C$ then $Z_{\wh G}(\rho^{\FS}(\pi(\Psi))|_{I_F})^\circ_{\red}$ is a torus if and only if $Z_{\wh G}(\rho^{\FS}(\pi(\Psi_\ell))|_{I_F})^\circ_{\red}$ is a torus, and similarly for $\rho^{\FS}(\pi(\ol\Psi))$ and $\rho^{\FS}(\pi(\ol\Psi_\ell))$. We may assume that \eqref{eq:bc-fs-param-identity-modular} is an equality, so (2) follows from Lemma~\ref{lemma:torus-action-mod-ell}, and the irreducibility assertion in (3) follows from \cite[Lemma~10.1.7]{CF26a}.

Now suppose that $\rho^{\FS}(\pi(\Psi))$ is irreducible. By Lemma~\ref{lem:semisimple-finite-image}, if $f\co \ld G \to \ld G/Z(\wh G)$ is the quotient map then we may choose $C$ large enough that for each $\ell > C$, the only prime numbers dividing the order of $f(\rho^{\FS}(\pi(\Psi_\ell))(W_{F_\ell}))$ either divide the order of $f(\rho^{\FS}(\pi(\Psi))(I_F))$ or are $\leq C$. If $h\co \ld G \to \ld G/\wh G_{\der}$ is the quotient map, then by compatibility with central characters and the Kummer sequence, every prime dividing the order of either $h(\rho^{\FS}(\pi(\Psi))(W_F))$ or $h(\rho^{\FS}(\pi(\Psi_\ell))(W_{F_\ell}))$ divides $\ord(\zeta) \cdot |W_0|$, so every prime dividing either order is at most $C$ if $C > |W_0|$. If $C$ is chosen larger than the order of $Z(\wh G_{\der})$, then (3) follows.
\end{proof}

\subsection{Descent to unramified twisted Levis}

Throughout this section, let
\[
\Psi = ((G^i)_{0\leq i\leq d}, x, (r_i)_{0\leq i\leq d}, \tau, (\phi_i)_{0\leq i\leq d})
\]
be a normalized irreducible Yu datum for $G$ with coefficients in a field $k \in \{\ol\Q_\ell, \ol\F_\ell\}$.

\begin{thm}\label{thm:unramified-twisted-Levi-functoriality}
Assume $p\neq 2$. Let $\pi = \pi(\Psi)$. Suppose that $G^0 \cap G_{\der}$ is not a totally ramified torus. Then there exists a proper unramified twisted Levi $F$-subgroup $M \subset G$ and a normalized irreducible Yu datum $\Psi_M$ for $M$ over $k$ such that
\begin{enumerate}
\item The following two $\wh G(k)$-conjugacy relations hold:
\begin{equation}\label{eqn:FS-inertial-unram-functoriality}
\rho^{\FS}(\pi)|_{I_F}
\sim
\ld j_{M,G}\circ \rho^{\FS}(\pi(\Psi_M))|_{I_F},
\end{equation}
\[
\rho_I^{\Kal}(\Psi)
\sim
\ld j_{M,G}\circ \rho_I^{\Kal}(\Psi_M).
\]
\item Assuming in addition that $S_\pi \coloneqq Z_{\wh G}(\rho^{\FS}(\pi)(I_F))^\circ_{\red}$ is a torus, then there are representatives of $\rho^{\FS}(\pi)$ and $\rho^{\FS}(\pi_H)$ such that equality holds in \eqref{eqn:FS-inertial-unram-functoriality} and such that $\rho^{\FS}(\pi)$ and $\ld j_{M,G}\circ \rho^{\FS}(\pi(\Psi_M))$ induce the same conjugation action of $W_F$ on $S_\pi$.
\item If $\Psi$ is non-singular, then $\Psi_M$ is non-singular and
\[
\rho^{\Kal}(\Psi) \sim \ld j_{M, G} \circ \rho^{\Kal}(\Psi_M).
\]
\item If $k$ is a field, then $\rho^{\FS}(\pi)$ is irreducible if and only if $\ld j_{M,G} \circ \rho^{\FS}(\pi(\Psi_M))$ is irreducible, and in this case there is a cocycle $z\co W_F/I_F \to Z(\wh M \cap \wh G_{\der})(k)^{I_F}$ such that 
\[
\rho^{\FS}(\pi) \sim z \cdot \ld j_{M,G} \circ \rho^{\FS}(\pi(\Psi_M)).
\]
\end{enumerate}
Moreover, if $k = \ol\Q_\ell$ and $\pi$ has central character valued in $\ol\Z_\ell^\times$, then we may choose $\Psi_M$ such that it extends to a Yu datum for $M$ over $\ol\Z_\ell$.
\end{thm}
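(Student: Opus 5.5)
The plan is to combine the two modular functoriality inputs already established, large-degree unramified base change (Theorem~\ref{thm:base-change-functoriality}) and descent along spectral endoscopy (Corollary~\ref{cor:yu-datum-dl-res-parameter}), with the lifting result Corollary~\ref{cor:lift-tame-reps-to-char-0}, and then to pass from mod-$\ell$ congruences to characteristic $0$ via \cite[Lemma~5.3.2]{Cot26b} and independence of $\ell$.

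\emph{Step 1 (choice of $M$).} Since $G^0\cap G_{\der}$ is not totally ramified, take $[(T,\theta)]\in\cT_0(\Psi)$ with $T$ elliptic in $G^0$ and let $T_0$ be its maximal unramified subtorus. Then $T_0\cap G_{\der}$ is nontrivial, and we set $M=Z_G(T_0\cap G_{\der})$ (or $Z_G$ of the maximal unramified subtorus of $T\cap G_{\der}$). This is a proper unramified twisted Levi containing $T$.

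\emph{Step 2 (reduction to spectral endoscopy).} Twist so that the central character has finite order, and choose a large prime $\ell'$ (different from $\ell$ when $k=\ol\Q_\ell$; we work with $\ol\Z_{\ell'}$-models via independence of $\ell$). Base change to $F_{\ell'}$ by Theorem~\ref{thm:base-change-functoriality}. By \cite[Lemma~5.3.5]{Cot26b}, after passing to $F_{\ell'}$ (or a further unramified extension of degree prime to the relevant primes), $T_0$ acquires an $F$-rational element $s$ of order $\ell'$ with $Z_G(s)=M$. Lemma~\ref{lem:centralizer-connected} ensures the centralizer is connected once $\ell'>\#\pi_1(G_{\der})$. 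Taking $\ell'$ also large enough makes the hypotheses of Theorem~\ref{thm:yu-datum-dl-res} hold: goodness, the index condition, $\theta$ of order prime to $\ell'$, and the field-of-definition condition. Corollary~\ref{cor:yu-datum-dl-res-parameter} then gives, modulo $\ell'$ and after dropping $\ss$ by taking $\ell'>\rk G+1$, a Yu datum $\ol\Psi_{M}$ whose parameter matches $\rho^{\FS}(\ol\pi_0)$ up to an unramified cocycle $z$. In particular the two parameters agree on $I_F$.

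\emph{Step 3 (lifting and characteristic $0$).} The datum $\ol\Psi_M$ lives over $\ol\F_{\ell'}$ only. Apply Corollary~\ref{cor:lift-tame-reps-to-char-0} to replace it by a Levi $L\subset M$ and a datum $\Psi_L$ over $\ol\Z_{\ell'}$ with irreducible generic fiber, compatible with both $\rho^{\FS}$ and $\rho_I^{\Kal}$. We then take $\Psi_M$ to be an irreducible Yu datum for $M$ with $\pi(\Psi_M)$ a subquotient of $I^M(\pi(\Psi_L))$. Alternatively, if $L\neq M$ is problematic, we retain $L$ itself; in that case $M$ should be replaced by the twisted Levi $Z_M(S)$ from Corollary~\ref{cor:lift-tame-reps-to-char-0}, which is still proper and unramified.

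By Lemma~\ref{lem:L-parameter-of-reduction}, the characteristic-$0$ parameters $\rho^{\FS}(\pi)|_{I_F}$ and $\ld j_{M,G}\circ\rho^{\FS}(\pi(\Psi_M))|_{I_F}$ are congruent mod $\ell'$. By Lemma~\ref{lemma:inertia-order} and Lemma~\ref{lemma:semisimple-image}, their inertial images have order prime to $\ell'$ for $\ell'$ large, so \cite[Lemma~5.3.2]{Cot26b} upgrades the congruence to conjugacy; independence of $\ell$ then transports this back to $k$. The Kaletha side of (1) and (3) follow from \cite[Theorem~9.3.3]{CF26a} together with the sign bookkeeping in Lemma~\ref{lemma:eta-sign-comparison}. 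Part (2) follows from Lemma~\ref{lemma:toral-l-parameter-centralizer} and Lemma~\ref{lemma:torus-action-mod-ell}. Part (4) follows from Proposition~\ref{prop:sigma-dual-unram-agreement-on-inertia}(2), Lemma~\ref{lemma:check-irreducibility-mod-ell}, and Lemma~\ref{lem:semisimple-finite-image}, which bound the primes in the image so that $\ell'$ can be taken coprime to it.

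The main obstacle is Step 3: making the lifted $M$-datum uniform in $\ell'$ so that the independence-of-$\ell$ gluing applies. The depth-zero piece $\tau_{H,0}$ produced by Lusztig restriction depends on $\ell'$. I would try to choose $\Psi_M$ in characteristic $0$ first, as a constituent of ${}^*R$ of $\tau$ twisted by $\xi_H$, and then check that for infinitely many $\ell'$ its reduction contains the $\tau_{H,0}$ from Theorem~\ref{thm:yu-datum-dl-res}. Pigeonholing over the finitely many constituents then fixes a single $\Psi_M$.
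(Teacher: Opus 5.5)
Your proposal has a genuine gap: Step~2 cannot be carried out as written, and no step brings the descended datum back from $F_{\ell'}$ to $F$.

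\textbf{Step~2 fails.} Let $S$ be an unramified torus whose Frobenius has order $m$ on $X_*(S)$, and let $\ell'\nmid m$ be prime. The characteristic polynomial of Frobenius is unchanged by $\phi\mapsto\phi^{\ell'}$, and $q^{\ell'}\equiv q\pmod{\ell'}$, so $|S(\F_{q^{\ell'}})|\equiv|S(\F_q)|\pmod{\ell'}$. Hence for large $\ell'$ the group $S(F_{\ell'})$, whose prime-to-$p$ torsion is $S(\F_{q^{\ell'}})$, has no element of order $\ell'$. The degree of the auxiliary base change and the order of the torsion element must therefore be two \emph{different} primes $\ell_1\neq\ell_0$; this is what \cite[Lemma~5.3.5]{Cot26b} supplies.

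Once the primes differ, Lemma~\ref{lemma:inertia-order} no longer says that $\ell_0$ is prime to the inertia image of an arbitrary $W_{F_{\ell_1}}$-parameter; indeed $\ell_0\mid|S(\F_{q^{\ell_1}})|$. So the upgrade via \cite[Lemma~5.3.2]{Cot26b} in your Step~3 works only for parameters whose restriction to $I_F$ is known to agree with that of a $W_F$-parameter.

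Your choice $M=Z_G(T_0\cap G_{\der})$ also gives no torsion point with centralizer exactly $M$. If $T_0$ is not simple, the $\ell_0$-torsion typically lies in a proper subtorus, whose centralizer is larger. The paper instead takes a simple $F$-subtorus $S\subset T_0$ and sets $H=Z_G(S)$. It then uses Lemma~\ref{lemma:simple-tori-centralizers}, with simplicity preserved over $F_{\ell_1}$, to get $Z_G(t)=H$. The final $M$ is an $F$-Levi of $H$.

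\textbf{No descent back to $F$.} The decisive missing idea is the one you flag as the ``main obstacle''. It is needed already because Tate cohomology produces a datum for $H(F_{\ell_1})$, while the theorem asks for one over $F$. The paper fixes the following \emph{before} choosing any primes.
\begin{itemize}
\item The finite set $\mathfrak X_H$ of characteristic-$0$ members of the Lusztig series of $\ol H^0_{[x]}$ attached to the torus--character pairs of $\tau$'s series.
\item The set $\mathfrak Y_H$ of Yu data obtained from $\mathfrak X_H$ by taking cuspidal supports, lifting to $F$-Levis $M\subset H$, and twisting by $\epsilon_G\epsilon_M\epsilon^{\vec G,M}_{\sharp,x}$.
\end{itemize}
Every member of $\mathfrak Y_H$ satisfies the Kaletha relations by \cite[Theorem~9.3.3]{CF26a}. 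There are finitely many inertial classes, and the Glauberman bijection \cite[Corollaries~4.4.1 and 4.4.2]{Cot26b} identifies them with those of $\mathfrak Y_{H,F_{\ell_1}}$. Applying Theorem~\ref{thm:base-change-functoriality} to $\Psi$ and to fixed representatives then moves the problem to $F_{\ell_1}$ and back. It also makes condition~(4) of Lemma~\ref{lemma:endoscopic-reduction} achievable.

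A single prime $\ell_0$ then suffices, with no pigeonholing. The mod-$\ell_0$ output $\ol\tau_{H,0}$ lifts into $\mathfrak X_H$ by \cite[Corollary~2.8.6]{Cot26b}. Proposition~\ref{prop:tame-cuspidal-parabolic-induction}, together with $\xi_H\epsilon_H\epsilon_M\epsilon^{\vec H,M}_{\sharp,x}=\epsilon_G\epsilon_M\epsilon^{\vec G,M}_{\sharp,x}$, then places $\pi(\ol\Psi_H)$ in $I_Q^H$ of a reduction of $\pi(\Psi_M)$ for some $\Psi_M\in\mathfrak Y_H$.

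Your first Step~3 option, taking $\pi(\Psi_M)$ to be a subquotient of $I^M(\pi(\Psi_L))$ with $L\neq M$, is impossible in characteristic $0$.

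Finally, you do not treat the case $k=\ol\F_\ell$. With modular coefficients you cannot change the prime without first lifting; the paper does this via Corollary~\ref{cor:lift-tame-reps-to-char-0} and induction on semisimple rank.
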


\begin{remark}
We expect that the cocycle in Theorem~\ref{thm:unramified-twisted-Levi-functoriality}(4) can be chosen to be a coboundary (and thus removed from the statement), but this would require a more precise calculation of $\sigma$-dual homomorphisms than is given in Proposition~\ref{prop:sigma-dual-unram-agreement-on-inertia}. Fortunately, this is not needed for the upcoming calculation of L-parameters in Theorem~\ref{thm:main-tame}.
\end{remark}

\subsubsection{First reductions}\label{sss:levi-functoriality-setting}

The proof of Theorem~\ref{thm:unramified-twisted-Levi-functoriality} occupies the remainder of this subsection; we will retain its hypotheses and notation throughout.

\begin{lemma}\label{lemma:unram-twisted-levi-first-reductions}
    It suffices to prove Theorem~\ref{thm:unramified-twisted-Levi-functoriality} under the additional assumptions that $k = \ol\Q_\ell$, that each $\phi_i$ is of finite order, and that $\pi$ has finite order central character.
\end{lemma}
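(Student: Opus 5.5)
We reduce in three independent steps: from $\ol\F_\ell$ to $\ol\Q_\ell$ coefficients, to finite-order characters $\phi_i$, and to finite-order central character. Each step uses the compatibility of all objects in Theorem~\ref{thm:unramified-twisted-Levi-functoriality} ($\rho^{\FS}$, $\rho_I^{\Kal}$, $\rho^{\Kal}$, irreducibility, the centralizer $S_\pi$) with the relevant operation.

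\emph{Twisting.} First we handle the characters. Recall that $\rho^{\FS}$, $\rho_I^{\Kal}$ and $\rho^{\Kal}$ are compatible with twists by characters of $G(F)$. Twisting by a character $\chi$ of $G(F)$ changes each parameter by the corresponding central cocycle $z_\chi\co W_F\to Z(\wh G)$. Such a cocycle commutes with $\ld j_{M,G}$, since $Z(\wh G)\subset Z(\wh M)$. It also preserves irreducibility and the centralizer $S_\pi$, and it is compatible with $\Psi\mapsto\Psi_M$, i.e.\ with restricting $\chi$ to $M(F)$. We proceed as in the first paragraph of the proof of Theorem~\ref{thm:base-change-functoriality}, using \cite[Proposition~6.6.3 and Lemma~6.6.5]{CF26a}. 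After a twist and replacement by a $G$-equivalent Yu datum, we may assume that $\pi$ has finite-order central character and that each $\phi_i$ has finite order. Since the Yu datum is normalized, the only content here is that a suitable unramified or central twist achieves both conditions simultaneously. The twist is by an $F$-rational character, so it can be undone on $M$ by restriction, and conclusions (1)--(4) transport back. For the final ``Moreover'' clause, if the central character is $\ol\Z_\ell^\times$-valued we choose the twisting character to be integral. Then the twisted $\Psi_M$ still extends over $\ol\Z_\ell$.

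\emph{From $\ol\F_\ell$ to $\ol\Q_\ell$.} If $k=\ol\F_\ell$, we lift the datum: take Teichmüller lifts of the $\phi_i$, and lift $\tau$ to a cuspidal $\ol\Q_\ell$-representation with integral lattice. This is the main obstacle, since cuspidal lifts of $\ol\tau$ need not exist. We handle it as in Corollary~\ref{cor:lift-tame-reps-to-char-0}. There is a Levi $L\subset G$ and a datum $\Psi_L$ over $\ol\Z_\ell$ with irreducible generic fiber such that $\pi(\ol\Psi)$ is a subquotient of $I_P^G(\pi(\ol\Psi_L))$, and the $\rho^{\FS}$ and $\rho_I^{\Kal}$ parameters are related via $\ld j_{L,G}$.

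If $L=G$, which is the case in particular when $\ol\Psi$ is non-singular, we apply the $\ol\Q_\ell$ case to the lift. We then reduce mod $\ell$ using Lemma~\ref{lem:L-parameter-of-reduction}. The Kaletha side is compatible with reduction by construction. For the restriction to $I_F$, note that inertia has image of order prime to $\ell$ after the finite-order reduction. For (2), we use Lemma~\ref{lemma:torus-action-mod-ell} to pass the statement about $S_\pi$ between characteristics. For (4), we use Lemma~\ref{lemma:check-irreducibility-mod-ell}.

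If $L\neq G$, the unramified twisted Levi $M$ can be taken containing $L$, or alternatively obtained from the $L$-datum and then combined with $\ld j_{L,G}$. Here we need transitivity of $\ld j$ (cf.\ \cite[Lemma~4.5.2]{CF26a}). Checking that the hypothesis that $G^0\cap G_{\der}$ is not totally ramified descends appropriately is where care is needed.

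Finally, we recombine the three reductions in the order twist, then lift, then descend, and conclude that the stated additional assumptions suffice.
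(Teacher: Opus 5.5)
Your route is the paper's: lift with Corollary~\ref{cor:lift-tame-reps-to-char-0}, compare parameters with Lemma~\ref{lem:L-parameter-of-reduction}, transfer (2) with Lemma~\ref{lemma:torus-action-mod-ell}, and get the finiteness conditions by a twist plus \cite[Proposition~6.6.3 and Lemma~6.6.5]{CF26a}. The gap is that you leave the case $L\neq G$ open, and your recursive suggestion does not work in general.

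The Levi $L$ in Corollary~\ref{cor:lift-tame-reps-to-char-0} comes from the cuspidal support of an \emph{arbitrary} characteristic-$0$ lift $\wt\tau$ of $\ol\tau$, so it can be very small. For example, it is a maximal torus of a split depth-zero $G$ when $\wt\tau$ is a principal-series constituent. Such an $L$ has no proper unramified twisted Levi at all, so Theorem~\ref{thm:unramified-twisted-Levi-functoriality} says nothing about $(L,\Psi_L)$; this is exactly the worry you flag. The missing observation is that no recursion is needed. A proper $F$-Levi is itself a proper unramified twisted Levi, so take $M=L$ and $\Psi_M=\ol\Psi_L$. Then:
\begin{itemize}
\item Corollary~\ref{cor:lift-tame-reps-to-char-0}(3) is the Kaletha half of (1).
\item Corollary~\ref{cor:lift-tame-reps-to-char-0}(4) is a full $W_F$-conjugacy $\rho^{\FS}(\pi)\sim\ld j_{L,G}\circ\rho^{\FS}(\pi(\ol\Psi_L))$. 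This gives the Fargues--Scholze half of (1) and, after choosing equal representatives, all of (2).
\item Conclusion (4) is then tautological with $z=1$.
\item Conclusion (3) never arises here, because non-singular data can be handled with $L=G$ by the last sentence of the corollary.
\end{itemize}

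There are also two smaller issues in the case $L=G$. First, your remark that $\rho(I_F)$ has order prime to $\ell$ is false for the arbitrary $\ell$ at hand; depth-zero parameters built from characters of $\F_{q^n}^\times$ of order divisible by $\ell$ are counterexamples. It is also unnecessary: you only push conjugacy from characteristic $0$ down to $\ol\F_\ell$, where comparing semisimplified reductions suffices. Second, for (2) you must first know that the lift $\pi(\Psi_L\otimes_{\ol\Z_\ell}\ol\Q_\ell)$ again has toral inertial centralizer before the $\ol\Q_\ell$ case of (2) can be invoked. This is supplied by Lemma~\ref{lemma:fixed-point-torus-flat}, and only then does Lemma~\ref{lemma:torus-action-mod-ell} carry the conclusion back to $\ol\F_\ell$.
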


\begin{proof}
    First, we show that if the result holds for $k = \ol\Q_\ell$, then it holds for $k = \ol\F_\ell$. Thus assume for the moment that $k = \ol\F_\ell$. By Corollary~\ref{cor:lift-tame-reps-to-char-0}, there is a parabolic $F$-subgroup $P \subset G$ with Levi factor $L$ and normalized irreducible Yu data $\ol\Psi_L$ and $\Psi_L$ for $L$ over $\ol\F_\ell$ and $\ol\Z_\ell$, respectively, such that $\pi(\ol\Psi_L)$ is an irreducible subquotient of $\pi(\Psi_L) \otimes_{\ol\Z_\ell} \ol\F_\ell$ and $\pi$ is an irreducible subquotient of $I_P^G(\pi(\ol\Psi_L))$ and we have
    \[
    \rho^{\FS}(\pi)|_{I_F} \sim \ld j_{L,G} \circ \rho^{\FS}(\pi(\ol\Psi_L))|_{I_F}
    \qquad\text{and}\qquad
    \rho_I^{\Kal}(\Psi) \sim \ld j_{L,G} \circ \rho_I^{\Kal}(\ol\Psi_L).
    \]
    By \cite[Chapitre II, no.\ 2.1]{Vig96} and \cite[Corollary 1.5(1)]{DHKM}, the normalized parabolic inductions $I_P^G(\pi(\Psi_L) \otimes_{\ol\Z_\ell} \ol\Q_\ell)$ and $I_P^G(\pi(\Psi_L) \otimes_{\ol\Z_\ell} \ol\F_\ell)$ are of finite length. Moreover, $I_P^G$ is exact (by \cite[Chapitre II, no.\ 2.1]{Vig96}) and sends flat $\ol\Z_\ell$-modules to flat $\ol\Z_\ell$-modules (by definition), so there is some smooth $\ol\Z_\ell$-representation $\wt\pi$ of $G(F)$ which occurs as a subquotient of $I_P^G(\pi(\Psi_L))$ such that $\wt\pi \otimes_{\ol\Z_\ell} \ol\Q_\ell$ is irreducible and $\pi$ is an irreducible subquotient of $\wt\pi \otimes_{\ol\Z_\ell} \ol\F_\ell$. By Lemma~\ref{lem:L-parameter-of-reduction} and induction on the semisimple rank of $G$, we may therefore pass from $(G, \Psi)$ to $(L, \Psi_L)$ in (1). For (2), note that if $S_\pi$ is a torus, then so is $S_{\wt\pi}$ by Lemma~\ref{lemma:fixed-point-torus-flat}. Thus we can reduce to $k = \ol\Q_\ell$ by Lemma~\ref{lemma:torus-action-mod-ell}. For (3) and (4), note that $L = G$ by Corollary~\ref{cor:lift-tame-reps-to-char-0}, so the reduction to $k = \ol\Q_\ell$ is clear.

    By twisting by a character of $G(F)$, we may assume that $\pi$ has finite order central character. By \cite[Lemmas~6.6.1, Proposition~6.6.3, and 6.6.5]{CF26a}, we may then pass to an equivalent Yu datum to assume that each $\phi_i$ is of finite order.
\end{proof}

Throughout the remainder of this section, we will assume that the hypotheses of Lemma~\ref{lemma:unram-twisted-levi-first-reductions} hold.

Call an $F$-torus $S'$ \emph{simple} if $X^*(S'_{\ol F}) \otimes_\Z \Q$ is a simple $\Q[\Gal(\ol F/F)]$-module, or equivalently if $S'$ admits no nontrivial proper $F$-subtorus. Fix a class $[(T, \theta)] \in \cT_0(\Psi)$, with notation as in \cite[Definition~7.2.1]{CF26a}. By hypothesis, the intersection $G^0 \cap G_{\der}$ is not a totally ramified torus. Since $T$ is a maximally unramified $F$-torus of $G^0$ by construction, it follows that the maximal unramified $F$-torus $T_0$ of $T \cap G_{\der}$ is nontrivial. Choose a simple $F$-subtorus $S \subset T_0$, and let $H = Z_G(S)$, so $H$ is a proper unramified twisted Levi $F$-subgroup of $G$. The unramified twisted Levi $M$ of Theorem~\ref{thm:unramified-twisted-Levi-functoriality} will be found as an $F$-Levi subgroup of $H$.

Note that
\begin{itemize}
\item $T$ is a maximally unramified \emph{elliptic}\footnote{Here we use that $k = \ol\Q_\ell$ and that $[(T, \theta)] \in \cT_0(\Psi)$, not just $\cT(\Psi)$.} maximal $F$-torus of $G^0$ with $x \in \cB(T)$, and there is a character $\theta_0 \co \ol T_{[x]}(\F_q) \to \ol\Q_\ell^\times$ such that $\tau$ is an irreducible constituent of $R_{\ol T_{[x]}}^{\ol G^0_{[x]}}(\theta_0)$ and $\theta = \theta_0 \cdot \prod_{i=0}^d \phi_i|_{T(F)_{[x]}}$. Since each $\phi_i$ is of finite order, so is $\theta_0$.
\item The point $x$ lies in $\cB(T) \subset \cB(H)$, and $Z(H^0)/Z(H)$ is anisotropic because $T$ is elliptic.
\end{itemize}
Let $\ol Z$ be the center of $\ol G_{[x]}^0$. Below, we will regard $\ld G$ as $\wh G \rtimes W_0$, where $W_0$ is a finite quotient of $W_F$ such that the map $\ld j_{H,G}$ factors through a map $\wh H \rtimes W_0 \to \wh G \rtimes W_0$.

\subsubsection{The candidate data}\label{sss:candidate-data}

For each maximal $\F_q$-torus $\ol S^\circ \subset \ol H^0_{[x]}$, set $\ol S = \ol S^\circ \cdot \ol Z$. For a character $\eta_0 \co \ol S(\F_q) \to \ol\Q_\ell^\times$, let $\cE(\ol H^0_{[x]}, [\ol S, \eta_0])$ denote the corresponding semi-rational Lusztig series from \cite[Definition~2.8.2]{Cot26b}. Consider the set
\begin{equation}\label{eqn:sigma-G-H-indexing-set}
\mathfrak X_H \coloneqq \bigcup_{\cE(\ol G^0_{[x]}, [\ol S,\eta_0]) = \cE(\ol G^0_{[x]}, [\ol T_{[x]},\theta_0])} \cE(\ol H^0_{[x]}, [\ol S,\eta_0]),
\end{equation}
where the union is over $\ol H^0_{[x]}(\F_q)$-conjugacy classes of torus-character pairs $(\ol S, \eta_0)$. The set $\mathfrak X_H$ is finite by \cite[Lemma~2.8.3]{Cot26b}.

We define a set $\mathfrak Y_H$ of Yu data for $H$ constructed from $\Psi$ via the procedure in \cite[\S 9]{CF26a}, which we now recall concisely.
The construction has the following shape:
\[
\begin{tikzcd}[column sep=4.8em, row sep=3em]
\text{(1) }\tau_{H,0}\in\mathfrak X_H
\ar[r, "\text{cuspidal}", "\text{support}"'] &
\text{(2) }(\ol M^0,\tau_{M,0})
\ar[d, "\text{lift to an }F\text{-Levi}"'] \\
&
\text{(3) }(M,Q)
\ar[r, "\text{sign}", "\text{twisting}"'] &
\text{(4) }\tau_M
\ar[r, "\text{extension}"'] &
\text{(5) }\Psi_M\in\mathfrak Y_H .
\end{tikzcd}
\]
where the steps are detailed as follows.
\begin{enumerate}
\item Choose $\tau_{H,0} \in \mathfrak X_H$.
\item Choose a split $\F_q$-torus $\ol T_0 \subset (\ol H^0_{[x]})^\circ$, let $\ol M^0 = Z_{\ol H^0_{[x]}}(\ol T_0)$, let $\ol Q^\circ$ be a parabolic $\F_q$-subgroup of $(\ol H^0_{[x]})^\circ$ with Levi $(\ol M^0)^\circ$, let $\ol Q = \ol M^0 \cdot \ol Q^\circ$, and let $\tau_{M,0}$ be a cuspidal $\ol\Q_\ell$-representation of $\ol M^0(\F_q)$ such that $\tau_{H,0}$ is an irreducible constituent of $\ind_{\ol Q(\F_q)}^{\ol H^0_{[x]}(\F_q)}(\tau_{M,0})$. Such data exist, and the pair $(\ol M^0, \tau_{M,0})$ is unique up to $\ol H^0_{[x]}(\F_q)$-conjugacy, by \cite[Lemma~2.10.2]{Cot26b}.
\item Next, choose a split $F$-torus $T_0 \subset H^0$ lifting $\ol T_0$, let $M = Z_H(T_0)$, set $M^0\coloneqq M\cap G^0$, and let $Q$ be a parabolic $F$-subgroup of $H$ with Levi factor $M$ lifting $\ol Q^\circ$. \cite[Lemmas~9.1.1 and 9.1.2]{CF26a}, with $(\ol S^\circ,\cS,L)$ replaced by $(\ol T_0,\cT_0,M)$, show that $T_0$ is the maximal $F$-split central torus of $M^0$, that $Z(M^0)/Z(M)$ is anisotropic, and that $[x]_{M^0}$ is a vertex of $\cB(M^0_{\der})$. The torus $T_0$, and thereby also the subgroup $M$, is unique up to $G(F)$-conjugacy by deformation theory for tori \cite[Exp.\ IX, Corollaire 7.3]{SGA3II}.

\item Choose a generalized maximal torus-character pair $(\ol T_M, \theta_{M,0})$ with $\tau_{M,0} \in \cE(\ol M^0, [\ol T_M, \theta_{M,0}])$ and an irreducible $\ol\Q_\ell$-representation $\tau_M$ of $\ol M^0_{[x]_M}(\F_q)$ whose restriction to $\ol M^0_{[x]}(\F_q) = \ol M^0(\F_q)$ admits $\tau_{M,0} \otimes \epsilon_G\epsilon_M\epsilon_{\sharp,x}^{\vec{G},M}$, with notation as in \cite[\S\S 3.3 and 6.5]{CF26a}. Thus \cite[Lemma~2.10.3]{Cot26b} shows that if $\ol T_{M,1} = Z_{\ol M^0_{[x]_M}}(\ol T_M)$ then there is a character
\[
\theta_{M,1}\co \ol T_{M,1}(\F_q) \to \ol\Q_\ell^\times \quad \text{extending} \quad \theta_{M,0} \cdot (\epsilon_G\epsilon_M\epsilon_{G,\sharp,x}\epsilon_{M,\sharp,x})|_{\ol T_M(\F_q)}
\]
such that
\begin{equation}\label{eq:sigma-M-depth-zero-condition}
\tau_M \in \cE\left(\ol M^0_{[x]_M},
\left[
\ol T_{M,1},
\theta_{M,1}\right]\right),
\end{equation}
\item We set
\[
\Psi_M = ((M^i), x, (r_i), \tau_M, (\phi_i|_{M^i(F)})), \qquad M^i \coloneqq M \cap G^i.
\]
Observe that $\Psi_M$ is a normalized Yu datum by \cite[Proposition~9.3.2]{CF26a}.
\end{enumerate}
Let $\mathfrak Y_H$ be the set of all Yu data $\Psi_M$ obtained by carrying out this construction, as $\tau_{H,0}$ runs through $\mathfrak X_H$. We declare two elements $\Psi_M$ and $\Psi_{M'}$ of $\mathfrak Y_H$ to be \emph{inertially equivalent} if there is some $g \in G(F)$ such that $M = gM'g^{-1}$ and there is an unramified character $\chi\co M(F)\to\ol\Q_\ell^\times$ such that $\pi(\Psi_M)\cong{}^g\pi(\Psi_{M'})\otimes\chi$.

\begin{lemma}\label{lemma:finite-adapted-inertial-classes}
The set $\mathfrak{Y}_H$ consists of finitely many inertial equivalence classes, and each inertial equivalence class contains some Yu datum $\Psi_M$ such that $\pi(\Psi_M)$ has finite order central character.
\end{lemma}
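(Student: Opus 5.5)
The plan is to track the finitely many discrete choices in the construction of \S\ref{sss:candidate-data} and show that the remaining continuous freedom amounts to an unramified twist.

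\textbf{Finiteness of the depth-zero data.} The set $\mathfrak X_H$ is finite by \cite[Lemma~2.8.3]{Cot26b}, so step (1) involves finitely many choices. For each $\tau_{H,0}$, step (2) produces a pair $(\ol M^0,\tau_{M,0})$ that is unique up to $\ol H^0_{[x]}(\F_q)$-conjugacy, by \cite[Lemma~2.10.2]{Cot26b}. Step (3) produces $T_0$, and hence $M$, uniquely up to $G(F)$-conjugacy, by the deformation theory of tori. After conjugating by $g\in G(F)$, we may therefore assume that $M$, $\ol M^0$ and $\tau_{M,0}$ are fixed. The conjugating elements may be taken in $H^0(F)_{[x]}$, which fixes $x$ and the $\phi_i$. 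Hence the data $(M^i)$, $x$, $(r_i)$, $(\phi_i|_{M^i(F)})$ are fixed up to this conjugation.

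\textbf{Reducing the choice of $\tau_M$ to an unramified twist.} The only remaining freedom is the irreducible representation $\tau_M$ of $\ol M^0_{[x]_M}(\F_q)$ in step (4). This must restrict to contain the fixed $\tau_{M,0}\otimes\epsilon_G\epsilon_M\epsilon^{\vec G,M}_{\sharp,x}$. The group $M^0(F)_{[x]_M}$ contains $Z_{M^0}(F)M^0(F)_{[x]}$ with finite index, because $Z(M^0)/Z(M)$ is anisotropic and $[x]_M$ is a vertex. By Clifford theory, two such $\tau_M$ then differ, up to finitely many possibilities, by a character of $M^0(F)_{[x]_M}/M^0(F)_{x,0}$ that is trivial on $M^0(F)_{[x]}$. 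Such a character factors through the image of the lattice $Z_{M^0}(F)/Z_{M^0}(F)_{\mathrm b}\cong X_*(T_0)$. Since $T_0$ is split and central in $M$, every character of this lattice extends to an unramified character of $M(F)$. This is the step I expect to be the main obstacle. One has to check that the relevant characters of $M^0(F)_{[x]_M}$ really are restrictions of unramified characters $\chi$ of $M(F)$, which holds possibly after passing to a finite-index sublattice and leaves a finite ambiguity. Once that is established, the twisting compatibility of Yu's construction gives $\pi(\Psi_M\otimes\chi)\cong\pi(\Psi_M)\otimes\chi$. So each fixed choice $(\tau_{H,0},M,\tau_{M,0})$ yields only finitely many inertial classes.

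\textbf{Finite-order central character.} Within a class, the central character of $\pi(\Psi_M)$ is $\omega\cdot\phi$-type data: its restriction to $Z(M)(F)_{\mathrm b}$ is of finite order, because $\theta_0$ and the $\phi_i$ have finite order. Its value on the lattice $Z(M)(F)/Z(M)(F)_{\mathrm b}$ is an arbitrary character of a finitely generated free abelian group up to finite index. Such a character can be cancelled by an unramified character of $M(F)$ raised to an appropriate root, using that $k=\ol\Q_\ell$ is algebraically closed and that $Z(M)^\circ\to M_{\ab}$ is an isogeny. Twisting by this unramified character gives a representative with finite-order central character.
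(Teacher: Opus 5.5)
Your finiteness argument is sound and is the paper's argument in different bookkeeping. Where the paper bounds the choices of $\tau_M$ by finiteness of the semi-rational Lusztig series and of the characters $\theta_{M,1}$ up to unramified twist, you use Clifford theory relative to the finite-index subgroup $Z(M^0)(F)\,M^0(F)_{[x]}$ of $M^0(F)_{[x]_M}$. The ``finite ambiguity'' you worry about actually disappears. Let $M(F)^1$ be the common kernel of the unramified characters of $M(F)$. Since $T_0$ is the maximal split central torus of both $M$ and $M^0$, the quotient $M^0(F)_{[x]_M}/M^0(F)_{[x]}$ injects into $M(F)/M(F)^1$ modulo the image of $Z(G)(F)$. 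Divisibility of $\ol\Q_\ell^\times$ then makes every character of it the restriction of an unramified character of $M(F)$ that is trivial on $Z(G)(F)$.

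The gap is in your last step: the twisted datum need not lie in $\mathfrak Y_H$. An inertial equivalence class is a subset of $\mathfrak Y_H$, so the finite-order representative must itself come from the construction of \S\ref{sss:candidate-data}; Lemma~\ref{lemma:endoscopic-reduction} uses it that way.

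\begin{itemize}
\item Step (4) fixes the restriction of $\tau_M$ to $\ol M^0_{[x]}(\F_q)=\ol M^0(\F_q)$. This group contains the image of $Z(G)(F)$, which is infinite as soon as $Z(G)$ has a nontrivial split part.
\item An unramified $\chi$ chosen, as you propose, to cancel the central character $\omega$ on all of $Z(M)(F)/Z(M)(F)_{\mathrm b}$ can be nontrivial on $Z(G)(F)$.
\item Every member of $\mathfrak Y_H$ has the same central character on the image of $Z(G)(F)$, determined by $\theta_0$ and the fixed sign characters. This comes from the matching of Lusztig series in \eqref{eqn:sigma-G-H-indexing-set} and passage to cuspidal support. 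So if $\chi$ is nontrivial on $Z(G)(F)$, then $\tau_M\otimes\chi$ falls outside $\mathfrak Y_H$.
\end{itemize}

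To repair this, first note that $\omega$ is already of finite order on $Z(M)(F)_{\mathrm b}Z(G)(F)$:
\begin{itemize}
\item on $Z(G)(F)$, for the reason just given;
\item on $Z(M)(F)_{\mathrm b}$, because $\tau_M$ is trivial on $M^0(F)_{x,0+}$ and $Z(M)(F)_{\mathrm b}/Z(M)(F)_{0+}$ is finite (this, not finiteness of $\theta_0$, is the real reason).
\end{itemize}
Then:
\begin{itemize}
\item Choose $N$ with $\omega^N$ trivial on $Z(M)(F)_{\mathrm b}Z(G)(F)$.
\item The lattice $Z(M)(F)/Z(M)(F)_{\mathrm b}Z(G)(F)$ also injects into $M(F)/M(F)^1$ modulo the image of $Z(G)(F)$. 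Extend $\omega^N$ from it to an unramified character trivial on $Z(G)(F)$.
\item Extract an $N$th root on the free part.
\end{itemize}
The resulting $\chi$ is trivial on $M^0(F)_{[x]}$. You are therefore twisting $\tau_M$ only by a character of $\ol M^0_{[x]_M}(\F_q)/\ol M^0_{[x]}(\F_q)$, which keeps you inside $\mathfrak Y_H$ and, by the first paragraph, inside the same inertial class. This is essentially how the paper's proof proceeds, via its character $\chi_0$.
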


\begin{proof}
The set $\mathfrak X_H$ is finite by \cite[Lemma~2.8.3]{Cot26b}, so fix $\tau_{H,0}$. By construction, the subgroup $M$ constructed in (3) is unique up to $G(F)$-conjugacy. Note that the semi-rational Lusztig series \eqref{eq:sigma-M-depth-zero-condition} is finite by definition, and there are only finitely many choices of $\theta_{M,1}$ up to twist by an unramified character of $M(F)$. For the final claim, note that for any choice of $\theta_{M,1}$ as above, there is a character $\chi_0$ of $\ol M^0_{[x]_M}(\F_q)/\ol M^0_{[x]}(\F_q)$ such that $\theta_{M,1}\chi_0|_{\ol T_{M,1}(\F_q)}$ is of finite order. Moreover, for each such $\chi_0$ there is an unramified character $\chi$ of $M(F)$ such that $\chi_0 \chi|_{\ol M^0_{[x]_M}(\F_q)}$ is of finite order; the claim follows.
\end{proof}

\begin{lemma}\label{lemma:candidate-data-properties}
Let $\Psi_M \in \mathfrak{Y}_H$.
\begin{enumerate}
\item We have
\[
    \rho_I^{\Kal}(\Psi) \sim \ld j_{M,G} \circ \rho_I^{\Kal}(\Psi_M).
\]
\item If $\Psi$ is non-singular, then $\Psi_M$ is non-singular, $M = H$, and
\[
    \rho^{\Kal}(\Psi) \sim \ld j_{H,G} \circ \rho^{\Kal}(\Psi_M).
\]
\end{enumerate}
\end{lemma}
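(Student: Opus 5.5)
The plan is to recognize that the construction of $\mathfrak Y_H$ in \S\ref{sss:candidate-data} is exactly the construction of \cite[\S 9]{CF26a}, so both claims should reduce to the results established there for $\rho^{\Kal}$ — chiefly \cite[Theorem~9.3.3]{CF26a} — after verifying its hypotheses. For (1), I would first record the intermediate comparison through the unramified twisted Levi $H$. Let $\Psi_{H,\tau_{H,0}}$ be the Yu datum for $H$ with depth-zero part $\tau_{H,0}\in\mathfrak X_H$, suitably twisted by $\epsilon_G\epsilon_H\epsilon_{G,\sharp,x}\epsilon_{H,\sharp,x}$ (compare Lemma~\ref{lemma:eta-sign-comparison}). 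The condition in \eqref{eqn:sigma-G-H-indexing-set} says that $\tau_{H,0}$ lies in a Lusztig series $\cE(\ol H^0_{[x]},[\ol S,\eta_0])$ whose induced $\ol G^0_{[x]}$-series is the series of $(\ol T_{[x]},\theta_0)$. By the definition of $\rho_I^{\Kal}$ through torus-character pairs, $\rho_I^{\Kal}(\Psi)$ is determined by $\ld j_{T,G}\circ{}^L\theta|_{I_F}$, with $\theta=\theta_0\prod_i\phi_i|_{T(F)_{[x]}}$. So the claim becomes: a pair $(S,\eta)$ for the $H$-datum, lifting $(\ol S,\eta_0)$, is $G(F)$-conjugate to a pair inducing the same inertial parameter. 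One then uses $\ld j_{T,G}\sim \ld j_{H,G}\circ\ld j_{T,H}$ on inertia, which is \cite[Lemma~4.5.2]{CF26a} together with the sign bookkeeping of \cite[Lemma~3.2.2]{CF26a}.

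Next I would pass from $H$ to the $F$-Levi $M\subset H$ produced in steps (2)--(5). The construction is arranged so that \cite[Theorem~9.3.3]{CF26a} applies verbatim. Step (3) gives the hypotheses on $M$, namely \cite[Lemmas~9.1.1, 9.1.2]{CF26a}. Step (4) chooses $\theta_{M,1}$ extending $\theta_{M,0}$ times exactly the sign character used there. That theorem then gives $\rho_I^{\Kal}(\Psi_{H,\tau_{H,0}})\sim\ld j_{M,H}\circ\rho_I^{\Kal}(\Psi_M)$. Composing with $\ld j_{H,G}$ and using transitivity $\ld j_{H,G}\circ\ld j_{M,H}\sim\ld j_{M,G}$ (again \cite[Lemma~4.5.2]{CF26a}) yields (1).

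For (2), suppose $\Psi$ is non-singular. Then $(\ol T_{[x]},\theta_0)$ is in general position in $\ol G^0_{[x]}$, so every pair $(\ol S,\eta_0)$ with the same $\ol G^0_{[x]}$-series is geometrically conjugate to an elliptic torus with a regular character. The $\ol H^0_{[x]}$-series is then non-singular, and its members are cuspidal: up to sign they are irreducible Deligne--Lusztig inductions from an elliptic torus of $\ol H^0_{[x]}$, using that $T\subset H$ is elliptic. Hence the cuspidal support in step (2) is trivial, so $\ol T_0$ is central, $M=H$, and $\Psi_M$ is non-singular. The full-parameter comparison would then follow from the non-singular part of \cite[Theorem~9.3.3]{CF26a}, or directly from $\rho^{\Kal}=\ld j_{T,G}\circ{}^L\theta$ and the transitivity of $\ld j$ applied to the whole of $W_F$.

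I expect the main obstacle to be the sign bookkeeping in (1). One must match $\theta_{M,1}$, including the factor $\epsilon_G\epsilon_M\epsilon_{G,\sharp,x}\epsilon_{M,\sharp,x}$, with the character demanded by the definition of $\rho_I^{\Kal}(\Psi_M)$, so that the $\ld j$'s compose without an extra sign. A second subtlety is the ellipticity and regularity transfer in (2). My first step there would be to check directly against the formulation of \cite[Theorem~9.3.3]{CF26a}.
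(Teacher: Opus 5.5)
Your headline reduction is exactly the paper's entire proof: the five-step construction of $\mathfrak Y_H$ in \S\ref{sss:candidate-data} is verbatim the descended Yu datum preceding \cite[Theorem~9.3.3]{CF26a}, so that theorem gives both claims. However, the route you then lay out for carrying this out contains a step that fails.

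You factor the descent as $G\to H\to M$ through an intermediate ``Yu datum'' $\Psi_{H,\tau_{H,0}}$ whose depth-zero part is a twist of $\tau_{H,0}\in\mathfrak X_H$. In general this is not a Yu datum. $\mathfrak X_H$ is a union of Lusztig series, and $\tau_{H,0}$ is typically \emph{not} cuspidal. That is precisely why steps (2)--(3) pass to the cuspidal support $(\ol M^0,\tau_{M,0})$ and the $F$-Levi $M$; compare the discussion opening \S\ref{ssec:modular-cuspidal-parabolic-induction}, where only non-cuspidal lifts are available in general.

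Consequently $\rho_I^{\Kal}(\Psi_{H,\tau_{H,0}})$ is undefined, and your ``$G\to H$'' comparison has no content as stated:
\begin{itemize}
\item A lift $S$ of a non-elliptic $\ol S$ does not give a class in $\cT(\cdot)$ of any $H$-datum.
\item The Lusztig-series condition in \eqref{eqn:sigma-G-H-indexing-set} only relates the finite-field pairs. It does not relate lifts up to $G(F)$-conjugacy.
\item For the same reason, \cite[Theorem~9.3.3]{CF26a} cannot be applied to the pair $(H,M)$ with this input.
\end{itemize}
The fix is to drop the intermediate datum and apply the theorem once, to $\Psi$ and the whole chain $M\subset H\subset G$. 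That single application also yields all of (2): non-singularity of $\Psi_M$, the equality $M=H$, and the full-parameter comparison. The sign twist built into step (4) is exactly the one that theorem uses.

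Two smaller points.

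First, do not rely on the ``direct'' alternative you offer for (2), namely transitivity of the $\ld j$'s on all of $W_F$. The embeddings $\ld j_{T,G}$ and $\ld j_{H,G}\circ\ld j_{T,H}$ need not agree on $W_F$. The mismatch between the L-embeddings for $G$ and for its unramified twisted Levis is one of the sources of the sign characters (\S\ref{ss:III-signs}). The paper only invokes a transitivity statement of this kind on inertia, namely \cite[Proposition~5.3.2]{CF26a}, in the proof of Proposition~\ref{prop:unram-twisted-levi-heart}.

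Second, ``non-singular'' is weaker than ``in general position.'' The members of the $\ol H^0_{[x]}$-series are therefore constituents of $\pm R_{\ol S}^{\ol H^0_{[x]}}(\eta_0)$, not irreducible Deligne--Lusztig characters. What you actually need for $M=H$ is that this series consists of cuspidal representations. That, too, is part of what \cite[Theorem~9.3.3]{CF26a} delivers.
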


\begin{proof}
Note that the construction above is an instance of the construction of the descended Yu datum preceding \cite[Theorem~9.3.3]{CF26a}, so that theorem gives the claims.
\end{proof}

By Lemma~\ref{lemma:candidate-data-properties}, it suffices to prove the following proposition. 

\begin{prop}\label{prop:unram-twisted-levi-heart}
    If the hypotheses of Lemma~\ref{lemma:unram-twisted-levi-first-reductions} hold and $H = Z_G(S)$ is as above, then there is some $\Psi_M \in \mathfrak Y_H$ such that \eqref{eqn:FS-inertial-unram-functoriality} holds, as do conclusions (2) and (4) of Theorem~\ref{thm:unramified-twisted-Levi-functoriality}.
\end{prop}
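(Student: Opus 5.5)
The plan is to work at many large primes $\ell'$ simultaneously and glue the mod-$\ell'$ information using independence of $\ell$. By Lemma~\ref{lemma:finite-adapted-inertial-classes}, $\mathfrak Y_H$ has finitely many inertial classes, each with a representative $\Psi_M$ for which $\pi(\Psi_M)$ has finite order central character. Since $\rho^{\FS}$ restricted to $I_F$, and the conjugation action of $W_F$ on a torus, are insensitive to unramified twists, it will suffice to show the following. For infinitely many primes $\ell'$, some fixed representative $\Psi_M$ in this finite list satisfies the desired relations modulo $\ell'$ after transport to $\ol\Q_{\ell'}$. By pigeonhole, a single $\Psi_M$ then works for infinitely many $\ell'$. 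A mod-$\ell'$ congruence for infinitely many $\ell'$, together with the a priori bounds of \S\ref{sec:generalities-on-l-parameters} and \cite[Lemma~5.3.2]{Cot26b}, should upgrade to characteristic $0$ conjugacy. The same gluing handles the torus-action statement (2) via Lemma~\ref{lemma:torus-action-mod-ell}, and the irreducibility statement (4) via Lemma~\ref{lemma:check-irreducibility-mod-ell} and \cite[Lemma~10.1.7]{CF26a}.

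To produce the congruence at a large prime $\ell'$, I would first base change along the unramified extension $F_n/F$ of suitable degree $n$, so that the simple unramified torus $S$ acquires an element $s$ of order $\ell'$ with $Z_{G}(s)=H$ (\cite[Lemma~5.3.5]{Cot26b}). Since $p\nmid\ell'$, one can arrange $n$ to be a product of large primes, and iterate Theorem~\ref{thm:base-change-functoriality}. This replaces $\Psi$ by $\Psi_n$ without changing $\rho^{\FS}|_{I_F}$, $\rho_I^{\Kal}$, or the $W$-action on $S_\pi$, and it preserves irreducibility. For $\ell'$ large, $\ell'$ is good, the index hypothesis of Theorem~\ref{thm:yu-datum-dl-res} holds, and $\theta$ has order prime to $\ell'$; the rationality hypothesis on $\tau$ also holds for $\ell'$ outside a finite set. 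Lemma~\ref{lem:centralizer-connected} ensures $H_{F_n}=G^\sigma$ is connected. Corollary~\ref{cor:yu-datum-dl-res-parameter} then gives
\[
\rho^{\FS}(\ol\pi_0)\sim z\cdot \ld j_{H,G}\circ\rho^{\FS}(\pi(\Psi_{H,\tau_H}))
\]
over $F_n$, where $\ol\pi_0$ is a constituent of the reduction of $\pi(\Psi_n)$. Here $\tau_{H,0}$ lies in the reduction of a Lusztig restriction of $\tau$, so it is the reduction of an element of $\mathfrak X_H$ after adjusting the torus-character data. Lemma~\ref{lemma:eta-sign-comparison} identifies the twist $\xi_H$ with the sign twist $\epsilon_G\epsilon_M\epsilon_{G,\sharp,x}\epsilon_{M,\sharp,x}$ used in step (4) of \S\ref{sss:candidate-data}. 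Next, apply Corollary~\ref{cor:lift-tame-reps-to-char-0} to $\Psi_{H,\tau_H}$ (equivalently, Proposition~\ref{prop:tame-cuspidal-parabolic-induction} together with compatibility with parabolic induction). This rewrites the right side as $\ld j_{M,H}$ applied to the reduction of $\rho^{\FS}(\pi(\Psi_M))$ for some $\Psi_M\in\mathfrak Y_H$. By Lemma~\ref{lem:L-parameter-of-reduction}, the left side is the reduction of $\rho^{\FS}(\pi)$. Restricting to inertia kills $z$, and \cite[Lemma~4.5.2]{CF26a} handles the compatibility $\ld j_{H,G}\circ\ld j_{M,H}\sim\ld j_{M,G}$.

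The main obstacle is matching the descended datum at each $\ell'$ to an honest element of the characteristic-$0$ set $\mathfrak Y_H$ uniformly in $\ell'$, since $\tau_{H,0}$ is only known modulo $\ell'$. One must check that the cuspidal support of its lift and the sign twists match the recipe (1)--(5). This uses uniqueness of cuspidal supports (\cite[Lemma~2.10.2]{Cot26b}) and the Lusztig-series compatibility in \cite[Proposition~3.4.1]{Cot26b}. A second obstacle is descending from $F_n$ back to $F$: the set $\mathfrak Y_H$ over $F$ must map to the corresponding set over $F_n$ compatibly with the base-change recipe of \cite[\S 8.2]{CF26a}, so that equality over $W_{F_n}$, in particular on $I_F$, is what we obtain. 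For (4), the cocycle $z$ is retained, and the full conjugacy over $W_F$ (not only $W_{F_n}$) follows from Theorem~\ref{thm:base-change-functoriality}(3) and the finite-image bounds of Lemma~\ref{lem:semisimple-finite-image}.
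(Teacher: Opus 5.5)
Your architecture is the paper's: base change along an unramified extension so that $S$ acquires an element $t$ of prime order $\ell$ with $Z_G(t)=H$, Tate-cohomology descent via Corollary~\ref{cor:yu-datum-dl-res-parameter}, passage from $H$ to a Levi $M$ via Proposition~\ref{prop:tame-cuspidal-parabolic-induction}, then lifting a mod-$\ell$ inertial congruence with \cite[Lemma~5.3.2]{Cot26b}. Gluing over infinitely many $\ell'$ with a pigeonhole buys nothing, because that lemma already works at a \emph{single} prime not dividing the (a priori bounded) inertial image orders. The paper (Lemma~\ref{lemma:endoscopic-reduction}) fixes the finitely many representatives of $\mathfrak Y_H$, takes $N$ beyond all relevant bounds, and uses one pair of primes $\ell_1,\ell_0>N$ from \cite[Lemma~5.3.5]{Cot26b} with $t\in S(F_{\ell_1})$ of order $\ell_0$. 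These primes must be produced together. You cannot fix $\ell'$ and then ``arrange'' $n$ to be a product of large primes, since whether $S(F_n)$ has $\ell'$-torsion depends on the order of $q$ modulo $\ell'$, which can force small prime factors in $n$. Also, $Z_G(t)=H$ over $F_{\ell_1}$ comes from Lemma~\ref{lemma:simple-tori-centralizers} together with simplicity of $S_{F_{\ell_1}}$.

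The genuine gap is the matching step you flag as the main obstacle, and the tool you name for it fails. You correctly note that $\ol\tau_{H,0}$ occurs in the reduction of some element of $\mathfrak X_H$. But you then apply Corollary~\ref{cor:lift-tame-reps-to-char-0} to $\Psi_{H,\tau_H}$, which lifts through an \emph{arbitrary} characteristic-zero $\wt\tau$ supplied by Serre's theorem. Since $\ell$ divides $\#S(F)_{\mathrm{tors}}$ and is non-banal, such a lift may lie in a Lusztig series differing from the required one by an $\ell$-power-order character. The resulting datum then need not be in $\mathfrak Y_H$ or in any list fixed in advance, so neither your pigeonhole nor the prime-to-$\ell$ hypothesis of \cite[Lemma~5.3.2]{Cot26b} is available.

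The paper reverses the order:
\begin{enumerate}
\item Lemma~\ref{lemma:tate-descent-to-H}(1), via \cite[Corollary~2.8.6]{Cot26b}, gives $\tau_{H,0}\in\mathfrak X_H$ lifting $\ol\tau_{H,0}$.
\item One builds $\Psi_M\in\mathfrak Y_H$ from \emph{this} $\tau_{H,0}$ by the recipe of \S\ref{sss:candidate-data}.
\item One picks a constituent $\ol\tau_{M,0}$ of the reduction of its cuspidal support $\tau_{M,0}$ such that $\ol\tau_{H,0}$ is a subquotient of $\ind_{\ol Q(\F_q)}^{\ol H^0_{[x]}(\F_q)}(\ol\tau_{M,0})$; it is cuspidal by exactness of Jacquet functors.
\item One applies Proposition~\ref{prop:tame-cuspidal-parabolic-induction} to $\ol\tau_{M,0}\otimes\xi_H$.
\item The identity $\xi_H\,\epsilon_H\epsilon_M\epsilon^{\vec H,M}_{\sharp,x}=\epsilon_G\epsilon_M\epsilon^{\vec G,M}_{\sharp,x}$ shows that the output $\ol\tau_M$ lies in the reduction of $\tau_M$.
\end{enumerate}
Note that Lemma~\ref{lemma:eta-sign-comparison} identifies $\xi_H$ only with the $G$-to-$H$ sign, not with the $G$-to-$M$ sign as you state.

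Your second open point, returning from $F_n$ to $F$, is also essential. The paper settles it by identifying $\mathfrak X_H$ and the inertial classes of $\mathfrak Y_H$ with their $F_{\ell_1}$-analogues, using the Glauberman correspondence \cite[Corollaries~4.4.1 and 4.4.2]{Cot26b} and uniqueness of cuspidal supports. The candidates over $F_{\ell_1}$ are then base changes of the fixed representatives.

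Finally, for (2) you need two further inputs. First, smoothness of the integral inertial centralizer \cite[Corollary~3.7]{BCT24}, so the conjugating element lifts and the full mod-$\ell$ comparison stays an equality before Lemma~\ref{lemma:torus-action-mod-ell} applies. Second, the fact that $W_F$ acts on $S_\Psi$ through a group of order prime to $\ell_1$, to pass from $W_{F_{\ell_1}}$ to $W_F$.
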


To prove this, we make a few further reductions.

\subsubsection{Base change to a spectral endoscopic situation}

We want to use Theorem~\ref{thm:yu-datum-dl-res} to prove Theorem~\ref{thm:unramified-twisted-Levi-functoriality}, so we must reduce to a setting in which its hypotheses hold. The following lemma shows that our construction of $H$ is well-suited for this.

\begin{lemma}\label{lemma:simple-tori-centralizers}
Let $S_0$ be a non-central simple $F$-subtorus of $G$. Then $S_0 \cap Z(G)$ is finite, and there is an integer $N_0$ with the following property: for every field extension $E/F$ such that $(S_0)_E$ is a simple $E$-torus, and every $t \in S_0(E)$ of finite order greater than $N_0$ and coprime to $\#\pi_1(G_{\der})$, we have $Z_{G_E}(t) = H_E$.
\end{lemma}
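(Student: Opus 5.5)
The finiteness of $S_0\cap Z(G)$ comes first and is formal. The identity component $(S_0\cap Z(G))^\circ_{\red}$ is an $F$-subtorus of $S_0$. Since $S_0$ is simple, this subtorus is either trivial or all of $S_0$, and the latter is excluded because $S_0$ is non-central. Hence $S_0\cap Z(G)$ is finite.

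For the main claim, write $H=Z_G(S_0)$, which is connected reductive as the centralizer of a torus. Choose a maximal torus $T'$ of $G_{\ol F}$ containing $(S_0)_{\ol F}$, and let $R\subset X^*(S_{0,\ol F})$ be the finite set of nonzero restrictions $\alpha|_{S_0}$, where $\alpha$ runs over the roots of $(G,T')$. For $t\in S_0(E)$ of finite order, $Z_{G}(t)^\circ$ is generated by $T'$ and the root groups $U_\alpha$ with $\alpha(t)=1$, while $H$ is generated by $T'$ and the $U_\alpha$ with $\alpha|_{S_0}=0$. So the key claim is the following: if $\chi\in R$, then $\chi(t)\neq 1$ once the order of $t$ exceeds a bound $N_0$ independent of $E$.

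To prove this, suppose $\chi(t)=1$. Since $t$ is $E$-rational, $\gamma\chi(t)=\gamma(\chi(\gamma^{-1}t))=1$ for every $\gamma\in\Gal(\ol F/E)$. Thus $t$ lies in the intersection $D$ of the kernels of the characters in the $\Gal(\ol F/E)$-orbit $O\subset R$ of $\chi$. The $\Q$-span of $O$ is a nonzero $\Gal(\ol F/E)$-stable subspace of $X^*(S_{0,\ol F})\otimes\Q$, so by simplicity of $(S_0)_E$ it is everything. Hence $O$ spans a finite-index sublattice $\Lambda_O\subset X^*(S_{0,\ol F})$, and $D$ is a finite group of order $[X^*(S_{0,\ol F}):\Lambda_O]$.

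The point is that $O$ is one of finitely many subsets of $R$. So I take $N_0$ to be the maximum of $[X^*(S_{0,\ol F}):\Lambda]$ over all full-rank sublattices $\Lambda$ spanned by subsets of $R$; this depends only on $S_0\subset G$ and not on $E$. If $t$ has order $>N_0$, then $t\notin D$, which is a contradiction. This shows $Z_{G_E}(t)^\circ=H_E$.

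It remains to show $Z_{G_E}(t)$ is connected. The element $t$ is semisimple of finite order coprime to $\#\pi_1(G_{\der})$, so Lemma~\ref{lem:centralizer-connected} applies and gives connectedness. I expect the only delicate step to be the uniformity in $E$ of the bound, which is handled by ranging over the finitely many subsets of $R$ as above; the rest is routine root-subgroup bookkeeping, together with the observation that the equality of identity components holds over $\ol F$ and therefore descends to $E$.
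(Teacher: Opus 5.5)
Your proof is correct and follows essentially the same route as the paper. Both arguments bound the order of $t$ by the largest finite kernel $\bigcap\ker(\alpha|_{S_0})$ over the finitely many relevant sets of roots, use simplicity of $(S_0)_E$ to show that the Galois orbit of a nontrivial restricted root cuts out a finite kernel, compare the roots of $Z_{G_E}(t)^\circ$ with those of $H_E$, and get connectedness from Lemma~\ref{lem:centralizer-connected}. Your version, which takes Galois orbits inside $R$ (Galois-stable because it is the set of nonzero $S_0$-weights on $\Lie G$), is if anything slightly cleaner than taking orbits of roots of a torus defined only over $\ol F$.
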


\begin{proof}
The identity component of $(S_0 \cap Z(G))_{\red}$ is an $F$-subtorus of $S_0$, proper because $S_0 \not\subset Z(G)$, hence trivial because $S_0$ is simple. Therefore $S_0 \cap Z(G)$ is finite, proving the first part of the assertion.

Let $E/F$ be an extension such that $(S_0)_E$ is a simple $E$-torus. Fix a maximal torus $T \supset (S_0)_{\ol F}$ of $G_{\ol F}$, and let $\Phi = \Phi(G_{\ol F}, T_{\ol F})$. For a subset $\Phi_0 \subset \Phi$, let $K_{\Phi_0} = \bigcap_{\alpha \in \Phi_0} \ker \alpha|_{(S_0)_{\ol F}} \subset (S_0)_{\ol F}$, and let $N_0$ be the maximum of $\#(S_0 \cap Z(G))(\ol F)$ and of $\#K_{\Phi_0}(\ol F)$ over the subsets $\Phi_0$ for which $K_{\Phi_0}$ is finite.

If $t \in S_0(E)$ is of finite order larger than $N_0$, then we claim that $\alpha(t) \neq 1$ for all $\alpha \in \Phi$ with $\alpha|_{S_0} \neq 0$. If not, then $t$ lies in $K_{\Phi_0}(E)$ for $\Phi_0$ the $\Gal(\ol E/E)$-orbit of some nonzero $\alpha$. By construction $K_{\Phi_0}$ is stable under $\Gal(\ol E/E)$, so $(K_{\Phi_0})^\circ_{\red}$ is an $E$-subtorus of $(S_0)_E$, proper because $\alpha|_{S_0} \neq 0$, hence trivial because $(S_0)_E$ is simple. Thus $K_{\Phi_0}$ is finite, hence of order at most $N_0$ by definition, contradicting the assumption about the order of $t$. Now we have established that the roots of $Z_{G_E}(t)^\circ$ are exactly the roots vanishing on $S_0$, which are the roots of $H$ (as $H = Z_G(S_0)$); therefore $Z_{G_E}(t)^\circ = H_E$. Since the order of $t$ is coprime to $\#\pi_1(G_{\der})$, the centralizer $Z_{G_E}(t)$ is connected by Lemma~\ref{lem:centralizer-connected}, so $Z_{G_E}(t) = H_E$.
\end{proof}

The following lemma reduces Theorem~\ref{thm:unramified-twisted-Levi-functoriality} to the case where $\chH$ is a spectral endoscopic group of $\chG$.

\begin{lemma}\label{lemma:endoscopic-reduction}
To prove Proposition~\ref{prop:unram-twisted-levi-heart}, it suffices to prove it whenever, in addition to its hypotheses, the following conditions hold:
\begin{enumerate}
\item there exists an element $t \in S(F)$ of order $\ell$ such that $H = Z_G(t)$;
\item $\tau$ is defined over $\Q_\ell^{\unr}$;
\item $\ell$ is large enough so that the following are satisfied:
\begin{itemize}
\item $\ell$ is larger than $\rk G + 1$,
\item $\ell$ divides neither $\#\pi_1(G_{\der})$ nor $\#\pi_1(M'_{\ad})$ for any Levi subgroup $M' \subset G_{\ol F}$, and
\item $\ell$ does not divide the order of $\theta_0$, of any $\phi_i$, of any $\eta_0$ appearing in \eqref{eqn:sigma-G-H-indexing-set}, or of any character $\theta_0'$ of a generalized maximal torus-character pair $(\ol T', \theta_0')$ in $\ol G^0_{[x]}$ with $\tau \in \cE(\ol G^0_{[x]}, [\ol T', \theta_0'])$, or of $\pi_0(\ol T'\cap \ol Z)$ for any such pair.
\item $\ell$ does not divide $[\ol G^0_{[x]}(\F_{q^n}): (\ol G^0_{[x]})^\circ(\F_{q^n}) \cdot Z(\ol G^0_{[x]})(\F_{q^n})]$ for any $n \geq 1$.
\end{itemize}
\item $\ell$ does not divide the order of $\rho(I_F)$, where $\rho$ is any of the maps $\rho^{\FS}(\pi(\Psi))$, $\rho_I^{\Kal}(\Psi)$, $\ld j_{M,G}\circ\rho^{\FS}(\pi(\Psi_M))$, and $\ld j_{M,G}\circ\rho_I^{\Kal}(\Psi_M)$ for any $\Psi_M \in \mathfrak Y_H$. Moreover, whenever a map $\rho$ in the preceding list is irreducible, $\ell$ is larger than the corresponding integer $N$ of Lemma~\ref{lem:semisimple-finite-image} attached to $|(f\circ\rho)(I_F)|$, where $f \co \ld G \to \ld G/Z(\wh G)$ is the quotient map.
\end{enumerate}
\end{lemma}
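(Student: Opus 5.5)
The plan is to reduce the general case to the special case by an unramified base change to $F_{\ell'}$ for a suitable large prime $\ell'$, and then to use independence of $\ell$ to move to a coefficient prime $\ell$ at which conditions (1)--(4) hold.

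\emph{Step 1: what must be transferred.} All conclusions of Proposition~\ref{prop:unram-twisted-levi-heart} concern $\rho^{\FS}$ restricted to $I_F$, the $W_F$-action on the torus $S_\pi$, and irreducibility. By \cite[Theorem 1.2]{Sch25}, these are determined by the compatible system of excursion characters. Their validity is therefore independent of the coefficient prime, once we use the fixed isomorphisms $\ol\Q_\ell \cong \CC$ of \S\ref{ssec:independence-of-ell}. Since the central character and the $\phi_i$ have finite order, $\Psi$ is defined over a number field, and transporting it to another coefficient prime $\ell$ gives a Yu datum of the same shape. The sets $\mathfrak X_H$ and $\mathfrak Y_H$ are transported along with it, and by Lemma~\ref{lemma:finite-adapted-inertial-classes} there are only finitely many inertial classes. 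Each condition in (2)--(4) excludes only finitely many primes $\ell$:
\begin{itemize}
\item condition (2) holds for $\ell$ unramified in the field of definition of $\tau$;
\item the bounds in (3) depend on $G$, $\Psi$ and the finitely many characters involved;
\item the bounds in (4) come from Lemmas~\ref{lemma:semisimple-image}, \ref{lem:semisimple-finite-image} and \ref{lemma:inertia-order}.
\end{itemize}
These bounds must be stable under base change to $F_n$, which is why (3) is stated for all $\F_{q^n}$.

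\emph{Step 2: producing an element of order $\ell$.} Condition (1) requires an element $t \in S(F)$ of order $\ell$. This need not exist over $F$, but by Lemma~\ref{lemma:simple-tori-centralizers} it exists over an unramified extension $E=F_n$ where $S_E$ remains simple. Indeed, $S$ is unramified, so $S(F_n)$ contains the points of order $\ell$ once $q^n$ is a suitable power, and then $Z_{G_E}(t)=H_E$ because $\ell>N_0$ and $\ell\nmid\#\pi_1(G_{\der})$. To descend back to $F$, I would realize $E/F$ as a tower of cyclic extensions of large prime degree $\ell'$. At each step I apply Theorem~\ref{thm:base-change-functoriality} to $\Psi$ and to the candidates $\Psi_M$. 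This base change preserves:
\begin{itemize}
\item $\rho^{\FS}|_{I_F}$;
\item $\rho_I^{\Kal}$;
\item the $W$-action on $S_\pi$ (by part (2) of that theorem);
\item irreducibility, and the parameter itself restricted to $W_{F_{\ell'}}$ (by part (3)).
\end{itemize}
We also need the base change construction to commute with the descent $\mathfrak Y_H \to \mathfrak Y_{H_E}$, at least up to inertial equivalence. Granting this, the inertial statements over $E$ descend to $F$, since $I_E=I_F$.

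Choosing $n$ so that every prime factor $\ell'$ of $n$ exceeds the constant $C$ of Theorem~\ref{thm:base-change-functoriality} requires $q^n\equiv 1 \pmod \ell$ with $n$ built from such primes. I would arrange this by selecting $\ell$ with the multiplicative order of $q$ modulo $\ell$ a large prime (for instance, $\ell$ a prime factor of $q^{\ell'}-1$ not dividing $q-1$). There are infinitely many such $\ell$, so they can avoid the finitely many excluded primes of Step 1.

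\emph{Step 3: recovering conclusion (4).} Descent over $W_{F_n}$ yields a conjugacy over $W_{F_n}$, twisted by a cocycle. Recovering the statement over all of $W_F$ uses the irreducibility bound (4) together with Theorem~\ref{thm:base-change-functoriality}(3) applied step by step. The resulting ambiguity is absorbed into an unramified cocycle valued in $Z(\wh M\cap\wh G_{\der})$, as allowed in Theorem~\ref{thm:unramified-twisted-Levi-functoriality}(4).

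\emph{Expected main obstacle.} The difficult point is the compatibility in Step 2: the descended candidates $\mathfrak Y_{H_E}$ for $\Psi_{\ell'}$ must coincide with base changes of elements of $\mathfrak Y_H$, up to inertial equivalence. This requires matching Lusztig series under the Shintani-type correspondence and checking the sign characters. I would first attempt to prove it via the explicit description in \cite[\S 8.2]{CF26a} together with \cite[Lemma~3.5.11]{Cot26b}.
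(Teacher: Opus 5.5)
Your architecture is the paper's: an auxiliary unramified base change of large prime degree via Theorem~\ref{thm:base-change-functoriality}, a torsion point of $S$ over that extension whose order becomes the new coefficient prime, independence of $\ell$, and a matching of $\mathfrak Y_H$ with its analogue upstairs. However, the step producing $t$ fails as written.

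You require $q^{\ell'}\equiv 1\pmod{\ell}$ (e.g.\ $\ell\mid q^{\ell'}-1$, $\ell\nmid q-1$). Here $S\subset T\cap G_{\der}$ is anisotropic because $T$ is elliptic in $G$. Being simple and unramified, $S$ has $X^*(S_{\ol F})\otimes\Q\cong\Q(\zeta_d)$, with Frobenius acting by a primitive $d$th root of unity, for some $d>1$ whose prime factors are at most $\rk G+1$. Since $\ell'>\rk G+1$, $S_{F_{\ell'}}$ is still anisotropic. So the prime-to-$p$ torsion of the compact group $S(F_{\ell'})$ is that of the $\F_{q^{\ell'}}$-points of the special fiber of the $\cO_F$-torus model of $S$, a group of order $\Phi_d(q^{\ell'})$, where $\Phi_d$ is the $d$th cyclotomic polynomial. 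If $q^{\ell'}\equiv1\pmod\ell$, then $\Phi_d(q^{\ell'})\equiv\Phi_d(1)\pmod\ell$, and $\Phi_d(1)$ is $1$ or a prime dividing $d$. So for large $\ell$ your recipe never yields an element of order $\ell$; for the norm-one torus of $F_2/F$, for instance, you need $\ell\mid q^{\ell'}+1$. What is needed is that $q^{\ell'}$ have multiplicative order exactly $d$ modulo $\ell$, e.g.\ $\ell$ a primitive prime divisor of $q^{d\ell'}-1$. An existence statement of this kind is what the paper imports from \cite[Lemma~5.3.5]{Cot26b} to obtain the pair $(\ell_1,\ell_0)$.

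Two smaller points here. Lemma~\ref{lemma:simple-tori-centralizers} only gives $Z_{G_E}(t)=H_E$ once $t$ exists; it does not produce $t$. And Theorem~\ref{thm:base-change-functoriality} ties the degree of the extension to the coefficient prime, so you must first pass to $\ol\Q_{\ell'}$-coefficients.

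Second, the compatibility you call the main obstacle is the heart of the lemma, and ``granting this'' leaves it unproved. The paper proceeds as follows.
\begin{enumerate}
\item It fixes in advance representatives of the finitely many inertial classes in $\mathfrak Y_H$ (Lemma~\ref{lemma:finite-adapted-inertial-classes}) whose depth-zero parts, like $\tau$, factor through finite quotients of order at most $a$.
\item It uses the bijection of semi-rational Lusztig series of \cite[Corollary~4.4.1]{Cot26b} (Glauberman correspondence, Frobenius twist, lift).
\item It uses the invariance under passage to $F_{\ell_1}$ of the pairs indexing \eqref{eqn:sigma-G-H-indexing-set} \cite[Corollary~4.4.2]{Cot26b}.
\item Together with uniqueness of cuspidal supports \cite[Lemma~2.10.2]{Cot26b}, these identify $\mathfrak X_H$ and the inertial classes of $\mathfrak Y_H$ with their analogues over $F_{\ell_1}$.
\end{enumerate}

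The bound $a$ is also what secures condition (2). In the reduced problem it is the base-changed $\tau_{\ell'}$ and $\tau_{M,\ell'}$ that must be defined over $\Q_\ell^{\unr}$. These depend on $\ell'$, and $\ell$ is chosen after $\ell'$, so your appeal to the field of definition of the original $\tau$ does not suffice. The paper instead uses $\ell_0>a$ and \cite[Lemmas~3.5.14 and~3.5.15]{Cot26b}.

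Finally, descending conclusion (2) requires upgrading agreement of the actions on $S_\pi$ from $W_{F_{\ell'}}$ to $W_F$. This uses that $W_F$ acts on this torus through a finite group of order prime to $\ell'$.
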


\begin{proof}
For each inertial equivalence class in $\mathfrak{Y}_H$, choose a representative $\Psi_M$ such that $\tau_M$ factors through a finite quotient; such representatives exist by Lemma~\ref{lemma:finite-adapted-inertial-classes}. Since there are only finitely many such classes, we may choose an integer $a$ such that $\tau$ and every chosen $\tau_M$ factor through quotient groups of order at most $a$. Let $N > a$ be a positive integer large enough that each prime $\ell > N$ satisfies the conditions in (3) and (4)\footnote{For the fourth bulleted condition in (3), note that the desired index can be bounded in terms of $[\ol G^0_{[x]}: (\ol G^0_{[x]})^\circ \cdot Z(\ol G^0_{[x]})]$ and $\rH^1(\F_{q^n}, Z(\ol G^0_{[x]}) \cap ((\ol G^0_{[x]})^\circ)_{\der})$, both of which are bounded independently of $n$.}, and such that $N$ is larger than
\begin{itemize}
\item the integer $N_0$ of Lemma~\ref{lemma:simple-tori-centralizers}(2) and $\rk G + 1$,
\item the constants $C$ appearing in \cite[Corollary~4.4.1]{Cot26b} and Theorem~\ref{thm:base-change-functoriality} for all twisted Levi $F$-subgroups of $G$,
\item the constant $A$ appearing in \cite[Proposition~4.3.4]{Cot26b}.
\end{itemize}

By \cite[Lemma~5.3.5]{Cot26b}, there is a prime $\ell_1 > N$ different from $p$ such that $S(F_{\ell_1})$ contains an element $t$ of prime order $\ell_0 > N$ different from $p$. These two primes play different roles:
\begin{itemize}
\item $\ell_1$ is the degree of the auxiliary unramified base change $F_{\ell_1}/F$, and is large enough for the base change results to apply (in particular $\ell_1$ is banal for $G$, being larger than the constant $C$);
\item the prime $\ell_0$ is the order of the torsion element $t$ and will become the characteristic of the coefficient field to which we apply Tate cohomology; it is \emph{not} banal for $G(F_{\ell_1})$, as it divides $\#S(F_{\ell_1})_{\mathrm{tors}}$.
\end{itemize}
Since $\ell_1 > \rk G + 1$, the argument in the proof of \cite[Lemma~4.2.1(1)]{Cot26b} shows that the Frobenius action on $X^*(S_{\ol F})\otimes_\Z\Q$ has order prime to
$\ell_1$, so the torus $S_{F_{\ell_1}}$ is simple. Since $\ell_0 > N_0$ and $\ell_0 \nmid \#\pi_1(G_{\der})$ by construction, Lemma~\ref{lemma:simple-tori-centralizers}(2) implies that $H_{F_{\ell_1}} = Z_{G_{F_{\ell_1}}}(t)$.

Passing from $\ol\Q_\ell$ to $\ol\Q_{\ell_1}$ by independence of $\ell$ for $\rho^{\FS}$ and applying Theorem~\ref{thm:base-change-functoriality} with $E = F_{\ell_1}$ (permissible since $\ell_1 > N \geq C$), we obtain a Yu datum
\[
\Psi_{\ell_1} = ((G^i_{\ell_1}), x, (r_i), \tau_{\ell_1}, (\phi_{\ell_1,i}))
\]
for $G_E$ over $\ol\Q_{\ell_1}$ such that, using that $I_E = I_F$ because $E/F$ is unramified,
\[
\rho^{\FS}(\pi(\Psi_{\ell_1}))|_{I_F} \sim \rho^{\FS}(\pi(\Psi))|_{I_F}
\qquad\text{and}\qquad
\rho_I^{\Kal}(\Psi_{\ell_1}) \sim \rho_I^{\Kal}(\Psi).
\]
Note that the base change $[(T_{F_{\ell_1}}, \theta \circ \Nm_{F_{\ell_1}/F})]$ lies in $\cT_0(\Psi_{\ell_1})$; this follows from \cite[Proposition~4.3.4(2)]{Cot26b}, the definition of $\cT_0$, and the assumpion $\ell_1 > N \geq A$. Now let $\Psi_M = ((M^i), x, (r_i), \tau_M, (\phi_i)) \in \mathfrak Y_H$ be one of the representatives chosen above. By Theorem~\ref{thm:base-change-functoriality} again, the Yu datum
\[
\Psi_{M,\ell_1} = ((M^i_{\ell_1}),x,(r_i),\tau_{M,\ell_1}, (\phi_{i,\ell_1})),
\]
from \cite[\S 8.2]{CF26a} has the property that
\[
\rho^{\FS}(\pi(\Psi_{M,\ell_1}))|_{I_F} \sim \rho^{\FS}(\pi(\Psi_M))|_{I_F}
\qquad\text{and}\qquad
\rho_I^{\Kal}(\Psi_{M,\ell_1}) \sim \rho_I^{\Kal}(\Psi_M).
\]

Recall from \cite[Definition~8.2.2]{CF26a} that $\tau_{\ell_1}$ is constructed as follows (since by choice of $a$ the representation $\tau$ factors through a quotient group $Q$ of $\ol G_{[x]}^0(\F_q)$ of order prime to $\ell_1$):
\begin{enumerate}
\item[(i)] Let $\ol\tau$ denote the $\ell_1$-modular reduction of $\tau$,
\item[(ii)] let $\ol\tau'_{\ell_1}$ denote the representation of $\ol G^0_{[x]}(\F_{q^{\ell_1}})$ obtained from $\ol\tau$ by the Glauberman correspondence of \cite[\S 3.5.3]{Cot26b},
\item[(iii)] let $(\ol\tau'_{\ell_1})^{(\ell_1)}$ be the $\ell_1$-Frobenius twist of $\ol\tau'_{\ell_1}$, and
\item[(iv)] let $\tau_{\ell_1}$ denote the unique irreducible lift of $(\ol\tau'_{\ell_1})^{(\ell_1)}$ which factors through a finite quotient of $\ol G^0_{[x]}(\F_{q^{\ell_1}})$ of order prime to $\ell_1$.
\end{enumerate}
A completely similar description applies to each $\tau_{M,\ell_1}$ arising from these chosen representatives. Since $\ell_0>N$ is prime to the orders of the fixed finite quotients supporting $\tau$ and the chosen $\tau_M$, \cite[Lemmas~3.5.14 and 3.5.15]{Cot26b} show that $\tau_{\ell_1}$ and every resulting $\tau_{M,\ell_1}$ are defined over $\Q_{\ell_0}^{\unr}$.

Fix a pair $(\ol S, \eta_0)$ as above, let $\eta'_{0,\ell_1}$ be the unique $\Gal(\F_{q^{\ell_1}}/\F_q)$-stable character of $\ol S(\F_{q^{\ell_1}})$ extending $\eta_0$ (which exists by \cite[Proposition~4.3.4(1)]{Cot26b}), and let $\eta_{0,\ell_1} = \eta_{0,\ell_1}'^{\ell_1}$. By \cite[Corollary~4.4.1]{Cot26b} and \cite[Part III, no.\ 15.5, Proposition 43]{Serre77}, the map
\[
\cE(\ol H^0_{[x]}, [\ol S, \eta_0]) \to \cE((\ol H^0_{[x]})_{\F_{q^{\ell_1}}}, [\ol S_{\F_{q^{\ell_1}}}, \eta_{0,\ell_1}])
\]
induced by the Glauberman correspondence and the above procedure of reducing modulo $\ell_1$, passing to a Frobenius twist, and lifting, is bijective. By \cite[Corollary~4.4.2]{Cot26b}, the $\ol H^0_{[x]}(\F_q)$-conjugacy classes of pairs $(\ol S,\eta_0)$ appearing in \eqref{eqn:sigma-G-H-indexing-set} are unchanged upon passage to $F_{\ell_1}$. By the uniqueness of cuspidal supports in \cite[Lemma~2.10.2]{Cot26b}, this identifies the set $\mathfrak{X}_H$ with the corresponding set $\mathfrak{X}_{H,F_{\ell_1}}$ over $F_{\ell_1}$, and similarly it identifies the inertial equivalence classes in $\mathfrak{Y}_H$ with those in the corresponding set $\mathfrak{Y}_{H,F_{\ell_1}}$ over $F_{\ell_1}$. Thus, up to unramified twist, the corresponding data may be taken to be the base changes of the representatives chosen above. By assumption, it follows that there exists some $\Psi_{M,\ell_1}$ such that
\[
\rho^{\FS}(\pi(\Psi_{\ell_1}))|_{I_F} \sim \ld j_{M,G} \circ \rho^{\FS}(\pi(\Psi_{M,\ell_1}))|_{I_F}.
\]
We have thereby reduced Theorem~\ref{thm:unramified-twisted-Levi-functoriality}(1) to the corresponding statement over $F_{\ell_1}$.

Now suppose that $S_\Psi\coloneqq Z_{\wh G}(\rho^{\FS}(\pi(\Psi))(I_F))^\circ_{\red}$ is a torus and Theorem~\ref{thm:unramified-twisted-Levi-functoriality}(2) holds for $\Psi_{\ell_1}$ and $\Psi_{M,\ell_1}$ in place of $\Psi$ and $\Psi_M$. Passing again from $\ol\Q_\ell$ to $\ol\Q_{\ell_1}$, we may apply Theorem~\ref{thm:base-change-functoriality}(2) to pass to conjugates such that $\rho^{\FS}(\pi(\Psi))|_{I_F} = \rho^{\FS}(\pi(\Psi_{\ell_1}))|_{I_F}$ and the actions of $W_{F_{\ell_1}}$ on $S_\Psi$ through $\rho^{\FS}(\pi(\Psi))$ and $\rho^{\FS}(\pi(\Psi_{\ell_1}))$ are equal, and similarly for $\ld j_{M,G} \circ \rho^{\FS}(\pi(\Psi_M))$ and $\ld j_{M,G} \circ \rho^{\FS}(\pi(\Psi_{M,\ell_1}))$. But the action of $W_F$ on $S_\Psi$ is of order prime to $\ell_1$ since $\ell_1 > \rk G + 1$ by assumption, so if the actions of $\rho^{\FS}(\pi(\Psi_{\ell_1}))$ and $\ld j_{M,G} \circ \rho^{\FS}(\pi(\Psi_{M,\ell_1}))$ on $S_\Psi$ agree then the same is true of the actions of $W_F$ through $\rho^{\FS}(\pi(\Psi))$ and $\ld j_{M,G} \circ \rho^{\FS}(\pi(\Psi_M))$. This reduces Theorem~\ref{thm:unramified-twisted-Levi-functoriality}(1) and (2) from $F$ to $F_{\ell_1}$. By passing from $\ol\Q_\ell$ to $\ol\Q_{\ell_0}$, using independence of $\ell$ again, we have now arranged that $\ell$ satisfies conditions (1)--(4). Reducing Theorem~\ref{thm:unramified-twisted-Levi-functoriality}(4) to this setting is completely similar; the one additional ingredient is Lemma~\ref{lem:semisimple-finite-image}.
\end{proof}

\subsubsection{Modular descent to $H$}

Recall that we have now reduced to the case that $k = \ol\Q_\ell$, and each $\phi_i$ and the central character of $\tau$ have finite order. Thus we may fix an integral model $\Psi_{\ol\Z_\ell}=((G^i),x,(r_i),\tau_{\ol\Z_\ell},(\phi_i))$ of $\Psi$, which will be used to define the mod $\ell$ reduction. Let $H^i\coloneqq G^i\cap H$ and $\phi_{H,i}\coloneqq\phi_i|_{H^i(F)}$, and let $\xi_H$ be the twisting character of Definition~\ref{defn:levi-descent-twisting}. Write $\ol\phi_{H,i}$ for the reduction of $\phi_{H,i}$ and continue to write $\xi_H$ for its reduction. For each cuspidal irreducible $\ol\F_\ell$-representation $\ol\tau_{H,0}$ of $\ol H^0_{[x]}(\F_q)$, set $\ol\tau_H\coloneqq\ol\tau_{H,0}\otimes\xi_H$ and $\ol\Psi_H\coloneqq\Psi_{H,\ol\tau_H}$, as in \eqref{eq:levi-Psi-H}.

\begin{lemma}\label{lemma:tate-descent-to-H}
With notation as above, assume that conditions (1)--(4) of Lemma~\ref{lemma:endoscopic-reduction} hold; write $t \in S(F)$ for the element in condition (1). Then there exists a cuspidal irreducible $\ol\F_\ell$-representation $\ol\tau_{H,0}$ of $\ol H^0_{[x]}(\F_q)$ with the following two properties:
\begin{enumerate}
\item there is an irreducible $\ol\Q_\ell$-representation $\tau_{H,0} \in \mathfrak X_H$, with $\mathfrak X_H$ as defined in \eqref{eqn:sigma-G-H-indexing-set}, such that $\ol \tau_{H,0}$ is an irreducible constituent of the $\ell$-modular reduction of $\tau_{H,0}$;
\item for every irreducible subquotient $\ol\pi$ of the $\ell$-modular reduction of $\pi(\Psi)$, there exists a cocycle $z\co W_F/I_F \to Z(\wh H \cap \wh G_{\der})(\ol\F_\ell)^{I_F}$ such that
\[
\rho^{\FS}(\ol\pi) \sim z\cdot \ld j_{H,G} \circ \rho^{\FS}(\pi(\ol\Psi_H)).
\]
\end{enumerate}
\end{lemma}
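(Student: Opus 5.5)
The plan is to obtain $\ol\tau_{H,0}$ from Theorem~\ref{thm:yu-datum-dl-res} and its parameter corollary, Corollary~\ref{cor:yu-datum-dl-res-parameter}. We apply them with $s = t$ and $\sigma$ equal to conjugation by $t$. The first step is to check the hypotheses of the setting in \S\ref{sss:levi-descent-setting}.
\begin{itemize}
\item By condition (1) of Lemma~\ref{lemma:endoscopic-reduction}, $t\in S(F)\subset T(F)\subset H(F)$ has order $\ell$ and $H=Z_G(t)$.
\item The torus $T$ is elliptic in $G^0$ and is contained in $H\cap G^0$, because $S\subset T$ and $T$ is a maximal torus.
\item The element $t$ is central in $H$, so $H=G^\sigma$ is connected. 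Hence Ansatz~\ref{ansatz:modular-functoriality} holds.
\end{itemize}
Next we check the extra hypotheses of Theorem~\ref{thm:yu-datum-dl-res}. The index condition and the bound on the order of $\theta_0$ are exactly the bulleted conditions in (3). Goodness of $\ell$ follows from $\ell>\rk G+1$, after enlarging $N$ if necessary so that $\ell$ is good for $G$. By condition (2), $\tau$ is defined over $\Q_\ell^{\unr}$, which is an extension of degree $1$, prime to $\ell-1$.

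The theorem then yields a cuspidal $\ol\tau_{H,0}$ whose Brauer character occurs with nonzero coefficient in the reduction of ${}^*R^{\ol G^0_{[x]}}_{\ol H^0_{[x]}}(\tau)$. Property (1) follows by decomposing this Lusztig restriction into $\ol\Q_\ell$-irreducibles. Since $\tau\in\cE(\ol G^0_{[x]},[\ol T_{[x]},\theta_0])$, the compatibility of Lusztig restriction with (semi-rational) Lusztig series gives the following. Every constituent of ${}^*R(\tau)$ lies in some $\cE(\ol H^0_{[x]},[\ol S,\eta_0])$ whose induced $G$-series is $\cE(\ol G^0_{[x]},[\ol T_{[x]},\theta_0])$. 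This is the form used in \cite[Proposition~3.4.1]{Cot26b}. Such constituents are exactly the elements of $\mathfrak X_H$. Since $\ol\tau_{H,0}$ occurs with nonzero coefficient, it is a constituent of the reduction of some such $\tau_{H,0}\in\mathfrak X_H$. Here we use that Brauer characters of reductions are nonnegative combinations of irreducible Brauer characters. This settles (1).

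For (2), Corollary~\ref{cor:yu-datum-dl-res-parameter} gives one irreducible subquotient $\ol\pi_0$ of $\pi(\Psi)\otimes\ol\F_\ell$ satisfying the desired relation. Because $\ell>\rk G+1$, the relation holds without semisimplification. To pass to an arbitrary irreducible subquotient $\ol\pi$, we use Lemma~\ref{lem:L-parameter-of-reduction}. It shows that all irreducible subquotients of the reduction of the lattice have the same parameter $\ol{\rho^{\FS}(\pi(\Psi))}^{\ss}$. Hence $\rho^{\FS}(\ol\pi)\sim\rho^{\FS}(\ol\pi_0)$, and the same cocycle $z$ works for every $\ol\pi$.

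The main obstacle I expect is the series bookkeeping in (1). We must identify the $H$-series reached by Lusztig restriction with the index set of \eqref{eqn:sigma-G-H-indexing-set}, keeping track of the enlargement $\ol S=\ol S^\circ\cdot\ol Z$ and of the possibly disconnected group $\ol G^0_{[x]}$. This is where the coprimality conditions on $\pi_0(\ol T'\cap\ol Z)$ and on the orders of the $\eta_0$ from (3) should enter. They guarantee that reduction modulo $\ell$ does not merge series or create spurious constituents.
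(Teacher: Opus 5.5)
Your proposal matches the paper's proof. You verify the hypotheses of Theorem~\ref{thm:yu-datum-dl-res} from conditions (1)--(3), use Corollary~\ref{cor:yu-datum-dl-res-parameter} together with Lemma~\ref{lem:L-parameter-of-reduction} for (2), and for (1) extract a $\ol\Q_\ell$-constituent $\tau_{H,0}$ of ${}^*R(\tau)$ whose reduction contains $\ol\tau_{H,0}$, placing it in $\mathfrak X_H$ via compatibility of Lusztig restriction with series (the paper cites \cite[Corollary~2.8.6]{Cot26b}). Two small corrections: $H=G^\sigma$ is connected because it is the centralizer of the torus $S$, not because $t$ is central in $H$; and the obstacle you anticipate in (1) does not arise, since that step takes place entirely in characteristic $0$ (the coprimality conditions are used instead to check that $\theta$ has order prime to $\ell$).
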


\begin{proof}
The integral model $\Psi_{\ol\Z_\ell}$ places us in the setting of \S\ref{sss:levi-descent-setting} with $s = t$: the element $t$ has order $\ell$ with $H = Z_G(t)$ by condition (1) of Lemma~\ref{lemma:endoscopic-reduction}, the generic fiber of $\Psi_{\ol\Z_\ell}$ is the irreducible datum $\Psi$, $\ell$ is odd (condition (3)), and $H \cap G^0$ contains the elliptic maximally unramified maximal $F$-torus $T$.

We verify the hypotheses of Theorem~\ref{thm:yu-datum-dl-res}. The quotient $Z(H^0)/Z(H)$ is anisotropic since $T$ is elliptic in $G$. The generic fiber $\tau$ of $\tau_{\ol\Z_\ell}$ is defined over $\Q_\ell^{\unr}$ by condition (2) of Lemma~\ref{lemma:endoscopic-reduction}. Moreover, we have $\cT_0(\Psi_{\ol\Z_\ell}\otimes_{\ol\Z_\ell}\ol\Q_\ell)=\cT_0(\Psi)$, and every representative character of every class in this set has the form $\theta_0' \cdot \prod_i \phi_i$ for a generalized maximal torus-character pair $(\ol T', \theta_0')$ in $\ol G^0_{[x]}$ whose semi-rational Lusztig series contains $\tau$; all these characters, including $\theta_0'$, have finite order not divisible by $\ell$ by condition (3). The index $[\ol G^0_{[x]}(\F_q): (\ol G^0_{[x]})^\circ(\F_q) \cdot Z(\ol G^0_{[x]})(\F_q)]$ is prime to $\ell$ by the fourth bullet in condition (3). Finally, $\ell$ is good for $G$ by condition (3).

All hypotheses of Theorem~\ref{thm:yu-datum-dl-res} are now verified. Moreover, $\ell$ is good for $\chG$ and $\chH$, and $\ell \nmid \#\pi_1(M'_{\ad})$ for every Levi subgroup $M' \subset G$ containing $H$, and $\ell > \rk G + 1$ by condition (3) in Lemma~\ref{lemma:endoscopic-reduction}. Corollary~\ref{cor:yu-datum-dl-res-parameter} therefore shows that there exists a cuspidal irreducible $\ol\F_\ell$-representation $\ol\tau_{H,0}$ of $\ol H^0_{[x]}(\F_q)$ as in Theorem~\ref{thm:yu-datum-dl-res}, so that its Brauer character occurs with nonzero coefficient in $\ol{{}^*R^{\ol G^0_{[x]}}_{\ol H^0_{[x]}}(\tau)}$, and such that conclusion (2) of the present lemma holds for one irreducible subquotient of the reduction of $\pi(\Psi)$. Lemma~\ref{lem:L-parameter-of-reduction} shows that all such subquotients have the same Fargues--Scholze parameter, so conclusion (2) holds for every irreducible subquotient.

It remains to prove (1). Since the Brauer character of $\ol\tau_{H,0}$ occurs in $\ol{{}^*R^{\ol G^0_{[x]}}_{\ol H^0_{[x]}}(\tau)}$, there are an irreducible $\ol\Q_\ell$-representation $\tau_{H,0}$ and an $\ol H^0_{[x]}(\F_q)$-stable lattice $\Lambda_{H,0}\subset\tau_{H,0}$ such that $\ol \tau_{H,0}$ is a Jordan--H\"older constituent of $\Lambda_{H,0}\otimes_{\ol\Z_\ell}\ol\F_\ell$ and
\[
\left\langle
\tau_{H,0},
{}^*R^{\ol G^0_{[x]}}_{\ol H^0_{[x]}}(\tau)
\right\rangle\neq 0.
\]
By \cite[Corollary~2.8.6]{Cot26b}, this implies that $\tau_{H,0}\in\mathfrak X_H$.
\end{proof}

\subsubsection{Conclusion of the proof}

Finally, we prove Theorem~\ref{thm:unramified-twisted-Levi-functoriality}. As observed above, we have already reduced to proving Proposition~\ref{prop:unram-twisted-levi-heart}.

\begin{proof}[Proof of Proposition~\ref{prop:unram-twisted-levi-heart}]
By Lemma~\ref{lemma:endoscopic-reduction}, we may and do assume that conditions (1)--(4) of that lemma hold; write $t \in S(F)$ for the element in condition (1). Let $\ol\tau_{H,0}$, $\tau_{H,0}$, and $\ol\Psi_H$ be as in Lemma~\ref{lemma:tate-descent-to-H}, with $\tau_{H,0} \in \mathfrak X_H$, and let $\Psi_M = ((M^i), x, (r_i), \tau_M, (\phi_i)) \in \mathfrak{Y}_H$ and $\tau_{M,0} \in \cE(\ol M^0, [\ol T_M, \theta_{M,0}])$ be as constructed in \S\ref{sss:candidate-data}. Let $Q$ be a parabolic $F$-subgroup of $H$ with Levi $M$ as in \textit{loc.\ cit.}\ and let $(\ol T_{M,1}, \theta_{M,1})$ be a generalized maximal torus-character pair in $\ol M^0_{[x]_M}$ constructed in \textit{loc.\ cit.}\ satisfying \eqref{eq:sigma-M-depth-zero-condition}. Recall that $\ol\tau_H = \ol\tau_{H,0} \otimes \xi_H$ is the depth $0$ representation of $\ol\Psi_H$, where $\xi_H = \epsilon_G\epsilon_H\epsilon_{\sharp,x}^{\vec{G},H}$ after reduction modulo $\ell$ by Lemma~\ref{lemma:eta-sign-comparison}; the sign characters $\epsilon$ are as in \cite[\S\S 3.3 and 6.5]{CF26a}.

By the construction of \S\ref{sss:candidate-data}, $\tau_{H,0}$ is an irreducible constituent of $\ind_{\ol Q(\F_q)}^{\ol H^0_{[x]}(\F_q)}
(\tau_{M,0})$.
Choose an $\ol M^0(\F_q)$-stable lattice $\Lambda_{M,0}\subset\tau_{M,0}$. Since parabolic induction is exact and commutes with reduction modulo $\ell$, there is an irreducible constituent $\ol\tau_{M,0}$ of $\left(\Lambda_{M,0}\otimes_{\ol\Z_\ell}\ol\F_\ell\right)^{\mathrm{ss}}$ such that
\begin{equation}\label{eq:H0-M0-depth-zero-subquotient}
\ol\tau_{H,0}
\text{ is an irreducible subquotient of }
\ind_{\ol Q(\F_q)}^{\ol H^0_{[x]}(\F_q)}
(\ol\tau_{M,0}).
\end{equation}
Exactness of finite Jacquet functors modulo $\ell\neq p$ and \cite[Lemma~2.10.1]{Cot26b} imply that $\ol\tau_{M,0}$ is cuspidal. Set
\begin{equation}\label{eq:twisted-M-cuspidal-input}
\ol\sigma_{M,0}\coloneqq
\ol\tau_{M,0}\otimes\xi_H|_{\ol M^0(\F_q)} .
\end{equation}
Since $\ol\tau_{M,0}$ is cuspidal and $\xi_H$ is a character, the twist $\ol\sigma_{M,0}$ is also cuspidal, and twisting \eqref{eq:H0-M0-depth-zero-subquotient} by $\xi_H$ shows that $\ol\tau_H$ is an irreducible subquotient of $\ind_{\ol Q(\F_q)}^{\ol H^0_{[x]}(\F_q)}(\ol\sigma_{M,0})$.

For the pair $M \subset H$, the bulleted hypotheses of \S\ref{sss:parabolic-descent-setting}, required for Proposition~\ref{prop:tame-cuspidal-parabolic-induction}, are the content of \cite[Lemmas~9.1.2 and 9.1.1]{CF26a}. Proposition~\ref{prop:tame-cuspidal-parabolic-induction}, applied with ambient group $H$, Levi $M$, parabolic $Q$, and the representations $\ol\tau_H$ and $\ol\sigma_{M,0}$, shows that there is a Yu datum $\ol\Psi_M = ((M^i),x,(r_i),\ol\tau_M,(\ol\phi_{H,i}|_{M^i(F)}))$ for $M$ over $\ol\F_\ell$ such that
\begin{align}
\ol\sigma_{M,0}\otimes\epsilon_H\epsilon_M\epsilon_{\sharp,x}^{\vec{H},M}
&\subset
\ol\tau_M|_{M^0(F)_{[x]_H}} \text{, and}
\label{eq:M-depth-zero-output}\\
\pi(\ol\Psi_H)
&\text{ is an irreducible subquotient of }
I_Q^H(\pi(\ol\Psi_M)).
\label{eq:H-M-parabolic-subquotient}
\end{align}
Observe that
\[
\xi_H\epsilon_H\epsilon_M\epsilon_{\sharp,x}^{\vec{H},M} = \epsilon_G\epsilon_M\epsilon_{\sharp,x}^{\vec{G},M}
\]
by definition, so by \eqref{eq:M-depth-zero-output} and the construction of $\Psi_M$, we may and do assume that $\ol\tau_M$ is an irreducible constituent of the $\ell$-modular reduction of $\tau_M$. Thus \cite[Lemma~7.3.1(1)]{CF26a} and exactness properties of Yu's construction give
\begin{equation}\label{eq:M-Yu-lift-reduction}
\pi(\ol\Psi_M)
\text{ is an irreducible subquotient of the reduction of }
\pi(\Psi_M).
\end{equation}

Let $\ol\pi$ be an irreducible subquotient of the $\ell$-modular reduction of $\pi(\Psi)$; since $\pi(\Psi)$ is irreducible, Lemma~\ref{lem:L-parameter-of-reduction} identifies $\rho^{\FS}(\ol\pi)$ with the semisimplified reduction of $\rho^{\FS}(\pi(\Psi))$. By Lemma~\ref{lemma:tate-descent-to-H}(2), by \eqref{eq:H-M-parabolic-subquotient} and the compatibility of $\rho^{\FS}$ with parabolic induction \cite[Theorem I.9.6(viii)]{FS}, and by \eqref{eq:M-Yu-lift-reduction} and Lemma~\ref{lem:L-parameter-of-reduction} again (applied to the irreducible representation $\pi(\Psi_M)$), there is a cocycle $z\co W_F/I_F \to Z(\wh H \cap \wh G_{\der})(\ol\F_\ell)^{I_F}$ such that
\begin{equation}\label{eq:full-modular-WF-comparison}
\ol{\rho^{\FS}(\pi(\Psi))}^{\ss}
\sim z \cdot \ld j_{H,G}\circ\ld j_{M,H} \circ
\ol{\rho^{\FS}(\pi(\Psi_M))}^{\ss}.
\end{equation}
Restricting \eqref{eq:full-modular-WF-comparison} to $I_F$ and applying \cite[Proposition~5.3.2]{CF26a} therefore gives
\begin{equation}\label{eq:modular-inertial-comparison}
\ol{\rho^{\FS}(\pi(\Psi))}^{\ss}|_{I_F}
\sim \ld j_{M,G} \circ \ol{\rho^{\FS}(\pi(\Psi_M))}^{\ss}|_{I_F}.
\end{equation}
By condition (4) of Lemma~\ref{lemma:endoscopic-reduction}, the groups $\rho^{\FS}(\pi(\Psi))(I_F)$ and $\rho^{\FS}(\pi(\Psi_M))(I_F)$ are finite of order prime to $\ell$. In view of \eqref{eq:modular-inertial-comparison}, \cite[Lemma~5.3.2]{Cot26b} implies (1).

For conclusion (2) of the theorem, suppose that $S_\Psi$ is a torus. Note that by Lemma~\ref{lemma:toral-l-parameter-centralizer}, we have $S_\Psi \subset \wh H$. Since $\ell$ does not divide the order of $\rho^{\FS}(\pi(\Psi))(I_F)$ nor $\rho^{\FS}(\pi(\Psi_M))(I_F)$ as above, restricting \eqref{eq:full-modular-WF-comparison} to $I_F$ and applying \cite[Lemma~5.3.2]{Cot26b}, we may pass to conjugates to assume
\[
\rho^{\FS}(\pi(\Psi))|_{I_F} = \ld j_{H,G}\circ\ld j_{M,H} \circ \rho^{\FS}(\pi(\Psi_M))|_{I_F}
\]
as inertial L-parameters valued in $\ld G(\ol\Z_\ell)$. By \cite[Corollary 3.7]{BCT24}, their common inertial centralizer in $\wh G_{\ol\Z_\ell}$ is smooth over $\ol\Z_\ell$. The conjugating element in \eqref{eq:full-modular-WF-comparison} therefore lifts from the special fiber of this centralizer, so after conjugating $\rho^{\FS}(\pi(\Psi))$ by such a lift we may assume that \eqref{eq:full-modular-WF-comparison} is an equality. The cocycle $z$ appearing in \eqref{eq:full-modular-WF-comparison} acts trivially by conjugation on $\ol S_\Psi$, so \eqref{eq:full-modular-WF-comparison} identifies the actions of the two reduced parameters on $\ol S_\Psi$, and (2) follows from Lemma~\ref{lemma:torus-action-mod-ell}.

Finally, we prove (4). The first clause of (4) follows immediately from (2) and \cite[Lemma~10.1.9]{CF26a}. Thus $\ld j_{M,G}\circ\rho^{\FS}(\pi(\Psi_M))$ is irreducible, so the containment $M\subset H$ must be an equality. The displayed conjugacy follows from precisely the same argument which showed (1), using Proposition~\ref{prop:sigma-dual-unram-agreement-on-inertia}(2) and the fact that the orders of $\rho^{\FS}(\pi(\Psi))(W_F)$ and $\rho^{\FS}(\pi(\Psi_M))(W_F)$ are prime to $\ell$ by conditions (3)-(4) of Lemma~\ref{lemma:endoscopic-reduction} (arguing as in the last paragraph of the proof of Theorem~\ref{thm:base-change-functoriality}).
\end{proof}

\section{Calculation of Fargues--Scholze parameters}\label{section:fs-calculation}

This section is the culmination of the paper. We will use \cite[Theorem~10.2.1]{CF26a} to pin down $\rho^{\FS}$ on inertia. We then deduce a number of ``soft'' corollaries, including that $\rho^{\FS}$ has finite fibers when $G$ splits after a tamely ramified extension of $F$ and $p$ does not divide the order of the absolute Weyl group of $G$.

We continue to use the notation $\cT(\Psi)$ and $\rho_I^{\Kal}(\Psi)$ from \cite[\S 7.2]{CF26a}, as well as the embeddings $\ld j_{T,G}$ associated to a Yu datum as in \cite[Definition~4.5.1]{CF26a}. As in \S\ref{ssec:notation-fs}, if $n$ is a positive integer then $F_n$ denotes the unramified degree $n$ extension of $F$. 

\subsection{The main calculation}

The following theorem is the main result of this paper.

\begin{thm}\label{thm:main-tame}

Assume $p \neq 2$, and let $G$ be a connected reductive group over $F$. Let $\Psi$ be a normalized irreducible Yu datum for $G$ over a ring $k \in \{\ol \Q_\ell,\ol\F_\ell\}$, and let $\pi \coloneqq \pi(\Psi)$ be the corresponding tame cuspidal representation of $G(F)$ over $k$.

Let $[(T, \theta)] \in \cT(\Psi)$, let $T_0$ be the maximal unramified subtorus of $T$, and let $n$ be the ramification degree of the splitting field of $T\cap Z_G(T_0)_{\der}$. Suppose that the extension $F(\mu_n)/F$ is of degree prime to $p$\footnote{For instance, this holds if $p$ does not divide the order of the absolute Weyl group $\Omega$ of $Z_G(T_0)$: indeed, $n$ divides the order of $\Omega$, and \cite[\S 2, Table 1]{Fin21} shows that if $\ell$ is a prime number dividing $|\Omega|$, then every smaller prime also divides $|\Omega|$. Thus the degree of $F(\mu_n)/F$ is only divisible by primes dividing $|\Omega|$.}.
\begin{enumerate}
    \item Recalling that $\sim$ denotes $\wh G(k)$-conjugacy, we have
    \[
    \rho^{\FS}(\pi)|_{I_F} \sim \rho^{\Kal}_I(\Psi).
    \]
    Set $S_\Psi \coloneqq Z_{\wh G}(\rho^{\FS}(\pi)(I_F))^\circ_{\red}$. Therefore, by \cite[Lemma~10.1.11(2)]{CF26a}, $\Psi$ is non-singular if and only if $S_\Psi$ is a torus.
    \item Suppose that $p$ is good for $Z_G(T_0)$ and $\Psi$ is $F$-non-singular. Let
    \[
    X = X_*((T/Z(Z_G(T_0))^\circ_{\red})_{\ol F})_{I_F} \quad \text{and} \quad X_{\ad} = X_*((T/Z(Z_G(T_0)))_{\ol F})_{I_F}. 
    \]
    If $N$ is the least integer such that the map
    \[
    \rH^1(W_F/I_F, \Hom(X_{\ad}, k^\times)) \to \rH^1(W_{F_N}/I_F, \Hom(X, k^\times))
    \]
    is trivial, then
    \[
    \rho^{\FS}(\pi)|_{W_{F_N}} \sim \rho^{\Kal}(\Psi)|_{W_{F_N}}.
    \]
    Moreover, there exist representatives such that $\rho^{\FS}(\pi)|_{I_F} = \rho^{\Kal}(\Psi)|_{I_F}$ and the actions of $W_F$ on $S_\Psi$ through $\rho^{\FS}(\pi)$ and $\rho^{\Kal}(\Psi)$ are the same. In particular, if $\Psi_0$ is a normalized irreducible Yu datum for $G$ over $k$ then $\rho^{\FS}(\pi(\Psi_0))$ is irreducible if and only if $\Psi_0$ is non-singular.
\end{enumerate}
\end{thm}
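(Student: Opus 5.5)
The plan is to deduce the theorem from the partial characterization \cite[Theorem~10.2.1]{CF26a} (the body version of \cite[Theorem~1.3.1]{CF26a}), applied to the map $\pi(\Psi)\mapsto\rho^{\FS}(\pi(\Psi))$ on normalized irreducible Yu data. That result says that any assignment of semisimple L-parameters to tame cuspidal representations satisfying a short list of axioms agrees with $\rho_I^{\Kal}$ on inertia. Under the extra hypotheses of (2), it also agrees with $\rho^{\Kal}$ on $W_{F_N}$ and induces the same $W_F$-action on $S_\Psi$. Most of the work is checking those axioms for $\rho^{\FS}$, using the results already established.

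First I check the soft axioms, which come from \cite[Theorem I.9.6]{FS}: compatibility with central characters, with twisting by characters, with products and central isogenies, with Weil restriction, and with parabolic induction. The depth zero or torus base case is \cite[Theorem I.9.6]{FS} for tori. In the depth $0$ toral case the twisting compatibility reduces $\rho^{\FS}$ to the local class field theory of $T$ composed with $\ld j_{T,G}$. The two substantive axioms are hypotheses (5) and (4) of \cite[Theorem~1.3.1]{CF26a}. Hypothesis (5), descent to unramified twisted Levis, is exactly Theorem~\ref{thm:unramified-twisted-Levi-functoriality}. Part (1) of that theorem gives the inertial compatibility \eqref{eqn:FS-inertial-unram-functoriality} together with the matching statement for $\rho_I^{\Kal}$. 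Part (2) gives the refined statement about the $W_F$-action on $S_\pi$ when it is a torus, and parts (3) and (4) handle the non-singular case up to an unramified cocycle in $Z(\wh M\cap\wh G_{\der})$. That cocycle ambiguity is the reason the conclusion of (2) only holds over $W_{F_N}$; the integer $N$ is precisely the one that kills the image of $\rH^1(W_F/I_F,\Hom(X_{\ad},k^\times))$. Hypothesis (4), small degree base change in the totally ramified elliptic case, is supplied by Corollary~\ref{cor:yu-datum-small-degree-bc-parameter}. Combined with \cite[Lemma~5.3.2]{Cot26b}, it upgrades the congruence mod $\ell$ to agreement on $P_F$, since $\ell\neq p$.

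The argument is an induction on $\dim G$. If $T\cap G_{\der}$ is not totally ramified, I descend via Theorem~\ref{thm:unramified-twisted-Levi-functoriality} to a proper unramified twisted Levi $M$. I then apply the inductive hypothesis to $\Psi_M$ and compose with $\ld j_{M,G}$, using the compatible statement for $\rho_I^{\Kal}$. For $\ol\F_\ell$-coefficients I first reduce to $\ol\Q_\ell$ via Corollary~\ref{cor:lift-tame-reps-to-char-0} and Lemma~\ref{lem:L-parameter-of-reduction}. In the remaining case $T$ is totally ramified modulo the center, with splitting field of ramification degree $n$. Here I pass to $E=F(\mu_n)$ and then to cyclic extensions of prime degree $\ell\ne p$ making $T_E$ elliptic, which is where the hypothesis that $[F(\mu_n):F]$ is prime to $p$ enters. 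Corollary~\ref{cor:yu-datum-small-degree-bc-parameter} identifies $\rho^{\FS}$ restricted to $W_E$ modulo $\ell$. Working over several primes $\ell$ and gluing via independence of $\ell$ \cite{Sch25}, I recover the tame inertial part in characteristic $0$. The wild part is controlled by the agreement on $P_F$ already obtained. The equivalence between non-singularity and $S_\Psi$ being a torus is then \cite[Lemma~10.1.11(2)]{CF26a} applied to $\rho_I^{\Kal}$. The final irreducibility claim in (2) follows from the $S_\Psi$-action statement together with \cite[Lemma~10.1.9]{CF26a}.

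The main obstacle I expect is matching the precise form of the hypotheses in \cite[Theorem~10.2.1]{CF26a}, especially the totally ramified step. The base change results only give information after restriction to $W_E$ and only modulo $\ell$, so the characterization theorem must be applied in exactly the form that accepts such data. I would first check that its hypothesis (4) requires only $P_F$-agreement together with the Kaletha-side computation from \cite[\S 8.3]{CF26a}. Then the remaining input is the finite order central character normalization, which is arranged by twisting as in Lemma~\ref{lemma:unram-twisted-levi-first-reductions}.
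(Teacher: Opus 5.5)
Your proposal is correct and follows the paper's proof: the paper invokes \cite[Theorem~10.2.1]{CF26a} with $\rho=\rho^{\FS}$, takes the soft hypotheses from \cite[Theorem I.9.6]{FS}, gets hypothesis (5) from Theorem~\ref{thm:unramified-twisted-Levi-functoriality}, and gets hypothesis (4) from Corollary~\ref{cor:yu-datum-small-degree-bc-parameter} at every prime $\ell\neq p$, so that one may take $\cS=\{p\}$ (this is where the condition on $[F(\mu_n):F]$ is used). Your second paragraph re-sketches the induction and the totally ramified step, which is the internal content of the cited characterization theorem; the paper does not redo it.
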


\begin{proof}
The conclusion of this theorem is the same as the conclusion of \cite[Theorem~10.2.1]{CF26a} in the case $\rho = \rho^{\FS}$. Corollary~\ref{cor:yu-datum-small-degree-bc-parameter}, using modular functoriality at every prime $\ell\neq p$, allows us to take $\cS=\{p\}$. Thus we need only verify the hypotheses of that theorem:
\begin{enumerate}
    \item Compatibility with local Langlands for tori is \cite[Theorem I.9.6(i)]{FS}.
    \item Compatibility with character twisting and central characters is \cite[Theorem I.9.6(ii), (iii)]{FS}.
    \item Compatibility with homomorphisms which induce an isomorphism on adjoint groups is \cite[Theorem I.9.6(v)]{FS}.
    \item The analogue of \cite[Proposition~8.3.1]{CF26a} is Corollary~\ref{cor:yu-datum-small-degree-bc-parameter}.
    \item The analogue of \cite[Theorem~9.3.3]{CF26a} is Theorem~\ref{thm:unramified-twisted-Levi-functoriality}.
\end{enumerate}
This completes the proof.
\end{proof}

\subsection{Finiteness of the Fargues--Scholze correspondence}

In this section, we show that the calculation of $\rho^{\FS}(\pi)|_{P_F}$ for $k$-representations $\pi$ arising from Yu's construction of $G(F)$ is already sufficient to show that $\rho^{\FS}$ has finite fibers in significant generality.

\begin{thm}\label{thm:finiteness-of-fargues-scholze}

Suppose that $G$ splits after a tamely ramified extension of $F$ and $p$ does not divide the order of the absolute Weyl group $\Omega$ of $G$. Then $\rho^{\FS}$ preserves depth and has finite fibers.
\end{thm}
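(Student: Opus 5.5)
We reduce both claims to the cuspidal case and then read off the answer from Theorem~\ref{thm:main-tame}(1). Every irreducible smooth $k$-representation $\pi$ of $G(F)$ is a subquotient of $I_P^G(\sigma)$ for some Levi $L$ of a parabolic $P$ and some irreducible cuspidal $\sigma$ of $L(F)$. By \cite[Theorem I.9.6(viii)]{FS}, $\rho^{\FS}(\pi)\sim \ld j_{L,G}\circ\rho^{\FS}(\sigma)$. The Levis $L$ again split tamely, and their Weyl groups divide $|\Omega|$. The hypotheses on $p$ then imply (by Fintzen's exhaustion results) that $p\neq 2$ and that every cuspidal $\sigma$ arises from Yu's construction. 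Moreover, the condition of Theorem~\ref{thm:main-tame} on $F(\mu_n)/F$ holds for every torus attached to a Yu datum, by the footnote there. Hence
\[
\rho^{\FS}(\pi)|_{I_F}\sim \ld j_{L,G}\circ\ld j_{T,L}\circ\ld\theta|_{I_F},
\]
where $[(T,\theta)]\in\cT(\Psi_L)$ comes from the Yu datum $\Psi_L$ of $\sigma$.

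\emph{Depth.} We compare the depth of $\pi$ with the depth of the right-hand side. The depth of $\pi$ equals that of $\sigma$, by Moy--Prasad, and this in turn equals $r_d$ (or $\mathrm{depth}(\theta)$, which agrees with it for a normalized datum). The L-embeddings $\ld j$ built from minimally ramified (tame) $\chi$-data are trivial on the relevant upper ramification groups $I_F^{r+}$ for $r>0$. Therefore the depth of $\rho^{\FS}(\pi)$ equals the depth of $\ld\theta$, which equals the depth of $\theta$ by depth preservation for tori (Yu's theorem in the tame case). The main care needed here is the depth-zero and unramified-twist bookkeeping, which the twist compatibility of $\rho^{\FS}$ handles.

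\emph{Finite fibers.} Fix a semisimple parameter $\rho$. We show that only finitely many $\pi$ map to it, in two steps.

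First, the pair $(L,\sigma)$ is determined up to conjugacy and finitely many choices, and each $I_P^G(\sigma)$ has finite length. So it suffices to show that, for each of the finitely many Levis $L$, only finitely many cuspidal $\sigma$ of $L(F)$ satisfy $\ld j_{L,G}\circ\rho^{\FS}(\sigma)\sim\rho$. There are only finitely many $\wh G$-conjugacy classes of embeddings $\ld L\to\ld G$ through which $\rho$ can factor up to the relevant equivalence, so we reduce to $G=L$ with $\pi$ cuspidal.

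Second, we work with cuspidal $\pi$. The restriction $\rho|_{I_F}$ determines $\rho_I^{\Kal}(\Psi)$, and we use this to confine $\Psi$ to finitely many possibilities.
\begin{enumerate}
\item Up to conjugacy there are finitely many tame maximal tori $T$. The depth is fixed by the previous paragraph.
\item Given $T$, the restriction $\theta|_{T(F)_{\mathrm b}}$ is determined by $\rho|_{I_F}$ up to the finite Weyl group action, since $\ld j_{T,G}$ is a fixed embedding and $I_F$ sees exactly the bounded part.
\item Given $(T,\theta|_{T(F)_{\mathrm b}})$, only finitely many Yu data up to equivalence have $[(T,\theta)]\in\cT(\Psi)$ with $\theta$ unramified-twist equivalent. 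This uses the finiteness of the Lusztig series for the depth-zero part $\tau$ (\cite[Lemma~2.8.3]{Cot26b}) and the fact that the twisted Levi sequence and the $\phi_i$ are recovered from $\theta$ on positive-depth filtration subgroups, by Kaletha's reparametrization.
\end{enumerate}
This leaves an unramified twist ambiguity, which we remove using the full $\rho$. The central character of $\pi$ is determined by $\rho$, by compatibility with central characters. Because $Z(G)(F)/Z(G)(F)_{\mathrm b}$ has finite index in $G(F)/G(F)^1$ modulo compacts, only finitely many unramified twists $\pi\otimes\chi$ share a given central character and have the same restricted parameter up to the finite group $\Hom(G(F)/Z(G)(F)G(F)^1,k^\times)$.

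The step we expect to be the main obstacle is the cuspidal step: recovering finitely many Yu data from $\rho|_{I_F}$, in particular controlling how the signs and $\chi$-data in $\ld j_{T,G}$ change with $T$. We would handle this by noting that these signs are order-two characters, so they only multiply the count by a bounded factor.
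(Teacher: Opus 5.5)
\textbf{Overall.} Your argument is viable, but its finiteness step takes a different and heavier route than the paper's, and one step in the reduction to cuspidals must be restated.

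\textbf{Depth.} Your depth argument is essentially the paper's. It uses Theorem~\ref{thm:main-tame}(1) on wild inertia, the fact that $\ld j_{T,G}$ does not change depth, Yu's depth preservation for tame tori, and $\mathrm{depth}(\theta)=r_d$.

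\textbf{Finiteness in the cuspidal case: comparison of routes.} You try to \emph{reconstruct} the Yu datum from the parameter, in four steps.
\begin{enumerate}
\item $\rho|_{I_F}$ pins down $\theta|_{T(F)_{\mathrm b}}$ up to finite ambiguity.
\item A Howe factorization of $\theta$ recovers $(G^i)$, $(r_i)$ and the $\phi_i$ up to refactorization.
\item $\tau$ lies in a finite Lusztig series.
\item The central character removes the remaining ambiguity along $T(F)/T(F)_{\mathrm b}$.
\end{enumerate}
Step 4 is legitimate only because $T/Z(G)$ is anisotropic ($T$ is elliptic in $G^0$ and $Z(G^0)/Z(G)$ is anisotropic); you should say so.

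The paper never recovers $\theta$. Once $r_d=r$ is fixed by depth preservation, it simply counts:
\begin{itemize}
\item finitely many tame twisted Levi sequences up to conjugacy;
\item finitely many vertices $x$;
\item finitely many $(r_i)$ with $r_d=r$;
\item characters $\phi_i$ of bounded order after passing to an equivalent datum \cite[Lemma~6.6.5]{CF26a};
\item finitely many depth-zero cuspidal $\tau$ once the central character is prescribed. The central character has bounded order after twisting so that the parameter has finite image in $\ld G/\wh G_{\der}$.
\end{itemize}
So the paper uses only $\rho|_{P_F}$ and the central character. It needs none of the following, which your route does need:
\begin{itemize}
\item a genuine fiber-finiteness statement for composition with $\ld j_{T,G}$ on inertial parameters. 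Your ``up to the Weyl group'' is not a proof of this.
\item recovery of the datum from $\theta$ for possibly singular data. Kaletha's reparametrization does not cover this case, so it would need a separate Howe-factorization argument.
\end{itemize}
Your route would give finer information about fibers. However, the sign and $\chi$-data issue you flag as the main obstacle does not arise at all in the counting argument.

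\textbf{Correction needed in the reduction to cuspidals.} Finiteness of ``embeddings $\ld L\to\ld G$'' is the wrong statement, since $\ld j_{L,G}$ is fixed. What you need is that only finitely many $\wh L(k)$-conjugacy classes of semisimple $\psi\colon W_F\to\ld L(k)$ satisfy $\ld j_{L,G}\circ\psi\sim\rho$. This is a nontrivial input, namely \cite[Corollary 2.4]{DHKM}, which is what the paper invokes.

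\textbf{Minor points.}
\begin{itemize}
\item $p\neq 2$ comes from $|\Omega|$ being even when $G$ is not a torus, not from Fintzen. The torus case should therefore be set aside.
\item Before applying Theorem~\ref{thm:main-tame}, you should pass to an irredundant normalized datum. This is possible since $p\nmid|\pi_1(G_{\der})|$.
\end{itemize}
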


\begin{proof}
Let $k$ be a field among $\ol\Q_\ell$ and $\ol\F_\ell$, and fix a semisimple L-parameter $\varphi\co W_F \to \ld G(k)$; we want to show that $(\rho^{\FS})^{-1}(\varphi)$ is finite. Let $P_1, \dots, P_n \subset G$ be representatives for the $G(F)$-conjugacy classes of parabolic $F$-subgroups of $G$ such that $\varphi$ normalizes $\wh P_i \subset \wh G$, and for each $i$ choose a Levi factor $L_i \subset P_i$. If $\pi$ is an irreducible $k$-representation of $G(F)$ such that $\rho^{\FS}(\pi) \sim \varphi$, then since $\rho^{\FS}$ is compatible with parabolic induction, there is some $i$ and some irreducible cuspidal $k$-representation $\tau_i$ of $L_i(F)$ such that $\pi$ is an irreducible subquotient of the normalized parabolic induction $I_{P_i}^G(\tau_i)$. By \cite[Corollary 2.4]{DHKM} (see also \cite[Theorem 1.1]{Cot24} and \cite[Lemma 3.13]{Cot25}), for each $i$ there are only finitely many semisimple L-parameters $\psi\co W_F \to \ld L_i(k)$ up to $\wh L_i(k)$-conjugacy such that $\varphi\sim \ld j_{L_i,G} \circ \psi$. Thus by passing from $G$ to each $L_i$ separately, it suffices to show that $(\rho^{\FS})^{-1}(\varphi)$ contains only finitely many irreducible cuspidal $k$-representations of $G(F)$. Since $\rho^{\FS}$ is compatible with twists by characters, we may and do assume that the composition of $\varphi$ with $\ld G \to \ld G/\wh G_{\der}$ has finite image. 

If $G$ is a torus, then the result is clear; otherwise note that the classification of root systems shows that $p\nmid|\Omega|$ implies that $p\neq2$, that $p$ is good for $G$, and that $p\nmid|\pi_1(G_{\der})|$. Then \cite[Theorem 8.1]{Fin21} and \cite[Theorem 4.1]{Fin22} imply that every irreducible cuspidal $k$-representation of $G(F)$ arises from Yu's construction. Suppose that
\[
\Psi = ((G^i)_{0\leq i\leq d}, x, (r_i)_{0\leq i\leq d}, \tau, (\phi_i)_{0\leq i\leq d})
\]
is a Yu datum for $G$ over $k$ such that $\rho^{\FS}(\pi(\Psi)) \sim \varphi$.
By \cite[Lemma~6.6.1]{CF26a}, we may replace $\Psi$ by an irredundant datum defining the same representation, and thus assume $G^i\neq G^{i+1}$ for all $i$. Since $p\nmid|\pi_1(G_{\der})|$, \cite[Lemma~3.7.2]{Kal19} allows us to replace $\Psi$ with a normalized datum $\Psi'$ which is $G$-equivalent to $\Psi$, and \cite[Proposition~6.6.3]{CF26a} gives $\pi(\Psi')\cong\pi(\Psi)$. Thus we may assume that $\Psi$ is both irredundant and normalized.

Let $[(T, \theta)] \in \cT(\Psi)$. By Theorem~\ref{thm:main-tame}(1), we have $\varphi|_{P_F}\sim \ld j_{T,G} \circ \ld(\theta|_{T(F)_{\mathrm{b}}})|_{P_F}$. Let $r$ be the depth of $\varphi$, so $r$ is the least real number such that $\varphi|_{W_F^s}$ is trivial for all $s > r$, where $(W_F^s)_{s \in \RR_{\geq 0}}$ is the upper numbering filtration of $W_F$. Note that $\ld\theta$ is also of depth $r$ by \cite[Lemma~7.2.4]{CF26a}. By \cite[Theorem 7.10]{Yu09}, this implies that $\theta$ is also of depth $r$, i.e., $r$ is minimal such that $\theta|_{T(F)_{r+}}$ is trivial. But $\theta$ is of depth $r_d$ by construction, so $r_d = r$. This already establishes depth preservation.

It is enough to show that, up to equivalence, there are only finitely many $\Psi$ as above. We start with the following sequence of observations.
\begin{enumerate}
    \item There are only finitely many tamely ramified twisted Levi sequences $G^0 \subsetneq G^1 \subsetneq \cdots \subsetneq G^d$ up to $G(F)$-conjugacy since there are only finitely many tamely ramified maximal $F$-tori in $G$ up to $G(F)$-conjugacy, and each twisted Levi subgroup is determined by a tamely ramified maximal torus and a subset of the absolute root system.
    \item If $G^0 \subset G$ is a twisted Levi $F$-subgroup, then there are only finitely many vertices $x \in \cB(G^0_{\der})$ up to $G^0(F)$-conjugacy since each vertex is $G^0(F)$-conjugate to a vertex in a chosen alcove.
    \item If $(G^i)_{0\leq i\leq d}$ is fixed, then there are only finitely many sequences $(r_i)_{0\leq i\leq d}$ such that $r_d = r$, since the jumps in the Moy--Prasad filtration for $G^i(F)_{[x]}$ are discrete.
    \item If $G^0 \subset G$ is a twisted Levi $F$-subgroup and $x \in \cB(G^0)$ and $N$ is an integer, then there are only finitely many irreducible cuspidal $k$-representations of $G^0(F)_{[x]}/G^0(F)_{x,0+}$ with fixed restriction to $Z(G)(F)$ since $G^0(F)_{[x]}/(G^0(F)_{x,0+} \cdot Z(G)(F))$ is finite.
    \item If $(G^i)_{0\leq i\leq d}$ and $(r_i)_{0\leq i\leq d}$ are given and $(N_i)_{0\leq i\leq d}$ is a sequence of integers, then there are only finitely many choices of $(\phi_i)_{0\leq i\leq d}$ such that each $\phi_i$ is of order $\leq N_i$.
\end{enumerate}
By (1), (2), and (3), it is enough to prove that there are only finitely many $\Psi$ as above with prescribed $(G^i)$, $x$, and $(r_i)$. By \cite[Lemma~6.6.5]{CF26a}, there is some sequence $(N_i)$ of integers, depending only on $(G^i)$, $x$, and $(r_i)$, such that any such $\Psi$ is equivalent to a Yu datum with $\phi_i$ of order $\leq N_i$ for each $i$. By (5), it is therefore enough to show that there are only finitely many $\Psi$ such that $(\phi_i)$ is also prescribed. For any such $\Psi$, the construction of the pairs $[(T, \theta)] \in \cT(\Psi)$ and the compatibility of $\rho^{\FS}$ with central characters shows that the central character of $\tau$ is of bounded order, and thus the theorem follows from (4).
\end{proof}

\subsection{Surjectivity on inertial L-parameters}

\begin{thm}\label{thm:inertial-surjectivity}

Suppose that $G$ splits after a tamely ramified extension of $F$, and $p$ does not divide the order of the absolute Weyl group of $G$. Let $k \in \{\ol\Q_\ell, \ol\F_\ell\}$, and let $\varphi\co W_F \to \ld G(k)$ be a semisimple L-parameter. If $\varphi$ is irreducible, then there exists an irreducible $k$-representation $\pi$ of $G(F)$ such that
\begin{equation}\label{eq:inertial-param}
\rho^{\FS}(\pi)|_{I_F} \sim \varphi|_{I_F}.
\end{equation}
If $G$ is quasi-split, then the same conclusion holds without assuming that $\varphi$ is irreducible.
\end{thm}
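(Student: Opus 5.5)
The plan is to reduce the statement to the construction, on the Kaletha side, of a Yu datum whose inertial parameter $\rho_I^{\Kal}$ is $\varphi|_{I_F}$, and then transport this to $\rho^{\FS}$ using Theorem~\ref{thm:main-tame}(1). As in the proof of Theorem~\ref{thm:finiteness-of-fargues-scholze}, the hypothesis $p\nmid|\Omega|$ gives $p\neq 2$, that $p$ is good, and that $p\nmid|\pi_1(G_{\der})|$. It also makes the hypothesis on $F(\mu_n)/F$ in Theorem~\ref{thm:main-tame} automatic, by the footnote there. Hence for every normalized irreducible Yu datum $\Psi$ we have $\rho^{\FS}(\pi(\Psi))|_{I_F}\sim \rho_I^{\Kal}(\Psi)$. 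It therefore suffices to find such a $\Psi$ with $\rho_I^{\Kal}(\Psi)\sim\varphi|_{I_F}$.

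\textbf{Step 1: the irreducible case.} First twist by a character, which is compatible with both sides, so that $\varphi$ composed with $\ld G\to\ld G/\wh G_{\der}$ has finite image. Since $\varphi$ is irreducible and $p\nmid|\Omega|$, the parameter $\varphi$ is tame regular modulo the standard structure theory: $\varphi|_{P_F}$ is semisimple, and $Z_{\wh G}(\varphi(P_F))^\circ$ is a Levi-type subgroup $\wh G^0$-chain. Following Kaletha's construction of regular supercuspidal parameters (\cite{Kal19}, and its extension to arbitrary irreducible parameters as in \cite[\S 7]{CF26a}), I would factor
\[
\varphi|_{I_F}\sim \ld j_{T,G}\circ \ld\theta|_{I_F}
\]
for a tamely ramified elliptic maximal torus $T$ and a character $\theta$ of $T(F)$. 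Concretely, choose a $\varphi(I_F)$-stable Borel--torus pair in $Z_{\wh G}(\varphi(P_F))^\circ$, as in \cite[Lemma~10.1.10]{CF26a}. The $W_F$-action then produces an $F$-torus $T$ with $\wh T$ embedded in $\wh G$ and an L-embedding through which $\varphi|_{I_F}$ factors; ellipticity of $T$ follows from irreducibility of $\varphi$. Only the inertial restriction matters, so $\theta$ need only be specified up to unramified twist. From $(T,\theta)$ I would build a Yu datum: take a Howe factorization of $\theta$ to obtain the twisted Levi sequence, the depths $r_i$, and the characters $\phi_i$, and take $x$ to be the point of $T$. For the depth zero part, take $\tau$ an irreducible constituent of $\pm R_{\ol T}^{\ol G^0}(\theta_0)$ twisted by the appropriate FKS sign, chosen so that $[(T,\theta)]\in\cT(\Psi)$. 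By the definition of $\rho_I^{\Kal}$, this gives $\rho_I^{\Kal}(\Psi)\sim\varphi|_{I_F}$.

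\textbf{Main obstacle.} The main obstacle is to ensure that $\tau$ can be chosen \emph{cuspidal}, so that $\Psi$ is genuinely a Yu datum. The difficulty is singular depth-zero parts, where $R_{\ol T}^{\ol G^0}(\theta_0)$ might have no cuspidal constituent. My first attempt is to use ellipticity of $T$ in $G^0$, which makes $\ol T$ elliptic in $\ol G^0_{[x]}$ at a vertex $x$. Given that, a Lusztig series $\cE(\ol G^0_{[x]},[\ol T,\theta_0])$ attached to an elliptic torus contains a cuspidal member, up to the uniqueness of cuspidal support in \cite[Lemma~2.10.2]{Cot26b}. If $k=\ol\F_\ell$, I would lift $\varphi$ to $\ol\Z_\ell$ using Lemma~\ref{lemma:check-irreducibility-mod-ell} and flatness of the moduli space of parameters. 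I would then do the construction over $\ol\Q_\ell$, and reduce modulo $\ell$ via Lemma~\ref{lem:L-parameter-of-reduction}, taking an irreducible constituent of the reduction.

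\textbf{Step 2: the quasi-split case.} Suppose $G$ is quasi-split and $\varphi$ is arbitrary. Choose a minimal Levi $\ld L$ with $\varphi$ factoring as $\ld j_{L,G}\circ\psi$ and $\psi$ irreducible; because $G$ is quasi-split, every relevant Levi of $\ld G$ comes from a Levi $L\subset G$. Step 1 applied to $L$ produces a cuspidal $\pi_L$ with $\rho^{\FS}(\pi_L)|_{I_F}\sim\psi|_{I_F}$. Finally, let $\pi$ be any irreducible subquotient of $I_P^G(\pi_L)$. By compatibility with parabolic induction \cite[Theorem I.9.6(viii)]{FS}, $\rho^{\FS}(\pi)|_{I_F}\sim\varphi|_{I_F}$.
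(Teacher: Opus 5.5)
Your overall architecture matches the paper's: lift to characteristic $0$ when $k=\ol\F_\ell$, produce a tame cuspidal $\pi(\Psi)$ whose Kaletha parameter matches $\varphi$, transport to $\rho^{\FS}$ via Theorem~\ref{thm:main-tame}, reduce modulo $\ell$ via Lemma~\ref{lem:L-parameter-of-reduction}, and get the quasi-split case from parabolic induction. The difference is that the paper does not build $\Psi$ by hand. It quotes \cite[Lemma 4.1.3, Proposition 4.1.8, \S 4.2]{Kal21b}, which directly supplies a \emph{non-singular} Yu datum $\Psi$ with $\rho^{\Kal}(\Psi)\sim\varphi_0$.

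Your hand-made replacement has a genuine gap exactly at the step you flag. The assertion that a Lusztig series $\cE(\ol G^0_{[x]},[\ol T,\theta_0])$ attached to an elliptic torus contains a cuspidal member is false. For $\GL_2$ over $\F_q$, with $\ol T$ the anisotropic torus and $\theta_0=1$, the series is $\{1,\mathrm{St}\}$, and neither is cuspidal. The same failure occurs for $\theta_0=\chi\circ\Nm_{\F_{q^2}/\F_q}$. Uniqueness of cuspidal support does not rescue this: it says each irreducible has one cuspidal support, not that a given series contains a cuspidal member.

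What actually makes $\tau$ cuspidal is a hypothesis you never invoke at this step. Irreducibility of $\varphi$ forces $Z_{\wh G}(\varphi(I_F))^\circ$ to be a torus on which $\varphi(\Fr)$ has finite fixed points modulo the center (\cite[Lemma~10.1.9]{CF26a}). This is what makes the extracted pair $(T,\theta)$ not just elliptic but \emph{non-singular}, and for non-singular elliptic pairs every constituent of $\pm R_{\ol T}^{\ol G^0_{[x]}}(\theta_0)$ is cuspidal (compare \cite[Lemma~10.1.11(2)]{CF26a}). Relatedly, ``tame regular'' is the wrong notion: irreducible parameters need not be regular in Kaletha's 2019 sense. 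What holds under $p\nmid|\Omega|$ is that they are torally wild, with non-singular associated data.

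Your claim that $\rho_I^{\Kal}(\Psi)\sim\varphi|_{I_F}$ holds ``by definition'' also hides a real step. You must show that $\varphi$ factors through the specific L-embedding $\ld j_{T,G}$ built from minimally ramified $\chi$-data, with the FKS sign corrections absorbed into $\theta$. That is the content of Kaletha's Proposition 4.1.8, not a formal consequence. Citing \cite{Kal21b} for the existence of the non-singular datum, as the paper does, closes both gaps at once.
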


\begin{proof}
The second statement follows from the first and compatibility of $\rho^{\FS}$ with parabolic induction, so we will assume that $\varphi$ is irreducible. If $G$ is a torus, then the result is clear. Otherwise, the assumption on the absolute Weyl group implies that $p\neq2$, that $p$ is good for $G$, and that $p$ does not divide $|\pi_1(G_{\der})|$.

If $k=\ol\Q_\ell$, set $\varphi_0=\varphi$. If $k=\ol\F_\ell$, then \cite[Theorem 1.3, Theorem 1.6]{DHKM25} gives a lift $\wt\varphi\co W_F\to\ld G(\ol\Z_\ell)$ of $\varphi$; set $\varphi_0=\wt\varphi_{\ol\Q_\ell}$. In the latter case, Lemma~\ref{lemma:check-irreducibility-mod-ell} shows that $\varphi_0$ is irreducible. Using compatibility with character twists, we may assume that the composition of $\varphi_0$ with $\ld G\to\ld G/\wh G_{\der}$ has finite image.

By \cite[Lemma 4.1.3, Proposition 4.1.8, \S 4.2]{Kal21b}, there is a non-singular irreducible Yu datum $\Psi$ for $G$ over $\ol\Q_\ell$ such that $\rho^{\Kal}(\Psi)\sim\varphi_0$. After applying \cite[Lemma~6.6.1]{CF26a} to reduce to the case that $\Psi$ is irredundant, \cite[Lemma 3.7.2]{Kal19} (the statement of which assumes, but the proof of which does not use, that $k$ is of characteristic $0$) allows us to replace $\Psi$ by a normalized $G$-equivalent datum; \cite[Proposition~6.6.3 and Lemma~7.2.13]{CF26a} show that this does not change $\pi(\Psi)$ or $\rho^{\Kal}(\Psi)$. By the hypothesis on the absolute Weyl group, Theorem~\ref{thm:main-tame} gives
\[
\rho^{\FS}(\pi(\Psi))|_{W_{F_N}}\sim\varphi_0|_{W_{F_N}}
\]
for some $N$. Compatibility with central characters shows that $\pi(\Psi)$ has finite-order central character. \cite[Lemma~6.6.5]{CF26a} and the construction of $\pi(\Psi)$ therefore show that there is a $G(F)$-stable $\ol\Z_\ell$-lattice $\Lambda \subset \pi(\Psi)$. If $k=\ol\Q_\ell$, take $\pi=\pi(\Psi)$. If $k=\ol\F_\ell$, let $\pi$ be an irreducible subquotient of $\Lambda \otimes_{\ol\Z_\ell} \ol\F_\ell$. Since $I_F\subset W_{F_N}$, Lemma~\ref{lem:L-parameter-of-reduction} and the choice of $\wt\varphi$ show that $\pi$ satisfies \eqref{eq:inertial-param}.
\end{proof}

\begin{remark}
Theorem~\ref{thm:inertial-surjectivity} has not been seriously optimized; for certain groups which split under unramified extensions \cite[Examples~10.2.6 and 10.2.8]{CF26a} show that one can find $\pi$ such that $\rho^{\FS}(\pi)|_{W_{F_N}} \sim \varphi|_{W_{F_N}}$ for relatively small $N$.

One can also obtain much sharper results for particular parameters. For instance, if $\varphi|_{I_F}$ factors through a maximal torus of $\wh G$, then one can find $\pi$ such that
\[
\rho^{\FS}(\pi) \sim \varphi.
\]
This follows from the same argument as the proof of Theorem~\ref{thm:inertial-surjectivity}, noting that the integer $N$ in Theorem~\ref{thm:main-tame} is equal to $1$ for any $\Psi$ constructed in the proof in this case.
\end{remark}

\begin{remark}
    \cite[Theorem 1.2]{Cot24b} and Theorem~\ref{thm:inertial-surjectivity} imply that if $G$ splits after a tamely ramified extension of $F$ and $p$ does not divide the order of the absolute Weyl group of $G$, then every connected component of the scheme of cocycles $\underline{Z}^1(W_F, \wh G)$ admits a cocycle lying in the image of $\rho^{\FS}$.\footnote{The invariant $\alpha$ in \cite[Theorem 1.2]{Cot24b} (which is taken from \cite[\S 4]{DHKM25}) is trivial in this case: indeed, $Z_{\wh G}(\rho(P_F))$ is connected under the hypotheses on $p$.}
\end{remark}

\subsection{Groups of type A}

For groups of absolute type A, we can strengthen the comparison in Theorem~\ref{thm:main-tame}. We will show that if $G_{F^{\unr}} \cong \SL_n$ for some $n$, then (under very weak hypotheses on $p$) in fact $\rho^{\FS}(\pi(\Psi)) \sim \rho^{\Kal}(\Psi)$ for all non-singular irreducible Yu data $\Psi$, extending \cite[Example~10.2.3]{CF26a}. While this is already known by other methods for some such groups $G$, our proof is purely local and also covers new cases. See Remark~\ref{rem:type-A} for a discussion of related prior results.

\begin{lemma}\label{lemma:l-parameters-for-gln}
Let $G$ be a connected reductive $F$-group such that $G_{F^{\unr}} \cong \GL_n$ for some $n$, let $f\co \ld G \to \ld G/\wh G_{\der}$ denote the quotient map, and let $k \in \{\ol\Q_\ell, \ol\F_\ell\}$. Let $\varphi_1, \varphi_2\co W_F \to \ld G(k)$ be two L-parameters such that
\begin{enumerate}
    \item $\varphi_1|_{I_F} \sim \varphi_2|_{I_F}$,
    \item $f \circ \varphi_1 \sim f \circ \varphi_2$,
    \item $\varphi_1$ is irreducible.
\end{enumerate}
Then there exists a cocycle $z\co W_F/I_F \to Z(\wh G_{\der})(k)$ such that $\varphi_2 \sim z\cdot\varphi_1$.
\end{lemma}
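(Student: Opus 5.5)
Since $G_{F^{\unr}}\cong\GL_n$, we have $\wh G=\GL_n$, and $W_F$ acts on it through a finite quotient $W_0$ by pinned automorphisms (either trivially or through transpose-inverse). The plan is to reduce to a statement about conjugating the Frobenius element, with inertia fixed. By (1) we may conjugate so that $\varphi_1|_{I_F}=\varphi_2|_{I_F}$. Fix a lift $\Fr\in W_F$. Both $\varphi_i(\Fr)$ normalize the subgroup $\Gamma\coloneqq\varphi_1(I_F)$ and induce the same outer action on it. Hence
\[
\varphi_2(\Fr)=c\,\varphi_1(\Fr)\qquad\text{with } c\in Z_{\wh G}(\Gamma)(k)=:C,
\]
and $\varphi_2$ is determined by $c$. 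Irreducibility (3) gives that $Z_{\wh G}(\varphi_1)$ is finite modulo $Z(\wh G)^{W_F}$, so the relevant torus part of $C$ is controlled.

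The first key step is to compute $C$. When $k=\ol\Q_\ell$, or more generally when $\Gamma$ has order prime to $\ell$, the restriction $\varphi_1|_{I_F}$ decomposes the standard representation (twisted, in the unitary case) into isotypic pieces. Then $C^\circ$ is a product of general linear groups, of the form $\prod\GL_{m_j}$ or Weil restrictions thereof, and $C$ is connected by the standard fact that centralizers in $\GL_n$ are connected. Let $h=\operatorname{Int}(\varphi_1(\Fr))$ act on $C$. Irreducibility of $\varphi_1$ forces $C^{h}$ to be finite modulo center, and then \cite[Lemma~10.1.1]{CF26a} gives that $C^\circ_{\red}$ is a torus $S$, a product of $\G_m$'s permuted by $h$.

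The second key step is a Lang-type argument on the torus $S$. Conjugating by $s\in S$ replaces $c$ by $s\,c\,h(s)^{-1}$, so the class of $c$ in $S/(1-h)S$ is the only invariant. Condition (2) says that the image of $c$ under $\det\colon S\to\wh G/\wh G_{\der}=\G_m$ is trivial up to this same ambiguity. Each $h$-orbit of $\G_m$-factors is an induced torus, so $S/(1-h)S\cong\prod_{\text{orbits}}\G_m$ via products along orbits. The orbit-product map therefore lands in the kernel of $\det$ only up to elements which can be adjusted by central elements of $\SL_n$. We would then show that $c$ can be conjugated into $Z(\wh G_{\der})=\mu_n$. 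This defines $z(\Fr)\coloneqq c$, which extends to a cocycle on $W_F/I_F$ (it commutes with the $W_F$-action on the center), giving $\varphi_2\sim z\cdot\varphi_1$.

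The main obstacle I expect is the modular case $k=\ol\F_\ell$ when $\ell$ divides $|\Gamma|$, where $C$ need not be reductive or connected. There I would try to lift both parameters to $\ol\Z_\ell$ with equal inertial restriction using \cite[Lemma~A.1]{DHKM25} and the smoothness of $Z_{\wh G}(\rho(P_F))$ from \cite[Corollary 3.7]{BCT24}. I would then apply Lemma~\ref{lemma:fixed-point-torus-flat} to transport the torus argument to the special fibre, or else work directly with $Z_{\wh G}(\varphi_1(P_F))$, which is a product of general linear groups over finite extensions, and run the same Lang argument for $\Fr$ and a tame generator together.
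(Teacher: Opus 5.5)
Your skeleton is the one the paper uses: equalize inertia, write $\varphi_2(\Fr)=c\,\varphi_1(\Fr)$ with $c$ centralizing $\varphi_1(I_F)$, then run a Lang-type argument on a torus together with (2). As written, however, the proposal has two real holes.

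\textbf{The modular case.} You only compute $C=Z_{\wh G}(\varphi_1(I_F))$ when $\ell\nmid|\varphi_1(I_F)|$. For $k=\ol\F_\ell$ with $\ell$ dividing this order, you offer only a tentative lifting plan to $\ol\Z_\ell$, which is never carried out. That plan is unnecessary, and your reason for it is partly mistaken.
\begin{itemize}
\item Connectedness is not at risk. Since $G$ splits over $F^{\unr}$, $I_F$ acts trivially on $\wh G=\GL_n$, so $C$ is the centralizer of a subset of $\GL_n(k)$. It is therefore the unit group of a commutant algebra, open in an affine space, hence connected and smooth in \emph{every} characteristic \cite[III, 3.22]{SS70}.
\item Reductivity is not at risk either. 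Irreducibility of $\varphi_1$ alone forces $\wh S\coloneqq Z_{\wh G}(\varphi_1(I_F))^\circ_{\red}$ to be a torus with $\wh S^{\varphi_1(\Fr)}/Z(\wh G)^{\varphi_1(\Fr)}$ finite \cite[Lemma~10.1.9]{CF26a}. Indeed, a nontrivial unipotent radical normalized by $\varphi_1(W_F)$ would yield a $\varphi_1$-stable proper parabolic.
\end{itemize}
Together these give $C=\wh S$ uniformly in $k$, with no coprimality hypothesis.

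\textbf{The final step.} You only announce it (``we would then show''), and your description of $\wh S/(1-h)\wh S$ is not right in general.
\begin{itemize}
\item When $\Fr$ acts on $\wh G$ by the outer automorphism, $h$ acts on $X^*(\wh S)$ by signed permutations, not permutations. For irreducible $\varphi_1$ the quotient $\wh S/(1-h)\wh S$ is then actually trivial.
\item In the inner case, finiteness of $\wh S^{h}$ modulo $Z(\wh G)$ forces a single orbit.
\end{itemize}
You use neither fact, and the sentence about adjusting by central elements of $\SL_n$ is not an argument.

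The paper avoids this bookkeeping as follows.
\begin{itemize}
\item $Z(\wh G)\to\wh G/\wh G_{\der}$ is surjective, and central elements centralize $\varphi_1(I_F)$. So (2) lets you conjugate so that $f\circ\varphi_1=f\circ\varphi_2$ exactly, without disturbing the equality on inertia.
\item Then $c\in(\wh S\cap\wh G_{\der})(k)=T'(k)\cdot Z(\wh G_{\der})(k)$, where $T'\coloneqq(\wh S\cap\wh G_{\der})^\circ_{\red}$. This uses that $\wh S$ is a torus containing $Z(\wh G)$.
\item Since $(T')^{h}$ is finite, $t\mapsto t\,h(t)^{-1}$ is surjective on $T'(k)$ \cite[Lemma~5.3.4]{Cot26b}.
\item Conjugating by a suitable element of $T'(k)$, which fixes $\varphi_1|_{I_F}$, therefore moves $c$ into $Z(\wh G_{\der})(k)$. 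Its value at $\Fr$ determines the required cocycle on $W_F/I_F\cong\Z$.
\end{itemize}
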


\begin{proof}
By (1), we may conjugate to assume that $\varphi_1|_{I_F} = \varphi_2|_{I_F}$. By (2) and the fact that $Z(\wh G) \to \wh G/\wh G_{\der}$ is a surjection on whose source and target $I_F$ acts trivially, we may and do further assume $f \circ \varphi_1 = f \circ \varphi_2$. By (3) and \cite[Lemma~10.1.9]{CF26a}, the group $\wh S \coloneqq Z_{\wh G}(\varphi_1(I_F))^\circ_{\red}$ is a torus and, if $\Fr \in W_F$ is a lift of Frobenius, then $\wh S^{\varphi_1(\Fr)}/Z(\wh G)^{\varphi_1(\Fr)}$ is finite. But \cite[III, 3.22]{SS70} shows that the centralizer of any subset of $\GL_n$ is connected, and the fact that $\GL_n$ is an open subscheme of its Lie algebra shows that every such centralizer is smooth. So in fact $\wh S = Z_{\wh G}(\varphi_1(I_F))$.

Since $\varphi_1|_{I_F} = \varphi_2|_{I_F}$ and $f \circ \varphi_1 = f \circ \varphi_2$, there is a cocycle $z\co W_F/I_F \to (\wh S \cap \wh G_{\der})(k)$ such that $\varphi_2 = z \cdot \varphi_1$. Note that $\wh S \cap \wh G_{\der} = (\wh S \cap \wh G_{\der})^\circ_{\red} \cdot Z(\wh G_{\der})$ since $\wh S$ is a torus. Since $(\wh S \cap \wh G_{\der})^\circ_{\red}(k)^{\varphi_1(\Fr)}$ is finite, \cite[Lemma~5.3.4]{Cot26b} shows that there is some $t_0 \in (\wh S \cap \wh G_{\der})^\circ_{\red}(k)$ such that $t_0 \varphi_1(\Fr) t_0^{-1} \in \varphi_2(\Fr) \cdot Z(\wh G_{\der})$. So we may conjugate to assume $z$ is valued in $Z(\wh G_{\der})(k)$, as desired.
\end{proof}

\begin{lemma}\label{lemma:embed-sln-to-gln}
Let $G$ be a connected reductive $F$-group such that $G_{F^{\unr}} \cong \SL_n$ for some $n$. Then there exists an embedding $G \subset \wt G$, where $\wt G$ is a connected reductive $F$-group such that $\wt G_{F^{\unr}} \cong \GL_n$.
\end{lemma}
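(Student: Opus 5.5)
The plan is to build $\wt G$ as the pushout of $G$ along its center into a torus. Since $G_{F^{\unr}}\cong \SL_n$, $G$ is semisimple, simply connected, and of absolute type $A_{n-1}$; its center $Z\coloneqq Z(G)$ is a finite multiplicative $F$-group scheme with $Z_{\ol F}\cong\mu_n$. The natural candidate for $\wt G$ is
\[
\wt G \coloneqq (G\times R)/Z,
\]
where $R$ is an $F$-torus equipped with an $F$-embedding $Z\inj R$ and $Z$ is embedded antidiagonally.

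\emph{Step 1: choose $R$.} Over $F^{\unr}$ we need $\wt G_{F^{\unr}}\cong \GL_n$. The identity $\GL_n=(\SL_n\times\G_m)/\mu_n$ suggests taking $R$ to be a form of $\G_m$ containing $Z$ as $R[n]$. The Galois action on $Z\cong\mu_n$ is given by a character $\Gal(\ol F/F)\to(\Z/n)^\times$. This is the cyclotomic character possibly twisted by the outer (inverse-transpose) automorphism, which acts by $-1$. Two cases then arise. If $G$ is an inner form of $\SL_n$, take $R=\G_m$. If $G$ is an outer form, it splits over an unramified quadratic extension $E/F$ (since $G_{F^{\unr}}$ is split), and $Z\cong\ker(\Nm\co \Res_{E/F}\mu_n\to\mu_n)$; take $R=\Res^1_{E/F}\G_m$, whose $n$-torsion is this kernel. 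In both cases $R$ is a one-dimensional torus, split over $F^{\unr}$, with $R[n]\cong Z$ as $F$-group schemes.

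To identify $R[n]$ with $Z$ canonically, I would use the Galois module $X^*(Z_{\ol F})=\Z/n$. The action on it factors through the image of the unramified Galois group together with the $\pm1$ outer action, and this matches the Galois action on $X^*(R)/n=\Z/n$ because $X^*(R)=\Z$ carries the same sign character.

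\emph{Step 2: verify the properties.} The map $G\to\wt G$ is a monomorphism because $Z\cap(1\times R)$ meets $G\times 1$ trivially in the antidiagonal quotient. Hence it is a closed embedding. The group $\wt G$ is connected reductive. Over $F^{\unr}$ we have $G\cong\SL_n$ and $R\cong\G_m$ with $Z\cong\mu_n$ embedded as $R[n]$, so $\wt G_{F^{\unr}}\cong(\SL_n\times\G_m)/\mu_n\cong\GL_n$.

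\emph{Main obstacle.} The main obstacle is Step 1 in the outer case: checking that the isomorphism $R[n]\cong Z$ is compatible with the actual Galois action on $Z$, including when the outer twist and cyclotomic action interact. I would handle this by computing both sides over the quasi-split inner form $\SU_n(E/F)$, where $Z$ is visibly the scalar matrices $\mu_n\cap\Res^1_{E/F}\G_m$. Since $Z$ depends only on the quasi-split inner form, this gives the identification in general.
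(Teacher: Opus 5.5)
Your proposal is correct and is essentially the paper's proof. Both write $\GL_n=(\SL_n\times\G_m)/\mu_n$, note that $\Gal(\ol F/F)$ acts on $X^*(Z(G)_{\ol F})=\Z/n$ through $\Gal(F_2/F)$ either trivially or by inversion, and push out along $Z(G)\inj\G_m$ or $Z(G)\inj\Res^1_{F_2/F}\G_m$ accordingly. Your insistence on a specific identification $Z(G)\cong R[n]$ (matching generators of the character groups, as realized by the scalars of $\mathrm{SU}_n(F_2/F)$ inside $\mathrm{U}_n(F_2/F)$) is genuinely needed: pushing out along $z\mapsto z^a$ gives a form of $\GL_n$ only when $a\equiv\pm1\pmod n$, a point the paper's phrase ``into which $Z(G)$ embeds'' leaves implicit.
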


\begin{proof}
Recall that $\GL_n = \SL_n \times^{\mu_n} \G_m$, where the embedding $\mu_n \to \SL_n \times \G_m$ is given by $z \mapsto (z, z^{-1})$. Thus the claim is that there is a $1$-dimensional unramified $F$-torus $T$ into which $Z(G)$ embeds. Note that $\Gal(\ol F/F)$ acts on $X^*(Z(G)_{\ol F})$ through $\Gal(F_2/F)$ since the group of outer automorphisms of $\SL_n$ is either trivial (if $n = 2$) or $\Z/2$ (otherwise), and the nontrivial element either acts trivially or by inversion. If the action is trivial, we may take $T = \G_m$; if the action is by inversion, we may take $T = \Res^1_{F_2/F}\G_m$.
\end{proof}

\begin{thm}\label{thm:sln-full-conjugacy}
Suppose $p \neq 2$, let $G$ be a connected reductive $F$-group such that $G_{F^{\unr}} \cong \SL_n$ for some $n$, let $k \in \{\ol\Q_\ell, \ol\F_\ell\}$, and let $\Psi$ be a non-singular normalized irreducible Yu datum for $G$ over $k$. If $p \nmid [F(\mu_m):F]$ for all $m \leq n$ (e.g., $p > n$), then $\rho^{\FS}(\pi(\Psi)) \sim \rho^{\Kal}(\Psi)$.
\end{thm}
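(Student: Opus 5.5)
We will compare $\varphi_1 \coloneqq \rho^{\Kal}(\Psi)$ and $\varphi_2 \coloneqq \rho^{\FS}(\pi(\Psi))$ by passing to the group $\wt G$ of Lemma~\ref{lemma:embed-sln-to-gln}, where Lemma~\ref{lemma:l-parameters-for-gln} applies. The idea is to reduce everything to an unramified cocycle valued in the finite group $Z(\wh G_{\der})$, and then kill that cocycle using a $\GL_n$-type comparison.

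First, Theorem~\ref{thm:main-tame}(1) gives $\varphi_2|_{I_F} \sim \varphi_1|_{I_F}$. This is where we use the hypothesis on $p$. It requires $p\nmid[F(\mu_m):F]$ for the relevant ramification degree $m$ of $T\cap Z_G(T_0)_{\der}$. That degree divides the order of a Weyl group of a Levi of $\SL_n$, so is it $\le n$? In fact what matters is the degree of the splitting field on inertia, which for type A tori is bounded by $n$.

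Theorem~\ref{thm:main-tame}(2) then gives irreducibility of $\varphi_2$, and $\varphi_1$ is irreducible since $\Psi$ is non-singular. Compatibility with central characters \cite[Theorem I.9.6(ii),(iii)]{FS}, together with the corresponding property of $\rho^{\Kal}$, gives $f\circ\varphi_1\sim f\circ\varphi_2$. The argument of Lemma~\ref{lemma:l-parameters-for-gln} needs only that inertial centralizers are connected tori, which holds for $\wh G = \mathrm{PGL}_n$ after lifting. Concretely, we transport along $\wt G\to G_{\ad}$, or equivalently work with $\wh{\wt G}=\GL_n$.

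Next we choose a $\wt G$-extension. Write $\wt G = G\cdot Z(\wt G)$ as in Lemma~\ref{lemma:embed-sln-to-gln}. Extend $\Psi$ to a Yu datum $\wt\Psi$ for $\wt G$ by extending the characters $\phi_i$ and $\tau$ across the central torus, so that $\pi(\Psi)$ is a constituent of $\pi(\wt\Psi)|_{G(F)}$. Both $\rho^{\FS}$ and $\rho^{\Kal}$ are compatible with the map $\wt G\to G_{\ad}$, which induces an isomorphism on adjoint groups \cite[Theorem I.9.6(v)]{FS}. It therefore suffices to prove $\rho^{\FS}(\pi(\wt\Psi))\sim\rho^{\Kal}(\wt\Psi)$ and then push forward along $\wh{\wt G}=\GL_n\to\wh G=\mathrm{PGL}_n$.

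For $\wt G$, apply Lemma~\ref{lemma:l-parameters-for-gln}. Its hypotheses (1)--(3) come respectively from Theorem~\ref{thm:main-tame}(1) for $\wt G$, from the central character and determinant compatibilities, and from non-singularity. It yields an unramified cocycle $z$ valued in $\mu_n = Z(\SL_n)$ with $\rho^{\FS}\sim z\cdot\rho^{\Kal}$.

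The main obstacle is killing $z$. Theorem~\ref{thm:main-tame}(2) only gives agreement on $W_{F_N}$, so $z|_{W_{F_N}}$ is a coboundary; this does not by itself remove $z$. We would try to use the determinant. Since $\det\circ\rho^{\FS} = \det\circ\rho^{\Kal}$, we get $z(\Fr)^n=1$, which holds automatically and so is useless on its own. The further input we would try is the twisting ambiguity: $z\cdot\varphi_1\sim\varphi_1\otimes\chi$ where $\chi$ is an unramified character of order dividing $n$. For $\wt G$ this corresponds to twisting $\pi(\wt\Psi)$ by $\chi\circ\mathrm{Nrd}$. Pushing down to $\mathrm{PGL}_n$, such a twist becomes trivial, because central twists of $\GL_n$-parameters by $\mu_n$-valued characters become conjugate in $\mathrm{PGL}_n$. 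It follows that $\rho^{\FS}(\pi(\Psi))\sim\rho^{\Kal}(\Psi)$ in $\ld G$. The delicate check is that conjugacy in $\mathrm{PGL}_n$ of $z\cdot\bar\varphi_1$ and $\bar\varphi_1$ indeed holds. It holds because $z$ becomes trivial in $\wh G$ once we project through $Z(\wh{\wt G})$, which contains $\mu_n$.
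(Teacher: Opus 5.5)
Your proposal is correct and, once the detours are stripped away, it is the paper's argument. You pass to $\wt G$, use Theorem~\ref{thm:main-tame} and Lemma~\ref{lemma:l-parameters-for-gln} to get $\rho^{\FS}(\pi(\wt\Psi))\sim z\cdot\rho^{\Kal}(\wt\Psi)$ with $z$ valued in $Z(\wh{\wt G}_{\der})=\mu_n$, and note that $z$ dies under $h\colon \ld\wt G\to\ld G$; compatibility of both correspondences with $G\hookrightarrow\wt G$ then finishes the proof. There is no need to kill $z$ on $\wt G$ itself, and the $\chi\circ\mathrm{Nrd}$ discussion is superfluous (it is not even literally meaningful for unitary forms).

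One correction to your check of the $p$-hypothesis: the relevant ramification degree is the order of an element of a product of symmetric groups, i.e.\ an lcm of integers $\le n$, so it can exceed $n$ (e.g.\ $\mathrm{lcm}(2,3)=6$ when $n=5$). The hypothesis still holds, because $F(\mu_{\mathrm{lcm}(a,b)})$ is the compositum of $F(\mu_a)$ and $F(\mu_b)$.
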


\begin{proof}
Let $\Psi = ((G^i), x, (r_i), \tau, (\phi_i))$ be the given Yu datum. Choose an embedding $G \subset \wt G$ as in Lemma~\ref{lemma:embed-sln-to-gln}. For each $i$, let $\wt G^i = G^i \cdot Z(\wt G)$, let $\wt x \in \cB(\wt G)$ be a point mapping to $x \in \cB(G)$, let $\wt r_i = r_i$ for all $i$, let $\wt\tau$ be a $k$-representation of $\wt G^0(F)_{[\wt x]}$ which is trivial on $\wt G^0(F)_{\wt x,0+}$ and whose restriction to $G^0(F)_{[x]}$ admits $\tau$ as an irreducible constituent, and let $\wt\phi_i$ be a character of $\wt G^i(F)$ extending $\phi_i$; this exists since $\Psi$ is normalized. Let $\wt\Psi = ((\wt G^i),\wt x,(\wt r_i), \wt\tau, (\wt\phi_i))$. Theorem~\ref{thm:main-tame} applies to $\wt\Psi$ by our assumptions on $p$ and gives
\[
\rho^{\FS}(\pi(\wt\Psi))|_{I_F} \sim \rho^{\Kal}(\wt\Psi)|_{I_F}
\]
and $\rho^{\FS}(\pi(\wt\Psi))$ is irreducible. By compatibility of $\rho^{\FS}$ and $\rho^{\Kal}$ with maps which induce isomorphisms of adjoint groups (\cite[Lemma~7.4.1]{CF26a} and \cite[Theorem I.9.6(v)]{FS}) and Lemma~\ref{lemma:l-parameters-for-gln}, it follows that if $h\co \ld \wt G \to \ld G$ is the natural map, then 
\[
\rho^{\FS}(\pi(\Psi)) \sim h \circ \rho^{\FS}(\pi(\wt\Psi)) \sim h \circ \rho^{\Kal}(\wt\Psi) \sim \rho^{\Kal}(\Psi).
\]
This proves the claim.
\end{proof}

\appendix
\section{Generalities on L-homomorphisms}\label{app:l-homs}

In this section we discuss some generalities on the Tannaka dual perspective of L-homomorphisms. Let $k$ be a commutative ring. 

\subsection{Admissible homomorphisms}
\label{sssec:split-extensions-and-cocycles}

We begin with a general lemma.  Let $\Gamma$ be a topological group, and let
$M$ and $G$ be affine group schemes over $k$ equipped with continuous
$\Gamma$-actions.  We write
\[
M\rtimes\Gamma \longrightarrow \Gamma,
\qquad
G\rtimes\Gamma \longrightarrow \Gamma
\]
for the corresponding semidirect products, with distinguished sections
\[
s_M(w)=(1,w),\qquad s_G(w)=(1,w).
\]

We say that a homomorphism $M \rtimes \Gamma \rightarrow G \rtimes \Gamma$ is \emph{admissible} if it lies over the identity on $\Gamma$. 

\begin{lemma}\label{lem:homomorphisms-of-split-extensions}
Let
\[
\Phi\co M\rtimes\Gamma\longrightarrow G\rtimes\Gamma
\]
be an admissible homomorphism. There exist unique maps $i\co M \to G$, $c\co \Gamma \to G$ such that
\begin{equation}\label{eq:general-cocycle-formula}
\Phi(m,w)=\bigl(i(m)c(w),w\bigr)
\end{equation}
for all $m \in M$ and $w \in \Gamma$. The map $i$ is a homomorphism, and $c$ is a $1$-cocycle.
Moreover, the following conditions are equivalent:
\begin{enumerate}[label=(\roman*)]
\item $\Phi$ preserves the distinguished sections;
\item $c(w)=1$ for every $w\in\Gamma$;
\item $i$ is $\Gamma$-equivariant and
    $\Phi(m,w)=(i(m),w)$.
\end{enumerate}
If $i$ is already $\Gamma$-equivariant, then every value of $c$ centralizes
$i(M)$.
\end{lemma}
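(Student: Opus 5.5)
The plan is to read off $i$ and $c$ by evaluating $\Phi$ on the two obvious families of elements, and then to translate the homomorphism property of $\Phi$ into identities for $i$ and $c$. Since $\Phi$ is admissible, the second coordinate of $\Phi(m,w)$ is $w$. Write $\Phi(m,1)=(i(m),1)$ and $\Phi(1,w)=(c(w),w)$. In $M\rtimes\Gamma$ we have $(m,w)=(m,1)(1,w)$, so
\[
\Phi(m,w)=(i(m),1)(c(w),w)=(i(m)c(w),w).
\]
This gives the formula \eqref{eq:general-cocycle-formula}. Uniqueness is automatic: putting $w=1$ in the formula recovers $i$, since $c(1)=1$ follows from $\Phi(1,1)=(1,1)$, and putting $m=1$ recovers $c$. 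Everything here is functorial in $k$-algebras, so the same computation works scheme-theoretically, with $c$ regarded as a map of underlying sets or spaces on $\Gamma$.

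Next comes the homomorphism and cocycle properties. Restricted to $M=M\times\{1\}$, the map $\Phi$ is a homomorphism landing in $G\times\{1\}$, so $i$ is a homomorphism. For $c$, expand $\Phi((1,w)(1,w'))=\Phi(1,ww')$ and compare it with $\Phi(1,w)\Phi(1,w')=(c(w)\,{}^w c(w'),ww')$. This gives $c(ww')=c(w)\,{}^wc(w')$, which is the cocycle identity. Applying the same expansion to $(1,w)(m,1)=({}^wm,w)$ yields the relation
\[
i({}^wm)\,c(w)=c(w)\,{}^w i(m).
\]
I will use this relation twice below.

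For the equivalences: (i) says $\Phi(1,w)=(1,w)$, which is literally (ii). If (ii) holds, the formula gives $\Phi(m,w)=(i(m),w)$, and the relation above becomes $i({}^wm)={}^wi(m)$, so (iii) holds. Conversely, (iii) gives $\Phi(1,w)=(1,w)$, which is (i). For the last claim, if $i$ is $\Gamma$-equivariant, the relation reads ${}^wi(m)\,c(w)=c(w)\,{}^wi(m)$ for all $m$. Since ${}^wi(m)=i({}^wm)$ and ${}^w$ is bijective on $M$, it follows that $c(w)$ centralizes $i(M)$.

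There is no real obstacle. The only care needed is with the semidirect product multiplication convention $(g,w)(g',w')=(g\,{}^wg',ww')$ and with interpreting the identities functorially on points.
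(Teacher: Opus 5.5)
Your proof is correct and follows the paper's argument: write $(m,w)=(m,1)(1,w)$ and apply $\Phi$ to $(1,w)(1,w')=(1,ww')$. It also supplies the relation $i({}^wm)\,c(w)=c(w)\,{}^wi(m)$, obtained from $(1,w)(m,1)=({}^wm,w)$, which the paper leaves implicit when it calls (ii)$\Leftrightarrow$(iii) and the final claim ``clear''.

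One small correction on uniqueness, which the paper does not discuss. For a competing pair $(i',c')$, the identity $\Phi(1,1)=(1,1)$ only yields $i'(1)c'(1)=1$, not $c'(1)=1$. In fact $(m\mapsto i(m)z^{-1},\ w\mapsto z\,c(w))$ satisfies the same formula for every $z\in G$. So uniqueness should be understood among pairs with $i$ a homomorphism or $c$ a cocycle; the latter forces $c(1)=c(1)^2=1$. Under that reading your argument goes through.
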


\begin{proof}
Set
\[
\Phi(m,1)=(i(m),1),
\qquad
\Phi(1,w)=(c(w),w).
\]
Since $(m,w)=(m,1)(1,w)$, formula
\eqref{eq:general-cocycle-formula} follows.  Applying $\Phi$ to
$(1,w)(1,w')=(1,ww')$ shows that $c$ is a $1$-cocycle.
Finally,
\[
\Phi(s_M(w))=(c(w),w),
\]
so (i) is equivalent to (ii). Clearly (ii) is equivalent to (iii). The final claim is clear. 
\end{proof}

Let us say that $\Phi$ is \emph{untwisted} if the equivalent conditions (i), (ii), (iii) are satisfied.

\begin{lemma}\label{lem:conjugating-split-homomorphism}
Let $\Phi$ be an admissible homomorphism. The $G$-conjugation class of $\Phi$ has an untwisted representative if and only if $c$ is a coboundary. 
\end{lemma}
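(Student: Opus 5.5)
The plan is to compute directly how conjugation by an element of $G$ changes the decomposition of Lemma~\ref{lem:homomorphisms-of-split-extensions}. Fix $g\in G$ and set $\Phi^g\coloneqq \mathrm{Int}(g,1)\circ\Phi$. This is again admissible, because conjugation by $(g,1)$ lies over the identity of $\Gamma$. By the uniqueness in Lemma~\ref{lem:homomorphisms-of-split-extensions}, $\Phi^g$ has a decomposition $(i^g,c^g)$, and the task is to write down $c^g$ explicitly.

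Evaluating on $(1,w)$, and using the semidirect product law $(a,w)(b,w')=(a\,{}^w b,ww')$, gives
\[
(g,1)(c(w),w)(g^{-1},1)=\bigl(g\,c(w)\,{}^{w}(g^{-1}),\,w\bigr).
\]
Hence $c^g(w)=g\,c(w)\,{}^w g^{-1}$. Evaluating on $(m,1)$ gives $i^g=\mathrm{Int}(g)\circ i$, although only the formula for $c^g$ is needed.

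For the "if" direction, suppose $c$ is a coboundary, say $c(w)=h^{-1}\,{}^w h$ for some $h\in G$. Then
\[
c^h(w)=h\,h^{-1}\,{}^w h\,{}^w h^{-1}=1,
\]
so $\Phi^h$ is untwisted by the equivalence (i)$\Leftrightarrow$(ii) of Lemma~\ref{lem:homomorphisms-of-split-extensions}. The sign convention for coboundaries has to match the one that makes the displayed formula cancel. If coboundaries are instead written as $w\mapsto h\,{}^w h^{-1}$, one conjugates by $h^{-1}$ in place of $h$.

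For the "only if" direction, suppose some conjugate $\Phi^g$ is untwisted. Then $c^g\equiv 1$, so $c(w)=g^{-1}\,{}^w g$, which is a coboundary.

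The only delicate point is fixing conventions: the order of multiplication in $G\rtimes\Gamma$ and the direction of the coboundary. It also has to be made precise what "$G$-conjugation" means when $G$ is a group scheme rather than a group of points. One interprets it as conjugation by $G(k)$, or by $G(R)$ after base change to a $k$-algebra $R$. The computation above is functorial in $R$, so it works uniformly in either reading. Since $c$ is valued in the same set of points as the conjugating elements, the argument goes through without change.
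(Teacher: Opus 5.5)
Your proof is correct and follows the paper's argument. Both conjugate $\Phi$ by $(g,1)$ and read off $c^g(w)=g\,c(w)\,{}^w g^{-1}$ and $i^g=\Ad(g)\circ i$; the equivalence is then immediate. Your explicit treatment of the two directions and of the coboundary sign convention just spells out what the paper leaves as ``proving the claim.''
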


\begin{proof}
Let $g \in G(k)$. We have
\[
(g,1)(i(m)c(w),w)(g^{-1},1)
=\bigl(g i(m)c(w){}^w g^{-1},w\bigr),
\]
which implies that the maps $i^g$ and $c^g$ associated to $\Phi^g = (g,1)\Phi(g,1)^{-1}$ satisfy
\[
i^g=\Ad(g)\circ i,
\qquad
c^g(w)=g\,c(w)\,{}^w g^{-1},
\]
proving the claim.
\end{proof}

\subsection{Tannaka duality}
\label{sssec:relative-tannaka-general}

Next we study the nature of admissible homomorphisms arising from (relative) Tannaka duality over a topological group $\Gamma$. Let $\Rep_k(\Gamma)$ be the category of finite-dimensional continuous representations of $\Gamma$ over $k$.

Let $\cC_G$ be a $k$-linear tensor category over $\Rep_k(\Gamma)$ with relative fiber functor
\[
\omega_G\co\mathcal C_G\longrightarrow\Rep_k(\Gamma).
\]
The relative Tannaka duality formalism of
\cite[Proposition VI.10.2]{FS} associates an affine group scheme internal to $\Rep_k(\Gamma)$. The usual forgetful fiber functor gives a group scheme $\chG$ over $k$ with a $\Gamma$-action. This action defines the semidirect product $\chG(k)\rtimes\Gamma$, with canonical section $s_G\co\Gamma\to\chG(k)\rtimes\Gamma$, $s_G(\gamma)=(1,\gamma)$. Under relative Tannaka duality, restriction along $s_G$ identifies with $\omega_G$.

Now suppose that $\cC_H$ is another tensor category over $\Rep_k(\Gamma)$ with relative fiber functor $\omega_H\co\mathcal C_H\longrightarrow\Rep_k(\Gamma)$.
Suppose that $F\co\mathcal C_G\longrightarrow\mathcal C_H$
is a symmetric monoidal functor equipped with a specified tensor
isomorphism
\begin{equation}\label{eq:relative-fiber-rigidification}
\eta\co\omega_G\xrightarrow{\sim}\omega_H\circ F
\end{equation}
in $\Rep_k(\Gamma)$. Then Tannaka duality associates to $(\cC_H, \omega_H)$ a group scheme $\chH$ internal to $\Rep_k(\Gamma)$ whose semidirect product $\chH(k)\rtimes\Gamma$ has canonical section $s_H(\gamma)=(1,\gamma)$, and to $(F,\eta)$ a
morphism of affine group schemes $i_F\co\check H\longrightarrow\check G$ internal to
$\Rep_k(\Gamma)$.
For an algebra $R$ and
$a\in\Aut^\otimes(R\otimes\omega_H)$, it is given by
\begin{equation}\label{eq:tannakian-map-explicit}
(i_F(a))_X
=\eta_X^{-1}\,a_{F(X)}\,\eta_X,
\qquad X\in\mathcal C_G.
\end{equation}
Because $\eta$ is a morphism of $\Gamma$-representations,
\eqref{eq:tannakian-map-explicit} is $\Gamma$-equivariant. It therefore induces a homomorphism of semidirect products given by
\begin{equation}\label{eq:split-relative-tannakian-map}
\Phi(h,\gamma)=\bigl(i_F(h),\gamma\bigr).
\end{equation}

In summary, a homomorphism $\Phi\co\chH(k)\rtimes\Gamma\to\chG(k)\rtimes\Gamma$ arising in this way is always admissible and untwisted. Note however that in some cases we may want to conjugate $i_F$ (for example when $H$ is a torus, we may want to conjugate $i_F$ to become the canonical embedding of the abstract Cartan), in which case Lemma
\ref{lem:conjugating-split-homomorphism} shows that the corresponding homomorphism can become twisted by a coboundary with respect to the original $\Phi$. 

\subsubsection{Passage from $c$-groups to L-groups}
\label{sssec:c-groups-to-L-groups}

We now specialize the preceding discussion to $\Gamma=W_F$ and to the
geometric $c$-groups occurring in Fargues--Scholze.  In this setting
\eqref{eq:split-relative-tannakian-map} is the
$c$-group homomorphism
\begin{equation}\label{eq:split-c-group-map}
\lc\psi(h,w)=\bigl(i_F(h),w\bigr).
\end{equation}
The usual pinned L-group is isomorphic to the $c$-group, but there is no canonical isomorphism; it depends on a square root of the cyclotomic character
\cite[Lemma 5.5.7]{Zhu17a}.  We now record the resulting cocycle for the L-group explicitly.

For a reductive $F$-group $J$, choose an isomorphism over $W_F$
\[
\alpha_J\co\ld J\xrightarrow{\sim}\lc J
\]
coming from a fixed square root of the cyclotomic character.  After
identifying the identity components, there is a uniquely determined function
$a_J\co W_F\to\wh J$ such that
\begin{equation}\label{eq:c-to-L-coordinate-change}
\alpha_J(j,w)=\bigl(j a_J(w),w\bigr).
\end{equation}
The precise description of $a_J$ depends on the convention used for the
$c$-group, but the following transport formula is convention-independent
once \eqref{eq:c-to-L-coordinate-change} is fixed.

\begin{lemma}\label{lem:transport-c-to-L}
Assume that the $c$-group homomorphism is given by
\eqref{eq:split-c-group-map}, and let
\[
\ld\psi=\alpha_G^{-1}\circ\lc\psi\circ\alpha_H.
\]
Then, in the corresponding pinned L-group coordinates,
\begin{equation}\label{eq:transported-L-map}
\ld\psi(h,w)
=\bigl(i_F(h)c_\psi(w),w\bigr),
\qquad
c_\psi(w)=i_F(a_H(w))a_G(w)^{-1}.
\end{equation}
In particular, the transported map has trivial cocycle if and only if
\begin{equation}\label{eq:normalization-compatibility-criterion}
i_F(a_H(w))=a_G(w)
\qquad\text{for every }w\in W_F.
\end{equation}
\end{lemma}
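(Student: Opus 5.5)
The computation is a direct one: write everything out in coordinates and compare. We are given $\alpha_J(j,w)=(j\,a_J(w),w)$ for $J\in\{G,H\}$, and $\lc\psi(h,w)=(i_F(h),w)$ by \eqref{eq:split-c-group-map}. We first invert $\alpha_G$. Since $\alpha_G$ lies over the identity on $W_F$ and is the identity on $\wh G$ after identifying identity components, solving $\alpha_G(g',w)=(g,w)$ gives
\[
\alpha_G^{-1}(g,w)=(g\,a_G(w)^{-1},w).
\]

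Next we compose the three maps in order. First $\alpha_H(h,w)=(h\,a_H(w),w)$. Applying $\lc\psi$ gives $(i_F(h\,a_H(w)),w)=(i_F(h)\,i_F(a_H(w)),w)$; here we use that $i_F$ is a homomorphism $\chH\to\chG$ of the underlying groups. Finally, applying $\alpha_G^{-1}$ gives
\[
\bigl(i_F(h)\,i_F(a_H(w))\,a_G(w)^{-1},w\bigr).
\]
This is exactly \eqref{eq:transported-L-map} with $c_\psi(w)=i_F(a_H(w))\,a_G(w)^{-1}$.

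For the last assertion, Lemma~\ref{lem:homomorphisms-of-split-extensions} says the decomposition $\ld\psi(h,w)=(i(h)c(w),w)$ is unique. Evaluating at $h=1$ shows that the cocycle of $\ld\psi$ is precisely $c_\psi$, so it is trivial if and only if $c_\psi\equiv1$. That is equivalent to \eqref{eq:normalization-compatibility-criterion}.

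The one point needing care, though it is not a real obstacle, is that the formula \eqref{eq:c-to-L-coordinate-change} already encodes how the group laws of the semidirect products are transported, because $\alpha_J$ is an isomorphism over $W_F$. So the computation only uses multiplicativity of $i_F$ on $\wh H$ and never the semidirect-product multiplication. For the same reason, no convention-dependent description of $a_J$ enters.
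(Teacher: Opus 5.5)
Your proposal is correct and is the same direct coordinate computation as the paper's proof: apply $\alpha_H$, then $\lc\psi$ (using that $i_F$ is multiplicative), then $\alpha_G^{-1}(g,w)=(g\,a_G(w)^{-1},w)$. Your justification of the final clause, via the uniqueness in Lemma~\ref{lem:homomorphisms-of-split-extensions} evaluated at $h=1$, is a harmless addition that the paper leaves implicit.
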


\begin{proof}
By \eqref{eq:c-to-L-coordinate-change},
\[
\alpha_H(h,w)=(h a_H(w),w).
\]
Applying $\lc\psi$ and then $\alpha_G^{-1}$ gives
\[
(h,w)\longmapsto
\bigl(i_F(h)i_F(a_H(w)),w\bigr)
\longmapsto
\bigl(i_F(h)i_F(a_H(w))a_G(w)^{-1},w\bigr),
\]
which is \eqref{eq:transported-L-map}.
\end{proof}

Combining Lemmas \ref{lem:conjugating-split-homomorphism} and
\ref{lem:transport-c-to-L}, if one replaces $i_F$ by $\Ad(g)\circ i_F$, then the cocycle becomes
\begin{equation}\label{eq:coordinate-change-total-cocycle}
c_{\psi,g}(w)
=g\,i_F(a_H(w))a_G(w)^{-1}\,{}^w g^{-1}.
\end{equation}

\subsection{Example calculations of the L-homomorphism}
\label{ssec:explicit-L-group-calculations}

\subsubsection{The constant term functor}
\label{sssec:constant-term-L-group-calculation}

Let $P\subset G$ be a parabolic subgroup defined over $F$, with Levi quotient
$M$, and let
\begin{equation}\label{eq:CT}
\CT_P[\deg_P]\co
\Sat(\Gr_{G,\Div_X^1};k)
\longrightarrow
\Sat(\Gr_{M,\Div_X^1};k)
\end{equation}
be the normalized constant term functor, so $\CT_P[\deg_P]$ is exact and symmetric monoidal.  Geometric Satake
identifies its restriction to a geometric fiber with restriction of
representations along the standard dual Levi homomorphism
\[
j_{M,G}\co\check M\longrightarrow\check G.
\]
Moreover, the cohomological comparison defining constant terms gives a
specified tensor isomorphism, as in the constant term compatibility of
\cite[\S VI.3 and \S VI.11]{FS},
\begin{equation}\label{eq:constant-term-fiber-rigidification}
\eta_P\co\omega_G
\xrightarrow{\sim}
\omega_M\circ\CT_P[\deg_P]
\end{equation}
in $\Rep_k(W_F)$.

\begin{prop}\label{prop:constant-term-split-c-map}
The $c$-homomorphism dual to \eqref {eq:CT} is
\begin{equation}\label{eq:constant-term-split-formula}
\lc j_{M,G}\co\lc M\longrightarrow \lc G,
\qquad
(m,w)\longmapsto\bigl(j_{M,G}(m),w\bigr).
\end{equation}
\end{prop}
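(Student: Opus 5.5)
The plan is to reduce to the relative Tannakian formalism of \S\ref{sssec:relative-tannaka-general}. We apply it with $\Gamma = W_F$, $\cC_G = \Sat(\Gr_{G,\Div_X^1};k)$ and $\cC_M = \Sat(\Gr_{M,\Div_X^1};k)$, with the relative fiber functors $\omega_G,\omega_M$ of \cite[\S VI.10--VI.11]{FS}. The functor is $F = \CT_P[\deg_P]$, which is exact and symmetric monoidal, and the rigidification is $\eta = \eta_P$ from \eqref{eq:constant-term-fiber-rigidification}. Since $\eta_P$ is an isomorphism in $\Rep_k(W_F)$, i.e.\ $W_F$-equivariant, formula \eqref{eq:tannakian-map-explicit} produces a $W_F$-equivariant homomorphism $i_{\CT}\co \chM \to \chG$ internal to $\Rep_k(W_F)$. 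Then \eqref{eq:split-relative-tannakian-map} says the dual $c$-homomorphism is the untwisted map $(m,w)\mapsto (i_{\CT}(m),w)$, by Lemma~\ref{lem:homomorphisms-of-split-extensions}(iii). So no cocycle appears, and it remains only to identify $i_{\CT}$ with $j_{M,G}$.

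For this identification, forget the $W_F$-action, i.e.\ pass to a geometric fiber. There, geometric Satake identifies $\CT_P[\deg_P]$ with restriction along the standard dual Levi embedding $j_{M,G}$, compatibly with the fiber functors via $\eta_P$. Concretely, the constant term on a geometric fiber is given by the weight functors of \cite[\S VI.3]{FS}, so the torus parts agree by construction, and the root subgroups are matched by the standard comparison. By \eqref{eq:tannakian-map-explicit}, $i_{\CT}(a)$ on $X$ is $\eta_X^{-1}a_{F(X)}\eta_X$. When $a$ is restriction along $j_{M,G}$, this recovers the action of $j_{M,G}(a)$. Hence $i_{\CT} = j_{M,G}$ as maps of underlying $k$-groups, and it is automatically $W_F$-equivariant. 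This gives \eqref{eq:constant-term-split-formula}.

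The main obstacle is the second step: pinning down the identification with $j_{M,G}$ on the nose, not merely up to $\chG$-conjugacy. This requires the rigidification $\eta_P$ to be exactly the one that geometric Satake uses to define the pinned embedding. A mismatch would replace $j_{M,G}$ by $\Ad(g)\circ j_{M,G}$ and, by Lemma~\ref{lem:conjugating-split-homomorphism}, introduce a coboundary. I would address this by checking compatibility on the maximal torus, where the constant term to $T$ is the total weight functor, and on the Levi root subgroups, using transitivity of constant terms $\CT_{B}\cong \CT_{B_M}\circ\CT_P$ with the degree shifts summing correctly. I would also note that the $c$-group convention for the Galois action is exactly the one in which this cohomological rigidification is $W_F$-equivariant without any cyclotomic twist. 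The half-sum-of-roots discrepancy only appears after transporting to L-groups, as in Lemma~\ref{lem:transport-c-to-L}.
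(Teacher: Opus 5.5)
Your proposal is correct and follows the paper's proof. Both arguments apply the relative Tannakian construction to $(\CT_P[\deg_P],\eta_P)$, use that $\eta_P$ is a morphism in $\Rep_k(W_F)$ to get a $W_F$-equivariant map and hence an untwisted homomorphism via Lemma~\ref{lem:homomorphisms-of-split-extensions}, and identify the connected part with $j_{M,G}$ through geometric Satake on a geometric fiber. Your extra discussion of matching $j_{M,G}$ on the nose, rather than up to a conjugation that would introduce a coboundary, addresses a point the paper does not argue but simply takes as part of what ``the standard dual Levi homomorphism'' means.
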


\begin{proof}
Apply the relative Tannakian construction to the pair
$(\CT_P[\deg_P],\eta_P)$.  Formula
\eqref{eq:tannakian-map-explicit} identifies the induced connected
homomorphism with $j_{M,G}$, and the fact that \eqref{eq:constant-term-fiber-rigidification}
is a morphism in $\Rep_k(W_F)$ implies that the map $j_{M,G}$ is $W_F$-equivariant for the actions defining the $c$-groups.  Lemma \ref{lem:homomorphisms-of-split-extensions} then
gives \eqref{eq:constant-term-split-formula}.
\end{proof}

\begin{remark}\label{rem:L-group-CT-dual}
It is important that Proposition \ref{prop:constant-term-split-c-map} is a
statement about $c$-groups.  If one transports the map to the L-groups, Lemma
\ref{lem:transport-c-to-L} gives
\begin{equation}\label{eq:constant-term-usual-L-formula}
\ld j_{M,G}(m,w)
=\bigl(j_{M,G}(m)c_P(w),w\bigr),
\qquad
c_P(w)=j_{M,G}(a_M(w))a_G(w)^{-1}.
\end{equation}
Equivalently, \(c_P\) is dual to the modulus character, as follows. If \(U_P\) denotes
the unipotent radical of \(P\), set
\[
\delta_{P,\alg}(m)
=\det\!\bigl(\Ad(m)\mid\Lie U_P\bigr)\in X^*(M),
\qquad
\delta_P(m)=|\delta_{P,\alg}(m)|_F.
\]
Let \(\wh\delta_{P,\alg}\in X_*(Z(\wh M))\) be the cocharacter dual to
\(\delta_{P,\alg}\).  With the square root convention used in
\eqref{eq:c-to-L-coordinate-change},
\[
c_P(w)=\wh\delta_{P,\alg}\bigl(\chi_{\cyc}^{1/2}(w)\bigr).
\]
\end{remark}

\subsubsection{Cyclic base change and the diagonal homomorphism}
\label{sssec:cyclic-base-change-diagonal}

Let $E/F$ be a cyclic extension of degree $\ell$, let
$\Gamma=\Gal(E/F)=\langle\sigma\rangle$, and let $H$ be a connected
reductive group over $F$.  Put
\[
G=\Res_{E/F}(H_E).
\]
Note that there is a natural action of $\Gamma$ on $G$ under which $G^\Gamma \cong H$.

Recall that $E$ is understood to be a subfield of a fixed separable closure of $F$. The product pinning gives an
isomorphism $\wh G\xrightarrow{\sim}
\prod_{\gamma\in\Gamma}\wh H_\gamma$.
If $\bar w$ denotes the image of $w\in W_F$ in $\Gamma$, the standard
$W_F$-action is
\begin{equation}\label{eq:Weil-action-on-induced-dual}
{}^w(h_\gamma)_{\gamma\in\Gamma}
=\bigl({}^w h_{\bar w^{-1}\gamma}\bigr)_{\gamma\in\Gamma}.
\end{equation}
Let $\Delta\co\wh H\longrightarrow\wh G$ be the diagonal map. 

It is computed in \cite[Proposition~7.5.2]{F24} that the $\sigma$-dual homomorphism has the form
\begin{equation}\label{eq:cyclic-base-change-c-map}
\lc\psi(h,w)=\bigl(\Delta(h),w\bigr).
\end{equation}
We claim that after choosing the same square root of the cyclotomic character for $H$ and
$G$, and using the product pinning on $\wh G$, the transported usual
L-homomorphism is also
\begin{equation}\label{eq:cyclic-base-change-L-map}
\ld\psi(h,w)=\bigl(\Delta(h),w\bigr).
\end{equation}
To see this, use Lemma \ref{lem:transport-c-to-L}: note that we have $a_G(w)=\Delta(a_H(w))$ for all $w \in W_F$,
so the $c_\psi(w)$ from Lemma \ref{lem:transport-c-to-L} satisfies
\[
c_\psi(w)=\Delta(a_H(w))a_G(w)^{-1}=1.
\]
Putting this into Lemma \ref{lem:transport-c-to-L} yields \eqref{eq:cyclic-base-change-L-map}.

\section{Kim--Yu covers with modular coefficients}
\label{app:kim-yu-ohara-coefficients}

The purpose of this appendix is to adapt certain technical results of Kim--Yu \cite{KY17} and Ohara \cite{Ohara24}, which are stated with characteristic zero coefficients, to modular coefficients. 

\subsection{Coefficient conventions}
Let \(\ell\neq p\) and \(k=\ol\F_\ell\), and fix a choice of $q^{1/2} \in k$. 
Fix a parabolic \(F\)-subgroup \(P=MU\) of a connected reductive group \(G\) and a smooth $k$-representation $V$ of $P(F)$. We
write \(V_U\) for the space of $U(F)$-coinvariants of $V$. Following the convention
in \cite[\S 2]{Ohara24}, normalized Jacquet functor and normalized induction
are defined as
\[
r_P^G(V)=V_U\otimes\delta_P^{1/2},
\qquad
I_P^G(W)=\Ind_{P(F)}^{G(F)}(W\otimes\delta_P^{-1/2}),
\]
where $\delta_P$ is the modulus character of $P(F)$.

\begin{lemma}\label{lem:heisenberg-step}
Let $H$ be a finite Heisenberg $p$-group with center $C$. Let $A$ and $\ol A$ be subgroups such that $A\cap C=\ol A\cap C=\{1\}$. Let $\psi\co C\to k^\times$ be a character for which
\[
(xC,yC)\longmapsto \psi([x,y])
\]
is a nondegenerate alternating pairing on $H/C$, and assume that the images of $A$ and $\ol A$ are complementary Lagrangians for this pairing. Let $X$ be a $k[H]$-module on which $C$ acts by $\psi$. If $v\in X^A$ is
nonzero, then $\sum_{\bar a\in\ol A}\bar a v \neq 0$.
\end{lemma}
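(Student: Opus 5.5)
The plan is to reduce to the unique irreducible Heisenberg module and then compute there using a Schrödinger-type model adapted to the Lagrangian $\ol A$.

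First, the reduction. Since $|H|$ is a power of $p$ and $p\neq\ell$, the module $X$ is semisimple. The standard Stone--von Neumann argument works over $k$ because $|H|$ is invertible in $k$. It shows that every irreducible $k[H]$-module with central character $\psi$ is isomorphic to a single module $W_\psi$. Hence $X\cong W_\psi^{\oplus I}$ for some index set $I$.

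Both $v\mapsto v$ on $X^A$ and the averaging operator
\[
S\coloneqq\sum_{\bar a\in\ol A}\bar a
\]
are compatible with this decomposition. Projecting a nonzero $v\in X^A$ to a component where it is nonzero, we may therefore assume that $X=W_\psi$ is irreducible.

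Next, the model. Because $\ol A\cap C=\{1\}$ and $\ol A$ maps isomorphically onto a Lagrangian, $\ol AC$ is a maximal abelian subgroup. Extend $\psi$ to a character $\psi'$ of $\ol A C$ that is trivial on $\ol A$. Then
\[
W_\psi\cong\ind_{\ol A C}^H\psi',
\]
by counting dimensions: both sides have dimension $|H/C|^{1/2}$, and irreducibility follows from Mackey's criterion applied to the nondegenerate pairing. Since $A$ and $\ol A$ are complementary, $A$ is a set of coset representatives for $\ol A C\backslash H$. So the induced module can be realized as functions $f$ on $A$, with $A$ acting by right translation.

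In this model the $A$-invariants are spanned by the constant function $\mathbf 1$, and $X^A$ is one-dimensional. Applying $S$, I would compute that $(S\mathbf 1)(a)=\sum_{\bar a}\psi([a,\bar a])$, up to the twist coming from the cocycle when $\bar a a$ is rewritten as $a\bar a[\bar a,a]$. For $a=1$ this value is $|\ol A|$, which is a power of $p$ and hence nonzero in $k$. Therefore $S\mathbf 1\neq 0$.

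The main obstacle is organizing this commutator and cocycle bookkeeping cleanly, and in particular checking that the evaluation at $a=1$ really yields $|\ol A|$. An alternative would avoid this bookkeeping. One would note that the operator $\sum_{a\in A}a\circ S$ on $X^A$ equals the scalar $|A|$ multiplied by the projection to the trivial $A$-isotypic part. This scalar should be a nonzero power of $p$, which again gives $Sv\neq0$ since $p$ is invertible in $k$. I would first attempt the explicit model computation, and fall back on the alternative if the signs become messy.
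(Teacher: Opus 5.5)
Your proposal is correct. The paper gives no argument of its own here: it says only that the proof of \cite[Lemma 5.5]{KY17} carries over to $k$, which rests on $\ell\neq p$. So there is no detailed route of the paper's to compare against, and your proof supplies a self-contained verification.

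\textbf{The model computation.} Your first route works, and the obstacle you anticipate does not arise. Since $a\bar a=[a,\bar a]\,\bar a\,a$ with $[a,\bar a]\bar a\in\ol AC$, you get exactly $(S\mathbf 1)(a)=\sum_{\bar a\in\ol A}\psi([a,\bar a])$, with no further twist. At $a=1$ this is simply $\sum_{\bar a}\psi'(\bar a)=|\ol A|$. The one step you leave implicit is the existence of $\psi'$. It uses that $\ol A$ is abelian, which follows from $[\ol A,\ol A]\subseteq\ol A\cap C=\{1\}$.

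\textbf{The fallback is the more economical proof.} It needs neither semisimplicity nor Stone--von Neumann. For $v\in X^A$,
\[
\sum_{a\in A}a\sum_{\bar a\in\ol A}\bar a\,v=\sum_{\bar a\in\ol A}\Bigl(\sum_{a\in A}\psi([a,\bar a])\Bigr)\bar a\,v=|A|\,v.
\]
This holds for the following reasons.
\begin{enumerate}
\item The map $a\mapsto\psi([a,\bar a])$ is a character of $A$, because the commutator is bimultiplicative in a group of nilpotency class $2$.
\item This character is nontrivial for $\bar a\neq1$. Otherwise the image of $\bar a$ would be orthogonal to both complementary Lagrangians, hence zero, forcing $\bar a\in\ol A\cap C=\{1\}$.
\item A nontrivial character of a finite group sums to $0$ in any field.
\end{enumerate}
Since $|A|$ is a power of $p$, it is a unit in $k$. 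This gives $\sum_{\bar a}\bar a v\neq0$ for an arbitrary $k[H]$-module on which $C$ acts by $\psi$, and it makes transparent why the Kim--Yu statement survives with modular coefficients.

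\textbf{What each route buys.} The model computation yields slightly more. In the irreducible case, $X^A$ is a line, and averaging over $\ol A$ sends $\mathbf 1$ to $|\ol A|$ times the indicator function of $1\in A$. That extra information is not needed for the lemma.
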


\begin{proof}
Same as the proof of \cite[Lemma 5.5]{KY17}.
\end{proof}

\subsection{The Kim--Yu cover criterion over \texorpdfstring{\(k\)}{k}}

We use the notation of \cite[\S 6]{KY17}, so $\ol P=M\ol U$ is the opposite parabolic for $P$ with Levi $M$. Fix a Yu datum
\[
\Psi=\bigl((G^j)_{0\leq j\leq d},y,(r_j)_{0\leq j\leq d},
\tau,(\phi_j)_{0\leq j\leq d}\bigr)
\]
for $G$ over $k$. We will assume that the maximal split central subtorus $Z_s(M)$ of $M$ is contained in $G^0$. Define a sequence of twisted Levis $(M^j)_{0 \leq j \leq d}$ in $M = M^d$ by 
\[
M^j\coloneqq Z_{G^j}\bigl(Z_s(M)\bigr) = M\cap G^j.
\]
Note that $Z(M^0)/Z(M)$ is anisotropic by the assumption $Z_s(M) \subset G^0$. Let $\{\io\}$ be an $\vec s$-generic diagram of building embeddings as in \cite[\S 3.5]{KY17} for\footnote{When $d=0$, this means $\vec s=(0)$.}
\[
\vec s=\left(0,\frac{r_0}{2},\ldots,\frac{r_{d-1}}{2}\right).
\]
If $H$ is one of the groups $G^j$, then $H(F)^{\{\io\}}_{y,a}$ denotes the Moy--Prasad subgroup formed at the image of $y$ under the diagram $\{\io\}$, and $H(F)^{\{\io\}}_{[y]}$ denotes the stabilizer of the same image in the reduced building of $H$. Define\footnote{With the convention that $K^+=G^0(F)^{\{\io\}}_{y,0+}$ when $d=0$.}
\begin{equation}\label{eq:KY-plus-explicit}
K^+
\coloneqq
G^0(F)^{\{\io\}}_{y,0+}
G^1(F)^{\{\io\}}_{y,\frac{r_0}{2}+}\cdots
G^d(F)^{\{\io\}}_{y,\frac{r_{d-1}}{2}+}.
\end{equation}
For $0\leq j<d$, let
\[
\widehat\phi_j^{\{\io\}}\co
G^0(F)^{\{\io\}}_{[y]}\,G^j(F)^{\{\io\}}_{y,0}\,
G(F)^{\{\io\}}_{y,\frac{r_j}{2}+}\longrightarrow k^\times
\]
be the extension of $\phi_j$ defined in \cite[\S 6.2]{CF26a} (following \cite[\S 4]{Yu01}), formed using the diagram $\{\io\}$, and set $\widehat\phi_d^{\{\io\}}=\phi_d$. Define the character
\begin{equation}\label{eq:KY-theta-explicit}
\theta = \prod_{j=0}^{d}
\left.\widehat\phi_j^{\{\io\}}\right|_{K^+} \co K^+\longrightarrow k^\times.
\end{equation}

\begin{defn}[Blondel cover condition over $k$]\label{def:blondel-cover}
Let $K\subset G(F)$ be an open subgroup which is compact mod center, let $K_M=K\cap M(F)$, and let $\rho$ and $\rho_M$ be finite-dimensional smooth $k$-representations of $K$ and $K_M$ on the same underlying vector space. We say that $(K,\rho)$ is a \emph{Blondel $G$-cover} of $(K_M,\rho_M)$ over $k$ for the fixed parabolic $P=MU$ if, writing $\ol P=M\ol U$ for the opposite parabolic, the following conditions hold:
\begin{enumerate}[label=(\roman*)]
\item\label{itm:decomp} $K=(K\cap\ol U(F))(K\cap M(F))(K\cap U(F))$. 
\item\label{itm:inflation} $\rho|_{K_M}=\rho_M$, and $K\cap U(F)$ and $K\cap\ol U(F)$ act trivially on $\rho$.
\item\label{itm:injective-jacquet} For every smooth $k$-representation $V$ of $G(F)$, the Jacquet map $j_U\co V\to V_U$ induces an injection
\begin{equation}\label{eq:injective-cover-hom}
\Hom_K(\rho,V)\hookrightarrow \Hom_{K_M}(\rho_M,V_U).
\end{equation}
\end{enumerate}
\end{defn}

\begin{remark}\label{rem:injective-vs-strong}
With characteristic zero coefficients, Definition~\ref{def:blondel-cover} is made in \cite[Definition 4.2]{KY17} under the name ``$G$-cover''. It is equivalent to Bushnell--Kutzko's notion of $G$-cover \cite[\S 8]{BK} in characteristic zero, by \cite[Th\'eor\`eme~1]{Blondel97}, but not in positive characteristic. 
\end{remark}

The following is the analogue of \cite[Theorem 6.3]{KY17} over $k$. Note that \textit{loc.\ cit.}\ uses the character $\theta \cdot \phi_d^{-1}$ in place of $\theta$, but it is obvious that these formulations are equivalent. 

\begin{prop}[Kim--Yu]\label{prop:KY-cover}
The pair $(K^+,\theta)$ is a Blondel $G$-cover of $(K_M^+,\theta_M)$ over $k$ in the sense of Definition~\ref{def:blondel-cover} for the fixed parabolic $P=MU$.
\end{prop}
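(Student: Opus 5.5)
We follow the proof of \cite[Theorem 6.3]{KY17}, checking at each step that only features valid over $k=\ol\F_\ell$ with $\ell\neq p$ are used. Conditions \ref{itm:decomp} and \ref{itm:inflation} of Definition~\ref{def:blondel-cover} are group-theoretic. The Iwahori-type decomposition $K^+=(K^+\cap\ol U(F))K_M^+(K^+\cap U(F))$ comes from the $\vec s$-genericity of $\{\io\}$, since the factors $G^j(F)^{\{\io\}}_{y,s_j+}$ decompose along the root subgroups of $Z_s(M)$ exactly as in \cite[\S 6]{KY17}. Triviality of $\theta$ on $K^+\cap U(F)$ and $K^+\cap\ol U(F)$ holds because each $\widehat\phi_j^{\{\io\}}$ is trivial on the relevant root-subgroup parts of positive depth that lie outside $G^j$. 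These statements do not involve coefficients at all, so they transfer verbatim.

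The real content is the injectivity \eqref{eq:injective-cover-hom}. Our plan is to reduce it, as Kim--Yu do, to a nonvanishing statement for vectors in $V$. Let $f\in\Hom_{K^+}(\theta,V)$ be nonzero and set $v=f(1)$, so $v\in V$ transforms under $K^+$ by $\theta$. We must show that $j_U(v)\neq0$. By the standard criterion, which holds for smooth $k$-representations because $\ell\neq p$ and so averaging over pro-$p$ open subgroups is an exact projection, $j_U(v)=0$ if and only if $\int_{U_0}u\,v\,du=0$ for some compact open $U_0\subset U(F)$. Using a strictly $P$-positive element $a\in Z_s(M)(F)$, this reduces to showing that $e_{U_n}(a^n v)\neq0$ for all $n$, where $e_{U_n}$ denotes averaging over an increasing exhausting family $U_n\subset U(F)$ of compact open subgroups. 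Kim--Yu handle this by an inductive contraction through the filtration: at each step one passes from $K^+\cap U$ to a slightly larger subgroup, and the relevant successive quotient is either abelian and acted on trivially by $\theta$, or a Heisenberg $p$-group whose center acts through $\widehat\phi_j$ via the nondegenerate pairing $\psi([x,y])$. In the Heisenberg case the images of the $U$- and $\ol U$-parts are complementary Lagrangians.

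We then apply Lemma~\ref{lem:heisenberg-step} to each Heisenberg step. It says that averaging an $A$-fixed vector over $\ol A$ never kills it, which is exactly the nonvanishing needed to carry the induction. The abelian steps are immediate because the relevant groups are pro-$p$ and fix the vector. Combining the steps gives $j_U(v)\neq0$, hence the injectivity of \eqref{eq:injective-cover-hom}.

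The main obstacle we expect is verifying that Kim--Yu's proof never uses characteristic-zero features beyond these two inputs. In particular, \cite{KY17} deduce the injectivity from Bushnell--Kutzko theory (Blondel's theorem, Remark~\ref{rem:injective-vs-strong}) and from semisimplicity of Hecke modules, neither of which is available over $k$. The first thing we would try is to check each place in \cite[\S\S 5--6]{KY17} where $\Hom$ spaces are compared and replace any argument of the form ``semisimple, hence split'' by the pro-$p$ averaging argument. We would also ensure we prove only the injectivity \ref{itm:injective-jacquet}, since this is all Definition~\ref{def:blondel-cover} requires, and not the stronger Bushnell--Kutzko cover condition.
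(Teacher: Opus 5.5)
Your overall route matches the paper's. Conditions (i) and (ii) are coefficient-free (they are \cite[Lemma 6.2]{KY17}). Condition (iii) is reduced, as in \cite[Theorem 6.3]{KY17}, to showing that a nonzero $(K^+,\theta)$-vector $v$ survives averaging over an exhausting family of compact open subgroups of $U(F)$, with Lemma~\ref{lem:heisenberg-step} handling the positive-depth crossings.

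The gap is in your treatment of the remaining steps. You say every non-Heisenberg step is ``abelian and acted on trivially by $\theta$'' and is ``immediate because the relevant groups are pro-$p$ and fix the vector.'' This fails at the depth-zero crossings. These occur whenever $M^0\neq G^0$, and they are the only crossings when $d=0$. At such a wall point $z$, the relevant subquotient is the finite reductive group $G^0(F)_{z,0}/G^0(F)_{z,0+}$, which is neither abelian nor a $p$-group. The $\ol U$-part of $K^+(t_{i-1},t_i)$ and the $U$-part of $K^+(t_i,t_{i+1})$ map to opposite unipotent subgroups of it. The newly added part of $N_{i+1}$ is not contained in $K^+(t_{i-1},t_i)$, so nothing forces it to fix $v$. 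More generally, at no crossing does the new part of $N_{i+1}$ fix $v$ for free; that nonvanishing is precisely what each step must prove. At depth zero one must show that averaging over one unipotent radical does not kill a nonzero vector fixed by the opposite one. This is a genuine statement, not a formal consequence of $\ell\neq p$. It is the analogue, in non-defining characteristic, of independence of Harish-Chandra restriction from the choice of parabolic.

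The paper supplies this input by induction on $d$, with base case $d=0$. By \cite[Proposition 3.3]{KY17}, a reformulation of \cite[Proposition 6.7]{MP96}, for a $0$-generic embedding the Jacquet map gives a bijection $V^{G_{y,0+}}\xrightarrow{\sim}(V_U)^{M_{y,0+}}$. The paper observes that this argument only averages over compact open subgroups of unipotent radicals, so it survives over $k$. Your proposal needs this base case, or an independent argument for the depth-zero crossings, to be complete.

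Separately, your worry about Blondel's theorem and semisimplicity is unfounded. In \cite{KY17} the cover condition is already defined as the injectivity (iii) (Remark~\ref{rem:injective-vs-strong}), and their Theorem 6.3 proves it directly by the averaging argument. So there is no step of that kind to replace.
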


\begin{proof}
Conditions \ref{itm:decomp} and \ref{itm:inflation} are \cite[Lemma 6.2]{KY17}; the proof of the latter makes no mention of the coefficients. It remains to prove \ref{itm:injective-jacquet}; we will sketch the argument from \cite{KY17}, indicating the necessary ingredients. The proof of \cite[Theorem 6.3]{KY17} is by induction on the length $d$ of the Yu datum. For $d=0$, \cite[Theorem 6.3]{KY17} reduces to \cite[Proposition 3.3]{KY17}, which is a reformulation of \cite[Proposition 6.7]{MP96}: for a $0$-generic embedding of buildings, the Jacquet map gives a bijection
\begin{equation}\label{eq:mp-base}
V^{G_{y,0+}}\xrightarrow{\sim} (V_U)^{M_{y,0+}} .
\end{equation}
The proof of \eqref{eq:mp-base} uses only averaging over compact open subgroups of the unipotent radical, hence it carries over to $k$ because $\ell \neq p$.

Before proceeding to the inductive step, we define some notation. Choose $\gamma\in X_*(Z_s(M^0))\otimes\RR$ with $\langle\alpha,\gamma\rangle>0$ for every root $\alpha$ of $Z_s(M^0)$ on $\Lie U$, where $Z_s(M^0)$ denotes the maximal $F$-split subtorus of the center of $M^0$. For $t\in\RR$, let $\{\io\}_{t\gamma}$ be the shifted diagram of embeddings of \cite[\S 3.6]{KY17}, and define $K^+(t)$ and $\theta(t)$ by the formulas \eqref{eq:KY-plus-explicit} and \eqref{eq:KY-theta-explicit} with $\{\io\}$ replaced by $\{\io\}_{t\gamma}$.  Let
\[
\cdots<t_{-1}<t_0<t_1<t_2<\cdots
\]
be a sequence of real numbers such that $\lim_{n\to\infty} t_n = \infty$ and $\lim_{n\to-\infty} t_n = -\infty$ and $K^+(t)$ is constant in the intervals $(t_i, t_{i+1})$, chosen so that $0\in(t_0,t_1)$. Set
\[
K^+(t_i,t_{i+1}) \coloneqq K^+(t),
\qquad
\theta(t_i,t_{i+1}) \coloneqq \theta(t)
\quad \text{for any }t\in(t_i,t_{i+1}).
\]

For the inductive step $d\geq 1$, Kim--Yu define $N_i = K^+(t_{i-1},t_i) \cap U$ and $\ol N_i = K^+(t_{i-1},t_i) \cap \ol U$, and form the groups $K^+(t_{i-1},t_i)=N_iK_M^+\ol N_i$. If $v$ is a nonzero vector on which $K^+$ acts through $\theta$, then Kim--Yu prove that $\int_{N_i} xvdx$ is a nonzero vector on which $K^+(t_{i-1},t_i)$ acts by $\theta(t_{i-1},t_i)$; since $\bigcup_{i \geq 0} N_i = U(F)$, this is enough to show that $v$ has nonzero image in $V_U$. Their proof uses only averages over pro-$p$ groups and Lemma~\ref{lem:heisenberg-step}, which still make sense over $k$ because $\ell\neq p$. So the canonical map $V\to V_U$ is injective on the $(K^+,\theta)$-isotypic vectors. Equivalently, the induced map
\[
\Hom_{K^+}(\theta,V)\longrightarrow
\Hom_{K_M^+}(\theta_M,V_U)
\]
is injective, which is \eqref{eq:injective-cover-hom} for $(K^+,\theta)$.
\end{proof}

The next corollary is the obvious analogue of \cite[Corollary 6.4]{KY17} over $k$. 

\begin{cor}\label{cor:KY-64}
Let $K$ be an open subgroup of $G(F)$ which is compact mod center, and let $\rho$ be a finite-dimensional smooth $k$-representation of $K$. Suppose that for the fixed parabolic $P=MU$, conditions \ref{itm:decomp} and \ref{itm:inflation} of Definition~\ref{def:blondel-cover} hold; suppose also that $K\supset K^+$ and that $\rho|_{K^+}$ is $\theta$-isotypic. Then $(K,\rho)$ is a Blondel $G$-cover of $(K_M,\rho|_{K_M})$ over $k$.
\end{cor}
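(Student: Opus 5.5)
The plan is to follow the characteristic-zero argument of \cite[Corollary 6.4]{KY17}. We verify condition~\ref{itm:injective-jacquet} of Definition~\ref{def:blondel-cover} for $(K,\rho)$ by reducing it to Proposition~\ref{prop:KY-cover}; conditions \ref{itm:decomp} and \ref{itm:inflation} hold by hypothesis. So let $V$ be a smooth $k$-representation of $G(F)$ and $f\in\Hom_K(\rho,V)$ with $j_U\circ f=0$ as a map $\rho_M=\rho|_{K_M}\to V_U$. We must show $f=0$.

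First, since $\rho|_{K^+}$ is $\theta$-isotypic, every vector $w\in\rho$ transforms under $K^+$ by $\theta$. Hence the image $f(w)$ lies in the $(K^+,\theta)$-isotypic subspace $V^{(K^+,\theta)}$. Equivalently, $w\mapsto f(w)$ defines, for each $w$, a homomorphism $\theta\to V$ of $K^+$-representations. Second, the last paragraph of the proof of Proposition~\ref{prop:KY-cover} shows that the Jacquet map $j_U\co V\to V_U$ is injective on $V^{(K^+,\theta)}$. This is exactly the statement that $\Hom_{K^+}(\theta,V)\to\Hom_{K_M^+}(\theta_M,V_U)$ is injective. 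Since $j_U(f(w))=0$ for every $w$, we get $f(w)=0$, so $f=0$.

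The one point to check is compatibility of the Jacquet maps: the map in \eqref{eq:injective-cover-hom} for $(K,\rho)$ is $f\mapsto j_U\circ f$ on the same underlying space. Restricting to $K_M\supset K_M^+$, and using that $\theta_M$ is the restriction of $\theta$ to $K_M^+=K^+\cap M(F)$, this map is visibly compatible with the one for $(K^+,\theta)$ applied vectorwise. This step uses nothing about the coefficients. The argument involves no averaging beyond what Proposition~\ref{prop:KY-cover} already provides, so the only real input, and the only potential obstacle, is the modular version of the Kim--Yu injectivity. That has already been established in Proposition~\ref{prop:KY-cover} using $\ell\neq p$ and Lemma~\ref{lem:heisenberg-step}.
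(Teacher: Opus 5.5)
Your proposal is correct and is essentially the paper's own argument. The image of any $T\in\Hom_K(\rho,V)$ lies in the $(K^+,\theta)$-eigenspace, on which $j_U$ is injective by Proposition~\ref{prop:KY-cover}, so $j_U\circ T=0$ forces $T=0$; your extra compatibility check is harmless but not needed.
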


\begin{proof}
Let $V$ be a smooth $k$-representation of $G(F)$. To prove that \eqref{eq:injective-cover-hom} is injective, let $T\in\Hom_K(\rho,V)$ and suppose that $j_U\circ T=0$. Since $K^+$ acts on $\rho$ through $\theta$, the image of $T$ lies in the $(K^+,\theta)$-eigenspace of $V$. Proposition~\ref{prop:KY-cover} says that $j_U$ is injective on this eigenspace, so $T=0$.
\end{proof}

\begin{cor}\label{cor:KY-dual-cover}
In the situation of Corollary~\ref{cor:KY-64}, the dual pair $(K,\rho^\vee)$ is a Blondel $G$-cover of $(K_M,(\rho|_{K_M})^{\vee})$ over $k$.
\end{cor}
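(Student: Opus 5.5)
We need to prove Corollary~\ref{cor:KY-dual-cover}: the dual pair $(K,\rho^\vee)$ is a Blondel cover of $(K_M,(\rho|_{K_M})^\vee)$. The plan is to apply Corollary~\ref{cor:KY-64} to $\rho^\vee$, but for the dual setup. Replace the Yu datum by its contragredient datum, with $\tau^\vee$ and $\phi_j^{-1}$, and keep the same $P=MU$. Alternatively, swap nothing and simply check the three hypotheses of Corollary~\ref{cor:KY-64} for $\rho^\vee$ with the character $\theta^{-1}$ in place of $\theta$.

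First, I check conditions \ref{itm:decomp} and \ref{itm:inflation} for $(K,\rho^\vee)$. Condition \ref{itm:decomp} concerns only $K$ and is unchanged. For \ref{itm:inflation}, $K\cap U(F)$ and $K\cap\ol U(F)$ act trivially on $\rho$, hence on $\rho^\vee$; and $\rho^\vee|_{K_M}=(\rho|_{K_M})^\vee$ tautologically. Moreover $\rho^\vee|_{K^+}$ is $\theta^{-1}$-isotypic because $\rho|_{K^+}$ is $\theta$-isotypic, and $\rho$ is finite-dimensional.

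Second, I need the analogue of Proposition~\ref{prop:KY-cover} for $(K^+,\theta^{-1})$. The key observation is that $\theta^{-1}$ is exactly the character \eqref{eq:KY-theta-explicit} attached to the dual Yu datum
\[
\Psi^\vee=\bigl((G^j),y,(r_j),\tau^\vee,(\phi_j^{-1})\bigr).
\]
This datum is again a Yu datum, since $\phi_j^{-1}$ is $G^{j+1}$-generic of depth $r_j$ whenever $\phi_j$ is: the generic element $X_j^*$ is replaced by $-X_j^*$. Its extensions satisfy $\widehat{\phi_j^{-1}}^{\{\io\}}=(\widehat\phi_j^{\{\io\}})^{-1}$, by the definition of $\widehat\phi$ via the Moy--Prasad isomorphism. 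The group $K^+$ depends only on $((G^j),y,(r_j))$ and the diagram $\{\io\}$, so it is unchanged. Proposition~\ref{prop:KY-cover} applied to $\Psi^\vee$ then shows that $(K^+,\theta^{-1})$ is a Blondel cover.

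With these in hand, Corollary~\ref{cor:KY-64} applied to $(K,\rho^\vee)$ and $\Psi^\vee$ gives the claim. Its proof only uses that the image of any $T\in\Hom_K(\rho^\vee,V)$ lies in the $(K^+,\theta^{-1})$-eigenspace, on which $j_U$ is injective.

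The only point requiring care is the genericity and compatibility of the inverse characters, in particular that the Heisenberg/Lagrangian hypothesis of Lemma~\ref{lem:heisenberg-step} still holds. It does: the pairing $\psi^{-1}([x,y])$ is again nondegenerate, and Lagrangians are preserved.
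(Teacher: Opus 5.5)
Your proposal is correct and follows the paper's proof: conditions (i) and (ii) are checked directly, you note that $K^+$ acts on $\rho^\vee$ through $\theta^{-1}$, and you conclude with Corollary~\ref{cor:KY-64}. Your observation that $\theta^{-1}$ is exactly the Kim--Yu character of the dual datum $((G^j),y,(r_j),\tau^\vee,(\phi_j^{-1}))$, whose group $K^+$ is the same, supplies Proposition~\ref{prop:KY-cover} for $\theta^{-1}$; the paper uses this step without stating it.
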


\begin{proof}
Conditions (i) and (ii) in Definition~\ref{def:blondel-cover} are clear. Note that $K^+$ acts on $\rho^\vee$ by $\theta^{-1}$, so Corollary~\ref{cor:KY-64} proves the claim.
\end{proof}

\subsection{A substitute for type theory over \texorpdfstring{\(k\)}{k}}

For a central closed subgroup $Z \subset G(F)$, a closed subgroup $N \subset G(F)$, and a $k$-vector space $V$, let $C_c^\infty(N\backslash G(F),Z,V)$ denote the $k$-vector space of locally constant $V$-valued functions $\varphi\co N\backslash G(F) \to V$ which are compactly supported modulo $Z$ and are smooth in the sense that there is some compact open subgroup $K \subset G(F)$ such that $\varphi(Nh) = \varphi(Nhg)$ for all $h \in G(F)$ and all $g \in K$. Equip $C_c^\infty(N\backslash G(F),Z,V)$ with the right $G(F)$-action given by
\[
(\varphi \cdot g)(Nh) = \varphi(Nhg^{-1}),
\]
so $C_c^\infty(N\backslash G(F), Z, V)$ is a smooth $k$-representation of $G(F)$. If $M \subset G(F)$ is another closed subgroup normalizing $N$, then there is a left $M$-action on $C_c^\infty(N\backslash G(F), Z, V)$ given by
\[
(m \cdot \varphi)(Nh) = \varphi(Nm^{-1}h).
\]
In particular, $C_c^\infty(G(F), Z, V)$ admits commuting left and right $G(F)$-actions.

Observe that if $K$ is a compact-mod-center open subgroup of $G(F)$ and $Z = K \cap Z(G)(F)$ and $(\rho, V_\rho)$ is a smooth representation of $K$, then
\[
\cInd_K^{G(F)}(\rho) \cong C_c^\infty(G(F), Z, V_\rho)^K,
\]
where the $K$-action is given by
\[
(k \cdot \varphi)(g) = \rho(k)\varphi(k^{-1}g),
\]
and the isomorphism intertwines the right $G(F)$-actions.

For a given $Z$, let
\[
\lambda_U\co C_c^\infty(G(F),Z,k)
\longrightarrow C_c^\infty(U(F)\backslash G(F),Z,k)
\]
denote the map
\begin{equation}\label{eq:lambda-u}
\lambda_U(h)(g)=\int_{U(F)} h(ug)\,du,
\end{equation}
where $du$ is a fixed $k$-valued Haar measure. We equip $C_c^\infty(U(F)\backslash G(F),Z,k)$ with the $M(F)$-action
\begin{equation}\label{eq:star-action}
(m\star \varphi)(g)=\delta_P(m)^{-1}\varphi(m^{-1}g).
\end{equation}

The following result is recorded in the case $Z = 1$ in \cite[Remarque~3.4]{Dat09}.

\begin{lemma}\label{lem:function-coinvariants}
The map $\lambda_U$ is $U(F)$-invariant and induces an $M(F)$-equivariant isomorphism (with respect to the usual left action on the left side and the action \eqref{eq:star-action} on the right side)
\begin{equation}\label{eq:function-coinvariant-iso}
C_c^\infty(G(F),Z,k)_U
\xrightarrow{\sim}
C_c^\infty(U(F)\backslash G(F),Z,k).
\end{equation}
\end{lemma}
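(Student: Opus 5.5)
We will prove both claims directly from the definition \eqref{eq:lambda-u}, following the argument of \cite[Remarque~3.4]{Dat09} and checking that the central subgroup $Z$ and the modular coefficients cause no new problems.

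\emph{$U(F)$-invariance and equivariance.} The left translate $u_0\cdot h$ satisfies $(u_0\cdot h)(g)=h(u_0^{-1}g)$. Integrating over $U(F)$ and using invariance of $du$ gives $\lambda_U(u_0\cdot h)=\lambda_U(h)$. Hence $\lambda_U$ factors through the coinvariants. For $m\in M(F)$, the substitution $u\mapsto m^{-1}um$ in $\int_{U(F)}h(m^{-1}ug)\,du$ introduces the modulus factor $\delta_P(m)^{-1}$, with the normalization of $\delta_P$ fixed in the appendix. This is exactly the twist in the $\star$-action \eqref{eq:star-action}, so $\lambda_U$ is $M(F)$-equivariant. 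Right $G(F)$-equivariance is clear. The integral is well defined because $h$ is compactly supported modulo $Z$ and $Z$ is central, so $u\mapsto h(ug)$ is compactly supported on $U(F)\cong U(F)Z/Z$; here we use $U(F)\cap Z(G)(F)=1$.

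\emph{Surjectivity.} Fix a $k$-valued Haar measure on $U(F)$. This is possible since $U(F)$ is an increasing union of pro-$p$ groups and $p$ is invertible in $k$. Every $\varphi\in C_c^\infty(U(F)\backslash G(F),Z,k)$ is a finite sum of translates of characteristic functions $\mathbf 1_{U(F)gJZ}$, with $J$ compact open and small enough that $gJg^{-1}\cap U(F)$ is pro-$p$. Let $J_U\coloneqq gJg^{-1}\cap U(F)$. Then $\mathbf 1_{U(F)gJZ}$ is a nonzero multiple of $\lambda_U(\mathbf 1_{gJZ})$, the scalar being $\mathrm{vol}(J_U)$, which is a power of $p$ and hence invertible in $k$.

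\emph{Injectivity (main obstacle).} We must show $\ker\lambda_U=C_c^\infty(G(F),Z,k)(U)$, the span of the elements $u\cdot h-h$. Following Dat, use the characterization of the kernel of the coinvariant map: $h\in C_c^\infty(G(F),Z,k)(U)$ if and only if $\int_{U_0}u\cdot h\,du=0$ for some compact open subgroup $U_0\subset U(F)$. This holds because $U(F)$ is a union of pro-$p$ compact open subgroups and $\ell\neq p$, so averaging over $U_0$ is a projector onto invariants.

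Now suppose $\lambda_U(h)=0$. Choose a compact-mod-$Z$ set $\Omega$ containing $\operatorname{supp}h$, and reduce to $h$ supported on a single double coset $U(F)\,g\,JZ$. On that coset, $h$ is a finite sum $\sum_i c_i\,\mathbf 1_{u_igJZ}$. Its $U(F)$-integral is $\mathrm{vol}(J_U)\sum_i c_i$, so $\lambda_U(h)=0$ forces $\sum_i c_i=0$. Then $h=\sum_i c_i\bigl(u_i\cdot\mathbf 1_{gJZ}-\mathbf 1_{gJZ}\bigr)$, which lies in the span of the $u\cdot h'-h'$. The only care needed is to choose $J$ so that $gJZ\cap U(F)gJZ$ decomposes into finitely many disjoint left $U(F)$-translates. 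This is possible because $J$ is compact and $Z$ is central, so that modulo $Z$ we are in the $Z=1$ situation of Dat.
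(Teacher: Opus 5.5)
There is a genuine gap. Throughout, you treat elements of $C_c^\infty(G(F),Z,k)$ and $C_c^\infty(U(F)\backslash G(F),Z,k)$ as if they were $Z$-invariant, but the paper's definition only asks for support compact modulo $Z$. This is not a harmless simplification. In Proposition~\ref{prop:ohara-33} the lemma is used through $\cInd_K^{G(F)}\rho\cong\Hom_K(\rho^\vee,C_c^\infty(G(F),Z,k))$ with $Z=K\cap Z(G)(F)$. So the relevant functions $g\mapsto\lambda(f(g))$ transform under $Z$ by $\rho|_Z$, and $Z$ is typically non-compact for the compact-mod-center groups $K$ used there.

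For such functions, several of your claims fail. The claim ``every $\varphi$ is a finite sum of translates of $\mathbf 1_{U(F)gJZ}$'' is false. So is ``on $U(F)gJZ$ one has $h=\sum_i c_i\mathbf 1_{u_igJZ}$'', since a function supported on $gJZ$ may take unrelated values on the infinitely many cosets $gJz$. Finally, ``modulo $Z$ we are in the $Z=1$ situation'' is unjustified, because the functions do not descend to $G(F)/Z$. (A harmless side remark: for $Z$-saturated sets the scalar is $\mathrm{vol}(U(F)\cap gJg^{-1}Z)$ rather than $\mathrm{vol}(J_U)$.)

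Both halves can be repaired.

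\emph{Surjectivity.} Suppose $\varphi$ is right $J$-invariant with support in the image of $CZ$, where $C$ is compact. Write $\varphi=\sum_D\varphi(D)\mathbf 1_D$ over the disjoint open double cosets $D=U(F)gJ$. For each $D$ with $\varphi(D)\neq0$, pick $g_D\in D\cap CZ$, and put $\tilde\varphi=\sum_D\varphi(D)\,\mathrm{vol}(U(F)\cap g_DJg_D^{-1})^{-1}\,\mathbf 1_{g_DJ}$. This locally finite sum is right $J$-invariant, is supported in $CJZ$, and satisfies $\lambda_U(\tilde\varphi)=\varphi$. Choosing lifts inside a fixed compact-mod-$Z$ set is exactly what the paper's continuous $Z$-equivariant section $\sigma$ accomplishes: there one sets $\tilde f(u\sigma(\bar g))=f(\bar g)$ for $u\in U_0$ and $0$ otherwise.

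\emph{Injectivity.} Actually apply the averaging criterion you quote. Suppose $\operatorname{supp}h\subset CZ$, and let $U_0$ be a compact open subgroup containing the compact set $U(F)\cap CC^{-1}Z$. Then for each $x$, the set $\{u\in U(F):ux\in\operatorname{supp}h\}$ lies in a single right $U_0$-coset. Hence $\int_{U_0}h(ux)\,du$ equals either $\lambda_U(h)(x)=0$ or $0$ trivially. Moreover, $h$ is left invariant under some open $U_1\subset U_0$, by right smoothness together with support compact modulo the central $Z$. This gives $[U_0:U_1]\,h=\sum_{u\in U_0/U_1}(h-u\cdot h)$. Since $[U_0:U_1]$ is a power of $p$, it is invertible in $k$, so $h$ lies in the span of the $u\cdot h'-h'$.

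The paper disposes of injectivity in one sentence and spends its effort on constructing $\sigma$. With these fixes, your argument covers both directions.
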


\begin{proof}
Since $U(F)$ is unimodular, it is clear from the definition \eqref{eq:lambda-u} that $\lambda_U$ is $P(F)$-equivariant and thus factors through $C_c^\infty(G(F),Z,k)_U$. Injectivity follows directly from the definition of coinvariants. For surjectivity, choose a compact open subgroup $U_0 \subset U(F)$, and rescale $du$ to assume $\operatorname{vol}(U_0) = 1$. The key claim is that there exists a continuous $Z$-equivariant section $\sigma\co U(F)\backslash G(F) \to G(F)$ to the natural projection. Granting this, we define $\wt f\co G(F) \to k$ by
\[
\wt f(u\sigma(\ol g)) = \begin{cases}
    f(\ol g) &\text{if } u \in U_0, \\
    0 &\text{otherwise}
\end{cases}
\]
for $u \in U(F)$ and $\ol g \in U(F)\backslash G(F)$. Since $\sigma$ is $Z$-equivariant, it is clear that $\wt f$ has compact support modulo $Z$. To see that $\wt f$ is smooth, let $K$ be a compact open subgroup of $G(F)$ and note that the map $c\co \supp f \times K \to U(F)$ given by $(g, h) \mapsto \sigma(\ol g)h\sigma(\ol g h)^{-1}$ has compact image: this follows again from $Z$-equivariance of $\sigma$. By shrinking $K$, we may therefore assume that $c$ factors through $U_0$, and we may further assume that $f$ is right $K$-invariant. But then for $g \in G(F)$ and $h \in K$ we have
\[
\sigma(\ol g)h = u_0\sigma(\ol g\ol h)
\]
for some $u_0 \in U_0$, and thus
\[
\wt f(u\sigma(\ol g)h) = \wt f(uu_0\sigma(\ol g h)) = \begin{cases}
    f(\ol g h) = f(\ol g) &\text{if } uu_0 \in U_0, \\
    0 &\text{otherwise.}
\end{cases}
\]
It follows that $\wt f$ is $K$-invariant, as desired.

Now we check that $\sigma$ exists. First, recall that the multiplication map $U(F) \times M(F) \times \ol U(F) \to G(F)$ is a homeomorphism to an open subset $\Omega$ of $G(F)$. For each $g \in G(F)$, there is a $Z$-equivariant continuous section $t_g\co U(F)\backslash \Omega g \to \Omega g$ given by
\[
t_g(um\ol u g) = m\ol u g
\]
for $u \in U(F)$, $m \in M(F)$, and $\ol u \in \ol U(F)$. It is an exercise in point-set topology (using the fact that $U(F)Z\backslash G(F)$ is locally compact, Hausdorff, and totally disconnected) to check that $G(F)$ is the disjoint union of $U(F)Z$-stable open subsets, each of which is contained in some $\Omega g$. Using this, one can patch together the various $t_g$ to obtain a section $\sigma$ as claimed.
\end{proof}

\begin{lemma}\label{lem:ohara-32}
Let \(K_M\subset M(F)\) be an open subgroup such that \(K_M/(K_M\cap Z(G)(F))\) is compact, and let \(\rho_M\) be a finite-dimensional smooth \(k\)-representation of \(K_M\). Let \(\operatorname{Inf}(\rho_M)\) denote the inflation of \(\rho_M\) from \(K_M\) to \(U(F)K_M\). The map
\begin{equation}\label{eq:iu1}
I_{U,1}\co F\longmapsto
\left[g\longmapsto
\left[m\longmapsto \delta_P(m)^{1/2}F(mg)\right]\right]
\end{equation}
is a \(G(F)\)-equivariant isomorphism
\[
\cInd_{U(F)K_M}^{G(F)}\operatorname{Inf}(\rho_M)
\xrightarrow{\sim}
I_P^G\left(\cInd_{K_M}^{M(F)}\rho_M\right).
\]
\end{lemma}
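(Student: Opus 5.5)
The plan is to prove Lemma~\ref{lem:ohara-32} as in \cite[Lemma 3.2]{Ohara24}, checking that each step uses only averaging over pro-$p$ groups and transitivity of induction, so that it goes through over $k$. The first step is well-definedness. Take $F\in\cInd_{U(F)K_M}^{G(F)}\operatorname{Inf}(\rho_M)$, so that $F(uk g)=\rho_M(k)F(g)$ and $F$ has compact support modulo $U(F)K_M$. For fixed $g$, the function $f_g\co m\mapsto \delta_P(m)^{1/2}F(mg)$ on $M(F)$ satisfies $f_g(km)=\rho_M(k)f_g(m)$, because $\delta_P$ is trivial on the compact-mod-center group $K_M$. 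It is smooth, being a restriction of a smooth function, and it has compact support modulo $K_M$. The support claim holds because $\supp F\subset U(F)K_M C$ with $C$ compact modulo the center, and the Iwasawa decomposition $G(F)=P(F)K_0$ shows that $\{m: mg\in U(F)K_M C\}$ is compact modulo $K_M$.

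Next I will check the equivariance properties. Using that $m$ normalizes $U(F)$ and that $F$ is left $U(F)$-invariant, we get
\[
f_{m'ug}(m)=\delta_P(m)^{1/2}F(mm'ug)=\delta_P(m')^{-1/2}f_g(mm').
\]
This is exactly the transformation rule for $I_P^G(W)=\Ind_{P(F)}^{G(F)}(W\otimes\delta_P^{-1/2})$ with $W=\cInd_{K_M}^{M(F)}\rho_M$ and $M(F)$ acting by right translation. Right $G(F)$-equivariance of $I_{U,1}$ is immediate. Smoothness of $g\mapsto f_g$ as a $W$-valued function follows from the right $K$-invariance of $F$ for some open $K$.

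The last step is bijectivity, and I will do this by writing down the inverse $\Phi\mapsto[g\mapsto \Phi(g)(1)]$. One checks directly that this function transforms correctly under $U(F)K_M$ on the left, and that the two maps are mutually inverse, since $\Phi(mg)(1)=\delta_P(m)^{1/2}\Phi(g)(m)$. The main obstacle is showing that this inverse lands in compactly supported functions. For that I will use that $P(F)\backslash G(F)$ is compact, so that $\Phi$ is determined by its restriction to a compact $K_0$. The values $\Phi(h)$ for $h\in K_0$ form a finite collection of elements of $W$ by smoothness, and each has support in finitely many cosets $K_Mm$. Together these bound the support of $g\mapsto\Phi(g)(1)$ inside $U(F)K_M\cdot(\text{finite set})\cdot K_0$, which is compact modulo $U(F)K_M$. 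No Haar measure is needed anywhere, so $\ell\neq p$ enters only through the choice of $\delta_P^{1/2}$.
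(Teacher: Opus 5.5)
Your proposal is correct and takes the same route as the paper: the paper's proof simply exhibits the inverse $\Phi\mapsto[g\mapsto\Phi(g)(1)]$, as in \cite[Lemma~3.2]{Ohara24}, and your checks of well-definedness, equivariance, and compact support of the inverse fill in the details the paper leaves implicit. One small slip: your own transformation law gives $\Phi(mg)(1)=\delta_P(m)^{-1/2}\Phi(g)(m)$, not $\delta_P(m)^{1/2}\Phi(g)(m)$, and it is this version that yields $\delta_P(m)^{1/2}\Phi(mg)(1)=\Phi(g)(m)$, so that the two maps are mutually inverse.
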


\begin{proof}
As mentioned in the proof of \cite[Lemma~3.2]{Ohara24}, the map $F \mapsto [g \mapsto F(g)(1)]$ is easily checked to define the inverse isomorphism.
\end{proof}

\begin{prop}
\label{prop:ohara-33}
Let $K\subset G(F)$ be a compact-mod-center open subgroup of $G(F)$, and let $\rho$ be a finite-dimensional smooth $k$-representation of $K$. Assume that $(K,\rho)$ satisfies conditions \ref{itm:decomp} and \ref{itm:inflation} of Definition~\ref{def:blondel-cover} for $P=MU$, where $\rho_M=\rho|_{K_M}$. Assume furthermore that the dual pair $(K,\rho^\vee)$ is a Blondel $G$-cover of $(K_M,\rho_M^\vee)$ over $k$ for $P$. Then the map
\begin{equation}
I_{U,2}\co 
\cInd_K^{G(F)}\rho
\rightarrow
\cInd_{U(F)K_M}^{G(F)}\operatorname{Inf}(\rho_M)
\end{equation}
given by 
\begin{equation}\label{eq:iu2}
f\longmapsto
\left[g\longmapsto \int_{U(F)} f(ug)\,du\right]
\end{equation}
is injective. Consequently, $I_U=I_{U,1}\circ I_{U,2}$ gives an injection
\begin{equation}\label{eq:ohara-composition}
\cInd_K^{G(F)}\rho
\hookrightarrow
I_P^G\left(\cInd_{K_M}^{M(F)}\rho_M\right).
\end{equation}
\end{prop}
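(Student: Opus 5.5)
The plan is to follow the proof of \cite[Proposition~3.3]{Ohara24}, the characteristic zero analogue, and replace each characteristic zero input by its modular counterpart from this appendix. Once injectivity of $I_{U,2}$ is known, the final claim \eqref{eq:ohara-composition} is immediate. Indeed, $I_{U,1}$ is an isomorphism by Lemma~\ref{lem:ohara-32}, so $I_U$ is a composite of an injection and an isomorphism.

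First I check that $I_{U,2}$ is well defined and $G(F)$-equivariant. For $f\in\cInd_K^{G(F)}\rho$, the function $g\mapsto\int_{U(F)}f(ug)\,du$ has the following properties.
\begin{itemize}
\item It is left $U(F)$-invariant, by unimodularity of $U(F)$.
\item It transforms under $K_M$ by $\rho_M$. This uses that $K_M$ normalizes $K\cap U(F)$ and $U(F)$, so the Haar measure is preserved up to the modulus factor $\delta_P$, which is trivial on the compact-mod-center group $K_M$.
\item It is compactly supported modulo $U(F)K_M$, because $\supp f$ is compact mod center.
\end{itemize}
Equivariance for right translation is clear.

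The key step is injectivity, which I plan to prove by duality using the hypothesis that $(K,\rho^\vee)$ is a Blondel cover. I identify $\cInd_K^{G(F)}\rho$ with $C_c^\infty(G(F),Z,V_\rho)^K$. Under this identification, $I_{U,2}$ becomes the map $\lambda_U\otimes V_\rho$ on $K$-invariants, composed with the identification of Lemma~\ref{lem:function-coinvariants}. In other words, $I_{U,2}$ is the composite of the inclusion of $K$-invariants with the Jacquet projection $X\to X_U$, where
\[
X=C_c^\infty(G(F),Z,k)\otimes V_\rho
\]
carries the left $G(F)$-action twisted by $\rho$.

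Next I rewrite $K$-invariants as $\Hom_K(\rho^\vee, X')$, where $X'=C_c^\infty(G(F),Z,k)$ is viewed as a smooth representation for the left action. With this description, the map above is exactly the map \eqref{eq:injective-cover-hom},
\[
\Hom_K(\rho^\vee,X')\to\Hom_{K_M}(\rho_M^\vee,X'_U).
\]
Condition~\ref{itm:injective-jacquet} for the dual cover, applied with $V=X'$, gives injectivity. The $\delta_P$ twist in \eqref{eq:star-action} does not affect this, since $\delta_P$ is trivial on $K_M$.

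I expect the main obstacle to be this bookkeeping: matching $I_{U,2}$ precisely with the Jacquet map on $\Hom_K(\rho^\vee,-)$. Two points need care.
\begin{itemize}
\item Left versus right actions, and the twist by $\rho$ versus $\rho^\vee$. If the identification naturally produces $\rho$ rather than $\rho^\vee$, I would instead apply the cover property for $(K,\rho)$ via Corollary~\ref{cor:KY-64}, or use Corollary~\ref{cor:KY-dual-cover} to switch between the two.
\item Checking that the $Z$-modified function spaces behave as in the case $Z=1$. This is exactly what Lemma~\ref{lem:function-coinvariants} provides.
\end{itemize}
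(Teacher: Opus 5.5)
Your proposal is correct and follows the paper's proof essentially step for step. You check well-definedness, identify $\cInd_K^{G(F)}\rho$ with $\Hom_K(\rho^\vee,C_c^\infty(G(F),Z,k))$ via $f\mapsto[\lambda\mapsto\lambda\circ f]$ (and the target similarly), use Lemma~\ref{lem:function-coinvariants} to recognize $I_{U,2}$ as the Jacquet map of condition~\ref{itm:injective-jacquet} for the dual cover $(K,\rho^\vee)$ with $V=C_c^\infty(G(F),Z,k)$, and conclude with Lemma~\ref{lem:ohara-32}. Your worry about $\rho$ versus $\rho^\vee$ resolves in your favor, since that identification is $K$-equivariant exactly for $\rho^\vee$, so the proposed fallback via Corollary~\ref{cor:KY-64} is unnecessary (and would in any case require hypotheses such as $K\supset K^+$ that the proposition does not assume).
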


\begin{remark}
This is an analogue of \cite[Proposition 3.3]{Ohara24}, which says that over $\CC$ the analogous map \eqref{eq:ohara-composition} is actually an \emph{isomorphism} when $K$ is compact. The proof of \emph{loc.\ cit.}\ uses the fact that smooth representations of compact groups are semisimple, which is only true with characteristic $0$ coefficients. However, it turns out that our applications will only require injectivity, and this follows from Ohara's argument. The result over $\CC$ was also proven abstractly in \cite[Lemma B.3]{BS20} using type theory, which is the reason for the title of this subsection.
\end{remark}

\begin{proof}
First, we will show that the integral in \eqref{eq:iu2} is well-defined. Put $Z=K\cap Z(G)(F)$. Since $K/Z$ is compact, $\operatorname{supp}(f)$ is compact modulo $Z$. The closed embedding $U(F)\hookrightarrow G(F)/Z$ implies that $U(F)g\cap\operatorname{supp}(f)$ is compact for each $g$, so local constancy of $f$ gives the claim. Assumptions (i) and (ii) in Definition~\ref{def:blondel-cover} imply
\[
I_{U,2}(f)(u m g)=\rho_M(m) I_{U,2}(f)(g)
\qquad
(u\in U(F),\,m\in K_M),
\]
where $\delta_P(m)=1$ because $K_M$ is compact mod center. The function $I_{U,2}(f)$ has compact support modulo $U(F)K_M$: indeed, $\operatorname{supp}(I_{U,2}(f))\subset U(F)\operatorname{supp}(f)$, and $\operatorname{supp}(f)$ is compact modulo $Z\subset K_M$. Thus \eqref{eq:iu2} lands in $\cInd_{U(F)K_M}^{G(F)}\operatorname{Inf}(\rho_M)$.

Since $\rho$ is finite-dimensional, there is a natural $G(F)$-equivariant isomorphism
\begin{equation}\label{eq:compact-ind-hom}
\cInd_K^{G(F)}\rho
\xrightarrow{\sim}
\Hom_K(\rho^\vee,C_c^\infty(G(F),Z,k)),
\qquad
f\longmapsto
\bigl[\lambda\mapsto(g\mapsto \lambda(f(g)))\bigr].
\end{equation}
Likewise, since the action of $U(F)$ on $\rho_M$ is trivial, we have 
\begin{equation}\label{eq:target-ind-hom}
\cInd_{U(F)K_M}^{G(F)}\operatorname{Inf}(\rho_M)
\xrightarrow{\sim}
\Hom_{K_M}(\rho_M^\vee,
C_c^\infty(U(F)\backslash G(F),Z,k)).
\end{equation}
Under \eqref{eq:compact-ind-hom} and \eqref{eq:target-ind-hom}, the map \(I_{U,2}\) is the map induced by
\[
\lambda_U\co \Hom_K(\rho^\vee,C_c^\infty(G(F),Z,k))
\longrightarrow
\Hom_{K_M}(\rho_M^\vee,C_c^\infty(U(F)\backslash G(F),Z,k)).
\]
By Lemma~\ref{lem:function-coinvariants}, this is exactly the instance \(V=C_c^\infty(G(F),Z,k)\) of Definition~\ref{def:blondel-cover} for $(K, \rho^\vee)$. Hence $I_{U,2}$ is injective, and we conclude by Lemma~\ref{lem:ohara-32}.
\end{proof}

\bibliographystyle{amsalpha-with-labels}
\bibliography{Bibliography}

@misc{ALRR22,
      title={Fixed points under pinning-preserving automorphisms of reductive group schemes}, 
      author={Pramod N. Achar and João Lourenço and Timo Richarz and Simon Riche},
      year={2022},
      eprint={2212.10182},
      archivePrefix={arXiv},
      primaryClass={math.AG},
      url={https://arxiv.org/abs/2212.10182}, 
}

@misc{BS20,
      title={The {G}elfand--{G}raev representation of {S}{O}$(2n+1)$ in terms of {H}ecke algebras}, 
      author={Petar Bakic and Gordan Savin},
      year={2020},
      eprint={2011.02456},
      archivePrefix={arXiv},
      primaryClass={math.RT},
      url={https://arxiv.org/abs/2011.02456}, 
}

@article {BBM,
    AUTHOR = {Beilinson, A. and Bezrukavnikov, R. and Mirkovi\'{c}, I.},
     TITLE = {Tilting exercises},
   JOURNAL = {Mosc. Math. J.},
  FJOURNAL = {Moscow Mathematical Journal},
    VOLUME = {4},
      YEAR = {2004},
    NUMBER = {3},
     PAGES = {547--557, 782},
      ISSN = {1609-3321},
   MRCLASS = {14F05 (18E30)},
 MRNUMBER = {2119139},
MRREVIEWER = {Toshiyuki Tanisaki},
       DOI = {10.17323/1609-4514-2004-4-3-547-557},
       URL = {https://doi.org/10.17323/1609-4514-2004-4-3-547-557},
}

@article {BCT24,
    AUTHOR = {Booher, Jeremy and Cotner, Sean and Tang, Shiang},
     TITLE = {Lifting {$G$}-valued {G}alois representations when {$\ell\neq
              p$}},
   JOURNAL = {Forum Math. Sigma},
  FJOURNAL = {Forum of Mathematics. Sigma},
    VOLUME = {12},
      YEAR = {2024},
     PAGES = {Paper No. e109, 41},
      ISSN = {2050-5094},
   MRCLASS = {11F80 (14L17 20G07)},
  MRNUMBER = {4831145},
       DOI = {10.1017/fms.2024.113},
       URL = {https://doi.org/10.1017/fms.2024.113},
}

@misc{BPHT25,
      title={Inductive construction of supercuspidal $L$-packets}, 
      author={Raphaël Beuzart-Plessis and Michael Harris and Jack Thorne},
      year={2025},
      eprint={2502.20611},
      archivePrefix={arXiv},
      primaryClass={math.NT},
      url={https://arxiv.org/abs/2502.20611}, 
}

@book {BK93,
    AUTHOR = {Bushnell, Colin J. and Kutzko, Philip C.},
     TITLE = {The admissible dual of {${\rm GL}(N)$} via compact open
              subgroups},
    SERIES = {Annals of Mathematics Studies},
    VOLUME = {129},
 PUBLISHER = {Princeton University Press, Princeton, NJ},
      YEAR = {1993},
     PAGES = {xii+313},
      ISBN = {0-691-03256-4; 0-691-02114-7},
   MRCLASS = {22E50 (22-02)},
  MRNUMBER = {1204652},
MRREVIEWER = {Mark\ Reeder},
       DOI = {10.1515/9781400882496},
       URL = {https://doi.org/10.1515/9781400882496},
}

@article {Blondel97,
    AUTHOR = {Blondel, Corinne},
     TITLE = {Crit\`ere d'injectivit\'e pour l'application de {J}acquet},
   JOURNAL = {C. R. Acad. Sci. Paris S\'er. I Math.},
  FJOURNAL = {Comptes Rendus de l'Acad\'emie des Sciences. S\'erie I.
              Math\'ematique},
    VOLUME = {325},
      YEAR = {1997},
    NUMBER = {11},
     PAGES = {1149--1152},
      ISSN = {0764-4442},
}

@article {BK,
    AUTHOR = {Bushnell, Colin J. and Kutzko, Philip C.},
     TITLE = {Smooth representations of reductive {$p$}-adic groups:
              structure theory via types},
   JOURNAL = {Proc. London Math. Soc. (3)},
    VOLUME = {77},
      YEAR = {1998},
     PAGES = {582--634},
}

@article {BMHN24,
    AUTHOR = {Bertoloni Meli, Alexander and Hamann, Linus and Nguyen, Kieu
              Hieu},
     TITLE = {Compatibility of the {F}argues-{S}cholze correspondence for
              unitary groups},
   JOURNAL = {Math. Ann.},
  FJOURNAL = {Mathematische Annalen},
    VOLUME = {390},
      YEAR = {2024},
    NUMBER = {3},
     PAGES = {4729--4787},
      ISSN = {0025-5831,1432-1807},
   MRCLASS = {11S37},
  MRNUMBER = {4803488},
MRREVIEWER = {Nicolas\ Arancibia Robert},
       DOI = {10.1007/s00208-024-02877-x},
       URL = {https://doi.org/10.1007/s00208-024-02877-x},
}

@article {BMR05,
    AUTHOR = {Bate, Michael and Martin, Benjamin and R\"ohrle, Gerhard},
     TITLE = {A geometric approach to complete reducibility},
   JOURNAL = {Invent. Math.},
  FJOURNAL = {Inventiones Mathematicae},
    VOLUME = {161},
      YEAR = {2005},
    NUMBER = {1},
     PAGES = {177--218},
      ISSN = {0020-9910,1432-1297},
   MRCLASS = {20G15 (20E42)},
  MRNUMBER = {2178661},
MRREVIEWER = {Andy\ R.\ Magid},
       DOI = {10.1007/s00222-004-0425-9},
       URL = {https://doi.org/10.1007/s00222-004-0425-9},
}

@book {CE56,
    AUTHOR = {Cartan, Henri and Eilenberg, Samuel},
     TITLE = {Homological {A}lgebra},
 PUBLISHER = {Princeton University Press, Princeton, NJ},
      YEAR = {1956},
     PAGES = {xv+390},
   MRCLASS = {09.0X},
  MRNUMBER = {77480},
MRREVIEWER = {G.\ Hochschild},
}

@incollection {Con14,
    AUTHOR = {Conrad, Brian},
     TITLE = {Reductive group schemes},
 BOOKTITLE = {Autour des sch\'emas en groupes. {V}ol. {I}},
    SERIES = {Panor. Synth\`eses},
    VOLUME = {42/43},
     PAGES = {93--444},
 PUBLISHER = {Soc. Math. France, Paris},
      YEAR = {2014},
      ISBN = {978-2-85629-794-0},
   MRCLASS = {14L15},
  MRNUMBER = {3362641},
}

@article {Cot24,
    AUTHOR = {Sean Cotner},
     TITLE = {Morphisms of character varieties},
   JOURNAL = {Int. Math. Res. Not. IMRN},
  FJOURNAL = {International Mathematics Research Notices. IMRN},
      YEAR = {2024},
    NUMBER = {16},
     PAGES = {11540--11548},
      ISSN = {1073-7928,1687-0247},
   MRCLASS = {14M35 (14L15 20G35)},
  MRNUMBER = {4789091},
MRREVIEWER = {Wen-Wei\ Li},
       DOI = {10.1093/imrn/rnae124},
       URL = {https://doi.org/10.1093/imrn/rnae124},
}

@misc{Cot24b,
      title={Connected components of the moduli space of {L}-parameters}, 
      author={Sean Cotner},
      year={2024},
      eprint={2404.16716},
      archivePrefix={arXiv},
      primaryClass={math.NT},
      url={https://arxiv.org/abs/2404.16716}, 
}

@misc{Cot25,
      title={Hom schemes for algebraic groups}, 
      author={Sean Cotner},
      year={2025},
      eprint={2309.16458},
      archivePrefix={arXiv},
      primaryClass={math.AG},
      url={https://arxiv.org/abs/2309.16458},
      note={To appear in Algebra \& Number Theory}
}

@misc{Cot26b,
      title={Modular functoriality for finite groups}, 
      author={Sean Cotner},
      year={2026}, 
}

@misc{CF26a,
      key={CF26a},
      title={Local {L}anglands functoriality for {Y}u's supercuspidals {I}: {K}aletha's parametrization}, 
      author={Sean Cotner and Tony Feng},
      year={2026}, 
}

@article {Dat09,
    AUTHOR = {Dat, Jean-Fran\c{c}ois},
     TITLE = {Finitude pour les repr\'{e}sentations lisses de groupes
              {$p$}-adiques},
   JOURNAL = {J. Inst. Math. Jussieu},
  FJOURNAL = {Journal of the Institute of Mathematics of Jussieu. JIMJ.
              Journal de l'Institut de Math\'{e}matiques de Jussieu},
    VOLUME = {8},
      YEAR = {2009},
    NUMBER = {2},
     PAGES = {261--333},
      ISSN = {1474-7480},
   MRCLASS = {22E50},
MRNUMBER = {2485794},
       DOI = {10.1017/S1474748008000054},
       URL = {https://doi.org/10.1017/S1474748008000054},
}

@article {DS18,
    AUTHOR = {DeBacker, Stephen and Spice, Loren},
     TITLE = {Stability of character sums for positive-depth, supercuspidal
              representations},
   JOURNAL = {J. Reine Angew. Math.},
  FJOURNAL = {Journal f\"ur die Reine und Angewandte Mathematik. [Crelle's
              Journal]},
    VOLUME = {742},
      YEAR = {2018},
     PAGES = {47--78},
      ISSN = {0075-4102,1435-5345},
   MRCLASS = {22E40 (43A15)},
  MRNUMBER = {3849622},
MRREVIEWER = {Adrien\ Boyer},
       DOI = {10.1515/crelle-2015-0094},
       URL = {https://doi-org.proxy.lib.umich.edu/10.1515/crelle-2015-0094},
}

@misc{DvHKZ26,
      title={Igusa {S}tacks and the {C}ohomology of {S}himura {V}arieties {I}{I}}, 
      author={Patrick Daniels and Pol van Hoften and Dongryul Kim and Mingjia Zhang},
      year={2026},
      eprint={2603.24921},
      archivePrefix={arXiv},
      primaryClass={math.NT},
      url={https://arxiv.org/abs/2603.24921}, 
}

@article {DHKM,
    AUTHOR = {Dat, Jean-Fran\c{c}ois and Helm, David and Kurinczuk, Robert and
              Moss, Gilbert},
     TITLE = {Finiteness for {H}ecke algebras of {$p$}-adic groups},
   JOURNAL = {J. Amer. Math. Soc.},
  FJOURNAL = {Journal of the American Mathematical Society},
    VOLUME = {37},
      YEAR = {2024},
    NUMBER = {3},
     PAGES = {929--949},
      ISSN = {0894-0347},
   MRCLASS = {22E50 (11F70 11F80)},
  MRNUMBER = {4736530},
       DOI = {10.1090/jams/1034},
       URL = {https://doi.org/10.1090/jams/1034},
}

@article {DHKM25,
    AUTHOR = {Dat, Jean-Fran\c{c}ois and Helm, David and Kurinczuk, Robert
              and Moss, Gilbert},
     TITLE = {Moduli of {L}anglands parameters},
   JOURNAL = {J. Eur. Math. Soc. (JEMS)},
  FJOURNAL = {Journal of the European Mathematical Society (JEMS)},
    VOLUME = {27},
      YEAR = {2025},
    NUMBER = {5},
     PAGES = {1827--1927},
      ISSN = {1435-9855,1435-9863},
   MRCLASS = {11F80 (11F70 22E50)},
  MRNUMBER = {4889237},
       DOI = {10.4171/jems/1599},
       URL = {https://doi.org/10.4171/jems/1599},
}

@misc{Ete23,
      title={Faisceaux monodromiques, th\'eorie de {D}eligne--{L}usztig, et cohomologie des champs de chtoucas en profondeur 0}, 
      author={Arnaud Eteve},
      year={2023}, 
}

@misc{F24,
      title={Modular functoriality in the {L}ocal {L}anglands {C}orrespondence}, 
      author={Feng, Tony},
      year={2024},
      eprint={2312.12542},
      archivePrefix={arXiv},
      primaryClass={math.NT},
      url={https://arxiv.org/abs/2312.12542}, 
}

@misc{FS,
      title={Geometrization of the local {L}anglands correspondence}, 
      author={Laurent Fargues and Peter Scholze},
      year={2021},
      eprint={2102.13459},
      archivePrefix={arXiv},
      primaryClass={math.RT},
      note = {To appear in Asterisque.}
}

@article {Fin21b,
    AUTHOR = {Fintzen, Jessica},
     TITLE = {On the construction of tame supercuspidal representations},
   JOURNAL = {Compos. Math.},
  FJOURNAL = {Compositio Mathematica},
    VOLUME = {157},
      YEAR = {2021},
    NUMBER = {12},
     PAGES = {2733--2746},
      ISSN = {0010-437X,1570-5846},
   MRCLASS = {22E50 (20G25)},
  MRNUMBER = {4357723},
MRREVIEWER = {Yongqi\ Feng},
       DOI = {10.1112/S0010437X21007636},
       URL = {https://doi.org/10.1112/S0010437X21007636},
}

@article {Fin22,
    AUTHOR = {Fintzen, Jessica},
     TITLE = {Tame cuspidal representations in non-defining characteristics},
   JOURNAL = {Michigan Math. J.},
  FJOURNAL = {Michigan Mathematical Journal},
    VOLUME = {72},
      YEAR = {2022},
     PAGES = {331--342},
      ISSN = {0026-2285},
   MRCLASS = {22E50 (11F70)},
  MRNUMBER = {4460255},
MRREVIEWER = {Corina Ciobotaru},
       DOI = {10.1307/mmj/20217217},
       URL = {https://doi.org/10.1307/mmj/20217217},
}

@article {Fin21,
    AUTHOR = {Fintzen, Jessica},
     TITLE = {Types for tame {$p$}-adic groups},
   JOURNAL = {Ann. of Math. (2)},
  FJOURNAL = {Annals of Mathematics. Second Series},
    VOLUME = {193},
      YEAR = {2021},
    NUMBER = {1},
     PAGES = {303--346},
      ISSN = {0003-486X},
   MRCLASS = {22E50},
MRNUMBER = {4199732},
MRREVIEWER = {Corina Ciobotaru},
       DOI = {10.4007/annals.2021.193.1.4},
       URL = {https://doi.org/10.4007/annals.2021.193.1.4},
}

@article {FKS23,
    AUTHOR = {Fintzen, Jessica and Kaletha, Tasho and Spice, Loren},
     TITLE = {A twisted {Y}u construction, {H}arish-{C}handra characters,
              and endoscopy},
   JOURNAL = {Duke Math. J.},
  FJOURNAL = {Duke Mathematical Journal},
    VOLUME = {172},
      YEAR = {2023},
    NUMBER = {12},
     PAGES = {2241--2301},
      ISSN = {0012-7094,1547-7398},
   MRCLASS = {22E50 (11S37)},
  MRNUMBER = {4654051},
MRREVIEWER = {Yongqi\ Feng},
       DOI = {10.1215/00127094-2022-0080},
       URL = {https://doi.org/10.1215/00127094-2022-0080},
}

@misc{FS25,
      title={Construction of tame supercuspidal representations in arbitrary residue characteristic}, 
      author={Jessica Fintzen and David Schwein},
      year={2025},
      eprint={2501.18553},
      archivePrefix={arXiv},
      primaryClass={math.RT},
      url={https://arxiv.org/abs/2501.18553}, 
}

@misc{Fu26,
      title={Depth-zero local {L}anglands via shtukas in mixed characteristic}, 
      author={Chenji Fu},
      year={2026}, 
}

@article {GHS24,
    AUTHOR = {Gan, Wee Teck and Harris, Michael and Sawin, Will and
              Beuzart-Plessis, Rapha\"el},
     TITLE = {Local parameters of supercuspidal representations},
   JOURNAL = {Forum Math. Pi},
  FJOURNAL = {Forum of Mathematics. Pi},
    VOLUME = {12},
      YEAR = {2024},
     PAGES = {Paper No. e13, 41},
      ISSN = {2050-5086},
   MRCLASS = {22E50 (11F70 11F80 11S37)},
  MRNUMBER = {4794807},
MRREVIEWER = {Zhengyu\ Mao},
       DOI = {10.1017/fmp.2024.10},
       URL = {https://doi.org/10.1017/fmp.2024.10},
}

@article {Ger77,
    AUTHOR = {G{\'e}rardin, Paul},
     TITLE = {Weil representations associated to finite fields},
   JOURNAL = {J. Algebra},
  FJOURNAL = {Journal of Algebra},
    VOLUME = {46},
      YEAR = {1977},
    NUMBER = {1},
     PAGES = {54--101},
      ISSN = {0021-8693},
   MRCLASS = {20G05},
  MRNUMBER = {460477},
MRREVIEWER = {G.\ I.\ Lehrer},
       DOI = {10.1016/0021-8693(77)90394-5},
       URL = {https://doi-org.proxy.lib.umich.edu/10.1016/0021-8693(77)90394-5},
}

@misc{GL18,
      title={Chtoucas restreints pour les groupes r\'eductifs et param\'etrisation de Langlands locale}, 
      author={Alain Genestier and Vincent Lafforgue},
      year={2018},
      eprint={1709.00978},
      archivePrefix={arXiv},
      primaryClass={math.AG}
}

@misc{GHILZ26,
      title={On the {S}chematic and {A}nalytic {C}onstructions of the {L}ocal {L}anglands {C}ategory}, 
      author={Ian Gleason and Linus Hamann and Alexander B. Ivanov and João Lourenço and Konrad Zou},
      year={2026},
      eprint={2606.02799},
      archivePrefix={arXiv},
      primaryClass={math.NT},
      url={https://arxiv.org/abs/2606.02799}, 
}

@article {Ham25,
    AUTHOR = {Hamann, Linus},
     TITLE = {Compatibility of the {F}argues-{S}cholze and {G}an-{T}akeda
              local {L}anglands},
   JOURNAL = {Compos. Math.},
  FJOURNAL = {Compositio Mathematica},
    VOLUME = {161},
      YEAR = {2025},
    NUMBER = {9},
     PAGES = {2202--2271},
      ISSN = {0010-437X,1570-5846},
   MRCLASS = {11S37 (14G35 14G45)},
  MRNUMBER = {4978375},
       DOI = {10.1112/S0010437X25102455},
       URL = {https://doi.org/10.1112/S0010437X25102455},
}

@misc{HM26,
      title={The categorical local Langlands conjecture}, 
      author={David Hansen and Lucas Mann},
      year={2026},
      eprint={2606.00983},
      archivePrefix={arXiv},
      primaryClass={math.NT},
      url={https://arxiv.org/abs/2606.00983}, 
}

@article {HKW22,
    AUTHOR = {Hansen, David and Kaletha, Tasho and Weinstein, Jared},
     TITLE = {On the {K}ottwitz conjecture for local shtuka spaces},
   JOURNAL = {Forum Math. Pi},
  FJOURNAL = {Forum of Mathematics. Pi},
    VOLUME = {10},
      YEAR = {2022},
     PAGES = {Paper No. e13, 79},
   MRCLASS = {14G45 (11S37)},
  MRNUMBER = {4430954},
MRREVIEWER = {Kazuhiro Ito},
       DOI = {10.1017/fmp.2022.7},
       URL = {https://doi.org/10.1017/fmp.2022.7},
}

@article {Han26,
    AUTHOR = {Hansen, David},
     TITLE = {Excursion operators and the stable {B}ernstein center},
   JOURNAL = {Forum Math. Pi},
  FJOURNAL = {Forum of Mathematics. Pi},
    VOLUME = {14},
      YEAR = {2026},
     PAGES = {Paper No. e10, 14},
      ISSN = {2050-5086},
   MRCLASS = {22E50 (11F77)},
  MRNUMBER = {5042225},
       DOI = {10.1017/fmp.2026.10028},
       URL = {https://doi.org/10.1017/fmp.2026.10028},
}

@article {Kal19,
    AUTHOR = {Kaletha, Tasho},
     TITLE = {Regular supercuspidal representations},
   JOURNAL = {J. Amer. Math. Soc.},
  FJOURNAL = {Journal of the American Mathematical Society},
    VOLUME = {32},
      YEAR = {2019},
    NUMBER = {4},
     PAGES = {1071--1170},
      ISSN = {0894-0347},
   MRCLASS = {22E50 (11F70 11S37)},
  MRNUMBER = {4013740},
MRREVIEWER = {Amiya Kumar Mondal},
       DOI = {10.1090/jams/925},
       URL = {https://doi.org/10.1090/jams/925},
}

@misc{Kal21b,
      title={Supercuspidal {L}-packets}, 
      author={Tasho Kaletha},
      year={2021},
      eprint={1912.03274},
      archivePrefix={arXiv},
      primaryClass={math.RT},
      url={https://arxiv.org/abs/1912.03274}, 
}

@book {KP,
    AUTHOR = {Kaletha, Tasho and Prasad, Gopal},
     TITLE = {Bruhat-{T}its theory---a new approach},
    SERIES = {New Mathematical Monographs},
    VOLUME = {44},
 PUBLISHER = {Cambridge University Press, Cambridge},
      YEAR = {2023},
     PAGES = {xxx+718},
      ISBN = {978-1-108-83196-3},
   MRCLASS = {20E42 (11F70 20G25 22E50)},
  MRNUMBER = {4520154},
}

@incollection {KY17,
    AUTHOR = {Kim, Ju-Lee and Yu, Jiu-Kang},
     TITLE = {Construction of tame types},
 BOOKTITLE = {Representation theory, number theory, and invariant theory},
    SERIES = {Progr. Math.},
    VOLUME = {323},
     PAGES = {337--357},
 PUBLISHER = {Birkh\"auser/Springer, Cham},
      YEAR = {2017},
      ISBN = {978-3-319-59727-0; 978-3-319-59728-7},
   MRCLASS = {22E50},
  MRNUMBER = {3753917},
MRREVIEWER = {Alan\ Roche},
       DOI = {10.1007/978-3-319-59728-7\_12},
       URL = {https://doi-org.proxy.lib.umich.edu/10.1007/978-3-319-59728-7_12},
}

@misc{LH24,
      title={On close fields and the local Langlands correspondence}, 
      author={Siyan Daniel Li-Huerta},
      year={2024},
      eprint={2407.07063},
      archivePrefix={arXiv},
      primaryClass={math.NT},
      url={https://arxiv.org/abs/2407.07063}, 
}

@article {Mok15,
    AUTHOR = {Mok, Chung Pang},
     TITLE = {Endoscopic classification of representations of quasi-split
              unitary groups},
   JOURNAL = {Mem. Amer. Math. Soc.},
  FJOURNAL = {Memoirs of the American Mathematical Society},
    VOLUME = {235},
      YEAR = {2015},
    NUMBER = {1108},
     PAGES = {vi+248},
      ISSN = {0065-9266,1947-6221},
      ISBN = {978-1-4704-1041-4; 978-1-4704-2226-4},
   MRCLASS = {22E55 (11R42 22E50)},
  MRNUMBER = {3338302},
MRREVIEWER = {Neven\ Grbac},
       DOI = {10.1090/memo/1108},
       URL = {https://doi.org/10.1090/memo/1108},
}

@article {MP96,
    AUTHOR = {Moy, Allen and Prasad, Gopal},
     TITLE = {Jacquet functors and unrefined minimal {$K$}-types},
   JOURNAL = {Comment. Math. Helv.},
  FJOURNAL = {Commentarii Mathematici Helvetici},
    VOLUME = {71},
      YEAR = {1996},
    NUMBER = {1},
     PAGES = {98--121},
      ISSN = {0010-2571},
   MRCLASS = {22E50 (22E35)},
  MRNUMBER = {1371680},
MRREVIEWER = {Mark Reeder},
       DOI = {10.1007/BF02566411},
       URL = {https://doi.org/10.1007/BF02566411},
}

@article {Ohara24,
    AUTHOR = {Ohara, Kazuma},
     TITLE = {A comparison of endomorphism algebras},
   JOURNAL = {J. Algebra},
  FJOURNAL = {Journal of Algebra},
    VOLUME = {659},
      YEAR = {2024},
     PAGES = {183--343},
      ISSN = {0021-8693,1090-266X},
   MRCLASS = {22E50 (20C08 20G25)},
  MRNUMBER = {4775177},
MRREVIEWER = {Arnaud\ Mayeux},
       DOI = {10.1016/j.jalgebra.2024.07.002},
       URL = {https://doi-org.proxy.lib.umich.edu/10.1016/j.jalgebra.2024.07.002},
}

@article {OT21,
    AUTHOR = {Oi, Masao and Tokimoto, Kazuki},
     TITLE = {Local {L}anglands correspondence for regular supercuspidal
              representations of {$GL(n)$}},
   JOURNAL = {Int. Math. Res. Not. IMRN},
  FJOURNAL = {International Mathematics Research Notices. IMRN},
      YEAR = {2021},
    NUMBER = {3},
     PAGES = {2007--2073},
      ISSN = {1073-7928,1687-0247},
   MRCLASS = {22E50 (11F70 11S37)},
  MRNUMBER = {4206603},
MRREVIEWER = {Corina\ Ciobotaru},
       DOI = {10.1093/imrn/rnaa197},
       URL = {https://doi.org/10.1093/imrn/rnaa197},
}

@misc{Pen26,
      title={Fargues-{S}cholze correspondence and endoscopic classification for special orthogonal and unitary groups}, 
      author={Hao Peng},
      year={2026},
      eprint={2503.04623},
      archivePrefix={arXiv},
      primaryClass={math.NT},
      url={https://arxiv.org/abs/2503.04623}, 
}

@article {PY06,
    AUTHOR = {Prasad, Gopal and Yu, Jiu-Kang},
     TITLE = {On quasi-reductive group schemes},
      NOTE = {With an appendix by Brian Conrad},
   JOURNAL = {J. Algebraic Geom.},
  FJOURNAL = {Journal of Algebraic Geometry},
    VOLUME = {15},
      YEAR = {2006},
    NUMBER = {3},
     PAGES = {507--549},
      ISSN = {1056-3911,1534-7486},
   MRCLASS = {14L17 (14L15)},
  MRNUMBER = {2219847},
MRREVIEWER = {Alan\ Koch},
       DOI = {10.1090/S1056-3911-06-00422-X},
       URL = {https://doi-org.proxy.lib.umich.edu/10.1090/S1056-3911-06-00422-X},
}

@article {Sch25,
    AUTHOR = {Scholze, Peter},
     TITLE = {Geometrization of the local {L}anglands correspondence,
              motivically},
   JOURNAL = {J. Reine Angew. Math.},
  FJOURNAL = {Journal f\"ur die Reine und Angewandte Mathematik. [Crelle's
              Journal]},
    VOLUME = {835},
      YEAR = {2026},
     PAGES = {301--327},
      ISSN = {0075-4102,1435-5345},
  MRNUMBER = {5080051},
       DOI = {10.1515/crelle-2026-0011},
       URL = {https://doi.org/10.1515/crelle-2026-0011},
}

@article {SS08,
    AUTHOR = {S\'echerre, V. and Stevens, S.},
     TITLE = {Repr\'esentations lisses de {${\rm GL}_m(D)$}. {IV}.
              {R}epr\'esentations supercuspidales},
   JOURNAL = {J. Inst. Math. Jussieu},
  FJOURNAL = {Journal of the Institute of Mathematics of Jussieu. JIMJ.
              Journal de l'Institut de Math\'ematiques de Jussieu},
    VOLUME = {7},
      YEAR = {2008},
    NUMBER = {3},
     PAGES = {527--574},
      ISSN = {1474-7480,1475-3030},
   MRCLASS = {22E50},
  MRNUMBER = {2427423},
       DOI = {10.1017/S1474748008000078},
       URL = {https://doi.org/10.1017/S1474748008000078},
}

@book {SGA3II,
    KEY = {SGA3\textsubscript{II}},
   EDITOR = {Demazure, Michel and Grothendieck, Alexandre},
     TITLE = {Sch\'{e}mas en groupes. {II}: {G}roupes de type multiplicatif, et structure des sch\'{e}mas en groupes g\'{e}n\'{e}raux},
    SERIES = {Lecture Notes in Mathematics},
    VOLUME = {Vol. 152},
      NOTE = {S\'{e}minaire de G\'{e}om\'{e}trie Alg\'{e}brique du Bois
              Marie 1962/64 (SGA 3),
              Dirig\'{e} par M. Demazure et A. Grothendieck},
 PUBLISHER = {Springer-Verlag, Berlin-New York},
      YEAR = {1970},
     PAGES = {ix+654},
   MRCLASS = {14.50},
  MRNUMBER = {274459},
}

@book {Serre77,
    AUTHOR = {Serre, Jean-Pierre},
     TITLE = {Linear representations of finite groups},
    SERIES = {Graduate Texts in Mathematics},
    VOLUME = {Vol. 42},
      NOTE = {Translated from the French by Leonard L. Scott},
 PUBLISHER = {Springer-Verlag, New York-Heidelberg},
      YEAR = {1977},
     PAGES = {x+170},
      ISBN = {0-387-90190-6},
   MRCLASS = {20CXX},
  MRNUMBER = {450380},
MRREVIEWER = {W.\ Feit},
}

@incollection {SS70,
    AUTHOR = {Springer, T. A. and Steinberg, R.},
     TITLE = {Conjugacy classes},
 BOOKTITLE = {Seminar on {A}lgebraic {G}roups and {R}elated {F}inite
              {G}roups ({T}he {I}nstitute for {A}dvanced {S}tudy,
              {P}rinceton, {N}.{J}., 1968/69)},
    SERIES = {Lecture Notes in Math.},
    VOLUME = {Vol. 131},
     PAGES = {167--266},
 PUBLISHER = {Springer, Berlin-New York},
      YEAR = {1970},
   MRCLASS = {14.50 (20.00)},
  MRNUMBER = {268192},
}

@article {Sri64,
    AUTHOR = {Srinivasan, Bhama},
     TITLE = {The modular representation ring of a cyclic {$p$}-group},
   JOURNAL = {Proc. London Math. Soc. (3)},
  FJOURNAL = {Proceedings of the London Mathematical Society. Third Series},
    VOLUME = {14},
      YEAR = {1964},
     PAGES = {677--688},
      ISSN = {0024-6115,1460-244X},
   MRCLASS = {20.80},
  MRNUMBER = {168666},
MRREVIEWER = {C.\ W.\ Curtis},
       DOI = {10.1112/plms/s3-14.4.677},
       URL = {https://doi-org.proxy.lib.umich.edu/10.1112/plms/s3-14.4.677},
}

@book {St68,
    AUTHOR = {Steinberg, Robert},
     TITLE = {Endomorphisms of linear algebraic groups},
    SERIES = {Memoirs of the American Mathematical Society, No. 80},
 PUBLISHER = {American Mathematical Society, Providence, R.I.},
      YEAR = {1968},
     PAGES = {108},
   MRCLASS = {14.50 (22.00)},
  MRNUMBER = {0230728},
MRREVIEWER = {E. Abe},
}

@article {St75,
    AUTHOR = {Steinberg, Robert},
     TITLE = {Torsion in reductive groups},
   JOURNAL = {Advances in Math.},
  FJOURNAL = {Advances in Mathematics},
    VOLUME = {15},
      YEAR = {1975},
     PAGES = {63--92},
      ISSN = {0001-8708},
   MRCLASS = {20G15},
  MRNUMBER = {354892},
MRREVIEWER = {S.\ I.\ Gel\cprime fand},
       DOI = {10.1016/0001-8708(75)90125-5},
       URL = {https://doi.org/10.1016/0001-8708(75)90125-5},
}

@article {Ste08,
    AUTHOR = {Stevens, Shaun},
     TITLE = {The supercuspidal representations of {$p$}-adic classical
              groups},
   JOURNAL = {Invent. Math.},
  FJOURNAL = {Inventiones Mathematicae},
    VOLUME = {172},
      YEAR = {2008},
    NUMBER = {2},
     PAGES = {289--352},
      ISSN = {0020-9910,1432-1297},
   MRCLASS = {22E50},
  MRNUMBER = {2390287},
MRREVIEWER = {Anne-Marie\ H.\ Aubert},
       DOI = {10.1007/s00222-007-0099-1},
       URL = {https://doi.org/10.1007/s00222-007-0099-1},
}

@misc{Tak26,
      title={Characteristic-free approaches around {Y}u's construction}, 
      author={Yuta Takaya},
      year={2026},
      eprint={2605.03638},
      archivePrefix={arXiv},
      primaryClass={math.RT},
      url={https://arxiv.org/abs/2605.03638}, 
}

@misc{Tok23,
      title={Local {J}acquet-{L}anglands correspondence for regular supercuspidal representations}, 
      author={Kazuki Tokimoto},
      year={2023},
      eprint={2303.02354},
      archivePrefix={arXiv},
      primaryClass={math.NT},
      url={https://arxiv.org/abs/2303.02354}, 
}

@article {TV,
    AUTHOR = {Treumann, David and Venkatesh, Akshay},
     TITLE = {Functoriality, {S}mith theory, and the {B}rauer homomorphism},
   JOURNAL = {Ann. of Math. (2)},
  FJOURNAL = {Annals of Mathematics. Second Series},
    VOLUME = {183},
      YEAR = {2016},
    NUMBER = {1},
     PAGES = {177--228},
      ISSN = {0003-486X},
   MRCLASS = {11F70 (20G25)},
 MRNUMBER = {3432583},
MRREVIEWER = {B. Sury},
       DOI = {10.4007/annals.2016.183.1.4},
       URL = {https://doi.org/10.4007/annals.2016.183.1.4},
}

@misc{TVarxiv,
      title={Functoriality, {S}mith theory, and the {B}rauer homomorphism}, 
      author={David Treumann and Akshay Venkatesh},
      year={2014},
      eprint={1407.2346},
      archivePrefix={arXiv},
      primaryClass={math.NT},
      note = {ArXiv version 1, \url{https://arxiv.org/abs/1407.2346}}
}

@book {Vig96,
    AUTHOR = {Vign\'{e}ras, Marie-France},
     TITLE = {Repr\'{e}sentations {$l$}-modulaires d'un groupe r\'{e}ductif
              {$p$}-adique avec {$l\ne p$}},
    SERIES = {Progress in Mathematics},
    VOLUME = {137},
 PUBLISHER = {Birkh\"{a}user Boston, Inc., Boston, MA},
      YEAR = {1996},
     PAGES = {xviii and 233},
      ISBN = {0-8176-3929-2},
   MRCLASS = {22E35 (20G25 22-02 22E50)},
 MRNUMBER = {1395151},
MRREVIEWER = {Chon Hu Cheng},
}

@incollection {XZ19,
    AUTHOR = {Xiao, Liang and Zhu, Xinwen},
     TITLE = {On vector-valued twisted conjugation invariant functions on a
              group},
 BOOKTITLE = {Representations of reductive groups},
    SERIES = {Proc. Sympos. Pure Math.},
    VOLUME = {101},
     PAGES = {361--425},
      NOTE = {With an appendix by Stephen Donkin},
 PUBLISHER = {Amer. Math. Soc., Providence, RI},
      YEAR = {2019},
      ISBN = {978-1-4704-4284-2},
   MRCLASS = {20G05 (20G10)},
  MRNUMBER = {3930024},
MRREVIEWER = {Stuart\ Martin},
       DOI = {10.1090/pspum/101/14},
       URL = {https://doi.org/10.1090/pspum/101/14},
}

@article {Yu01,
    AUTHOR = {Yu, Jiu-Kang},
     TITLE = {Construction of tame supercuspidal representations},
   JOURNAL = {J. Amer. Math. Soc.},
  FJOURNAL = {Journal of the American Mathematical Society},
    VOLUME = {14},
      YEAR = {2001},
    NUMBER = {3},
     PAGES = {579--622},
      ISSN = {0894-0347,1088-6834},
   MRCLASS = {22E50},
  MRNUMBER = {1824988},
MRREVIEWER = {Bertrand\ Lemaire},
       DOI = {10.1090/S0894-0347-01-00363-0},
       URL = {https://doi.org/10.1090/S0894-0347-01-00363-0},
}

@incollection {Yu09,
    AUTHOR = {Yu, Jiu-Kang},
     TITLE = {On the local {L}anglands correspondence for tori},
 BOOKTITLE = {Ottawa lectures on admissible representations of reductive
              {$p$}-adic groups},
    SERIES = {Fields Inst. Monogr.},
    VOLUME = {26},
     PAGES = {177--183},
 PUBLISHER = {Amer. Math. Soc., Providence, RI},
      YEAR = {2009},
      ISBN = {978-0-8218-4493-9},
   MRCLASS = {11S37 (22E50)},
  MRNUMBER = {2508725},
MRREVIEWER = {Michael\ M.\ Schein},
       DOI = {10.1090/fim/026/07},
       URL = {https://doi.org/10.1090/fim/026/07},
}

@incollection {Zhu17a,
    AUTHOR = {Zhu, Xinwen},
     TITLE = {An introduction to affine {G}rassmannians and the geometric
              {S}atake equivalence},
 BOOKTITLE = {Geometry of moduli spaces and representation theory},
    SERIES = {IAS/Park City Math. Ser.},
    VOLUME = {24},
     PAGES = {59--154},
 PUBLISHER = {Amer. Math. Soc., Providence, RI},
      YEAR = {2017},
   MRCLASS = {14M15 (14D24 20F65 22E57)},
  MRNUMBER = {3752460},
MRREVIEWER = {Felipe Zald\'{\i}var},
}

@misc{Zhu25,
      title={Tame categorical local {L}anglands correspondence}, 
      author={Xinwen Zhu},
      year={2025},
      eprint={2504.07482},
      archivePrefix={arXiv},
      primaryClass={math.RT},
      url={https://arxiv.org/abs/2504.07482}, 
}

@book{Art13,
    AUTHOR = {Arthur, James},
     TITLE = {The endoscopic classification of representations: orthogonal and symplectic groups},
    SERIES = {American Mathematical Society Colloquium Publications},
    VOLUME = {61},
 PUBLISHER = {American Mathematical Society, Providence, RI},
      YEAR = {2013},
      ISBN = {978-0-8218-4990-3},
       URL = {https://bookstore.ams.org/COLL/61},
}

@article{CZ21,
    AUTHOR = {Chen, Rui and Zou, Jialiang},
     TITLE = {Local {L}anglands correspondence for even orthogonal groups via theta lifts},
   JOURNAL = {Selecta Math. (N.S.)},
    VOLUME = {27},
    NUMBER = {5},
      YEAR = {2021},
     PAGES = {Paper No. 88, 71},
       DOI = {10.1007/s00029-021-00704-8},
       URL = {https://doi.org/10.1007/s00029-021-00704-8},
}

@article{GT11,
    AUTHOR = {Gan, Wee Teck and Takeda, Shuichiro},
     TITLE = {The local {L}anglands conjecture for {$\mathrm{GSp}(4)$}},
   JOURNAL = {Ann. of Math. (2)},
    VOLUME = {173},
    NUMBER = {3},
      YEAR = {2011},
     PAGES = {1841--1882},
       DOI = {10.4007/annals.2011.173.3.12},
       URL = {https://doi.org/10.4007/annals.2011.173.3.12},
}

@article{GT14,
    AUTHOR = {Gan, Wee Teck and Tantono, Welly},
     TITLE = {The local {L}anglands conjecture for {$\mathrm{GSp}(4)$}, {II}: the case of inner forms},
   JOURNAL = {Amer. J. Math.},
    VOLUME = {136},
    NUMBER = {3},
      YEAR = {2014},
     PAGES = {761--805},
       DOI = {10.1353/ajm.2014.0016},
       URL = {https://doi.org/10.1353/ajm.2014.0016},
}

@article{Ish24,
    AUTHOR = {Ishimoto, Hiroshi},
     TITLE = {The endoscopic classification of representations of non-quasi-split odd special orthogonal groups},
   JOURNAL = {Int. Math. Res. Not. IMRN},
    NUMBER = {14},
      YEAR = {2024},
     PAGES = {10939--11012},
       DOI = {10.1093/imrn/rnae113},
       URL = {https://doi.org/10.1093/imrn/rnae113},
}

@misc{KMSW14,
    AUTHOR = {Kaletha, Tasho and M\'{i}nguez, Alberto and Shin, Sug Woo and White, Paul-James},
     TITLE = {Endoscopic classification of representations: inner forms of unitary groups},
      YEAR = {2014},
    EPRINT = {1409.3731},
    ARCHIVEPREFIX = {arXiv},
    PRIMARYCLASS = {math.NT},
       URL = {https://arxiv.org/abs/1409.3731},
}

@article{GT10,
    AUTHOR = {Gan, Wee Teck and Takeda, Shuichiro},
     TITLE = {The local {L}anglands conjecture for {$\mathrm{Sp}_4$}},
   JOURNAL = {Int. Math. Res. Not. IMRN},
      YEAR = {2010},
    NUMBER = {15},
     PAGES = {2987--3038},
       DOI = {10.1093/imrn/rnp203},
       URL = {https://doi.org/10.1093/imrn/rnp203},
}

@article{Cho17,
    AUTHOR = {Choiy, Kwangho},
     TITLE = {The local {L}anglands conjecture for the {$p$}-adic inner form of {$\mathrm{Sp}_4$}},
   JOURNAL = {Int. Math. Res. Not. IMRN},
      YEAR = {2017},
    NUMBER = {6},
     PAGES = {1830--1889},
       DOI = {10.1093/imrn/rnw043},
       URL = {https://doi.org/10.1093/imrn/rnw043},
}

@misc{LH23,
    AUTHOR = {Li-Huerta, Siyan Daniel},
     TITLE = {Local-global compatibility over function fields},
      YEAR = {2023},
    EPRINT = {2301.09711},
    ARCHIVEPREFIX = {arXiv},
    PRIMARYCLASS = {math.NT},
       URL = {https://arxiv.org/abs/2301.09711},
}

\end{document}